\documentclass[12pt,reqno]{amsart}
\usepackage[T1]{fontenc}
\usepackage{lmodern}
\usepackage[margin=1.05in]{geometry}
\usepackage{mathtools,amssymb,mathrsfs}
\usepackage{tikz-cd}
\usepackage{microtype}
\usepackage{setspace}
\usepackage{enumitem}
\usepackage[hidelinks]{hyperref}
\hypersetup{pdftitle={Classification of reduced free products of nuclear C*-algebras},pdfauthor={George A. Elliott, Chun Guang Li, Zhuang Niu}}

\newcommand{\C}{\mathbb C}
\newcommand{\E}{\mathbb E}
\newcommand{\N}{\mathbb N}
\newcommand{\Z}{\mathbb Z}
\newcommand{\R}{\mathbb R}
\newcommand{\fr}{{*_{\mathrm r}}}
\newcommand{\id}{\operatorname{id}}
\newcommand{\Ad}{\operatorname{Ad}}
\newcommand{\tr}{\operatorname{tr}}
\newcommand{\Ell}{\operatorname{Ell}}
\newcommand{\Uquot}[1]{\mathrm U_\infty(#1)/\mathrm{CU}_\infty(#1)}

\newcommand{\Tor}{\operatorname{Tor}}
\newcommand{\Hom}{\operatorname{Hom}}
\newcommand{\KK}{\mathrm{KK}}
\newcommand{\KL}{\mathrm{KL}}
\newcommand{\Cu}{\mathrm{Cu}}
\newcommand{\red}{\mathrm r}
\newcommand{\ulK}{\underline{\mathrm K}}
\newcommand{\unit}{\mathbf1}

\newcommand{\au}{\mathrel{\approx_{\mathrm u}}}
\DeclareMathOperator{\Aff}{Aff}
\DeclareMathOperator{\Tr}{Tr}
\DeclareMathOperator{\Ext}{Ext}

\DeclareMathOperator{\ev}{ev}
\DeclareMathOperator{\dist}{dist}
\DeclareMathOperator{\CU}{CU}
\DeclareMathOperator{\cel}{cel}
\DeclareMathOperator{\cer}{cer}
\newcommand{\cucu}{\cel_{\mathrm{CU}}}
\newtheorem{introtheorem}{Theorem}

\newtheorem{introcorollary}[introtheorem]{Corollary}
\newtheorem{theorem}{Theorem}[section]
\newtheorem{proposition}[theorem]{Proposition}
\newtheorem{lemma}[theorem]{Lemma}
\newtheorem{corollary}[theorem]{Corollary}
\theoremstyle{definition}
\newtheorem{definition}[theorem]{Definition}
\newtheorem{example}[theorem]{Example}
\theoremstyle{remark}
\newtheorem{remark}[theorem]{Remark}
\numberwithin{equation}{section}
\setlist[enumerate]{label=\textup{(\roman*)},leftmargin=2.2em,itemsep=3pt}
\makeatletter
\newcommand{\keep@runin@heading}{%
  \if@noskipsec
    \leavevmode\par\nobreak
    \@nobreaktrue
  \fi}
\AddToHook{env/theorem/before}{\keep@runin@heading}
\AddToHook{env/proposition/before}{\keep@runin@heading}
\AddToHook{env/lemma/before}{\keep@runin@heading}
\AddToHook{env/corollary/before}{\keep@runin@heading}
\AddToHook{env/definition/before}{\keep@runin@heading}
\AddToHook{env/example/before}{\keep@runin@heading}
\AddToHook{env/remark/before}{\keep@runin@heading}
\AddToHook{env/proof/before}{\keep@runin@heading}
\makeatother

\title[Classification of reduced free products]{Classification of reduced free products\\of nuclear $\mathrm{C}^{*}$-algebras}
\author[G. A. Elliott]{George A. Elliott}
\address{Department of Mathematics, University of Toronto,
Toronto, Ontario M5S 2E4, Canada}
\email{elliott@math.toronto.edu}
\author[C. G. Li]{Chun Guang Li}
\address{School of Mathematics and Statistics, Northeast Normal University,
Changchun 130024, P. R. China}
\email{licg864@nenu.edu.cn}
\author[Z. Niu]{Zhuang Niu}
\address{Department of Mathematics and Statistics, University of Wyoming,
Laramie, Wyoming 82071, USA}
\email{zhuangniu@icloud.com}
\date{}
\subjclass[2020]{Primary 46L35; Secondary 46L54, 46L80, 19K35}
\keywords{Reduced free product, Elliott invariant, total $\mathrm K$-theory, trace-kernel ideal}
\begin{document}
\raggedbottom
\begin{abstract}
Let $D$ be the countably infinite reduced free product of
$(\mathrm C([0,1]),\lambda)$, where $\lambda$ is Lebesgue measure.
We prove that finite and countable reduced free products of
separable unital nuclear UCT $\mathrm C^*$-algebras with faithful
selected traces are classified, after adjoining $D$, by the Elliott
invariant. Every isomorphism of these invariants is induced by a
unital $^*$-isomorphism, and every unital morphism of these invariants
is induced by a unital embedding. If each of two products to be compared
has two distinct factors admitting Haar unitaries, then $D$ can be omitted.
\end{abstract}
\maketitle

\tableofcontents

\section{Introduction}\label{sec:introduction}

In \cite{HP}, using techniques from the classification of nuclear
$\mathrm C^*$-algebras, Hirshberg and Phillips show that certain
nuclear free factors can be adjoined to infinite reduced free products
without changing the isomorphism class. In particular, they identify
the infinite reduced free product of Lebesgue interval algebras with
that of Jiang--Su algebras. Their proofs combine approximate
intertwining with existence and uniqueness results for homomorphisms
from nuclear algebras. Motivated by their work (in particular, their
circle--annulus isomorphism questions), we extend these techniques
further to classify a class of nonnuclear algebras obtained by reduced
free products.

The structural study of reduced free products goes back to Avitzour's
simplicity and unique-trace criteria \cite{Avitzour}.
Dykema and R\o rdam developed the theory of projections and ordered
$\mathrm{K}$-groups in free products \cite{DR}, while Dykema, Haagerup and
R\o rdam proved stable-rank-one results \cite{DHR}.
More recently, Robert's theory of selfless $\mathrm{C}^*$-algebras provided
a route to strict comparison \cite{RobertSelfless}.
Amrutam, Gao, Kunnawalkam Elayavalli and Patchell proved strict
comparison for $\mathrm{C}^*_{\mathrm r}(\mathbb F_n)$, $2\leq n<\infty$,
using this framework \cite{AGKEP}.
Hayes, Kunnawalkam Elayavalli and Robert developed selflessness
and further regularity results for reduced free products \cite{HKR}.
These results provide the structural ingredients for the Elliott
invariant in this setting.

On the $\mathrm{K}$-theoretic side, the calculation of Pimsner and Voiculescu
\cite{PV} gives $\mathrm{K}_1(\mathrm{C}^*_{\mathrm r}(\mathbb F_n))\cong\Z^n$,
distinguishing reduced free group algebras of different finite ranks.
The $\mathrm{K}$- and $\mathrm{KK}$-theory of amalgamated free products
developed by Fima and Germain \cite{FG} provides the corresponding
calculations for general factors.

In this paper, we consider
\[
 (D,\tau_D)=\fr_{j=1}^{\infty}
       (\mathrm C([0,1]),\lambda),
\]
where $\lambda$ is Lebesgue measure. For a unital algebra $E$ with
a faithful selected tracial state $\tau_E$, we call the operation
\[
 (E,\tau_E)\longmapsto(E,\tau_E)\fr(D,\tau_D)
\]
its \emph{$D$-stabilization}. We say that $E$ is $D$-stable if
$E\fr D\cong E$ by a trace-preserving isomorphism.
Reindexing the interval factors gives $D\fr D\cong D$, so every
$D$-stabilized algebra is $D$-stable.

The absorption results of \cite{HP} motivate viewing this operation
as a free-product analogue of tensorial $\mathcal Z$-stabilization.
Separable simple unital nuclear UCT $\mathrm C^*$-algebras
that absorb the Jiang--Su algebra tensorially are classified by
the Elliott invariant, in the purely infinite case
\cite{KirchbergPI,PhillipsPI} and in the stably finite case
\cite{CETWW,EGLN,GLN1,GLN2,TWW}.

The following Theorem~\ref{thm:nuclear-classification} gives a free-product
counterpart: after adjoining $D$, finite and countable reduced free
products of nuclear UCT algebras are classified by their Elliott
invariants. The factors may be nonsimple or multitracial, and only
their selected traces enter the construction.

Write $\Ell$ for the Elliott invariant, consisting of ordered
$\mathrm K_0$ with its distinguished unit, $\mathrm K_1$, the
tracial state simplex, and the pairing of traces with $\mathrm K_0$.
A unital morphism of Elliott invariants consists of homomorphisms
on the two $\mathrm K$-groups, positive and unit preserving in degree
zero, and a continuous affine map of trace spaces in the reverse
direction, compatible with the pairing.
For a nonempty finite or countable family
$\mathcal A=((A_i,\tau_i))_{i\in S}$ of unital algebras with
faithful selected tracial states, write
\[
 P_{\mathcal A}=\fr_{i\in S}(A_i,\tau_i).
\]

\begin{introtheorem}[Nuclear factors: classification and range]
\label{thm:nuclear-classification}\label{thm:main}
Let $\mathcal A$ and $\mathcal B$ be nonempty finite or countable
families of separable unital nuclear UCT algebras equipped with
faithful tracial states.
Put $A=P_{\mathcal A}\fr D$ and $B=P_{\mathcal B}\fr D$. Then
each of $A$ and $B$ is isomorphic to $F\fr D$ for a nuclear UCT
core $F$ carrying its aggregate selected pointed data. In particular,
\[
 A\cong B\quad\Longleftrightarrow\quad\Ell(A)\cong\Ell(B).
\]
Every unital morphism $\Ell(A)\to\Ell(B)$ is induced by a unital
embedding $A\to B$. If the invariant morphism is an isomorphism,
the realizing map may be chosen to be a $^*$-isomorphism.
If each family contains two distinct factors admitting Haar unitaries,
the same conclusion holds for $P_{\mathcal A}$ and $P_{\mathcal B}$
without adjoining $D$.

Moreover, the range of the $D$-stabilized products consists exactly
of monotracial Elliott invariants with countable groups
$\mathrm K_0,\mathrm K_1$, a distinguished unit $u$, and a homomorphism
$\rho:\mathrm K_0\to\R$ satisfying
\[
 \rho(u)=1,\qquad
 \mathrm K_0^+=\{0\}\cup\{g:\rho(g)>0\}.
\]
Every such invariant is realized by $F\fr\mathcal Z$, where $F$ is
simple, infinite-dimensional, unital, monotracial, nuclear, UCT and
$\mathcal Z$-stable.
The commutative subclass has the sharper range
and standard models described in Corollary~\ref{thm:classification}.
\end{introtheorem}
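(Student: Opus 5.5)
The proof has four parts: an absorption step that reduces $A$ and $B$ to a normal form, the computation of the invariant of the normal form, a construction of embeddings realizing invariant morphisms, and the range statement.

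\emph{Step 1: reduction to a core.} First I would show that $A\cong F\fr D$, where $F$ is a single separable unital nuclear UCT algebra carrying the aggregate selected data. The model is the Hirshberg--Phillips absorption argument \cite{HP}. Since $D\fr D\cong D$, one can borrow infinitely many interval factors, and an approximate intertwining then lets nuclear factors be moved in and out of $D$ without changing the isomorphism class.

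To build $F$, I would compute $\mathrm K_*(P_{\mathcal A})$ via the Fima--Germain exact sequence \cite{FG}. Because each factor is unital with a faithful trace, this gives
\[
\mathrm K_0(P_{\mathcal A})\cong\Bigl(\bigoplus_i\mathrm K_0(A_i)\Bigr)\big/\langle[1]_i-[1]_j\rangle,\qquad \mathrm K_1(P_{\mathcal A})\cong\bigoplus_i\mathrm K_1(A_i).
\]
The trace pairing is induced by the $\tau_i$. Note that $D$ has $\mathrm K_0=\Z$ and $\mathrm K_1=0$, so it contributes nothing new.

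Next I would take $F$ to be a classifiable model: simple, monotracial, $\mathcal Z$-stable, nuclear and UCT, with exactly these groups and with $\rho$ given by the pairing. Such an $F$ exists by the range results of \cite{EGLN,GLN1}.

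The absorption step then needs a unital trace-preserving embedding of each $A_i$ into $F\fr D$, realizing the canonical $\KK$-class, and the reverse embedding of $F$ into $P_{\mathcal A}\fr D$.
\begin{itemize}
\item The required $\KL$/$\KK$ data come from the UCT.
\item Existence of such embeddings into the free product would use the presence of many Haar unitaries and semicircular-type elements in $D$. It would also use that $D$-stable free products have stable rank one \cite{DHR}, strict comparison via selflessness \cite{RobertSelfless,HKR}, and are monotracial \cite{Avitzour}.
\end{itemize}

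\emph{Step 2: invariant of the normal form.} With Step 1 in hand, I would show that $\Ell(F\fr D)$ is exactly the monotracial invariant $(\mathrm K_0,u,\mathrm K_1,\rho)$ with $\mathrm K_0^+=\{0\}\cup\{\rho>0\}$. The three inputs are:
\begin{itemize}
\item simplicity with unique trace, by Avitzour \cite{Avitzour};
\item the order description, which follows from strict comparison together with the fact that projections are dense enough in the sense of Dykema--R\o rdam \cite{DR};
\item the $\mathrm K$-groups, by Fima--Germain \cite{FG}.
\end{itemize}
This gives "range $\subseteq$". For the converse, any such invariant is the invariant of some classifiable $F$, and $F\fr\mathcal Z$ realizes it.

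\emph{Step 3: existence and uniqueness.} The next task is to realize an invariant morphism $\Ell(F\fr D)\to\Ell(F'\fr D)$. By the UCT I would first lift it to $\KL(F,F')$. By classification of maps from nuclear algebras into $\mathcal Z$-stable targets, the lift is then realized by a unital map $F\to F'\otimes$(something). Note that $F'\fr D$ is not nuclear, so this must really be done as a map $F\to F'\fr D$.

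My plan is to use the fact that $F'\fr D$ contains a copy of $F'\otimes\mathcal Z$-like subalgebras only approximately. Instead I would use existence and uniqueness theorems for maps from nuclear $F$ into simple, stable-rank-one, strict-comparison targets with unique trace, in the style of Carri\'on et al.\ \cite{CETWW}. The required ingredients are $\mathcal Z$-stability of the source and the target's regularity.

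The map on $D$ is sent to the freely independent copy of $D$ in the target. This gives a unital embedding $F\fr D\to F'\fr D$ by the universal property of reduced free products with respect to trace-preserving maps, since traces are faithful.

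For isomorphisms, I would run an Elliott two-sided approximate intertwining. The uniqueness needed is approximate unitary equivalence of maps $F\to F'\fr D$ agreeing on $\KL$ and traces. Uniqueness must also hold on the unitary group modulo commutators, and I expect this to be automatic from free absorption.

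\emph{Step 4: omitting $D$.} If each family has two factors with Haar unitaries, then $P_{\mathcal A}$ contains $C(\mathbb T)\fr C(\mathbb T)$. I would show that this already absorbs $D$, following \cite{HP}, so that $P_{\mathcal A}\cong P_{\mathcal A}\fr D$.

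\emph{Main obstacle.} I expect the uniqueness theorem in Step 3, and the absorption in Step 1 that depends on it, to be the hardest part. The difficulty is that one must classify maps from a nuclear algebra into a nonnuclear free product up to approximate unitary equivalence. This requires verifying, for $F'\fr D$, the regularity hypotheses (stable rank one, strict comparison, and the $\CU$/determinant condition) that classification-of-maps theorems demand. I would attack it by proving that $D$-stable algebras are selfless, via Robert's criterion \cite{RobertSelfless}, and deduce the needed properties from that.
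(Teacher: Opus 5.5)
Your overall architecture matches the paper's. You reduce each $D$-stabilized product to $F\fr D$ with a nuclear core carrying the aggregate data, read off $\Ell$ from the Fima--Germain sequence and strict comparison, realize morphisms at the core level extended by $\id_D$, and remove $D$ by absorption. The gap is in your Step~1, which carries almost all the weight once two or more factors have nontrivial pointed data. The Hirshberg--Phillips mechanism you invoke replaces \emph{one} nuclear core. Uniqueness is applied to that nuclear algebra, and the correcting unitary $u$ is extended as $\id_{D_{\le r}}\fr\Ad(u)$ over a protected reservoir head; this is Proposition~\ref{prop:marked-replacement}. A factor can be moved into $D$ only if its selected data are those of $\C$. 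With several factors, the embeddings $A_i\to F\fr D$ must first be made mutually free to assemble into a map $\eta$ on $P_{\mathcal A}\fr D$. That forces you to liberate their images with reservoir coordinates, for instance a Haar conjugator. The intertwining then needs a single return map $\beta$ with $\beta\eta\approx\id$ on a source packet, including the liberating coordinates, \emph{and} an identity-marked automorphism $\alpha$ with $\alpha\eta\beta\approx\id$ on a target packet. Nuclear-source uniqueness on $A_1$, $A_2$ and $F$ separately gives unrelated unitaries, and correcting one free piece by $\Ad(u)$ spoils the others. There is also no uniqueness theorem for the nonnuclear source $P_{\mathcal A}\fr D$ to fall back on; it actually fails, by Example~\ref{ex:reservoir-nonuniqueness}. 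This is why the paper needs Sections~\ref{sec:fusion-outline}--\ref{sec:general-fusion}. The identity $\mathrm M_2(A_1\fr A_2)\cong(A_1\oplus A_2)*_{\C^2}\mathrm M_2$ puts both factors into one nuclear coefficient algebra with only a finite-dimensional attachment, liberated by a $\C^2$-central interval exponential $q=e^{2\pi ih}$. The two estimates for the same $\beta$ come from relative two-logarithm comparison, lifting circle Nielsen automorphisms to actual interval automorphisms, and corner-determinant normalization. These are followed by Lemma~\ref{lem:marked-caap} and compression retaining the exact class of $e_{11}$. Arbitrary $\mathrm K_1$ and nonsplit units further need the support-shifting and matrix-completion argument of Section~\ref{app:split-unit}. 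Countable families need a two-sided tail-controlled exhaustion (Proposition~\ref{gf:countable-zero-K1-fusion}).

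Your Step~3 misplaces and underestimates the analytic input. Once both sides are $F\fr D$ with classifiable $F,F'$, morphisms and isomorphisms are realized at the core level by \cite{EGLN} and \cite[Theorem~B]{CGSTW}, then by $\theta\fr\id_D$, with no intertwining. Maps into nonnuclear targets are really needed in Step~1, for embedding $F$ into $P_{\mathcal A}\fr D$ and comparing inside $E\fr D$. There, existing classification of maps assumes a nuclear $\mathcal Z$-stable codomain. The regularity you get from selflessness or \cite[Lemma~2.6]{HP} is only the hypothesis of a theorem that must still be proved. The paper proves it as Theorem~\ref{thm:embedding}, via:
\begin{enumerate}
\item the trace-kernel extension $0\to J\to T_\omega\to T^\omega\to0$;
\item the regularity of $J$ from \cite{KES}, namely separable stability and the corona factorization property, which give absorption;
\item Szab\'o's $\KL$-uniqueness \cite{Szabo};
\item a separate descent from $T_\omega$ to $T$.
\end{enumerate}
The unitary component is also not automatic. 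By Proposition~\ref{prop:relative-rotation}, embeddings with the same $\KL$-class and trace can differ on Hausdorffized algebraic $\mathrm K_1$ by any element of $\Hom(\mathrm K_1(A)/\Tor\mathrm K_1(A),\R/\overline{H})$. For example, take a Haar unitary $w\in D$ and $c\notin\Z$. The embeddings $z\mapsto w$ and $z\mapsto e^{2\pi ic}w$ of $\mathrm C(\mathbb T)$ agree on $\KL$ and trace but have different determinants, so they are not approximately unitarily equivalent. Finally, Step~4 cannot follow \cite{HP}: that absorption uses infinitely many factors satisfying the atom condition and says nothing about a finite product such as $\mathrm C(\mathbb T)\fr\mathrm C(\mathbb T)$. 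You need Shlyakhtenko's semicircular absorption \cite[Theorem~5.2]{Shlyakhtenko}, applied after grouping the factors into two blocks that each contain a Haar unitary. The same theorem, with Haar unitaries from \cite[Corollary~B]{ThielHaar}, gives $F\fr\mathcal Z\cong(F\fr\mathcal Z)\fr D$, which your range argument uses without justification.
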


In the commutative factor case, the invariant range and the
standard models admit a more explicit description.
For a nonempty finite or countable family
$\mathcal X=((X_i,\mu_i))_{i\in S}$ of measured compact metrizable spaces,
put
\[
 P_{\mathcal X}=\fr_{i\in S}(\mathrm{C}(X_i),\tau_{\mu_i}),
 \qquad \tau_{\mu_i}(f)=\int_{X_i}f\,d\mu_i.
\]
All spaces are nonempty and all measures have full support; atoms
are allowed. Evaluation at a point splits every commutative unit class.

\begin{introcorollary}[Commutative factors: range and standard models]\label{thm:classification}
Let $\mathcal X$ and $\mathcal Y$ be nonempty finite or countable families
of nonempty compact metrizable spaces with full-support Borel probability
measures, and put $A=P_{\mathcal X}\fr D$ and $B=P_{\mathcal Y}\fr D$.
Then
\[
 A\cong B\quad\Longleftrightarrow\quad\Ell(A)\cong\Ell(B).
\]
Every unital morphism $\Ell(A)\to\Ell(B)$ is induced by a unital
embedding $A\to B$, which preserves the unique traces. If the
invariant morphism is an isomorphism, the realizing map may be
chosen to be a $^*$-isomorphism.

The range consists exactly of monotracial Elliott invariants with
trace pairing $\rho:\mathrm{K}_0\to\R$ and
\[
 \mathrm{K}_0=F\oplus H,\qquad [1]=u\in F,\qquad
 \rho(u)=1,\qquad \rho(H)=0,
\]
where $F$ is a countable free abelian group with a basis containing $u$,
$H$ and $\mathrm{K}_1$ are arbitrary countable abelian groups, and
$\mathrm{K}_0^+=\{0\}\cup\{g:\rho(g)>0\}$. The restriction of $\rho$ to $F$
is otherwise arbitrary. Every such invariant is realized in this class.

Moreover, each of $A$ and $B$ is isomorphic to an inductive limit of
finite reduced free products of measured commutative algebras with
full-support probability measures and spectra from the list
\[
 [0,1],\quad [0,1]\sqcup[0,1],\quad S^1,\quad S^2,
 \quad T_{\mathrm{II},m},\quad T_{\mathrm{III},m}\quad(m\ge2).
\]
Here $T_{\mathrm{II},m}$ and $T_{\mathrm{III},m}$ are the two- and
three-dimensional generalized projective spaces defined in
Section~\ref{sec:models}. The connecting maps are unital
trace-preserving embeddings. If both $\mathrm{K}$-groups of either algebra
are finitely generated, that algebra is isomorphic to a single such
finite free product.
If each family contains two distinct factors admitting Haar unitaries,
the same assertions hold for $P_{\mathcal X}$ and $P_{\mathcal Y}$
without adjoining $D$.
In particular, this last conclusion applies when each family contains
at least two nonatomic measures.
\end{introcorollary}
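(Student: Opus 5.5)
The plan is to deduce Corollary~\ref{thm:classification} from Theorem~\ref{thm:nuclear-classification}, since each $\mathrm C(X_i)$ is separable, unital, nuclear and satisfies the UCT, and $\tau_{\mu_i}$ is faithful exactly because $\mu_i$ has full support. The classification statement, the realization of unital morphisms by unital embeddings, and the upgrade to $^*$-isomorphisms then follow directly. Trace preservation is automatic, because the invariants are monotracial by the range part of Theorem~\ref{thm:nuclear-classification}, so the pulled-back trace must be the unique one. The Haar-unitary clause also transfers directly. If $\mu$ is nonatomic on a compact metrizable space, there is a Borel isomorphism mod null sets with $([0,1],\lambda)$, and one can take a continuous function $f\colon X\to S^1$ whose pushforward of $\mu$ is Haar measure: compose a continuous $g\colon X\to[0,1]$ with nonatomic distribution with the distribution function, which is continuous because the distribution is nonatomic. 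Then $e^{2\pi i F(g)}$ is a Haar unitary.

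The new work is the precise range. For necessity I would use the Fima--Germain (or Germain) computation of $\mathrm K$-theory of reduced free products of nuclear (K-amenable) algebras, which gives
\[
\mathrm K_0\Bigl(\fr_i A_i\Bigr)\cong\Bigl(\bigoplus_i \mathrm K_0(A_i)\Bigr)\Big/\langle [1_{A_i}]-[1_{A_j}]\rangle,\qquad \mathrm K_1\Bigl(\fr_i A_i\Bigr)\cong\bigoplus_i \mathrm K_1(A_i).
\]
Adjoining $D$ changes nothing, since $\mathrm C([0,1])$ is contractible. For $A_i=\mathrm C(X_i)$, evaluation at a point $x_i$ splits off the unit: $\mathrm K_0(\mathrm C(X_i))=\Z[1]\oplus\tilde{\mathrm K}_0(X_i)$. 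Further split $\tilde{\mathrm K}_0(X_i)$ as $F_i'\oplus H_i$, where $H_i=\ker\rho$ on it. The kernel of the trace on reduced $\mathrm K_0$ need not be a direct summand in general, so instead I would use the decomposition of $\mathrm K_0(\mathrm C(X))$ by components. Idempotents of $\mathrm C(X)$ give a free part $F$ (the classes of clopen sets modulo relations, which is free since it lies in $\mathrm C(X,\Z)$). Every class with rank zero at each point has trace zero, because $\tau$ integrates the pointwise rank. So $\rho$ factors through the rank map $\mathrm K_0\to \mathrm C(X,\Z)$, which is surjective onto a free group, and the sequence splits. This gives $\mathrm K_0=F\oplus H$ with $\rho(H)=0$ and $u\in F$ part of a basis ($\mathrm C(X,\Z)$ has a basis containing $1$, obtained by refining partitions). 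Gluing over $i$ keeps this form. The order $\mathrm K_0^+=\{0\}\cup\{\rho>0\}$ is already in the nuclear range.

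For sufficiency I would build the standard models, which also give the inductive-limit assertion:
\begin{itemize}
\item $[0,1]\sqcup[0,1]$ with weights $t,1-t$ contributes a free generator with any value of $\rho$ in $(0,1)$. Arbitrary real values are then reached by differences and sums of generators, since $F$ only needs a basis containing $u$, so a base change handles the values.
\item $S^1$ contributes $\Z$ to $\mathrm K_1$.
\item $S^2$ contributes $\Z$ to $H$ (the Bott class has trace zero).
\item The generalized projective spaces $T_{\mathrm{II},m}$ and $T_{\mathrm{III},m}$, i.e.\ Moore-space-type spaces, contribute $\Z/m$ to $\mathrm K_1$ and to $H$, respectively.
\end{itemize}
Finitely generated data are then a finite free product of these, and countable groups are inductive limits of finitely generated ones.

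The hard part is realizing the connecting maps by unital trace-preserving embeddings of finite free products. I would first realize each group homomorphism between finitely generated invariants by a unital morphism of Elliott invariants of the $D$-stabilized algebras. Theorem~\ref{thm:nuclear-classification} then supplies embeddings, and an intertwining argument together with the uniqueness part identifies the limit with $A$. The obstacle is that embeddings land in $F\fr D$ rather than in the bare finite product. I would absorb $D$ using $D\fr D\cong D$ and the Hirshberg--Phillips absorption, and in the Haar-unitary case drop it as Theorem~\ref{thm:nuclear-classification} allows. The main risk is this step, namely that the statement intends the factors of the limit to be genuinely bare finite products, with the $D$ absorbed into interval factors that are already on the list.
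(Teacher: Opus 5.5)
Your deduction of classification and morphism realization from Theorem~\ref{thm:nuclear-classification} matches the paper. So does your necessity argument for the range via the split rank map $\mathrm K_0(\mathrm C(X))\to\mathrm C(X,\Z)$ (Lemma~\ref{lem:rank-decomposition}).

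The first genuine gap is the realization half of the range. A finite or countable free product of blocks from the standard list has reduced $\mathrm K$-groups that are direct sums of cyclic groups, so it cannot have, for example, $\mathrm K_1\cong\Z[1/2]$ or $H\cong\mathbb Q$. Passing to an inductive limit $\varinjlim C_n$ does not help: the limit is not of the form $P_{\mathcal X}\fr D$, so it does not show the invariant is realized \emph{in this class}. The paper notes that arbitrary inductive systems of standard products need not even stay in the commutative range. What you are missing is an actual commutative factor carrying arbitrary countable $H_0$ and $\mathrm K_1$. The paper builds a connected compact metrizable $Y$ of dimension at most four as an inverse limit of the spaces $\Sigma W(H_{0,n})\vee W(H_{1,n})\vee[0,1]$. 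Here the $W(\cdot)$ are finite wedges of $S^3$'s and Moore complexes realizing finitely generated subgroups, and the result has $\mathrm K_0(\mathrm C(Y))=\Z[1]\oplus H_0$, $\mathrm K_1=H_1$ and rank pairing (Lemma~\ref{lem:connected-pointed-range}). Disconnected intervals and $S^2$ then supply the free part, and two Lebesgue intervals give absorption of $D$.

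The second gap is your identification of $\varinjlim(C_n,\varphi_n)$ with $A$ by ``an intertwining argument together with the uniqueness part.'' There is no such uniqueness part. Approximate unitary uniqueness is available only for nuclear sources (Theorem~\ref{thm:embedding}), the $C_n$ are not nuclear, and Example~\ref{ex:reservoir-nonuniqueness} shows uniqueness already fails for automorphisms of $D$. With arbitrary embeddings realizing the invariant maps, you have no control over the limit algebra. The paper avoids approximate intertwining altogether:
\begin{enumerate}
\item Each $C_n$ contains two Lebesgue-interval factors, so $C_n\cong C_n\fr D$ by Shlyakhtenko's absorption theorem. This also settles your worry about bare products.
\item Fusion gives marked isomorphisms $\psi_n:C_n\to N_n\fr D$ with $N_n$ nuclear range cores.
\item Nuclear existence gives embeddings $\nu_n:N_n\to N_{n+1}$, and one \emph{defines} $\varphi_n=\psi_{n+1}^{-1}(\nu_n\fr\id_D)\psi_n$.
\end{enumerate}
The limit is then exactly $(\varinjlim N_n)\fr D$. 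Uniqueness enters only through classification of the nuclear limit $\varinjlim N_n$, compared with the fused nuclear core of $A$ from Theorem~\ref{gf:general-fusion}.

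Two smaller points. You have swapped $T_{\mathrm{II},m}$ and $T_{\mathrm{III},m}$: $S^1\cup_mD^2$ contributes $\Z/m$ to $\mathrm K_0$, while $S^2\cup_mD^3$ contributes $\Z/m$ to $\mathrm K_1$. Also, for nonatomic $\mu$, the existence of a \emph{continuous} $g$ with atomless law is the real content of the Haar-unitary claim, and a Borel isomorphism mod null sets gives no continuity. The paper obtains $g$ from a random series $\sum_k2^{-k}a_kf_k$ of separating functions and a Fubini estimate on the collision set (Lemma~\ref{lem:continuous-Haar}).
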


The preceding existence statements do not give uniqueness of
embeddings up to approximate unitary equivalence.
Example~\ref{ex:reservoir-nonuniqueness} gives two automorphisms of the
interval reservoir inducing the same full totally compatible triple
that are not approximately unitarily equivalent.
The uniqueness theorem for embeddings below requires the source to
be nuclear.

For the commutative subclass there is no dimension, connectedness, torsion, or finite-generation
restriction on the spaces or their $\mathrm{K}$-groups.
The algebras $A$ and $B$ in Corollary~\ref{thm:classification}
are simple and monotracial,
with strict comparison. In particular,
\[
 \mathrm{K}_0(A)^+=\{0\}\cup\{g\in \mathrm{K}_0(A):\tau_{A*}(g)>0\},
 \qquad \mathrm{T}(A)=\{\tau_A\}.
\]
Thus a unital morphism of their Elliott invariants is precisely a pair
of group homomorphisms $\chi_i:\mathrm{K}_i(A)\to \mathrm{K}_i(B)$, $i=0,1$, with
\[
 \chi_0([1_A])=[1_B],\qquad \tau_{B*}\chi_0=\tau_{A*}.
\]
Positivity follows from the strict trace cones. Such a morphism is
an isomorphism exactly when both group maps are isomorphisms.
The invariant is intrinsic to the product algebra. Its calculation
from the individual factors is given in
Proposition~\ref{prop:product-elliott}.
The Haar-unitary condition has a separate role: it permits removal of
$D$ by Shlyakhtenko's free-absorption theorem \cite{Shlyakhtenko}.
Without the Haar-unitary hypothesis, these classification statements
retain $D$.

Shlyakhtenko \cite{Shlyakhtenko} proved that the $\mathrm{C}^*$-algebras
generated by free semicircular systems of cardinality
$n\in\{2,3,\ldots,\infty\}$ are mutually isomorphic.
His absorption theorem shows that a reduced free product of two
faithfully tracial algebras,
each containing a Haar unitary, absorbs any finite or countable
free semicircular family; the factors need not be nuclear.
Combined with Hirshberg--Phillips, this
also identifies all reduced free powers of the Jiang--Su algebra
with at least two factors. The semicircular absorption theorem is
the input used here to remove $D$ under the Haar-unitary hypothesis
\cite[Theorem~5.2]{Shlyakhtenko}.
In particular, two simple infinite-dimensional factors satisfy
the Haar-unitary hypothesis by \cite[Corollary~B]{ThielHaar}.

The classification also allows more general reservoirs
\begin{equation}\label{eq:reservoir-intro}
 (D,\tau_D)=\fr_{j=1}^{\infty}(R_j,\sigma_j).
\end{equation}
Here the blocks are separable unital nuclear UCT algebras with
faithful selected traces, the unit maps induce
$\mathrm K_0(R_j)=\Z[1_{R_j}]$ and $\mathrm K_1(R_j)=0$, and the
traces satisfy the atomic nonconcentration condition in
Definition~\ref{def:reservoir}. For example, the blocks may all be
Jiang--Su algebras. The nuclear-core argument is proved directly
for these reservoirs; Corollary~\ref{cor:reservoir-model} subsequently
identifies every such reservoir with the interval product.
The later fusion constructions use this interval presentation.

The proof of Theorem~\ref{thm:nuclear-classification} begins with
the case of a single nuclear core. We establish this case for the
general reservoirs above before passing to several nuclear factors
by fusion.

\begin{introtheorem}[$D$-stabilization of a nuclear core]
\label{thm:core-classification-intro}
Let $D$ be a reservoir in the sense of Definition~\ref{def:reservoir}, and let
$E,F$ be separable unital nuclear UCT algebras with faithful selected
tracial states $\tau_E,\tau_F$. Put $A=E\fr D$ and $B=F\fr D$.
Let
\[
 \chi_i:\mathrm K_i(E)\longrightarrow\mathrm K_i(F),\qquad i=0,1,
\]
be group homomorphisms satisfying
\[
 \chi_0([1_E])=[1_F],\qquad \tau_{F*}\chi_0=\tau_{E*}.
\]
This pair is induced, through the canonical core inclusions,
by a unital trace-preserving embedding $A\to B$. If both group
maps are isomorphisms, the realizing map may be chosen to be a
$^*$-isomorphism.
Equivalently, every unital morphism $\Ell(A)\to\Ell(B)$ is realized
by a unital embedding, and every isomorphism is realized by a
$^*$-isomorphism. In particular,
$A\cong B$ if and only if $\Ell(A)\cong\Ell(B)$.
No splitting condition on either unit class is required.

Moreover, the invariant range is exactly that described in
Theorem~\ref{thm:nuclear-classification}.
Every such invariant is realized by $F_0\fr D$, where $F_0$ is simple,
infinite-dimensional, unital, monotracial, nuclear, UCT and
$\mathcal Z$-stable.
\end{introtheorem}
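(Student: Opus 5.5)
The plan is an approximate intertwining built from an existence theorem and a uniqueness theorem for maps out of the nuclear core. The aim is to reduce everything to trace-preserving maps $\varphi:E\to B$ with $\varphi_*=\chi$ on $\mathrm K_*$, and then extend by freeness.

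\textbf{Step 1 (structure of $B$).} Using Avitzour's criterion together with the results of Dykema--Haagerup--R\o rdam and Hayes--Kunnawalkam Elayavalli--Robert, I would check that $B=F\fr D$ is simple, monotracial, of stable rank one, with strict comparison and selfless. The reservoir hypothesis $\mathrm K_*(R_j)=(\Z[1],0)$ gives, via the Fima--Germain exact sequence, that the core inclusion $F\to B$ is a $\mathrm{KK}$-equivalence on $\mathrm K_*$. This is a free-product analogue of the Pimsner--Voiculescu computation: $\mathrm K_*(B)\cong\mathrm K_*(F)$, and the positive cone is determined by $\tau_{F*}$. It follows that a pair $\chi$ satisfying the hypotheses is exactly a unital morphism of $\Ell(A)\to\Ell(B)$, which proves the stated equivalence.

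\textbf{Step 2 (existence).} Since $E$ is nuclear and satisfies the UCT, lift $\chi$ to an element $\kappa\in\KL(E,F)$ compatible with total $\mathrm K$-theory. Choosing $\kappa$ on torsion coefficients is possible because the UCT sequence splits. I would then realize the triple $(\kappa,\tau_E,$ a compatible map on $\Uquot{\cdot})$ by a unital $^*$-homomorphism $\varphi:E\to B$ with $\tau_B\circ\varphi=\tau_E$. Here I would use the classification machinery for nuclear sources into targets with stable rank one, strict comparison and a trace-kernel ideal, in the style of Carri\'on--Gabe--Schafhauser--Tikuisis--White. The required $\mathcal Z$-like regularity is provided by the ultrapower of $B$ relative to the trace-kernel ideal, using selflessness. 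Faithfulness of $\tau_E$ forces $\varphi$ to be injective. Freeness then extends $\varphi$ to $\Phi=\varphi\fr\iota:E\fr D\to B$. For this I need the reservoir copy $\iota(D)$ to be $*$-free from $\varphi(E)$. I would arrange this by sending $D$ onto a tail $\fr_{j>N}R_j$ that is free from everything and absorbing the finitely many remaining blocks into $\varphi$. Trace preservation together with freeness makes $\Phi$ a well-defined, injective, trace-preserving embedding.

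\textbf{Step 3 (uniqueness and intertwining).} Next I would show that two trace-preserving maps $E\to B$ with the same total $\mathrm K$-theory and the same algebraic $\mathrm K_1$ are approximately unitarily equivalent. This is where nuclearity of the source is used; Example~\ref{ex:reservoir-nonuniqueness} shows it fails for nonnuclear sources. For $\chi$ an isomorphism, I would then run the Elliott two-sided approximate intertwining. The delicate point is to make the intertwining compatible with the reservoir, so that the limit maps $D$-parts to free $D$-parts. I would handle this by enumerating the reservoir blocks, alternately mapping finite pieces $E\fr R_1\fr\cdots\fr R_n$ into $B$, and extending freely to fresh tail blocks at each stage. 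The reindexing isomorphism $D\fr D\cong D$ supplies unused tails throughout.

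\textbf{Main obstacle.} I expect the existence step to be the main difficulty, because $B$ is not nuclear, so the classification theorems cannot be applied off the shelf. The key is to verify the target hypotheses in Step 2: that the trace-kernel quotient $B_\omega/J_B$ is a II$_1$ factor ultrapower and that the relative commutant is large enough. Freeness with the tail of $D$ should supply the needed commuting Haar unitaries and the tracial divisibility.

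\textbf{Range.} For the range statement, the cone condition follows from strict comparison and monotraciality. For realization, given any $(\mathrm K_0,u,\rho,\mathrm K_1)$ of the stated form, I would take $F_0$ a simple, unital, monotracial, nuclear, UCT, $\mathcal Z$-stable algebra with this invariant, which exists by the range results in \cite{EGLN,GLN1}. Step 1 then gives $\Ell(F_0\fr D)=\Ell(F_0)$.
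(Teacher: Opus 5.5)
Your Step 1, your range argument, and your reduction to nuclear-source existence and uniqueness into a nonnuclear admissible target agree with the paper. The gap is in the steps that produce the embedding of $E\fr D$ and the isomorphism.

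\emph{Step 2.} The map $\Phi=\varphi\fr\iota$ is not defined. The image $\varphi(E)$ comes from an abstract existence theorem, so it is a generic subalgebra of $F\fr D$ and no tail of $D$ is free from it. There is also no reason for $\varphi$ to land in a finite head $F\fr D_{\le N}$, which need not even be an admissible target. Sending $D$ onto $D_{>N}$, or invoking $D\fr D\cong D$, requires identical blocks. For a general reservoir this is Corollary~\ref{cor:reservoir-model}, which the paper deduces from this very theorem, so using it here is circular.

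\emph{Step 3.} Your uniqueness theorem controls only maps out of $E$. A two-sided intertwining, however, must make $\Psi_n\Phi_n$ close to the identity on finite subsets of $E\fr D_{\le n}$, which is not nuclear. Example~\ref{ex:reservoir-nonuniqueness} shows that equal full invariants do not give approximate unitary equivalence once reservoir blocks are involved. So the compositions must return the head blocks exactly, and the unitary correcting the core must commute exactly with that head. The only available mechanism is to keep the core images and the correcting unitary in the complementary tail $E\fr D_{>r}$, so that the correction is the automorphism $\id_{D_{\le r}}\fr\Ad(u)$. This tail condition must then be maintained at every later stage. ``Fresh tail blocks'' does the opposite: the compositions move $R_1,\dots,R_n$ off themselves, and no correction on the core can repair that.

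The paper avoids both problems with two ideas: a common nuclear model and a \emph{one-sided} intertwining.
\begin{enumerate}
\item Take a simple monotracial nuclear $\mathcal Z$-stable range model $A$ (Lemma~\ref{lem:marked-range}) and a marked embedding $\theta:E\to A$. Since $A$ is free from $D$, $\theta\fr\id_D$ is automatically an embedding, identifying $E\fr D$ with $\mathrm C^*(\theta(E),D)\subset A\fr D$. This removes your freeness problem.
\item To show this inclusion is onto up to an automorphism, verify the property of Lemma~\ref{lem:marked-caap} using only automorphisms $\id_{D_{\le r}}\fr\Ad(u)$ with $u\in\mathrm U(A\fr D_{>r})$. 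These come from two uses of nuclear-source uniqueness: first realize $\rho:A\to E\fr D_{>r}$ with $\rho\theta\approx j_E$; then choose $u$ with $\Ad(u)j_A\approx(\theta\fr\id_{D_{>r}})\rho$, whose image lies in $\mathrm C^*(\theta(E),D)$ (Proposition~\ref{prop:marked-replacement}).
\item The isomorphism is $\Phi_F^{-1}\Phi_E$. A general morphism is $\Theta_F^{-1}(\theta'\fr\id_D)\Theta_E$ for an embedding $\theta'$ between two range models (Theorem~\ref{thm:marked-core}).
\end{enumerate}

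Your heuristic for the existence step also points the wrong way. Since $\pi_\tau(F\fr D)''$ is a full factor, central sequences are trivial and there are no commuting Haar unitaries to use. Section~\ref{sec:local} instead relies on:
\begin{itemize}
\item Connes' theorem at the II$_1$-factor quotient;
\item regularity of the trace-kernel ideal (separable stability and almost unperforation, hence the corona factorization property and absorption);
\item Szab\'o's $\mathrm{KL}$-uniqueness;
\item a reparameterization descent from $T_\omega$ to $T$.
\end{itemize}
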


The invariant here is that of the $D$-stabilized product.  It is
computed from the graded $\mathrm K$-groups of the core, its unit,
and its selected trace pairing; the original core may have other
traces.  The classification assertion follows from the marked
$D$-stabilization theorem, Theorem~\ref{thm:marked-core}.
Its marked form realizes every prescribed unital totally compatible
full-invariant morphism, with an isomorphism for an invertible morphism,
and can fix any identified finite reservoir-block prefix pointwise.
The source order and its other traces are not part of the selected data.

The methodological point of \cite{HP} is essential for the proof of
Theorem~\ref{thm:core-classification-intro}: Hirshberg and Phillips
apply uniqueness to nuclear cores and extend the correcting
automorphisms over free complements before approximate intertwining.
Our marked $D$-stabilization argument follows this strategy for
nuclear UCT cores with arbitrary $\mathrm K$-groups, preserving
the selected trace pairing and full-invariant markings.
Their first absorption theorem also allows nonexact background
factors; our reservoirs have nuclear UCT blocks with scalar pointed
$\mathrm K$-theory.
We answer their circle--annulus isomorphism questions in
Example~\ref{ex:hp-annulus} and, for exact targets, their uniqueness
question in Remark~\ref{rem:hp-uniqueness}.

The analytic foundation of
Theorem~\ref{thm:core-classification-intro} is the following
classification theorem for embeddings with nuclear source.
It applies to every separable
simple unital infinite-dimensional exact monotracial target with
stable rank one and strict comparison. Neither a free-product
presentation nor nuclearity or the UCT is required for the target.
At this level, ordinary $\mathrm{K}$-groups and traces must be supplemented by
total $\mathrm{K}$-theory and Hausdorffized stable unitary algebraic $\mathrm{K}_1$
(see also \cite{CGSTW}).
We express the full invariant through the totally compatible triples of
\cite[Section~6]{GLNHom}, including the coefficient-to-unitary maps
$\zeta_n^A$.

\begin{introtheorem}[Embeddings]\label{thm:embedding-intro}
Let $E$ be a separable unital nuclear UCT algebra with a faithful tracial
state $\tau_E$. Let $T$ be a separable simple unital infinite-dimensional
exact algebra with unique tracial state $\tau_T$, stable rank one, and
strict comparison.
Every unital totally compatible triple $(\kappa,\kappa_T,\kappa_\gamma)$
from $E$ to $T$ satisfying $\kappa_T(\tau_T)=\tau_E$ is induced by a
unital embedding $E\to T$.
Two such embeddings are approximately unitarily equivalent if and only
if their full-invariant morphisms agree.
\end{introtheorem}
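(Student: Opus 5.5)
The plan is to deduce Theorem~\ref{thm:embedding-intro} from the Gong--Lin--Niu classification of homomorphisms from nuclear UCT algebras into $\mathcal Z$-stable-type targets \cite{GLNHom}. The work consists in supplying, for the target $T$, the regularity that their existence and uniqueness theorems require. We separate uniqueness from existence and treat uniqueness first, since the existence argument uses it in an approximate intertwining.

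For \emph{uniqueness}, take two unital embeddings $\varphi,\psi:E\to T$ with the same full-invariant morphism. Because $T$ is monotracial and $\varphi,\psi$ are injective, the pulled-back trace $\tau_T\circ\varphi$ is a faithful trace on $E$. This gives the uniform fullness needed in the stable uniqueness theorem: for every nonzero positive $a\in E$, the value $\tau_T(\varphi(a))$ is bounded below on finite sets. Combined with strict comparison, this gives control of the form $\varphi(a)$ Cuntz-dominating small positive elements.

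The main step is to replace the $\mathcal Z$-stability hypothesis of \cite{GLNHom} by stable rank one, strict comparison and exactness together with simplicity and infinite-dimensionality. Exactness plus monotraciality gives that quasitraces are traces (Haagerup). Strict comparison and stable rank one then identify $\Cu(T)$ with $V(T)\sqcup \mathrm{LAff}_{++}$ of the single trace, which is the Cuntz semigroup of $T\otimes\mathcal Z$. I would use this to produce the tracially approximately divisible "room" in $T$ needed to absorb the error maps in the stable uniqueness theorem: existence of order-zero maps from matrix algebras with small complement in trace, via Robert's comparison-based arguments. Then I would run the standard argument. Agreement on $\ulK$ (equivalently $\KL$) gives stable approximate unitary equivalence by Dadarlat--Eilers / Lin's stable uniqueness, with the absorbing map taken as a point-evaluation-type map that is available since $E$ is nuclear and $T$ has enough divisible room. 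Agreement on $\mathrm U/\mathrm{CU}$ together with the trace then removes the rotation obstruction via the determinant / de la Harpe--Skandalis argument, and stable rank one gives $\mathrm U(T)/\mathrm{CU}(T)\cong \Uquot{T}$. I expect this replacement of $\mathcal Z$-stability to be the \textbf{main obstacle}. The key lemma to prove is that for finite $\mathcal F\subset E$ and $\epsilon>0$, $\varphi$ is approximately unitarily equivalent to a map of the form $\varphi_0\oplus\varphi_1$ with $\varphi_1$ factoring through a finite-dimensional algebra with large trace. I would first try to obtain this from Robert's selfless-style division, or directly from the $\Cu$ identification, using a Toms--Winter style argument that strict comparison plus stable rank one gives tracial $\mathcal Z$-absorption for maps from nuclear sources.

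For \emph{existence}, fix a totally compatible triple $(\kappa,\kappa_T,\kappa_\gamma)$. I would realize $\kappa\in\KL(E,T)$, which the UCT allows, as a sequence of asymptotically multiplicative c.p.c.\ maps, and realize the trace component by a model map through a $\mathcal Z$-stable AH/Razak-type building block whose trace is pulled back to $\tau_E$. The model map is built from the GLN range theorem applied to the monotracial invariant of $T$. The rotation part $\kappa_\gamma$ is corrected by adding unitary-rotation homomorphisms. This produces approximate multiplicative maps with the prescribed invariants on finite pieces. The uniqueness theorem above, in its approximate (finite-set) form, then supplies connecting unitaries, and an Elliott approximate intertwining yields a genuine homomorphism inducing the triple. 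Injectivity follows since the induced trace $\tau_E$ is faithful and $T$ is simple.
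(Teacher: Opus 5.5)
Your plan follows the Gong--Lin--Niu stable-uniqueness strategy, which is a different route from the paper's. The step you flag as the main obstacle is a genuine gap, and your key lemma, as formulated, is false for the targets this theorem is designed for.

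Take $T=D_I$, which is admissible by Lemma~\ref{lem:interval-regularity}. Since $\mathrm K_0(D_I)=\Z[1]$ and the trace is faithful, $D_I$ has no projections other than $0$ and $1$. An orthogonal decomposition $\varphi\au\varphi_0\oplus\varphi_1$ inside $T$ forces $\varphi_0(1)$ and $\varphi_1(1)$ to be approximately complementary projections, hence close to $0$ and $1$. Moreover, a unital homomorphism from a finite-dimensional algebra into $D_I$ factors through a character. So for $E=\mathrm C([0,1])$ with Lebesgue measure, no decomposition with a finite-dimensional summand of large but non-full trace exists.

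The point-evaluation absorbing maps exist only in $\mathrm M_k(T)$. Dadarlat--Eilers stable uniqueness therefore leaves you with $\varphi\oplus\sigma\au\psi\oplus\sigma$ and no way to cancel $\sigma$. Replacing finite-dimensional algebras by dimension-drop or Razak blocks does not help by itself. In the $\mathcal Z$-stable setting that room comes from approximately central copies of such blocks, that is, from $T\cong T\otimes\mathcal Z$, and this is exactly what is missing here.

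The identification $\Cu(T)\cong\Cu(T\otimes\mathcal Z)$ is information about $T$ alone and produces no central sequences. Strict comparison plus stable rank one does not give $\mathcal Z$-stability outside the nuclear setting. For instance, $D_I$ and $\mathrm C^*_{\red}(\mathbb F_2)$ are admissible, but their tracial von Neumann algebras are full factors. Hence they are not even tracially $\mathcal Z$-absorbing, since that would give approximately central, trace-almost-unital order-zero images of $\mathrm M_n$ and therefore property Gamma.

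Whatever divisibility might exist relative to $\varphi(E)$ would need a real argument, and the proposal does not supply one. Your existence step inherits the gap: it feeds the uniqueness theorem into an intertwining, and it again adds a $\KL$-realizing summand orthogonally to a trace-model summand inside $T$. Your observation that a faithful pulled-back trace plus strict comparison gives fullness is correct; the paper uses it as Lemma~\ref{lem:trace-kernel-regularity}(iv).

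The paper never splits off an absorbing summand inside $T$; it gets the room elsewhere.
\begin{enumerate}
\item It works in the norm ultrapower with the trace-kernel extension $0\to J\to T_\omega\to T^\omega\to0$. Strict comparison and stable rank one pass uniformly to $T_\omega$. They make $J$ separably stable with almost unperforated Cuntz semigroup, hence give it the corona factorization property (Lemma~\ref{lem:trace-kernel-regularity}).
\item For existence, it embeds $E$ trace-preservingly into the $\mathrm{II}_1$ factor $T^\omega$, using hyperfiniteness of $\pi_{\tau_E}(E)''$. It lifts that embedding with a prescribed $\KK$-class via \cite[Theorem~5.16]{CGSTW} and Gabe's absorption criterion.
\item It corrects the algebraic $\mathrm K_1$ data through $\Omega_T:\mathrm K_1(J)\cong\Uquot{T_\omega}$ and the rotation map of Proposition~\ref{prop:relative-rotation}.
\item For uniqueness, it uses Szab\'o's $\KL$-uniqueness for absorbing representations (Theorem~\ref{thm:ultrapower-full-invariant}).
\item It descends to $T$ itself in Proposition~\ref{prop:descent-prescribed-invariant}, using coefficient $\mathrm K$-groups that commute with the ultrapower, fixed-size commutator witnesses, and intertwining by reparameterization.
\end{enumerate}
The room is the stability of $J$, not divisibility inside $T$, which is why the argument covers full-factor targets such as $D_I$.
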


The precise statement is Theorem~\ref{thm:embedding}.
Its proof combines the trace-kernel ideal methods of \cite{KES},
the invariant and lifting framework of \cite{CGSTW}, and the
$\mathrm{KK}$-uniqueness theorem of \cite{Szabo}.
Working in $\mathrm{KL}$ accounts for pure-extension classes when the source
$\mathrm{K}$-groups are infinitely generated.
The maps $\zeta_n$ retain the compatibility between coefficient
classes and unitary data in the presence of torsion.
Coefficient homotopies and commutator witnesses are preserved through
descent from the norm ultrapower to the original target.

These existence and uniqueness results supply the embeddings and
unitary corrections for marked nuclear-core replacement.
The passage to an isomorphism uses the
$\mathrm C^*$-automorphism approximation property~\eqref{eq:caap} and its
isomorphism criterion \cite[Proposition~1.2]{Shlyakhtenko}:
trace-preserving automorphisms nearly fix a prescribed finite subset
of the source image while moving that image close to prescribed
target elements. Our marked version requires every correcting
automorphism to act identically on the full invariant.
Two applications of nuclear-source uniqueness, with the corrections
extended over free complements, verify this property for core
replacement. The resulting marked isomorphisms give the isomorphism
assertion in Theorem~\ref{thm:core-classification-intro}.
For a general invariant morphism, replace the source and target
separately by stabilizations of simple monotracial nuclear range cores.
Their native core inclusions identify the full invariants.
Nuclear-source existence realizes the transported morphism between
the two range cores; free extension by $\id_D$ and conjugation by
the replacement isomorphisms give the required embedding.

Fusion then provides the passage from
Theorem~\ref{thm:core-classification-intro} to
Theorem~\ref{thm:nuclear-classification}: it replaces a finite or
countable family of nuclear factors by one nuclear core with the
same pointed graded $\mathrm K$-groups and selected pairing.

For the fusion of two factors, we first treat the case in which
their $\mathrm K_1$-groups vanish. A matrix enlargement gives
\[
 \mathrm M_2(A_1\fr A_2)
 \cong (A_1\oplus A_2)*_{\C^2}\mathrm M_2,
\]
with the canonical traces and expectations.
The direct sum $A_1\oplus A_2$ is nuclear, and the remaining
attachment is finite dimensional. The embedding theorem supplies
the coefficient maps needed to compare this amalgamated product
with a matrix algebra over a nuclear range model.

To turn this comparison into an isomorphism, we construct a map
back to the source and a correcting automorphism of the target.
The same map must satisfy two estimates: it must approximately
invert the comparison embedding on a prescribed source set,
and its composition with the corrected embedding must approximate
a prescribed target set. Independent parts of the reservoir make
these requirements compatible. Marked automorphism approximation
then gives the amplified isomorphism; correction and compression
of the distinguished corner yield fusion of the original factors.

The split-unit argument of Section~\ref{app:split-unit}, including
its Gaussian matrix model \cite{Dykema,NSS}, separates arbitrary
$\mathrm K_1$ and reduces the general case to the preceding
construction. Iteration gives finite fusion. For countably many
factors, each successive fusion is chosen to approximate the
preceding identification on an increasing finite set.
Summable estimates for both the maps and their inverses ensure
that the limiting map is onto.

Throughout the construction we retain compatible total
$\mathrm K$-theory and unitary data and can fix a finite external
reservoir head; Theorem~\ref{gf:general-fusion} states this marked
form of fusion. Such control of the maps does not enlarge the
final algebra invariant, since unit- and trace-preserving graded
$\mathrm K$-isomorphisms admit compatible full extensions
\cite{CGSTW}. Section~\ref{sec:fusion-outline} gives a fuller
outline of the construction.
Concrete AF and Moore-space models are discussed in
Section~\ref{sec:fusion-models}.

The general range assertions in
Theorems~\ref{thm:nuclear-classification} and~\ref{thm:core-classification-intro}
are proved in Theorem~\ref{thm:nuclear-range}.
For the commutative subclass, Theorem~\ref{thm:invariant-range}
imposes a condition on the pairing,
beyond the splitting of the unit: the pairing must factor through a
free abelian direct summand containing the unit as a basis element.
The standard-model assertions are proved in
Theorem~\ref{thm:standard-models} and
Corollary~\ref{cor:finite-standard-model}. The spectra have dimension
at most three, and the interval-component masses record the trace
pairing. The connecting embeddings are obtained by fusion and nuclear
existence.

As a further application, the selected trace pairing determines the
exponential length of these free products. For a unital algebra $B$,
write $\cel(B)$ for the supremum of the infimal lengths of unitary
paths from $1$ to elements of $\mathrm{U}_0(B)$, and $\cucu(B)$ for the same
supremum restricted to the closed commutator subgroup. Exponential
rank $1+\varepsilon$ means that single exponentials are dense in
$\mathrm{U}_0(B)$ but do not exhaust it; see Section~\ref{sec:exponential}.

\begin{introtheorem}[Exponential rank and exponential length]\label{thm:exponential-intro}
Let $\mathcal A$ be any nonempty finite or countable family of
separable unital nuclear UCT algebras with faithful selected traces,
put $A=P_{\mathcal A}\fr D$, and let
$H=\tau_{A*}(\mathrm{K}_0(A))\subseteq\R$.
Then
\[
 \cel(A)=\cucu(A)=
 \begin{cases}
 2\pi,&H\text{ is discrete},\\
 \pi,&H\text{ is dense in }\R,
 \end{cases}
 \qquad \cer(A)=1+\varepsilon.
\]
These are the only two possibilities for the length.
No splitting condition on the unit classes is needed.
The same assertions hold for $P_{\mathcal A}$ whenever two distinct
factors admit Haar unitaries, and all the displayed values are unchanged
by matrix amplification.
\end{introtheorem}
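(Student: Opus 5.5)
We reduce to a single $D$-stabilized simple monotracial nuclear model and compute the lengths there. By Theorem~\ref{thm:nuclear-classification}, $A\cong F\fr D$ for a nuclear UCT core $F$ with the same pointed graded $\mathrm K$-groups and selected pairing. By the range assertion, $A\cong F_0\fr D$ with $F_0$ simple, infinite-dimensional, monotracial, nuclear, UCT and $\mathcal Z$-stable. Under the Haar-unitary hypothesis, Shlyakhtenko absorption \cite[Theorem~5.2]{Shlyakhtenko} gives $P_{\mathcal A}\cong P_{\mathcal A}\fr D$, so the same conclusions follow. Matrix amplification leaves $\tau_*(\mathrm K_0)$ unchanged, and $\mathrm M_n(A)$ is again a $D$-stabilized product of nuclear factors, with $\mathrm M_n(P_{\mathcal A})\fr D$-type identifications obtained by fusion. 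So the whole theorem comes down to showing that $\cel$, $\cucu$ and $\cer$ of $A$ depend only on whether $H$ is discrete or dense.

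\emph{Lower bounds.} Since $A$ is simple, monotracial, with stable rank one and strict comparison, the de la Harpe--Skandalis determinant
\[
\Delta:\mathrm U_0(A)\to\R/H
\]
is defined, and its kernel is $\overline{\CU(A)}$. A unitary path of length $L$ from $1$ changes the determinant by at most $L/2\pi$ modulo $H$. When $H$ is dense this gives nothing, and the lower bound must come from a commutator unitary. Take a unitary $u$ with spectrum the whole circle and spectral measure Haar measure, for example a Haar unitary in a reservoir interval factor, and conjugate it to $-u$ inside $\CU(A)$. The standard spectral argument (any $e^{ih}$ close to $u$ forces $\|h\|\ge\pi$) then shows $\cucu(A)\ge\pi$. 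When $H=\Z\cdot\tau(p)$ is discrete, I would use the determinant with values in $\R/H$ combined with the rotation trick. A unitary of the form $e^{2\pi i t}$ on a projection of trace in $H$ yields unitaries in $\overline{\CU(A)}$ needing length close to $2\pi$; this uses that the fine class of the projection generates a discrete group.

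\emph{Upper bounds and $\cer$.} Every $u\in\mathrm U_0(A)$ is approximated by unitaries with finite spectrum, because stable rank one and strict comparison give real rank-zero-like behavior at the level of unitaries via $\mathcal Z$-stability of $F_0$ and the embedding Theorem~\ref{thm:embedding-intro}. After multiplying by a small exponential, the determinant can be corrected using projections whose trace values lie in $H$. If $H$ is dense, any determinant can be approximated by eigenvalue redistribution with length at most $\pi+\varepsilon$. If $H$ is discrete, length at most $2\pi+\varepsilon$ suffices. This is the Lin-type estimate for simple $\mathcal Z$-stable algebras of tracial rank zero, transferred through embeddings of $F_0$ and unitary corrections from the uniqueness part of Theorem~\ref{thm:embedding-intro}. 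Density of single exponentials gives $\cer\le 1+\varepsilon$. A unitary with full-circle spectrum and nonzero determinant obstruction shows that not every unitary is an exponential, so $\cer\ne 1$.

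The main obstacle is the upper bound. $A$ is not nuclear, so Lin's theorems do not apply directly, and one must show that every unitary of $A$ is approximately conjugate into the image of a nuclear subalgebra where the estimate holds, with control on path length. I would first try to exhaust $A$ by $D$-stabilized finite subproducts and use the reservoir's Haar unitaries to implement the needed rotations.
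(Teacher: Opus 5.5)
Your reduction to $F_0\fr D$ is legitimate but buys nothing. $F_0\fr D$ is still nonnuclear, and neither $\mathcal Z$-stability nor any tracial-rank property of $F_0$ passes to it; the paper simply treats $A$ as an admissible target. Your dense-case lower bound is essentially right: for dense $H$ the Thomsen sequence and stable rank one give $\mathrm U_0(A)=\CU(A)$, and any path from $1$ to a unitary with $-1$ in its spectrum, such as $e^{2\pi i(x_1-1/2)}$, has length at least $\pi$.

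The upper bound has a genuine gap, because finite-spectrum approximation is impossible when $H$ is discrete. If $H=\Z$, faithfulness of the trace makes $A$ projectionless, since a projection has trace $0$ or $1$. Finite-spectrum unitaries are then scalars, and a Haar unitary is at distance $2$ from all of them.

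What is needed is the approximate conjugacy into a nuclear subalgebra that you flag at the end, carried out via the uniqueness half of Theorem~\ref{thm:embedding} for the source $\mathrm C(\mathbb T)$. Two such embeddings into an admissible target with the same spectral distribution and the same determinant class in $\R/\overline H$ induce the same totally compatible triple. Indeed, the $\mathrm K$- and coefficient maps are forced, and $\Uquot{\mathrm C(\mathbb T)}$ is generated by the Thomsen kernel and $[z]$. Hence the two embeddings are approximately unitarily equivalent.

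Concretely, given $u$ with full spectrum:
\begin{enumerate}
\item lift its spectral measure by a Borel argument to $[0,2\pi]$;
\item move a fraction $t$ of the mass down by $2\pi$ so that the normalized mean matches the determinant of $u$;
\item realize this faithful measure on $[-2\pi,2\pi]$ by the existence half of the same theorem.
\end{enumerate}
This gives $v=e^{ih}$ with $\|h\|\le2\pi$ approximately conjugate to $u$, hence $\cel\le2\pi$ and $\cer\le1+\varepsilon$ with no real rank zero at all (Proposition~\ref{prop:bounded-logarithms}). The sharper bound $\pi$ for dense $H$ needs genuine real rank zero of $A$ itself. This follows from R\o rdam's criterion (simple, exact, stable rank one, almost unperforated, dense $\mathrm K_0$-image in $\Aff\mathrm T(A)$), then Lin's exponential approximation and reduction of finite-spectrum logarithms into $[-\pi,\pi]$.

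Your discrete lower bound also fails as proposed. A rotation on a projection has finite spectrum, so its principal logarithm has norm at most $\pi$. No finite-spectrum unitary can have exponential length near $2\pi$, and for $H=\Z$ there are no nontrivial projections anyway. The witnesses must have spectrum essentially filling the circle, and discreteness must enter through periods of the determinant.

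The paper takes a Haar unitary $b$, a small $t\notin H$, and $u_n=e^{ih_n}$ with $h_n=2\pi(t-f_n(b))$ traceless. Then $u_n\in\CU(A)$ while $u_n\to e^{2\pi it}1$ in $2$-norm. Suppose paths from $1$ to $u_n$ had lengths at most $\ell<2\pi(1-t)$. Their determinant integrals would lie in the discrete group $H$, hence eventually equal a constant $c$. Subdividing into boundedly many principal logarithms and passing to the tracial ultrapower gives a path from $1$ to the scalar $e^{2\pi it}$ of length at most $\ell+\eta$ with integral $c$. Closing it with the scalar arc gives a loop of length less than $2\pi$ with integral $c-t\ne0$, contradicting Lemma~\ref{lem:short-loop-exp}.

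Finally, your argument for $\cer\ne1$ is incorrect. The determinant never obstructs being a single exponential, since $e^{ih}$ has determinant $\tau(h)/2\pi$, which takes every value. Full spectrum is no obstruction either, as $e^{2\pi ix}\in\mathrm C([0,1])$ shows. One needs a concrete unitary without a self-adjoint logarithm. The paper uses $w=a_1a_2$, a product of two interval Haar unitaries, and shows by a reduced-word computation of the trace-preserving expectation onto $\mathrm W^*(w)$ that $A\cap\{w\}'=\mathrm C^*(w)$. A logarithm would then be $f(w)$ with $e^{if(z)}=z$ on $\mathbb T$, which is impossible by winding number (Lemma~\ref{lem:interval-no-log}).
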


In the commutative subclass, connected spectra always give length $2\pi$.
For disconnected spectra, irrational values of the trace pairing can
give length $\pi$. The rank remains $1+\varepsilon$ in both cases.
The $2\pi$ upper bound follows from the embedding theorem by lifting
the spectral distribution of a unitary to an interval and matching its
determinant. Dense trace range gives real rank zero and hence the
sharper $\pi$ bound. Two interval factors provide a unitary whose ambient
commutant rules out an exact logarithm.

Section~\ref{sec:prelim} fixes the notation and invariants.
Sections~\ref{sec:local} and~\ref{sec:marked-core} establish the
embedding and marked replacement theorems.
Section~\ref{sec:fusion-outline} outlines the fusion argument.
Sections~\ref{sec:relative-logs} and~\ref{sec:fd-relative} establish
the logarithm comparison and finite-dimensional relative reservoirs.
Section~\ref{app:split-unit} proves split-unit fusion independently.
Section~\ref{sec:general-fusion} proves general fusion, and
Section~\ref{sec:fusion-models} completes
Theorem~\ref{thm:nuclear-classification} and records concrete models.
Section~\ref{sec:range} gives the general range and the sharper
commutative range with standard models. Exponential rank and exponential
length are computed in Section~\ref{sec:exponential}; examples follow
in Section~\ref{sec:examples}.

\medskip
\noindent\textbf{Acknowledgements.}
The work was carried out using ChatGPT.
The first author is supported by an NSERC Discovery Grant,
the second author is supported by an NNSF Grant of China
(No.~12671150), and the third author is supported by a
Simons Foundation Grant (MP-TSM-00002606).

\section{Preliminaries}\label{sec:prelim}

All algebras are separable and unital, unless stated otherwise.
Selected traces are faithful unless explicitly stated otherwise.
Free products are reduced free
products with respect to the displayed traces, and are amalgamated
over the scalar units unless an amalgamation algebra is indicated.
All isomorphisms of traced algebras preserve their selected traces.
For a unital algebra $B$, let $\mathrm{T}(B)$ denote its tracial state space and put
\[
 \rho_B:\mathrm{K}_0(B)\longrightarrow\Aff \mathrm{T}(B),\qquad
 \rho_B([p]-[q])(\tau)=(\tau\otimes\Tr)(p)-(\tau\otimes\Tr)(q).
\]
In $\mathrm K_0$-pairings, matrix traces are unnormalized.
For a selected trace $\tau_B$, the selected state on $\mathrm M_n(B)$
is $\tau_B\otimes\tr_n$, where $\tr_n=n^{-1}\Tr$.
Conditional expectations are denoted
by $\E$, with a subscript specifying the range when needed.
We use the Elliott invariant
\[
 \Ell(B)=(\mathrm{K}_0(B),\mathrm{K}_0(B)^+,[1_B],\mathrm{K}_1(B),\mathrm{T}(B),\rho_B).
\]
For a faithfully tracial core $(E,\tau_E)$, the comparisons below
use its pointed groups and selected trace pairing
\begin{equation}\label{eq:selected-data}
 (\mathrm{K}_0(E),[1_E],\mathrm{K}_1(E),\tau_{E*}).
\end{equation}
Proposition~\ref{prop:product-elliott} explains how these data
compute $\Ell(P\fr D)$ from its nuclear factors.
The free-product trace and its pairing are always the selected ones.
The reservoir $D$ is as in \eqref{eq:reservoir-intro}, with its
chosen block presentation. Write
\[
 D_{\le r}=\fr_{j=1}^{r}(R_j,\sigma_j),\qquad
 D_{>r}=\fr_{j>r}(R_j,\sigma_j),\qquad D_{\le0}=\C.
\]
A finite block head and its complementary tail always refer to this
presentation until the interval convention following
Corollary~\ref{cor:reservoir-model} is adopted.
We also write $I=(\mathrm C([0,1]),\lambda)$ and $D_I=I^{\fr\infty}$.

\begin{definition}[Reservoirs]\label{def:reservoir}
For a unital algebra $R$ with faithful tracial state $\sigma$, let
$\mu_{\sigma,a}$ denote the spectral distribution of $a=a^*\in R$, and put
\[
 \operatorname{At}(R,\sigma)
 =\inf_{a=a^*\in R}\sup_{t\in\operatorname{sp}(a)}
       \mu_{\sigma,a}(\{t\}).
\]
A reservoir in this paper is the reduced free product
\eqref{eq:reservoir-intro} of separable unital nuclear UCT blocks with
faithful selected traces, such that the unit maps
$\Z\to\mathrm K_0(R_j)$ are isomorphisms, $\mathrm K_1(R_j)=0$, and
\begin{equation}\label{eq:reservoir-atom}
 \sum_j\bigl(1-\operatorname{At}(R_j,\sigma_j)\bigr)=\infty.
\end{equation}
The atom condition is that of \cite[Definition~2.5 and Lemma~2.6]{HP}.
It persists after removing finitely many blocks.
\end{definition}

For the support-shifting argument we use a splitting
\begin{equation}\label{eq:split-unit}
 \mathrm{K}_0(E)=\Z[1_E]\oplus G_E.
\end{equation}
Equivalently, the unit class admits a group retraction to $\Z$.
Evaluation at a point supplies this splitting for every $\mathrm{C}(X)$.
The condition concerns a direct summand, and does not require the
remaining group to be free or finitely generated.
For a unital algebra $B$, let $\mathrm{DU}(B)$ denote the subgroup
generated by commutators in $\mathrm U_0(B)$, and let $\CU(B)$ be
its norm closure, as in \cite[Definition~2.1]{GLNHom}.

\subsection{\texorpdfstring{Total \(\mathrm{K}\)-theory and unitary quotients}{Total K-theory and unitary quotients}}
We use total \(\mathrm K\)-theory and its Bockstein operations as in
\cite[Section~4.1]{DG}; see also \cite[Definition~6.1]{GLNHom} for the
coefficient groups. For the coefficient algebras, we use the convention
of \cite[Sections~2.3 and~3.1]{CGSTW}. Thus
\[
 \ulK(B)=\mathrm{K}_0(B)\oplus \mathrm{K}_1(B)
 \oplus\bigoplus_{m\geq2}\bigl(\mathrm{K}_0(B;\Z/m)\oplus \mathrm{K}_1(B;\Z/m)\bigr).
\]
A \(\Lambda\)-morphism respects the grading and all coefficient and
Bockstein maps. For a separable UCT source \(A\), the universal
multicoefficient theorem \cite{DL} identifies
\begin{equation}\label{eq:umct}
 \KL(A,B)\cong\Hom_\Lambda(\ulK(A),\ulK(B)).
\end{equation}
When a target ideal is not \(\sigma\)-unital, we use the
separable-subalgebra formulation of \(\KK\) and \(\KL\) in
\cite[Appendix~B]{CGSTW}; the corresponding multicoefficient theorem
is \cite[Theorem~8.5]{CGSTW}. Every class and each of its countably many
witnesses is represented inside an appropriate separable subalgebra.
We specify the enlargements needed when using this convention.

Give \(\mathrm U_\infty(B)=\bigcup_m\mathrm U(\mathrm M_m(B))\) its
topological inductive-limit topology, and put
\[
 \mathrm{DU}_\infty(B)=\bigcup_m\mathrm{DU}(\mathrm M_m(B)),\qquad
 \mathrm{CU}_\infty(B)=\overline{\mathrm{DU}_\infty(B)}.
\]
The closure is taken in that topology. After stabilization it agrees
with the closure obtained from commutators of connected unitaries
\cite[Section~1]{ThomsenTraces}. We use the Hausdorffized stable
unitary algebraic \(\mathrm K_1\)-group \(\Uquot{B}\) of
\cite[Section~3]{ThomsenTraces}, with the notation of
\cite[Definitions~2.4 and~2.7]{GLNHom}, and write \(\bar u\) for the
class of a stable unitary \(u\). A unital homomorphism \(\phi:A\to B\)
induces the map
\[
 \phi^\ddagger:\Uquot{A}\longrightarrow\Uquot{B}.
\]
The determinant exact sequence \cite[Corollary~3.3]{ThomsenTraces} is
\begin{equation}\label{eq:thomsen}
 0\longrightarrow\Aff\mathrm T(B)/\overline{\rho_B(\mathrm K_0(B))}
 \xrightarrow{\mathrm{Th}_B}\Uquot{B}
 \xrightarrow{\Pi_B^{\mathrm{cu}}}\mathrm K_1(B)\longrightarrow0.
\end{equation}
Here \(\Pi_B^{\mathrm{cu}}\) is the ordinary \(\mathrm K_1\)-class map;
we also write \(\mathrm{Th}_B\) for its determinant-kernel inclusion
composed with the quotient map from \(\Aff\mathrm T(B)\).
If \(B\) has stable rank one, the natural map
\begin{equation}\label{eq:unstable-alg}
 \mathrm U(B)/\CU(B)\longrightarrow\Uquot{B}
\end{equation}
is bijective, and the same holds for every matrix algebra over \(B\)
\cite[discussion following Definition~2.6]{GLNHom}.
The fixed matrix size in this observation
is important in the descent argument.

Following \cite[Section~6]{GLNHom}, we describe a morphism of the full
invariant from \(A\) to \(B\) by a \emph{totally compatible triple}
\[
 (\kappa,\kappa_T,\kappa_\gamma).
\]
Here \(\kappa:\ulK(A)\to\ulK(B)\) is a unital \(\Lambda\)-morphism,
\(\kappa_T:\mathrm T(B)\to\mathrm T(A)\) is continuous affine, and
\[
 \kappa_\gamma:\Uquot{A}\longrightarrow\Uquot{B}
\]
is a group homomorphism. We write
\(\kappa_\sharp:\Aff\mathrm T(A)\to\Aff\mathrm T(B)\) for the unital
positive map \(f\mapsto f\circ\kappa_T\). Let \(\kappa_i\) and
\(\kappa_{i,m}\) denote the restrictions of \(\kappa\) to ordinary
and coefficient \(\mathrm K_i\)-groups, respectively. Total compatibility
means
\begin{align*}
 \kappa_\sharp\rho_A&=\rho_B\kappa_0,
 &\kappa_\gamma\mathrm{Th}_A&=\mathrm{Th}_B\kappa_\sharp,\\
 \Pi_B^{\mathrm{cu}}\kappa_\gamma&=\kappa_1\Pi_A^{\mathrm{cu}},
 &\kappa_\gamma\zeta_m^A&=\zeta_m^B\kappa_{0,m}\quad(m\ge2),
\end{align*}
where
\[
 \zeta_m^A:\mathrm K_0(A;\Z/m)\longrightarrow\Uquot{A}
\]
are the natural coefficient-to-unitary maps of
\cite[Remark~6.6]{GLNHom}. Write
\(\mu_m^A:\mathrm K_0(A)\to\mathrm K_0(A;\Z/m)\) for the
coefficient map and
\(\nu_m^A:\mathrm K_0(A;\Z/m)\to\mathrm K_1(A)\) for the
Bockstein map. With our orientation convention, these maps satisfy
\begin{equation}\label{eq:coefficient-normalization}
 \Pi_A^{\mathrm{cu}}\zeta_m^A=\nu_m^A,
 \qquad
 \zeta_m^A\mu_m^A
   =\mathrm{Th}_A\circ\bigl(m^{-1}\rho_A\bigr);
\end{equation}
see \cite[Propositions~3.1 and~3.2]{CGSTW}.
A compatible \(\kappa_\gamma\) is
automatically continuous \cite[Proposition~2.14]{CGSTW}.
Under \eqref{eq:umct}, we identify \(\kappa\) with its class in
\(\KL(A,B)\) whenever the source satisfies the UCT.
For a unital homomorphism \(\phi:A\to B\), the induced triple is
\[
 ([\phi],\phi_T,\phi^\ddagger),
\]
where \([\phi]\) denotes the induced \(\Lambda\)-morphism, identified
with \(\KL(\phi)\) when \eqref{eq:umct} applies. Composition of triples
uses the usual order on total \(\mathrm K\)-theory and unitary quotients,
and the reverse order on trace spaces. Componentwise injectivity or
surjectivity refers to the covariant maps on total \(\mathrm K\)-theory,
unitary quotients, and affine trace functions.

\begin{remark}[Total compatibility]\label{rem:total-invariant-gln}
In \cite[Definition~6.3]{GLNHom}, the enlarged triple
\((\kappa,\kappa_T,\kappa_\Gamma)\) also records the unitary maps for
the coefficient algebras. These additional maps are determined by the
basic triple under total compatibility
\cite[Proposition~6.4(3)]{GLNHom}. By \cite[Remark~6.6]{GLNHom},
total compatibility is equivalent to ordinary compatibility together
with the displayed \(\zeta_m\)-relations. Thus we use the basic triple
with these relations throughout.
Our coefficient-algebra convention is that of \cite{CGSTW}; the
coefficient-to-unitary maps agree with those of \cite{GLNHom} after
the interval-orientation identification recorded in
\cite[Remark~3.4]{CGSTW}.
\end{remark}

\subsection{The regularity class of targets}
Call \(T\) an admissible target if it is separable, unital, simple, infinite dimensional, exact and monotracial, and has strict comparison and stable rank one. Its unique trace is denoted \(\tau\). A homomorphism is trace preserving when its pullback of \(\tau\) equals the specified source trace.

We use \(A_\omega\) for the norm ultrapower associated with a free ultrafilter \(\omega\) on \(\N\), and
\[
 A_\infty=\ell^\infty(\N,A)/c_0(\N,A)
\]
for the norm sequence algebra. These two constructions are kept distinct.
For an admissible target $T$, its unique trace is extremal, so its
GNS von Neumann algebra is a finite factor, as is its tracial ultrapower.
We first work in the norm ultrapower $T_\omega$ and its tracial quotient,
and use the sequence algebra only for the final reparameterization step.

\begin{lemma}\label{lem:positive-cone}
For an admissible target \(T\),
\[
 \mathrm{K}_0(T)^+=\{0\}\cup\{g\in \mathrm{K}_0(T):\tau_*(g)>0\}.
\]
If \(0<\tau_*(g)<k\), the class \(g\) is represented by a projection in \(\mathrm{M}_k(T)\).
\end{lemma}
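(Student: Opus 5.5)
We argue the two inclusions separately, then deduce the projection statement from strict comparison in the Cuntz semigroup together with stable rank one.

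For the inclusion \(\subseteq\): let \(g=[p]-[q]\in\mathrm K_0(T)^+\) with \(g\neq0\). Stable rank one gives cancellation of projections, so \(g=[p]\) for a nonzero projection \(p\in\mathrm M_n(T)\). By simplicity the unique trace is faithful on \(T\), and hence \((\tau\otimes\Tr)\) is faithful on \(\mathrm M_n(T)\). Therefore \(\tau_*(g)=(\tau\otimes\Tr)(p)>0\).

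For the inclusion \(\supseteq\): write \(g=[p]-[q]\) with projections \(p,q\in\mathrm M_n(T)\), and suppose \(\tau_*(g)>0\), so \((\tau\otimes\Tr)(q)<(\tau\otimes\Tr)(p)\). Since \(T\) is monotracial, exact and simple, every \(2\)-quasitrace is a trace (Haagerup), so the unique lower semicontinuous dimension function up to scaling on the relevant hereditary subalgebras is \(d_\tau\). Strict comparison then gives \(q\precsim p\), that is, Cuntz subequivalence. For projections this is Murray--von Neumann subequivalence, so \(q\sim q'\le p\). Consequently \(g=[p-q']\), a class represented by a projection, and so \(g\in\mathrm K_0(T)^+\).

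For the second assertion, suppose \(0<\tau_*(g)<k\). We have just shown \(g=[e]\) for a projection \(e\in\mathrm M_n(T)\), with \(n\) possibly large. Compare \(e\) with \(1_k\oplus0\in\mathrm M_n(T)\) (enlarging \(n\ge k\)). Here \(d_\tau(e)=\tau_*(g)<k=d_\tau(1_k)\), and strict comparison yields \(e\sim e'\le 1_k\oplus 0\). Thus \(e'\) lies in \(\mathrm M_k(T)\) and represents \(g\).

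The main point to check is that the strict comparison hypothesis is phrased with respect to the unique trace. The paper's definition of admissible includes exactness precisely so that quasitraces are traces. A further check is that the comparison is applied in matrix algebras: strict comparison is understood for \(T\otimes\mathcal K\), which is standard. Everything else is routine.
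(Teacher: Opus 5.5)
Your proof is correct and follows the same route as the paper: strict comparison for the unique trace gives $q\precsim p$, hence a Murray--von Neumann subprojection of $p$ whose complement represents $g$; a second comparison with $1_k$ moves that representative into $\mathrm M_k(T)$; and faithfulness of the trace gives the reverse inclusion. The appeal to cancellation in your first inclusion is unnecessary, since an element of $\mathrm K_0(T)^+$ is by definition the class of a projection, and that projection is nonzero whenever the class is.
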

\begin{proof}
Write \(g=[p]-[q]\), with $p,q$ in matrix algebras over $T$.
If \(\tau_*(g)>0\), strict comparison gives \(q\precsim p\).
For projections, this gives a partial isometry $v$ in the stabilization
with $v^*v=q$ and $vv^*\le p$. Thus $r=p-vv^*$ is a projection with
$[r]=g$. If $\tau_*(g)<k$, compare $r$ with
$1_{\mathrm M_k(T)}$. The resulting subprojection of
$1_{\mathrm M_k(T)}$ represents $g$ in $\mathrm M_k(T)$.
Conversely, a nonzero projection has strictly positive trace, since
$\tau$ is faithful.
\end{proof}

\begin{lemma}[Regularity and $\mathrm K$-theory of reservoir tails]
\label{lem:interval-regularity}\label{lem:reservoir-regularity}
Let $D$ be a reservoir and let $P$ be a separable unital exact algebra
with a faithful tracial state. For every $r\ge0$, $P\fr D_{>r}$ is an
admissible target. The canonical inclusion
\[
 P\fr D_{>r}\longrightarrow P\fr D
\]
induces graded and total $\mathrm K$-isomorphisms preserving the unit
and selected pairing. It therefore induces a full-invariant
isomorphism. If $P$ is nuclear, its canonical inclusion into
$P\fr D_{>r}$ also induces graded and total $\mathrm K$-isomorphisms.
In particular,
\[
 \mathrm K_0(D)=\Z[1_D],\qquad \mathrm K_1(D)=0.
\]
\end{lemma}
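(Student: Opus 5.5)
We split the lemma into a regularity part (admissibility of $P\fr D_{>r}$) and a $\mathrm K$-theoretic part (the inclusions). Since $D_{>r}$ is again a reservoir (the atom condition \eqref{eq:reservoir-atom} survives removal of finitely many blocks), it suffices to treat $P\fr D$, and then to compare $P\fr D_{>r}\subseteq P\fr D$.

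\emph{Regularity.} First, $P\fr D$ is separable, unital and exact, because reduced free products of exact algebras with faithful states are exact (Dykema). Faithfulness of the free-product trace is standard. Simplicity and uniqueness of trace should come from Avitzour's criterion \cite{Avitzour}. The atomic nonconcentration condition \eqref{eq:reservoir-atom}, via \cite[Lemma~2.6]{HP}, supplies inside finite blocks of $D$ unitaries (or selfadjoints) with small atoms whose free position makes the Avitzour/Dykema hypotheses hold. Infinite dimensionality is clear. For stable rank one we use \cite{DHR}, whose hypotheses are again furnished by the diffuseness coming from \eqref{eq:reservoir-atom} within a finite portion of the free product. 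For strict comparison, the plan is to show $P\fr D$ is selfless in Robert's sense via \cite{HKR}: a reduced free product with an infinite free family of such blocks absorbs a free Haar unitary or semicircular element in the ultrapower in the required way. Robert's theorem \cite{RobertSelfless} then gives strict comparison, and with uniqueness of trace this yields admissibility. This is the step I expect to be the main obstacle. The delicate point is verifying the precise hypotheses of \cite{HKR} for the general atom condition, rather than for diffuse blocks. My first attempt would be to group infinitely many blocks into finite free subproducts $\fr_{j\in J_k}R_j$ with $\sum_{j\in J_k}(1-\operatorname{At})\ge1$. By \cite[Lemma~2.6]{HP} each such grouped block is diffuse enough to contain a Haar unitary, and the selfless results then apply.

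\emph{$\mathrm K$-theory.} For the $\mathrm K$-statements use the Fima--Germain six-term sequences \cite{FG} (amalgamation over $\C$). For nuclear (hence $\mathrm K$-amenable) factors they give
\[
 \mathrm K_*(A_1\fr A_2)\cong\bigl(\mathrm K_*(A_1)\oplus\mathrm K_*(A_2)\bigr)/\langle([1],-[1])\rangle .
\]
Applied to $D_{\le n}$, whose blocks have $\mathrm K_0=\Z[1]$ and $\mathrm K_1=0$, this gives $\mathrm K_*(D_{\le n})=(\Z[1],0)$. Continuity of $\mathrm K$-theory along $D=\varinjlim D_{\le n}$ then yields $\mathrm K_0(D)=\Z[1_D]$ and $\mathrm K_1(D)=0$, and similarly for $D_{>r}$. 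For an exact, possibly nonnuclear $P$ we instead invoke the $\KK$-equivalence form of \cite{FG}, which requires only $\mathrm K$-amenability of the (nuclear) blocks being added. It shows that $P\to P\fr D_{\le n}$, and hence $P\fr D_{>r}\to P\fr D_{>r}\fr D_{\le r}\cong P\fr D$, induces a $\KK$-equivalence-type isomorphism on $\mathrm K_*$. Tensoring with coefficient algebras (or using naturality of the six-term sequence with $\Z/m$ coefficients) gives the total $\mathrm K$-isomorphism. When $P$ is nuclear, the same argument gives that $P\to P\fr D_{>r}$ is a total $\mathrm K$-isomorphism.

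\emph{Unit, pairing and full invariant.} The unit is preserved trivially, and the pairing is preserved because the inclusions preserve the selected traces. Both algebras are monotracial with stable rank one. By \eqref{eq:thomsen} and the five lemma, the map on $\Uquot{\cdot}$ is an isomorphism, since it is an isomorphism on $\mathrm K_1$ and on $\Aff\mathrm T/\overline{\rho(\mathrm K_0)}=\R/\overline{\tau_*\mathrm K_0}$. Hence the induced triple is a full-invariant isomorphism.
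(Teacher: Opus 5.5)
Your proposal is correct. The $\mathrm K$-theoretic half matches the paper, but the regularity half takes a more granular route.

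\textbf{Regularity.} The paper gets simplicity, uniqueness of trace, stable rank one, strict comparison and infinite dimensionality in one step, by applying the statement of \cite[Lemma~2.6]{HP} to $P$ together with the blocks of $D_{>r}$. This is why Definition~\ref{def:reservoir} adopts the Hirshberg--Phillips atom condition. Exactness comes from \cite{DykemaExact}, as in your argument.

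You instead reassemble these conclusions from primary sources:
\begin{enumerate}
\item group finitely many blocks so that each group has $\sum(1-\operatorname{At})>1$;
\item obtain two free Haar unitaries by the free-additive-convolution atom count (the grouping argument that the paper writes out in the proof of Theorem~\ref{cor:unstabilized-classification}(iii) and attributes to the proof of \cite[Lemma~2.6]{HP});
\item feed these into Avitzour, Dykema--Haagerup--R\o rdam, and Hayes--Kunnawalkam Elayavalli--Robert together with Robert's selfless theorem.
\end{enumerate}

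\textbf{Trade-offs.} Your route makes visible that admissibility only needs exactness and two free Haar unitaries. The cost is that you must check the exact hypotheses of each cited theorem, notably \cite{HKR}, which is the step you flag. That verification is already packaged in the conclusion of \cite[Lemma~2.6]{HP}. Citing the lemma's statement, rather than using it only as a source of Haar unitaries, lets you skip the reassembly.

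\textbf{$\mathrm K$-theory.} Your argument follows the paper: Fima--Germain sequences, continuity, coefficient exact sequences and the Thomsen sequence. Two small corrections:
\begin{itemize}
\item \cite[Theorem~3.1 and Corollary~4.12]{FG} apply to the nonnuclear partial products because the faithful traces give GNS-faithful expectations. No $\mathrm K$-amenability of any factor is required.
\item Your quotient formula for $\mathrm K_*(A_1\fr A_2)$ silently uses injectivity of the unit-relation map $\Z\to\mathrm K_0(A_1)\oplus\mathrm K_0(A_2)$. Without it, the boundary map would add a summand to $\mathrm K_1$. Injectivity holds here because unit classes have infinite order; for example, the relevant head has $\mathrm K_0(D_{\le r})=\Z[1]$. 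This should be stated.
\end{itemize}
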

\begin{proof}
Apply \cite[Lemma~2.6]{HP} to $P$ together with the reservoir blocks
remaining after the finite deletion. Condition~\eqref{eq:reservoir-atom}
gives simplicity, unique trace, stable rank one and strict comparison;
the same lemma gives infinite dimensionality. Exactness follows from
reduced free-product permanence \cite{DykemaExact}.

The faithful traces give GNS-faithful scalar expectations.
Hence the ordinary reduced free products here are covered by
\cite[Theorem~3.1 and Corollary~4.12]{FG}, even when a partial
product is nonnuclear. For every finite head $H=D_{\le r}$, its
scalar free-product $\mathrm K$-sequence and the assumptions on the blocks give
$\mathrm K_0(H)=\Z[1_H]$ and $\mathrm K_1(H)=0$.
Applying the same sequence to
$(P\fr D_{>r})\fr H$ proves the assertion about the tail inclusion.
Here the unit-relation map is injective because the faithful
normalized trace forces every unit class to have infinite order.
For nuclear $P$, iteration and inductive-limit continuity give the
assertion about the core inclusion. Coefficient exact sequences give
the assertions for total $\mathrm K$-theory.
The tail and full algebras are monotracial, their pairing ranges
agree, and the inclusion preserves their unique traces. The Thomsen
sequence therefore makes its algebraic unitary map an isomorphism
as well; naturality retains the coefficient-to-unitary maps.
This proves the full-invariant assertion.
\end{proof}

\subsection{Computation of the Elliott invariant}\label{subsec:product-elliott}

\begin{proposition}\label{prop:product-elliott}
Let
\[
 P=\fr_{i\in S}(A_i,\tau_i)
\]
be a nonempty finite or countable reduced free product of separable
unital nuclear UCT algebras with faithful selected traces, and put
$A=P\fr D$. No condition on the unit classes is needed for these
calculations.
The canonical inclusions identify
\begin{align*}
 \mathrm{K}_0(P)&\cong
 \left(\bigoplus_{i\in S}\mathrm{K}_0(A_i)\right)
 \big/\left\langle[1_{A_i}]-[1_{A_j}]:i,j\in S\right\rangle,
 \\
 \mathrm{K}_1(P)&\cong\bigoplus_{i\in S}\mathrm{K}_1(A_i).
\end{align*}
All direct sums are algebraic. The common factor-unit class corresponds
to $[1_P]$, and the selected trace pairing is
\begin{equation}\label{eq:product-trace-pairing}
 \tau_{P*}([(g_i)_i])=\sum_i(\tau_i)_*(g_i).
\end{equation}
The inclusion $P\to A$ induces isomorphisms on $\mathrm{K}_0$ and $\mathrm{K}_1$,
preserving the unit and this pairing. Moreover,
\[
 \mathrm{K}_0(A)^+=\{0\}\cup\{g:\tau_{A*}(g)>0\},
 \qquad \mathrm{T}(A)=\{\tau_A\}.
\]
Together these formulas compute $\Ell(A)$ from the factors.
\end{proposition}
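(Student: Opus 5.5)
The computation of $\mathrm K_*(P)$ comes from the Fima--Germain exact sequence for reduced free products with GNS-faithful scalar expectations \cite[Theorem~3.1 and Corollary~4.12]{FG}, as in the proof of Lemma~\ref{lem:interval-regularity}. I would start with finite $S$ and induct on $|S|$. For two factors the sequence reads
\[
 \mathrm K_0(\C)\to\mathrm K_0(A_1)\oplus\mathrm K_0(A_2)\to\mathrm K_0(A_1\fr A_2)\to\mathrm K_1(\C)=0,
\]
together with the analogous $\mathrm K_1$ sequence. The first map sends $1\mapsto([1_{A_1}],-[1_{A_2}])$. Since $\mathrm K_1(\C)=0$, the $\mathrm K_0$ sequence gives the quotient formula, and the $\mathrm K_1$ sequence gives $\mathrm K_1(A_1)\oplus\mathrm K_1(A_2)\cong\mathrm K_1(A_1\fr A_2)$.

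For the connecting map $\mathrm K_1(P)\to\mathrm K_0(\C)$ to vanish, i.e.\ for exactness at $\mathrm K_0(\C)$ on the left, the unit-relation map must be injective. This holds because $\tau_1([1_{A_1}])=1\neq0$, so $[1_{A_1}]$ has infinite order, as noted in Lemma~\ref{lem:interval-regularity}. Nuclearity is not needed for this step; the cited sequence only requires faithful traces, and partial products remain exact. Iterating gives the formula for all finite $S$, with the factor inclusions inducing the identification.

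For countable $S$, I would write $P$ as the inductive limit of the finite partial products $P_n=\fr_{i\le n}A_i$. The connecting maps are the canonical trace-preserving inclusions, which are injective. Continuity of $\mathrm K$-theory then gives the algebraic direct sums, and compatibility of the identifications passes to the limit.

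The pairing formula \eqref{eq:product-trace-pairing} follows because the free-product trace restricts to $\tau_i$ on each $A_i$. The pairing is well defined on the quotient since every generator $[1_{A_i}]-[1_{A_j}]$ pairs to $1-1=0$. The unit statement holds because each $1_{A_i}$ equals $1_P$.

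The inclusion $P\to A$ inducing $\mathrm K$-isomorphisms is the case $r=0$ of the nuclear assertion in Lemma~\ref{lem:interval-regularity}, applied with $P$ itself as the core. Here $P$ is only exact in general, not nuclear. So I would instead apply the tail-inclusion argument of that lemma: add the reservoir blocks one at a time via the same Fima--Germain sequence, using $\mathrm K_0(R_j)=\Z[1]$ and $\mathrm K_1(R_j)=0$, and then pass to the limit. Adding a block contributes $\Z[1_{R_j}]$, which is identified with the unit class, and nothing to $\mathrm K_1$. Trace preservation then gives the pairing statement.

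Finally, $A=P\fr D$ is an admissible target by Lemma~\ref{lem:interval-regularity}, with $r=0$ and $P$ exact by \cite{DykemaExact}. Admissibility gives $\mathrm T(A)=\{\tau_A\}$, and Lemma~\ref{lem:positive-cone} gives the positive cone.

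The step I expect to need the most care is checking that the hypotheses of \cite{FG} cover possibly nonnuclear partial products of infinitely many blocks, and that the boundary maps vanish. Both reduce to the faithful-trace argument above.
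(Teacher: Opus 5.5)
Your proposal is correct and follows the paper's proof. Both arguments apply the Fima--Germain sequence at each binary step, use the normalized trace to make the unit-relation map injective, pass to countable families by inductive-limit continuity, and cite Lemmas~\ref{lem:interval-regularity} and~\ref{lem:positive-cone} for the unique trace and the strict cone. The only cosmetic difference is that you adjoin the reservoir block by block, whereas the paper does one binary step with $D$, using $\mathrm K_0(D)=\Z[1_D]$ and $\mathrm K_1(D)=0$ (which that lemma itself obtains block by block).
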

\begin{proof}
The faithful factor traces give GNS-faithful scalar expectations.
The scalar-amalgamated free-product $\mathrm K$-theory sequence
\cite[Theorem~3.1 and Corollary~4.12]{FG} therefore applies at every
binary step, including steps whose first factor is a nonnuclear
partial product. Its unit-relation map is
\[
 \Z\longrightarrow\mathrm K_0(P_0)\oplus\mathrm K_0(A_i),
 \qquad n\longmapsto(n[1_{P_0}],-n[1_{A_i}]).
\]
Applying a normalized trace shows that this map is injective.
Thus each step contributes no additional $\mathrm K_1$-term,
without any primitivity assumption on the units. Iteration gives
the two group formulas for finite products. The canonical finite-prefix
inclusions are injective and have dense union, so inductive-limit
continuity gives the formulas for countable products. Each factor trace is
normalized, so the sum in \eqref{eq:product-trace-pairing} vanishes
on the displayed unit relations and gives the free-product pairing.
Since $\mathrm{K}_0(D)=\Z[1_D]$ and $\mathrm{K}_1(D)=0$, the same sequence shows that
adjoining $D$ preserves both groups, the unit, and the pairing.
Finally, Lemma~\ref{lem:interval-regularity} gives a unique trace
and strict comparison for $A$, and Lemma~\ref{lem:positive-cone}
gives its positive cone. This supplies every component of $\Ell(A)$.
\end{proof}

\subsection{Haar unitaries}

\begin{lemma}[A Haar unitary]\label{lem:continuous-Haar}
If $M$ is compact metrizable and $\mu$ is a nonatomic probability
measure, then $\mathrm{C}(M)$ contains a Haar unitary for $\tau_\mu$.
\end{lemma}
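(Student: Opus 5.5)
The plan is to build a Borel map $\phi:M\to[0,1]$ that pushes $\mu$ forward to Lebesgue measure $\lambda$. Then $u=e^{2\pi i\phi}$ is a unitary in $L^\infty(M,\mu)$ whose moments are
\[
\tau_\mu(u^n)=\int_0^1 e^{2\pi int}\,dt=0 \qquad (n\neq0).
\]
Since $\phi$ is only Borel, this $u$ need not be continuous. We therefore either construct the map continuously, or settle for a cheaper continuous substitute.

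First, we reduce to a continuous cumulative function. The measure $\mu$ is nonatomic on a compact metric space. We look for a continuous $f:M\to[0,1]$ such that $f_*\mu$ is nonatomic. In that case its distribution function $F(t)=\mu(f\le t)$ is continuous and nondecreasing, and $F_*(f_*\mu)=\lambda$ (the probability integral transform). So $g=F\circ f:M\to[0,1]$ is continuous with $g_*\mu=\lambda$. We then take $u=\exp(2\pi i g)\in\mathrm C(M)$, which is unitary. The computation above gives $\tau_\mu(u^n)=0$ for $n\ne0$, so $u$ is a Haar unitary.

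The main step is to produce a continuous $f$ with $f_*\mu$ nonatomic. Equivalently, every level set $f^{-1}(t)$ must be $\mu$-null. Only countably many $t$ can fail this, but a single $f$ may well have an atom. The first attempt is a distance function $f=d(\cdot,x_0)$; its atoms are spheres of positive measure, and these can occur.

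To get around this, we use a countable family $f_k=d(\cdot,x_k)$, where $\{x_k\}$ is dense, and form
\[
f=\sum_k 3^{-k}f_k/(1+\operatorname{diam} M).
\]
This $f$ is continuous. We then argue that $f$ separates points up to a null set, or perturb the coefficients.

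A cleaner route is to perturb the coefficients randomly. For $c\in[1,2]^{\N}$ chosen with independent uniform coordinates, put $f_c=\sum_k c_k 2^{-k}f_k$. Using Fubini, we show that for almost every $c$ every level set of $f_c$ is $\mu$-null. Consider the set of pairs $(x,y)$ with $f_c(x)=f_c(y)$. If $x\ne y$, some $f_k$ separates $x$ and $y$, so the condition is one linear equation in $c_k$. Its solution set in the $c_k$ variable has measure zero.

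Hence, for almost every $c$, the set $\{(x,y): f_c(x)=f_c(y)\}$ is $\mu\otimes\mu$-null off the diagonal. The diagonal itself is null because $\mu$ is nonatomic. Now if $f_c$ had an atom at some $t$, then $f_c^{-1}(t)\times f_c^{-1}(t)$ would have positive $\mu\otimes\mu$-measure, which is impossible. This yields the required $f$.

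I expect this Fubini step to be the only real obstacle; everything after it is a routine calculation.
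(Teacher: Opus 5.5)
Your proof is correct and follows essentially the same route as the paper: you randomize the coefficients of a uniformly convergent series of point-separating continuous functions (your distance functions to a dense sequence play the role of the paper's $f_k$), use Fubini and nonatomicity of $\mu$ to find a parameter whose collision set $\{(x,y):f_c(x)=f_c(y)\}$ is $\mu\otimes\mu$-null, conclude that $(f_c)_*\mu$ has no atoms, and finish with the probability integral transform and $u=\exp(2\pi i\,F\circ f_c)$. The one detail worth stating explicitly, as the paper does, is that $(c,x)\mapsto f_c(x)$ is jointly continuous by uniform convergence, which makes the collision set in $[1,2]^{\mathbb N}\times M\times M$ measurable so that Fubini applies.
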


\begin{proof}
Choose real-valued continuous functions $f_k$ with $\|f_k\|\leq1$
which separate points.  For $a=(a_k)\in[0,1]^{\N}$ let
\[
 h_a(x)=\sum_{k=1}^\infty 2^{-k}a_k f_k(x).
\]
The series converges uniformly in both $a$ and $x$, and the resulting
function of $(a,x)$ is jointly continuous. Give the parameter space its product
Lebesgue measure.  If $x\ne y$, choose $k$ with $f_k(x)\ne f_k(y)$.
Conditioning on all other coordinates shows that the equation
$h_a(x)=h_a(y)$ has parameter probability zero.  Because $\mu$ has
no atoms, the diagonal has $(\mu\times\mu)$-measure zero.  Fubini's
theorem therefore gives a parameter $a$ such that
\[
 (\mu\times\mu)\{(x,y):h_a(x)=h_a(y)\}=0.
\]
The pushforward distribution of this continuous function has no atoms:
an atom of mass $c>0$ would contribute at least $c^2$ to the displayed
collision probability.  Its cumulative distribution function $F$ is
continuous.  The probability integral transform shows that the
continuous function $g=F\circ h_a$ has uniform distribution on
$[0,1]$.  Therefore $v=\exp(2\pi i g)\in \mathrm{C}(M)$ satisfies
$\tau_\mu(v^k)=0$ for every nonzero integer $k$, as required.
\end{proof}

\section{Embeddings into admissible targets}\label{sec:local}

We extend the trace-kernel approach to the admissible targets of
Section~\ref{sec:prelim}. The source is nuclear and satisfies the UCT;
the target may be nonnuclear. The proof uses $\mathrm{KL}$-uniqueness for
absorbing representations and retains coefficient and unitary data
through descent to the original target.

The final Theorem~B of \cite{CGSTW} assumes a nuclear,
$\mathcal Z$-stable codomain and is used only for nuclear range
models below. The theorem for possibly nonnuclear admissible
targets proved here instead uses the abstract lifting results
of \cite{CGSTW}, the trace-kernel regularity of \cite{KES}, and
the absorbing-pair uniqueness theorem of \cite{Szabo}.
See also \cite[Addendum~1.5]{CGSTW} for the role of these advances.

\subsection{Classification in the norm ultrapower}
\label{sec:ultrapower-classification}

Throughout this section, $T$ is a separable, simple, unital, infinite-dimensional,
exact $\mathrm{C}^*$-algebra with unique tracial state $\tau$, stable rank one, and strict
comparison.  We use strict comparison with respect to the canonical
unnormalized extensions of $\tau$ to matrix algebras.  Exactness ensures that
there are no additional normalized quasitraces to consider.  Fix a free
ultrafilter $\omega$ on $\N$, and put
\[
 B=T_\omega,\qquad Q=T^\omega,\qquad J=\ker(q:B\longrightarrow Q).
\]
The canonical trace on $B$ and its quotient trace on $Q$ are both denoted by
$\tau$.  Thus the extension under consideration is
\begin{equation}\label{eq:trace-kernel-extension}
 0\longrightarrow J\xrightarrow{j}B\xrightarrow{q}Q\longrightarrow0.
\end{equation}
Here $Q$ is the tracial ultrapower of the finite factor
$\pi_\tau(T)''$, and hence is a $\mathrm{II}_1$ factor.  The identification of
these two descriptions of $Q$ follows from Kaplansky density; see
\cite[Section~2.3]{KES}. In particular,
\[
\mathrm K_0(Q)=\R,\qquad \mathrm K_1(Q)=0,
\]
where the first identification uses the unnormalized trace on matrix
projections. The second assertion follows because every unitary in a
finite von Neumann algebra has a bounded Borel self-adjoint logarithm.

We use the total $\mathrm K$-theory and stabilized unitary quotients
of Section~\ref{sec:prelim}. The closure defining
$\mathrm{CU}_\infty(C)$ is taken in the topological inductive-limit
topology. The canonical map and determinant inclusion are
\[
 \Pi_C^{\mathrm{cu}}:\Uquot{C}\longrightarrow\mathrm K_1(C),\qquad
 \mathrm{Th}_C:\Aff\mathrm T(C)\longrightarrow\Uquot{C}.
\]
In particular,
\begin{equation}\label{eq:thomsen-general}
 \mathrm K_0(C)\xrightarrow{\rho_C}\Aff\mathrm T(C)
 \xrightarrow{\mathrm{Th}_C}\Uquot{C}
 \xrightarrow{\Pi_C^{\mathrm{cu}}}\mathrm K_1(C)\longrightarrow0
\end{equation}
is exact when the kernel at $\Aff\mathrm T(C)$ is interpreted as
$\overline{\rho_C(\mathrm K_0(C))}$.
We retain totally compatible triples in the notation of
Section~\ref{sec:prelim}, including compatibility with the maps $\zeta_n^C$.
For a monotracial target, a triple is faithful if its trace component
pulls the unique target trace back to a faithful source trace.

The ideal regularity established below permits the use of abstract
existence and uniqueness for absorbing representations.

\begin{lemma}\label{lem:trace-kernel-regularity}
The following assertions hold.
\begin{enumerate}
\item $B$ has stable rank one and strict comparison with respect to its
 canonical trace.
\item Every tracial state on $B$ vanishes on $J$.  Consequently $B$ has unique
 tracial state.
\item $J$ is separably stable and has almost unperforated Cuntz semigroup.
 Every separable subalgebra of $J$ is contained in a separable, stable
 subalgebra of $J$ having almost unperforated Cuntz semigroup, and therefore
 having the corona factorization property.
\item Every trace-preserving homomorphism from a unital algebra with faithful
 trace to $B$ is full.
\end{enumerate}
\end{lemma}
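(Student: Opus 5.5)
The plan is to derive each of (i)--(iv) from the corresponding property of $T$ by the standard transfer through the ultrapower, following \cite[Section~2]{KES}. I take the items in order, since (iv) depends on (ii) and (iii).

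\emph{Item (i).} For stable rank one, note that $T$ has stable rank one, so every element $x_n$ of $T$ is within $1/n$ of an invertible element. Given $x=(x_n)_\omega\in B$, I pick invertibles $y_n$ with $\|x_n-y_n\|<1/n$. Their inverses may be unbounded, so I first handle norm control. I take polar decompositions in the unitization and replace $x_n$ by $u_n(|x_n|+\epsilon)$, which is invertible with inverse bounded by $\epsilon^{-1}$. This gives invertibles in $B$ within $\epsilon$ of $x$, and the same argument applies in every matrix algebra over $B$.

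For strict comparison, I first show that every $2$-quasitrace on $B$ is a limit trace. Here I use Ozawa's/\cite{KES}-type results: $T$ is exact and monotracial, so limit traces are dense. I plan to bypass full generality as follows. I verify strict comparison relative to the canonical trace $\tau_\omega$ directly, using the uniform strict comparison of $T$ with functional calculus $(a-\epsilon)_+$ and a diagonal argument. Since $T$ has unique trace and strict comparison, it has uniform comparison, with the comparison constant depending only on the trace gap. The issue is that $d_\tau$ is only lower semicontinuous in the ultralimit. I handle this by comparing $(a-\epsilon)_+$ and letting $\epsilon\to0$.

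\emph{Item (ii).} Let $\sigma$ be a tracial state on $B$. I must show $\sigma(J)=0$. Take a positive contraction $e\in J$. By definition $\lim_\omega\tau(e_n^*e_n)=0$, so $d_{\tau_\omega}((e-\epsilon)_+)$ is small relative to every positive element with definite trace. By strict comparison from (i), together with the infinite-dimensionality and simplicity of $T$, I find $k$ pairwise orthogonal copies of $(e-\epsilon)_+$ under $1$. This gives $\sigma((e-\epsilon)_+)\le 1/k$, and letting $k\to\infty$ gives $\sigma(e)=0$. Thus every trace factors through $Q$, which is a $\mathrm{II}_1$ factor with unique trace, so $B$ is monotracial.

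\emph{Item (iii).} This is the main obstacle. Separable stability of $J$ should follow from \cite[Section~4]{KES}. For every positive $a\in J$, I produce $b\in J$ orthogonal to $a$ with $a\precsim b$. I obtain $b$ in the tracial-kernel ideal by a reindexing argument, choosing elements of small but nonzero trace against $a$'s support. Almost unperforation of $\Cu(J)$ comes from the strict comparison in (i): elements of $J$ have $d_\tau$ equal to $0$ or $\infty$ after stabilization, and the comparison in $J$ is governed by ideal-generation and the ultrafilter scales.

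For the separable statement, I would use a Löwenheim--Skolem closing-off argument, alternately adding witnesses for stability and for the comparison relations. I expect this bookkeeping to be where care is needed. The corona factorization property then follows from Ortega--Perera--Rørdam for almost unperforated stable algebras.

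\emph{Item (iv).} Let $\phi:A\to B$ be trace preserving with $\tau_A$ faithful, and take $a\ne0$ positive in $A$. Then $\tau(\phi(a))=\tau_A(a)>0$. By strict comparison in $B$, finitely many orthogonal copies of $\phi(a)$ dominate $1$ in the Cuntz order. Hence the ideal generated by $\phi(a)$ contains $1$, and $\phi$ is full.
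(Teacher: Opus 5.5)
\textbf{Verdict: genuine gap in (iii).} Your items (i), (ii) and (iv) follow the paper's route:
\begin{itemize}
\item uniformly bounded inverses for stable rank one;
\item coordinatewise comparison of cut-downs $(a_n-\varepsilon)_+$ for strict comparison;
\item $e^{\oplus k}\precsim 1_B$, which forces every trace to vanish on $J$;
\item $1_B\precsim\phi(a)^{\oplus m}$ for fullness.
\end{itemize}
The gap is your justification that $\Cu(J)$ is almost unperforated; the reason you give is false.

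\textbf{Why the argument fails.} By your own argument in (ii), $\tau_\omega$ vanishes on $J$. So $d_{\tau_\omega}$ is identically $0$ on $(J\otimes\mathcal K)_+$; it never takes the value $\infty$ there and carries no comparison information. Nor is Cuntz comparison in $J$ governed by ideal generation. If $a\precsim b$ in $B$ with $a,b\in J_+$, lifting an approximate witness gives $d_\tau((a_n-\varepsilon)_+)\le d_\tau(b_n)$ on an $\omega$-large set. This is a quantitative condition on the rates at which the coordinate dimensions tend to zero, and elements generating the same ideal of $J$ need not satisfy it.

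\textbf{What is needed.} Run the argument of (i) at these vanishing scales; this is what \cite[Proposition~2.5]{KES} supplies and what the paper cites. Suppose $(k+1)\langle a\rangle\le k\langle b\rangle$ with $a,b\in\mathrm M_m(J)_+$ contractions.
\begin{enumerate}
\item For each $\varepsilon>0$ there are $\delta>0$ and a bounded witness, so that on an $\omega$-large set
\[
(k+1)\,d_\tau((a_n-2\varepsilon)_+)\le k\,d_\tau((b_n-\delta)_+).
\]
\item Whenever $(a_n-2\varepsilon)_+\neq0$ this gives the strict inequality $d_\tau((a_n-2\varepsilon)_+)<d_\tau((b_n-\delta)_+)$.
\item Coordinatewise strict comparison in $T$, with witnesses $f_\delta(b_n)v_n$ of norm at most $\delta^{-1/2}$, then gives $(a-3\varepsilon)_+\precsim b$.
\item Hereditarity of $J$ and approximation by matrices over $J$ finish.
\end{enumerate}

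\textbf{Stability.} Your stability sketch needs the same uniform cut-down control, which the ``reindexing'' does not visibly provide. You need $b_n$ almost orthogonal to $a_n$, with $\tau(b_n^2)\to_\omega0$, yet $d_\tau((b_n-\delta)_+)>d_\tau((a_n-\varepsilon)_+)$ for a fixed $\delta$. Either give that argument or cite \cite[Proposition~2.4]{KES} as the paper does. The closing-off step and the appeal to Ortega--Perera--R\o rdam are then as in the paper.

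\textbf{Two small repairs in (i).}
\begin{itemize}
\item Take the polar decomposition of the invertible approximant $y_n$, whose polar part is a genuine unitary of $T$; $x_n$ itself need not have one. Use $v_n(|y_n|+\varepsilon/2)$.
\item The uniformity that puts the strict-comparison witnesses in $B$ is the bound $\|f_\delta(b_n)v_n\|\le\delta^{-1/2}$, for a cut-off $\delta$ fixed on an $\omega$-large set. A ``comparison constant depending only on the trace gap'' is not the right formulation.
\end{itemize}
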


\begin{proof}
Stable rank one passes to norm ultrapowers with a uniform inverse bound.
For a contraction $x_n\in T$ and $\varepsilon>0$, choose an invertible
$y_n$ with $\|y_n-x_n\|<\varepsilon/2$, and write
$y_n=v_n|y_n|$ with $v_n$ unitary. Then
\[
z_n=v_n(|y_n|+\varepsilon/2)
\]
satisfies $\|z_n-x_n\|<\varepsilon$ and
$\|z_n^{-1}\|\leq2/\varepsilon$, uniformly in $n$.
Thus $(z_n)_\omega$ is invertible in $B$ and approximates $(x_n)_\omega$.
The same argument works in each fixed matrix size.
We spell out the uniform bound needed for strict comparison, as in
\cite[proof of Proposition~2.5]{KES}. Let $a=(a_n)$ and $b=(b_n)$ be
positive contractions in a fixed matrix algebra over $B$, and suppose
$d_\tau(a)<d_\tau(b)$. For each $\varepsilon>0$, continuous spectral
cutoffs give $0<\delta<1$ such that
\[
 d_\tau((a_n-\varepsilon)_+)
 <d_\tau((b_n-\delta)_+)
\]
on a set in $\omega$. Indeed, dominate the support of
$(a_n-\varepsilon)_+$ by a continuous function of $a_n$ supported away
from zero, and minorize the support of $(b_n-\delta)_+$ by a continuous
function of $b_n$ whose limiting trace exceeds $d_\tau(a)$.
Coordinate strict comparison and the usual cutdown factorization give
$v_n$ with $\|v_n\|\leq1$,
\[
 v_n^*v_n=(a_n-2\varepsilon)_+,
 \qquad v_nv_n^*\in\operatorname{her}((b_n-\delta)_+).
\]
Choose a bounded continuous function $f_\delta$ equal to $t^{-1/2}$
on $[\delta,1]$, with $\|f_\delta\|\leq\delta^{-1/2}$. Then
$w_n=f_\delta(b_n)v_n$ satisfies
$w_n^*b_nw_n=(a_n-2\varepsilon)_+$ and
$\|w_n\|\leq\delta^{-1/2}$; set $w_n=0$ off the chosen set.
These bounded sequences give $(a-2\varepsilon)_+\precsim b$ in the
ultrapower. Letting $\varepsilon\downarrow0$ proves $a\precsim b$.
The same argument uses unnormalized traces in every matrix size.
Nuclearity or Jiang--Su stability of $T$ is not needed.

If $a\in J_+$ is a contraction, then $d_\tau(a)=0$, since every positive
root of $a$ still belongs to $J$.  Strict comparison in $B$ gives
$a^{\oplus k}\precsim1_B$ for every positive integer $k$.  For any tracial
state $\lambda$ on $B$, it follows that $k d_\lambda(a)\leq1$.  Thus
$\lambda(a)=0$, and $\lambda$ factors through $Q$.  A finite factor has unique
tracial state, by Dixmier averaging, so $\lambda=\tau$.

The assertions about separable stability and almost unperforation are
\cite[Propositions~2.4 and~2.5]{KES}.  One alternates the two separable
enlargements and passes to their inductive limit; almost unperforation and
stability pass to this limit.  A separable stable algebra with almost
unperforated Cuntz semigroup has the corona factorization property by
\cite[Proposition~2.17 and Theorem~5.11]{OPR}; this consequence is also recorded
in \cite[Section~2.4]{KES}.

For the final assertion, let $\phi:A\to B$ preserve a faithful trace, and let
$a\in A_+$ be nonzero.  Then $d_\tau(\phi(a))>0$.  For sufficiently large $m$,
strict comparison gives $1_B\precsim\phi(a)^{\oplus m}$.  Thus $\phi(a)$ is
full in $B$.
\end{proof}

\begin{lemma}\label{lem:closed-trace-range}
Put $H=\overline{\tau_*(\mathrm{K}_0(T))}\subseteq\R$.  Then
\[
 \tau_*(\mathrm{K}_0(B))=H.
\]
In particular, this range is closed.
\end{lemma}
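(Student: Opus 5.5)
We prove the two inclusions $\tau_*(\mathrm K_0(B))\subseteq H$ and $H\subseteq\tau_*(\mathrm K_0(B))$. Closedness then follows because $H$ is a closure.

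For the first inclusion, take $g=[p]-[q]\in\mathrm K_0(B)$ with $p,q$ projections in $\mathrm M_k(B)$. Projections in $\mathrm M_k(T_\omega)=\mathrm M_k(T)_\omega$ lift to sequences of projections: lift a self-adjoint representative and apply continuous functional calculus to coordinates whose spectra lie near $\{0,1\}$. So $p=(p_n)_\omega$ and $q=(q_n)_\omega$, and the trace on $B$ gives
\[
\tau_*(g)=\lim_{n\to\omega}\bigl(\tau(p_n)-\tau(q_n)\bigr).
\]
Each term of this limit lies in $\tau_*(\mathrm K_0(T))$, so the limit lies in $H$.

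For the reverse inclusion, take $t\in H$. The group $H$ is a closed subgroup of $\R$ containing $1$. Adding an integer multiple of $[1_B]$ changes $t$ by an integer, so we may assume $0<t<k$ for some integer $k$. Choose $g_n\in\mathrm K_0(T)$ with $\tau_*(g_n)\to t$; discarding finitely many terms, $0<\tau_*(g_n)<k$ for all $n$. By Lemma~\ref{lem:positive-cone}, each $g_n$ is represented by a projection $p_n\in\mathrm M_k(T)$. This matrix size is fixed, independent of $n$, which is the point of the second clause of that lemma. The sequence $(p_n)$ defines a projection $p\in\mathrm M_k(B)$, and $\tau(p)=\lim_\omega\tau(p_n)=t$. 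Hence $t\in\tau_*(\mathrm K_0(B))$.

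The main point to check is the uniform matrix size in the second step. Without it, the $p_n$ would not define an element of a single matrix algebra over $B$. Lemma~\ref{lem:positive-cone} supplies this bound, using strict comparison in $T$. The lifting of projections in the first step is standard, and only the trace formula along $\omega$ is needed there.
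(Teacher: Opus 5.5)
Your argument is correct and uses the same two ingredients as the paper's proof. For one inclusion you lift projections from $\mathrm M_k(T_\omega)$ to coordinate projections, so the trace is an ultralimit of bounded values in the closed set $H$; for the other you take fixed-size projection representatives from strict comparison (Lemma~\ref{lem:positive-cone}) to realize limits of traces. The only difference is organizational. The paper first splits into the cases $H$ discrete and $H=\mathbb{R}$ and uses each argument only for the inclusion that is nontrivial in that case, whereas you run both inclusions uniformly, which is slightly cleaner and avoids the dichotomy for closed subgroups of $\mathbb{R}$.
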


\begin{proof}
A closed subgroup of $\R$ containing $1$ is either $\R$ or a discrete cyclic
group.  If $H$ is discrete, a projection in $\mathrm{M}_k(B)$ lifts to a sequence of
projections in $\mathrm{M}_k(T)$.  Its trace is an ultralimit of elements of the bounded
set $H\cap[0,k]$, and therefore belongs to $H$.  This proves one inclusion;
the reverse inclusion follows because a subgroup with discrete closure equals its closure, and constant classes realize the original trace image.

Suppose instead that $H=\R$, and fix $s\in\R$.  Choose $g_m\in \mathrm{K}_0(T)$ with
$\tau_*(g_m)\to s$.  Choose fixed positive integers $k,l$ such that
$0<\tau_*(g_m)+k<l$ for all $m$, after discarding finitely many terms.  Strict
comparison implies that every class of strictly positive trace in $\mathrm{K}_0(T)$ is
represented by a projection: if $g=[p]-[r]$ and $\tau(p)>\tau(r)$, first
subequate $r$ to $p$ and take the complementary projection.  A second comparison
with $1_{\mathrm{M}_l(T)}$ shows that a class of trace in $(0,l)$ has a projection
representative in $\mathrm{M}_l(T)$.  Hence choose $p_m\in \mathrm{M}_l(T)$ with
$[p_m]=g_m+k[1_T]$.  The class
$[(p_m)_\omega]-k[1_B]$ has trace $s$.
\end{proof}

\begin{proposition}\label{prop:kernel-algebraic-k1}
Let $s:J^\dagger\to\C1_B$ be the scalar map on the forced unitization.
There is a natural isomorphism
\[
 \Omega_T:\mathrm{K}_1(J)\xrightarrow{\cong}\Uquot{B},\qquad
 \Omega_T([u]_1)=\overline{u}-\overline{s(u)}.
\]
It identifies $\ker \mathrm{K}_1(j)$ with $\ker\Pi_B^{cu}$, both identified with
$\R/H$.
\end{proposition}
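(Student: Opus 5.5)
The plan is to compare two extensions of $\mathrm K_1(B)$ by $\R/H$ and conclude with the five lemma. The first extension comes from the six-term sequence of \eqref{eq:trace-kernel-extension}. Since $\mathrm K_0(Q)=\R$ and $\mathrm K_1(Q)=0$, and $\tau_*(\mathrm K_0(B))=H$ by Lemma~\ref{lem:closed-trace-range}, it reads
\[
 \mathrm K_0(B)\xrightarrow{\tau_*}\R\xrightarrow{\partial}\mathrm K_1(J)\xrightarrow{\mathrm K_1(j)}\mathrm K_1(B)\longrightarrow0 .
\]
So $\mathrm K_1(j)$ is onto, with kernel $\partial(\R)\cong\R/H$.

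The second extension is the Thomsen sequence \eqref{eq:thomsen-general} for $B$, which holds for any unital algebra. By Lemma~\ref{lem:trace-kernel-regularity}(ii), $B$ has the unique trace $\tau$, so $\Aff\mathrm T(B)=\R$. The closure $\overline{\rho_B(\mathrm K_0(B))}$ equals $H$, since this range is already closed. This gives
\[
 0\to\R/H\xrightarrow{\mathrm{Th}_B}\Uquot{B}\xrightarrow{\Pi^{\mathrm{cu}}_B}\mathrm K_1(B)\to0 .
\]

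The first step is to show that $\Omega_T$ is a well-defined homomorphism. For a unitary $u\in\mathrm M_n(J^\dagger)$ with $s(u)=\lambda\in\mathrm U_n(\C)$, the element $\overline u-\overline{s(u)}$ is the class of $s(u)^{-1}u\in 1+\mathrm M_n(J)$. Additivity under block sums is clear, and stabilization by $1$ changes nothing.

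Homotopy invariance is the substantive part. Let $u_t$ be a path of unitaries in $\mathrm M_n(J^\dagger)$. Then $w_t=s(u_t)^{-1}u_t$ is a path in $1+\mathrm M_n(J)$. The de la Harpe--Skandalis determinant of $w_t$ is computed from $\tau$ of elements of the form $\dot w_t w_t^{-1}$, which lie in $\mathrm M_n(J)$. By Lemma~\ref{lem:trace-kernel-regularity}(ii), $\tau$ vanishes on $J$, so this determinant is zero. Thomsen's description of $\ker\Pi^{\mathrm{cu}}$ as the image of the determinant then puts $w_1w_0^{-1}$ in $\mathrm{CU}_\infty(B)$. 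Hence $\Omega_T$ factors through $\mathrm K_1(J)$, and naturality in $T$ is evident.

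Next I check that the two squares commute. The right square, $\Pi^{\mathrm{cu}}_B\Omega_T=\mathrm K_1(j)$, holds because scalar unitaries have trivial $\mathrm K_1$-class.

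For the left square, recall that $\partial(t)$, for $t\in\R=\mathrm K_0(Q)$, is represented as follows. Choose a self-adjoint $h\in\mathrm M_n(Q)$ with $(\tau\otimes\Tr)(h)=t$ (for instance a scalar multiple of a projection). Lift $h$ to a self-adjoint $a\in\mathrm M_n(B)$; then $\partial(t)=\pm[\exp(2\pi i a)]_1$, a unitary in $1+\mathrm M_n(J)$. Its class in $\Uquot{B}$ is the determinant class $\mathrm{Th}_B(\pm(\tau\otimes\Tr)(a))=\mathrm{Th}_B(\pm t)$, by the path $s\mapsto\exp(2\pi i s a)$. After fixing the sign convention, $\Omega_T\partial=\mathrm{Th}_B$ modulo $H$.

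The five lemma then shows that $\Omega_T$ is an isomorphism. It carries $\ker\mathrm K_1(j)=\partial(\R)$ onto $\mathrm{Th}_B(\R/H)=\ker\Pi^{\mathrm{cu}}_B$.

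I expect the main obstacle to be this homotopy-invariance and determinant bookkeeping for the nonseparable ultrapower $B$. It requires that Thomsen's identification of $\ker\Pi^{\mathrm{cu}}$ with $\Aff\mathrm T(B)/\overline{\rho_B(\mathrm K_0(B))}$ via the determinant applies without separability. That identification is purely algebraic, so it should. It also requires care that the closure uses the inductive-limit topology on $\mathrm U_\infty(B)$, and that the sign of the index map matches the orientation convention in \eqref{eq:coefficient-normalization}.
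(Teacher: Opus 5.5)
Your argument is essentially the paper's proof. Well-definedness comes from $\tau$ vanishing on $J$; your path-determinant computation in $1+\mathrm M_n(J)$ is equivalent to the paper's reduction to exponentials, where $\tau(h)=\tau(s(h))$. The six-term sequence of the trace-kernel extension is then compared with the Thomsen sequence, which has kernel exactly $H$ because $\tau_*(\mathrm K_0(B))=H$ is closed, and the short five lemma finishes.

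One step must be corrected. In the left square you cannot take an arbitrary self-adjoint $h\in\mathrm M_n(Q)$ of trace $t$, and a non-integer multiple of a projection certainly fails: the lift satisfies $e^{2\pi i a}\in 1+\mathrm M_n(J)$ only when $e^{2\pi i h}=1$. Instead take a projection $p\in\mathrm M_n(Q)$ with $(\tau\otimes\Tr)(p)=t$. Such a projection exists for $t\ge0$ since $Q$ is a $\mathrm{II}_1$ factor, and additivity handles $t<0$. Lift $p$ to a positive contraction $a$; then $\Omega_T\partial[p]=\overline{e^{2\pi i a}}=\mathrm{Th}_B(\tau(a))=\mathrm{Th}_B(t)$, exactly as you compute.

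Your worry about separability is unfounded. The Thomsen sequence, which the paper uses for arbitrary unital algebras, needs no separability. It is not purely algebraic, however: it rests on the Cuntz--Pedersen description of trace-zero elements.
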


\begin{proof}
The argument is the proof of \cite[Theorem~6.8]{CGSTW}, with its hypotheses
replaced by the conclusions just proved.  We include the details needed here.
The formula is additive on matrix unitaries.  To see that it vanishes on
null-homotopic unitaries, it suffices to check exponentials.  For
$h=h^*\in \mathrm{M}_k(J^\dagger)$, the difference between $h$ and its scalar part has
zero value at every trace on $B$, by Lemma~\ref{lem:trace-kernel-regularity}.
The Thomsen map, or the Lie--Trotter formula modulo closed commutators, therefore
gives
\[
 \overline{e^{2\pi i h}}
 -\overline{e^{2\pi i s(h)}}=0.
\]
This proves that the formula descends to $\mathrm{K}_1(J)$.

Under $\mathrm{K}_0(Q)=\R$, the six-term sequence and
\eqref{eq:thomsen-general} give the commuting diagram
\[
\begin{array}{ccccccccc}
 \mathrm{K}_0(B)&\longrightarrow&\R&\xrightarrow{\partial}&\mathrm{K}_1(J)
 &\xrightarrow{\mathrm{K}_1(j)}&\mathrm{K}_1(B)&\longrightarrow&0\\
 \Vert&&\Vert&&\downarrow\Omega_T&&\Vert\\
 \mathrm{K}_0(B)&\xrightarrow{\tau_*}&\R&\xrightarrow{\mathrm{Th}_B}&
 \Uquot{B}&\xrightarrow{\Pi_B^{cu}}&\mathrm{K}_1(B)&\longrightarrow&0.
\end{array}
\]
The lower row is exact because $\tau_*(\mathrm{K}_0(B))=H$ is closed.  For the middle
square, a projection $p\in \mathrm{M}_k(Q)$ has a positive contraction lift
$a\in \mathrm{M}_k(B)$ and
$\partial[p]=[e^{2\pi i a}]_1$; its image under the displayed formula is
$\mathrm{Th}_B(\tau(a))$.  The other squares commute directly.  Equivalently, this gives a morphism of short exact sequences
\[
\begin{array}{ccccccccc}
0&\longrightarrow&\R/H&\longrightarrow&\mathrm K_1(J)&\longrightarrow&
\mathrm K_1(B)&\longrightarrow&0\\
&&\Vert&&\downarrow\Omega_T&&\Vert\\
0&\longrightarrow&\R/H&\longrightarrow&\Uquot{B}&\longrightarrow&
\mathrm K_1(B)&\longrightarrow&0 .
\end{array}
\]
The short five lemma proves that $\Omega_T$ is an isomorphism.
Its restriction to the two kernels is the identity under their
identification with $\R/H$.
\end{proof}

We recall explicitly the two abstract tools used below.  First,
\cite[Theorem~5.16(i),(ii)]{CGSTW} is an existence theorem for a separable source
and a $\sigma$-unital stable ideal $I$.  Its part (ii) requires the source to be
nuclear; it does not require a nuclear or Jiang--Su stable codomain.  Second,
\cite[Theorem~2.6]{Gabe} says that, for a separable stable ideal with
the corona factorization property, a Busby map is nuclearly absorbing exactly
when it is unitizably full.  For a nuclear source, nuclear absorption is
absorption.  The multiplier representation is absorbing if and only if its
Busby map is absorbing, by \cite[Corollary~5.11]{CGSTW}.

\begin{lemma}\label{lem:separable-subextension}
In \eqref{eq:trace-kernel-extension}, or in any finite matrix amplification of
that extension (with $B,J,Q$ replaced accordingly), one may pass to a unital
separable subextension
\[
 0\longrightarrow I\longrightarrow S\xrightarrow{q_S}D\longrightarrow0,
 \qquad I=S\cap J,
\]
with $I$ stable and $\Cu(I)$ almost unperforated, subject to the following
additional requirements whenever the indicated data are present.
\begin{enumerate}
\item A prescribed unitizably full map $\theta:A\to Q$ with separable source
 corestricts to a unitizably full map $\theta_D:A\to D$.
\item If $\widetilde\kappa\in \KK(A,B)$ satisfies $q_*(\widetilde\kappa)=\KK(\theta)$, one may retain
 $\widetilde\kappa_S\in \KK(A,S)$ with image $\widetilde\kappa$ and
 $(q_S)_*(\widetilde\kappa_S)=\KK(\theta_D)$.
\item Prescribed separable sets of homomorphism images may be included in $S$.
 A prescribed class in $\KK(A,J)$ may be retained in $\KK(A,I)$.
\item If a prescribed Cuntz pair has zero class in $\KL(A,J)$, the subextension
 may be chosen so that its class already vanishes in $\KL(A,I)$.
\end{enumerate}
In particular, $I$ has the corona factorization property.
\end{lemma}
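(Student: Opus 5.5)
The plan is a countable closing-off (Löwenheim--Skolem) argument. We build an increasing sequence of separable $\mathrm C^*$-subalgebras $S_1\subseteq S_2\subseteq\cdots$ of $B$ (or of the given matrix amplification), all containing $1_B$, and put $S=\overline{\bigcup_n S_n}$, $I=S\cap J$, $D=q(S)$. Since $I$ is the kernel of $q|_S$, we get the short exact sequence $0\to I\to S\to D\to0$, and it is separable and unital. At each stage we enlarge $S_n$ by countably many elements; each element witnesses one requirement evaluated on a countable dense subset of the current stage.

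First, for stability and almost unperforation we use Lemma~\ref{lem:trace-kernel-regularity}(iii). At stage $n$ we enlarge $S_n\cap J$ to a separable, stable subalgebra $J_n\subseteq J$ with almost unperforated Cuntz semigroup, and set $S_{n+1}\supseteq S_n+J_n$. The resulting ideal $I$ is the inductive limit of the $J_n$, because $\overline{\bigcup(S_n\cap J)}=S\cap J$: an element of $S\cap J$ is approximated by elements of $S_n$, and we may choose these approximants in $J$ by adding a quasicentral-approximate-unit correction, which is exactly why we also close under an approximate unit of $J_n$ here. Stability and almost unperforation pass to inductive limits, as in the proof of Lemma~\ref{lem:trace-kernel-regularity}. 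The corona factorization property then follows from \cite[Proposition~2.17 and Theorem~5.11]{OPR}.

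The remaining items are handled by adding witnesses, all of countable size.

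For (i), unitizable fullness of $\theta$ means that for each $a$ in a countable dense subset of $A_+\setminus\{0\}$ there are finitely many $x_i\in Q$ with $\sum x_i^*\theta(a)x_i=1$. We add lifts of these $x_i$ and of $\theta(A)$ to $S$, so that $\theta(A)\subseteq D$ and $\theta_D$ is unitizably full in $D$.

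For (ii) and (iii), we use the separable-subalgebra formulation of \cite[Appendix~B]{CGSTW}. Any $\KK$-class in $\KK(A,B)$ or $\KK(A,J)$ is represented by a Kasparov module with countably many coefficients, and any equality of classes, such as $q_*(\widetilde\kappa)=\KK(\theta)$, is witnessed by a homotopy with countably many coefficients. We lift all of these into $S$, in $S\cap J$ where appropriate. The retained class $\widetilde\kappa_S$ then maps to $\widetilde\kappa$, and $(q_S)_*\widetilde\kappa_S=\KK(\theta_D)$, since the witnessing homotopy lives over $D$.

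For (iv), a Cuntz pair whose class vanishes in $\KL(A,J)$ has $\KK$-class in the closure of zero. By \cite[Theorem~8.5]{CGSTW} this is detected by $\Lambda$-morphisms on total $\mathrm K$-theory. Vanishing therefore amounts to countably many statements: for each element of $\ulK(A)$, or equivalently for each finitely generated piece together with the countably many approximating $\KK$-classes, the image class vanishes. Each such vanishing is witnessed by a homotopy or a stable unitary equivalence with coefficients in $J$, and we add these coefficients to $S\cap J$.

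The main obstacle is (iv), because $\KL$-vanishing is a closure condition rather than a single homotopy. The approach is to express it through the multicoefficient theorem as the vanishing of countably many induced maps on $\ulK(A)$, each with a finite witness, and to check that these witnesses survive in the inductive limit $I$ by continuity of $\ulK$. A further point of care is that all requirements are added in the same alternating sequence, so that the stability enlargement does not destroy the earlier witnesses; it cannot, since every requirement is preserved under enlarging $S$. Finite matrix amplifications are handled in the same way, by running the construction in $\mathrm M_k(B)$.
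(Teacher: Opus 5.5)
Your overall scheme matches the paper's. You retain the data through the direct-limit description of $\KK$ over separable subalgebras \cite[Appendix~B]{CGSTW}, alternately enlarge the intersection with $J$ using Lemma~\ref{lem:trace-kernel-regularity}(iii), and pass stability and almost unperforation to the limit.

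\textbf{Item (i) has a real gap.} You adjoin witnesses $\sum_i x_i^*\theta(a)x_i=1$, with $x_i\in D$, only for $a$ in a countable dense subset of $A_+\setminus\{0\}$. This does not make $\theta_D(a)$ full in $D$ for every nonzero $a$. Fullness does not pass to norm limits, because the witnesses for nearby $a'$ have uncontrolled norms. For instance, in $\mathrm C([0,1])$ the strictly positive functions are dense in the positive cone, yet $t\mapsto t$ is not full.

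The paper also adjoins witnesses for all nonzero rational cutdowns $(a'-\epsilon)_+$ of the dense family. If $\|a-a'\|<\epsilon<\|a\|/2$, then $(a'-\epsilon)_+\neq0$ and $(a'-\epsilon)_+=dad^*$ for some $d$. So fullness of $\theta_D((a'-\epsilon)_+)$ forces fullness of $\theta_D(a)$.

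There is a second omission in (i). Unitizable fullness means fullness of the forced unitization $\theta^\dagger$ on $(A^\dagger)_+$, not of $\theta$ on $A_+$. When $A$ is unital, you must also witness, inside $D$, that the complementary summand $1-\theta(1_A)$ is full. Your description leaves this out.

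\textbf{Item (iv) relies on the UCT.} Your argument goes through the multicoefficient theorem, which requires $A$ to satisfy the UCT. Without it, vanishing of the induced $\Lambda$-morphism on $\ulK(A)$ does not force the class in $\KL(A,I)$ to vanish. The lemma does not assume the UCT.

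The paper instead uses the direct-limit description of $\KL$ \cite[Proposition~5.7]{CGSTW}. A zero class in $\KL(A,J)$ already vanishes after a separable enlargement of the ideal, and that enlargement is then placed in $S$.

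Under the UCT your route is sound. This suffices for the paper, since (iv) is used only in the uniqueness part of Theorem~\ref{thm:ultrapower-full-invariant}, where $A$ is UCT. You do need to order the choices carefully: for each $x\in\ulK(A)$, pick a separable $J_x\subseteq J$ that contains the current ideal and in which the image of $x$ dies.

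\textbf{A minor point on $S\cap J$.} The identity $S\cap J=\overline{\bigcup_n(S_n\cap J)}$ needs no closure under approximate units. The map $S_n/(S_n\cap J)\to Q$ is an injective $^*$-homomorphism, hence isometric. Therefore $\operatorname{dist}(s,S_n\cap J)=\|q(s)\|\le\|s-x\|$ for $s\in S_n$ and $x\in S\cap J$.
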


\begin{proof}
This is the separabilization argument of
\cite[Lemma~7.8 and its proof]{CGSTW}; we spell out why Jiang--Su stability is
unnecessary here. To retain unitizable fullness, choose a countable dense
family in $(A^\dagger)_+$ and all its nonzero rational spectral cutdowns.
For each such cutdown, adjoin finitely many witnesses that its image under
$\theta^\dagger$ generates the quotient unit as a closed ideal.
Use the same quotient unit throughout. These witnesses retain fullness
of the forced-unitization map, including its complementary scalar summand
when $A$ is unital. The factorization of a KK-class and an equality
between its quotient and $\KK(\theta)$ are retained by the direct-limit
description
\[
 \KK(A,C)=\varinjlim_{C_0\subseteq C\ \mathrm{separable}}\KK(A,C_0)
\]
of \cite[Definition~5.4 and Proposition~B.6]{CGSTW}.  The analogous
direct-limit description for KL, \cite[Proposition~5.7]{CGSTW}, retains a zero
KL-class after a further separable enlargement.  These requirements persist
under subsequent enlargement of the subextension.

Starting from such a separable subextension, alternately enlarge its intersection
with $J$ to a separable stable subalgebra and to one with almost unperforated
Cuntz semigroup, using Lemma~\ref{lem:trace-kernel-regularity}.  Include this
enlargement in the middle algebra, take its intersection with $J$ again, and
continue.  If $S$ is the closure of the increasing middle algebras, the closure
$I$ of the intervening ideal enlargements is exactly $S\cap J$.  Indeed, for
$x\in S\cap J$ approximate $x$ by $s$ in an earlier middle algebra.  The
quotient norm formula gives an element in that earlier algebra's intersection
with $J$ within $\|q(s)\|$ plus an arbitrarily small error of $s$.  This
intersection was included in the next ideal enlargement.  Thus $x\in I$.
Stability and almost unperforation pass to the inductive limit.  This replaces
the separable Jiang--Su stable enlargement in the cited proof; no other step
of that proof uses it.  The final assertion follows from
Lemma~\ref{lem:trace-kernel-regularity}.
\end{proof}

\begin{lemma}\label{lem:abstract-full-lift}
Let $A$ be separable and nuclear, let $\theta:A\to Q$ be unitizably full, and
suppose $\widetilde\kappa\in \KK(A,B)$ satisfies $q_*(\widetilde\kappa)=\KK(\theta)$.  There is a
homomorphism $\phi:A\to B$ such that $q\phi=\theta$ and $\KK(\phi)=\widetilde\kappa$.
Moreover, given a lift $\psi:A\to B$ of $\theta$ and a class
$\delta\in \KK(A,J)$, there is another lift $\phi$ with
$[\phi,\psi]_{\KK(A,J)}=\delta$.
\end{lemma}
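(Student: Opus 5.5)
We plan to reduce to the separable subextension of Lemma~\ref{lem:separable-subextension} and apply there the abstract existence theorem \cite[Theorem~5.16(i),(ii)]{CGSTW}; its part (ii) needs a nuclear source but no regularity of the codomain beyond a $\sigma$-unital stable ideal. First, choose a unital separable subextension $0\to I\to S\xrightarrow{q_S}D\to0$ in which $I$ is stable with $\Cu(I)$ almost unperforated. By Lemma~\ref{lem:separable-subextension}(i) we may arrange that $\theta$ corestricts to a unitizably full map $\theta_D:A\to D$. By (ii), $\widetilde\kappa$ is retained as $\widetilde\kappa_S\in\KK(A,S)$ with $(q_S)_*\widetilde\kappa_S=\KK(\theta_D)$. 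For the second assertion, (iii) lets us include $\psi(A)$ in $S$ and retain $\delta$ as a class $\delta_I\in\KK(A,I)$. Then $I$ is separable, stable, and has the corona factorization property.

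Next, we check the absorption hypothesis needed in \cite[Theorem~5.16]{CGSTW}. Since $\theta_D$ is unitizably full and $I$ is separable stable with the corona factorization property, \cite[Theorem~2.6]{Gabe} shows that the Busby map $A\to\mathcal Q(I)$ obtained by pulling back the extension along $\theta_D$ is nuclearly absorbing. Because $A$ is nuclear, this means it is absorbing, and by \cite[Corollary~5.11]{CGSTW} the associated multiplier representation is absorbing. With this hypothesis in place, \cite[Theorem~5.16(i)]{CGSTW} produces a lift $\phi_S:A\to S$ of $\theta_D$ with $\KK(\phi_S)=\widetilde\kappa_S$. Part (ii) then gives, for the given lift $\psi$ now valued in $S$, a lift $\phi_S$ with $[\phi_S,\psi]_{\KK(A,I)}=\delta_I$. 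Composing with the inclusions $S\subseteq B$ and $I\subseteq J$ yields $\phi$ with $q\phi=\theta$, $\KK(\phi)=\widetilde\kappa$, and $[\phi,\psi]_{\KK(A,J)}=\delta$, by naturality of Cuntz-pair classes under the inclusion of extensions.

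The main obstacle is matching hypotheses precisely. Theorem~5.16 of \cite{CGSTW} is formulated for an extension of a separable quotient by a $\sigma$-unital stable ideal, with fullness stated for the map into the quotient. We must ensure that fullness of $\theta$ in $Q$ survives as fullness of $\theta_D$ in the separable $D$, which is exactly what the witnesses in Lemma~\ref{lem:separable-subextension}(i) provide. We must also ensure that KK-classes relative to the nonseparable $B$ and $J$ are computed by the direct limit over separable subalgebras, so that equalities obtained in $S$ and $I$ push forward correctly. If the cited existence statement requires the representation itself to be unital or to absorb the zero map, we would first pass to the forced unitization $A^\dagger$, as in the proof of Lemma~\ref{lem:separable-subextension}, and use the complementary scalar summand retained there.
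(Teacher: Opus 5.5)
Your separabilization and absorption steps match the paper. One small point: Gabe's criterion needs $\overline\mu\theta_D$ itself to be unitizably full. This holds because the Busby map $\overline\mu:D\to Q(I)$ of the unital extension is unital, so it preserves fullness of the forced unitization.

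\textbf{The gap is in the existence step.} As the paper recalls and uses it, \cite[Theorem~5.16]{CGSTW} does not directly produce a lift with a prescribed absolute class.
\begin{itemize}
\item Part (i) starts from an absorbing $\rho:A\to M(I)$ and a class $\delta'\in\KK(A,I)$. It returns an absorbing $\rho'$ with $(\rho',\rho)$ a Cuntz pair of class $\delta'$.
\item Part (ii), for nuclear $A$, lifts an absorbing map $A\to Q(I)$ whose $\KK$-obstruction vanishes to an absorbing map into $M(I)$. It gives no control over the class of that lift.
\end{itemize}
You swapped the two parts and credited (i) with exactly the first assertion of the lemma. So that assertion is assumed rather than proved. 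In particular, your argument never actually uses the hypothesis $q_*(\widetilde\kappa)=\KK(\theta)$.

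\textbf{Three ingredients are missing.}

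First, the cited results concern $M(I)$, not $S$. Since $I$ need not be essential in $S$, the multiplier map $\mu:S\to M(I)$ need not be injective. You need the pullback identification $S\cong\{(m,d)\in M(I)\oplus D:\pi_I(m)=\overline\mu(d)\}$. It turns maps into $M(I)$ whose quotient is $\overline\mu\theta_D$ into lifts of $\theta_D$ into $S$.

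Second, you must show the lifting obstruction vanishes. We have $\KK(\overline\mu\theta_D)=\overline\mu_*(q_S)_*\widetilde\kappa_S=(\pi_I)_*\mu_*\widetilde\kappa_S$, which factors through $\KK(A,M(I))=0$ because $I$ is stable. This is where $\widetilde\kappa$ enters. Part (ii) followed by the pullback then gives some lift $\psi_0:A\to S$, with uncontrolled class.

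Third, you must correct the class.
\begin{itemize}
\item Since $\widetilde\kappa_S-\KK(\psi_0)\in\ker(q_S)_*$, half-exactness of $\KK(A,-)$ for nuclear $A$ \cite[Proposition~5.5(v)]{CGSTW} gives $\delta'\in\KK(A,I)$ mapping onto this difference.
\item Part (i), applied to the absorbing map $\mu\psi_0$, gives $\overline\phi$ of relative class $\delta'$ with the same Busby image. So $\overline\phi$ pulls back to some $\phi:A\to S$.
\item Naturality of Cuntz-pair classes gives $\KK(\phi)=\KK(\psi_0)+j_*\delta'=\widetilde\kappa_S$.
\end{itemize}

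Your treatment of the relative assertion is correct in substance. However, it is part (i), not (ii), applied to $\mu\psi$. That map is absorbing because its Busby image is $\overline\mu\theta_D$. It must again be followed by the pullback to return to $S$.
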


\begin{proof}
Choose a subextension as in Lemma~\ref{lem:separable-subextension}, retaining
$\theta_D$ and $\widetilde\kappa_S$.  Let $\mu:S\to M(I)$ be the multiplier map and
$\overline\mu:D\to Q(I)$ its Busby map. The multiplier map need not
be faithful on all of $S$, but the extension is the pullback
\[
S\cong\{(m,d)\in M(I)\oplus D:
          \pi_I(m)=\overline\mu(d)\}.
\]
Indeed, lift $d$ to $s\in S$ and correct $m-\mu(s)\in I$.
Injectivity follows because $\mu$ is faithful on $I$.
The Busby map is unital. Since $\theta_D$ is already unitizably
full, its forced unitization is full, and composition with the unital
$\overline\mu$ preserves fullness. Hence
$\overline\mu\theta_D$ is unitizably full.
It is absorbing by the corona
factorization property and the absorption criterion recalled above.  The
commuting pullback square shows that its KK-class factors through
$\KK(A,M(I))$, which is zero for stable $I$; see
\cite[proof of Theorem~5.16(ii)]{CGSTW}.  Theorem~5.16(ii) therefore lifts it to
an absorbing homomorphism into $M(I)$.  Pullback gives a homomorphism
$\psi:A\to S$ lifting $\theta_D$.

One must still correct its KK-class.  In this same separable extension,
\[
 \widetilde\kappa_S-\KK(\psi)\in\ker\bigl(\KK(A,q_S)\bigr).
\]
Half-exactness in the second variable for nuclear $A$ gives a preimage
$\delta'\in \KK(A,I)$; see \cite[Proposition~5.5(v)]{CGSTW}.  Since $\mu\psi$
is absorbing, Theorem~5.16(i) gives an absorbing $\overline\phi:A\to M(I)$
with $(\overline\phi,\mu\psi)$ a Cuntz pair of class $\delta'$.  It has the
same Busby quotient as $\mu\psi$, and therefore pulls back to
$\phi:A\to S$.  Naturality of Cuntz-pair classes,
\cite[Proposition~5.5(iv)]{CGSTW}, gives
$\KK(\phi)-\KK(\psi)=\widetilde\kappa_S-\KK(\psi)$.  Thus $\KK(\phi)=\widetilde\kappa_S$.

For the relative assertion, separabilize while retaining $\psi$ and $\delta$.
The representation $\mu\psi$ is absorbing by the same criterion.  Apply
Theorem~5.16(i) directly and pull back.  Naturality gives the specified
relative class after inclusion into $J$.
\end{proof}

\begin{lemma}\label{lem:kl-uniqueness-abstract}
Let $A$ and $I$ be separable, with $I$ stable.  If two absorbing representations
$\phi,\psi:A\to M(I)$ form a Cuntz pair with zero class in $\KL(A,I)$, then
there are unitaries $v_n\in1+I$ such that
\[
 \|v_n\phi(a)v_n^*-\psi(a)\|\longrightarrow0\qquad(a\in A).
\]
\end{lemma}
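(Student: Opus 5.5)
This is the $\KL$-uniqueness theorem for absorbing representations. I would not rebuild it from first principles. Instead, I would reduce it to the $\KK$-uniqueness theorem of \cite{Szabo} (in the formulation recorded in \cite[Section~5]{CGSTW}) and then pass from $\KK$ to $\KL$ by a standard diagonal argument. The statement involves no trace or nuclearity hypotheses on the target beyond separability and stability of $I$, so everything should be soft and abstract.

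First, recall the $\KK$-level result. If $\phi,\psi:A\to M(I)$ are absorbing and $(\phi,\psi)$ is a Cuntz pair whose class vanishes in $\KK(A,I)$, then $\phi$ and $\psi$ are asymptotically unitarily equivalent through unitaries in $1+I$. In particular, there is a sequence $v_n\in 1+I$ with $\|v_n\phi(a)v_n^*-\psi(a)\|\to0$. This is the absorbing-pair form of Szab\'o's uniqueness theorem; the unitaries can be taken in $1+I$, not merely in $M(I)$, precisely because both maps absorb and the pair is Cuntz. I would cite this as \cite{Szabo} together with the translation in \cite[Section~5]{CGSTW}, checking only that ``absorbing'' there matches our usage (unital versus nonunital absorption for the forced unitization).

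Second, pass from $\KL$ to $\KK$. By definition $\KL(A,I)=\KK(A,I)/\overline{\{0\}}$, so the hypothesis says the Cuntz-pair class $x=[\phi,\psi]$ lies in the closure of zero. This means: for every finite set $\mathcal F\subseteq A$ and every $\varepsilon>0$, there should be a Cuntz pair with zero $\KK$-class that agrees with $(\phi,\psi)$ on $\mathcal F$ up to $\varepsilon$. Equivalently, I would use the characterisation via the Rørdam/Dadarlat sequence-algebra picture: $x$ is the image of a zero class under a map to $\KK(A,I_\infty)$ or $\prod I/\bigoplus I$. Concretely, since $\psi$ absorbs, $x\in\overline{\{0\}}$ gives absorbing $\psi_k$ with $(\psi_k,\psi)$ Cuntz of class $x$ and $\psi_k\to\phi$ pointwise modulo $I$-small errors. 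Replacing $\phi$ by $\psi_k$ and applying step one to the pair $(\phi,\psi_k)$, whose class is $0$ in $\KK$ after subtracting, would then give unitaries $w_k\in1+I$. A diagonal choice over an exhausting sequence of finite sets finishes.

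The main obstacle is this second step: making precise that the closure of zero in $\KK(A,I)$ is detected by approximately unitarily equivalent Cuntz pairs, while keeping the unitaries in $1+I$. I would first try Dadarlat's description $\KL(A,I)=\KK(A,I)/\mathrm{Pext}$ under the UCT. That route fails here, because no UCT is assumed, so I would instead use the topological description of $\overline{\{0\}}$ from \cite[Proposition~5.7]{CGSTW}: a class is in the closure of zero if and only if it maps to zero in $\KK(A,I_\infty)$ via the constant embedding. I would then apply the $\KK$-uniqueness with the target $\prod_n I$-type sequence algebra, which is again stable enough for absorption after separabilization, and read off the unitaries coordinatewise.
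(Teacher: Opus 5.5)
Your first step, Szab\'o's $\KK$-uniqueness theorem for absorbing Cuntz pairs with unitaries in $1+I$, is exactly what the paper uses. The gap is in the passage from $\KL$ to $\KK$. You correctly flag this as the crux, but neither of your two sketches carries it out.

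In the middle paragraph, suppose you have maps $\psi_k$ with $[\psi_k,\psi]=x$ and $\psi_k\to\phi$. Uniqueness then gives only that $\phi$ is approximately unitarily equivalent to $\psi_k$. That says nothing about $\psi$. What you need is a family of absorbing maps whose classes relative to $\psi$ are $0$, converging in norm to a map whose class relative to $\psi$ is $x$; uniqueness is then applied twice. You also only assert the existence of such maps (``since $\psi$ absorbs'').

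In the last paragraph there are two problems.
\begin{enumerate}
\item The characterization of $\overline{\{0\}}$ as the kernel of $\KK(A,I)\to\KK(A,I_\infty)$ is not the description the argument rests on. In the paper the cited \cite[Proposition~5.7]{CGSTW} is a direct-limit description for nonseparable targets. Since $\KK$ need not commute with infinite products, vanishing at finite stages does not obviously give vanishing in $\KK(A,I_\infty)$.
\item More seriously, uniqueness over a separable subideal of $\prod I/\bigoplus I$ would require the constant-sequence representation to be absorbing there. Nothing in the hypotheses gives this. $A$ is not assumed nuclear, and absorption of $\psi$ into $M(I)$ does not pass to multipliers of sequence-algebra ideals.
\end{enumerate}

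The missing idea is to stay inside the separable stable ideal $\mathrm C(\overline\N,I)$, where $\overline\N=\N\cup\{\infty\}$.
\begin{enumerate}
\item $\KL$-vanishing gives $\eta\in\KK(A,\mathrm C(\overline\N,I))$ with $\eta(n)=0$ for finite $n$ and $\eta(\infty)=[\phi,\psi]$.
\item The constant representation $\Psi(a)(n)=\psi(a)$ into $M(\mathrm C(\overline\N,I))=\mathrm C_{\mathrm{strict}}(\overline\N,M(I))$ is absorbing by \cite[Proposition~3.2]{DadarlatTopology}, because $\mathrm C(\overline\N)$ is separable, unital and nuclear.
\item Abstract $\KK$-existence then gives an absorbing $\Phi$ with Cuntz-pair class $\eta$ relative to $\Psi$.
\item Since $\Phi(a)-\Psi(a)\in\mathrm C(\overline\N,I)$, the evaluations $\phi_n=\mathrm{ev}_n\Phi$ converge to $\phi_\infty$ in norm, not merely strictly. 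Each $\phi_n$ is absorbing, because its corona map is that of $\psi$.
\item Naturality gives $[\phi_n,\psi]=0$ for finite $n$, and $[\phi,\phi_\infty]=[\phi,\psi]-\eta(\infty)=0$ in $\KK(A,I)$.
\item Apply Szab\'o's theorem to both pairs. For a given finite set and tolerance, choose $n$ large and compose the two conjugacies to get the required unitaries in $1+I$.
\end{enumerate}
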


\begin{proof}
This follows from \cite[Theorem~A]{Szabo} by the formal proof of
\cite[Definition~5.6 and proof of Theorem~5.17(ii)]{CGSTW}; unlike the theorem stated there, the conclusion
here has no Jiang--Su tensor factor.  For completeness, KL-vanishing supplies
$\eta\in \KK(A,\mathrm{C}(\overline\N,I))$, where
$\overline\N=\N\cup\{\infty\}$, whose evaluations at finite integers are zero
and whose evaluation at infinity is $[\phi,\psi]$.  The constant representation
$\Psi(a)(n)=\psi(a)$ is absorbing into
$M(\mathrm C(\overline\N,I))=\mathrm C_{\mathrm{strict}}(\overline\N,M(I))$
by \cite[Proposition~3.2]{DadarlatTopology}, applied to the absorbing
representation $\psi$ and the separable unital nuclear algebra
$\mathrm C(\overline\N)$.
Abstract KK existence gives an absorbing $\Phi$ with Cuntz-pair class $\eta$
relative to $\Psi$.  Write $\phi_n=\mathrm{ev}_n\Phi$.  Since
$\Phi(a)-\Psi(a)\in \mathrm{C}(\overline\N,I)$, one has
$\|\phi_n(a)-\phi_\infty(a)\|\to0$, not merely strict convergence.
Each $\phi_n$ is absorbing, since its corona map equals that of $\psi$.
Naturality and additivity of Cuntz-pair classes give, for finite $n$,
\[
 [\phi_n,\psi]_{\KK(A,I)}=(\mathrm{ev}_n)_*(\eta)=0,
 \qquad
 [\phi,\phi_\infty]_{\KK(A,I)}
   =[\phi,\psi]_{\KK(A,I)}-(\mathrm{ev}_\infty)_*(\eta)=0.
\]
These two $\KK$-equalities are the inputs to the uniqueness theorem
\cite[Theorem~A]{Szabo}, which gives approximate
conjugacies by unitaries in $1+I$ from
$\phi_n$ to $\psi$ for finite $n$, and from $\phi$ to $\phi_\infty$, since the
corresponding KK-classes vanish.  For any fixed finite set and tolerance, first
choose $n$ so that $\phi_n$ and $\phi_\infty$ are close, and then compose these
two conjugacies.  Exhausting the source proves the assertion.
\end{proof}

\begin{proposition}\label{prop:relative-rotation}
Let $A$ be separable and satisfy the UCT.  There is a natural isomorphism
\begin{equation}\label{eq:rotation-isomorphism}
 \mathcal R_{A,T}:\ker\bigl(\KL(A,j):\KL(A,J)\to \KL(A,B)\bigr)
 \xrightarrow{\cong}
 \Hom\bigl(\mathrm{K}_1(A)/\Tor \mathrm{K}_1(A),\R/H\bigr).
\end{equation}
For a unital Cuntz pair $\phi,\psi:A\to B$ with relative class $\lambda$ in
this kernel, it satisfies
\begin{equation}\label{eq:rotation-pair-formula}
 \bigl(\phi^\ddagger-\psi^\ddagger\bigr)(x)
 =\mathcal R_{A,T}(\lambda)
       \bigl(\Pi_A^{cu}(x)+\Tor \mathrm{K}_1(A)\bigr).
\end{equation}
\end{proposition}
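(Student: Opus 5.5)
The plan is to compute $\ker\KL(A,j)$ through the six-term sequence of the extension \eqref{eq:trace-kernel-extension} in $\KL(A,-)$, and then identify the resulting quotient with a $\Hom$-group via the UCT.

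\emph{Step 1: identify the kernel as a quotient.} Since $A$ satisfies the UCT, $\KL(A,-)$ is half-exact on the trace-kernel extension, after separabilizing through Lemma~\ref{lem:separable-subextension} and the direct-limit description of $\KL$. This gives
\[
 \KL(A,SQ)\xrightarrow{\partial}\KL(A,J)\xrightarrow{j_*}\KL(A,B),
\]
so $\ker j_*$ is the image of $\partial$, i.e.\ $\KL(A,SQ)$ modulo the image of $\KL(A,SB)$. Now $\mathrm K_1(Q)=0$ and $\mathrm K_0(Q)=\R$ is divisible and torsion free. By the UCT, $\KL(A,SQ)\cong\Hom(\mathrm K_1(A),\R)$: the $\Ext$ term vanishes because $\R$ is injective, and $\KL=\KK$ modulo pure extensions only affects $\Ext$. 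The image of $\KL(A,SB)$ in $\Hom(\mathrm K_1(A),\R)$ consists of the maps factoring through $\tau_*:\mathrm K_0(B)\to\R$, whose image is $H$ by Lemma~\ref{lem:closed-trace-range}.

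\emph{Step 2: describe that image precisely.} The cokernel should be $\Hom(\mathrm K_1(A),\R)/\Hom(\mathrm K_1(A),H)$-like, but this has to be done carefully with total $\mathrm K$-theory. Using \eqref{eq:umct}, I would compute at the level of $\Lambda$-morphisms $\ulK(A)\to\ulK(J)$ that vanish after $j$. Proposition~\ref{prop:kernel-algebraic-k1} shows $\ker\mathrm K_1(j)=\R/H$, a divisible group, and that $\mathrm K_0(J)\to\mathrm K_0(B)$ is injective, since $\partial:\mathrm K_1(Q)=0\to\mathrm K_0(J)$. Hence a $\Lambda$-morphism in the kernel is determined by its $\mathrm K_1$ component $\mathrm K_1(A)\to\R/H\subseteq\mathrm K_1(J)$; the torsion parts are forced by the coefficient groups.

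The main obstacle is checking that, for $\R/H$ (divisible, and torsion free when $H=\R$, though possibly with torsion when $H$ is discrete), the compatibility with Bocksteins and coefficient maps forces vanishing on $\Tor\mathrm K_1(A)$, and that every homomorphism $\mathrm K_1(A)/\Tor\to\R/H$ extends to a $\Lambda$-morphism. I would handle this by factoring through $\R$. A morphism in the image of $\partial$ comes from $\Hom(\mathrm K_1(A),\R)$, which kills torsion. Conversely, $\R/H$ is divisible, so a map on $\mathrm K_1(A)/\Tor$ lifts to $\Hom(\mathrm K_1(A)/\Tor,\R)$, because $\R$ is injective and $\mathrm K_1(A)/\Tor$ is torsion free; the lift may need only to be defined modulo $H$. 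Injectivity of $\mathcal R_{A,T}$ then amounts to showing that a map $\mathrm K_1(A)\to\R$ landing in $H$ comes from $\KL(A,SB)$. This uses $\mathrm K_0(B)\to H$ surjective together with the UCT lifting for $\Hom(\mathrm K_1(A),\mathrm K_0(B))$ modulo pure extensions, which I expect to be the delicate point; I would handle it by working with $\KL$ so that pure $\Ext$ issues disappear.

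\emph{Step 3: the pair formula.} For a unital Cuntz pair $\phi,\psi$ with class $\lambda$, the $\mathrm K_1$-component of $\lambda$ sends $[u]\in\mathrm K_1(A)$ to $[\phi(u)\psi(u)^*]\in\mathrm K_1(J)$, with $\phi(u)\psi(u)^*\in 1+\mathrm M_k(J)$. Applying $\Omega_T$ from Proposition~\ref{prop:kernel-algebraic-k1} gives $\overline{\phi(u)}-\overline{\psi(u)}$ in $\Uquot{B}$, because the scalar part is $1$. Since $\lambda\in\ker j_*$, this lands in $\ker\Pi_B^{cu}=\R/H$. This yields \eqref{eq:rotation-pair-formula}, first for $x=\bar u$ and then for general $x$ by additivity. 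Naturality in $A$ follows from naturality of $\partial$ and $\Omega_T$.
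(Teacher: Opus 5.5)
There is a genuine gap, and it is in your Step~1. $\KL(A,-)$ is not half-exact. Here the sequence $\KL(A,SQ)\xrightarrow{\partial}\KL(A,J)\xrightarrow{j_*}\KL(A,B)$ is in general \emph{not} exact at $\KL(A,J)$. Under the UMCT, a $\Lambda$-morphism $\ulK(A)\to\ulK(SQ)$ is just a homomorphism $\mathrm K_1(A)\to\R$. So $\operatorname{im}\partial$ consists of those maps $\mathrm K_1(A)/\Tor\mathrm K_1(A)\to\R/H$ that lift to $\R$. The exact sequence $\Hom(G,\R)\to\Hom(G,\R/H)\to\Ext(G,H)\to\Ext(G,\R)=0$ shows that the failure is measured by $\Ext(\mathrm K_1(A)/\Tor\mathrm K_1(A),H)$.

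This group is nonzero, for instance, for a solenoid algebra $A$ with $\mathrm K_0(A)=\Z$, $\mathrm K_1(A)=\mathbb Q$, and $T=D_I$, where $H=\Z$. Take a class $x$ whose $\mathrm K_1$-component $\mathbb Q\to\R/\Z\subseteq\mathrm K_1(J)$ does not lift to $\R$. Then $x\notin\operatorname{im}\partial$. Yet $j_*(x)$ has zero $\Hom$-part, so it lies in $\Ext(\mathbb Q,\mathrm K_0(B))$, which consists of pure extensions because $\mathbb Q$ is torsion free. Hence $j_*(x)=0$ in $\KL$. Half-exactness holds for $\KK(A,-)$ but is lost on dividing by $\operatorname{Pext}$.

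The same confusion underlies your surjectivity argument. Injectivity of $\R$ lets you extend homomorphisms along monomorphisms; it does not let you lift them along the surjection $\R\to\R/H$. So a homomorphism $\mathrm K_1(A)/\Tor\to\R/H$ need not lift to $\R$, and Step~1 would identify the kernel with a proper subgroup of the correct one.

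The repair is to drop Step~1 and carry out the first half of your Step~2 directly, which is the paper's proof. Naturality of the UMCT and left exactness of $\Hom_\Lambda$ give $\ker\KL(A,j)\cong\Hom_\Lambda(\ulK(A),\ker\ulK(j))$. Since $\mathrm K_1(Q)=0$, the map $\mathrm K_0(j)$ is injective. Since $\mathrm K_0(Q)=\R$ is divisible and torsion free and $\mathrm K_1(Q)=0$, every finite-coefficient group of $Q$ vanishes. So $j$ induces isomorphisms on all coefficient groups; this is the precise content of your ``forced by the coefficient groups.'' Thus $\ker\ulK(j)$ is $\R/H$ in the $\mathrm K_1$ slot and zero elsewhere.

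Every $n$-torsion element of $\mathrm K_1(A)$ lies in the image of the Bockstein $\nu_n^A$. The coefficient components of a kernel element vanish, so Bockstein naturality kills torsion. Conversely, a homomorphism $\mathrm K_1(A)/\Tor\to\R/H$, with zero components elsewhere, is a $\Lambda$-morphism. Reduction mod $n$ commutes because $\R/H$ is divisible, so its image in $\mathrm K_1(J;\Z/n)$ is zero. Bocksteins commute because torsion is killed.

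Injectivity is then automatic, since a kernel element is determined by its $\mathrm K_1$ component. The ``delicate point'' involving $\KL(A,SB)$ and pure extensions never arises. Composing with $\Omega_T$ gives $\mathcal R_{A,T}$, and your Step~3 then goes through as you wrote it, matching the paper.
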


\begin{proof}
Apply the UMCT, \cite[Theorem~8.5]{CGSTW}, to identify $\KL$ with
morphisms of total $\mathrm K$-theory, naturally in the second variable.
This applies to the nonseparable ideals here through the
separable-subalgebra convention of Appendix~B of that paper.
The quotient $Q$ has
$\mathrm{K}_0(Q)=\R$, $\mathrm{K}_1(Q)=0$, and all its finite-coefficient $\mathrm{K}$-groups vanish.
Here is the relevant algebraic kernel calculation explicitly.
The six-term sequence gives injectivity of $\mathrm K_0(j)$.
Since every finite-coefficient group of $Q$ is zero, every
finite-coefficient map induced by $j$ is an isomorphism.
Moreover
\[
V:=\ker\mathrm K_1(j)\cong\R/H
\]
is divisible. A total-$\mathrm K$ morphism killed by $j$ therefore
has zero $\mathrm K_0$ component and zero coefficient components;
only its ordinary $\mathrm K_1$ component can remain, with image in $V$.
Bockstein naturality forces that component to vanish on torsion,
since each $n$-torsion class is in a Bockstein image.
Conversely a map $\mathrm K_1(A)/\Tor\mathrm K_1(A)\to V$,
together with zero maps on all other components, respects reduction
modulo $n$ because $V$ is divisible, and respects Bocksteins because
it kills torsion. All other coefficient squares are zero.
Thus the kernel calculation of \cite[Lemma~2.18]{CGSTW} gives
\[
 \ker\Hom_\Lambda(\ulK(A),\ulK(j))
 \cong
 \Hom\bigl(\mathrm K_1(A)/\Tor\mathrm K_1(A),
                 \ker\mathrm K_1(j)\bigr),
\]
by the ordinary $\mathrm K_1$ component.  Compose with
Proposition~\ref{prop:kernel-algebraic-k1} to obtain
\eqref{eq:rotation-isomorphism}.  This is exactly the proof of
\cite[Theorem~8.7]{CGSTW}; the nuclear and Jiang--Su stable codomain in that
statement is used there to obtain the quotient $\mathrm{K}$-theory and the isomorphism
$\Omega_T$, which have been proved independently here.

For a matrix unitary $u$ in $A$, the $\mathrm{K}_1$ component of the relative Cuntz-pair
class is $[\phi(u)\psi(u)^*]_1\in \mathrm{K}_1(J)$, using the scalar-unit convention;
see \cite[Proposition~8.3]{CGSTW}.  Applying $\Omega_T$ gives
$\overline{\phi(u)}-\overline{\psi(u)}$.  This proves
\eqref{eq:rotation-pair-formula}.
\end{proof}

\begin{theorem}\label{thm:ultrapower-full-invariant}
Let $T$ be an admissible target, let $\omega$ be a free ultrafilter
on $\N$, and put $B=T_\omega$.
Let $A$ be separable, unital, nuclear, and satisfy the UCT, and let $\lambda$ be
a faithful tracial state on $A$. Then:
\begin{enumerate}
\item Every unital totally compatible triple
$(\kappa,\kappa_T,\kappa_\gamma)$ from $A$ to $B$ satisfying
$\kappa_T(\tau)=\lambda$ is realized by a unital embedding $A\to B$.
\item Two unital embeddings $A\to B$ inducing the same triple and the
 trace $\lambda$ are unitarily equivalent.
\end{enumerate}
No vanishing, torsion, or finite-generation assumption is imposed on either
$\mathrm{K}$-group of $A$ or $T$.
\end{theorem}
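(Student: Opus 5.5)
For existence, I will follow the trace-kernel strategy of \cite{KES} and \cite{CGSTW}: first realize the tracial part in the quotient $Q$, then lift through \eqref{eq:trace-kernel-extension}, and finally correct the $\KK$- and unitary data using the tools of Section~\ref{sec:ultrapower-classification}.

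\emph{Step 1: the map into $Q$.} Since $A$ is nuclear and $Q$ is a $\mathrm{II}_1$ factor, embed the GNS von Neumann algebra $\pi_\lambda(A)''$, which is hyperfinite, trace-preservingly into the tracial ultrapower $Q$. Restricting gives $\theta:A\to Q$ with $\tau\theta=\lambda$. Because $\lambda$ is faithful, every nonzero positive element has positive trace and is therefore full in the factor $Q$. Hence $\theta$, and its forced unitization, is unitizably full. Any two such $\theta$ are unitarily equivalent in $Q$ by uniqueness of trace-preserving embeddings of hyperfinite algebras into ultrapowers.

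\emph{Step 2: choosing the $\KK$-class.} I will choose $\widetilde\kappa\in\KK(A,B)$ with $q_*\widetilde\kappa=\KK(\theta)$ whose image under \eqref{eq:umct} is $\kappa$. Under the UCT, $\KL(\theta)$ is determined by $\rho_Q\circ\theta_*=\lambda_*$ on $\mathrm K_0$, with all other components zero. Since $\kappa_\sharp\rho_A=\rho_B\kappa_0$ and $\kappa_T(\tau)=\lambda$, the composite $q_*\kappa$ agrees with $\KL(\theta)$. The discrepancy between $\KK$ and $\KL$ is a pure-extension (Pext) class. I will absorb it by lifting $\kappa$ to some $\KK$-class and adjusting by an element of $\KK(A,J)$, using half-exactness. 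Lemma~\ref{lem:abstract-full-lift} then gives a lift $\psi:A\to B$ with $q\psi=\theta$ and $\KL(\psi)=\kappa$. Injectivity is automatic because $\theta$ is faithful.

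\emph{Step 3: correcting the unitary data.} The map $\psi^\ddagger$ and $\kappa_\gamma$ agree after composing with $\Pi_B^{\mathrm{cu}}$. They also agree on $\mathrm{Th}$-images, since $T(B)$ is a singleton, and on $\zeta_m$-images, by total compatibility together with \eqref{eq:coefficient-normalization}. Their difference therefore factors as a homomorphism $\mathrm K_1(A)\to\R/H$ that vanishes on $\Pi^{\mathrm{cu}}_A\zeta_m^A=\nu_m^A$, and hence on $\Tor\mathrm K_1(A)$. By Proposition~\ref{prop:relative-rotation}, this difference is $\mathcal R_{A,T}(\delta)$ for some $\delta\in\ker\KL(A,j)$. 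I will lift $\delta$ to $\KK(A,J)$, using surjectivity of $\KK\to\KL$, and apply the relative part of Lemma~\ref{lem:abstract-full-lift} to obtain a lift $\phi$ with $[\phi,\psi]=\delta$. Formula \eqref{eq:rotation-pair-formula} then gives $\phi^\ddagger=\kappa_\gamma$, and $\KL(\phi)=\kappa$ is unchanged because $\delta$ dies in $\KL(A,B)$.

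\emph{Uniqueness.} Let $\phi,\psi$ induce the same triple. By Step 1, their quotients are unitarily equivalent in $Q$; lifting the unitary to $B$, which has stable rank one, we may assume $q\phi=q\psi$. They then form a Cuntz pair with class in $\ker\KL(A,j)$. Since their unitary maps agree, Proposition~\ref{prop:relative-rotation} forces this class to vanish. Separabilizing by Lemma~\ref{lem:separable-subextension}(iv), where the representations are absorbing by fullness and the corona factorization property, Lemma~\ref{lem:kl-uniqueness-abstract} gives approximate unitary equivalence by unitaries in $1+J$. A standard reindexing (Kirchberg $\varepsilon$-test) in the ultrapower $B=T_\omega$ upgrades this to exact unitary equivalence.

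The main obstacle is Step 2 together with Step 3. One must ensure that the class in $\KK(A,J)$ used to fix the unitary data, and the Pext correction, leave the $\KL$-class intact while retaining the $\zeta_m$-compatibility, all within the separable-subextension framework.
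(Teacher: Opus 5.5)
There is a genuine gap, and it lies in Step~1 rather than where you place the ``main obstacle.'' You assert that the unital map $\theta:A\to Q$ and its forced unitization are unitizably full. This is false. For unital $A$ the forced unitization is $A\oplus\C$, and $\theta^\dagger(a,z)=\theta(a)+z(1-\theta(1_A))$ kills the nonzero positive element $(0,1)$ whenever $\theta$ is unital. Consequently Lemma~\ref{lem:abstract-full-lift} cannot be applied to $\theta$, neither for the initial lift in Step~2 nor for the relative lift in Step~3.

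The uniqueness argument fails for the same reason. A unital multiplier representation cannot absorb the zero representation, since $\phi\oplus0$ sends $1$ to a proper projection, so it is never absorbing. Hence Lemma~\ref{lem:kl-uniqueness-abstract} does not apply to your unital maps.

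The missing idea is to work in a corner. Lift $\theta_0=\operatorname{diag}(\theta,0):A\to\mathrm M_2(Q)$, whose forced unitization $(a,z)\mapsto\operatorname{diag}(\theta(a),z)$ is full, with class $(e_{11})_*\widetilde\kappa$. Unitality of $\kappa$ then gives $[\rho(1_A)]=[e_{11}]$. Stable-rank-one cancellation, applied to this projection and to its complement, conjugates $\rho$ into the $e_{11}$ corner and yields a unital $\psi$. This is where stable rank one of $B$ actually enters.

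In Step~3 you must do the same with $\operatorname{diag}(\psi,0)$. Compute the algebraic $\mathrm K_1$ difference through the unital extensions $\rho_0(a)+z(1-\rho_0(1_A))$ and $\operatorname{diag}(\psi(a),z)$ on $A\oplus\C$, and only then compress.

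In the uniqueness part, apply the $\KL$-uniqueness lemma to $\operatorname{diag}(\phi,0)$ and $\operatorname{diag}(\psi,0)$. Then correct the resulting $w_n\in1+I$ by the close-projection unitary so that they commute exactly with $e_{11}$, and take the corner. This is the route the paper follows.

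Two secondary points. First, your Pext mechanism in Step~2 does not work as stated. Since $q_*j_*=0$, adjusting by an element of $\KK(A,J)$ cannot change the image in $\KK(A,Q)$. Fortunately nothing needs fixing. $Q$ is unital, $\mathrm K_0(Q)=\R$ is divisible and $\mathrm K_1(Q)=0$, so the UCT gives $\KK(A,Q)\cong\Hom(\mathrm K_0(A),\R)$ with no extension term. Any $\KK$-representative of $\kappa$ therefore already satisfies $q_*\widetilde\kappa=\KK(\theta)$ by trace compatibility.

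Second, unitaries of $Q$ lift to $B$ because every unitary in the finite von Neumann algebra $Q$ is an exponential, not because $B$ has stable rank one.

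The remainder of your Step~3 and the $\KL$-vanishing step in uniqueness match the paper. That covers vanishing on $\mathrm{Th}$-images and on $\zeta_m$-images, hence on torsion; realization through $\mathcal R_{A,T}$; and vanishing of the relative class via \eqref{eq:rotation-pair-formula}.
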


\begin{proof}
We first record the quotient-level input.  The finite von Neumann algebra
$\pi_\lambda(A)''$ is hyperfinite, by nuclearity and the hyperfiniteness
theorem of \cite{Connes}. It admits a normal
trace-preserving embedding into the $\mathrm{II}_1$ factor $Q$.  One can obtain
this by successively embedding its increasing finite-dimensional
approximating algebras, using unitary conjugacy of trace-preserving embeddings
of finite-dimensional algebras into a factor, and then taking bounded
$2$-norm limits.  Its restriction gives a unital embedding
$\sigma:A\to Q$ with trace $\lambda$.
Moreover, any two embeddings $A\to Q$ inducing $\lambda$ extend
normally to trace-preserving embeddings of $\pi_\lambda(A)''$:
the trace identities give $\mathrm L^2$ isometries, and bounded
$\mathrm L^2$ approximation gives the extensions.
They need not have factorial domain.
The finite-dimensional approximation just described first gives
approximate conjugacy in $2$-norm on finite sets.  Countable saturation of the
tracial ultrapower, equivalently the usual $2$-norm epsilon test, gives one
unitary implementing the conjugacy on all of $A$.

\emph{Existence of total $\mathrm{K}$-theory and trace.}
Via the UMCT, regard $\kappa$ as a class in $\KL(A,B)$ and choose a
representative $\widetilde\kappa\in\KK(A,B)$.
Its image in $\KK(A,Q)$ equals $\KK(\sigma)$.
Indeed, $Q$ is unital and hence $\sigma$-unital, so the UCT applies.
Since $\mathrm K_0(Q)=\R$ is divisible and $\mathrm K_1(Q)=0$,
the extension terms vanish and
\[
\KK(A,Q)\cong\Hom(\mathrm K_0(A),\R).
\]
Trace compatibility gives equality in this group, not merely equality
after passage to $\KL$.
The map $\sigma$ itself is unital, so it is not unitizably full as a map from
the unital source $A$.  To apply Lemma~\ref{lem:abstract-full-lift}, use instead
\[
 \sigma_0(a)=\begin{pmatrix}\sigma(a)&0\\0&0\end{pmatrix}
       :A\longrightarrow \mathrm{M}_2(Q),\qquad
 \widetilde\kappa_0=(e_{11})_*(\widetilde\kappa).
\]
The forced unitization of $\sigma_0$ maps $A\oplus\C$ to $\mathrm{M}_2(Q)$ by
$(a,z)\mapsto\operatorname{diag}(\sigma(a),z)$, so it is full.  Here and
below, the displayed coordinates identify the forced unitization of a unital
algebra with its direct sum with $\C$.
The lemma gives $\rho:A\to \mathrm{M}_2(B)$ lifting $\sigma_0$ with class
$\widetilde\kappa_0$.
Set $p=\rho(1_A)$.  Unitality of the prescribed class gives
$[p]=[e_{11}]\in \mathrm{K}_0(\mathrm{M}_2(B))$.  Stable rank one gives cancellation, hence
Murray--von Neumann equivalence of these projections and also of their
complements.  Thus they are unitarily equivalent in $\mathrm{M}_2(B)$.  Conjugate
$\rho$ into the $e_{11}$ corner and identify that corner with $B$.
The resulting unital homomorphism $\psi:A\to B$ has total $\mathrm{K}$-theory map
$\kappa$ and trace $\lambda$.  Faithfulness of $\lambda$ makes it injective.

\emph{Correction of algebraic $\mathrm{K}_1$.}
Put $d=\kappa_\gamma-\psi^\ddagger$.  Compatibility with the Thomsen maps
shows that $d$ vanishes on $\mathrm{im}(\mathrm{Th}_A)$, and compatibility with
$\Pi^{cu}$ shows that its image is contained in $\ker\Pi_B^{cu}=\R/H$.
Hence it factors as a map $r:\mathrm{K}_1(A)\to\R/H$.  It vanishes on torsion.
Indeed, an element killed by $n$ is in the image of the Bockstein map
$\mathrm{K}_0(A;\Z/n)\to \mathrm{K}_1(A)$; the factorization of that Bockstein map through
$\zeta_n^A$, and total compatibility of both triples,
give $d\zeta_n^A=0$.  This is the algebraic argument in
\cite[proof of Theorem~8.9(ii)]{CGSTW}.

Write $\overline r:\mathrm K_1(A)/\Tor\mathrm K_1(A)\to\R/H$
for the induced map.
Choose $\eta\in\ker\KL(A,j)$ with
$\mathcal R_{A,T}(\eta)=\overline r$, and choose a
representative $\delta\in \KK(A,J)$ of $\eta$.  Apply the relative assertion of
Lemma~\ref{lem:abstract-full-lift} to
$\psi_0=\operatorname{diag}(\psi,0)$ and the corner image of $\delta$.
This gives $\rho_0:A\to \mathrm{M}_2(B)$ with the same quotient as $\psi_0$ and
relative class equal to that corner image.  Its absolute total $\mathrm{K}$-theory map
is unchanged, since $\eta$ maps to zero in $\KL(A,B)$.  In particular,
$[\rho_0(1_A)]=[e_{11}]$.

For clarity, the algebraic $\mathrm{K}_1$ calculation does not treat a nonunital image of
a unitary as a unitary.  Write $p_0=\rho_0(1_A)$ and extend the two maps to
unital maps on $A^\dagger=A\oplus\C$:
\[
 \rho_0^\dagger(a,z)=\rho_0(a)+z(1-p_0),\qquad
 \psi_0^\dagger(a,z)=\operatorname{diag}(\psi(a),z).
\]
For $u\in \mathrm{U}_k(A)$, the unitary $(u,1_k)\in \mathrm{U}_k(A\oplus\C)$ is sent to
$\rho_0^{(k)}(u)+1-p_0^{(k)}$ and
$\operatorname{diag}(\psi^{(k)}(u),1_k)$, respectively.  The $\mathrm{K}_1$ component
of the relative class of these unitized maps is the $\mathrm{K}_1$ component of
$\delta$, under $\mathrm{K}_1(A^\dagger)=\mathrm{K}_1(A)$.  The formula defining $\Omega_T$, or
the same calculation as \eqref{eq:rotation-pair-formula}, therefore gives the
difference $r([u]_1)$ for their algebraic $\mathrm{K}_1$ classes.
Now conjugate $p_0$ to $e_{11}$ by a unitary in $\mathrm{M}_2(B)$, as in the preceding
paragraph, and compress $\rho_0$ to a unital map $\phi:A\to B$.  Under this
conjugacy, the first of the displayed unitary images becomes
$\operatorname{diag}(\phi^{(k)}(u),1_k)$.  Inner conjugacy and matrix
stabilization preserve the algebraic $\mathrm{K}_1$ class.  Consequently
$\phi^\ddagger=\psi^\ddagger+d=\kappa_\gamma$.  Trace and total
$\mathrm{K}$-theory are unchanged.  This proves existence.

\emph{Uniqueness.}
Let $\phi,\psi:A\to B$ induce the same prescribed triple.  Their quotient
maps are unitarily conjugate by the quotient-level assertion above.  Every
unitary in the finite von Neumann algebra $Q$ is an exponential of a bounded
self-adjoint element, and therefore lifts to a unitary in $B$.  After
conjugating $\psi$, we may assume $q\phi=q\psi$.  Their relative class
$\eta=[\phi,\psi]_{\KL(A,J)}$ maps to zero in $\KL(A,B)$, by the UMCT and
equality of total $\mathrm{K}$-theory.  Equality on algebraic $\mathrm{K}_1$ and
\eqref{eq:rotation-pair-formula} imply
$\mathcal R_{A,T}(\eta)=0$.  Thus $\eta=0$.

Stabilize the two maps to $\phi_0=\operatorname{diag}(\phi,0)$ and
$\psi_0=\operatorname{diag}(\psi,0)$.  Their quotient maps are unitizably full,
and their relative KL-class is zero.  Lemma~\ref{lem:separable-subextension}
produces a subextension retaining this zero class and their images, whose
ideal $I$ is stable and has the corona factorization property.  The two
multiplier representations are absorbing.  Apply
Lemma~\ref{lem:kl-uniqueness-abstract} to obtain unitaries $w_n\in1+I$ giving
approximate conjugacy of those representations.  This is also approximate
conjugacy in the middle algebra: the difference after conjugation belongs to
$I$, and the multiplier map is isometric on $I$.

Evaluation at $1_A$ gives $w_ne_{11}w_n^*\to e_{11}$.  The standard
close-projection unitary, formed by polar decomposition of
$e_{11}(w_ne_{11}w_n^*)+(1-e_{11})(1-w_ne_{11}w_n^*)$, belongs to $1+I$ and
converges to $1$.  Multiplying $w_n$ by this correcting unitary makes it
commute with $e_{11}$ exactly without changing the limiting conjugacy.  Its
upper-left corner is then a unitary in $B$ approximately conjugating $\phi$
to $\psi$.  Finally, a norm-ultrapower epsilon test on a countable dense subset
of $A$ turns approximate unitary equivalence into unitary equivalence in $B$.
\end{proof}

\begin{remark}\label{rem:ultrapower-scope}
Theorem~\ref{thm:ultrapower-full-invariant} is an ultrapower theorem.  Obtaining
an embedding into $T$ itself additionally requires the coordinate control and
reparameterization argument proved separately.  In particular, neither
coordinate convergence in a naively stabilized norm quotient nor an
unproved interchange of total $\mathrm{K}$-theory with ultraproducts is used in the
proof above.
\end{remark}

\subsection{Coefficient groups and descent to the original target}
\label{subsec:coefficient-descent}

Throughout this section, $T$ is an admissible target, with unique tracial
state $\tau$.  Put
\[
 G=\mathrm{K}_0(T),\qquad H_1=\mathrm{K}_1(T),\qquad
 \sigma=\tau_*:G\longrightarrow\R,\qquad u=[1_T]_0.
\]
For a free ultrafilter $\omega$ on $\N$, write
\[
 T_\omega=\ell^\infty(T)/c_\omega(T),\qquad
 c_\omega(T)=\{(x_m):\lim_\omega\|x_m\|=0\}.
\]
We use the facts, established above, that $T_\omega$ has stable rank one
and a unique tracial state $\tau_\omega$.  We also use the computation
\begin{equation}\label{eq:descent-k1-ultrapower}
 \mathrm{K}_1(T_\omega)\ \cong\ (H_1)_\omega
\end{equation}
from \cite[Corollary~4.2]{KES}. Here and below, an ultrapower of an abelian
group is the algebraic ultrapower, with equality on an $\omega$-large set.
The uniform homotopy control behind this computation is supplied by
\cite[Theorem~1.2]{KES}. For injectivity, stable rank one represents a
class in $\mathrm K_1(T_\omega)$ by a unitary $u\in T_\omega$.
Lift $u$ to coordinate unitaries $u_m\in T$. If its coordinate class
vanishes, replace $u_m$ by $1_T$ outside an $\omega$-large set, so
that $[u_m]_1=0$ for every $m$.
The cited theorem gives $h_m,k_m\in\mathrm M_2(T)_{\mathrm{sa}}$
with $\|h_m\|,\|k_m\|\le\pi$ and
\[
 \|u_m\oplus1_T-e^{ih_m}e^{ik_m}\|<1/m.
\]
Consequently $u\oplus1_T=e^{ih}e^{ik}$ in $\mathrm M_2(T_\omega)$,
where $h=(h_m)_\omega$ and $k=(k_m)_\omega$. The path
$t\mapsto e^{ith}e^{itk}$ proves injectivity of the coordinate
$\mathrm K_1$-map. Stable rank one also permits
representatives in $\mathrm U(T)$ for arbitrary coordinate classes,
which gives surjectivity. The uniform estimate is an external input
to the present argument.

\begin{lemma}[Bounded-trace description of $\mathrm{K}_0$]
\label{lem:descent-bounded-k0}
The coordinate map identifies $\mathrm{K}_0(T_\omega)$ with the subgroup
\[
 G_b=\left\{[(g_m)]\in G_\omega:
       \sup_m|\sigma(g_m)|<\infty
       \text{ for some representative }(g_m)\right\}.
\]
Under this identification,
\[
 (\tau_\omega)_*([(g_m)])=\lim_\omega\sigma(g_m).
\]
In particular,
\begin{equation}\label{eq:descent-trace-range-closure}
 (\tau_\omega)_*(\mathrm{K}_0(T_\omega))
       =\overline{\sigma(G)}.
\end{equation}
\end{lemma}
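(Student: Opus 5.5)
We first define the coordinate map and then prove it is injective with image exactly $G_b$. A class in $\mathrm{K}_0(T_\omega)$ is $[p]-k[1]$ with $p$ a projection in $\mathrm{M}_n(T_\omega)$. Such a projection lifts to a sequence of projections $p_m\in\mathrm{M}_n(T)$, by the usual functional calculus on a self-adjoint lift, which is close to a projection on an $\omega$-large set. We send the class to $[([p_m]-k u)]\in G_\omega$. This is well defined: Murray--von Neumann equivalence $v^*v=p$, $vv^*=q$ in $\mathrm{M}_n(T_\omega)$ lifts to approximate partial isometries. Perturbing these gives equivalence of $p_m$ and $q_m$ on an $\omega$-large set, and stable equivalence is handled by enlarging $n$. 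The traces satisfy $|\sigma([p_m]-ku)|\le n+k$. Hence the image lies in $G_b$, and $(\tau_\omega)_*$ is the ultralimit of $\sigma(g_m)$, since $\tau_\omega$ is the ultralimit of the coordinate traces.

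For injectivity, we use cancellation. $T_\omega$ has stable rank one by Lemma~\ref{lem:trace-kernel-regularity}, so $\mathrm{K}_0(T_\omega)$ has cancellation. If the coordinate class of $[p]-[q]$ vanishes, with $p,q\in\mathrm{M}_n(T_\omega)$, then $[p_m]=[q_m]$ in $G$ on an $\omega$-large set. Stable rank one of $T$ gives cancellation there too, so $p_m\sim q_m$ already in $\mathrm{M}_n(T)$. The implementing partial isometries have norm at most one, so they define a partial isometry in $\mathrm{M}_n(T_\omega)$, and $p\sim q$.

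For surjectivity, we reduce to a uniformly bounded matrix size. Given $(g_m)$ with $|\sigma(g_m)|\le C$, fix an integer $k>C$ and set $h_m=g_m+ku$. Then $0<\sigma(h_m)<2k$. By Lemma~\ref{lem:positive-cone}, each $h_m$ is represented by a projection $p_m\in\mathrm{M}_{2k}(T)$. The element $(p_m)_\omega$ is a projection in $\mathrm{M}_{2k}(T_\omega)$, and its class minus $k[1]$ maps to $[(g_m)]$.

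The main obstacle is this step. The real content is that the matrix size stays bounded uniformly in $m$, which is why the trace bound defining $G_b$ is necessary; strict comparison, through Lemma~\ref{lem:positive-cone}, is what supplies it. The point of the representative-independence clause in the definition of $G_b$ is only that some representative is bounded, and the construction above uses exactly that.

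Finally, \eqref{eq:descent-trace-range-closure} holds for the following reason. The ultralimits of bounded sequences in $\sigma(G)$ lie in $\overline{\sigma(G)}$. Conversely, every point of $\overline{\sigma(G)}$ is the limit of some bounded sequence $\sigma(g_m)$, and that sequence defines an element of $G_b$ with the required trace. This is consistent with Lemma~\ref{lem:closed-trace-range}.
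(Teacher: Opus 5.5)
Your proof is correct and follows essentially the same route as the paper. Injectivity uses stable-rank-one cancellation in each $\mathrm{M}_n(T)$, with the norm-one implementing partial isometries passed to the ultrapower. Surjectivity uses strict comparison (Lemma~\ref{lem:positive-cone}) to represent $g_m+ku$ by projections in one fixed matrix algebra, and the trace range follows from the same ultralimit argument. The only differences are cosmetic: you use $\mathrm{M}_{2k}$ with $k>C$ where the paper uses $\mathrm{M}_{2k+1}$ with $k>C+1$, and your remark on cancellation in $T_\omega$ is not actually needed.
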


\begin{proof}
Projection representatives in a fixed matrix algebra over $T_\omega$
lift to sequences of projections in the same matrix algebra over $T$.
Consequently, a difference of projection classes in $\mathrm{K}_0(T_\omega)$
has coordinate traces bounded uniformly in absolute value.

The coordinate map is injective.  Indeed, suppose that projections
$p,q\in \mathrm{M}_d(T_\omega)$ lift to projections $p_m,q_m\in \mathrm{M}_d(T)$ and that
$[p_m]_0=[q_m]_0$ for $\omega$-many $m$.  Cancellation, which follows
from stable rank one, gives Murray--von Neumann equivalence of $p_m$
and $q_m$ in $\mathrm{M}_d(T)$ for those $m$.  Choose implementing partial
isometries in $\mathrm{M}_d(T)$; their norms are at most one.  Their ultrapower
therefore implements $p\sim q$.  Padding with zero projections first
handles differences represented in different matrix sizes.

For surjectivity, let $|\sigma(g_m)|\leq C$ and choose an integer
$k>C+1$.  Strict comparison of projections implies
\[
 \{g\in G:\sigma(g)>0\}\subseteq \mathrm{K}_0(T)_+.
\]
For completeness, if $g=[p]_0-[q]_0$ has strictly positive trace, then
$q\precsim p$, and the difference is represented by the complement of
a subprojection of $p$ equivalent to $q$.  Thus $g_m+ku$ is represented
by a projection $r_m$ in some matrix algebra over $T$.  Its trace is
strictly between zero and $2k$.  Comparing $r_m$ with $1_{2k}$, or with
$1_{2k+1}$ if desired to keep an unused rank, moves it by equivalence
into the fixed matrix algebra $\mathrm{M}_{2k+1}(T)$.  The difference
$[(r_m)]_0-[1_k]_0$ consequently represents $[(g_m)]$ in $\mathrm{K}_0(T_\omega)$.

The trace formula follows from projection representatives.  Its range
is contained in the right-hand side of
\eqref{eq:descent-trace-range-closure}, because a bounded sequence of
points in the closed set $\overline{\sigma(G)}$ has its ultralimit in
that set.  Conversely, if $t\in\overline{\sigma(G)}$, choose
$g_m\in G$ such that $\sigma(g_m)\to t$.  This sequence has bounded
trace and represents a class with trace $t$.
\end{proof}

We next justify all uses of coefficient classes in coordinate maps.
For $n\geq2$, fix the usual nuclear coefficient algebra $D_n$ with
$\mathrm{K}_0(D_n)=\Z/n$ and $\mathrm{K}_1(D_n)=0$, obtained by suspending the mapping
cone of the unital inclusion $\C\to \mathrm{M}_n$.  Write
$\mathrm{K}_i(B;\Z/n)=\mathrm{K}_i(B\otimes D_n)$.  This convention gives the usual
Bockstein exact sequence.

\begin{lemma}[Finite coefficients commute with the ultrapower]
\label{lem:descent-coefficients}
For every $n\geq2$ and $i=0,1$, the canonical coordinate map is an
isomorphism
\begin{equation}\label{eq:descent-coefficient-isomorphism}
 \eta_{i,n}:\mathrm{K}_i(T_\omega;\Z/n)
       \longrightarrow \mathrm{K}_i(T;\Z/n)_\omega.
\end{equation}
The ordinary and coefficient coordinate maps commute with the
Bockstein operations and the changes of coefficient.
\end{lemma}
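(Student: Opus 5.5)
The approach is a five-lemma argument on the Bockstein six-term sequences, supplied with the ultrapower computations of ordinary $\mathrm K$-theory already in hand.

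\emph{Step 1: naturality.} The coordinate map $\eta_{i,n}$ is defined on a class represented in a fixed matrix algebra over $T_\omega\otimes D_n$ by lifting the representative to a bounded sequence of coordinate representatives over $T\otimes D_n\subseteq \mathrm M_N(C_0((0,1),T))$ and taking coordinate classes. The coordinate maps on $\mathrm K_0$ and $\mathrm K_1$ are defined in the same way. Since $D_n$ is nuclear, the canonical map $T_\omega\otimes D_n\to (T\otimes D_n)_\omega$ is injective, so lifting is legitimate. Well-definedness follows because homotopies and equivalences in a fixed matrix size also lift coordinatewise. Naturality with respect to the Bockstein maps and the changes of coefficient is then formal. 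Each of these maps is induced either by a $^*$-homomorphism between coefficient algebras or by the boundary map of the mapping-cone extension. A $^*$-homomorphism acts coordinatewise. The boundary map is given by an explicit exponential or index formula of bounded size, which can be applied coordinatewise.

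\emph{Step 2: the ladder.} We compare the Bockstein exact sequence
\[
\mathrm K_0(T_\omega)\xrightarrow{n}\mathrm K_0(T_\omega)\to\mathrm K_0(T_\omega;\Z/n)\to\mathrm K_1(T_\omega)\xrightarrow{n}\mathrm K_1(T_\omega)
\]
with the algebraic ultrapower of the corresponding sequence for $T$. The algebraic ultrapower preserves exactness, since exactness is a first-order condition checked coordinatewise on an $\omega$-large set. By \eqref{eq:descent-k1-ultrapower} the $\mathrm K_1$ terms are isomorphisms. By Lemma~\ref{lem:descent-bounded-k0} the $\mathrm K_0$ terms are injective, with image $G_b$. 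The five lemma requires isomorphisms on all four outer terms, so $G_b\subsetneq G_\omega$ must be handled.

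\emph{Step 3: replacing $G_\omega$ by $G_b$.} The point is that the bottom row remains exact when each $G_\omega$ is replaced by $G_b$. First, $nG_\omega\cap G_b=nG_b$. If $g=nh$ and $|\sigma(g_m)|\le C$, then $|\sigma(h_m)|\le C/n$, so $h\in G_b$. Second, the map $G_b/nG_b\to G_\omega/nG_\omega$ is surjective, so every class modulo $n$ has a bounded representative. Given $(g_m)$, write $\sigma(g_m)=n\,\sigma(k_m u)+r_m$ with $k_m\in\Z$ and $r_m\in[0,n)$. Then $g_m-nk_m u$ has bounded trace, because $\sigma(u)=1$. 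Together these give
\[
\mathrm K_0(T_\omega)/n\cong (G/nG)_\omega,\qquad \mathrm K_0(T_\omega)[n]=G_\omega[n],
\]
where the second holds because torsion has trace $0$. Combining these with the $\mathrm K_1$ isomorphism in the short exact sequence
\[
0\to \mathrm K_0/n\to \mathrm K_0(\,\cdot\,;\Z/n)\to \mathrm K_1[n]\to0
\]
yields the case $i=0$. The case $i=1$ is identical, with the roles of $\mathrm K_0$ and $\mathrm K_1$ interchanged. There the relevant pieces are $\mathrm K_1/n$, which is handled by \eqref{eq:descent-k1-ultrapower}, and $\mathrm K_0[n]$, which equals $G_\omega[n]$ as just shown.

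\emph{Main obstacle.} The main difficulty is Step 1 together with the identification of the bottom row. One must confirm that a class in $\mathrm K_i(T;\Z/n)_\omega$ lifts to a class in $\mathrm K_i(T_\omega;\Z/n)$, which means finding a representative of uniformly bounded matrix size. This follows from the short exact sequence once the two outer groups are known to lift with uniform size, using the uniform projection sizes from Lemma~\ref{lem:descent-bounded-k0} and the uniform exponential-length control behind \eqref{eq:descent-k1-ultrapower}. The diagram chase in Step 3 then gives bijectivity without needing an explicit lift.
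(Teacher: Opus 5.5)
Your proposal is correct and follows essentially the same route as the paper: coordinate maps defined by lifting fixed-size representatives through the canonical map $T_\omega\otimes D_n\to(T\otimes D_n)_\omega$, Bockstein compatibility checked coordinatewise via the cone-extension boundary formulas, the identifications $G_b/nG_b\cong(G/nG)_\omega$ and $\Tor_n(G_b)\cong(\Tor_nG)_\omega$ by the same floor-and-division arguments, and the five lemma on the short exact Bockstein sequences. Injectivity of $T_\omega\otimes D_n\to(T\otimes D_n)_\omega$ is true for nuclear $D_n$ but plays no role in defining the lift; the paper uses only the existence of this map.
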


\begin{proof}
We first describe the map rather than assuming that tensor products
commute with ultrapowers.  Exactness of $D_n$ gives an exact sequence
\[
 0\longrightarrow c_\omega(T)\otimes D_n
 \longrightarrow\ell^\infty(T)\otimes D_n
 \longrightarrow T_\omega\otimes D_n\longrightarrow0.
\]
Coordinate evaluation induces a $*$-homomorphism from the middle
algebra to $\ell^\infty(T\otimes D_n)$.  It sends the left-hand
ideal into $c_\omega(T\otimes D_n)$: this follows first for finite
sums of elementary tensors and then by norm approximation.  It
therefore induces a canonical $*$-homomorphism
\begin{equation}\label{eq:descent-coefficient-tensor-map}
 T_\omega\otimes D_n\longrightarrow (T\otimes D_n)_\omega.
\end{equation}
We do not assert that this map is surjective as a map of algebras.

For any $\mathrm{C}^*$-algebra $B$, there are natural coordinate maps
$\mathrm{K}_i(B_\omega)\to \mathrm{K}_i(B)_\omega$.  Here is the required elementary
verification.  Lift a projection, or a unitary, in a fixed matrix
algebra over the appropriate unitization of $B_\omega$ to a bounded
sequence of self-adjoint elements, or arbitrary elements, over $B$.
The projection or unitary defects tend to zero along $\omega$.
Spectral cut-down, or polar decomposition, gives coordinate
projections or unitaries on an $\omega$-large set, in the same matrix
size.  The scalar part can be kept fixed when $B$ is nonunital.

Independence of representatives and homotopy invariance require no
uniform homotopy-length assertion.  For a given continuous homotopy
in a fixed matrix algebra over $B_\omega$, choose a finite subdivision
so that successive values differ in norm by less than $1/8$.
Lift these finitely many values and round them as above.  On the
intersection of finitely many $\omega$-large sets, successive
coordinate projections, or unitaries, differ by less than $1/2$.
They can be joined by the usual short projection homotopies, or
short logarithm paths.  The endpoints are the chosen coordinate
representatives.  Stabilizations used in a given $\mathrm{K}$-theory
equality have a fixed finite matrix size and are treated in exactly
the same way.  This proves that the coordinate maps are well defined.
Applying this construction after
\eqref{eq:descent-coefficient-tensor-map} defines $\eta_{i,n}$.
We spell out the boundary compatibility used below.
Let $C_{\iota_n}$ be the cone of $\iota_n:\C\to\mathrm M_n$,
so $D_n=SC_{\iota_n}$.
For every algebra $X$ the cone extension is
\[
0\longrightarrow S\mathrm M_n(X)\longrightarrow
X\otimes C_{\iota_n}\longrightarrow X\longrightarrow0 .
\]
Apply this first with $X=T_\omega$.
There is a morphism from this extension to the norm ultraproduct
of the corresponding extensions for $T$, using the same elementary
tensor maps as above. The lower extension is exact: surjectivity
follows from the uniformly contractive cone lifts
$x\mapsto[t\mapsto\chi(t)(x\otimes1_n)]$, where
$\chi(0)=1$ and $\chi$ vanishes near $1$.
If a bounded coordinate sequence has quotient norms tending to zero along
$\omega$, lift it into the coordinate ideals with errors tending
to zero and uniformly bounded norms.
The exponential and index formulas for the boundary maps commute
with these coordinate maps. Each formula for a particular class
uses one fixed matrix size, finitely many lifts, and functional
calculus with a fixed spectral cutoff, so it may be evaluated
coordinatewise. Suspension and the standard Bott identification
also commute with these maps; a fixed Bott representative and
finite approximation of its paths suffice.
Thus the induced diagram of Bockstein sequences commutes.
The standard cone diagrams defining changes of coefficient,
with their fixed matrix amplifications and suspension
identifications, give the other required naturality squares
in the same way. This argument does not assert surjectivity
of any algebra-level tensor map.

For the group calculation, Lemma~\ref{lem:descent-bounded-k0} gives
\begin{equation}\label{eq:descent-k0-mod-torsion}
 G_b/nG_b\cong(G/nG)_\omega,
 \qquad \Tor_n(G_b)\cong(\Tor_nG)_\omega.
\end{equation}
For the first surjectivity, replace an arbitrary representative
$g_m$ of a class in $(G/nG)_\omega$ by
\[
 g_m-n\left\lfloor\frac{\sigma(g_m)}n\right\rfloor u.
\]
Its trace belongs to $[0,n)$.  For injectivity, if $g_m=nh_m$ on an
$\omega$-large set and $(\sigma(g_m))$ is bounded, then
$\sigma(h_m)=\sigma(g_m)/n$ there; modifying the sequence outside
that set makes $(h_m)$ a bounded-trace representative.  The second
isomorphism follows because all torsion classes have trace zero.

For any abelian group $L$, coordinate selection gives
\[
 L_\omega/nL_\omega\cong(L/nL)_\omega,
 \qquad \Tor_n(L_\omega)\cong(\Tor_nL)_\omega.
\]
Use this with $L=H_1$, together with
\eqref{eq:descent-k1-ultrapower}.  The canonical maps now give a
commutative diagram between the short exact Bockstein sequence
\[
 0\longrightarrow \mathrm{K}_i(T_\omega)/nK_i(T_\omega)
 \longrightarrow \mathrm{K}_i(T_\omega;\Z/n)
 \longrightarrow\Tor_n(\mathrm{K}_{i-1}(T_\omega))\longrightarrow0
\]
and the algebraic ultrapower of the corresponding sequence for $T$.
Algebraic ultrapowers preserve exactness of this finite sequence,
as preimages can be chosen coordinatewise.  The two outside
vertical maps are isomorphisms by the preceding calculations.
The middle map is consequently an isomorphism.
\end{proof}

\begin{lemma}[Witnesses in a fixed matrix algebra]
\label{lem:descent-fixed-commutators}
Let $d\geq1$, and let $w\in \mathrm{U}_d(T_\omega)$.  If
$\overline{w}=0$ in $\Uquot{T_\omega}$, then, for each $k\geq1$,
there are an integer $L_k$ and unitaries
$a_{k,\ell},b_{k,\ell}\in \mathrm{U}_d(T_\omega)$, $1\leq\ell\leq L_k$,
such that
\[
 \left\|w-\prod_{\ell=1}^{L_k}
       a_{k,\ell}b_{k,\ell}a_{k,\ell}^*b_{k,\ell}^*\right\|
       <2^{-k-1}.
\]
The matrix size $d$ is independent of $k$; the number $L_k$ need not
be bounded.  If $w=[(w_m)]$ with $w_m\in \mathrm{U}_d(T)$, these witnesses
can be lifted to coordinate unitaries in $\mathrm{U}_d(T)$ so that the
corresponding inequality with $2^{-k}$ holds on an $\omega$-large set.
\end{lemma}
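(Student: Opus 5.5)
The plan is to use the stable-rank-one identification \eqref{eq:unstable-alg} at the fixed matrix size $d$, applied to $T_\omega$. By Lemma~\ref{lem:trace-kernel-regularity}(i), $T_\omega$ and every $\mathrm M_d(T_\omega)$ have stable rank one. The natural map
\[
\mathrm U(\mathrm M_d(T_\omega))/\CU(\mathrm M_d(T_\omega))\to\Uquot{T_\omega}
\]
is therefore bijective, so $\overline w=0$ gives $w\in\CU(\mathrm M_d(T_\omega))$. This closure is the norm closure of $\mathrm{DU}(\mathrm M_d(T_\omega))$, the group generated by commutators of unitaries in $\mathrm U_0(\mathrm M_d(T_\omega))$. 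Hence for each $k$ there is an element of $\mathrm{DU}$ within $2^{-k-1}$ of $w$.

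An element of $\mathrm{DU}$ is a finite product of commutators and their inverses. The inverse of $aba^*b^*$ is $bab^*a^*$, which is again a commutator. So the element has the displayed form with some finite $L_k$ and $a_{k,\ell},b_{k,\ell}\in\mathrm U_d(T_\omega)$. The size $d$ never changes, and $L_k$ is whatever the approximation requires. Where exactly the paper's cited observation following \cite[Definition~2.6]{GLNHom} applies matters here. I expect the main point to check is that injectivity of the unstable-to-stable map holds at size $d$ itself, and not only after further amplification. If only a weaker statement is available, I would pass to a larger but still fixed size $d'$ and compress back using stable rank one. The paper stresses precisely this fixed-size issue, so I would rely on the bijectivity as stated for every matrix algebra over a stable-rank-one algebra.

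For the lifting, I represent each $a_{k,\ell},b_{k,\ell}$ by bounded sequences in $\mathrm M_d(T)$. As in the proof of Lemma~\ref{lem:descent-coefficients}, polar decomposition (or the unitary rounding $x\mapsto x|x|^{-1}$ once $\|x^*x-1\|<1$) gives coordinate unitaries $a_{k,\ell,m},b_{k,\ell,m}\in\mathrm U_d(T)$ on an $\omega$-large set, with the same classes in $T_\omega$. Elsewhere I set them to $1$. For fixed $k$, only finitely many ($2L_k$) unitaries are involved. The norm of the difference between $w$ and the product in $\mathrm M_d(T_\omega)$ is the $\omega$-limit of the coordinate norms $\|w_m-\prod_\ell a_{k,\ell,m}b_{k,\ell,m}a_{k,\ell,m}^*b_{k,\ell,m}^*\|$. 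Since this limit is $<2^{-k-1}$, the coordinate inequality with bound $2^{-k}$ holds on an $\omega$-large set. Different $k$ may use different large sets. No uniform bound on $L_k$ is needed, because each $k$ is handled separately, which is exactly what the statement permits.
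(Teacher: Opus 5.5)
Your proposal is correct and follows the paper's proof. Stable-rank-one bijectivity of $\mathrm U(\mathrm M_d(T_\omega))/\CU(\mathrm M_d(T_\omega))\to\Uquot{T_\omega}$, which the paper cites as \cite[Remark~2.8]{CGSTW}, puts $w$ in the closure of the commutator subgroup at the fixed size $d$; then, for each $k$, the finitely many witnesses are lifted to coordinate unitaries, and the strict inequality $2^{-k-1}<2^{-k}$ yields the $\omega$-large set, so your fallback to a larger size $d'$ is not needed.
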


\begin{proof}
Apply the stable-rank-one identification
\[
 \mathrm{U}(\mathrm{M}_d(T_\omega))/\overline{\mathrm{DU}(\mathrm{M}_d(T_\omega))}
       \cong \Uquot{\mathrm{M}_d(T_\omega)}
       \cong \Uquot{T_\omega};
\]
see \cite[Remark~2.8]{CGSTW}.  Thus $w$ lies in the norm closure
of the commutator subgroup of $\mathrm{U}_d(T_\omega)$ itself.  This gives
all the approximations in the single matrix algebra $\mathrm{M}_d(T_\omega)$.
Unitaries lift to coordinate unitaries in a norm ultrapower.
For fixed $k$, lift its finitely many witnesses and use the strict
norm inequality to obtain the coordinate assertion.
\end{proof}

\begin{remark}\label{rem:descent-fixed-size-warning}
The fixed matrix size in the preceding lemma is essential.  Merely
giving, for each accuracy, an approximation after some possibly
larger matrix amplification does not establish membership in the
closure of the commutator subgroup for the topological inductive
limit defining $\mathrm U_\infty/\mathrm{CU}_\infty$.  We retain a separate finite list of
witnesses for each accuracy, always in the same matrix algebra.
No uniform bound on commutator length is used.
\end{remark}

\begin{lemma}[The diagonal invariant is injective]
\label{lem:descent-diagonal-injective}
For every free ultrafilter $\omega$, the diagonal embedding
$\iota_\omega:T\to T_\omega$ induces injective maps on total
$\mathrm K$-theory, affine trace functions, and the stabilized unitary quotient.
\end{lemma}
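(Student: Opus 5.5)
The plan is to treat the three components one at a time, always comparing with the coordinate descriptions already proved: Lemma~\ref{lem:descent-bounded-k0}, \eqref{eq:descent-k1-ultrapower} and Lemma~\ref{lem:descent-coefficients}. The basic observation is that for each of these identifications the composite of $\iota_\omega$ with the coordinate map is the constant-sequence (diagonal) map into the algebraic ultrapower. For any abelian group $L$, the diagonal map $L\to L_\omega$ is injective, since a constant sequence $(g)$ vanishes on an $\omega$-large set only if $g=0$.

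\emph{Total $\mathrm K$-theory.} For $\mathrm K_0$, Lemma~\ref{lem:descent-bounded-k0} identifies $\mathrm K_0(T_\omega)$ with $G_b\subseteq G_\omega$. Under this identification $\iota_{\omega*}(g)$ is the constant class $[(g)]$, which has bounded trace, so the map is the diagonal and hence injective. For $\mathrm K_1$, use \eqref{eq:descent-k1-ultrapower}. Stable rank one lets us represent a class by a unitary $u\in T$, and its image is the constant sequence. For coefficients, use the isomorphism $\eta_{i,n}$ of Lemma~\ref{lem:descent-coefficients}. I need to check that $\eta_{i,n}\circ(\iota_\omega\otimes\id_{D_n})_*$ is the diagonal map $\mathrm K_i(T;\Z/n)\to\mathrm K_i(T;\Z/n)_\omega$. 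This holds because the tensor map \eqref{eq:descent-coefficient-tensor-map} composed with $\iota_\omega\otimes\id$ is the diagonal embedding of $T\otimes D_n$, and the coordinate lifts of a constant representative can be taken constant.

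\emph{Affine trace functions.} Both algebras are monotracial, and $\tau_\omega\circ\iota_\omega=\tau$. The induced map $\Aff\mathrm T(T)=\R\to\Aff\mathrm T(T_\omega)=\R$ is therefore the identity, which is injective.

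\emph{Unitary quotient.} This is the main step. Let $x\in\Uquot{T}$ with $\iota_\omega^\ddagger(x)=0$. First, $\Pi^{\mathrm{cu}}(x)$ maps to zero in $\mathrm K_1(T_\omega)$, so by the $\mathrm K_1$ case $\Pi^{\mathrm{cu}}(x)=0$. Hence $x=\mathrm{Th}_T(t)$ for some $t\in\R$, and by \eqref{eq:thomsen} the image is $\mathrm{Th}_{T_\omega}(t)$. Naturality of the Thomsen sequences and injectivity of $\mathrm{Th}_{T_\omega}$ on $\R/\overline{\tau_{\omega*}\mathrm K_0(T_\omega)}$ then give $t\in\overline{\tau_{\omega*}(\mathrm K_0(T_\omega))}$. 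By \eqref{eq:descent-trace-range-closure} this set equals $\overline{\sigma(G)}$, so $x=\mathrm{Th}_T(t)=0$ in $\Uquot{T}$.

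The obstacle I expect is the exactness of the Thomsen sequence for $T_\omega$ in the form used, namely that the kernel of $\mathrm{Th}_{T_\omega}$ is exactly the closure of $\tau_{\omega*}(\mathrm K_0(T_\omega))$. This follows from \eqref{eq:thomsen} applied to $T_\omega$, which is allowed because $T_\omega$ has a unique trace by Lemma~\ref{lem:trace-kernel-regularity}. I would also remark that the range is already closed, by Lemma~\ref{lem:closed-trace-range}.

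If one prefers a direct argument avoiding that citation, Lemma~\ref{lem:descent-fixed-commutators} gives commutator witnesses in a fixed matrix size. One then lifts them to coordinates, picks a single coordinate $m$ in the $\omega$-large set for each accuracy, and concludes that the constant unitary lies in $\CU(\mathrm M_d(T))$. This needs $\mathrm K_1$-triviality first, so I would present the Thomsen-based argument and keep the witness argument as a remark.
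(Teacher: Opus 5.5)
\textbf{Comparison with the paper's proof.} Your proof is correct. For total $\mathrm K$-theory and the affine trace map it is the paper's argument: you reduce to the diagonal map into an algebraic ultrapower via Lemma~\ref{lem:descent-bounded-k0}, \eqref{eq:descent-k1-ultrapower} and Lemma~\ref{lem:descent-coefficients}, and to $\id_\R$.

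For the unitary quotient you take a different route. The paper uses exactly the argument you relegated to a remark:
\begin{itemize}
\item it applies Lemma~\ref{lem:descent-fixed-commutators} to the diagonal image of $v\in\mathrm U_d(T)$;
\item it lifts the finitely many commutator witnesses for each accuracy;
\item it reads off a single coordinate, so that $v$ itself is approximated by products of commutators in the fixed algebra $\mathrm M_d(T)$.
\end{itemize}

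You instead chase the diagram of Thomsen sequences for $T$ and $T_\omega$. Injectivity on $\mathrm K_1$ kills the $\Pi^{\mathrm{cu}}$-component. Naturality, with induced map $\id_\R$ on $\Aff\mathrm T$, carries the remaining determinant class to $\mathrm{Th}_{T_\omega}(t)$. Finally, \eqref{eq:descent-trace-range-closure} shows that both determinant kernels are $\R$ modulo the same closed subgroup $\overline{\sigma(G)}$.

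What each route buys:
\begin{itemize}
\item Your version explains the result structurally and never meets the fixed-matrix-size issue of Remark~\ref{rem:descent-fixed-size-warning}.
\item It does, however, rest on exactness of the Thomsen sequence for the nonseparable $T_\omega$. The paper also uses that exactness, in the proof of Proposition~\ref{prop:kernel-algebraic-k1}.
\item It also routes the unitary statement through the external $\mathrm K_1$ input \eqref{eq:descent-k1-ultrapower}.
\item The paper's argument needs only the stable-rank-one identification in the fixed size $\mathrm M_d(T_\omega)$. It is independent of the $\mathrm K_1$ computation.
\end{itemize}

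\textbf{A correction to your closing remark.} The witness argument does not need $\mathrm K_1$-triviality first. Every class is represented by some $v\in\mathrm U_d(T)$, and the lemma applies directly to its diagonal image. Approximation of $v$ by products of commutators in $\mathrm U_d(T)$ already gives $\bar v=0$, and in particular zero $\mathrm K_1$ class, since commutators of arbitrary unitaries lie in $\CU$ after stabilization.
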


\begin{proof}
Injectivity on ordinary $\mathrm{K}$-theory follows from
Lemma~\ref{lem:descent-bounded-k0} and
\eqref{eq:descent-k1-ultrapower}.  Injectivity with finite
coefficients follows from Lemma~\ref{lem:descent-coefficients}.
The affine trace map is the identity on $\R$, since both algebras
have the unique trace under consideration.

For algebraic $\mathrm{K}_1$, let $v\in \mathrm{U}_d(T)$ have diagonal image of zero
class.  Lemma~\ref{lem:descent-fixed-commutators} gives, for each
accuracy, finitely many commutator witnesses in $\mathrm{U}_d(T_\omega)$.
Lift them and choose one coordinate where the asserted
approximation holds.  Hence $v$ is approximable by products of
commutators in $\mathrm{U}_d(T)$ and $\overline{v}=0$.
\end{proof}

The following descent argument is deliberately phrased in terms of
the ultrapower existence and uniqueness results, rather than an
unstated finite-set classification theorem.  It explains all
diagonal choices, including the coefficient and unitary data.

\begin{proposition}[Descent of a prescribed totally compatible triple]
\label{prop:descent-prescribed-invariant}
Let $A$ be a unital separable nuclear UCT algebra, and let
$\Xi=(\kappa,\kappa_T,\kappa_\gamma)$ be a faithful unital totally
compatible triple from $A$ to $T$. Use the following two conclusions
of Theorem~\ref{thm:ultrapower-full-invariant}:
\begin{enumerate}
 \item for some free ultrafilter $\omega$, there is a unital
 $*$-homomorphism $\Phi:A\to T_\omega$ whose induced triple is
 the composite of $\Xi$ with the triple induced by the diagonal
 embedding $\iota_\omega:T\to T_\omega$;
 \item for every free ultrafilter $\nu$, unital full maps
 $A\to T_\nu$ inducing the same triple are unitarily equivalent.
\end{enumerate}
Then there is a unital embedding $\rho:A\to T$ satisfying
$([\rho],\rho_T,\rho^\ddagger)=\Xi$.
\end{proposition}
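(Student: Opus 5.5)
The plan is the standard "ultrapower existence plus uniqueness gives an honest map" argument: build a sequence of maps into $T$ that induces $\Xi$ on larger and larger finite pieces of the invariant, then take the limit. Fix increasing finite sets $\mathcal F_1\subseteq\mathcal F_2\subseteq\cdots$ with dense union in $A$. Fix also increasing finite sets $\mathcal P_k$ of invariant data, namely finitely many projections and unitaries in matrices over $A$ representing classes in $\ulK(A)$ (including coefficient classes, via $A\otimes D_n$) and finitely many unitaries representing elements of $\Uquot{A}$, whose union exhausts all of these. Lift $\Phi$ to coordinate maps $\phi_m:A\to T$, which are unital, asymptotically multiplicative and asymptotically trace preserving along $\omega$.

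First, on each fixed finite $\mathcal P_k$, the coordinates $\phi_m$ induce the prescribed invariant for $\omega$-most $m$. For each listed projection or unitary $x$, the approximate image $\phi_m(x)$, rounded by spectral cutoff or polar decomposition, has coordinate class whose $\omega$-limit is the class of $\Phi(x)$. This class equals $\iota_\omega$ applied to $\Xi(x)$. Lemma~\ref{lem:descent-bounded-k0}, \eqref{eq:descent-k1-ultrapower} and Lemma~\ref{lem:descent-coefficients} identify the coordinate classes with the algebraic ultrapower, so on an $\omega$-large set the coordinate class equals the constant value $\Xi(x)$. For $\Uquot{}$, Lemma~\ref{lem:descent-fixed-commutators} lets me descend the vanishing of $\overline{\Phi(u)}-\overline{\iota_\omega\kappa_\gamma(\bar u)}$ to coordinate commutator witnesses in a fixed matrix size at each accuracy $2^{-k}$. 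So the coordinate maps realize $\kappa_\gamma$ on $\mathcal P_k$ up to $2^{-k}$ in the $\mathrm{CU}$ metric, and at the limit accuracy this becomes exact by Lemma~\ref{lem:descent-diagonal-injective}.

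Second, I intertwine. Write $\phi^{(k)}$ for a coordinate chosen at stage $k$, and define $\rho$ as a limit of $\Ad w_k\circ\phi^{(k)}$ with unitaries $w_k\in T$ making the sequence Cauchy on $\mathcal F_k$. To find $w_k$, use hypothesis (ii) by contradiction. Suppose that for some $k$, no unitary in $T$ brings $\phi^{(k+1)}$ within $2^{-k}$ of $\phi^{(k)}$ on $\mathcal F_k$, for $\omega$-many choices of the later coordinate. Collect such coordinate pairs $(\phi_{m_j},\phi_{m'_j})$, with both indices going to $\infty$ along $\omega$, into two maps $A\to T_\nu$ for a suitable free ultrafilter $\nu$ (for instance, a reindexing of $\omega$). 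Both maps are unital and trace preserving, hence full by Lemma~\ref{lem:trace-kernel-regularity}(iv). Both induce the triple $\iota_\nu\circ\Xi$, by the first step. Uniqueness (ii) then makes them unitarily equivalent in $T_\nu$, and lifting that unitary gives a contradiction on a $\nu$-large set. The errors are summable, so $\rho$ is a unital homomorphism that is trace preserving, hence injective, since $\lambda=\kappa_T(\tau)$ is faithful. Its invariant on each $\mathcal P_k$ equals that of $\phi^{(k')}$ for large $k'$, because invariants are locally constant under small perturbation. So $([\rho],\rho_T,\rho^\ddagger)=\Xi$ on a generating set, and by additivity everywhere.

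The main obstacle is this second step, specifically producing the ultrafilter $\nu$ and the pair of maps into $T_\nu$ so that both genuinely induce $\iota_\nu\Xi$, including the $\zeta_n$-compatible unitary data, rather than merely agreeing on finitely many classes. I would handle it by choosing the pairs along a diagonal in which, at stage $j$, both coordinates realize $\Xi$ on $\mathcal P_j$ to accuracy $2^{-j}$. Then every invariant element is eventually handled. For the algebraic $\mathrm K_1$ part this uses the fixed-matrix-size witnesses of Lemma~\ref{lem:descent-fixed-commutators} and Remark~\ref{rem:descent-fixed-size-warning}, and the trace condition passes to $\nu$ trivially.
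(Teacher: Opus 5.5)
You share the paper's overall architecture: coordinate lifts, a diagonally selected sequence realizing the invariant on growing finite pieces, and uniqueness in $T_\nu$ by contradiction. Replacing its appeal to \cite[Proposition~9.2]{CGSTW} and Lemma~\ref{lem:descent-diagonal-injective} by a hand-built one-sided intertwining would be acceptable. However, the key step has a genuine gap in its quantifiers.

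You fix the already chosen coordinate $\phi^{(k)}$ and suppose that $\omega$-many later coordinates are not $2^{-k}$-conjugate to it on $\mathcal F_k$. Hypothesis (ii) cannot refute this. A single fixed coordinate has a fixed multiplicativity defect and defines no homomorphism into any $T_\nu$. If $\phi^{(k)}$ is not within $2^{-k}$ of conjugates of the late coordinates on $\mathcal F_k$, your supposition is simply true, and nothing in your choice of $\phi^{(k)}$ prevents this. The pairs $(\phi_{m_j},\phi_{m'_j})$ with both indices tending to infinity do not arise from that supposition.

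What you need first is a uniform Cauchy-type statement for a diagonally selected sequence $\psi_n=\phi_{m(n)}$. Along this sequence itself, not merely on $\omega$-large sets, require the multiplicativity and trace errors, the prescribed classes of the first $n$ representatives, and the fixed-size commutator witnesses. The statement is: for every finite $F\subset A$ and $\varepsilon>0$ there is $N$ such that $\psi_n,\psi_{n'}$ are $\varepsilon$-conjugate on $F$ whenever $n,n'\ge N$. Prove it by contradiction over bad pairs $(n_j,n'_j)$ with both entries tending to infinity, taking $\nu$ to be any free ultrafilter on the pair index $j$. This is the paper's claim \eqref{eq:descent-eventual-coordinate-unitaries} for $\psi_n$ against $\psi_{r(n)}$. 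Only once the thresholds $N_k$ are known can you choose $\phi^{(k)}$ beyond $N_1,\ldots,N_k$ and intertwine.

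Your treatment of the unitary component is also wrong as stated. Countably many finite sets cannot exhaust $\Uquot{A}$. It is uncountable already for $A=\C$, where it is $\R/\Z$, and in general it contains $\Aff\mathrm T(A)/\overline{\rho_A(\mathrm K_0(A))}$. So ``equality on a generating set plus additivity'' proves neither of two things you need:
\begin{enumerate}
\item that the two maps into $T_\nu$ induce the same full triple, which must hold before (ii) can be invoked;
\item that $\rho^\ddagger=\kappa_\gamma$ at the end.
\end{enumerate}
Moreover $\Uquot{T}$ is not discrete, so local constancy does not apply to it. You must instead carry the fixed-size commutator witnesses, conjugated by the $w_k$, to the limit.

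The missing ingredient is Thomsen compatibility, and either of two routes works:
\begin{enumerate}
\item As the paper does, test on a dense sequence $u_j$ in each $\mathrm U_d(A)$. For $\|u-v\|<2$ with $v^*u=e^{ih}$, both maps send $\bar u-\bar v$ to $\mathrm{Th}\bigl(\kappa_\sharp(\widehat h)/(2\pi)\bigr)$, computed in the relevant target.
\item Test only on countably many unitaries whose classes generate $\mathrm K_1(A)$. On $\ker\Pi_A^{\mathrm{cu}}=\mathrm{Th}_A(\Aff\mathrm T(A))$, any homomorphism with trace $s$ already agrees with $\kappa_\gamma$, because $\rho^\ddagger\mathrm{Th}_A=\mathrm{Th}_T\rho_\sharp=\mathrm{Th}_T\kappa_\sharp=\kappa_\gamma\mathrm{Th}_A$.
\end{enumerate}
With these two repairs your route goes through.
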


\begin{proof}
Since $T$ is monotracial, the trace component specifies
$s=\kappa_T(\tau)\in\mathrm T(A)$, and faithfulness of $\Xi$ says
that $s$ is faithful.  Nuclearity and the Choi--Effros lifting theorem give
ucp lifts $\phi_m:A\to T$ of $\Phi$.  A cpc lift can be made
unital by adding
$s(a)(1-\phi_m(1))$ if necessary; this does not alter the quotient
map.  Thus
\begin{equation}\label{eq:descent-initial-cp-lifts}
 \lim_\omega\|\phi_m(ab)-\phi_m(a)\phi_m(b)\|=0,
 \qquad
 \lim_\omega\tau(\phi_m(a))=s(a)
 \quad(a,b\in A).
\end{equation}

Choose countable dense test sets for $A$ and all its finite matrix
algebras.  Enumerate representatives of the countable total
$\mathrm{K}$-theory group $\underline{\mathrm{K}}(A)$.  For coefficient classes, these
representatives are projections and unitaries over $A\otimes D_n$
with their prescribed scalar parts.  Applying
$\phi_m\otimes\id_{D_n}$ and rounding gives their partial
coordinate classes whenever the relevant finite multiplicativity
conditions hold.  Lemma~\ref{lem:descent-coefficients}, and the
ordinary coordinate maps, show that every finite list of these
partial classes has its prescribed value under $\kappa$ on an
$\omega$-large set.

Choose a sequence $u_j\in \mathrm{U}_{d_j}(A)$ whose terms are dense in
each $\mathrm{U}_d(A)$.  Choose a fixed unitary
$v_j\in \mathrm{U}_{e_j}(T)$ representing
$\kappa_\gamma(\overline{u_j})$, and put
$D_j=\max\{d_j,e_j\}$.  Pad both unitaries with identity blocks
when needed.  The unitary
\[
 W_j=\Phi^{(D_j)}(u_j)\,\iota_\omega^{(D_j)}(v_j)^*
       \in \mathrm{U}_{D_j}(T_\omega)
\]
has zero algebraic $\mathrm{K}_1$-class.  For every pair $(j,k)$, apply
Lemma~\ref{lem:descent-fixed-commutators} to $W_j$, and fix
$L_{j,k}$ lifted commutator witnesses in this single matrix size
$D_j$.  The unitary obtained by polar correction of
$\phi_m^{(d_j)}(u_j)$ represents $\Phi^{(d_j)}(u_j)$ in the
ultrapower.  Consequently, on an $\omega$-large set its padded
product with $v_j^*$ is within $2^{-k}$ of a product of
$L_{j,k}$ coordinate commutators in $\mathrm{U}_{D_j}(T)$.

Select an increasing sequence $m(n)$ as follows.  At stage $n$,
impose multiplicativity and trace errors less than $1/n$ on the
first $n$ dense tests; impose the prescribed values on the first
$n$ total-$\mathrm{K}$ representatives; and retain all the commutator
witness inequalities indexed by $j,k\leq n$.  Each stage has
finitely many requirements, each valid on an $\omega$-large set,
so their intersection is infinite and one may choose
$m(n)>m(n-1)$.  Set $\psi_n=\phi_{m(n)}$.

The maps $\psi_n$ are asymptotically multiplicative in the usual
sense, and
\begin{equation}\label{eq:descent-selected-trace}
 \tau\psi_n(a)\longrightarrow s(a)\qquad(a\in A).
\end{equation}
They define a unital $*$-homomorphism
\[
 \psi_\infty:A\longrightarrow
 T_\infty:=\ell^\infty(T)/c_0(T).
\]
For each fixed $j,k$, all sufficiently large $n$ retain a
$2^{-k}$ commutator approximation to the difference between the
polar image of $u_j$ and $v_j$, with exactly $L_{j,k}$ factors
in the fixed matrix size $D_j$.

Now fix any map $r:\N\to\N$ such that $r(n)\to\infty$, and any
free ultrafilter $\nu$.  The sequences $\psi_n$ and $\psi_{r(n)}$
induce unital homomorphisms
$\Psi_\nu,\Psi_\nu^r:A\to T_\nu$.  Both have trace $s$ by
\eqref{eq:descent-selected-trace}.  Their ordinary and coefficient
$\mathrm{K}$-maps equal $[\iota_\nu]\kappa$, because the partial
coordinate classes have their prescribed constant values
eventually, and all the coordinate maps above are injective.

Their algebraic $\mathrm{K}_1$ maps also agree with the prescribed one.
Indeed, for fixed $j,k$, the retained $L_{j,k}$ witnesses form
unitaries in $\mathrm{U}_{D_j}(T_\nu)$ and give the same $2^{-k}$
approximation there.  Letting $k\to\infty$ shows that the image
of $u_j$ differs from the constant $v_j$ by an element of
$\overline{\mathrm{DU}(\mathrm{M}_{D_j}(T_\nu))}$.  The matrix size is $D_j$ for every $k$.
This argument applies to both $\psi_n$ and $\psi_{r(n)}$.

Equality on the dense unitary test sets implies equality on all
unitaries by an exact Thomsen-map calculation. For $u\in \mathrm{U}_d(A)$,
choose a tested $v\in \mathrm{U}_d(A)$ with $\|u-v\|<2$ and write
$v^*u=e^{ih}$ with $h=h^*\in \mathrm{M}_d(A)$. For each of the two induced
maps, compatibility with the Thomsen map sends
$\overline{u}-\overline{v}$ to
\[
 \operatorname{Th}_{T_\nu}
 \bigl(\kappa_{\sharp,\nu}(\widehat h)/(2\pi)\bigr),
\]
where $\kappa_{T,\nu}=\kappa_T\circ(\iota_\nu)_T$ is their common
map on tracial state spaces, and
$\kappa_{\sharp,\nu}=\Aff\mathrm T(\iota_\nu)\circ\kappa_\sharp$
is its dual map on affine functions. The prescribed map has exactly the same value
on this difference. Equality on $\overline{v}$ therefore gives equality
on $\overline{u}$, with no limiting argument in the quotient group.
We have proved
\begin{equation}\label{eq:descent-equal-full-invariants}
 \begin{aligned}
 [\Psi_\nu]&=[\Psi_\nu^r]=[\iota_\nu]\kappa,\\
 (\Psi_\nu)_T&=(\Psi_\nu^r)_T=\kappa_{T,\nu},\\
 \Psi_\nu^\ddagger&=(\Psi_\nu^r)^\ddagger
                   =\iota_\nu^\ddagger\kappa_\gamma.
 \end{aligned}
\end{equation}
The maps are full by Lemma~\ref{lem:trace-kernel-regularity}(iv).
Explicitly, for a nonzero positive $a\in A$ the image has positive
canonical trace. Choose $\delta>0$ so its spectral cut
$(\Psi_\nu(a)-\delta)_+$ has positive dimension function, and then
choose $m$ with $m d_\tau((\Psi_\nu(a)-\delta)_+)>1$.
Strict comparison puts the unit below the direct sum of these
$m$ cutdowns, so the image of $a$ is full.
The same argument applies to $\Psi_\nu^r$.  The ultrapower uniqueness therefore
gives a unitary conjugating $\Psi_\nu$ and $\Psi_\nu^r$.

We claim that, for every finite $F\subset A$ and $\varepsilon>0$,
for all sufficiently large $n$ there is $z_n\in \mathrm{U}(T)$ with
\begin{equation}\label{eq:descent-eventual-coordinate-unitaries}
 \|z_n\psi_n(a)z_n^*-\psi_{r(n)}(a)\|<\varepsilon
       \qquad(a\in F).
\end{equation}
Otherwise, the set of indices where no such unitary exists is
infinite.  Choose a free ultrafilter $\nu$ containing that set.
The unitary just obtained from
\eqref{eq:descent-equal-full-invariants} lifts to a sequence of
unitaries in $T$.  Its coordinate inequalities on $F$ contradict
the choice of $\nu$.  This proves the claim.  Diagonalizing
\eqref{eq:descent-eventual-coordinate-unitaries} over increasing
finite dense sets gives a unitary in $T_\infty$ conjugating
$\psi_\infty$ and $r^*\psi_\infty$.

This holds for every diverging reparameterization $r$.
Intertwining by reparameterization
\cite[Proposition~9.2]{CGSTW} now gives a homomorphism
$\rho:A\to T$ whose constant-sequence map is unitarily equivalent
to $\psi_\infty$.  That proposition requires only a separable
source and a unital target; it does not require nuclearity of
$T$.  Since $\psi_\infty$ is unital, so is $\rho$.

Finally, compose the unitary equivalence in $T_\infty$ with its
quotient to any $T_\nu$.  Equation
\eqref{eq:descent-equal-full-invariants} and
Lemma~\ref{lem:descent-diagonal-injective} give
\[
 \begin{aligned}
 [\iota_\nu][\rho]&=[\iota_\nu]\kappa,\\
 \rho_T(\tau)&=s,\\
 \iota_\nu^\ddagger\rho^\ddagger
     &=\iota_\nu^\ddagger\kappa_\gamma.
 \end{aligned}
\]
Diagonal injectivity and monotraciality therefore give
$([\rho],\rho_T,\rho^\ddagger)=\Xi$.
In particular, $\tau\rho=s$ is faithful, so $\rho$ is injective.
This last recovery uses the monotracial ultrapower, and requires
no assertion about all traces of the sequence algebra $T_\infty$.
\end{proof}

\begin{theorem}[Embeddings into an admissible target]\label{thm:embedding}
Let \(A\) be separable, unital, nuclear, and satisfy the UCT, and let
\(T\) be an admissible target. Every faithful unital totally compatible
triple \(\Xi=(\kappa,\kappa_T,\kappa_\gamma)\) from \(A\) to \(T\) is
realized by a unital embedding \(A\to T\). Two unital embeddings inducing
the same triple, whose common induced trace on \(A\) is faithful, are
approximately unitarily equivalent. No assumption on either
\(\mathrm{K}\)-group is needed.
\end{theorem}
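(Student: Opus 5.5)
Existence follows from Proposition~\ref{prop:descent-prescribed-invariant}; uniqueness comes from the same reparameterization device, run on two given embeddings. So the proof will mostly verify that the hypotheses of that proposition are supplied by Theorem~\ref{thm:ultrapower-full-invariant}, and then adapt its final argument to the uniqueness claim.

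\emph{Existence.} Let $\Xi=(\kappa,\kappa_T,\kappa_\gamma)$ be faithful, unital and totally compatible, and put $s=\kappa_T(\tau)$, which is a faithful trace on $A$. Fix a free ultrafilter $\omega$. The composite of $\Xi$ with the triple induced by $\iota_\omega$ is again unital and totally compatible, because induced triples compose compatibly. Its trace component pulls back the unique trace $\tau_\omega$ to $s$. Since $T_\omega$ is $B$ in the notation of Section~\ref{sec:ultrapower-classification}, Theorem~\ref{thm:ultrapower-full-invariant}(i) with $\lambda=s$ gives $\Phi:A\to T_\omega$ realizing this composite. This is hypothesis (i) of Proposition~\ref{prop:descent-prescribed-invariant}.

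For hypothesis (ii), let $\nu$ be any free ultrafilter and take two unital maps $A\to T_\nu$ inducing the same triple. They pull back the unique trace of $T_\nu$ (Lemma~\ref{lem:trace-kernel-regularity}(ii)) to the same trace. In the application that trace is the faithful $s$, so both maps are embeddings, and Theorem~\ref{thm:ultrapower-full-invariant}(ii) makes them unitarily equivalent. The proposition then yields $\rho:A\to T$ with $([\rho],\rho_T,\rho^\ddagger)=\Xi$. Since $\tau\rho=s$ is faithful, $\rho$ is injective.

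\emph{Uniqueness.} Let $\phi,\psi:A\to T$ be unital embeddings inducing the same triple, with common faithful trace $s$. I argue by contradiction. Suppose some finite $F\subset A$ and $\varepsilon>0$ admit no unitary $z\in\mathrm U(T)$ with $\|z\phi(a)z^*-\psi(a)\|<\varepsilon$ for all $a\in F$. Take any free ultrafilter $\nu$ and compose both maps with $\iota_\nu$. The maps $\iota_\nu\phi$ and $\iota_\nu\psi$ induce equal triples, and they pull back $\tau_\nu$ to $s$. By Theorem~\ref{thm:ultrapower-full-invariant}(ii) they are conjugate by a unitary $W\in T_\nu$. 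Lift $W$ to coordinate unitaries $w_m\in\mathrm U(T)$; the lift exists because a unitary in a norm ultrapower lifts to coordinate unitaries. On an $\nu$-large set of $m$, the unitary $w_m$ satisfies the required $\varepsilon$-estimate on $F$, which contradicts the assumption. Hence $\phi$ and $\psi$ are approximately unitarily equivalent. No diagonal reparameterization is needed here, because both sequences are constant.

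I expect the main obstacle to be keeping track of which maps are full or faithful, so that every appeal to ultrapower uniqueness is legitimate. Part (ii) of Theorem~\ref{thm:ultrapower-full-invariant} is stated for embeddings with trace $\lambda$ faithful. This is exactly why the statement requires the common induced trace to be faithful. Lemma~\ref{lem:trace-kernel-regularity}(iv) then supplies fullness, which is what the absorption machinery in that theorem uses. A secondary check is that composing $\Xi$ with $\iota_\omega$ preserves total compatibility, in particular the $\zeta_m$ relations. This holds by naturality of $\zeta_m$ and of the Thomsen maps under unital homomorphisms.
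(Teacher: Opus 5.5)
Your proposal is correct and follows the paper's own proof. For existence you compose $\Xi$ with the diagonal triple, realize it in $T_\omega$ by Theorem~\ref{thm:ultrapower-full-invariant}(i), take hypothesis~(ii) of Proposition~\ref{prop:descent-prescribed-invariant} from Theorem~\ref{thm:ultrapower-full-invariant}(ii), and descend; for uniqueness you apply ultrapower uniqueness to the diagonal images and read the estimate off one coordinate of a lifted implementing unitary. The contradiction framing in the uniqueness step is harmless but unnecessary, since the direct argument already gives the estimate on every finite set.
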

\begin{proof}
For existence, compose \(\Xi\) with the triple induced by a diagonal
embedding into \(T_\omega\). Theorem~\ref{thm:ultrapower-full-invariant}
realizes this composite in the ultrapower. That theorem supplies the same
uniqueness statement for every free ultrafilter.
Proposition~\ref{prop:descent-prescribed-invariant} therefore gives an
embedding into \(T\) inducing \(\Xi\).

For uniqueness, apply Theorem~\ref{thm:ultrapower-full-invariant} to the two diagonal embeddings into \(T_\omega\). Lift an implementing unitary to a sequence of unitaries of \(T\). For a prescribed finite source set and tolerance, one of its coordinates gives the desired approximate conjugacy. Exhaust the separable source.
\end{proof}

\begin{remark}[A uniqueness question of Hirshberg and Phillips]
Let $X$ be a nonempty contractible compact metrizable space, let $T$
be an admissible target with unique trace $\tau$, and let
$\phi,\psi:\mathrm C(X)\to T$ be unital embeddings.\label{rem:hp-uniqueness}
Then
\[
 \tau\circ\phi=\tau\circ\psi
 \quad\Longleftrightarrow\quad
 \phi\text{ and }\psi\text{ are approximately unitarily equivalent}.
\]
Taking $X=[0,1]^n$ and $T=D_I$ answers the first part of
Hirshberg and Phillips's \cite[Question~4.8]{HP} affirmatively.
The same conclusion answers the exact-target case of their more
general target question. Their formulation of that second part does
not assume exactness.

Contractibility gives
$\mathrm K_0(\mathrm C(X))=\Z[1]$ and
$\mathrm K_1(\mathrm C(X))=0$. Thus unitality determines the maps
on ordinary $\mathrm K$-theory and, by the coefficient exact
sequences, on total $\mathrm K$-theory. Equality of the induced
traces determines the trace component. Since the source
$\mathrm K_1$-group vanishes, the Thomsen sequence shows that its
Hausdorffized unitary group is the determinant kernel, so its map
is determined by the trace component as well. Hence $\phi$ and
$\psi$ induce the same triple. Their common induced trace
is faithful, and $\mathrm C(X)$ is nuclear and satisfies the UCT.
Theorem~\ref{thm:embedding} therefore gives approximate unitary
equivalence. The converse follows from traciality and norm
continuity. Finally, $D_I$ is admissible by
Lemma~\ref{lem:interval-regularity}, so the stated specialization
applies.
\end{remark}

\begin{remark}
Theorem~\ref{thm:embedding} is the analytic input used for replacement. Its hypotheses concern the nuclearity and UCT of the source, not nuclearity of the target. The target may be any of the free products \(\mathrm{C}(M)*_{\red}D\) or \(A*_{\red}D\) used below. The entire triple is needed for uniqueness even when the theorem to be deduced is stated using ordinary \(\mathrm{K}\)-groups and one trace pairing.
\end{remark}

\section{Marked replacement of a nuclear core}\label{sec:marked-core}

Throughout this section $D$ is any reservoir of Definition~\ref{def:reservoir}.
The nuclear cores are separable, unital and UCT,
and their selected traces are faithful.  No finite-generation assumption is made.
No splitting of the core's unit class is required. The separate
support-shifting construction of Section~\ref{app:split-unit} uses
a splitting only for the factor being moved.
Write $S_E=E\fr D$, with its selected free-product trace.
The targets $S_E$ and their infinite reservoir tails have the regularity
required in Theorem~\ref{thm:embedding}, by
Lemma~\ref{lem:interval-regularity}. Their canonical inclusions
$j_E:E\to S_E$ induce isomorphisms on graded and total $\mathrm{K}$-theory.
We retain totally compatible triples, including their total
$\mathrm K$-theory, trace and Hausdorffized unitary maps and the
coefficient-to-unitary compatibility. The selected data determine the
invariants of the monotracial $D$-stabilizations; we do not identify
the trace simplices of the original cores. In particular, selected
traces on multitracial cores are allowed.
Whenever $f:L\to L'$ is a trace-preserving unital map between
monotracial algebras and $f_*$ is a graded $\mathrm K$-isomorphism,
the induced triple $([f],f_T,f^\ddagger)$ is invertible.
The coefficient exact sequences prove this on total $\mathrm K$-theory.
The trace map identifies the one-point trace simplices, and its
induced map on affine functions is $\id_\R$. The two determinant
quotients agree because the pairing ranges agree, so the Thomsen
sequence proves invertibility on the unitary quotients as well.
We shall use this observation for the reservoir-tail inclusions
without further mention.

\begin{lemma}[A monotracial range model]\label{lem:marked-range}
Let $E$ be a nuclear UCT core with faithful selected trace.
There is a simple, infinite-dimensional,
unital, monotracial, nuclear UCT, $\mathcal Z$-stable algebra $A$
with the same selected graded $\mathrm{K}$-groups, unit and trace pairing as $E$.
There is a unital trace-preserving embedding $\theta:E\to A$
inducing graded and total $\mathrm{K}$-isomorphisms.
\end{lemma}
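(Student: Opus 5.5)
The plan is to build $A$ from the range theorem of the Elliott classification program, and then obtain $\theta$ from the embedding theorem already proved. Put
\[
 G_0=\mathrm K_0(E),\quad u=[1_E],\quad G_1=\mathrm K_1(E),\quad \rho=\tau_{E*}:G_0\to\R .
\]
Order $G_0$ by the strict cone $G_0^+=\{0\}\cup\{g:\rho(g)>0\}$. This is a simple ordered group with order unit $u$, because $\rho(u)=1$. It is weakly unperforated, since $ng\in G_0^+$ with $n\ge1$ forces $\rho(g)>0$ or $g=0$, except possibly for torsion $g$ with $ng=0$, which is allowed. Take the single-point trace simplex with pairing $\rho$; the pairing condition holds by construction. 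The range theorem for classifiable algebras (Elliott's construction in the stably finite simple case; see \cite{GLN1,GLN2}) then supplies a simple, infinite-dimensional, unital, monotracial, nuclear UCT, $\mathcal Z$-stable algebra $A$ with
\[
 \Ell(A)\cong(G_0,G_0^+,u,G_1,\{\mathrm{pt}\},\rho).
\]
Infinite-dimensionality is automatic for the models, since they are simple and $\mathcal Z$-stable. Countability of the groups follows from separability of $E$.

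For $\theta$ I apply Theorem~\ref{thm:embedding} with source $E$ and target $T=A$. The target is admissible: it is separable, simple, unital and infinite-dimensional; it is nuclear and hence exact; it is monotracial; and it has strict comparison and stable rank one because it is $\mathcal Z$-stable and stably finite (R\o rdam's results). It then suffices to produce a faithful unital totally compatible triple $\Xi=(\kappa,\kappa_T,\kappa_\gamma)$ from $E$ to $A$ whose $\kappa$ is a total $\mathrm K$-isomorphism extending the chosen graded isomorphism $\chi=(\chi_0,\chi_1)$. The trace component is $\kappa_T(\tau_A)=\tau_E$, which is faithful by hypothesis.

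For $\kappa$, I lift $\chi$ to an invertible class in $\KL(E,A)$, or equivalently an invertible $\Lambda$-morphism. Both algebras satisfy the UCT, so by \cite{DL} every graded isomorphism lifts to a $\KK$-equivalence; I take its $\KL$-image. Unitality and compatibility with $\rho$ hold because $\chi_0$ preserves the unit and pairing.

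For $\kappa_\gamma$, I need a homomorphism
\[
 \Uquot{E}\to\Uquot{A}
\]
compatible with the Thomsen sequences and the $\zeta_m$ maps. Here the unitary quotient of $E$ is computed with all its traces, while $\kappa_\sharp$ only sees $\tau_E$. I define $\kappa_\sharp:\Aff\mathrm T(E)\to\R$ as evaluation at $\tau_E$. This map is compatible with $\rho$, so it induces a map on the determinant kernels. The extension to the full quotient is the main obstacle. I would invoke the existence of compatible full extensions for unit- and trace-compatible $\KL$-classes, which the introduction attributes to \cite{CGSTW}. That argument uses the splitting of the Thomsen sequence, whose kernel in the target $\R/\overline{\rho(G_0)}$ is divisible, together with torsion handling via $\zeta_m$ as in \cite[proof of Theorem~8.9]{CGSTW}. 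If needed, $\kappa$ can be adjusted within its $\KL$-class by the rotation freedom of Proposition~\ref{prop:relative-rotation}.

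Theorem~\ref{thm:embedding} then gives a unital embedding $\theta:E\to A$ with $[\theta]=\kappa$ and $\tau_A\theta=\tau_E$. Hence $\theta$ induces graded and total $\mathrm K$-isomorphisms, the latter because $\kappa$ is invertible.
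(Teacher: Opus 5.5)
Your proof is correct and follows the paper's route. The strict trace cone makes the selected data a simple weakly unperforated invariant, realized by a classifiable $\mathcal Z$-stable range model; the paper cites \cite[Theorem~2.8]{EGLN} and handles the exceptional data $(\Z,m,0)$ explicitly by $\mathrm M_m(\mathcal Z)$. Then $\theta$ realizes a compatible full extension of the identity data via an embedding theorem.

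The one difference is which embedding theorem you use. You invoke the paper's own Theorem~\ref{thm:embedding}, after checking that $A$ is admissible through R\o rdam's stable rank one and strict comparison results for $\mathcal Z$-stable algebras. The paper instead applies \cite[Theorem~B and Corollary~C]{CGSTW} directly to the nuclear $\mathcal Z$-stable target.

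Your aside about adjusting $\kappa$ within its $\KL$-class via Proposition~\ref{prop:relative-rotation} is confused, since that freedom changes the unitary component of a realized map, not $\kappa$. It is not needed for the argument.
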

\begin{proof}
Put $G_i=\mathrm{K}_i(E)$, $u=[1_E]$ and $\sigma=\tau_{E*}$, and equip $G_0$
with the strict trace cone
\[
 G_0^+=\{0\}\cup\{g:\sigma(g)>0\}.
\]
This is a simple weakly unperforated scaled ordered group: adding a sufficiently
large multiple of $u$ makes any element positive, every
nonzero positive element is an order unit, and rational comparisons
with $u$ show that its only normalized state is $\sigma$.
The nuclear range theorem \cite[Theorem~2.8]{EGLN} supplies a
classifiable monotracial range model for these data, with one-point
trace simplex.  Choose it infinite dimensional and $\mathcal Z$-stable.
For clarity, a finite-dimensional simple model would have
$(G_0,u,G_1)=(\Z,m,0)$ with $m\geq1$. For these data one can
instead take the infinite-dimensional model $\mathrm{M}_m(\mathcal Z)$,
with its normalized trace. Thus no splitting of $u$ is needed.
The identity on the selected graded data is compatible with the
trace map $\mathrm T(A)\to\mathrm T(E)$ sending the unique trace of
$A$ to $\tau_E$. Its induced map on affine functions is evaluation
at $\tau_E$.
A nonzero source projection has strictly positive selected trace,
so its image is positive in the displayed cone.
The nuclear embedding theorem \cite[Theorem~B and Corollary~C]{CGSTW}
therefore gives $\theta$; its version with totally compatible
triples permits any chosen compatible extension of these data.  Coefficient exact
sequences give the asserted total $\mathrm{K}$-isomorphism.
\end{proof}

\begin{lemma}[Factorization of totally compatible triples]\label{lem:marked-factor}
Let $E$ have faithful selected trace and let
$\theta:E\to A$, $j:E\to T$ be trace-preserving unital embeddings
inducing graded $\mathrm K$-isomorphisms, where $A,T$ are monotracial.
Their total $\mathrm K$-theory maps, Hausdorffized unitary maps and
induced maps on affine functions are surjective. There is a unique
totally compatible isomorphism triple
$\lambda=(\lambda_{\mathrm K},\lambda_T,\lambda_\gamma)$ from $A$ to $T$
such that
\[
 \lambda([\theta],\theta_T,\theta^\ddagger)
       =([j],j_T,j^\ddagger).
\]
\end{lemma}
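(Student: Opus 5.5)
The plan is to show that each of the two induced triples $([\theta],\theta_T,\theta^\ddagger)$ and $([j],j_T,j^\ddagger)$ is invertible, and then to take
\[
 \lambda=([j],j_T,j^\ddagger)\circ([\theta],\theta_T,\theta^\ddagger)^{-1}.
\]
Uniqueness is then automatic, because the $\theta$-triple is invertible. Total compatibility of $\lambda$ also comes for free, since composites and inverses of totally compatible triples are totally compatible. The compatibility relations are equalities of maps, so they can be conjugated by the invertible components.

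Invertibility is the observation recorded at the start of this section. Both $\theta$ and $j$ are trace-preserving unital maps that induce graded $\mathrm K$-isomorphisms. That observation, however, is stated for maps between monotracial algebras, whereas here the source $E$ may be multitracial. So I will redo the argument with $E$ carrying only its selected trace, component by component.

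\emph{Total $\mathrm K$-theory.} For each $m\ge2$ the Bockstein sequences
\[
0\to \mathrm K_i/m\to \mathrm K_i(\,\cdot\,;\Z/m)\to \Tor_m\mathrm K_{i-1}\to0
\]
are natural. The five lemma therefore upgrades the graded isomorphisms $\theta_*$ and $j_*$ to $\Lambda$-isomorphisms $[\theta]$ and $[j]$.

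\emph{Traces.} $\mathrm T(A)$ and $\mathrm T(T)$ are single points, and both are sent to $\tau_E$. The induced map on affine functions is evaluation at $\tau_E$, from $\Aff\mathrm T(E)$ onto $\R$. This map is surjective, since constants map onto $\R$. It is not injective when $E$ is multitracial, which is why the statement claims only surjectivity for the source maps. Between $A$ and $T$ the trace component $\lambda_T$ is forced to be the unique bijection of points, and its dual map $\lambda_\sharp$ is $\id_\R$.

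\emph{Unitary quotients.} Here I will use the Thomsen sequence \eqref{eq:thomsen}. Its left terms are $\Aff\mathrm T(E)/\overline{\rho_E(\mathrm K_0(E))}$ and $\R/\overline{\tau_{E*}\mathrm K_0(E)}$. The pairing ranges of $A$ and $T$ both equal $\tau_{E*}(\mathrm K_0(E))$, because $\theta_*$ and $j_*$ are isomorphisms preserving the trace pairing. Hence the left map is a surjection onto $\R/\overline H$, where $H=\tau_{E*}(\mathrm K_0(E))$, and the right map is the isomorphism $\mathrm K_1(E)\cong\mathrm K_1(A)$. A four-lemma diagram chase then gives surjectivity of $\theta^\ddagger$, and likewise of $j^\ddagger$.

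I then define $\lambda$ directly. On total $\mathrm K$-theory set $\lambda_{\mathrm K}=[j][\theta]^{-1}$, with $\lambda_T$ as above. For the unitary component, set $\lambda_\gamma(\theta^\ddagger(x))=j^\ddagger(x)$. This defines $\lambda_\gamma$ on all of $\Uquot{A}$ because $\theta^\ddagger$ is onto. The main obstacle is showing that this is well defined, i.e.\ that $\ker\theta^\ddagger\subseteq\ker j^\ddagger$. Take $x$ with $\theta^\ddagger(x)=0$. Then $\theta_*\Pi^{\mathrm{cu}}_E(x)=0$, so $\Pi^{\mathrm{cu}}_E(x)=0$ and $x=\mathrm{Th}_E(f)$ for some $f\in\Aff\mathrm T(E)$. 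Naturality of $\mathrm{Th}$ gives $\mathrm{Th}_A(f(\tau_E))=0$, so $f(\tau_E)\in\overline H$. Applying naturality again, $j^\ddagger(x)=\mathrm{Th}_T(f(\tau_E))=0$, since the closed pairing range of $T$ is the same $\overline H$.

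The same chase with $\theta$ and $j$ interchanged shows that $\lambda_\gamma$ is injective, so it is an isomorphism, and it is a homomorphism. It remains to check total compatibility of $\lambda$:
\begin{itemize}
\item The relations with $\mathrm{Th}$, $\Pi^{\mathrm{cu}}$ and $\rho$ follow from the corresponding naturality squares for $\theta$ and $j$ together with the surjectivity of the source maps.
\item The $\zeta_m$-relation needs $\lambda_\gamma\zeta^A_m=\zeta^T_m\lambda_{\mathrm K,0,m}$. Since $[\theta]$ is onto in coefficient degree, it suffices to check this on elements $[\theta](y)$, where it reads $j^\ddagger\zeta^E_m(y)=\zeta^T_m[j](y)$. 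This is the naturality of $\zeta_m$.
\end{itemize}
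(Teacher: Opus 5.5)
Your proof is correct and follows the paper's argument essentially step for step: you take $\lambda_{\mathrm K}=[j][\theta]^{-1}$ and the forced one-point map $\lambda_T$, define $\lambda_\gamma$ through the Thomsen sequence after checking $\ker\theta^\ddagger=\ker j^\ddagger$, and obtain every compatibility relation, including the $\zeta_m$-relations, by cancelling the surjective or bijective $\theta$-components. The one thing to fix is your opening claim that the $\theta$-triple is invertible, which fails for multitracial $E$ (as you later note yourself); uniqueness and total compatibility should instead be attributed to that same cancellation, which is exactly how the paper argues.
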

\begin{proof}
The total $\mathrm K$-theory maps are isomorphisms; set
$\lambda_{\mathrm K}=[j][\theta]^{-1}$.
Let $\lambda_T:\mathrm T(T)\to\mathrm T(A)$ be the unique affine map
between the one-point trace simplices. Its induced map
$\lambda_\sharp:\Aff\mathrm T(A)\to\Aff\mathrm T(T)$ is $\id_\R$.
Put $H=\overline{\tau_{E*}\mathrm K_0(E)}$ and use the Thomsen sequence
\[
 0\longrightarrow \Aff\mathrm T(L)/\overline{\rho_L\mathrm K_0(L)}
 \longrightarrow\Uquot{L}\xrightarrow{\Pi_L^{\mathrm{cu}}}\mathrm K_1(L)
 \longrightarrow0.
\]
The determinant-kernel maps induced by $\theta,j$ are the same
surjection, namely evaluation at $\tau_E$ modulo $H$.
Together with the $\mathrm K_1$-isomorphisms this makes
$\theta^\ddagger$ and $j^\ddagger$ surjective. Their kernels coincide:
an element in either kernel has zero ordinary $\mathrm K_1$-class,
hence is $\mathrm{Th}_E(f)$, and its image vanishes precisely when
$f(\tau_E)\in H$. Thus there is a unique unitary isomorphism
$\lambda_\gamma$ with
$\lambda_\gamma\theta^\ddagger=j^\ddagger$.
The induced maps on $\Aff\mathrm T$ are the same surjection,
namely evaluation at the selected trace. This is not a surjectivity
assertion about $\theta_T$ or $j_T$ when $E$ has several traces;
both maps have the single selected trace as their image.

The compatibility squares follow by cancellation on total
$\mathrm K$-theory, unitary quotients and affine functions.
In particular, for every $m\geq2$, naturality gives
\[
 \lambda_\gamma\zeta_m^A\ulK_0(\theta;\Z/m)
 =\zeta_m^T\lambda_{\mathrm K}^{(m)}
                    \ulK_0(\theta;\Z/m),
\]
and the last factor is an isomorphism. This proves the
coefficient-to-unitary compatibility, including torsion, and proves
the lemma.
\end{proof}

We use CAAP to abbreviate the \emph{$\mathrm C^*$-automorphism
approximation property}. For a trace-preserving unital inclusion
$B\subset C$, it asks
for trace-preserving automorphisms of $C$ that almost fix a
prescribed finite subset of $B$ and move $B$ close to a prescribed
element of $C$. We use the following form of the one-sided Elliott
approximate intertwining argument, retaining control of the full
invariant; compare \cite[Proposition~1.2]{Shlyakhtenko}.

\begin{lemma}[One-sided approximate intertwining with an exact invariant]\label{lem:marked-caap}
Let $B\subset C$ be a unital inclusion of faithfully tracial
separable algebras, with $\tau_B=\tau_C|_B$.
Suppose a subgroup $\mathcal G\subset\operatorname{Aut}_\tau(C)$
induces the identity on total $\mathrm K$-theory, the trace simplex
and the Hausdorffized unitary quotient of $C$, and has the following
property:
given finite $F\subset B$, $z\in C$ and $\eta,\varepsilon>0$, some
$\alpha\in\mathcal G$ and $y\in B$ satisfy
\begin{equation}\tag{CAAP}\label{eq:caap}
 \max_{b\in F}\|\alpha(b)-b\|<\eta,
 \qquad \|z-\alpha(y)\|<\varepsilon.
\end{equation}
Then there is a trace-preserving isomorphism $\Phi:B\to C$,
arbitrarily close to inclusion on any prescribed finite set, whose
induced triple is that of the inclusion $\iota:B\hookrightarrow C$:
\[
 ([\Phi],\Phi_T,\Phi^\ddagger)
       =([\iota],\iota_T,\iota^\ddagger).
\]
\end{lemma}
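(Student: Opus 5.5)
We will run the standard one-sided Elliott approximate intertwining, as in \cite[Proposition~1.2]{Shlyakhtenko}, and build $\Phi$ as a pointwise limit of automorphisms of $C$ restricted to $B$. Fix dense sequences $(b_k)\subset B$ and $(c_k)\subset C$, a prescribed finite set $F_0\subset B$ and a tolerance $\varepsilon_0>0$. Inductively choose $\alpha_n\in\mathcal G$, finite sets $F_n\subset B$ and tolerances $\eta_n$ with $\sum\eta_n<\varepsilon_0$. Put $\beta_n=\alpha_1\circ\cdots\circ\alpha_n\in\mathcal G$. Then $\Phi=\lim_n\beta_n|_B$ is the candidate map. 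At stage $n$ the finite set $F_n$ will contain $F_{n-1}$, $b_1,\dots,b_n$, and the elements $y_1,\dots,y_{n-1}$ chosen at earlier stages. We then apply \eqref{eq:caap} to $F_n$, to the target element $z=\beta_{n-1}^{-1}(c_n)\in C$, and to tolerances $\eta_n$ and $2^{-n}$. This gives $\alpha_n\in\mathcal G$ and $y_n\in B$ with $\|\alpha_n(b)-b\|<\eta_n$ on $F_n$ and $\|\beta_{n-1}^{-1}(c_n)-\alpha_n(y_n)\|<2^{-n}$. Because $\beta_{n-1}$ is isometric, the second estimate reads $\|c_n-\beta_n(y_n)\|<2^{-n}$.

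Next we prove convergence and that the limit is an isomorphism. For $b\in F_m$ and $n>m$ we have $\|\beta_n(b)-\beta_{n-1}(b)\|=\|\alpha_n(b)-b\|<\eta_n$, so $(\beta_n(b))$ is Cauchy. Since $\bigcup F_m$ is dense in $B$ and every $\beta_n$ is contractive, $\Phi(b)=\lim\beta_n(b)$ exists for all $b\in B$. It is a unital trace-preserving $^*$-homomorphism $B\to C$, and it is isometric as a limit of isometries. On $F_0$ we get $\|\Phi(b)-b\|\le\sum\eta_n<\varepsilon_0$.

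For surjectivity, note that $y_n\in F_{n+1}$. Hence $\|\Phi(y_n)-\beta_n(y_n)\|\le\sum_{k>n}\eta_k$. Combined with $\|c_n-\beta_n(y_n)\|<2^{-n}$, this shows that every $c_n$ lies within $2^{-n}+\sum_{k>n}\eta_k$ of $\Phi(B)$. This is the one point that needs care: the $y_n$ must be inserted into later finite sets. The image $\Phi(B)$ is closed because $\Phi$ is isometric. The $c_n$ are dense, but each is approximated only once, so we enumerate $(c_k)$ with every element repeated infinitely often. The estimates then go to zero along each repetition, which gives density of the image, hence surjectivity.

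Finally we identify the invariant. Each $\beta_n$ lies in $\mathcal G$, so $\beta_n|_B=\beta_n\circ\iota$ induces exactly the triple $([\iota],\iota_T,\iota^\ddagger)$. We expect the main obstacle to be passing this to the limit $\Phi$.

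For traces, $\tau_C\circ\Phi=\lim\tau_C\circ\beta_n|_B=\tau_B$ pointwise. For the other trace components we use that $\Phi$ is an isomorphism: $\Phi_T$ is determined by precomposition, and the trace condition is pointwise. So $\Phi_T=\iota_T$ whenever trace spaces are compared, since $\tau_C\circ\beta_n=\tau_C$ for all traces fixed by $\mathcal G$ and pointwise limits preserve this.

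For total $\mathrm K$-theory we use stability of classes under small perturbations. A class in $\ulK(B)$ is represented by a projection or unitary over $B$, or over $B\otimes D_m$, in a fixed matrix size. The maps $\Phi\otimes\id$ and $\beta_n\otimes\id$ are uniformly close on such a representative for large $n$, so they give equal classes. Thus $[\Phi]=[\beta_n\iota]=[\iota]$.

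For the unitary quotient, take $u\in\mathrm U_d(B)$. For large $n$, $\Phi^{(d)}(u)$ and $\beta_n^{(d)}(u)$ are within distance less than $2$, so $\Phi^{(d)}(u)^*\beta_n^{(d)}(u)=e^{ih_n}$ with $\|h_n\|\to0$. The Thomsen map sends the class of $e^{ih_n}$ to $\mathrm{Th}_C(\widehat{h_n}/2\pi)$, whose norm tends to zero. Therefore $\Phi^\ddagger(\bar u)$ is the limit of the constant sequence $\iota^\ddagger(\bar u)$ in the Hausdorff quotient, and the quotient being Hausdorff gives $\Phi^\ddagger=\iota^\ddagger$.
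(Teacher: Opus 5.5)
Your proposal is correct and follows the paper's proof essentially step for step: the same one-sided intertwining through the composed automorphisms $\beta_n=\beta_{n-1}\alpha_n$, the earlier preimages $y_k$ inserted into later finite sets, a target sequence with infinite repetitions for surjectivity, and norm-perturbation arguments that pass the inclusion triple to the limit. The only cosmetic difference is on the unitary quotient. You write the small discrepancy as $\mathrm{Th}_C(\widehat{h_n}/2\pi)$; note that $\mathrm{Th}_C$ sends this affine function to the class of $e^{ih_n}$, not the other way round. The paper instead cites continuity of $\mathrm U_d(C)\to\Uquot{C}$ and the Hausdorff property directly, and the two arguments are equivalent.
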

\begin{proof}
Fix a finite set $F_0\subset B$ and $\delta>0$, and choose
$\varepsilon_n>0$ with $\sum_n\varepsilon_n<\delta$.
Let $(b_j)$ be dense in $B$, and let $(z_n)$ be a sequence in $C$
in which each member of a fixed countable dense subset occurs
infinitely often. Put $\gamma_0=\id_C$ and $\phi_0=\iota$.
Inductively, having chosen $\gamma_{n-1}$ and $y_1,\ldots,y_{n-1}\in B$,
put
\[
 S_n=F_0\cup\{b_1,\ldots,b_n\}\cup\{y_1,\ldots,y_{n-1}\}.
\]
Apply~\eqref{eq:caap} to $S_n$ and $\gamma_{n-1}^{-1}(z_n)$ to find
$\alpha_n\in\mathcal G$ and $y_n\in B$ such that
\[
 \max_{b\in S_n}\|\alpha_n(b)-b\|<\varepsilon_n,
 \qquad
 \|\gamma_{n-1}^{-1}(z_n)-\alpha_n(y_n)\|<\varepsilon_n.
\]
Set $\gamma_n=\gamma_{n-1}\alpha_n$ and $\phi_n=\gamma_n\iota$.
Since $\gamma_{n-1}$ is isometric, these estimates give
\[
 \|\phi_n(b)-\phi_{n-1}(b)\|<\varepsilon_n\quad(b\in S_n),
 \qquad \|z_n-\phi_n(y_n)\|<\varepsilon_n.
\]
This is the one-sided approximate intertwining
\[
\begin{tikzcd}[column sep=large,row sep=large]
 B \arrow[r,"\id_B"] \arrow[d,hook,"\iota"']
   & B \arrow[r,"\id_B"] \arrow[d,hook,"\iota"']
   & B \arrow[r] \arrow[d,hook,"\iota"']
   & \cdots \arrow[r] & B \arrow[d,dashed,"\Phi"] \\
 C \arrow[r,"\alpha_1^{-1}"']
   & C \arrow[r,"\alpha_2^{-1}"']
   & C \arrow[r] & \cdots \arrow[r] & C .
\end{tikzcd}
\]
The square containing $\alpha_n^{-1}$ commutes within
$\varepsilon_n$ on $S_n$, because
$\|\alpha_n^{-1}(b)-b\|=\|\alpha_n(b)-b\|$.
The bottom limit is identified with $C$ by the stage maps $\gamma_n$,
which satisfy $\gamma_n\alpha_n^{-1}=\gamma_{n-1}$.
Under these identifications the vertical maps are $\phi_n$.

The summable estimates on the dense sequence $(b_j)$ show that
$\phi_n$ converges pointwise in norm to a unital trace-preserving
embedding $\Phi:B\to C$. It is isometric as a limit of isometric maps,
and
\[
 \max_{b\in F_0}\|\Phi(b)-b\|
 \leq\sum_n\varepsilon_n<\delta.
\]
Every chosen preimage $y_n$ belongs to all subsequent sets $S_m$,
$m>n$. Hence
\[
 \|\Phi(y_n)-z_n\|
 \leq \|\phi_n(y_n)-z_n\|
       +\sum_{m>n}\|\phi_m(y_n)-\phi_{m-1}(y_n)\|
 <\sum_{m\geq n}\varepsilon_m.
\]
The right side tends to zero. Since every member of the chosen dense
subset of $C$ occurs infinitely often among the $z_n$, the range of
$\Phi$ is dense. It is also norm closed, so $\Phi$ is surjective.

Finally, every $\gamma_n\in\mathcal G$ acts identically on the full
invariant, so each $\phi_n$ induces the inclusion triple.
For any fixed projection or unitary representative over $B$ or
$B\otimes D_m$, the images converge in norm; eventual closeness gives
equality of its $\mathrm K$-class. Tensoring by a fixed coefficient
algebra preserves point-norm convergence, so this also treats total
$\mathrm K$-theory.
For $v\in\mathrm U_d(B)$, the images converge in the single matrix
algebra $\mathrm M_d(C)$, and their common algebraic unitary class
passes to the limit because
$\mathrm U_d(C)\to\Uquot{C}$ is continuous and $\Uquot{C}$ is Hausdorff.
Every trace of $C$ gives the inclusion trace on $B$ at each stage,
and this equality passes to the limit as well. Thus
$([\Phi],\Phi_T,\Phi^\ddagger)=([\iota],\iota_T,\iota^\ddagger)$.
This argument treats each individual class and requires neither
finite generation nor a uniform matrix bound across different classes.
\end{proof}

\begin{proposition}[Replacement with the inclusion marking]
\label{prop:marked-replacement}
Let $\theta:E\to A$ be as in Lemma~\ref{lem:marked-factor}, with
$E,A$ nuclear UCT and $A$ monotracial. Then there is a
trace-preserving isomorphism $E\fr D\to A\fr D$ inducing the same
totally compatible triple as $\theta\fr\id_D$.
\end{proposition}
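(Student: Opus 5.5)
The plan is to apply Lemma~\ref{lem:marked-caap} to the unital trace-preserving inclusion
\[
 B=(\theta\fr\id_D)(E\fr D)\subset C=A\fr D,
\]
with $\mathcal G$ the group of trace-preserving automorphisms of $A\fr D$ that act identically on the full invariant. The map $\theta\fr\id_D$ is a unital trace-preserving embedding, injective because the free-product traces are faithful. The lemma then gives an isomorphism $\Phi:B\to C$ inducing the inclusion triple. Precomposing with $\theta\fr\id_D$ produces the required isomorphism $E\fr D\to A\fr D$, and it carries the triple of $\theta\fr\id_D$. So everything reduces to verifying \eqref{eq:caap} for this inclusion.

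To verify \eqref{eq:caap}, fix a finite set $F\subset B$, an element $z\in C$ and tolerances $\eta,\varepsilon>0$.
\begin{enumerate}
\item Approximate $F$ by elements of $\theta(E)\fr D_{\le r}$ and $z$ by an element of $A\fr D_{\le r}$, for a finite head $D_{\le r}$.
\item Use the tail $D_{>r}$, which is admissible by Lemma~\ref{lem:interval-regularity}, together with a further finite block $D_{(r,s]}$ reserved as a second independent reservoir.
\item By Lemma~\ref{lem:marked-factor}, the embeddings $E\to A\fr D_{>s}$ (via $\theta$) and $E\to E\fr D_{>s}$ differ by an invertible triple. Theorem~\ref{thm:embedding}, existence, applied to the nuclear UCT source $A$ and the admissible target $\theta(E)\fr D_{>s}\subset B$, yields a unital embedding $\beta:A\to\theta(E)\fr D_{>s}$ with the inverse-transported triple.
\item Theorem~\ref{thm:embedding}, uniqueness, applied to the nuclear source $E$, shows that $\beta\theta$ is approximately unitarily equivalent to the inclusion of $\theta(E)$ in the tail algebra. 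The embeddings agree on triples because triples compose correctly and the tail inclusion is a full-invariant isomorphism.
\end{enumerate}

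Next, build the automorphism as in Hirshberg--Phillips. The map $\beta$ together with the identity on $D_{\le s}$, and with a correction on the remaining tail, gives a homomorphism of $A\fr D$ into itself, the free product of the corrected maps. The correcting unitary is then extended over the free complement $D_{\le r}$ by conjugating only the core and tail factors, so that the head is fixed exactly. Reversing the roles with a second application of uniqueness on $A$ gives an inverse up to approximate unitary equivalence. The resulting two-sided intertwining between $A\fr D$ and itself is then adjusted into a genuine automorphism $\alpha$: we ensure $\alpha$ nearly fixes $\theta(E)$ and $D_{\le r}$, and that $\alpha^{-1}(A)$ lies near $B$. Then $y=\alpha^{-1}$ of the approximant of $z$ lies near $B$ and can be perturbed into $B$. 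Finally, $\alpha$ acts trivially on the full invariant because it is built from maps with prescribed triples, composed with inner automorphisms and identities on the reservoir, and the tail inclusions are invertible on triples.

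The main obstacle is producing an actual automorphism of $A\fr D$, rather than merely an endomorphism, that moves $A$ into $B$ while nearly fixing $\theta(E)$. The intended route is to swap the roles of the core $A$ and a disjoint copy of it sitting inside $\theta(E)\fr D_{(r,s]}\fr D_{>s}$, using the reindexing isomorphism $D\fr D\cong D$. The maps $\beta$ and the inverse comparison are each realized by Theorem~\ref{thm:embedding}. I would first try to arrange the swap as an automorphism of a free product $A\fr(\theta(E)\fr\text{tail})$. Uniqueness, applied twice with unitaries extended over the free complements, would then make the composites close to the identity on the prescribed finite sets.
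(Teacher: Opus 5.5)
\textbf{Verdict: genuine gap.} The overall reduction matches the paper: you apply Lemma~\ref{lem:marked-caap} to $B=\mathrm C^*(\theta(E),D)\subset C=A\fr D$. Your steps (iii)--(iv) also match it. Your $\beta:A\to\theta(E)\fr D_{>s}$ is the paper's $\chi=(\theta\fr\id_{D_{>r}})\rho$, and your first uniqueness step on the source $E$ is the paper's. The problem is the construction of the automorphism itself, which you flag as the ``main obstacle'' and answer only with an unexecuted plan (an endomorphism, an approximate inverse, a two-sided intertwining, a swap with a disjoint copy of $A$). Part of that plan is ill defined as stated. Since $\beta(A)\subset\theta(E)\fr D_{>s}$, $\beta(A)$ is not free from whatever you send the tail to, so no free product of those maps exists. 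Moreover, a two-sided intertwining would not by itself give an automorphism that nearly fixes $\theta(F)$, moves $A$ into $B$, and lies in $\mathcal G$.

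\textbf{The missing idea.} No swap or intertwining is needed. Apply nuclear-source uniqueness a second time, now to the source $A$ and the admissible target $C_0=A\fr D_{>r}$. Compare the canonical inclusion $j_A$ with $\chi$; these two embeddings have the same full triple, for two reasons.
\begin{itemize}
\item The triples of $\chi\theta$ and $j_A\theta$ agree, because $(\theta\fr\id)\,j=j_A\theta$ and $\rho\theta$ carries the triple of $j:E\to E\fr D_{>r}$.
\item You can then cancel $\theta$, using the surjectivity of $[\theta]$ and $\theta^\ddagger$ from Lemma~\ref{lem:marked-factor} together with the monotraciality of $A$.
\end{itemize}
Uniqueness then gives $u\in\mathrm U(C_0)$ with $\Ad(u)j_A$ close to $\chi$ on the $A$-coefficients of $z$ and on the $\theta$-images of the $E$-coefficients of $F$.

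\textbf{Why this suffices.} The map $\beta'=\id_{D_{\le r}}\fr\Ad(u)$ is an actual trace-preserving automorphism of $C=D_{\le r}\fr C_0$ with the following properties.
\begin{itemize}
\item It fixes the head exactly.
\item It acts trivially on the full invariant, because inner automorphisms of $C_0$ do and the tail inclusion $C_0\to C$ induces an invertible triple.
\item It nearly fixes each selected $\theta(e)$, since $u\theta(e)u^*\approx\chi\theta(e)\approx\theta(e)$ by the two uniqueness estimates.
\item It sends the polynomial approximating $z$ close to an element of $B$. That element is obtained by replacing each $A$-coefficient $a$ with $\chi(a)\in\theta(E)\fr D_{>r}\subset B$.
\end{itemize}
Taking $\alpha=(\beta')^{-1}$ verifies~\eqref{eq:caap}. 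Your remark about ``extending the correcting unitary over the free complement'' was pointing here: that extended inner automorphism is itself the required element of $\mathcal G$. This also makes the extra block $D_{(r,s]}$ and the reindexing $D\fr D\cong D$ unnecessary.
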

\begin{proof}
Regard $B=\mathrm{C}^*(\theta(E),D)$ as a subalgebra of $C=A\fr D$.
For every finite reservoir-block head $D_{\leq r}$, the tail inclusion
$k:C_0=A\fr D_{>r}\to C$ is a total $\mathrm{K}$-isomorphism.
Its induced triple is invertible, since both algebras are
monotracial with the same pairing, by the Thomsen sequence.
Consequently every automorphism
$\beta=\id_{D_{\leq r}}\fr\Ad(u)$, $u\in\mathrm U(C_0)$,
induces the identity triple on $C$: the relation
$\beta k=k\Ad(u)$ and triviality of the triple of an inner
automorphism permit cancellation of the invertible triple of $k$.

We verify~\eqref{eq:caap} using precisely these automorphisms and their inverses.
Approximate finite $F\subset B$ and $z\in C$ by polynomials with a
common finite reservoir-block head.  Put $T=E\fr D_{>r}$ and let
$j:E\to T$, $j_A:A\to C_0$ be canonical.
Lemma~\ref{lem:marked-factor} gives a totally compatible
isomorphism triple $\lambda$ from $A$ to $T$ with
\[
 \lambda([\theta],\theta_T,\theta^\ddagger)
       =([j],j_T,j^\ddagger).
\]
Theorem~\ref{thm:embedding} realizes it by $\rho:A\to T$.
Nuclear-source uniqueness gives $\rho\theta\au j$; conjugate $\rho$
to make this approximation as accurate as needed on the finitely
many $E$-coefficients of $F$.
Set $L=\theta\fr\id_{D_{>r}}$ and $\chi=L\rho:A\to C_0$.
Since $Lj=j_A\theta$, the triples induced by $\chi\theta$ and
$j_A\theta$ agree. Surjectivity of $[\theta]$ and
$\theta^\ddagger$ permits cancellation on total $\mathrm K$-theory
and unitary quotients; the trace maps agree because $A$ is
monotracial. Thus
\[
 ([\chi],\chi_T,\chi^\ddagger)
       =([j_A],(j_A)_T,j_A^\ddagger).
\]
A second use of nuclear-source uniqueness
gives $u\in \mathrm{U}(C_0)$ with $\Ad(u)j_A$ close to $\chi$ on the
$A$-coefficients of $z$ and the $\theta$-images of the selected
$E$-coefficients of $F$.
For every selected coefficient $e\in E$, the two uniqueness estimates give
\[
 \|\beta(\theta(e))-\theta(e)\|
 \leq\|u\theta(e)u^*-\chi\theta(e)\|
       +\|L\rho\theta(e)-Lj(e)\|<2\delta.
\]
The head is fixed exactly.  Replacing each $A$-coefficient of the
polynomial for $z$ by its image under $\chi$ gives an element
of $B$, because $\chi(A)\subset B$; its distance from $\beta(z)$
is arbitrarily small.  Polynomial continuity gives the specified
errors, first choosing the polynomial approximations and then
$\delta$.  Taking $\alpha=\beta^{-1}$ gives the orientation
in Lemma~\ref{lem:marked-caap}.  The subgroup generated by these
automorphisms induces the identity triple on $C$, so that lemma
gives the isomorphism with exactly the inclusion marking.
\end{proof}

\begin{remark}[Comparison with Hirshberg--Phillips: replacement]
\label{rem:hp-replacement}
The two uniqueness steps in Proposition~\ref{prop:marked-replacement}
have the role played by \cite[Corollary~3.4]{HP} in their absorption
argument. Their corollary treats unital one-dimensional NCCW algebras
and their unital countable inductive limits, provided the source has
vanishing $\mathrm K_1$, using Cuntz-semigroup uniqueness inside an
exact intermediate product with $\mathcal Z$, followed by transfer
through their diagram~(3.4). Theorem~\ref{thm:embedding} allows
general nuclear UCT sources and retains the prescribed total
$\mathrm K$-theory and algebraic unitary data.

Extending a correcting conjugacy by the identity on the free
complement is the mechanism used in \cite[proof of Theorem~3.1]{HP}.
In the proof above, this is the automorphism
$\id_{D_{\le r}}\fr\Ad(u)$. Cancellation through the invertible
triple induced by $k$ also verifies that the correction preserves
the prescribed triple exactly.

Hirshberg--Phillips complete their construction by approximate
intertwining \cite[Theorem~2.2 and proof of Theorem~3.1]{HP}.
Here Lemma~\ref{lem:marked-caap} organizes the ambient corrections
through automorphism approximation and retains the inclusion marking.
Both arguments apply uniqueness only to the nuclear sources.
\end{remark}

\begin{theorem}[Marked nuclear-core $D$-stabilization]\label{thm:marked-core}
Let $D$ be a reservoir. Let $E,F$ be nuclear UCT cores with faithful
selected traces. Every prescribed unital totally compatible morphism
$\xi$ between the full invariants of $E\fr D$ and $F\fr D$ is realized
by a unital trace-preserving embedding. If $\xi$ is an isomorphism of
full invariant triples, the realizing map may be chosen to be a
$^*$-isomorphism.

Every unit-preserving homomorphism of graded $\mathrm K$-groups
that preserves the selected pairing admits a compatible full extension
and is therefore realized. If both group maps are isomorphisms,
the realizing map may be chosen to be an isomorphism.
The prescribed $\xi$ may be any unital totally compatible extension,
as provided by \cite[Theorem~3.9]{CGSTW}; no canonical extension is
asserted. No preservation of the original ordered
$\mathrm K_0$ or of the other source traces is required.
The realizing map may fix any identified finite reservoir-block prefix
$H=D_{\leq r}$ pointwise and carry $E\fr D_{>r}$ into
$F\fr D_{>r}$, and onto it in the isomorphism case.
In particular the old core stays in the complementary tail.
We call $H$ a \emph{protected head}: it consists of the first $r$
reservoir blocks, and the realizing map fixes every element of $H$.
Only the number of blocks is finite; $H$ need not be finite dimensional.
\end{theorem}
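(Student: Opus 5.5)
The plan is to reduce everything to the replacement isomorphism of Proposition~\ref{prop:marked-replacement} and the embedding theorem, Theorem~\ref{thm:embedding}. First fix the protected head $H=D_{\le r}$ and note that $D_{>r}$ is again a reservoir, by the persistence of \eqref{eq:reservoir-atom} under finite deletion. By Lemma~\ref{lem:interval-regularity}, the inclusions $E\fr D_{>r}\to E\fr D$ and $F\fr D_{>r}\to F\fr D$ induce invertible full-invariant triples. Transport $\xi$ through them to a triple $\xi'$ from $E\fr D_{>r}$ to $F\fr D_{>r}$. It then suffices to realize $\xi'$ by a map $\phi'$ (an isomorphism when $\xi$ is invertible) and set $\phi=\id_H\fr\phi'$. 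Since $\phi'$ is trace preserving, this free product map exists. By naturality of the tail inclusions it induces $\xi$, fixes $H$ pointwise, and carries $E\fr D_{>r}$ into (onto) $F\fr D_{>r}$. We may therefore assume $r=0$.

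Next replace both cores by range models. Lemma~\ref{lem:marked-range} gives simple monotracial $\mathcal Z$-stable nuclear UCT algebras $A_E,A_F$ and embeddings $\theta_E:E\to A_E$, $\theta_F:F\to A_F$ inducing graded and total $\mathrm K$-isomorphisms. Proposition~\ref{prop:marked-replacement} gives isomorphisms $\Theta_E:E\fr D\to A_E\fr D$ and $\Theta_F:F\fr D\to A_F\fr D$ inducing the triples of $\theta_E\fr\id_D$ and $\theta_F\fr\id_D$. Conjugating gives a triple $\xi''=\Theta_F\xi\Theta_E^{-1}$ from $A_E\fr D$ to $A_F\fr D$. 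Precompose with the core inclusion $A_E\to A_E\fr D$ to get a faithful unital totally compatible triple from the nuclear UCT algebra $A_E$ into the admissible target $A_F\fr D$; faithfulness holds because $A_E$ is simple. Theorem~\ref{thm:embedding} realizes this by an embedding $\rho:A_E\to A_F\fr D$. Then $\rho\fr\id_D$ is not yet a map into $A_F\fr D$. I would instead use $D\fr D\cong D$: realize $\rho$ into $A_F\fr D_{\mathrm{odd}}$, with $D_{\mathrm{odd}}$ a reservoir built from alternate blocks. Then $\rho\fr\id_{D_{\mathrm{even}}}$ maps into $A_F\fr D$, and it induces $\xi''$ because both the core inclusion and the reservoir contribute trivially to the invariants; this follows by the cancellation arguments of Lemma~\ref{lem:marked-factor}. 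Finally $\phi=\Theta_F^{-1}(\rho\fr\id)\Theta_E$. For the graded-group statement, extend the given maps to a full triple by \cite[Theorem~3.9]{CGSTW} and apply the above.

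The isomorphism case is the main obstacle, since $\rho\fr\id$ need not be onto. Here I would take $\rho$ realizing an invertible triple and apply Lemma~\ref{lem:marked-caap} to the inclusion $B=\mathrm C^*(\rho(A_E),D_{\mathrm{even}})\subset C=A_F\fr D$. Its hypothesis \eqref{eq:caap} would be verified as in Proposition~\ref{prop:marked-replacement}: approximate $z$ by a polynomial with a finite head of odd blocks. Use nuclear-source uniqueness twice, once for $A_F$ into a tail $B$-type algebra via an inverse embedding, and once for $\rho$, to get a unitary $u$ with $\id\fr\Ad(u)$ nearly fixing $F$ and pulling $z$ into $B$. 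The delicate point is arranging the independence of odd and even blocks so that these correcting automorphisms exist with identity triple and preserve $B$-coefficients. I expect this bookkeeping, together with verifying that the inverse triple is realized inside $B$, to be the hardest step.
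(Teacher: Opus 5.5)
Your reduction to $r=0$ through the tail inclusions is the paper's, and so is your use of Lemma~\ref{lem:marked-range} and Proposition~\ref{prop:marked-replacement} to get marked isomorphisms $\Theta_E,\Theta_F$. The first problem is that you realize the comparison triple into the stabilization $A_F\fr D$ rather than into the core $A_F$. This forces your odd/even splitting, and $\rho\fr\id_{D_{\mathrm{even}}}$ then has source $A_E\fr D_{\mathrm{even}}$, not $A_E\fr D$. Identifying these (equivalently $D_{\mathrm{even}}\cong D$, or $D\fr D\cong D$) is a reindexing only for the interval reservoir. For a general reservoir of Definition~\ref{def:reservoir} it is Corollary~\ref{cor:reservoir-model}, which the paper deduces \emph{from} this theorem, so your argument is circular. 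Alternate blocks also need not both satisfy \eqref{eq:reservoir-atom}.

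The repair is to stay at core level, as the paper does. Let $J_E,J_F$ be the triples of the native inclusions $A_E\to A_E\fr D$ and $A_F\to A_F\fr D$. These are invertible: each is a total $\mathrm K$-isomorphism, both sides are monotracial with the same trace range, and the Thomsen sequence finishes the argument. So $\zeta=J_F^{-1}\xi''J_E$ is a faithful unital triple from $A_E$ to $A_F$. Since $A_F$ is simple, monotracial and $\mathcal Z$-stable, it is itself an admissible target. Theorem~\ref{thm:embedding} therefore realizes $\zeta$ by $\theta:A_E\to A_F$, and $\theta\fr\id_D$ is an honest embedding $A_E\fr D\to A_F\fr D$ inducing $\xi''$ after cancelling $J_E$.

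The isomorphism case is a genuine gap. You leave the verification of \eqref{eq:caap} as bookkeeping, and in your configuration the mechanism of Proposition~\ref{prop:marked-replacement} fails. There $D\subset B$, so the protected head lies in $B$ and is fixed exactly. Here $B=\mathrm C^*(\rho(A_E),D_{\mathrm{even}})$ omits the odd head blocks. Since $D_{\mathrm{even}}$ is free from $A_F\fr D_{\mathrm{odd}}$, the trace-preserving expectation onto $A_F\fr D_{\mathrm{odd}}$ maps $B$ onto $\rho(A_E)$. An odd head letter fixed by $\id_{\mathrm{head}}\fr\Ad(u)$ is therefore $\varepsilon$-close to $B$ only if it is $\varepsilon$-close to the nuclear algebra $\rho(A_E)$, and nothing arranges that. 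Moving those letters instead would require comparison maps on the nonnuclear algebra generated by $A_F$ and those blocks, which Theorem~\ref{thm:embedding} does not provide.

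The missing idea is that no new intertwining is needed. The paper treats the invertible case first, with a \emph{single} range model $A$ for $F$. It realizes $\lambda_F^{-1}\xi([j_E],(j_E)_T,j_E^\ddagger)$ by $\theta_E:E\to A$, which is a graded $\mathrm K$-isomorphism because $\xi$ is invertible. It then applies Proposition~\ref{prop:marked-replacement} to both $\theta_E$ and $\theta_F$, obtaining isomorphisms $\Phi_E,\Phi_F$ onto the same $A\fr D$. The answer is $\Phi_F^{-1}\Phi_E$, whose marking is checked by cancelling the surjective triples of $j_E,j_F$ (Lemma~\ref{lem:marked-factor}).

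In your two-model setup you could get the same effect differently. Realize $\zeta$ and $\zeta^{-1}$ by $\theta:A_E\to A_F$ and $\theta':A_F\to A_E$. Uniqueness in Theorem~\ref{thm:embedding} gives $\theta'\theta\au\id$ and $\theta\theta'\au\id$. Two-sided approximate intertwining then replaces $\theta$ by an isomorphism with the same triple, and $\theta\fr\id_D$ is onto.
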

\begin{proof}
We first suppose that $\xi$ is an isomorphism of full invariant triples,
that is, an invertible morphism, and take $r=0$, so $H=D_{\leq0}=\C1$.
Choose a monotracial range model $A$
for $F$ and an embedding $\theta_F:F\to A$ as in
Lemma~\ref{lem:marked-range}. Factorization gives a totally
compatible isomorphism triple $\lambda_F$ from $A$ to $F\fr D$ with
\[
 \lambda_F([\theta_F],(\theta_F)_T,\theta_F^\ddagger)
       =([j_F],(j_F)_T,j_F^\ddagger).
\]
The faithful totally compatible triple
\[
 \lambda_F^{-1}\xi([j_E],(j_E)_T,j_E^\ddagger)
\]
from $E$ to $A$ is realized by a trace-preserving embedding
$\theta_E:E\to A$, using the nuclear embedding theorem
\cite[Theorem~B]{CGSTW}. It induces graded $\mathrm K$-isomorphisms.
Proposition~\ref{prop:marked-replacement} gives isomorphisms
$\Phi_E:E\fr D\to A\fr D$ and $\Phi_F:F\fr D\to A\fr D$.
Writing $j_A:A\to A\fr D$, their exact markings are
\[
 \begin{aligned}
 ([\Phi_F],(\Phi_F)_T,\Phi_F^\ddagger)
   &=([j_A],(j_A)_T,j_A^\ddagger)\lambda_F^{-1},\\
 ([\Phi_E],(\Phi_E)_T,\Phi_E^\ddagger)
   &=([j_A],(j_A)_T,j_A^\ddagger)\lambda_F^{-1}\xi.
 \end{aligned}
\]
For the first identity, compose on the right with the triple of
$j_F$; for the second, use that of $j_E$. The resulting equalities
are exactly the two inclusion markings of
Proposition~\ref{prop:marked-replacement} and the definition of
$\theta_E$. The total $\mathrm K$-theory and unitary maps induced
by $j_E,j_F$ are surjective by Lemma~\ref{lem:marked-factor}, so
cancellation proves these two components of each identity. All
stabilizations are monotracial, so the trace components agree as
well. Hence $\Phi_F^{-1}\Phi_E$ realizes $\xi$ exactly.

Now let $\xi$ be an arbitrary unital totally compatible morphism.
Choose simple monotracial nuclear range models $A_E,A_F$ by
Lemma~\ref{lem:marked-range}. Choose compatible isomorphism triples
between their stabilizations and the respective original stabilizations
by \cite[Theorem~3.9]{CGSTW}. Apply the invertible case just proved
separately to the two matching pairs of cores, obtaining marked
trace-preserving isomorphisms
\[
 \Theta_E:E\fr D\longrightarrow A_E\fr D,\qquad
 \Theta_F:F\fr D\longrightarrow A_F\fr D.
\]
Let $\Lambda_E,\Lambda_F$ be their induced triples, and let
$J_E,J_F$ be the triples of the native inclusions
\[
 j_E:A_E\longrightarrow A_E\fr D,\qquad
 j_F:A_F\longrightarrow A_F\fr D.
\]
Both $J_E$ and $J_F$ are invertible: they are total-$\mathrm K$
isomorphisms, both their source and target are monotracial, and
the selected trace ranges agree, so the Thomsen sequence gives the
unitary isomorphisms as well. The triple
\[
 \zeta=J_F^{-1}\Lambda_F\xi\Lambda_E^{-1}J_E
\]
is a unital totally compatible morphism from $A_E$ to $A_F$.
Its trace map sends the unique trace to the unique faithful trace.
Theorem~\ref{thm:embedding} therefore realizes $\zeta$ by a unital
trace-preserving embedding $\theta:A_E\to A_F$.
Reduced free-product functoriality gives an actual embedding
\[
 L=\theta\fr\id_D:A_E\fr D\longrightarrow A_F\fr D.
\]
Write $\operatorname{Inv}(L)$ for its induced triple. The identity
$Lj_E=j_F\theta$ gives
\[
 \operatorname{Inv}(L)J_E=J_F\zeta
      =\Lambda_F\xi\Lambda_E^{-1}J_E.
\]
Cancel $J_E$ to see that
\[
 \Theta_F^{-1}L\Theta_E:E\fr D\longrightarrow F\fr D
\]
induces exactly $\xi$. No invertibility of $\xi$ was used in this
construction.

For the assertion about graded group homomorphisms, transport the
given unit- and pairing-preserving maps to $A_E,A_F$.
They are positive for the strict trace cones: a nonzero positive
class has strictly positive selected trace, and the pairing is
preserved. The unique trace map is faithful.
The compatible-extension theorem \cite[Theorem~3.9]{CGSTW}
therefore supplies a full morphism, which is realized as above.
For group isomorphisms the extension is invertible and the first
part of the proof supplies an isomorphism.

For a prescribed head $H=D_{\leq r}$, write $D=H\fr D_{>r}$.
The inclusions of $E\fr D_{>r}$ and $F\fr D_{>r}$ into
$E\fr D$ and $F\fr D$, respectively, induce invertible triples.
Transport $\xi$ through these triples, apply the appropriate preceding
construction to $D_{>r}$,
and extend the resulting tail embedding by $\id_H$ using reduced
free products. If $\xi$ is an isomorphism of invariants, choose the
tail isomorphism.
Cancellation through the invertible triple of either tail inclusion
verifies its marking. This fixes $H$ pointwise and has the stated
containment, or equality, of complementary tails.
\end{proof}

However, uniqueness up to approximate unitary equivalence fails in
general for embeddings between these $D$-stabilized free products, even
when the full invariant maps agree.

\begin{example}[Failure of uniqueness for free-product embeddings]
\label{ex:reservoir-nonuniqueness}
Let
\[
 D_I=\fr_{j=1}^{\infty}(\mathrm C([0,1]),\lambda),
\]
where $\lambda$ is Lebesgue measure, and let $\alpha$ exchange the first
two free factors while fixing all later factors pointwise.
Then $\alpha$ and $\id_{D_I}$ induce the same full totally compatible
triple, but are not approximately unitarily equivalent.
Thus uniqueness fails even for automorphisms in the class of
Theorem~\ref{thm:marked-core}, with both nuclear cores
equal to $\C$. Such counterexamples can also fix any prescribed finite
interval prefix pointwise.
\end{example}
\begin{proof}
Permutation of the identically traced factors gives a unital
trace-preserving automorphism $\alpha$. The reservoir has
\[
 \mathrm K_0(D_I)=\Z[1],\qquad
 \mathrm K_1(D_I)=0,\qquad
 \mathrm T(D_I)=\{\tau\}.
\]
Hence unitality determines the identity map on ordinary
$\mathrm K$-theory. For every $m\ge2$, the coefficient exact sequences
give
\[
 \mathrm K_0(D_I;\Z/m)=\Z/m,\qquad
 \mathrm K_1(D_I;\Z/m)=0.
\]
The coefficient map from $\mathrm K_0(D_I)$ is surjective, so
$\alpha$ acts identically on total $\mathrm K$-theory as well.
The determinant sequence~\eqref{eq:thomsen} identifies
\[
 \Uquot{D_I}\cong\R/\Z.
\]
This identification is natural with respect to the trace, so
$\alpha^\ddagger=\id$. Its trace map is also the identity.
Thus all components of its totally compatible triple agree with
those of $\id_{D_I}$.

For the failure of approximate unitary equivalence, put
$M=\pi_\tau(D_I)''$, and let $N\subset M$ be the von Neumann algebra
generated by the factors with indices $j\ge3$.
The tracial free-product construction gives
\[
 M=\bigl(\mathrm L^\infty([0,1],\lambda)
       *\mathrm L^\infty([0,1],\lambda)\bigr)*N.
\]
Here $N$ is a type $\mathrm{II}_1$ factor, $M$ is full, and
$N'\cap M=\C$, by \cite[Corollary~3.2 and Theorems~3.4 and~3.7]{UedaFull}.
The normal extension $\bar\alpha$ fixes $N$ pointwise.
An implementing unitary for $\bar\alpha$ would therefore belong to
$N'\cap M=\C$, forcing $\bar\alpha=\id$. This is impossible because
$\alpha$ exchanges two distinct free factors. Thus $\bar\alpha$ is outer.

If $\alpha\au\id_{D_I}$, there would be unitaries $u_n\in D_I$ with
$u_n a u_n^*\to\alpha(a)$ in norm for every $a\in D_I$.
Density of $D_I$ in $M$ for $\|\cdot\|_2$ and trace preservation
would then give
\[
 \|u_n x u_n^*-\bar\alpha(x)\|_2\longrightarrow0
 \qquad(x\in M).
\]
For a full finite factor, the inner automorphism group is closed
in this topology. Hence $\bar\alpha$ would be inner, a contradiction.
Finally, exchanging two factors after a prescribed finite prefix
and fixing all other factors gives the same counterexample with
that prefix fixed pointwise.
\end{proof}

\begin{remark}[Comparison with Hirshberg--Phillips: the nuclear model]
\label{rem:hp-model}
The common nuclear model $A$ used in the invertible case of
Theorem~\ref{thm:marked-core} plays the role of the Jiang--Su model
in \cite[Proposition~2.13 and proof of Theorem~3.1]{HP}.
Its pointed graded $\mathrm K$-groups and selected pairing are chosen
to match the given core data, and the two embeddings into it retain
the prescribed totally compatible triple.
\end{remark}

\begin{remark}[The role of $\mathrm K$-theory and of the reservoir]
\label{rem:reservoir-roles}
Theorem~\ref{thm:marked-core} classifies the products using the pointed
graded $\mathrm K$-groups of their nuclear cores and the selected trace
pairing. The order on the original core and its full trace simplex
need not be preserved. Strict comparison and monotraciality recover
the order and trace simplex of the product from these selected data.
Totally compatible triples are needed in the embedding and
uniqueness steps, and compatible extensions of the selected group
isomorphisms exist.

The proof uses four properties of the reservoir: the core inclusions
and complementary-tail inclusions preserve $\mathrm K$-theory; every
infinite tail gives an admissible target; finite sets are approximated
using finitely many reservoir blocks; and a trace-preserving inner
automorphism of the complementary tail extends by the identity on
the finite head. The first two properties are supplied by
Lemma~\ref{lem:reservoir-regularity}. The last two follow from the
free-product presentation. No interval coordinate, semicircular
distribution or polynomial-flow construction is used in this proof.
\end{remark}

\begin{corollary}[Independence of the reservoir presentation]
\label{cor:reservoir-model}
Every reservoir $D$ of Definition~\ref{def:reservoir} is
trace-preservingly isomorphic to $D_I=I^{\fr\infty}$.
Consequently any two such reservoirs give the same class of
$D$-stabilized algebras, and $D\fr D\cong D$.
\end{corollary}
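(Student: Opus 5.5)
The plan is to apply Theorem~\ref{thm:marked-core} with both nuclear cores equal to $\C$, once for $D$ and once for $D_I$, and then to compare the two stabilizations $\C\fr D=D$ and $\C\fr D_I=D_I$ through a third, common algebra.

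\emph{Step 1: $D\fr D_I$ is a reservoir.} The free product $D\fr D_I$, with the interval blocks interleaved with the blocks $R_j$, is the reduced free product of the countably many blocks $R_j$ and copies of $I$. Each block has $\mathrm K_0=\Z[1]$ and $\mathrm K_1=0$. The interval blocks satisfy $\operatorname{At}(I,\lambda)=0$, so the divergence condition~\eqref{eq:reservoir-atom} holds. It is therefore a reservoir in the sense of Definition~\ref{def:reservoir}. Associativity and commutativity of reduced free products, up to trace-preserving isomorphism, let us regard it either as $D\fr D_I$ or as $D_I\fr D$.

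\emph{Step 2: absorb $D_I$ into $D$.} Take the reservoir $D$ and the cores $E=\C$, $F=D_I$. Here $D_I$ is a separable unital nuclear UCT algebra by Corollary~\ref{cor:reservoir-model}'s own hypotheses? No: $D_I$ is not nuclear, so this direct route fails. Instead I use only nuclear cores. I enumerate the blocks and aim to identify $D$ with $D\fr I$, that is, to absorb one interval block at a time, and then pass to the limit. For a single block, apply Theorem~\ref{thm:marked-core} with reservoir $D$, $E=\C$ and $F=\mathrm C([0,1])$ with trace $\lambda$. Both cores are nuclear UCT, with pointed data $(\Z,[1],0)$ and pairing $1\mapsto1$. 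The identity on graded $\mathrm K$-groups therefore preserves the unit and the pairing, and the theorem gives a trace-preserving isomorphism $D\to I\fr D$. By the protected-head clause, it fixes any prescribed finite prefix $D_{\le r}$ pointwise.

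\emph{Step 3: iterate to absorb all of $D_I$.} Iterate Step~2, absorbing successive interval blocks into the tails $D_{>r_n}$, while protecting an increasing head that contains all previously absorbed interval blocks and the first $n$ original blocks. Each stage fixes the already constructed finite part, so the composite isomorphisms stabilize on the union of the heads. The union of the images then gives a trace-preserving isomorphism $D\cong D\fr D_I$. For the surjectivity bookkeeping, I would use a back-and-forth arrangement in the spirit of Lemma~\ref{lem:marked-caap}. This step is the main obstacle: the maps must converge, and the limit must be onto. The protected heads make each map exactly fixed on earlier data, but the tails change at each stage. I would control this by arranging that each tail isomorphism sends $D_{>r_n}$ onto (tail)$\fr I$ exactly, so that the dense union of the increasing heads is preserved.

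\emph{Step 4: symmetric argument and the consequences.} The same argument with the roles reversed uses nuclear blocks $R_j$ as cores $F$, with reservoir $D_I$. This gives $D_I\cong D_I\fr D$, and hence $D\cong D\fr D_I\cong D_I$. The remaining assertions then follow immediately. Since every reservoir is isomorphic to $D_I$, any two reservoirs give the same $D$-stabilized algebras. Reindexing the interval blocks gives $D_I\fr D_I\cong D_I$, and therefore $D\fr D\cong D$.
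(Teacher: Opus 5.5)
Your architecture is the paper's: Theorem~\ref{thm:marked-core} with cores $\C$ and $I$ over $D$ gives isomorphisms $D\to I\fr D$ fixing arbitrarily long block prefixes. The same theorem with cores $\C$ and $R_j$ over $D_I$ gives the reverse absorption, and $D\fr D_I\cong D_I\fr D$ links the two. (Step~1 is not needed, and the aborted sentence in Step~2 should be deleted.) The gap is in Step~3, which you flag but do not close.

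Write $I_{\le n}=I_1\fr\cdots\fr I_n$, let $\chi_n\colon D\to I_n\fr D$ be the isomorphism fixing $D_{\le r_n}$, put $\psi_n=\id_{I_{\le n-1}}\fr\chi_n$, and let $\Phi_n=\psi_n\cdots\psi_1$. The forward composites do \emph{not} stabilize on the heads. Already $\psi_1$ moves $R_{r_1+1},\dots,R_{r_2}$ into $I_1\fr D_{>r_1}$, so $\Phi_1(D_{\le r_2})$ need not lie in the head protected by $\psi_2$.

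What stabilizes exactly is the inverse system: $\Phi_m^{-1}=\Phi_k^{-1}$ on $I_{\le k}\fr D_{\le r_{k+1}}$ for $m\ge k$. This gives an isometric embedding $\Psi\colon D_I\fr D\to D$. The map $\Psi$ is onto exactly when $\dist\bigl(\Phi_k(x),I_{\le k}\fr D_{\le r_{k+1}}\bigr)\to0$ for every $x\in D$. Neither a non-adaptive head such as ``the first $n$ blocks'' nor the tail-onto-tail property controls this. The latter is already part of Theorem~\ref{thm:marked-core}, and nothing bounds how far $\chi_n$ spreads tail elements.

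The missing idea is to choose the heads adaptively. This is exactly the elementary approximate-intertwining fact the paper isolates: an inclusion system whose connecting maps are point-norm limits of isomorphisms has limit isomorphic to $A_0$. Concretely:
\begin{enumerate}
\item Fix a dense sequence $(x_i)$ in $D$.
\item After $\Phi_n$ is known, choose $r_{n+1}>r_n$ with $\dist\bigl(\Phi_n(x_i),I_{\le n}\fr D_{\le r_{n+1}}\bigr)<2^{-n}$ for $i\le n$. This is possible by density of finite free-product words.
\item Since $\psi_{n+1}$ is isometric and fixes that head, $\|\Phi_{n+1}(x_i)-\Phi_n(x_i)\|<2^{1-n}$. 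Hence the $\Phi_n$ converge pointwise to an isometric homomorphism $\Phi\colon D\to D_I\fr D$.
\item For $y$ in a target head, $\Phi\Psi(y)=\lim_m\Phi_m\Phi_m^{-1}(y)=y$. So the range of $\Phi$ contains a dense set, and $\Phi$ is onto.
\end{enumerate}
The same adaptive choice is needed in your Step~4 for the system $D_I\to R_1\fr D_I\to R_1\fr R_2\fr D_I\to\cdots$. With this repair the rest of your argument goes through, including the consequences $D\cong D_I$ and $D\fr D\cong D$.
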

\begin{proof}
We use the elementary approximate-intertwining fact that, if the
connecting inclusions $A_n\hookrightarrow A_{n+1}$ are point-norm
limits of isomorphisms, then $A_0\cong\varinjlim A_n$.
To see this, compare the inclusion system with the system of
isomorphisms, choosing the latter with summable errors on increasing
finite sets containing dense subsets and the relevant images from
all earlier stages. The usual approximate intertwining identifies
their limits; the isomorphism system has limit $A_0$.
The same argument preserves selected traces.

Apply Theorem~\ref{thm:marked-core} to the nuclear cores $\C$ and $I$
with reservoir $D$. It gives isomorphisms $D\to I\fr D$ fixing any
prescribed finite block prefix of $D$. Since such prefixes have dense
union, the canonical inclusion is a point-norm limit of these
isomorphisms. Extending over an external finite interval head proves
the same assertion for the inclusions
\[
 I^{\fr n}\fr D\longrightarrow I^{\fr(n+1)}\fr D.
\]
The preceding fact gives $D\cong D_I\fr D$.

Conversely, $D_I$ itself satisfies Definition~\ref{def:reservoir}.
Apply Theorem~\ref{thm:marked-core}, with reservoir $D_I$, to $\C$
and each individual nuclear block $R_j$. It gives isomorphisms
$D_I\to R_j\fr D_I$ approximating the canonical inclusion by fixing
arbitrarily large interval prefixes. Extend them over the preceding
finite free head $R_1\fr\cdots\fr R_{j-1}$. Thus the inclusions in
the system
\[
 D_I\longrightarrow R_1\fr D_I\longrightarrow
 R_1\fr R_2\fr D_I\longrightarrow\cdots
\]
are point-norm limits of isomorphisms. Its limit is $D\fr D_I$, so
$D_I\cong D\fr D_I$. Associativity and interchange of the two free
factors now give $D\cong D_I$. No fusion theorem is used.
Finally, reindexing interval factors gives $D_I\fr D_I\cong D_I$.
\end{proof}

\begin{remark}[Finite old-core alignment]\label{rem:marked-alignment}
Suppose additionally that a unital embedding
$\kappa:E\to F\fr D_{>r}$ preserves the selected trace and has the same
induced triple as the restriction of the constructed tail
isomorphism.  Theorem~\ref{thm:embedding} makes the two restrictions
approximately unitarily equivalent inside $F\fr D_{>r}$.
For any finite core packet, postcompose by
$\id_H\fr\Ad(u)$ for the resulting tail unitary $u$.
This attains the desired approximation, fixes $H$ exactly, preserves
the complementary tail, and changes no component of $\xi$.
Thus exact invariant marking and finite old-core coherence are available
whenever the comparison embedding already lies in the same chosen
complementary tail. This statement does not prepare an arbitrary
core which is merely scalar free from a reservoir.
\end{remark}
\begin{remark}[An exactly protected common matrix frame]\label{rem:common-frame}
Suppose the two nuclear-source embeddings being compared agree on a
unital frame $C\cong\mathrm M_d$ inside the same complementary tail.
Choose an approximate conjugator $u$ in that tail, including its matrix
units among the uniqueness tests. Then
\[
y=\frac1d\sum_{i,j=1}^d e_{ij}u e_{ji}\in C'
\]
satisfies
$\|y-u\|\leq d\max_{i,j}\|[e_{ij},u]\|$.
For a sufficiently accurate initial conjugator, $y$ is invertible.
Its polar unitary lies in $C'$ and remains close to $u$.
Thus the frame can be fixed exactly, while extending by the identity
on an external protected reservoir head.

The invariant must mark the minimal-projection class of this frame.
Preservation of the total unit alone does not imply preservation of
$[e_{11}]$ in the presence of $d$-torsion.
For instance, in $\Z g\oplus\Z/2\,t$ with unit $2g$,
the automorphism $g\mapsto g+t$, $t\mapsto t$ preserves the unit
and the trace pairing but changes the chosen minimal class $g$.
\end{remark}

\begin{remark}[The geometric scope of replacement]
The preceding results classify nuclear-source embeddings and replace
abstract nuclear cores after adjoining a general reservoir of Definition~\ref{def:reservoir}.
They do not assert that approximate conjugating unitaries protect an
additional reservoir. The alignment in
Remark~\ref{rem:marked-alignment} requires the compared maps already
to lie in the same ambient complementary tail; scalar freeness from
the reservoir alone does not give this containment.
The finite-base construction in Section~\ref{sec:general-fusion}
supplies the additional relative compatibility needed for fusion.
Its correcting maps are constructed in the chosen complementary
tail, so this geometric requirement is verified explicitly.
\end{remark}

\begin{remark}[Interval coordinates for the subsequent constructions]
\label{rem:interval-convention}
From the next section onward, fix a trace-preserving identification
$D\cong D_I$ from Corollary~\ref{cor:reservoir-model} and use $D$ for
this interval presentation. Finite heads and complementary tails in
the matrix, fusion and exponential-length constructions refer to
these interval coordinates. A continuous change of variables also
gives the free standard semicircular presentation used there.
The extension $\id_P\fr(D\cong D_I)$ transfers the conclusions back
to every reservoir of Definition~\ref{def:reservoir}; it preserves the
canonical $P$-inclusion and its selected invariant data.
This choice does not identify an original block prefix with an
interval prefix. Protection of original block prefixes is the direct
conclusion of Theorem~\ref{thm:marked-core}.
\end{remark}

\section{Outline of the fusion argument}\label{sec:fusion-outline}

The nuclear-core theorem of Section~\ref{sec:marked-core} classifies
algebras of the form $E\fr D$. To apply it to several nuclear
factors, we must first identify their $D$-stabilized free product
with $E\fr D$ for a nuclear core $E$ carrying the same aggregate
selected data.
We outline this fusion construction before developing its analytic
tools. Section~\ref{sec:relative-logs} treats logarithm comparison
and interval automorphisms; Section~\ref{sec:fd-relative} supplies the
finite-dimensional relative reservoirs, and
Section~\ref{sec:general-fusion} assembles the general argument.
The independent split-unit construction in
Section~\ref{app:split-unit} supplies the reduction for arbitrary
$\mathrm K_1$.

For the binary step, let $A_1,A_2$ be nuclear UCT algebras with
faithful selected traces, and initially assume
$\mathrm K_1(A_1)=\mathrm K_1(A_2)=0$.
The goal is an isomorphism
$(A_1\fr A_2)\fr D\cong E_0\fr D$, where $E_0$ is nuclear UCT
and carries the aggregate pointed $\mathrm K$-groups and selected
trace pairing.

\begin{enumerate}
\item \emph{Matrix reduction.}
The exact reduced identity
\[
 \mathrm M_2(A_1\fr A_2)
 \cong (A_1\oplus A_2)*_{\C^2}\mathrm M_2
\]
puts the two original factors into the nuclear coefficient algebra
$A_1\oplus A_2$. The remaining attachment is
$\mathrm M_2$, over the common labelled diagonal $\C^2$.
The trace on $A_1\oplus A_2$ is half-weighted, and the selected traces on the
matrix algebras are normalized. Thus the problem becomes a comparison of
a nuclear coefficient algebra with a finite-dimensional attachment.

\item \emph{Choice of a nuclear model.}
Choose a simple monotracial nuclear UCT range core $E_0$ with
the aggregate selected data and put $E=\mathrm M_2(E_0)$.
The embedding theorem supplies maps $a_i:A_i\to E_0$ with the
canonical selected markings. Their diagonal sum gives
$a:A_1\oplus A_2\to E$, explicitly
$a(x_1\oplus x_2)=\operatorname{diag}(a_1(x_1),a_2(x_2))$,
and the standard matrix frame gives $b:\mathrm M_2\to E$.
These maps agree on $\C^2$. The core $E_0$ records the aggregate unit
and pairing, while the labelled frame in $E$ records the corner
to be compressed at the end.

\item \emph{The relative interval comparison.}
Let $I_h=(\mathrm C([0,1]),\lambda)$, with coordinate $h$, and put
\[
 \begin{aligned}
 S&=\bigl((A_1\oplus A_2)*_{\C^2}\mathrm M_2\bigr)\fr D,\\
 T&=\bigl(E*_{\C^2}(\C^2\otimes I_h)\bigr)\fr D,
 \qquad q=e^{2\pi ih}.
 \end{aligned}
\]
The relative Haar conjugation rule gives a faithful embedding
$\eta:S\to T$ defined by
\[
 \eta|_{A_1\oplus A_2}=a,\qquad
 \eta|_{\mathrm M_2}=\Ad(q)\circ b,\qquad
 \eta|_D=\id_D.
\]
The last prescription concerns the common external reservoir;
the interval $h$ is a separate relative coordinate commuting with
$\C^2$. Proposition~\ref{gf:zero-K1-binary-fusion} verifies that $\eta$ has invertible
full selected marking and that the working targets satisfy the
admissibility and zero-$\mathrm K_1$ hypotheses of the return
construction.

\item \emph{One return with source and target control.}
For prescribed finite sets $F\subset S$ and $L\subset T$ and
$\varepsilon>0$, construct an actual trace-preserving embedding
$\beta:T\to S$ and an actual trace-preserving automorphism
$\alpha$ of $T$ such that
\[
 \|\beta\eta(s)-s\|<\varepsilon\quad(s\in F),\qquad
 \|\alpha\eta\beta(t)-t\|<\varepsilon\quad(t\in L).
\]
The return has the inverse full selected marking, and $\alpha$ acts
identically on the full invariant.
The first estimate comes from nuclear-source uniqueness and
relative free transport. The second requires compatible corrections
of the nuclear coefficients and the whole finite matrix frame,
using independent parts of the reservoir.
Relative logarithm comparison, lifts of circle automorphisms to
interval automorphisms, and normalized corner determinants make
the two estimates hold for the same $\beta$.
These are the principal analytic ingredients developed in
Sections~\ref{sec:relative-logs}, \ref{sec:fd-relative}
and~\ref{sec:general-fusion}.

\item \emph{Isomorphism and compression.}
Include $\eta(F)$ in $L$. The two estimates then make $\alpha$
almost fix $\eta(F)$, while placing each target test element near
$\alpha\eta(S)$. More precisely,
\[
 \begin{aligned}
 \|\alpha\eta(s)-\eta(s)\|
 &\leq\|s-\beta\eta(s)\|
       +\|\alpha\eta\beta(\eta(s))-\eta(s)\|<2\varepsilon
       &&(s\in F),\\
 \dist(t,\alpha\eta(S))
 &\leq\|t-\alpha\eta\beta(t)\|<\varepsilon
       &&(t\in L).
 \end{aligned}
\]
Since the tolerances are arbitrary, Lemma~\ref{lem:marked-caap} gives
$S\cong T$ with the marking of $\eta$.
Finite-dimensional tag absorption identifies $T$ with $E\fr D$,
fixing $E$. Together with the matrix identities of
Proposition~\ref{gf:matrix-identities}, this yields
\[
 \mathrm M_2\bigl((A_1\fr A_2)\fr D\bigr)
 \cong \mathrm M_2(E_0\fr D).
\]
The marking tracks the exact class of the standard projection
$e_{11}$, including possible two-torsion: $\eta(e_{11})=e_{11}$,
because $e_{11}\in\C^2$ and $q$ commutes with $\C^2$.
Stable-rank-one cancellation for this projection and its complement
gives a unitary correction carrying it to the standard target
corner. Compression therefore gives the desired binary fusion.

\item \emph{Finite and countable families and arbitrary
$\mathrm K_1$.}
Iteration treats finite zero-$\mathrm K_1$ families. For a
countable zero-$\mathrm K_1$ family, each new fusion is chosen to approximate the
previous identification on increasing finite subsets of the old
$D$-stabilized product, using
Proposition~\ref{gf:tail-controlled-binary-fusion}.
The two-sided dense exhaustions and summable
estimates make both the stage isomorphisms and their inverses
converge, giving mutually inverse limiting maps as in
Proposition~\ref{gf:countable-zero-K1-fusion}.
The split-unit argument of Section~\ref{app:split-unit} then
separates arbitrary $\mathrm K_1$ and completes the general
fusion theorem. Marked nuclear-core replacement realizes any
prescribed compatible global marking after the fusion isomorphism
has been constructed.
\end{enumerate}

\section{Relative logarithms and interval automorphisms}
\label{sec:relative-logs}

We now develop the logarithm comparison needed for the simultaneous
estimates in the outline of Section~\ref{sec:fusion-outline}.
The coefficient algebra need not be
nuclear: proper unitary comparison uses a nuclear difference ideal
and a vanishing actual relative $\mathrm{KK}$ class.
We also lift suitable circle automorphisms to actual interval
automorphisms. These results will later be applied to a global
nuclear core together with finitely many distinguished intervals,
and extended over a finite diagonal in
Section~\ref{sec:general-fusion}.

\subsection{The endpoint state and one-turn interval relations}
\label{rf:subsec-foundation}

Throughout this subsection, all reduced free products use their
specified faithful tracial states.  We write
\[
 R=(F,\tau_F)\fr(\mathrm C(\mathbb T),m),\qquad
 Q=(F,\tau_F)\fr(\mathrm C([0,1]),\lambda),
\]
and denote the circle and interval coordinates by $w$ and $h$.
The canonical inclusion $j:R\to Q$ sends $w$ to $e^{2\pi ih}$
and fixes $F$.  No nuclearity assumption on $F$ is made in the
next three lemmas.

\begin{lemma}[The endpoint state]\label{rf:lem-endpoint}
The state $\omega=\operatorname{ev}_1\circ\E_{\mathbb T}$ is
the unique state on $R$ taking value one at $w$.  For every
$a\in R$,
\[
 \sup_f\|f(w)(a-\omega(a)1)f(w)\|\longrightarrow0
 \qquad(\delta\downarrow0),
\]
where the supremum runs over positive contractions $f$ supported
in $\{z:|z-1|\leq\delta\}$.  In any representation of $R$,
the spectral projection $P=1_{\{1\}}(w)$ satisfies
$PaP=\omega(a)P$.
\end{lemma}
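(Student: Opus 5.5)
The plan is to prove the three assertions in reverse order of difficulty, starting from the algebraic structure of the reduced free product. Write $R^\circ$ for the dense $*$-subalgebra spanned by $1$ and alternating words $x_1x_2\cdots x_n$ whose letters are centred, lying alternately in $F^\circ=\ker\tau_F$ and $\mathrm C(\mathbb T)^\circ=\ker m$. The conditional expectation $\E_{\mathbb T}:R\to\mathrm C(\mathbb T)$ kills every word of length at least two, kills words consisting of one $F^\circ$ letter, and fixes $\mathrm C(\mathbb T)$. Thus $\omega=\ev_1\circ\E_{\mathbb T}$ is a state with $\omega(w)=1$.

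\emph{Projection identity.} Work in an arbitrary representation and put $P=1_{\{1\}}(w)$. First, $Pg(w)P=g(1)P$ for every $g\in\mathrm C(\mathbb T)$, by spectral calculus. Next I will show $PxP=0$ for every alternating word $x$ of length at least two and for $x\in F^\circ$. For $x\in F^\circ$ alone, choose positive contractions $f_\delta$ with $f_\delta(1)=1$ and support shrinking to $1$. Then $Pxp=Pf_\delta(w)xf_\delta(w)P$, so it suffices to prove
\[
 \|f_\delta(w)xf_\delta(w)\|\to0.
\]
This norm estimate is the main analytic obstacle. For it I would use the free-product structure: $f_\delta(w)$ has trace $m(f_\delta)\to0$, and $f_\delta(w)xf_\delta(w)$ is an alternating-type element whose norm, by the Haagerup-type bound for reduced free products (or by direct computation in the free product Hilbert space $L^2(R)$), is controlled by $\|f_\delta\|_2^{\,\cdot}\|x\|$-type quantities. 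More concretely, I would write $f_\delta(w)=m(f_\delta)+\mathring f_\delta$ and expand. In the free product Fock space, the operator $\mathring f\,x\,\mathring f$ with $x$ centred in $F$ acts on tensors by creation/annihilation terms. Each term contains either a factor $\|\mathring f\|_2\le\|f\|_2=m(f^2)^{1/2}$ or an annihilation pairing of $\mathring f$ against a centred circle letter, which is again bounded by $\|f\|_2$. Since $\|f_\delta\|_2\to0$ as the support shrinks (Haar measure has no atoms), the norm tends to zero, uniformly over positive contractions supported in $\{|z-1|\le\delta\}$. For longer words, one inner letter is in $F^\circ$, and the same estimate applies after splitting the word. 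Alternatively, I would reduce to the single-letter case by inserting $f_\delta$ and using that the circle letters $g(w)$ compress at $P$ via spectral calculus. By linearity and continuity this yields $PaP=\omega(a)P$ for all $a$ in the dense subalgebra and then for all $a\in R$. The same estimate, applied to $a-\omega(a)1$ with the $\mathrm C(\mathbb T)$ part handled by $\|f(w)(g(w)-g(1))f(w)\|\le\sup_{|z-1|\le\delta}|g(z)-g(1)|$, gives the displayed uniform limit. Density and the bound $\|f(a-\omega(a))f\|\le2\|a\|$ handle general $a$.

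\emph{Uniqueness.} Let $\varphi$ be a state with $\varphi(w)=1$. In its GNS representation, the cyclic vector $\xi$ satisfies $\langle w\xi,\xi\rangle=1$ with $w$ unitary, so $w\xi=\xi$ and $P\xi=\xi$. Hence
\[
 \varphi(a)=\langle aP\xi,P\xi\rangle=\langle PaP\xi,\xi\rangle=\omega(a),
\]
by the projection identity. This gives uniqueness, and existence was already noted above.
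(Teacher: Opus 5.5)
Your architecture is sound and genuinely different from the paper's. The paper proves uniqueness first. The conjugates $w^nFw^{-n}$ are free, so for a centred $x$ supported on finitely many of them the averages $N^{-1}\sum_{j=1}^N w^{jL}xw^{-jL}$ have norm at most $(N^{-1}+2N^{-1/2})\|x\|$. A state with $\varphi(w)=1$ has $w$ in its multiplicative domain, hence is invariant under these translates, and averaging forces $\varphi=\omega$. The uniform estimate comes from the same averages together with $\|(w^n-1)f(w)\|\le|n|\delta$. The compression formula is deduced last, from uniqueness, by polarization over unit vectors of $P\mathcal H$.

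You reverse the order: a direct Fock-space estimate, then $PaP=\omega(a)P$ from $f(w)P=P$, then uniqueness from $w\xi=\xi$ in the GNS space. Those last two deductions are correct as written. Your route gives an explicit rate of order $m(\{|z-1|\le\delta\})^{1/2}$ and needs no shift or multiplicative-domain argument. The paper's route obtains uniqueness intrinsically and uses the free-product Hilbert space only to bound sums of free centred elements.

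The central estimate, however, is still only a sketch, and as worded it has a gap. In $L^2(R)$ write $\lambda(\mathring f)=c+a+d$ (creation, annihilation, diagonal). Then $\|c\|=\|a\|=\|\mathring f\|_2\le\|f\|_2$, but $\|d\|$ is of the order of $\|\mathring f\|_\infty$ and does not tend to zero. So the claim that every term carries a factor $\|f\|_2$ needs the vanishing of $d\,\lambda(x)\,d$.

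That vanishing holds when $x$ is an alternating word beginning and ending with $F^\circ$-letters. On tensors whose first letter lies in $L^2(\mathbb T)^\circ$, $\lambda(x)$ acts by pure creation and produces tensors whose first letter lies in $L^2(F)^\circ$, which $d$ annihilates. The eight remaining terms then give $\|f(w)xf(w)\|\le(m(f)^2+2m(f)+8\|f\|_2)\|x\|$, uniformly in $f$.

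It fails for words with a circle letter at either end: for $x=g_1yg_2$ one has $d\,\lambda(x)\,d\ne0$ in general. Such letters must first be stripped using $\|f(w)g(w)-g(1)f(w)\|\le\sup_{|z-1|\le\delta}|g(z)-g(1)|$.

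Neither of your suggestions for longer words supplies this. Splitting at an inner $F^\circ$-letter does not reduce $f\,u\,y\,v\,f$ to $f\,y\,f$. Compressing circle letters at $P$ cannot handle interior ones, since in $Py_1g(w)y_2P$ no $P$ is adjacent to $g(w)$. Once the diagonal cancellation and the boundary stripping are made explicit, your argument is complete.
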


\begin{proof}
Put $F_n=w^nFw^{-n}$ and $B=\mathrm C^*(F_n:n\in\Z)$.
Expanding centered words proves that the $F_n$ are free.
Finite sums $\sum_kx_kw^k$, with each $x_k$ supported in
finitely many $F_n$, are dense in $R$, and
$\E_{\mathbb T}(xw^k)=\tau_B(x)w^k$ for $x\in B$.
For centered elements $y_1,\ldots,y_N$ in free subalgebras,
the creation, annihilation and diagonal decomposition on the
reduced free-product Hilbert space gives
\[
 \Big\|\sum_jy_j\Big\|
 \leq\max_j\|y_j\|
  +\Big(\sum_j\tau(y_j^*y_j)\Big)^{1/2}
  +\Big(\sum_j\tau(y_jy_j^*)\Big)^{1/2}.
\]
If $x\in B$ is centered and has finite integer support of
width less than $L$, its translates by $w^{jL}$ lie in free
subalgebras.  Consequently
\[
 \Big\|N^{-1}\sum_{j=1}^Nw^{jL}xw^{-jL}\Big\|
 \leq(N^{-1}+2N^{-1/2})\|x\|.
\]
A state $\varphi$ with $\varphi(w)=1$ has $w$ in its
multiplicative domain.  It is invariant under these translates
and satisfies $\varphi(aw^k)=\varphi(a)$.  Averaging and then
using the displayed dense algebra gives
$\varphi(\sum_kx_kw^k)=\sum_k\tau_B(x_k)=\omega(\sum_kx_kw^k)$.
This proves uniqueness.

For the asserted uniform estimate,
$\|(w^n-1)f(w)\|\leq |n|\delta$ gives
\[
 \|f(w)xf(w)\|
 \leq(N^{-1}+2N^{-1/2}+2LN\delta)\|x\|.
\]
First choose $N$ large and then $\delta$ small.  For a local
sum $\sum_kx_kw^k$, apply this to $x_k-\tau_B(x_k)1$;
the remaining scalar error is at most
$\delta\sum_k|k|\,|\tau_B(x_k)|$.  Density proves the
statement for all $a$.  Finally, every unit vector in
$P\mathcal H$ defines a state taking value one at $w$.
Uniqueness and polarization give the compression formula.
\end{proof}

\begin{lemma}[The reduced interval pushout]
\label{rf:lem-pushout}
The canonical surjection
\[
 R*_{\mathrm C(\mathbb T)}^{\max}\mathrm C([0,1])\longrightarrow Q
\]
is an isomorphism.
\end{lemma}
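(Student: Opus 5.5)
The idea is to show that every representation of the pushout
$P=R*_{\mathrm C(\mathbb T)}^{\max}\mathrm C([0,1])$ is dominated in norm by the reduced algebra $Q$. Surjectivity is already given, so it is enough to prove $\|\pi(x)\|\le\|x\|_Q$ for $x$ in the dense $*$-algebra generated by $F$ and $h$, and every representation $\pi$ of $P$. The only extra information $P$ carries beyond $R$ is a self-adjoint $\pi(h)$ with spectrum in $[0,1]$ and $e^{2\pi i\pi(h)}=\pi(w)$. Away from the spectral point $1$ of $w$, such an $h$ is a continuous function of $w$: if $\phi\in\mathrm C([0,1])$ vanishes near $0$ and $1$, then $\phi(h)=\psi(w)$ with $\psi=\phi\circ(2\pi i)^{-1}\log$ continuous on $\mathbb T$. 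So these elements already lie in the image of $R$, on which $\pi$ factors through the reduced norm. The whole ambiguity is therefore concentrated at $w=1$, where $h$ may sit near $0$ or near $1$. Lemma~\ref{rf:lem-endpoint} is designed to control exactly this region.

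First I fix a partition of unity $f_0(w)^2+f_1(w)^2=1$ on $\mathbb T$, with $f_1$ supported in $\{|z-1|\le\delta\}$ and $f_0$ vanishing near $1$. I write $h=h_0+h_1$, where $h_0$ lies in $\mathrm C^*(w)$ modulo an error that is small in norm, and $h_1=f_1(w)hf_1(w)$ is localized at $w=1$. Lemma~\ref{rf:lem-endpoint} gives
\[
 \|f_1(w)(a-\omega(a)1)f_1(w)\|\to0\qquad(\delta\downarrow0)
\]
uniformly over such $f_1$, for every $a\in R$. Hence in any representation, a word in $F$ and $h$, cut down by $f_1(w)$, is asymptotically a word in scalars $\omega(\cdot)$ and the commuting pair $f_1(w)$, $h f_1(w)$. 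The same holds in $Q$ itself, since $R\subset Q$ and the lemma only uses $R$. Thus for each polynomial $x$ and $\varepsilon>0$, I choose $\delta$ so that $\pi(x)$ is within $\varepsilon$ of the sum of two pieces: a piece built from the image of $R$, and a localized piece that is a polynomial in the commuting elements $w$, $h$ with coefficients computed by $\omega$. I then use the matrix trick $\|x\|^2=\|\sum_{k,l}f_k x^*x f_l\|$ in the form $\|x\|^2\le\|f_0x^*xf_0\|+\|f_1x^*xf_1\|$ plus the errors, and bound each piece by its counterpart in $Q$.

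The $R$-piece is fine, because $\pi|_R$ factors through $R$, which embeds isometrically in $Q$. The localized piece has norm $\sup|p(t)|$ over the joint spectrum of $(w,h)$ near $w=1$, that is, over $t$ near $0$ or near $1$ in $[0,1]$. To dominate it I must show that $Q$ realizes the maximal such value. Lebesgue measure has full support, so $h$ in $Q$ has spectrum all of $[0,1]$, including points near both endpoints. The compression formula $PaP=\omega(a)P$ for the spectral projection at $w=1$, applied in the universal representation of $Q$, shows that the same scalar coefficients appear there. Hence the localized norm in $Q$ is at least the one in $\pi$.

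I expect the main obstacle to be this comparison at the endpoint. One needs uniform control when $h$ is inserted between elements of $F$ several times, so that the errors from Lemma~\ref{rf:lem-endpoint} (one for each cut-down, applied to $a-\omega(a)$ with $a\in R$) remain summable. One also needs the localized model in $Q$ to attain the supremum over both endpoint branches simultaneously. I would handle the first point by induction on word length, rewriting $h$ as a function of $w$ plus $h_1$ at each occurrence. For the second, I would use states on $Q$ extending $\omega$ together with the point evaluations $\operatorname{ev}_0$ and $\operatorname{ev}_1$ on $\mathrm C([0,1])$.
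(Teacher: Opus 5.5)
Your proposal has a genuine gap at the step where $\pi(x)$ is approximated by ``an $R$-piece plus a localized commutative piece''.

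Lemma~\ref{rf:lem-endpoint} collapses an element $c\in R$ only when it is flanked on \emph{both} sides by localizing factors, $f_1(w)cf_1(w)\approx\omega(c)f_1(w)^2$. You substitute $h=\psi(w)+f_1(w)hf_1(w)$ at each occurrence; this is exact, with $\psi=f_0^2\cdot\arg/(2\pi)$ continuous. That collapses the interior $R$-blocks, but the outermost coefficients survive. What you actually get is $x\approx r_0+\sum_j r_j\,f_1(w)q_j(h)f_1(w)\,s_j$ with nonscalar $r_j,s_j\in R$, and no induction on word length removes them.

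Such terms are far from $R+\mathrm C^*(w,h)$. Take $x=ahb$ with $a,b\in F$ centered.
\begin{itemize}
\item In the GNS representation of $\omega$ on $R$, the $1$-eigenspace of $w$ is $\C\Omega$.
\item Let $H$ be the Borel argument of $w$ off $\C\Omega$, and set $H=0$, respectively $H=1$, on $\C\Omega$. This gives two representations of the pushout that agree on $R$.
\item In these two representations, every element of $\mathrm C^*(w,h)$ differs by a multiple of $|\Omega\rangle\langle\Omega|$, while the images of $x$ differ by $|a\Omega\rangle\langle b^*\Omega|$.
\end{itemize}
Hence $x$ has distance at least $\tfrac12\|a\|_2\|b\|_2$ from $R+\mathrm C^*(w,h)$, for every $\delta$.

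For the same reason, the inequality $\|x\|^2\le\|f_0x^*xf_0\|+\|f_1x^*xf_1\|$ does not finish the argument. The inequality itself is valid, and even a constant loss would be harmless. But only the $f_1$-piece can be handled, even granting your uniform estimates there. The term $f_0x^*xf_0$ still contains interior occurrences of $h$ between nonscalar coefficients, so it is not in the image of $R$. Bounding it by its $Q$-norm is just the original problem again.

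The missing idea is global rather than norm-local, and it is how the paper argues.
\begin{itemize}
\item In a representation of the pushout, put $P=1_{\{1\}}(W)=P_0+P_1$ with $P_s=1_{\{s\}}(H)$.
\item The compression formula $PaP=\omega(a)P$ makes $\mathcal H_s=\overline{RP_s\mathcal H}$ orthogonal subspaces reducing $R$ and $H$. Their complement has no $1$-eigenvectors for $W$.
\item On each summand, $H$ is the \emph{strong} limit of a single continuous branch $g^s_\delta(W)$. Here $g^s_\delta$ agrees with the argument outside an arc of Haar measure $\delta$ adjacent to $1$, and $g^s_\delta(1)=s$. No continuous function of $W$approximates $H$ in norm, which is exactly why a norm-local argument cannot work.
\item Since $p(F,g^s_\delta(W))$ lies in the image of $R$, its norm is at most $\|p(F,g^s_\delta(w))\|_R$.
\item The element $g^s_\delta(w)$ is free from $F$ with uniform law on $[0,1-\delta]$ or $[\delta,1]$. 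So this norm equals the $Q$-norm of $p$ evaluated at $(1-\delta)h$ or $\delta+(1-\delta)h$, and it tends to $\|p(F,h)\|_Q$. The paper invokes strong continuity of reduced free products at this point.
\item Lower semicontinuity of the norm under strong convergence then gives $\|p(F,H)\|\le\|p(F,h)\|_Q$ on each summand.
\end{itemize}
Your argument lacks this branch decomposition, which separately controls how $F$ spreads each endpoint branch.
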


\begin{proof}
For $0<\delta<1/4$, define continuous circle functions by
\[
 g^0_\delta(e^{2\pi it})=
 \begin{cases}t&0\leq t\leq1-\delta,\\
 (1-\delta)(1-t)/\delta&1-\delta\leq t\leq1,
 \end{cases}
\]
and
\[
 g^1_\delta(e^{2\pi it})=
 \begin{cases}1-(1-\delta)t/\delta&0\leq t\leq\delta,\\
 t&\delta\leq t\leq1.
 \end{cases}
\]
Their Haar laws converge to $\lambda$, and their spectra are
$[0,1-\delta]$ and $[\delta,1]$.  Thus their marginal moments
and polynomial norms converge to those of $h$.  Strong
continuity of reduced free products
\cite[Theorem~4.2]{RF-Pisier} gives, for every joint polynomial
with finitely many coefficients in $F$,
\begin{equation}\label{rf:eq-branch-norm}
 \|p(F,g^s_\delta(w))\|\longrightarrow\|p(F,h)\|_Q.
\end{equation}

Represent the full pushout on $\mathcal H$, and write $W,H$
for its circle and interval coordinates.  Set
$P_s=1_{\{s\}}(H)$ and $P=P_0+P_1=1_{\{1\}}(W)$.
By Lemma~\ref{rf:lem-endpoint}, $PaP=\omega(a)P$ for
$a\in R$.  The subspaces
$\mathcal H_s=\overline{RP_s\mathcal H}$ are orthogonal,
reduce $R$, and satisfy $P\mathcal H_s=P_s\mathcal H$.
Off $P\mathcal H$, the equation $e^{2\pi iH}=W$ uniquely
determines $H$ as a Borel argument of $W$.  On
$P_s\mathcal H$, it equals $s$.  Hence $\mathcal H_s$ also
reduces $H$.  Their orthogonal complement $\mathcal H_*$
reduces $R,H$ and has no $1$-eigenvectors for $W$.
The spectral theorem gives
$g^s_\delta(W)\to H$ strongly on $\mathcal H_s$, and either
branch converges strongly on $\mathcal H_*$.  Strong lower
semicontinuity of the norm, contractivity of the representation
of $R$, and \eqref{rf:eq-branch-norm} now give
$\|p(F,H)\|\leq\|p(F,h)\|_Q$ on each reducing subspace.
The canonical quotient gives the opposite inequality.
\end{proof}

\begin{corollary}[Strong one-turn logarithms]
\label{rf:cor-strong-logs}
Let $L$ have a faithful tracial state, let $F\subset L$, and
let $w\in L$ be Haar and free from $F$.  If positive
contractions $H_n\in L$ satisfy
$\|e^{2\pi iH_n}-w\|\to0$, then
\[
 \|p(F,H_n)\|\longrightarrow\|p(F,h)\|_Q
\]
for every joint polynomial, and all joint moments converge.
In particular, $h\mapsto(H_n)_n$ and the constant inclusion
of $F$ define a faithful homomorphism $\Theta:Q\to L_\infty$,
where $L_\infty=\ell^\infty(L)/c_0(L)$, and
$\Theta j=\iota_L|_R$.
\end{corollary}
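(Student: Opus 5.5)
The plan is to reduce everything to Lemma~\ref{rf:lem-pushout}, which identifies $Q$ with the full pushout $R*^{\max}_{\mathrm C(\mathbb T)}\mathrm C([0,1])$. First we use the asymptotic data to produce a homomorphism out of $R$ and then out of $Q$. Since $w\in L$ is Haar and free from $F$ with respect to the faithful trace on $L$, the $\mathrm C^*$-algebra generated by $F$ and $w$ is canonically the reduced free product $R$; faithfulness of the trace makes the GNS representation faithful, so the constant inclusion defines $\iota_L|_R:R\to L\subset L_\infty$. Next, the sequence $(H_n)_n$ defines a positive contraction $\mathbf H\in L_\infty$ with $e^{2\pi i\mathbf H}=w$, by the hypothesis $\|e^{2\pi iH_n}-w\|\to0$. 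The pair consisting of $\iota_L|_R$ and the homomorphism $\mathrm C([0,1])\to L_\infty$, $h\mapsto\mathbf H$, agrees on $\mathrm C(\mathbb T)$, since $w\mapsto e^{2\pi ih}$ on one side and $w\mapsto w$ on the other. The universal property of the full amalgamated pushout then gives a homomorphism out of $R*^{\max}_{\mathrm C(\mathbb T)}\mathrm C([0,1])$, and Lemma~\ref{rf:lem-pushout} turns this into $\Theta:Q\to L_\infty$ with $\Theta j=\iota_L|_R$.

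Second, we prove that $\Theta$ is faithful by showing it preserves the trace and using faithfulness of $\tau_Q$. The key point is to compute moments. Any joint $*$-moment $\tau(p(F,H_n))$ is, up to error tending to zero, $\tau(p(F,g(e^{2\pi i H_n})))$ only if we can express $H_n$ continuously through $e^{2\pi iH_n}$, which fails near the endpoints. Instead we argue through the limit trace $\tau_\infty=\lim_n\tau$ along any free ultrafilter (or take limit points) on $\mathrm C^*(\Theta(Q))$. This is a tracial state on $Q$ whose restriction to $R$ is $\tau_R$, because $\Theta j=\iota_L|_R$ and $\tau_L$ restricts to the free-product trace on $R$. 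The restriction to $\mathrm C([0,1])$ is a measure $\mu$ on $[0,1]$ whose pushforward under $t\mapsto e^{2\pi it}$ is Haar measure. Hence $\mu$ agrees with $\lambda$ on $(0,1)$, and the only ambiguity is mass at the endpoints; but Haar measure has no atom at $1$, so $\mu(\{0,1\})=0$ and $\mu=\lambda$.

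The main obstacle is showing that the limit trace on $Q$ equals $\tau_Q$ on mixed words, not only on each marginal. Freeness of $h$ from $F$ under the limit trace follows because off the null endpoint set $h=g(w)$ for the Borel argument $g$. In the GNS von Neumann algebra of the limit trace, $h$ therefore lies in $\{w\}''$, which is free from $F$ since $\tau$ on $R$ is the free-product trace. So mixed moments coincide with those of $\tau_Q$, all joint moments of $(F,H_n)$ converge, and $\tau_\infty\circ\Theta=\tau_Q$. Faithfulness of $\tau_Q$ then yields injectivity: if $\Theta(x)=0$, then $\tau_Q(x^*x)=0$.

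Finally, the norm statement is proved by a two-sided estimate. The upper bound $\limsup\|p(F,H_n)\|\le\|p(F,h)\|_Q$ holds because $\Theta$ is a homomorphism, which gives $\|\Theta(p)\|\le\|p\|$, and this holds along every subsequence. The lower bound follows from injectivity of $\Theta$, which makes it isometric, so $\|\Theta(p)\|=\limsup_n\|p(F,H_n)\|$ equals $\|p\|_Q$. To obtain convergence rather than only the $\limsup$, we apply the same argument to an arbitrary subsequence, which is legitimate because the hypotheses pass to subsequences, so $\liminf$ also equals $\|p\|_Q$.
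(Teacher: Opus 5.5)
Your proof is correct. Its first half is the paper's upper-bound argument: you apply Lemma~\ref{rf:lem-pushout} directly in $L_\infty$, whose norm is the $\limsup$, whereas the paper applies it in norm ultrapowers of arbitrary subsequences. These amount to the same thing.

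The real difference is in how you identify the joint moments.

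The paper works inside $L$. With $w_n=e^{2\pi iH_n}$, it uses the continuous branch $g^0_\delta$ and the weak convergence of the laws of $w_n$ to Haar measure to get $\limsup_n\|H_n-g^0_\delta(w_n)\|_2^2\le\delta$. This gives $2$-norm convergence of $H_n$ to the Borel argument of $w$ in the tracial von Neumann algebra of $L$. The lower bound then follows in one stroke from $\|p\|_Q=\lim_k\tau_Q((p^*p)^k)^{1/(2k)}\le\liminf_n\|p(F,H_n)\|$.

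You instead pass to the GNS von Neumann algebra of $\sigma=\tau_\omega\circ\Theta$, where $e^{2\pi i\pi_\sigma(h)}=\pi_\sigma(w)$ holds exactly. The GNS trace is faithful on $\pi_\sigma(Q)''$, and $\pi_\sigma(w)$ has Haar law, so the endpoint projection $1_{\{0,1\}}(\pi_\sigma(h))=1_{\{1\}}(\pi_\sigma(w))$ vanishes. Hence $\pi_\sigma(h)$ is the Borel argument of $\pi_\sigma(w)$. It therefore lies in $\mathrm W^*(\pi_\sigma(w))$, which is free from $F$ because freeness passes to weak closures by normality. This gives $\sigma=\tau_Q$ for every free ultrafilter, and hence convergence of the moments. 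It also gives injectivity of $\Theta$, and then the lower bound via automatic isometry of injective $^*$-homomorphisms together with your subsequence argument.

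Your route is softer and dispenses with the $\delta$-estimates. The paper's route yields extra concrete information: $H_n$ itself converges in $2$-norm inside $L''$. The paper reuses this by telescoping in Proposition~\ref{af:strong-logs}, although your exact functional-calculus argument would adapt to several circles just as well.
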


\begin{proof}
In every norm ultrapower of every subsequence, $H=(H_n)_\omega$
has $e^{2\pi iH}=w$.  Lemma~\ref{rf:lem-pushout} gives
the upper bound for each joint polynomial.  Failure of that
bound along the original sequence would persist along a
subsequence and contradict this observation.

Put $w_n=e^{2\pi iH_n}$.  Its law converges to Haar measure.
The difference $H_n-g^0_\delta(w_n)$ is bounded by one and
is supported where $H_n\in[1-\delta,1]$.  The latter set
maps into a circle arc of Haar measure $\delta$.
Weak convergence of the circle laws therefore gives
$\limsup_n\|H_n-g^0_\delta(w_n)\|_2^2\leq\delta$.
For fixed $\delta$, $g^0_\delta(w_n)\to g^0_\delta(w)$ in
norm.  Letting $\delta\downarrow0$ proves convergence of
$H_n$ in $2$-norm to the Borel argument of $w$.  This argument
is uniform on bounded words and proves joint moment convergence.
Faithfulness of the trace on $Q$ then gives
\[
 \|p(F,h)\|
 =\lim_{k\to\infty}\tau_Q((p^*p)^k)^{1/(2k)}
 \leq\liminf_n\|p(F,H_n)\|.
\]
This supplies the lower bound and the asserted faithful map.
Every ultralimit trace on its image is the specified trace.
\end{proof}

\subsection{The actual two-logarithm comparison ideal}
\label{rf:subsec-two-log}

\begin{proposition}[The induced two-projection ideal]
\label{rf:prop-two-log}
Let $R$ be separable and unital, and let $w\in R$ have a unique
state $\omega$ taking value one at $w$.  Suppose
$T=\mathrm C^*(R,H_0,H_1)$, where $H_s$ are positive
contractions with $e^{2\pi iH_s}=w$.  Put
\[
 J=\operatorname{Ideal}_{\mathrm C^*(1,H_0,H_1)}(H_1-H_0),
 \qquad I=\operatorname{Ideal}_T(H_1-H_0).
\]
Then, with $\mathcal H_\omega$ the GNS space of $\omega$,
\begin{equation}\label{rf:eq-induced-ideal}
 I\cong\mathcal K(\mathcal H_\omega)\otimes J,
 \quad axb^*\longleftrightarrow
 |\pi_\omega(a)\Omega\rangle
 \langle\pi_\omega(b)\Omega|\otimes x.
\end{equation}
Moreover, $J$ is a quotient of
\[
 J_{\rm pair}=
 \{f\in\mathrm C([0,1],\mathrm M_2):
       f(0)=0,\ f(1)\text{ is diagonal}\}.
\]
Thus $I$ is separable, nuclear and type I, with nuclear
dimension at most one.  If $\dim\mathcal H_\omega=\infty$,
then $I$ is stable and has the corona factorisation property.
\end{proposition}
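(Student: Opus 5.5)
The plan is to work in a faithful representation of $T$ and show that the generator $H_1-H_0$ is supported under a single spectral projection of $w$, on which $R$ compresses to scalars. Everything else then comes from a Hilbert-module identification.

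\emph{Step 1: support.} Let $P=1_{\{1\}}(w)$ in the enveloping von Neumann algebra. Each $H_s$ commutes with $w=e^{2\pi iH_s}$, and has spectrum in $[0,1]$.

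\begin{itemize}
\item Off $P$, the relation $e^{2\pi iH_s}=w$ forces $H_s$ to equal the Borel argument of $w$, which is the same for $s=0,1$.
\item On $P$, each $H_s$ has spectrum in $\{0,1\}$. So $Q_s=H_sP$ is a projection, and $H_1-H_0=Q_1-Q_0=P(H_1-H_0)P$.
\end{itemize}

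It follows that every $x\in J$ satisfies $x=PxP$. Every unit vector in $P\mathcal H$ gives a state with value one at $w$. Uniqueness of $\omega$ and polarization then give $PaP=\omega(a)P$ for $a\in R$, exactly as at the end of Lemma~\ref{rf:lem-endpoint}. Hence
\[
x^*a y=\omega(a)\,x^*y\qquad(x,y\in J,\ a\in R).
\]
Next I reduce the ideal. An element of $T$ is a limit of words in $R$ and the $H_s$. Any letter $H_s$ adjacent to an element of $J$ is absorbed into $J$, because $J$ is an ideal of $\mathrm C^*(1,H_0,H_1)$. Consequently
\[
I=\overline{\operatorname{span}}\,RJR.
\]

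\emph{Step 2: the module identification.} Let $X=\overline{RJ}$, viewed as a right Hilbert $J$-module with $\langle \xi,\eta\rangle=\xi^*\eta$. By Step 1,
\[
\langle ax,by\rangle=\omega(a^*b)\,x^*y=\langle \pi_\omega(a)\Omega,\pi_\omega(b)\Omega\rangle\,x^*y .
\]
So $\pi_\omega(a)\Omega\otimes x\mapsto ax$ is an isometry from the exterior tensor module $\mathcal H_\omega\otimes J$ onto $X$; surjectivity holds by density. The closed span of $XX^*=RJJR=RJR$ is $I$. Its elements act on $X$ by left multiplication, and this action is faithful because $I\cdot X\subset X$ and $I=\overline{XX^*}$. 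Therefore
\[
I\cong\mathcal K(X)\cong\mathcal K(\mathcal H_\omega)\otimes J,
\]
with $axb^*\mapsto|\pi_\omega(a)\Omega\rangle\langle\pi_\omega(b)\Omega|\otimes x$. This is \eqref{rf:eq-induced-ideal}.

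\emph{Step 3: structure of $J$.} By Step 1, $\mathrm C^*(1,H_0,H_1)$ splits along $P$.

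\begin{itemize}
\item On $1-P$ the two generators coincide, so that part does not contribute to the ideal generated by $H_1-H_0$.
\item On $P$ the algebra is generated by the two projections $Q_0,Q_1$.
\end{itemize}

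So $J$ is a quotient of the ideal generated by $q_1-q_0$ in the universal algebra of two projections. That universal algebra is
\[
\{f\in\mathrm C([0,1],\mathrm M_2): f(0),f(1)\text{ diagonal}\},
\]
with $q_0=\operatorname{diag}(1,0)$ and $q_1(t)$ the rank-one projection at angle $\pi t/2$, together with scalar-diagonal endpoint summands. Computing where $q_1-q_0$ vanishes identifies the ideal it generates with $J_{\rm pair}$ after reparametrizing $t$. I expect this endpoint bookkeeping to be the most error-prone part of the whole argument: tracking which diagonal endpoint entries survive, and checking that the extra characters where $q_0=q_1$ are killed.

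\emph{Step 4: consequences.} Separability of $I$ follows from separability of $R$. Being type I, nuclear, and having nuclear dimension at most one all pass from $J_{\rm pair}$ (a one-dimensional subhomogeneous algebra) to quotients and to $\mathcal K\otimes J$. If $\dim\mathcal H_\omega=\infty$, then $I\cong\mathcal K\otimes J$ is stable. The corona factorization property then follows from finite nuclear dimension: such algebras have almost unperforated Cuntz semigroup, so \cite{OPR} applies as in Lemma~\ref{lem:trace-kernel-regularity}.

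The main conceptual step is Step 1, the scalar compression $x^*ay=\omega(a)x^*y$; it is what makes $R$ behave like rank-one operators over $J$.
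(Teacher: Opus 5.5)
Your reduction $I=\overline{\operatorname{span}}\,RJR$ at the end of Step~1 is not proved, and it is the nontrivial part of the statement. Absorbing an $H_s$ into $J$ only works for letters immediately adjacent to a $J$-element. After those are absorbed, a typical term still has the form $x\,r\,H_0\,r'$ with $x\in J$ and $r,r'\in R$. Here $H_0$ is separated from $x$ by $r$. Your compression identity $x^*ay=\omega(a)x^*y$ controls products in $JRJ$, but it says nothing about $JRH_s$. So nothing you have shown places $xrH_0$ in $\overline{\operatorname{span}}\,JR$. Without this, $\overline{\operatorname{span}}\,RJR$ is only a $C^*$-subalgebra of $I$, and Step~2 identifies that subalgebra rather than $I$. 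The paper's proof spends a separate paragraph on exactly this point: invariance under the two logarithms.

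The missing idea is to use the part of $H_s$ off $P$. Let $g$ be the Borel argument on $\mathbb T$, so $e^{2\pi i g(z)}=z$, $g(z)\in(0,1)$ for $z\neq1$, and $g(1)=0$. Your support analysis then gives $H_s=H_sP+g(w)$.

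\begin{itemize}
\item The first term is harmless: $H_sP\,a\,x=H_s(PaP)x=\omega(a)H_sx\in J$.
\item The second term is not in $T$. Still, $g(w)aP$ is a norm limit of $g_n(w)aP$ for continuous $g_n\to g$ boundedly pointwise, for example the branch functions $g^0_{1/n}$ of Lemma~\ref{rf:lem-pushout}. To see this, extend $Pa^*f(w)aP=\omega(a^*f(w)a)P$ from continuous to bounded Borel $f$ by normality. With $\mu_a$ the measure representing $f\mapsto\omega(a^*f(w)a)$, this gives
\[
\|(g_n-g)(w)aP\|^2=\int|g_n-g|^2\,d\mu_a\longrightarrow0 .
\]
Hence $g(w)ax=\lim_n g_n(w)ax\in\overline{RJ}$.
\end{itemize}

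Taking adjoints handles right multiplication. Only then is $\overline{\operatorname{span}}\,RJR$ an ideal of $T$, and hence equal to $I$.

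The paper obtains the same invariance on Hilbert space. The $1$-eigenspace of $\pi_\omega(w)$ is $\mathbb C\Omega$. So on $\overline{RJ\mathcal H}\cong\mathcal H_\omega\otimes\overline{J\mathcal H}$, each $H_s$ acts as $\operatorname{arg}_0(\pi_\omega(w))\otimes1+|\Omega\rangle\langle\Omega|\otimes m_s$, which normalizes $\mathcal K(\mathcal H_\omega)\otimes J$. Once this step is supplied, your Hilbert-module Step~2 is a clean, representation-free substitute for the paper's unitary $V$, and Step~3 is fine.

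A smaller point concerns Step~4. Finite nuclear dimension does not in general give an almost unperforated Cuntz semigroup; this fails already for $\mathrm C((S^2)^n)$ with $n\geq2$. So the route through almost unperforation is not justified as written. Use instead the result the paper cites: finite nuclear dimension yields the corona factorisation property directly.
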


\begin{proof}
Represent $T$ faithfully on $\mathcal H$ and put
$P=1_{\{1\}}(w)$.  Each $H_s$ commutes with $w$; off
$P\mathcal H$ both are the same Borel argument, while on
$P\mathcal H$ they are projections.  Hence
$(w-1)J=J(w-1)=0$.  Uniqueness of $\omega$ gives
$PaP=\omega(a)P$, and therefore
\begin{equation}\label{rf:eq-sandwich}
 xay=\omega(a)xy\qquad(x,y\in J, a\in R).
\end{equation}
Let $\mathcal V=\overline{J\mathcal H}$ and
$\mathcal M=\overline{R\mathcal V}$.  The map
\[
 V:\mathcal H_\omega\otimes\mathcal V\to\mathcal M,
 \qquad \pi_\omega(a)\Omega\otimes\xi\mapsto a\xi
\]
is unitary, since its inner products are
$\omega(a^*b)\langle\xi,\eta\rangle$.  Here $R$ acts as
$\pi_\omega(R)\otimes1$, and $x\in J$ acts as
$|\Omega\rangle\langle\Omega|\otimes x|_{\mathcal V}$.
The representation of $J$ on $\mathcal V$ is faithful.
It follows that the closed span $K$ of $axb^*$ acts as the
right side of \eqref{rf:eq-induced-ideal} on $\mathcal M$
and as zero on $\mathcal M^\perp$.

We check invariance under the two logarithms.  The
$1$-eigenspace of $\pi_\omega(w)$ is exactly $\C\Omega$:
the endpoint compression gives
$1_{\{1\}}(\pi_\omega(w))\pi_\omega(a)\Omega
 =\omega(a)\Omega$, and these vectors are dense.
Also $P\mathcal M=\mathcal V$, and $\mathcal M$ reduces
each $H_s$ because its complement to $\mathcal V$ carries
the common argument of $w$, whereas $H_sJ\subset J$.
In the tensor coordinates above,
\[
 H_s=\operatorname{arg}_0(\pi_\omega(w))\otimes1
       +|\Omega\rangle\langle\Omega|\otimes m_s,
\]
where $m_s\in\mathcal M(J)$ is the multiplier induced by
$H_s$.  Both terms normalize $\mathcal K(\mathcal H_\omega)
\otimes J$.  Thus $K$ is an ideal of $T$ containing $J$,
and equals $I$.

The element $w$ is central in $D=\mathrm C^*(1,H_0,H_1)$.
Since $J$ annihilates $\overline{(w-1)D}$, its intersection
with that ideal is zero, so $J$ embeds as an ideal in the
fiber at $w=1$.  This fiber is generated by two projections.
The universal algebra of two projections is
\[
 \{f\in\mathrm C([0,1],\mathrm M_2):
             f(0),f(1)\text{ are diagonal}\}.
\]
Choose its usual generators $p,q$ with $p=q$ at zero and
$q=1-p$ at one.  The ideal generated by $p-q$ is exactly
$J_{\rm pair}$, proving the quotient assertion.
Its one-dimensional subhomogeneous description, passage to
quotients and matrix stability give the nuclear dimension
bound.  Infinite-dimensional $\mathcal H_\omega$ gives
stability; finite nuclear dimension gives the corona
factorisation property \cite{RF-RobertCFP}.
\end{proof}

\begin{lemma}[Proper comparison]\label{rf:lem-proper}
Let $A$ be separable and unital, and let $I$ be separable,
stable, nuclear and have the corona factorisation property.
For unital full homomorphisms $\varphi_0,\varphi_1:A\to
\mathcal M(I)$ forming a Cuntz pair, assume
$[\varphi_0,\varphi_1]=0$ in $\KK(A,I)$.
Then there are unitaries $v_m\in1+I$ such that
$\Ad(v_m)\varphi_0(a)\to\varphi_1(a)$ for every $a\in A$.
\end{lemma}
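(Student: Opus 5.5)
This is essentially Lemma~\ref{lem:kl-uniqueness-abstract} with three changes. The hypothesis of absorption is replaced by fullness, nuclearity is moved from the source to the ideal, and $\KL$ is replaced by $\KK$. The plan is to reduce to the absorbing-pair uniqueness theorem \cite[Theorem~A]{Szabo}. I would take three steps: first, show that $\varphi_0$ and $\varphi_1$ are absorbing; second, apply the uniqueness theorem to the vanishing $\KK$-class; third, normalize the implementing unitaries so that they lie in $1+I$ and the convergence is in norm rather than strict.

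\emph{Absorption.} The Busby maps $\pi_I\varphi_s:A\to\mathcal Q(I)$ are unital and full, since composing a full map with the unital quotient preserves fullness. Because $A$ is unital and the maps are unital, unitizable fullness reduces to fullness of these Busby maps. I would then invoke \cite[Theorem~2.6]{Gabe}: for a separable stable ideal with the corona factorisation property, a unitizably full Busby map is nuclearly absorbing.

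Here nuclearity of $I$ replaces nuclearity of $A$. Every completely positive map $A\to\mathcal M(I)$ that is strictly nuclear composes with the nuclear ideal, so nuclear absorption coincides with absorption of all weakly nuclear representations. For a nuclear target ideal, every representation $A\to\mathcal M(I)$ is weakly nuclear, since $a\mapsto x\varphi(a)x^*$ factors through $I$. Hence the Busby maps are absorbing. By \cite[Corollary~5.11]{CGSTW}, $\varphi_0$ and $\varphi_1$ are absorbing representations. I expect this step to be the main obstacle. The technical point is to identify "weakly nuclear" with "all" when $I$, rather than $A$, is nuclear, and to check that Gabe's criterion is stated for nuclear absorption in a form that uses only separability of $A$.

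\emph{Uniqueness.} With both maps absorbing and forming a Cuntz pair with $[\varphi_0,\varphi_1]=0$ in $\KK(A,I)$, \cite[Theorem~A]{Szabo} directly gives a norm-continuous path, hence a sequence, of unitaries $v_m\in1+I$ with $\Ad(v_m)\varphi_0(a)-\varphi_1(a)\in I$ and $\|\Ad(v_m)\varphi_0(a)-\varphi_1(a)\|\to0$. Since the class vanishes already in $\KK$, no approximation by the $\overline{\N}$-parametrized family used in Lemma~\ref{lem:kl-uniqueness-abstract} is needed. If the theorem is phrased with unitaries in $\mathcal M(I)$ that are merely of the form $1+I$ up to a scalar, I would multiply by that scalar, which does not affect $\Ad$.

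\emph{Norm convergence.} Norm rather than strict convergence follows because each difference lies in $I$, and the theorem asserts asymptotic equivalence in norm within $I$. The unitality of the $\varphi_s$ ensures that no corner correction at the unit is required, unlike in the proof of Theorem~\ref{thm:ultrapower-full-invariant}.
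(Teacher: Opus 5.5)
There is a genuine gap in your absorption step, and the rest of the argument rests on it. You claim that, for unital maps from a unital $A$, unitizable fullness reduces to fullness. That is false. The forced unitization of $\tau=\pi_I\varphi_s$ is $A\oplus\C\ni(a,z)\mapsto\tau(a)+z(1-\tau(1_A))=\tau(a)$, which annihilates the summand $(0,1)$. So a unital Busby map from a unital source is never unitizably full. The paper notes exactly this in the existence part of Theorem~\ref{thm:ultrapower-full-invariant}, where the unital $\sigma$ is replaced by $\operatorname{diag}(\sigma,0)$ for that reason.

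Correspondingly, a unital $\varphi_s$ is not absorbing in the nonunital sense required by \cite[Theorem~2.6]{Gabe} and \cite[Theorem~A]{Szabo}. It cannot even absorb the zero representation: evaluating at $1_A$ would force the complementary projection $0\oplus1$ of $(\varphi_s\oplus0)(1_A)$ into $\mathrm M_2(I)$ after a unitary conjugation, and hence $1\in I$. So Theorem~A cannot be applied to $\varphi_0,\varphi_1$ as they stand. Your closing remark is also backwards: a corner correction is needed precisely because unital maps are not absorbing. Your identification of nuclear absorption with absorption, and of $\KK_{\rm nuc}(A,I)$ with $\KK(A,I)$, via nuclearity of $I$ is fine and matches the paper.

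There are two ways to close the gap.

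\emph{The paper's route.} It stays in the unital framework. Fullness and the corona factorisation property give \emph{unital} nuclear absorption \cite[Theorem~1.11]{RF-HW}. The unital uniqueness theorem \cite[Theorem~1.12(i)]{RF-HW} then needs an input with no counterpart in your plan: $\mathrm K_1$-injectivity of the corona relative commutant, supplied by \cite[Corollary~2.9]{Szabo}. Finally, scalar parts are removed to land in $1+I$.

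\emph{A route closer to yours.} Run your argument on $\psi_s=\operatorname{diag}(\varphi_s,0):A\to\mathrm M_2(\mathcal M(I))=\mathcal M(\mathrm M_2(I))$.
\begin{itemize}
\item Their forced unitizations $(a,z)\mapsto\operatorname{diag}(\pi_I\varphi_s(a),z)$ are full, so the $\psi_s$ are absorbing.
\item The class is unchanged: $[\psi_0,\psi_1]=[\varphi_0,\varphi_1]=0$.
\item Theorem~A gives $w_m\in1+\mathrm M_2(I)$ with $\Ad(w_m)\psi_0\to\psi_1$. At $1_A$ this yields $p_m=w_me_{11}w_m^*\to e_{11}$.
\item The polar part of $e_{11}p_m+(1-e_{11})(1-p_m)$ lies in $1+\mathrm M_2(I)$, tends to $1$, and conjugates $p_m$ to $e_{11}$.
\item Multiplying $w_m$ by it gives a unitary commuting with $e_{11}$. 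Its $e_{11}$-corner is the required $v_m\in1+I$.
\end{itemize}
This is the compression used in the uniqueness part of Theorem~\ref{thm:ultrapower-full-invariant}. Either way, the unital case needs an ingredient your proposal omits: $\mathrm K_1$-injectivity or this corner compression.
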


\begin{proof}
Every compression $a\mapsto b^*\varphi_s(a)b$, $b\in I$,
takes values in the nuclear algebra $I$, so is nuclear.
Both maps are weakly nuclear.  The same argument for
$\mathrm C([0,1])\otimes I$ makes every ordinary Cuntz-pair
homotopy weakly nuclear: this homotopy ideal is nuclear as well.
Thus both the pairs and their homotopy relations agree, identifying
$\KK_{\rm nuc}(A,I)=\KK(A,I)$.
Fullness and the corona factorisation property give unital
nuclear absorption \cite[Theorem~1.11]{RF-HW}.
The corona relative commutant is $\mathrm K_1$-injective by
\cite[Corollary~2.9]{Szabo}.  Thus
\cite[Theorem~1.12(i)]{RF-HW} gives asymptotic conjugacy by
unitaries in the unitization of $I$.  Removing their scalar
parts puts them in $1+I$ without changing conjugation.
\end{proof}

\begin{proposition}[Reparameterization descent]
\label{rf:prop-descent}
Let $F$ be separable, unital and faithfully tracial, with
$\dim L^2(F,\tau_F)=\infty$, and suppose $Q=F\fr
\mathrm C([0,1])$ is simple.  Let $L$ be unital with a
specified faithful trace, and suppose the native
$R=F\fr\mathrm C(\mathbb T)$ embeds unitally and trace
preservingly in $L$.  If positive contractions $H_n\in L$ satisfy
$e^{2\pi iH_n}\to w$ in norm, there are a faithful unital
$f:Q\to L$ and unitaries $u_n\in L$ such that
\[
 \Ad(u_n)fj(x)\longrightarrow x\qquad(x\in R).
\]
The map $f$ preserves the specified trace.
\end{proposition}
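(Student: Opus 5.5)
The plan is to pass to the sequence algebra and then descend by reparameterization. By Corollary~\ref{rf:cor-strong-logs}, the sequence $(H_n)$ and the constant copy of $F$ define a faithful homomorphism $\Theta:Q\to L_\infty$ with $\Theta j=\iota_L|_R$. On $\Theta(Q)$, every ultralimit trace is the specified trace.

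To obtain an honest map $Q\to L$, I would use intertwining by reparameterization \cite[Proposition~9.2]{CGSTW}, as in the proof of Proposition~\ref{prop:descent-prescribed-invariant}. That result needs only a separable source and a unital target. Its hypothesis is that, for every $r:\N\to\N$ with $r(n)\to\infty$, the maps $\Theta$ and $r^*\Theta$ (defined by $(H_{r(n)})_n$) are approximately unitarily equivalent in $L_\infty$, at least on a dense set and then by a diagonal argument. It then gives $f:Q\to L$ whose constant-sequence map is unitarily equivalent to $\Theta$. Lifting the implementing unitary of $L_\infty$ to coordinates $u_n\in L$, and using $\Theta j=\iota_L|_R$, gives $\Ad(u_n)fj(x)\to x$ for $x\in R$. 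The trace statement follows from unitary invariance of traces together with the ultralimit-trace property of $\Theta$. Faithfulness follows from simplicity of $Q$.

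The core step is therefore the comparison of $\Theta$ with $r^*\Theta$. Put $H_0=(H_n)_n$, $H_1=(H_{r(n)})_n$ and $w$ in $L_\infty$. Both $H_s$ are positive contractions with $e^{2\pi iH_s}=w$ exactly in $L_\infty$. The image of $R$ is the native $R$, and Lemma~\ref{rf:lem-endpoint} supplies the unique endpoint state $\omega$. Proposition~\ref{rf:prop-two-log}, applied to $T=\mathrm C^*(R,H_0,H_1)\subset L_\infty$, identifies $I=\operatorname{Ideal}_T(H_1-H_0)$ with $\mathcal K(\mathcal H_\omega)\otimes J$. I expect $\mathcal H_\omega$ to be infinite dimensional: it contains the vectors $\pi_\omega(x)\Omega$ for centered $x\in F$, and $\omega$ restricts to $\tau_F$ on $F$ by the formula $\omega=\ev_1\circ\E_{\mathbb T}$. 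Since $\dim L^2(F,\tau_F)=\infty$, the ideal $I$ is then separable, stable, nuclear and has the corona factorisation property. The two maps $\Theta,r^*\Theta:Q\to T$ agree on $R$ and differ on $h$ by $H_1-H_0\in I$. Via the multiplier map $T\to\mathcal M(I)$, they therefore form a unital Cuntz pair. Both maps are full, because $Q$ is simple and both are unital.

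Lemma~\ref{rf:lem-proper} then gives unitaries $v_m\in 1+I\subset T^\dagger$ asymptotically conjugating one multiplier representation to the other. Since the differences lie in $I$, on which the multiplier map is isometric, this is approximate unitary equivalence inside $L_\infty$, which is what the reparameterization argument requires.

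The main obstacle is verifying $[\Theta,r^*\Theta]=0$ in $\KK(Q,I)$. This is not automatic, since $\mathrm K_0(J_{\rm pair})\cong\Z$. My first attempt would exploit that the pair agrees on $R$ and that, by Lemma~\ref{rf:lem-pushout}, $Q$ is the pushout of $R$ and $\mathrm C([0,1])$ over $\mathrm C(\mathbb T)$. This should reduce the class to one carried by the endpoint fiber, where $H_0$ and $H_1$ are a pair of projections $P_0,P_1$ with $P_0+P_1=P$ on $P\mathcal H$ (in the notation of the proof of Proposition~\ref{rf:prop-two-log}). There I would try to write down an explicit Cuntz-pair homotopy: rotate $H_1$ back to $H_0$ along a path $H_t$ inside the universal two-projection algebra, keeping $e^{2\pi iH_t}=w$ and keeping $H_t-H_0$ in the ideal $\mathrm C([0,1])\otimes I$. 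The difficulty is that the endpoint condition may force a nontrivial class. If it does, my fallback is to absorb that class by modifying one lift with a projection of prescribed $\mathrm K_0$-class inside the endpoint corner, using the freedom in choosing the branch.
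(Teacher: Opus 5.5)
Your outline matches the paper's proof up to the decisive step, and that step is left open. You never show that the Cuntz-pair class $[\Theta,r^*\Theta]\in\KK(Q,I)$ vanishes, and neither of your suggested routes establishes it.

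The missing idea is that the inclusion $\iota_F:F\hookrightarrow Q$ is a $\KK$-equivalence. In the full free product $F*\mathrm C([0,1])$, evaluation of the interval coordinate at $0$ is a retraction onto $F$. The maps $h\mapsto th$ give a homotopy from the identity to the inclusion composed with this retraction. The full-to-reduced quotient is also a $\KK$-equivalence, by \cite[Proposition~2.15 and Theorem~3.1]{FG}, because the faithful traces give GNS-faithful expectations. Now $\Theta$ and $r^*\Theta$ agree exactly on $F$ (indeed on $R$). Naturality of Cuntz-pair classes therefore gives $\iota_F^*[\Theta,r^*\Theta]=[\Theta|_F,\Theta|_F]=0$ in $\KK(F,I)$. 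Since $\iota_F^*$ is an isomorphism, the class itself is zero. Your observation that $\mathrm K_0(J_{\rm pair})\cong\Z$ is beside the point: $\KK(Q,I)\cong\KK(F,I)$ may well be nonzero, but this particular class is forced to vanish. No fibrewise homotopy in the endpoint corner and no use of the pushout structure is needed.

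Your fallback would not work in any case. Intertwining by reparameterization \cite[Proposition~9.2]{CGSTW} requires comparing $\Theta$ with exactly $r^*\Theta$ for every diverging $r$, and the interval coordinate of $r^*\Theta$ is the fixed sequence $(H_{r(n)})_n$. You therefore have no freedom to modify a lift or change branch. Altering $H_1$ would compare $\Theta$ with a different map, which does not feed the reparameterization hypothesis.

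Once the vanishing is supplied as above, the rest of your outline agrees with the paper's proof: Lemma~\ref{rf:lem-proper}, norm estimates through the isometric multiplier action on $I$, diagonal selection, reparameterization, and the trace argument.
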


\begin{proof}
Corollary~\ref{rf:cor-strong-logs} gives
$\Theta:Q\to L_\infty$ with $\Theta j=\iota_L|_R$.
For any diverging $r:\N\to\N$, form
$T=\mathrm C^*(R,H_0,H_1)\subset L_\infty$, with
$H_0=(H_n)_n$ and $H_1=(H_{r(n)})_n$.
Their exponentials equal the constant $w$.  By
Proposition~\ref{rf:prop-two-log}, the difference ideal
$I$ is stable and nuclear with the corona factorisation
property: the GNS space contains $L^2(F,\tau_F)$ isometrically.
If $I=0$ the two maps coincide.  Otherwise their multiplier
maps form a unital Cuntz pair and are full, since $Q$ is
simple.  They agree exactly on $F$.

The inclusion $F\to Q$ is a $\KK$-equivalence.  Indeed the
full free product with the interval retracts onto $F$ by
evaluation at zero, and $h\mapsto th$ gives the required
homotopy.  The full-to-reduced quotient is a
$\KK$-equivalence by
\cite[Proposition~2.15 and Theorem~3.1]{FG}: the faithful
expectations identify the vertex-reduced and ordinary reduced
products.
Restriction to $F$ therefore annihilates the actual
$\KK$-pair injectively.  Lemma~\ref{rf:lem-proper}
gives proper approximate unitary comparison of
$\Theta$ and $r^*\Theta$.

These multiplier estimates are estimates in $T$ itself:
their errors belong to $I$, on which the multiplier action
is isometric.  Essentiality of $I$ is unnecessary.
Separability and diagonal selection improve approximate
unitary equivalence in $L_\infty$ to unitary equivalence.
Explicitly, lift successively chosen implementing unitaries
to coordinate unitaries in $L$, and let the finite packet
index increase sufficiently slowly with the coordinate.
Such coordinate lifts exist by polar decomposition of an
asymptotically unitary representative in $\ell^\infty(L)$.
No homomorphic or completely positive lift of $\Theta$
is used.  Intertwining by reparameterization
\cite[Proposition~9.2]{CGSTW} now gives
$\Theta=\Ad(U)\iota_Lf$ for a homomorphism $f:Q\to L$.
It is faithful and unital because $\Theta$ is.
Lifting $U$ to coordinate unitaries proves the displayed
limit.  Applying any ultralimit of the specified trace to
this equality and using joint moment convergence proves
trace preservation.
\end{proof}

\begin{corollary}[Two logarithms in a sequence algebra]
\label{rf:two-log-comparison}
Let $F$ be separable, unital and faithfully tracial, with
$\dim L^2(F)=\infty$, and suppose $Q=F\fr\mathrm C([0,1])$ is
simple.  Put $q=e^{2\pi ih}$ and $R=\mathrm C^*(F,q)\subset Q$.
For a unital algebra $L$, let $\rho_0,\rho_1:Q\to L_\infty$ be
faithful unital homomorphisms agreeing on $R$.  Then there is
$V\in\mathrm U(L_\infty)$ with $\Ad(V)\rho_0=\rho_1$.
\end{corollary}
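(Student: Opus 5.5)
The plan is to reduce to the two-logarithm comparison ideal of Proposition~\ref{rf:prop-two-log} inside $L_\infty$, mirroring the first half of the proof of Proposition~\ref{rf:prop-descent}. Put $w=\rho_0(q)=\rho_1(q)\in L_\infty$. Since the $\rho_s$ agree on $R$, we may identify $R$ with its common image $\rho_0(R)=\rho_1(R)$; this is a faithful copy of $R$ because $\rho_0$ is faithful. Set $H_s=\rho_s(h)$ for $s=0,1$. These are positive contractions with $e^{2\pi iH_s}=w$, and $T=\mathrm C^*(R,H_0,H_1)\subset L_\infty$ contains both images $\rho_s(Q)$. The canonical map $\mathrm C^*(F,q)\to Q$ identifies $R$ with the reduced product $F\fr\mathrm C(\mathbb T)$: $q$ is Haar, is free from $F$, and the trace is faithful. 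So Lemma~\ref{rf:lem-endpoint} gives the uniqueness of the state $\omega=\ev_1\circ\E_{\mathbb T}$ at $w$. The GNS space of $\omega$ contains a copy of $L^2(F,\tau_F)$, via $a\mapsto\pi_\omega(a)\Omega$ for $a\in F$, because $\omega|_F=\tau_F$. Hence it is infinite dimensional, and Proposition~\ref{rf:prop-two-log} shows that $I=\operatorname{Ideal}_T(H_1-H_0)$ is separable, stable, nuclear, and has the corona factorisation property.

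If $I=0$, then $\rho_0=\rho_1$ and we take $V=1$. Otherwise consider the multiplier maps $\mu\rho_s:Q\to\mathcal M(I)$. They differ modulo $I$ only through $H_1-H_0\in I$, since they agree on $F$ and on $q$. Hence they form a unital Cuntz pair. Each is full because $Q$ is simple and unital, so every nonzero element of $Q$ generates it as an ideal. For the vanishing of the class in $\KK(Q,I)$, argue as in Proposition~\ref{rf:prop-descent}. The inclusion $F\to Q$ is a $\KK$-equivalence: the full product $F*\mathrm C([0,1])$ contracts onto $F$ by $h\mapsto th$, and the full-to-reduced quotient is a $\KK$-equivalence by \cite[Proposition~2.15 and Theorem~3.1]{FG}. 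Restricted to $F$, the Cuntz pair is degenerate, since $\rho_0|_F=\rho_1|_F$. By naturality the restriction map $\KK(Q,I)\to\KK(F,I)$ is an isomorphism that sends the class to zero, so $[\mu\rho_0,\mu\rho_1]=0$. Lemma~\ref{rf:lem-proper} then gives unitaries $v_m\in1+I$ with $\Ad(v_m)\rho_0(x)\to\rho_1(x)$ for all $x\in Q$. The errors lie in $I$, where the multiplier action is isometric, so these are genuine norm estimates in $T\subset L_\infty$.

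Finally, approximate unitary equivalence in $L_\infty$ is upgraded to exact unitary equivalence by the standard diagonal argument. Fix a dense sequence $(x_k)$ in $Q$. For each $k$, choose $v_{m_k}$ with error below $2^{-k}$ on $x_1,\dots,x_k$. Lift $v_{m_k}$ to a sequence of unitaries in $L$, using polar decomposition of an asymptotically unitary representative. Also lift $\rho_0(x_i)$ and $\rho_1(x_i)$ to representative sequences. For each $k$, the required estimate then holds at all coordinates $n\ge N_k$. Define $V$ coordinatewise by $V_n=(v_{m_k})_n$ for $N_k\le n<N_{k+1}$, with $N_k$ increasing. Then $\Ad(V)\rho_0(x_i)=\rho_1(x_i)$ exactly in $L_\infty$, for each $i$, and density gives $\Ad(V)\rho_0=\rho_1$.

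The main obstacle is the $\KK$-vanishing step. One must check that the Cuntz pair on the nonseparable ambient $L_\infty$ really has its class in $\KK(Q,I)$ for the separable ideal $I$, and that the restriction-to-$F$ argument applies to the pair itself rather than only to classes into $L_\infty$. I would handle this exactly as in Proposition~\ref{rf:prop-descent}: the pair is identical on $F$, so it is degenerate there and its restricted class is literally zero. Injectivity of restriction then transfers this to $Q$.
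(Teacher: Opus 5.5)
Your proposal is correct and follows the paper's proof essentially step for step. Both arguments form the separable algebra generated by the common image of $R$ and the two logarithms, and take the difference ideal from Proposition~\ref{rf:prop-two-log}, made stable by $L^2(F)\subset\mathcal H_\omega$. Both then get fullness from simplicity of $Q$ and vanishing of the Cuntz-pair class from the $\KK$-equivalence $F\hookrightarrow Q$, apply Lemma~\ref{rf:lem-proper}, and finish with a diagonal lift to a single unitary in $L_\infty$. Your nonseparability worry is resolved exactly as in the paper: $T=\mathrm C^*(R,H_0,H_1)$ is separable, and the conjugation errors lie in $I$, where the multiplier action is isometric.
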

\begin{proof}
Identify $R$ with its common faithful image and form the separable
algebra generated by this image and $\rho_0(h),\rho_1(h)$.
Lemma~\ref{rf:lem-endpoint} and
Proposition~\ref{rf:prop-two-log} give the difference ideal
$I\cong\mathcal K(\mathcal H_\omega)\otimes J$.  The GNS space
contains $L^2(F)$, so $I$ is stable, nuclear and has the corona
factorisation property.  If $I=0$ the maps coincide.  Otherwise the
two multiplier maps are full because their source $Q$ is simple
and their common unit is the multiplier unit.  They agree on $F$.
As in Proposition~\ref{rf:prop-descent}, $F\hookrightarrow Q$ is
a $\KK$-equivalence, so their actual Cuntz-pair class is zero.
Lemma~\ref{rf:lem-proper} gives proper approximate conjugacies.
Their errors lie in $I$, where the multiplier action is isometric,
so these are norm estimates in $L_\infty$.  Lifting the implementing
unitaries to coordinate unitaries and diagonalizing on a countable
dense subset of $Q$ gives the asserted single unitary $V$.
\end{proof}

\begin{corollary}[Actual marked interval models]
\label{rf:actual-model}
Let $L$ be an admissible target containing a uniform interval
coordinate $k$.  Let $F\subset L$ be separable and unital with
$\dim L^2(F)=\infty$, and let $w\in L$ be Haar and free from $F$.
Suppose $Q=F\fr\mathrm C([0,1])$ is simple and the two embeddings
of $\mathrm C(\mathbb T)$ with coordinate images $w$ and
$e^{2\pi ik}$ have the same full invariant.
Then there are a trace-preserving embedding $f:Q\to L$ and
unitaries $u_n\in L$ such that
\[
 \Ad(u_n)f(a)\longrightarrow a\quad(a\in F),\qquad
 \Ad(u_n)f(e^{2\pi ih})\longrightarrow w.
\]
In particular $f|_F$ has the full invariant of its native inclusion.
\end{corollary}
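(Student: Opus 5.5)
The plan is to first construct some embedding of $R$ into $L$ sending $w$ to the interval exponential, then upgrade it to an embedding of $Q$ via Proposition~\ref{rf:prop-descent}. Let $R_L=\mathrm C^*(F,w)\subset L$ denote the native copy of $R=F\fr\mathrm C(\mathbb T)$: since $w$ is Haar and free from $F$ with respect to the faithful trace of $L$, freeness and faithfulness identify $\mathrm C^*(F,w)$ trace-preservingly with $R$.

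The first step produces logarithms of $w$. The two embeddings $\mathrm C(\mathbb T)\to L$, with coordinate images $w$ and $e^{2\pi ik}$, have the same full invariant and a faithful common trace (Haar measure). Theorem~\ref{thm:embedding} therefore makes them approximately unitarily equivalent. Choose unitaries $z_n\in L$ with $\|z_ne^{2\pi ik}z_n^*-w\|\to0$, and put $H_n=z_nkz_n^*$. These are positive contractions with $e^{2\pi iH_n}\to w$ in norm.

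The second step is descent. The native $R$ embeds unitally and trace-preservingly in $L$ as $R_L$, and $\dim L^2(F)=\infty$ and simplicity of $Q$ are given. Proposition~\ref{rf:prop-descent} then yields a faithful unital trace-preserving $f:Q\to L$ and unitaries $u_n\in L$ with $\Ad(u_n)fj(x)\to x$ for $x\in R$. Taking $x\in F$ and $x=w$, and using $j(w)=e^{2\pi ih}$ and $j|_F=\id$, gives exactly the two displayed limits.

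The final assertion follows from this. The map $f|_F$ is approximately unitarily equivalent to the inclusion $F\subset L$, so it induces the same total $\mathrm K$-theory and trace. Inner automorphisms act trivially on $\Uquot{L}$, and the displayed convergence, together with continuity into the Hausdorff quotient, gives equality of the unitary maps.

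The main obstacle is checking the hypotheses of Proposition~\ref{rf:prop-descent} in the present setting, which the statement does not hand over literally. The native copy must be trace-preservingly the reduced free product, which follows because the free-product trace on $R$ is determined by freeness of $w$ and $F$ together with Haar distribution. In addition, $L$ must have a specified faithful trace, which holds because $L$ is admissible and monotracial, so its trace is faithful by simplicity. No nuclearity of $F$ is required, since both Theorem~\ref{thm:embedding} (applied only to the source $\mathrm C(\mathbb T)$) and the descent proposition avoid it.
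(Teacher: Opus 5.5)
Your proposal is correct and follows the paper's proof essentially step for step. Theorem~\ref{thm:embedding} on the nuclear source $\mathrm C(\mathbb T)$ produces the conjugated logarithms $H_n=z_nkz_n^*$, Proposition~\ref{rf:prop-descent} then supplies $f$ and the unitaries $u_n$, and approximate unitary equivalence on $F$ transfers the full invariant; your extra checks (that $\mathrm C^*(F,w)$ is the native reduced product and that the trace of $L$ is faithful) are precisely the hypotheses the paper leaves implicit.
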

\begin{proof}
Theorem~\ref{thm:embedding}, applied to the nuclear source
$\mathrm C(\mathbb T)$, supplies unitaries $z_n$ with
$z_ne^{2\pi ik}z_n^*\to w$.  The positive uniform coordinates
$H_n=z_nkz_n^*$ therefore satisfy the hypotheses of
Proposition~\ref{rf:prop-descent}.  Its conclusion gives the
embedding and unitaries.  Approximate unitary equivalence on $F$
retains ordinary and coefficient $\mathrm K$-theory, traces and
the continuous algebraic-unitary invariant.
\end{proof}

\subsection{Lifting circle automorphisms to actual interval automorphisms}
\label{rf:subsec-circle-auto}

\begin{theorem}[Relative interval lifting]
\label{rf:thm-circle-lift}
\label{rf:circle-lift}
Let $F$ be separable, unital and faithfully tracial, with
$\dim L^2(F,\tau_F)=\infty$.  Suppose
$Q=F\fr\mathrm C([0,1])$ is an admissible target, and put
$q=e^{2\pi ih}$ and $R=\mathrm C^*(F,q)\subset Q$.
Let $\gamma\in\operatorname{Aut}(R)$ preserve the specified
trace and satisfy $\gamma(F)=F$.
Assume that the three embeddings of $\mathrm C(\mathbb T)$
into $Q$ with coordinate images $q,\gamma(q),\gamma^{-1}(q)$
induce the same totally compatible triple.
Then there are trace-preserving automorphisms
$\alpha_n\in\operatorname{Aut}(Q)$ such that
\[
 \alpha_n(x)\longrightarrow\gamma(x)\qquad(x\in R).
\]
They may be chosen as inner conjugates of one fixed
automorphism $\alpha$.  For every $n$, the restriction
$\alpha_n|_F$ is approximately unitarily equivalent to
$\gamma|_F$, and therefore induces exactly its full
compatible invariant on $F$.
No convergence on the interval coordinate $h$ is asserted.
\end{theorem}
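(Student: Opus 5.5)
The plan is to realize $\gamma$ on $Q$ by first producing an embedding of $Q$ into itself that extends $\gamma$ up to approximate unitary equivalence, and then upgrading it to an automorphism. Regard $L=Q$ as the admissible target, with the uniform interval coordinate $k=h$. Inside $Q$, put $F'=\gamma(F)=F$ and $w=\gamma(q)$. Since $\gamma$ is trace preserving and $q$ is Haar and free from $F$ in $R$, the unitary $w$ is Haar and free from $F$. By hypothesis, the embeddings of $\mathrm C(\mathbb T)$ with coordinate images $w$ and $e^{2\pi ih}$ have the same full invariant.

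Corollary~\ref{rf:actual-model}, applied with the pair $(F,w)$ and with $F$ included in $Q$ via $\gamma|_F$, then gives a trace-preserving embedding $f:Q\to Q$ and unitaries $u_n$. We have $\Ad(u_n)f(a)\to\gamma(a)$ for $a\in F$ and $\Ad(u_n)f(q)\to\gamma(q)$. To obtain this, one applies the corollary to the abstract copy $\gamma(R)=\mathrm C^*(F,w)$, which is the native $R$ by trace preservation and freeness, and precomposes with the identification $Q\cong F\fr\mathrm C([0,1])$ that sends $F$ by $\gamma|_F$. Hence $\beta_n=\Ad(u_n)f$ satisfies $\beta_n\to\gamma$ pointwise on $R$. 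The same construction applied to $\gamma^{-1}$ (whose hypothesis is the one about $\gamma^{-1}(q)$) gives $g:Q\to Q$ and unitaries $v_n$ with $\Ad(v_n)g\to\gamma^{-1}$ on $R$.

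The main obstacle is surjectivity, because $f$ is only an embedding. I would handle it by a two-sided approximate intertwining in the style of Lemma~\ref{lem:marked-caap}, comparing $g f$ and $f g$ with $\id_Q$.

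First, $\Ad(v)g\Ad(u)f$ is close to $\id$ on $R$ for suitable $u$ and $v$. So $gf$ and $\id_Q$ are two faithful unital maps $Q\to Q$ that agree approximately, up to inner conjugation, on $R$. Passing to the sequence algebra $Q_\infty$, the maps $\iota_Q$ and $(\Ad(v_n u'_n)gf)_n$ agree exactly on $R$ after choosing the conjugators diagonally. Corollary~\ref{rf:two-log-comparison}, with $L=Q$, then gives a unitary $V\in Q_\infty$ conjugating one to the other on all of $Q$, including $h$. Lifting $V$ to coordinates shows that $gf$, and symmetrically $fg$, is approximately inner.

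The standard Elliott two-sided intertwining, with summable tolerances on increasing finite sets, then produces an automorphism $\alpha$ that is approximately unitarily equivalent to $f$. Consequently $\alpha_n=\Ad(z_n)\alpha$, for suitable unitaries $z_n$, converge to $\gamma$ on $R$. These are inner conjugates of the single automorphism $\alpha$, and they are trace preserving because the trace is unique. The restriction $\alpha_n|_F$ is approximately unitarily equivalent to $\gamma|_F$ by construction. Approximate unitary equivalence preserves total $\mathrm K$-theory, the trace and the algebraic unitary class, since $\Uquot{Q}$ is Hausdorff and the map to it is continuous. This gives the final assertion about $\alpha_n|_F$. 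No control on $h$ itself is claimed, consistent with the statement.
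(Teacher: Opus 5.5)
Your proposal is correct and follows the paper's proof essentially step for step. The paper also builds $f$ and $g$ by nuclear-source uniqueness for $\mathrm C(\mathbb T)$ followed by reparameterization descent, with the coefficient identification supplied by $\gamma$ (this is exactly what Corollary~\ref{rf:actual-model} packages), and then applies Corollary~\ref{rf:two-log-comparison} with $L=Q$ to get $fg\au\id_Q$ and $gf\au\id_Q$, before using two-sided approximate intertwining for $\alpha\au f$ and a diagonal choice of inner conjugates, as you do.
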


\begin{proof}
Nuclear-source uniqueness for $\mathrm C(\mathbb T)$,
Theorem~\ref{thm:embedding}, gives positive uniform
contractions $H_n$ with
$e^{2\pi iH_n}\to\gamma(q)$: conjugate the native $h$
by unitaries comparing $q$ with $\gamma(q)$.
The coefficient algebra is now $\gamma(F)$, and
$\gamma(q)$ is Haar and free from it.  Applying
Proposition~\ref{rf:prop-descent} with the identification
of $F$ supplied by $\gamma$ gives a fixed trace-preserving
embedding $f:Q\to Q$ and unitaries $u_n\in Q$ with
$f_n=\Ad(u_n)f$ satisfying $f_n|_R\to\gamma$.
Applying the same construction to $\gamma^{-1}$ gives a
fixed embedding $g$ and $g_n=\Ad(v_n)g$ with
$g_n|_R\to\gamma^{-1}$.

Contractivity and the two limits imply
$f_ng_n|_R\to\id_R$ and $g_nf_n|_R\to\id_R$.
For example, for $x\in R$,
\[
 \|f_ng_n(x)-x\|
 \leq\|g_n(x)-\gamma^{-1}(x)\|
      +\|f_n(\gamma^{-1}(x))-x\|\longrightarrow0.
\]
Define faithful unital homomorphisms
\[
 \rho_0(x)=(f_ng_n(x))_n,\qquad \rho_1(x)=\iota_Q(x)
 \quad(x\in Q).
\]
The preceding convergence gives $\rho_0|_R=\rho_1|_R$.
Corollary~\ref{rf:two-log-comparison}, with $L=Q$, supplies
$V\in\mathrm U(Q_\infty)$ such that $\Ad(V)\rho_0=\rho_1$.
Lift $V$ to coordinate unitaries $V_n\in Q$.  Since
\[
 f_ng_n=\Ad(u_nf(v_n))\,fg,
\]
we have $\Ad(V_nu_nf(v_n))fg(x)\to x$ for every $x\in Q$.
Thus the fixed composition $fg$ is approximately unitarily
equivalent to $\id_Q$.  The same argument gives
$gf\au\id_Q$. The two-sided approximate intertwining argument
\cite[Proposition~9.7]{CGSTW} gives an isomorphism
$\alpha:Q\to Q$ approximately unitarily equivalent to $f$.
For each successive finite packet in $R$, choose $n$ making
$f_n$ close to $\gamma$, and then an inner conjugate of
$\alpha$ close to $f_n$ on that packet.  A diagonal choice
gives the asserted automorphisms.  Their trace preservation
follows either from approximate unitary equivalence to $f$
or from uniqueness of the trace on $Q$.
Finally, $f|_F\au\gamma|_F$ by the original sequence $u_n$,
and $\alpha\au f$.  Approximate unitary equivalence is
unchanged by an inner conjugacy, so each $\alpha_n|_F$
has this same approximate-unitary class.  Its ordinary and
coefficient $\mathrm K$-maps and its continuous algebraic-unitary
map consequently agree with those of $\gamma|_F$; its trace
map agrees by trace preservation.
\end{proof}

\begin{remark}[Finite packets and moving coefficients]
\label{rf:rem-moving-packets}
The theorem applies separately at every coordinate.  Thus,
for a sequence $\gamma_n$ satisfying its hypotheses, finite
packets $G_n\subset R$, and positive tolerances
$\varepsilon_n$, one may choose actual automorphisms
$\alpha_n$ with
$\|\alpha_n(x)-\gamma_n(x)\|<\varepsilon_n$ for $x\in G_n$.
The packets may include moving coefficients or multipliers
belonging to $R$.  This conclusion supplies precisely finite
packet control on $R$; it adds no control on $h$.
The full marked hypothesis in the theorem must be verified.
Vanishing of the ordinary $\mathrm K_1$ class alone does
not replace it.  An infinite free interval tail in $F$
supplies the infinite GNS space and simplicity required
above; the admissible-target and marked uniqueness
hypotheses remain part of the assertion.
\end{remark}


\subsection{Comparison of finitely many common free circles}
\label{af:multi-logs}

We first extend the logarithm comparison in the preceding one-circle construction
to finitely many independent circle coordinates.  The independence is
essential: it separates their endpoint defects.  Throughout this
subsection, $F$ is separable, unital and faithfully tracial, and
\[
 R=F\fr\mathop{\fr}_{i=1}^r
       (\mathrm C(\mathbb T_{w_i}),m),\qquad
 Q=F\fr\mathop{\fr}_{i=1}^r
       (\mathrm C([0,1]_{h_i}),\lambda),
 \qquad 1\leq r<\infty.
\]
The native inclusion $j:R\to Q$ fixes $F$ and sends
$w_i$ to $e^{2\pi ih_i}$.  Put
\[
 \omega_i=\ev_1\circ\E_{\mathbb T_{w_i}},\qquad
 (\pi_i,\mathcal H_i,\Omega_i)=\operatorname{GNS}(\omega_i).
\]
Applying Lemma~\ref{rf:lem-endpoint} with all the other free
factors included in the coefficient algebra shows that $\omega_i$
is the unique state of $R$ taking value one at $w_i$.

\begin{lemma}[Separation of endpoint representations]
\label{af:endpoint-mixing}
If $i\ne j$, then
\[
 \pi_i(w_j)^n\longrightarrow0
 \quad\hbox{weakly as }|n|\longrightarrow\infty.
\]
In particular $\pi_i(w_j)$ has no $1$-eigenvectors.  In any
representation of $R$, put $P_i=1_{\{1\}}(w_i)$ and
$\mathcal M_i=\overline{RP_i\mathcal H}$.  The spaces
$\mathcal M_i$ are pairwise orthogonal and reduce $R$.
\end{lemma}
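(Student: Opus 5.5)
We establish the weak mixing first and then derive the geometric consequences from it. By Lemma~\ref{rf:lem-endpoint}, applied with every free factor other than $\mathrm C(\mathbb T_{w_i})$ absorbed into the coefficient algebra, $\omega_i=\ev_1\circ\E_{\mathbb T_{w_i}}$. The vectors $\pi_i(a)\Omega_i$ are dense in $\mathcal H_i$ and $\pi_i(w_j)$ is unitary. Hence it suffices to show
\[
 \omega_i(a^*w_j^nb)\longrightarrow0\qquad(|n|\to\infty)
\]
for $a,b$ in the dense algebra of finite sums of reduced words. Write $b=\sum_k b_kw_i^k$ and $a=\sum_l a_lw_i^l$ with $a_l,b_k\in B_i=\mathrm C^*(w_i^mF'w_i^{-m}:m\in\Z)$, where $F'$ is the free product of $F$ with the other circles; this is the presentation used in the proof of Lemma~\ref{rf:lem-endpoint}. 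Then
\[
 \omega_i(a^*w_j^nb)=\sum_{k,l}\tau_{B_i}\bigl(a_l^*w_i^{-l}w_j^nw_i^{k}\cdot\text{(rearranged)}\bigr),
\]
and after moving the powers of $w_i$ through conjugation, each term equals $\tau$ of a word of the form $x\,(w_i^{m}w_j^nw_i^{-m})\,y$. Here $x,y$ have bounded integer support and $m$ ranges over a finite set. Write $w_j^n$ as its own centered element, since $\tau(w_j^n)=0$ for $n\neq0$. The elements $x,y$ decompose into a scalar part plus centered reduced words of bounded length. Because $w_j^n$ lies in a factor free from everything except itself, the trace of such an alternating product is controlled by freeness. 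A product scalar $\cdot w_j^n\cdot$ scalar has trace $0$. A mixed term has the form $\tau(x_0w_j^ny_0)$ with $x_0,y_0$ centered words. By free independence of $\mathrm C(\mathbb T_{w_j})$ from the algebra generated by the rest, this equals $\tau(w_j^n)\cdot(\ldots)$ plus terms involving only the $w_j$-letters adjacent to $w_j^n$ in $x_0,y_0$. Those terms are Fourier coefficients $\tau(w_j^{p}w_j^nw_j^{q})=\delta_{p+n+q,0}$ with $p,q$ bounded. Hence every term vanishes once $|n|$ exceeds the word-length bound.

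The main obstacle is the bookkeeping of the conjugated words: we need $\omega_i$ to be expressible through $\tau$ on words of uniformly bounded length in $w_j$. First we try the cruder route via Haar measure, noting that $n\mapsto\omega_i(a^*w_j^nb)$ has only finitely many nonzero values for polynomial $a,b$. This follows from the free-product moment formula, because the only way to cancel $w_j^n$ in a reduced word is against $w_j$-letters of total degree at least $|n|$. Density and unitarity then give weak convergence to $0$ on all of $\mathcal H_i$. If $\xi$ were a $1$-eigenvector of $\pi_i(w_j)$, we would have $\langle\pi_i(w_j)^n\xi,\xi\rangle=\|\xi\|^2$, which contradicts this, so there are no $1$-eigenvectors.

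For the last assertion, we work in an arbitrary representation. Each $\mathcal M_i$ reduces $R$ because it is the closure of $R$ applied to a subspace and $R$ is selfadjoint. For orthogonality, we use the unitary $V_i:\mathcal H_i\otimes P_i\mathcal H\to\mathcal M_i$ from the proof of Proposition~\ref{rf:prop-two-log}, which relies only on $P_iaP_i=\omega_i(a)P_i$. Under $V_i$, the operator $w_j$ acts as $\pi_i(w_j)\otimes1$, which has no $1$-eigenvectors. Hence $P_j\mathcal M_i=0$, so $P_j\mathcal H\perp\mathcal M_i$. Since $\mathcal M_i$ reduces $R$, we get $RP_j\mathcal H\perp\mathcal M_i$, and therefore $\mathcal M_j\perp\mathcal M_i$.
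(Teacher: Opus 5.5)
Your proposal is correct and follows essentially the paper's proof. Your ``cruder route'' is exactly the paper's argument: for finite reduced words in Laurent polynomials, the reduced form of $a^*w_j^nb$ keeps a nonzero power of $w_j$ once $|n|$ is large, so $\E_{\mathbb T_{w_i}}$ and hence $\omega_i$ annihilate it. The preliminary detour through the $B_i$-presentation of Lemma~\ref{rf:lem-endpoint} is not needed, and the second half (the amplification unitary $\mathcal H_i\otimes P_i\mathcal H\to\mathcal M_i$ from $P_iaP_i=\omega_i(a)P_i$, then $P_j\mathcal M_i=0$, then orthogonality via the reducing property) matches the paper step for step.
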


\begin{proof}
Let $a,b$ belong to the dense algebra of finite reduced words,
using Laurent polynomials in the circle coordinates.  For sufficiently
large $|n|$, reduction of $a^*w_j^nb$ leaves a nonzero power of
$w_j$: the fixed words can cancel only finitely many powers at
its two boundaries.  Since $j\ne i$, its expectation onto
$\mathrm C(\mathbb T_{w_i})$ is zero.  Thus
\[
 \langle\pi_i(w_j)^n\pi_i(b)\Omega_i,
                  \pi_i(a)\Omega_i\rangle
 =\omega_i(a^*w_j^nb)=0
\]
eventually.  Density and boundedness give weak convergence on
$\mathcal H_i$, and therefore absence of $1$-eigenvectors.

The endpoint compression formula gives
$P_i aP_i=\omega_i(a)P_i$.  Consequently the representation
of $R$ on $\mathcal M_i$ is an amplification of $\pi_i$,
under the unitary
\[
 \mathcal H_i\otimes P_i\mathcal H\longrightarrow\mathcal M_i,
 \qquad \pi_i(a)\Omega_i\otimes\xi\longmapsto a\xi.
\]
The first assertion implies $P_j\mathcal M_i=0$ for $j\ne i$.
Since $\mathcal M_i$ reduces $R$, it is then orthogonal to
$\overline{RP_j\mathcal H}=\mathcal M_j$.
\end{proof}

\begin{proposition}[The finite logarithm difference ideal]
\label{af:multi-ideal}
Let
\[
 \mathcal T=\mathrm C^*(R,H_i^s:1\leq i\leq r, s=0,1),
 \qquad 0\leq H_i^s\leq1,\qquad e^{2\pi iH_i^s}=w_i.
\]
Write
\[
 J_i=\operatorname{Ideal}_{\mathrm C^*(1,H_i^0,H_i^1)}
                         (H_i^1-H_i^0),\qquad
 I=\operatorname{Ideal}_{\mathcal T}
                         (H_i^1-H_i^0:1\leq i\leq r).
\]
Then
\begin{equation}\label{af:eq-multi-ideal}
 I=\bigoplus_{i=1}^r I_i,
 \qquad
 I_i\cong\mathcal K(\mathcal H_i)\otimes J_i.
\end{equation}
Each $J_i$ is a quotient of
\[
 J_{\rm pair}=
 \{f\in\mathrm C([0,1],\mathrm M_2):
                f(0)=0,\ f(1)\text{ is diagonal}\}.
\]
Thus $I$ is separable, nuclear and type I, of nuclear dimension
at most one.  If $\dim L^2(F,\tau_F)=\infty$, it is stable
and has the corona factorisation property.
\end{proposition}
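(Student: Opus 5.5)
The plan is to follow the one-circle proof of Proposition~\ref{rf:prop-two-log} coordinate by coordinate, using Lemma~\ref{af:endpoint-mixing} to keep the $r$ endpoint defects orthogonal. Represent $\mathcal T$ faithfully on $\mathcal H$ and put $P_i=1_{\{1\}}(w_i)$. Each $H_i^s$ commutes with $w_i$. Off $P_i\mathcal H$ both $H_i^0$ and $H_i^1$ equal the Borel argument of $w_i$, while on $P_i\mathcal H$ they are projections. Hence $(w_i-1)J_i=J_i(w_i-1)=0$ and $\overline{J_i\mathcal H}=:\mathcal V_i\subseteq P_i\mathcal H$. Since $\omega_i$ is the unique state taking value one at $w_i$, the compression formula $P_iaP_i=\omega_i(a)P_i$ gives the sandwich identity $xay=\omega_i(a)xy$ for $x,y\in J_i$ and $a\in R$.

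Next put $\mathcal M_i=\overline{R\mathcal V_i}$. Exactly as in the one-circle case, the map $\pi_i(a)\Omega_i\otimes\xi\mapsto a\xi$ is a unitary $\mathcal H_i\otimes\mathcal V_i\to\mathcal M_i$. Under it, $R$ acts as $\pi_i\otimes1$ and $x\in J_i$ acts as $|\Omega_i\rangle\langle\Omega_i|\otimes x$. Hence the closed span $I_i$ of the elements $axb^*$ is isomorphic to $\mathcal K(\mathcal H_i)\otimes J_i$. By Lemma~\ref{af:endpoint-mixing} the spaces $\mathcal M_i$ are pairwise orthogonal and reduce $R$, so the $I_i$ act on mutually orthogonal subspaces. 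This gives $I_iI_k=0$ for $i\ne k$, and the sum is an orthogonal direct sum.

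The key point, and the expected main obstacle, is that each $I_i$ is invariant under \emph{all} the logarithms $H_k^s$, including those with $k\ne i$. For $k=i$ one argues as before. The $1$-eigenspace of $\pi_i(w_i)$ is $\C\Omega_i$, so
\[
H_i^s|_{\mathcal M_i}=\arg_0(\pi_i(w_i))\otimes1+|\Omega_i\rangle\langle\Omega_i|\otimes m_i^s ,
\]
and both terms normalize $\mathcal K(\mathcal H_i)\otimes J_i$. For $k\ne i$, Lemma~\ref{af:endpoint-mixing} says that $\pi_i(w_k)$ has no $1$-eigenvectors. Therefore $P_k\mathcal M_i=0$, and on $\mathcal M_i$ both $H_k^0$ and $H_k^1$ equal the Borel argument of $w_k$, which is $\arg_0(\pi_i(w_k))\otimes1$. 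To make this rigorous I would check that $\mathcal M_i$ reduces $H_k^s$. The operator $H_k^s$ commutes with $w_k$ and agrees with a Borel function of $w_k$ on $(1-P_k)\mathcal H\supseteq\mathcal M_i$, and Borel functions of $w_k$ preserve $\mathcal M_i$ because $\mathcal M_i$ reduces $R$. These elements lie in the von Neumann algebra generated by $\pi_i(R)\otimes1$, so they normalize $I_i$ only if I use the multiplier picture: $\mathcal K(\mathcal H_i)\otimes J_i$ is an ideal in $B(\mathcal H_i)\otimes\mathcal M(J_i)$. Hence $\sum_iI_i$ is an ideal of $\mathcal T$ containing every $H_i^1-H_i^0$ and contained in $I$, so it equals $I$.

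Finally, the structural claims. The proof that each $J_i$ is a quotient of $J_{\rm pair}$ is verbatim the two-projection argument of Proposition~\ref{rf:prop-two-log}: $w_i$ is central in $\mathrm C^*(1,H_i^0,H_i^1)$, so $J_i$ sits in the fiber at $w_i=1$, which is generated by two projections. Nuclearity, type~I and the bound $\dim_{\rm nuc}\le1$ then pass to tensor products with compacts and to finite direct sums. If $\dim L^2(F)=\infty$, each $\mathcal H_i$ is infinite dimensional, because it contains a copy of $L^2(F,\tau_F)$: the restriction of $\omega_i$ to $F$ is $\tau_F$, as $\E_{\mathbb T_{w_i}}$ restricted to $F$ is $\tau_F$. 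This gives stability of each summand. The corona factorisation property follows from finite nuclear dimension by \cite{RF-RobertCFP}, as before.
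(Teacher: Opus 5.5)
Your proposal is correct and follows the paper's proof essentially step for step. You make the same coordinatewise induced-ideal identification as in Proposition~\ref{rf:prop-two-log}, obtain invariance under the other circles' logarithms $H_k^s$ ($k\ne i$) from the absence of $1$-eigenvectors in Lemma~\ref{af:endpoint-mixing}, and get $I_iI_k=0$ from orthogonality of the support spaces. The one difference is notational: your $\mathcal M_i=\overline{R\mathcal V_i}$ is the paper's smaller support space $\mathcal N_i$, which lies inside the lemma's $\overline{RP_i\mathcal H}$, and that containment is exactly why the lemma's orthogonality applies to it.
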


\begin{proof}
Represent $\mathcal T$ faithfully on $\mathcal H$.  For a fixed
$i$, Proposition~\ref{rf:prop-two-log}, applied first to
$\mathrm C^*(R,H_i^0,H_i^1)$, identifies the closed span
\[
 K_i=\overline{\operatorname{span}}\{axb^*:a,b\in R, x\in J_i\}
       \cong\mathcal K(\mathcal H_i)\otimes J_i.
\]
Its support space is
$\mathcal N_i=\overline{RJ_i\mathcal H}\subset\mathcal M_i$.
The proof of that proposition shows that $\mathcal N_i$ reduces
$R,H_i^0,H_i^1$, and that these two logarithms normalize $K_i$.
We verify the additional invariance required here.

For $j\ne i$, Lemma~\ref{af:endpoint-mixing} shows that
$w_j|_{\mathcal N_i}$ has no $1$-eigenvectors.  Both $H_j^s$
therefore restrict to the same Borel argument of $w_j$ on
$\mathcal N_i$.  More explicitly, this subspace reduces $w_j$,
its spectral projection at one is zero there, and off that
projection the relation $e^{2\pi iH_j^s}=w_j$ determines
$H_j^s$ uniquely.  Thus $\mathcal N_i$ reduces both logarithms,
and in the tensor representation they act as
\[
 \operatorname{arg}_0(\pi_i(w_j))\otimes1.
\]
These bounded operators normalize
$\mathcal K(\mathcal H_i)\otimes J_i$.
On $\mathcal N_i^\perp$ the ideal $K_i$ is zero, so this
proves that $K_i$ is an ideal of the whole algebra $\mathcal T$.
It consequently equals the ideal $I_i$ generated there by
$H_i^1-H_i^0$.

The spaces $\mathcal N_i\subset\mathcal M_i$ are pairwise
orthogonal.  Hence $I_iI_j=0$ for $i\ne j$, and the finite
sum of the $I_i$ is closed and equals $I$.  The description of
$J_i$ and its nuclear dimension are those of
Proposition~\ref{rf:prop-two-log}.  Moreover
$L^2(F,\tau_F)$ embeds isometrically in every $\mathcal H_i$,
since $\omega_i|_F=\tau_F$.  Infinite dimensionality of this
space therefore gives stability of every nonzero $I_i$.
The finite direct sum is stable, and its finite nuclear dimension
gives the corona factorisation property \cite{RF-RobertCFP}.
\end{proof}

\begin{proposition}[Simultaneous strong one-turn logarithms]
\label{af:strong-logs}
The canonical map from the iterated full pushout
\begin{equation}\label{af:eq-full-pushout}
 R*_{\mathrm C(\mathbb T_{w_1})}^{\max}\mathrm C([0,1]_{h_1})
  *_{\mathrm C(\mathbb T_{w_2})}^{\max}\cdots
  *_{\mathrm C(\mathbb T_{w_r})}^{\max}\mathrm C([0,1]_{h_r})
 \longrightarrow Q
\end{equation}
is an isomorphism, where every amalgamation sends
$w_i$ to $e^{2\pi ih_i}$.

Let $L$ be unital with a faithful specified trace, and suppose
the native $R$ embeds unitally and trace preservingly in $L$.
If positive contractions $H_{i,n}\in L$ satisfy
\[
 \|e^{2\pi iH_{i,n}}-w_i\|\longrightarrow0
 \qquad(1\leq i\leq r),
\]
then, for every joint polynomial with coefficients in $F$,
\[
 \|p(F,H_{1,n},\ldots,H_{r,n})\|
       \longrightarrow\|p(F,h_1,\ldots,h_r)\|_Q,
\]
and all joint moments converge to those in $Q$.  In particular
the constant inclusion of $F$ and the assignments
$h_i\mapsto(H_{i,n})_n$ define a faithful unital homomorphism
$\Theta:Q\to L_\infty$ with $\Theta j=\iota_L|_R$.
\end{proposition}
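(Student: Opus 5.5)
The plan is to argue by induction on $r$, modelled on the one-circle results Lemma~\ref{rf:lem-pushout} and Corollary~\ref{rf:cor-strong-logs}. The key device is to absorb the other free coordinates into the coefficient algebra at each step.

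\emph{The pushout isomorphism.} Write $Q_k$ for the product with the first $k$ circles replaced by intervals:
\[
 Q_k=F\fr\mathop{\fr}_{i\le k}\mathrm C([0,1]_{h_i})\fr\mathop{\fr}_{i>k}\mathrm C(\mathbb T_{w_i}).
\]
Then $Q_0=R$ and $Q_r=Q$. Set
\[
 F_k=F\fr\mathop{\fr}_{i<k}\mathrm C([0,1]_{h_i})\fr\mathop{\fr}_{i>k}\mathrm C(\mathbb T_{w_i}),
\]
which is separable, unital and faithfully tracial. Then $Q_{k-1}=F_k\fr\mathrm C(\mathbb T_{w_k})$ and $Q_k=F_k\fr\mathrm C([0,1]_{h_k})$. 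Lemma~\ref{rf:lem-pushout} needs no nuclearity of the coefficient algebra, so applying it with coefficient $F_k$ gives
\[
 Q_{k-1}*^{\max}_{\mathrm C(\mathbb T_{w_k})}\mathrm C([0,1]_{h_k})\cong Q_k .
\]
Pushouts compose: the universal property of the iterated full pushout \eqref{af:eq-full-pushout} factors through the successive one-step pushouts. Induction on $k$ therefore identifies the iterated pushout with $Q$. The one point to check is that the interval coordinates already adjoined may sit in the coefficient algebra of the next step. That is exactly what the reformulation $Q_{k-1}=F_k\fr\mathrm C(\mathbb T_{w_k})$ achieves, by associativity of reduced free products with respect to the specified traces.

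\emph{Norm upper bound.} Pass to a norm ultrapower $L_\omega$ along any subsequence and put $H_i=(H_{i,n})_\omega$. Then $0\le H_i\le1$ and $e^{2\pi iH_i}=w_i$ exactly in $L_\omega$, and $R$ sits inside $L_\omega$ trace preservingly through $L$. Hence $\mathrm C^*(R,H_1,\dots,H_r)$ is a quotient of the full pushout \eqref{af:eq-full-pushout}, which is $Q$. This gives
\[
 \|p(F,H_1,\ldots,H_r)\|\le\|p(F,h_1,\ldots,h_r)\|_Q .
\]
If $\limsup_n\|p(F,H_{1,n},\ldots,H_{r,n})\|$ exceeded $\|p\|_Q$, that excess would persist along a subsequence and contradict this bound, exactly as in Corollary~\ref{rf:cor-strong-logs}.

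\emph{Lower bound via moments.} Fix $i$. The argument of Corollary~\ref{rf:cor-strong-logs} gives
\[
 \limsup_n\|H_{i,n}-g^0_\delta(e^{2\pi iH_{i,n}})\|_2^2\le\delta ,
\]
because the law of $e^{2\pi iH_{i,n}}$ tends to Haar measure. Also $g^0_\delta(e^{2\pi iH_{i,n}})\to g^0_\delta(w_i)$ in norm. Thus each $H_{i,n}$ converges in $2$-norm, uniformly on bounded words, to the Borel argument $\arg_0(w_i)$ in $\pi_\tau(L)''$.

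The joint moments of $(F,\arg_0(w_1),\ldots,\arg_0(w_r))$ in $L$ are those of $(F,h_1,\ldots,h_r)$ in $Q$. The reason is that the $w_i$ are Haar and free from $F$ and from one another in $L$, since $R$ embeds trace preservingly. Freeness passes to the generated von Neumann algebras, and $\arg_0(w_i)$ has Lebesgue law. So all joint moments converge to those of $Q$. Faithfulness of $\tau_Q$ then gives
\[
 \|p\|_Q=\lim_k\tau_Q\bigl((p^*p)^k\bigr)^{1/2k}\le\liminf_n\|p(F,H_{\cdot,n})\|.
\]
Combining the two bounds, the assignment $h_i\mapsto(H_{i,n})_n$ extends isometrically on polynomials. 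It therefore defines the faithful unital $\Theta:Q\to L_\infty$, with $\Theta j=\iota_L|_R$ because $e^{2\pi iH_{i,n}}\to w_i$.

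\emph{Expected obstacle.} The main difficulty is the pushout step: verifying that Lemma~\ref{rf:lem-pushout} really applies with the non-nuclear coefficient algebra $F_k$, and that the iterated full pushout matches the composition of one-step pushouts. Checking the coefficient algebra is where the care goes. Lemma~\ref{rf:lem-endpoint} and Pisier's strong continuity are used there only for a faithfully tracial coefficient algebra, and Lemma~\ref{af:endpoint-mixing} is not needed for this step.
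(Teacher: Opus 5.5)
Your proposal is correct and follows essentially the same route as the paper. The paper likewise obtains the pushout isomorphism by $r$ successive applications of Lemma~\ref{rf:lem-pushout}, putting the intervals already adjoined and the remaining circles into the coefficient algebra at each step; it gets the norm upper bound from exact logarithms in norm ultrapowers of subsequences, and the lower bound from $2$-norm convergence of each $H_{i,n}$ to the Borel argument of $w_i$, telescoping over bounded words, freeness of these arguments from each other and from $F$, and faithfulness of $\tau_Q$.
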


\begin{proof}
We establish the full pushout assertion first.  At the first
step, Lemma~\ref{rf:lem-pushout} applies with coefficient
algebra $F\fr\mathop{\fr}_{i=2}^r\mathrm C(\mathbb T_{w_i})$.
Its conclusion replaces just the first circle by its uniform
interval.  Repeat the argument, putting the intervals already
obtained and all the remaining circles into the coefficient
algebra at each step.  These are reduced products with faithful
specified traces.  After $r$ steps the resulting reduced product
is $Q$, proving \eqref{af:eq-full-pushout}.

In any norm ultrapower of any subsequence, the classes
$H_i=(H_{i,n})_\omega$ satisfy $e^{2\pi iH_i}=w_i$.
The full pushout just proved gives the upper bound for every
polynomial norm.  A failure of this upper bound on the original
sequence would persist on a subsequence and contradict that
conclusion.

For the moments, use the continuous argument functions
$g^0_\delta$ from the proof of Lemma~\ref{rf:lem-pushout}.
As in Corollary~\ref{rf:cor-strong-logs}, the circle laws
converge to Haar measure and
\[
 \limsup_n
 \|H_{i,n}-g^0_\delta(e^{2\pi iH_{i,n}})\|_2^2\leq\delta.
\]
For fixed $\delta$ the continuous functions of the exponentials
converge in norm to $g^0_\delta(w_i)$.  Letting $\delta$ tend
to zero shows that $H_{i,n}$ converges in $2$-norm to the Borel
argument of $w_i$ in the tracial von Neumann algebra of $L$.
There are only finitely many coordinates.  Telescoping bounded
words therefore gives simultaneous joint moment convergence.
The Borel arguments have uniform laws and remain free from each
other and from $F$, so these are exactly the moments of $Q$.
Faithfulness of its trace gives the lower norm bound through
\[
 \|p(F,h_1,\ldots,h_r)\|
  =\lim_{k\to\infty}\tau_Q((p^*p)^k)^{1/(2k)}
  \leq\liminf_n\|p(F,H_{1,n},\ldots,H_{r,n})\|.
\]
This proves the norm assertion and the faithful sequence-algebra
map.  Every ultralimit of the specified trace induces the native
trace on its image.
\end{proof}

\begin{corollary}[Finite common-circle comparison]
\label{af:multi-comparison}
Suppose $\dim L^2(F,\tau_F)=\infty$ and $Q$ is simple.  Let
$L$ be unital, and let $\rho_0,\rho_1:Q\to L_\infty$ be
faithful unital homomorphisms agreeing on $j(R)$.  Then there
is $V\in\mathrm U(L_\infty)$ with
\[
 \Ad(V)\rho_0=\rho_1.
\]
\end{corollary}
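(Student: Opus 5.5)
The plan is to follow the proof of Corollary~\ref{rf:two-log-comparison} verbatim, with Proposition~\ref{af:multi-ideal} in place of Proposition~\ref{rf:prop-two-log}.

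First, identify $j(R)$ with its common faithful image $\rho_0 j(R)=\rho_1 j(R)\subset L_\infty$. Then form the separable algebra
\[
 \mathcal T=\mathrm C^*\bigl(\rho_0j(R),\rho_0(h_i),\rho_1(h_i):1\le i\le r\bigr).
\]
Put $H_i^s=\rho_s(h_i)$. These are positive contractions with $e^{2\pi iH_i^s}=\rho_s(e^{2\pi ih_i})=w_i$, since $\rho_0$ and $\rho_1$ agree on $j(R)$. Each $\omega_i$ is the unique state taking value one at $w_i$, by Lemma~\ref{rf:lem-endpoint} applied with the other free factors in the coefficient algebra. Proposition~\ref{af:multi-ideal} therefore identifies the difference ideal
\[
 I=\operatorname{Ideal}_{\mathcal T}\bigl(H_i^1-H_i^0:1\le i\le r\bigr)
\]
as $\bigoplus_i\mathcal K(\mathcal H_i)\otimes J_i$. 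Because $\dim L^2(F)=\infty$, this ideal is separable, nuclear, stable and has the corona factorisation property.

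If $I=0$, then $\rho_0=\rho_1$ on generators and there is nothing to prove. Otherwise, compose $\rho_0$ and $\rho_1$ with $\mathcal T\to\mathcal M(I)$. They agree modulo $I$, because they agree on $R$ and their logarithms agree modulo $I$. So they form a unital Cuntz pair $Q\to\mathcal M(I)$. Both maps are full, since $Q$ is simple and unital and the common unit maps to the multiplier unit.

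Next I show that the Cuntz-pair class vanishes in $\KK(Q,I)$. The pair agrees exactly on $F$, so it suffices to show that $F\hookrightarrow Q$ is a $\KK$-equivalence. For the full free product with $r$ intervals, evaluation of every $h_i$ at $0$ is a retraction onto $F$, and the scalings $h_i\mapsto th_i$ give the homotopy. The full-to-reduced quotient is a $\KK$-equivalence at each of the $r$ binary steps by \cite[Proposition~2.15 and Theorem~3.1]{FG}, using the faithful expectations. Hence restriction to $F$ is injective on $\KK(\,\cdot\,,I)$, and the class is zero. This step is the main thing to check. The point is that the retraction and the Fima--Germain comparison iterate over finitely many interval factors; everything else is formal.

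Lemma~\ref{rf:lem-proper} then supplies unitaries $v_m\in1+I$ with $\Ad(v_m)\rho_0\to\rho_1$ pointwise. The errors lie in $I$, on which the multiplier action is isometric, so these are norm estimates in $\mathcal T\subset L_\infty$. Finally, lift each $v_m$ to coordinate unitaries in $L$ by polar decomposition of a representative. Diagonalizing over a countable dense subset of $Q$, with the packet index increasing slowly in the coordinate, gives one unitary $V\in\mathrm U(L_\infty)$ with $\Ad(V)\rho_0=\rho_1$.
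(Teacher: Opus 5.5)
Your proposal is correct and is essentially the paper's own proof. Like the paper, you form the separable algebra generated by the common image of $R$ and the $2r$ logarithms, take the stable nuclear corona-factorisation difference ideal from Proposition~\ref{af:multi-ideal}, and kill the Cuntz-pair class through the $\KK$-equivalence $F\hookrightarrow Q$ (evaluation retraction, contraction homotopy, Fima--Germain full-to-reduced comparison), then apply Lemma~\ref{rf:lem-proper} and diagonalize in $L_\infty$. The one difference is your binary version of the full-to-reduced step, which is cleanest written as an induction through $Q_k=F\fr\mathrm C([0,1]_{h_1})\fr\cdots\fr\mathrm C([0,1]_{h_k})$, passing $Q_{k-1}\to Q_{k-1}*^{\max}\mathrm C([0,1]_{h_k})\to Q_k$; this only reorganizes the same citation.
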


\begin{proof}
Identify $R$ with its common faithful image, and form the
separable algebra generated by that image and the finitely many
pairs $\rho_0(h_i),\rho_1(h_i)$.  Proposition~\ref{af:multi-ideal}
gives its difference ideal $I$, which is stable and nuclear and
has the corona factorisation property.  If $I=0$, the maps
coincide.  Otherwise their multiplier maps are unital and full:
the simple source $Q$ makes the image of every nonzero positive
element generate the multiplier unit.  Their differences lie
in $I$, so they form a Cuntz pair.

The inclusion $F\to Q$ is a $\KK$-equivalence.  Indeed, the
full free product of $F$ with finitely many intervals retracts
onto $F$ by evaluation at zero, and simultaneous contraction of
the interval coordinates gives a homotopy inverse.  Its
full-to-reduced quotient is a $\KK$-equivalence by
\cite[Proposition~2.15 and Theorem~3.1]{FG}, using the faithful
expectations to identify the reduced models.  Since the two multiplier maps agree
exactly on $F$, restriction annihilates their actual Cuntz-pair
class in $\KK(Q,I)$ injectively.  That class is therefore zero.
Lemma~\ref{rf:lem-proper} gives proper approximate unitary
comparison by unitaries in $1+I$.

These are norm estimates in the generated algebra, not merely
in its multiplier image: their errors belong to $I$, where
the multiplier action is isometric.  Lift the implementing
unitaries to coordinate unitaries in $L$, and diagonalize on
a countable dense subset of $Q$.  The resulting single unitary
in $L_\infty$ gives the asserted equality.  Essentiality of
$I$ is not needed.
\end{proof}

\begin{corollary}[Actual finite interval models]
\label{af:actual-model}
Suppose $\dim L^2(F,\tau_F)=\infty$ and $Q$ is simple.
Let $L$ be an admissible target containing the native $R$
unitally and trace preservingly.  Suppose that $L$ contains a
uniform interval coordinate $k$ such that, for every $i$, the
circle embeddings with coordinate images $w_i$ and
$e^{2\pi ik}$ induce the same totally compatible triple.
Then there are a trace-preserving embedding $f:Q\to L$ and
unitaries $u_n\in L$ such that
\[
 \Ad(u_n)f(a)\longrightarrow a\quad(a\in F),\qquad
 \Ad(u_n)f(e^{2\pi ih_i})\longrightarrow w_i
                    \quad(1\leq i\leq r).
\]
In particular, the restriction $f|_F$ is approximately unitarily
equivalent to its native inclusion and has exactly its full
classification invariant.
\end{corollary}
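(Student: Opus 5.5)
The plan is to follow the proof of Corollary~\ref{rf:actual-model} essentially verbatim, replacing the one-circle descent by a finite-circle descent built from Proposition~\ref{af:strong-logs} and Corollary~\ref{af:multi-comparison}. First, for each $i$ we apply Theorem~\ref{thm:embedding} to the nuclear UCT source $\mathrm C(\mathbb T)$ with the faithful Haar trace. The two embeddings with coordinate images $w_i$ and $e^{2\pi ik}$ have the same totally compatible triple by hypothesis, so nuclear-source uniqueness gives unitaries $z_{i,n}\in L$ with $z_{i,n}e^{2\pi ik}z_{i,n}^*\to w_i$. Put $H_{i,n}=z_{i,n}kz_{i,n}^*$. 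These are positive contractions in $L$ with $\|e^{2\pi iH_{i,n}}-w_i\|\to0$ for every $i\le r$. Proposition~\ref{af:strong-logs} then gives a faithful unital homomorphism $\Theta:Q\to L_\infty$ with $\Theta j=\iota_L|_R$, sending $h_i$ to $(H_{i,n})_n$ and fixing $F$ constantly. Every ultralimit of the trace of $L$ restricts on its image to the native trace.

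Second, we run the reparameterization descent of Proposition~\ref{rf:prop-descent} with $r$ circles. For any diverging $r':\N\to\N$, the maps $\Theta$ and $r'^*\Theta$ are faithful unital homomorphisms $Q\to L_\infty$ that agree on $j(R)$, since both equal the constant inclusion there. Corollary~\ref{af:multi-comparison} therefore gives $V\in\mathrm U(L_\infty)$ with $\Ad(V)\Theta=r'^*\Theta$. Its hypotheses hold: $\dim L^2(F)=\infty$, and $Q$ is simple. This holds for every diverging reparameterization. Intertwining by reparameterization \cite[Proposition~9.2]{CGSTW}, which needs only a separable source and unital target, then yields a homomorphism $f:Q\to L$ and $U\in\mathrm U(L_\infty)$ with $\Theta=\Ad(U)\iota_Lf$. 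The map $f$ is unital and faithful because $\Theta$ is. Lifting $U^*$ to coordinate unitaries $u_n\in L$ (polar decomposition of an asymptotically unitary representative) gives $\Ad(u_n)f(x)\to x$ for $x\in F$ and $\Ad(u_n)f(e^{2\pi ih_i})\to w_i$, because $\Theta(e^{2\pi ih_i})=w_i$.

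Third, for trace preservation we apply an ultralimit of the trace of $L$ to $\Theta=\Ad(U)\iota_Lf$. The left side gives the native trace of $Q$ by the joint moment convergence in Proposition~\ref{af:strong-logs}, and the right side gives $\tau_L\circ f$. Since $f|_F$ is approximately unitarily equivalent to the inclusion $F\subset L$, it induces the same ordinary and coefficient $\mathrm K$-maps, the same trace, and, by continuity of the Hausdorffized unitary map, the same algebraic-unitary map.

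The main obstacle I expect is the second step: confirming that the multi-circle comparison applies inside $L_\infty$ to the algebra generated by $R$ and the pairs of logarithm sequences. This requires the ideal in Proposition~\ref{af:multi-ideal} to be nonzero, stable and to have the corona factorisation property, and the Cuntz pair to have vanishing $\KK$ class via the $\KK$-equivalence $F\to Q$. All of this has been packaged in Corollary~\ref{af:multi-comparison}, so the remaining care is only to check that its hypotheses match ours.
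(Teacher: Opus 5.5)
Your proposal is correct and matches the paper's proof step for step: conjugate $k$ by the nuclear-source uniqueness unitaries to obtain approximate logarithms $H_{i,n}$, build $\Theta$ from Proposition~\ref{af:strong-logs}, compare $\Theta$ with each diverging reparameterization via Corollary~\ref{af:multi-comparison}, descend using \cite[Proposition~9.2]{CGSTW}, and obtain trace preservation from joint moment convergence. One small slip: from $\Theta=\Ad(U)\iota_Lf$ you should lift $U$ itself, not $U^*$, to coordinate unitaries $u_n$ in order to get $\Ad(u_n)f(x)\to x$.
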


\begin{proof}
Nuclear-source uniqueness for $\mathrm C(\mathbb T)$,
Theorem~\ref{thm:embedding}, gives unitaries $z_{i,n}\in L$
with
$z_{i,n}e^{2\pi ik}z_{i,n}^*\to w_i$.  Put
$H_{i,n}=z_{i,n}kz_{i,n}^*$.  Proposition~\ref{af:strong-logs}
gives a faithful unital map $\Theta:Q\to L_\infty$ agreeing
with the native inclusion on $R$.

For every diverging map $s:\N\to\N$, the reparameterization
$s^*\Theta$ also agrees with $\Theta$ on $R$.
Corollary~\ref{af:multi-comparison} therefore gives unitary
equivalence of these two maps.  Intertwining by reparameterization
\cite[Proposition~9.2]{CGSTW}, exactly as in
Proposition~\ref{rf:prop-descent}, yields a homomorphism
$f:Q\to L$ and a unitary $U\in L_\infty$ with
\[
 \Theta=\Ad(U)\iota_Lf.
\]
The map $f$ is faithful and unital because $\Theta$ is.
Lifting $U$ to coordinate unitaries gives the displayed limits.
Joint moment convergence in
Proposition~\ref{af:strong-logs}, applied to any ultralimit
of the specified trace, gives trace preservation.
Approximate unitary equivalence on $F$ preserves ordinary and
coefficient $\mathrm K$-theory, traces, and the continuous
algebraic-unitary invariant.  Thus it preserves the complete
classification marking on that coefficient algebra.
\end{proof}

\begin{remark}
The vanishing used in Corollary~\ref{af:multi-comparison}
concerns the actual $\KK(Q,I)$ class of a specified Cuntz pair;
it follows from exact agreement on $F$ and the $\KK$-equivalence
$F\to Q$.  The marking assertion in
Corollary~\ref{af:actual-model} instead follows from
approximate unitary equivalence of the coefficient restrictions.
It does not identify arbitrary $\KK$ classes from their induced
$\mathrm K$-group maps when $F$ is nonnuclear.

The finite-family construction below requires only
one-circle automorphism lifting.  At each such lift, all other
whole interval factors belong to the coefficient algebra in
Theorem~\ref{rf:circle-lift}.  Its full marked hypothesis
is checked before applying the theorem; the resulting coefficient
restriction retains its global approximate-unitary class.
\end{remark}

\section{Finite-dimensional relative reservoirs}
\label{sec:fd-relative}

All traces in this section are faithful.  All conditional expectations
onto finite-dimensional subalgebras preserve the specified trace, and
all amalgamated free products use these expectations.  We write
$I=(\mathrm C([0,1]),\int_0^1\cdot\,dt)$ and
$D_\infty=\mathop{*}_{j\geq1}I_j$.  A uniform interval coordinate
can equivalently be replaced by a standard semicircular coordinate:
the semicircle distribution function is a homeomorphism from
$[-2,2]$ onto $[0,1]$.  We use this identification when calculating
operator-valued moments.  In particular, all Gaussian calculations
below concern actual elements of reduced $\mathrm C^*$-algebras.

If
\[
 F=\bigoplus_{s=1}^r\mathrm M_{d_s}\subset H,
\]
we fix matrix units $e^{(s)}_{ij}$ and put
\[
 p_s=\sum_{i=1}^{d_s}e^{(s)}_{ii},\qquad
 q_s=e^{(s)}_{11},\qquad
 \alpha_s=\tau(p_s),\qquad \lambda_s=\tau(q_s)=\alpha_s/d_s.
\]
The $p_s$ need not be central in $H$.  The trace-preserving
expectation onto $F$ is
\begin{equation}\label{eq:fd-expectation}
 \E_F(b)=\sum_{s=1}^r\frac{d_s}{\alpha_s}
       \sum_{i,l=1}^{d_s}\tau(e^{(s)}_{li}b)e^{(s)}_{il}.
\end{equation}
A label-preserving map between two such inclusions preserves the
chosen matrix units as well as the traces; consequently it preserves
all the numbers $\alpha_s$ and $\lambda_s$.

\begin{lemma}[An actual finite-dimensional relative tag]
\label{lem:fd-tag-embedding}
Let $F\subset H$ be a unital finite-dimensional subalgebra of a
unital faithfully tracial $\mathrm C^*$-algebra.  There is a unital
trace-preserving embedding
\[
 H*_F(F\otimes I)\longrightarrow H*I
\]
which fixes $H$.  Its interval coordinate commutes with $F$.
The construction is natural for label-preserving trace-preserving
embeddings and uses only the displayed fresh interval.  Additional
external scalar-free factors are therefore unchanged.
\end{lemma}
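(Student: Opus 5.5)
The plan is to construct the interval coordinate explicitly inside $H*I$ as an $F$-equivariant average of conjugates of the fresh interval, and then to identify the law of the resulting pair by a freeness-with-amalgamation computation.

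\emph{Construction.} Work with a standard semicircular coordinate $x\in I$; the semicircle distribution function converts it back to a uniform interval coordinate. Put
\[
 y=\sum_{s=1}^r\sum_{i=1}^{d_s}\sqrt{\lambda_s}^{\,-1}\cdots,
\]
or, more concretely, the following cleaner candidate: $y=\sum_s\sum_{i}e^{(s)}_{i1}\,x_s\,e^{(s)}_{1i}$, where $x_s=\lambda_s^{-1/2}q_sxq_s$ after a suitable free-compression normalization. By construction $y$ commutes with the matrix units, hence with $F$. The intended consequence is that $\mathrm C^*(F,y)\cong F\otimes\mathrm C^*(y)$, where $\mathrm C^*(y)$ is generated by the central part $\sum_s p_s\,\tilde x_s$ and $\tilde x_s$ is a copy of $q_sxq_s$.

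\emph{Fixing the law of the corners.} The compression $q_sxq_s$ of a semicircular element free from $q_s$ is, by Nica--Speicher compression, semicircular of variance $\lambda_s$ in $(q_sHq_s,\lambda_s^{-1}\tau)$. I therefore rescale by $\lambda_s^{-1/2}$ to obtain a standard semicircular element in each minimal corner.

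\emph{Main obstacle: amalgamated freeness.} I must show that $\mathrm C^*(F,y)$ is free from $H$ with amalgamation over $F$, and that its $F$-valued distribution is that of $F\otimes\mathrm C^*(y)$ with the product expectation. The tool is the trace-preserving expectation \eqref{eq:fd-expectation}. The approach is to compute operator-valued moments by expanding alternating centered words: each $F$-centered element of $\mathrm C^*(F,y)$ is a combination of matrix units times a centered polynomial in the $\tilde x_s$. Freeness of $x$ from $H$ over the scalars then kills alternating moments, by the Nica--Speicher formula for free compressions with matrix units. The risk is that the distinct $\tilde x_s$ come from the single element $x$, so they are not automatically free across different $s$. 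Because the $p_s$ need not be central in $H$, this cross-$s$ step is where the computation needs the most care. If that independence fails, I would instead use different compressed pieces of the fresh interval, or equivalently Dykema's free-compression theorem, to manufacture independent semicircular elements in each block.

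\emph{Conclusion.} Once the $F$-valued moments agree, both reduced amalgamated free products carry faithful traces. Hence the identity on $H$ together with $h\mapsto y$, pushed through the distribution-function homeomorphism, extends to a trace-preserving embedding $H*_F(F\otimes I)\to H*I$. Faithfulness follows from the GNS-faithful conditional expectations. Naturality holds because the construction uses only the chosen matrix units and the numbers $\lambda_s$, both of which are preserved by label-preserving maps. Extra external scalar-free factors are untouched because only the displayed $I$ is used.
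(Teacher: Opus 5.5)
Your element $y=\sum_s\lambda_s^{-1/2}\sum_i e^{(s)}_{i1}xe^{(s)}_{1i}$ is a legitimate choice. It commutes with $F$, and it differs from the paper's $Y=\sum_s\alpha_s^{-1/2}\sum_{i,j}e^{(s)}_{ij}xe^{(s)}_{ji}$ only in using the first column of matrix units, with the compensating normalization $\lambda_s^{-1/2}$. The gap is in identifying its law, which is the whole content of the lemma. You leave it as an unresolved ``main obstacle'' with a fallback.

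Expanding alternating $F$-centred words using scalar freeness of $x$ from $H$ and scalar compression formulas does not settle it. Because the $p_s$ are not central in $H$, the $H$-letters link different blocks and different matrix positions. You would therefore need the joint behaviour of all compressions $q_sxq_{s'}$ relative to all corners $q_sHq_{s'}$, which is an operator-valued statement you have not supplied. (Also, $\sum_sp_s\tilde x_s=\sum_s\tilde x_s$ is not central. The central coordinate is $y$ itself, with $q_syq_s=\tilde x_s$.)

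The missing idea is to compute $H$-valued cumulants instead of $F$-valued moments. Since $x$ is free from $H$, its only nonzero $H$-valued cumulant is $\kappa_2(x,bx)=\tau(b)1$. Your $y$ is a finite sum of sandwiches $axa'$ with $a,a'\in H$, so by multilinearity every $H$-valued cumulant of $y$ of order other than two vanishes, and
\[
 \E_H(yby)
 =\sum_{s,t}(\lambda_s\lambda_t)^{-1/2}\sum_{i,k}
   e^{(s)}_{i1}\,\tau\bigl(e^{(t)}_{k1}e^{(s)}_{1i}b\bigr)\,e^{(t)}_{1k}
 =\sum_s\lambda_s^{-1}\sum_{i,k}\tau\bigl(e^{(s)}_{ki}b\bigr)e^{(s)}_{ik}
 =\E_F(b)
\]
by \eqref{eq:fd-expectation}. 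The cross-block terms vanish because $e^{(t)}_{k1}e^{(s)}_{1i}=0$ for $s\neq t$. So reusing the single $x$ in every block is harmless, and no independent per-block semicirculars are needed.

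The $F$-central standard semicircle $z$ in $H*_F(F\otimes I)$ has the same $H$-valued Wick law. Indeed $\E_H(zbz)=\E_F(z\,\E_F(b)\,z)=\E_F(b)$, by reduced-word orthogonality applied to $b-\E_F(b)$, and its higher cumulants vanish. Hence $\E_H(b_0yb_1y\cdots yb_n)$ equals the corresponding moment of $z$ for all $b_j\in H$, so all scalar mixed moments agree. Faithfulness of the two traces then gives the isometric trace-preserving embedding fixing $H$; this is the paper's proof. The amalgamated freeness over $F$ that you wanted then follows as a consequence rather than an input, for instance by the Nica--Shlyakhtenko--Speicher criterion, since the covariance is $F$-valued and factors through $\E_F$.
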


\begin{proof}
Let $s$ be the standard semicircular generator of the fresh interval,
free from $H$, and set
\[
 Y=\sum_{s_0=1}^r\alpha_{s_0}^{-1/2}
       \sum_{i,j=1}^{d_{s_0}}
       e^{(s_0)}_{ij}s e^{(s_0)}_{ji}.
\]
The index $s_0$ denotes a summand and is distinct from the
semicircular variable $s$.  Matrix multiplication gives $[Y,F]=0$.
With $\E_H:H*I\to H$ the canonical expectation, the second
operator-valued cumulant is
\[
 \E_H(YbY)=\sum_{s_0=1}^r\frac{d_{s_0}}{\alpha_{s_0}}
       \sum_{i,l=1}^{d_{s_0}}
       \tau(e^{(s_0)}_{li}b)e^{(s_0)}_{il}=\E_F(b).
\]
For example, the scalar contraction of two terms from summands
$s_0,t_0$ contains
$\tau(e^{(s_0)}_{ji}b e^{(t_0)}_{lr})$; cyclicity of the trace
and the outside matrix product force $s_0=t_0$ and give the
factor $d_{s_0}$.  Multilinearity of operator-valued cumulants
shows that every cumulant of $Y$ of order different from two
vanishes.  Thus $Y$ is $H$-valued semicircular with covariance
$\E_F$.

The central standard semicircle in $F\otimes I$, regarded in
$H*_F(F\otimes I)$, has exactly this $H$-valued Wick formula.
In particular its scalar law is the standard semicircle law,
since $\E_F(1)=1$.  Equality of all scalar mixed moments, together
with faithfulness of both reduced traces, identifies the
generated $\mathrm C^*$-algebras isometrically.  Applying the
semicircle distribution function gives the claimed uniform
interval.  The formula for $Y$ commutes with every
label-preserving embedding, proving naturality.  The assertion
about external factors follows by reduced free-product
functoriality.
\end{proof}

\begin{lemma}[Reassociation and decomposition into block tags]
\label{lem:fd-reassociation}
Let $F_1,\ldots,F_m$ be unital finite-dimensional subalgebras of
a faithfully tracial algebra $R$.  Relative interval extensions
over these subalgebras can be performed in any order, with
canonical trace-preserving isomorphisms fixing $R$ and all the
labelled interval coordinates.

For a single algebra $F=\bigoplus_s\mathrm M_{d_s}$, its
$F$-central interval extension is also the successive extension
by one block-supported interval for each $s$.  The $s$th
extension is over
\[
 F_s^+=\mathrm M_{d_s}\oplus\mathbb C(1-p_s)
\]
and adjoins
\[
 G_s=(\mathrm M_{d_s}\otimes I_s)
                      \oplus\mathbb C(1-p_s).
\]
Zero complementary summands are omitted.  The added interval
is supported on $p_s$; every other block tag belongs to the
remaining coefficient algebra when this tag is placed last.
\end{lemma}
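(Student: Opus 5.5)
The approach is to identify both sides of each claimed isomorphism through their trace-preserving conditional expectations onto $R$. Reduced amalgamated free products are determined by the joint distributions of their generators with respect to these expectations, and faithfulness of all traces then turns distributional equality into isometric identification. Concretely, I would use the characterization of $H*_F(F\otimes I)$, as in the proof of Lemma~\ref{lem:fd-tag-embedding}: it is generated by $H$ together with an $F$-central semicircular element $Y$ whose $H$-valued covariance is $\E_F$ and all of whose other operator-valued cumulants vanish. Two generating systems with the same $R$-valued moment formulas and faithful traces produce canonically isomorphic reduced algebras, fixing $R$ and the labelled coordinates.

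For reordering, I would treat the iterated extension $R*_{F_1}(F_1\otimes I_1)*_{F_2}\cdots$ and show that the family $(Y_1,\dots,Y_m)$ is free with amalgamation over $R$. Each $Y_k$ should be $R$-valued semicircular with covariance $\E_{F_k}$. This follows by induction: the $k$th extension adjoins $Y_k$ whose cumulants over the previous algebra are given by $\E_{F_k}$ composed with the expectation onto $R$. Restricted to $R$-valued moments, this says the mixed cumulants of distinct $Y_k$ vanish and the second cumulant of $Y_k$ is $b\mapsto\E_{F_k}(b)$ for $b\in R$. Since this description is symmetric in $k$, any permutation gives the same $R$-valued distribution. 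Faithfulness then yields the canonical isomorphism fixing $R$ and each $Y_k$, hence each interval coordinate after applying the semicircle distribution function.

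For the block decomposition, I would write the $F$-central semicircle as $Y=\sum_s\alpha_s^{-1/2}\sum_{i,j}e^{(s)}_{ij}\,s\,e^{(s)}_{ji}$. Its covariance splits as $\E_F=\sum_s\E^{(s)}$, where $\E^{(s)}(b)=\frac{d_s}{\alpha_s}\sum_{i,l}\tau(e^{(s)}_{li}b)e^{(s)}_{il}$ is the block-$s$ part of \eqref{eq:fd-expectation}. A semicircular element with a sum of covariances is, distributionally, the sum of free semicirculars with the individual covariances. So the $F$-central interval extension should correspond to adjoining one $Y_s$ for each block. Each $Y_s$ is supported on $p_s$, commutes with $F_s^+$, and has covariance $\E^{(s)}$, which is the $p_s$-part of $\E_{F_s^+}$. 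This matches adjoining $G_s=(\mathrm M_{d_s}\otimes I_s)\oplus\C(1-p_s)$ over $F_s^+$, where the interval coordinate is the uniform transform of the block semicircle on $p_s$, extended appropriately on $1-p_s$.

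The first part then makes the order of the block tags irrelevant. When a tag is placed last, the other tags lie in its coefficient algebra. The $p_s$-supported coordinates must be matched with the appropriately normalized unit-supported semicircles (note $\alpha_s$ scaling), and I expect this matching to be the main obstacle. The fix I would try first is to compare the compressed covariances $p_s\E_{F_s^+}(\cdot)p_s$ with $\E^{(s)}$ and to check that the normalization of $\tau$ on $p_s$ gives a standard semicircle law in the corner.
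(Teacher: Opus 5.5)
Your proposal is essentially the paper's proof. You pass to semicircular coordinates and use the tower identity $\E^{R'}_{F_k}=\E^{R}_{F_k}\E^{R'}_R$ in the Wick recursion, so the tags form an $R$-free semicircular family with covariances $\E^R_{F_k}$. Your splitting of the covariance amounts to the paper's block variables $t_s=p_st$, with covariances $p_s\E_F(\cdot)p_s$ and $t=\sum_st_s$, and faithfulness of the reduced traces finishes the argument.

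The normalization you flag as the main obstacle is immediate. The supported semicircle $y=(1\otimes x)\oplus0\in G_s$ satisfies $\E_{F_s^+}(yfy)=p_sfp_s$ for $f\in F_s^+$. Hence its $R$-valued covariance in the successive extension is $p_s\E_{F_s^+}(b)p_s=p_s\E_F(b)p_s$, and no $\alpha_s$ rescaling is needed.
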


\begin{proof}
For the first assertion replace the intervals by their
semicircular coordinates.  Over $R$ the joint Wick formula
has covariance maps $E_{F_i}^R$ and zero mixed cumulants.
To check this in an iterated extension, use at each step the
expectation identity
\[
 E_{F_i}^{R'}=E_{F_i}^R \E_R^{R'},
\]
where $R'$ is the algebra already constructed.  This identity
follows from trace preservation and uniqueness of the
expectation onto $F_i$.  Inserting it in the Wick recursion
gives the same $R$-valued formula in every order.  Equivalently,
the recursion is the Fock formula for the direct sum of the
covariance correspondences.  Faithfulness of the reduced
traces upgrades equality of moments to the asserted
$\mathrm C^*$-isomorphisms.

For the second assertion let $t$ be the $F$-central standard
semicircle and put $t_s=p_st$.  Its block variables have
covariances
\[
 \eta_s(b)=p_s\E_F(b)p_s,
\]
zero mixed cumulants, and vanishing higher cumulants.  The
supported semicircle in $G_s$ has precisely covariance
$\eta_s$; the complementary scalar summand contributes no
variable.  The preceding Wick recursion therefore identifies
the successive construction with the original extension.
The block tags generate the original factor because
$t=\sum_st_s$ and $t_s=p_st$.  Transforming back to uniform
intervals uses functional calculus separately in the unital
corners $p_sRp_s$.
\end{proof}

\begin{lemma}[The native relative-extension class]
\label{lem:fd-extension-kk}
Let $R$ be separable and faithfully tracial.  The native inclusion
\[
 R\longrightarrow R*_F(F\otimes I)
\]
is a $\mathrm{KK}$-equivalence for every unital
finite-dimensional $F\subset R$.  The same is true for a
block-supported extension and for any finite iteration of
these extensions.  Consequently the native inclusion, rather
than just an unmarked abstract absorption isomorphism, gives
the canonical identification of total $\mathrm K$-theory.
\end{lemma}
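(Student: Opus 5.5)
We construct an explicit homotopy retraction, mimicking the argument already used for $F\to F\fr\mathrm C([0,1])$ in Proposition~\ref{rf:prop-descent}. Write $Z=R*_F(F\otimes I)$ and replace the interval by its central semicircular coordinate $t$, or directly use the uniform coordinate $h$ with $[h,F]=0$. The plan is to show that the native inclusion $\iota:R\to Z$ has a left inverse up to $\KK$, and that the composite in the other order is $\KK$-homotopic to the identity.

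First we pass to the full amalgamated product. Let $Z^{\max}=R*_F^{\max}(F\otimes\mathrm C([0,1]))$. Evaluation of the interval coordinate at $0$ gives a unital homomorphism $F\otimes\mathrm C([0,1])\to F$ fixing $F$. By the universal property it induces a retraction $\pi:Z^{\max}\to R$ with $\pi\iota^{\max}=\id_R$. The scaled maps $h\mapsto sh$, $s\in[0,1]$, are $F$-linear endomorphisms of $F\otimes\mathrm C([0,1])$. They therefore define a homotopy of endomorphisms of $Z^{\max}$ from $\iota^{\max}\pi$ (at $s=0$) to the identity (at $s=1$). Hence $\iota^{\max}$ is a homotopy equivalence, and in particular a $\KK$-equivalence.

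Next we compare full and reduced products. The main obstacle is showing that the quotient $Z^{\max}\to Z$ is a $\KK$-equivalence when amalgamation is over a finite-dimensional $F$ rather than scalars. Here we invoke Fima--Germain \cite[Proposition~2.15 and Theorem~3.1]{FG}, which treat amalgamated free products over a common subalgebra with conditional expectations. The trace-preserving expectations $\E_F$ on $R$ and on $F\otimes I$ are GNS-faithful because the traces are faithful. This identifies their vertex-reduced product with our reduced product, and their theorem gives $\KK$-equivalence of the full-to-reduced quotient. Composing, $\iota$ is a $\KK$-equivalence.

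For block-supported extensions, the same argument applies with $F_s^+$ and $G_s=(\mathrm M_{d_s}\otimes I_s)\oplus\C(1-p_s)$. Evaluation at $0$ and scaling act on the $I_s$ tensor factor while fixing $F_s^+$, so they again give $F_s^+$-linear retractions and homotopies. Alternatively, we can use Lemma~\ref{lem:fd-reassociation} to reduce to the central case. Finite iterations follow by composing $\KK$-equivalences, since each step is a native inclusion of the same type with the previous algebra as the new coefficient algebra. Separability and faithfulness of the trace are preserved at each step.

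Finally, because $\iota$ is an actual homomorphism whose class is invertible, the induced maps on ordinary and total $\mathrm K$-theory are isomorphisms. By naturality of the coefficient sequences, these identifications are canonical.
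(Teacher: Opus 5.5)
Your proposal is correct and follows the paper's proof essentially step for step: the evaluation-at-zero retraction and the scaling homotopy $h\mapsto sh$ on the full amalgamated product, the Fima--Germain full-to-reduced $\KK$-equivalence using the GNS-faithful trace-preserving expectations, the same homotopy on the supported block with the identity on the constant block, composition for finite iterations, and passage to total $\mathrm K$-theory. The one thing to drop is your parenthetical alternative via Lemma~\ref{lem:fd-reassociation}. That lemma writes a central tag as an iteration of block-supported tags, not conversely, so it does not by itself give the block-supported case; the direct supported homotopy you also give is the right argument.
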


\begin{proof}
First use the full amalgamated free product.  Evaluation of
the interval coordinate at zero gives a retraction onto $R$.
The homotopy $h\mapsto th$, $0\leq t\leq1$, in the uniform
coordinate, fixes $R$ and the amalgamated $F$ and joins the
retracted map to the identity.  Thus the full native
inclusion is a homotopy equivalence.  For a supported
extension the identical homotopy is used in the supported
matrix block and is the identity in the complementary
constant block.

The full-to-reduced comparison is a $\mathrm{KK}$-equivalence
by \cite[Proposition~2.15 and Theorem~3.1]{FG}:
the specified trace-preserving expectations are faithful,
since the traces are faithful, and hence are GNS-faithful.
Proposition~2.15 identifies the vertex-reduced and ordinary
reduced products, and Theorem~3.1 applies to the canonical
quotient.  That quotient
commutes with the two native inclusions of $R$.  Therefore
the reduced native inclusion is a $\mathrm{KK}$-equivalence
as well.  Composition proves the assertion for finite
iterations.  Applying coefficient $\mathrm K$-theory to
these $\mathrm{KK}$-equivalences preserves all Bockstein
operations and proves the final assertion.
\end{proof}

When both algebras in a native inclusion above are admissible
targets, the inclusion also identifies their full compatible
invariants for the selected traces.  Indeed its total
$\mathrm K$-theory map is an isomorphism, its normalized
trace map is the identity, and the trace ranges of
$\mathrm K_0$ therefore agree.  The determinant exact
sequence
\[
 0\longrightarrow
 \mathbb R/\overline{\tau_*(\mathrm K_0(\,\cdot\,))}
 \longrightarrow \Uquot{\,\cdot\,}
 \longrightarrow \mathrm K_1(\,\cdot\,)\longrightarrow0
\]
then shows that its Hausdorffized algebraic unitary map is
an isomorphism.  The same argument applies in compatible
normalized full corners.  This observation does not assume
that either $\mathrm K_1$ group vanishes.

\begin{lemma}[Onto absorption with all finite-dimensional labels fixed]
\label{lem:fd-tag-absorption}
Let $R=H*D_\infty$ and let
$F_1,\ldots,F_m\subset H$ be unital finite-dimensional
subalgebras with fixed matrix units.  There is a
trace-preserving isomorphism
\[
 J_R:R*_{F_1}(F_1\otimes I_1)\cdots
             *_{F_m}(F_m\otimes I_m)\longrightarrow R
\]
which fixes $H$ pointwise.  It can fix any prescribed finite
ordinary reservoir prefix.  The choices are natural for
label-preserving trace-preserving embeddings of $H$ that use
the same ordinary reservoir coordinates.
The same conclusion holds for block-supported tags: in any
one relative addition, interval coordinates may be added on
only some of the summands of the finite-dimensional base,
with the other summands left constant.
\end{lemma}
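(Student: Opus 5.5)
The plan is to realize the isomorphism by first embedding the relative extensions into ordinary free products with fresh intervals, and then absorbing those fresh intervals into the reservoir $D_\infty$.

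\emph{Step 1: reduction to fresh intervals.} By Lemma~\ref{lem:fd-reassociation}, the iterated relative extension can be reorganized in any order, and each $F_k$-central tag can be split into block-supported tags over the algebras $F_{k,s}^+$. It therefore suffices to handle one tag at a time and iterate. For a single tag, Lemma~\ref{lem:fd-tag-embedding} gives a trace-preserving embedding
\[
 R*_F(F\otimes I)\longrightarrow R*I_{\rm new}
\]
fixing $R$, hence fixing $H$ and every ordinary reservoir coordinate. Iterating this $m$ times, with fresh intervals $I_{\rm new,1},\ldots,I_{\rm new,m}$, gives an embedding of the full source into $H*D_\infty*I^{*m}$. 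Reindexing the intervals so that the fresh ones sit after any prescribed finite prefix identifies the target with $R$, still fixing $H$ and that prefix. This produces an embedding $\eta$ of the source into $R$ fixing $H$ and the protected prefix. Naturality in label-preserving embeddings is inherited from Lemma~\ref{lem:fd-tag-embedding}, since all formulas are written in terms of the matrix units and the chosen reservoir coordinates.

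\emph{Step 2: upgrading to an isomorphism.} Here I would apply Lemma~\ref{lem:marked-caap} with $B=\eta(\text{source})\subset C=R$. The automorphisms are of the form $\id_{H*D_{\le r}}*\Ad(u)$ for $u$ a unitary in a tail $D_{>r'}$, possibly composed with permutations of tail interval coordinates.

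\begin{itemize}
\item The marked invariant is unchanged. By Lemma~\ref{lem:fd-extension-kk} the native inclusion $R\to$ source is a $\KK$-equivalence, and by the remark after it, together with Lemma~\ref{lem:interval-regularity}, both algebras are admissible, so $\eta$ induces an invertible full triple. Tail corrections act trivially on this triple.
\item The CAAP estimate. Given finite sets $F\subset B$ and $z\in C$, approximate both by polynomials in $H$, finitely many reservoir intervals, and the tag variables. The image of a tag is the element $Y$ built from a fresh semicircular $s$. I would choose the automorphism to exchange the fresh interval used by $\eta$ with a later reservoir interval that does not appear in $F$; this is a trace-preserving automorphism of $R$ fixing $H$ and the prefix. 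A reservoir interval $t$ appearing in $z$ is then recovered approximately from $B$, because $B$ contains every reservoir interval except those consumed by tags.
\end{itemize}

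\emph{Main obstacle.} The real difficulty is that $\eta(B)$ omits the fresh semicircular $s$ itself and contains only the compressed combinations $p_s s p_s$ weighted by the matrix units. These combinations do not generate $s$ when $F\ne\C$. I would first try to recover the off-diagonal parts from a second, independent tag inside the tail: the compressions $e_{1i}Ye_{j1}$ give free semicircular corner elements, and a reservoir tail inside $q_sRq_s$ together with Lemma~\ref{lem:fd-tag-absorption}-type corner absorption might rebuild an entire free interval. If that fails, I would prove surjectivity in the other direction by a two-sided approximate intertwining: embed $R$ back into the source by sending a tail interval to a central tag coordinate. Both composites fix $H$ and shift tails, so they are approximately inner relative to tail unitaries, and \cite[Proposition~9.7]{CGSTW}-style intertwining then yields an isomorphism fixing $H$ and the prefix. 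Block-supported tags are handled identically on the corners $p_s$.
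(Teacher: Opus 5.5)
Your Step~1 is sound: iterating Lemma~\ref{lem:fd-tag-embedding} together with amalgamated functoriality gives a trace-preserving embedding $\eta$ of the tagged algebra into $R$ that fixes $H$ and a prefix. But this is only an embedding, and Step~2, which carries the whole content of the lemma, does not work.

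\emph{The swap does not verify \eqref{eq:caap}.} To approximate an element $z$ involving the consumed semicircular $s$, you must move $s$. The packet may, however, contain the tag image $Y=\sum_{s_0}\alpha_{s_0}^{-1/2}\sum_{i,j}e^{(s_0)}_{ij}se^{(s_0)}_{ji}$. Exchanging $s$ with a later coordinate $s'$ replaces $Y$ by its free copy $Y'$. Since $(s-s')/\sqrt2$ is again a standard semicircular free from $H$, you get $\|Y-Y'\|=2\sqrt2$, so the swap never almost fixes the packet.

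\emph{The fallbacks do not repair this.}
\begin{itemize}
\item A map $R\to R*_F(F\otimes I)$ fixing $H$ and sending a scalar-free interval to the $F$-central tag $t$ is not a homomorphism. One has $\E_H(tht)=\E_F(h)$, whereas $\E_H(shs)=\tau(h)1$ for scalar-free $s$. For a nonscalar $h=h^*\in F$ this compares $\tau(h^2)$ with $\tau(h)^2$, which differ.
\item The claim that composites which fix $H$ and shift tails are approximately inner is unsupported. Here $H$ is an arbitrary faithfully tracial algebra, so no nuclear-source uniqueness applies. Example~\ref{ex:reservoir-nonuniqueness} shows that moving free interval coordinates while fixing everything else need not be approximately inner.
\item Any isomorphism obtained as a limit of chosen correcting automorphisms would lose the exact naturality asserted in the lemma. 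That naturality is used later in the form $J_T\eta^+=\eta J_S$ in the proof of Theorem~\ref{gf:finite-base-fusion}.
\item The invariant bookkeeping you import through Lemma~\ref{lem:interval-regularity} needs exactness of $H$, which is neither assumed nor needed.
\end{itemize}

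\emph{The missing idea is an exact Hilbert-hotel absorption inside a full corner.}
\begin{itemize}
\item Put $p=\sum_s q_s$ and $v_{(s,i)}=e^{(s)}_{i1}$. Then $v_a^*v_b=\delta_{ab}q_{s(a)}$ and $\sum_av_av_a^*=1$, so $pRp$ is generated by $pHp$ and the entries $v_a^*s_jv_b$ of the reservoir semicirculars.
\item The $pHp$-valued Wick calculation shows that the normalized diagonal entries $\lambda_s^{-1/2}v_a^*s_jv_a$ (all $j\geq1$, all $a$ in block $s$) are free semicirculars over $pHp$. They have covariance $b\mapsto\lambda_s^{-1}\tau(q_sb)q_s$ and are free from the off-diagonal entries.
\item The compressed tag $q_stq_s$ has exactly the same covariance and is free from that whole family.
\item Each block already contains infinitely many such diagonal variables. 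So reindex the family with $q_stq_s$ adjoined onto the original family, fixing $pHp$ and all off-diagonal entries. This gives an onto trace-preserving isomorphism $\phi$ of corners.
\item The formula $x\mapsto\sum_{a,b}v_a\phi(v_a^*xv_b)v_b^*$ extends $\phi$ to the whole algebra and fixes $H$.
\item Several tags use disjoint infinite reservoir banks, and a protected prefix is placed inside $H$. Block-supported tags add a variable only at the selected vertices.
\item Naturality is exact because every formula is written in matrix units and reservoir labels.
\end{itemize}
No CAAP, $\KK$ or uniqueness input is required.
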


\begin{proof}
We first treat one algebra $F=\bigoplus_s\mathrm M_{d_s}$.
Let
\[
 p=\sum_sq_s,\qquad v_{s,i}=e^{(s)}_{i1}.
\]
Use a single letter $a=(s,i)$ for a pair of indices, and write
$s(a)=s$.  Then
\begin{equation}\label{eq:fd-full-frame}
 v_a^*v_b=\delta_{ab}q_{s(a)},\qquad
 \sum_av_av_a^*=1.
\end{equation}
In particular $p$ is full.  Let $s_j$ be the standard free
semicircular coordinates of $D_\infty$, and put
\[
 z^{(j)}_{ab}=v_a^*s_jv_b\in q_{s(a)}Rq_{s(b)}.
\]
Together with $pHp$, these entries generate $pRp$.  This follows
by inserting the second identity in
\eqref{eq:fd-full-frame} on either side of every reservoir
letter and between successive coefficient letters.

The operator-valued Wick recursion gives, for every
$x\in pHp$,
\begin{equation}\label{eq:fd-entry-covariance}
 \kappa_2\bigl(z^{(j)}_{ab},x z^{(l)}_{cd}\bigr)
 =\delta_{jl}\delta_{ad}\delta_{bc}
       \tau(q_{s(b)}x)q_{s(a)}.
\end{equation}
All higher cumulants vanish.  For clarity the identity is valid
for offdiagonal coefficients as well: its scalar contraction is
\[
 \tau(v_b x v_c^*)
       =\tau(v_c^*v_b x)
       =\delta_{bc}\tau(q_{s(b)}x).
\]
The remaining outside product is
$v_a^*v_d=\delta_{ad}q_{s(a)}$.
This also proves the Wick recursion directly by contracting
innermost pairs; no centrality of the $p_s$ in $H$ has been used.

Consequently the self-adjoint diagonal entries
\[
 \lambda_s^{-1/2}z^{(j)}_{(s,i),(s,i)}
       \quad (j\geq1,\ 1\leq i\leq d_s)
\]
are mutually free over $pHp$, are free over $pHp$ from the
entire offdiagonal entry family, and have covariance
\[
 b\longmapsto\lambda_s^{-1}\tau(q_sb)q_s.
\]
Here freeness refers to the operator-valued Gaussian law;
the offdiagonal entries retain their prescribed adjoint
relations and their joint covariances.

In $R^+=R*_F(F\otimes I)$ let $t$ be the new central
standard semicircle and set $t_s=q_stq_s$.  Its compressed
covariances are
\[
 \kappa_2(t_s,b t_l)
       =\delta_{sl}\lambda_s^{-1}\tau(q_sb)q_s.
\]
All higher cumulants vanish, and these variables are free
over $pHp$ from the preceding entry family.  One way to see
the last assertion without a compression assumption is to
use the joint $H$-valued Wick formula of $s_j,t$: its
covariances are $b\mapsto\tau(b)1$ and $\E_F^H$, with zero
mixed cumulants, by the expectation tower in
Lemma~\ref{lem:fd-reassociation}.  Compressing this recursion
gives exactly the displayed formulas.

For each $s$ reindex the countably infinite diagonal family
indexed by $(j,i)$ together with the one additional variable
$t_s$ onto the native diagonal family.  Fix $pHp$ and all
offdiagonal entries.  This prescription preserves all joint
Wick moments, has an inverse given by the inverse reindexing,
and therefore gives a trace-preserving isomorphism
\[
 \phi:pR^+p\longrightarrow pRp.
\]
The corner is generated by the listed entries and the $t_s$,
because $t=\sum_{s,i}v_{s,i}t_sv_{s,i}^*$.

Reconstruct the full algebra by the finite frame:
\begin{equation}\label{eq:fd-reconstruction}
 J_R(x)=\sum_{a,b}v_a\phi(v_a^*xv_b)v_b^*.
\end{equation}
Using \eqref{eq:fd-full-frame} proves that this map is unital
and multiplicative.  The same formula with $\phi^{-1}$ is
its inverse.  It fixes $H$, since $\phi$ fixes $pHp$, and
trace preservation follows by applying cyclicity to the
diagonal terms.  Thus this is an onto isomorphism, rather
than merely a relative embedding.

For finitely many tags partition the unused reservoir into
$m$ infinite banks.  Absorb the tags successively, using
one bank for each tag and treating all preceding banks and
tag images as coefficients.  Lemma~\ref{lem:fd-reassociation}
permits this order and leaves the other banks as external
free factors.  A finite ordinary reservoir prefix is first
included in $H$.  Finally every entry formula, normalization,
partition and reindexing can be chosen with the same labels
on the two sides of a label-preserving embedding.  Formula
\eqref{eq:fd-reconstruction} then commutes with that
embedding, proving the exact naturality assertion.
For a block-supported tag the same proof simply adds a
variable $t_s$ at each selected vertex and adds no variable
at the other vertices.  The native diagonal families at
unselected vertices are left unchanged.  This proves the
stated supported version as well.
\end{proof}

\begin{lemma}[A free implementing circle for a finite-dimensional frame]
\label{lem:fd-free-implementer}
Let $N$ be faithfully tracial, let $F_0,B\subset N$ be
scalar-free unital subalgebras, and let $C\subset F_0$ be
unital and finite dimensional.  Suppose $w_0\in\mathrm U(N)$
conjugates the labelled frame of $C$ onto a labelled frame
of $B$, so in particular $B=w_0Cw_0^*$.  In
\[
 \widetilde N=N*_C(C\otimes\mathrm C(\mathbb T))
\]
let $v$ be the $C$-central Haar generator.  Then
\[
 W=w_0v
\]
is scalar Haar, is free from $F_0$, and satisfies
$WcW^*=w_0cw_0^*$ for every $c\in C$.  Hence
$\mathrm C^*(F_0,W)\cong F_0*\mathrm C(\mathbb T)$
with the specified faithful trace.  Multiplying $W$ by a
scalar phase preserves these conclusions.
\end{lemma}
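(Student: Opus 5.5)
The conjugation identity is immediate and comes first. The generator $v$ of $C\otimes\mathrm C(\mathbb T)$ commutes with $C$, so for $c\in C$
\[
 WcW^*=w_0vcv^*w_0^*=w_0cw_0^*.
\]
Multiplying $W$ by a scalar phase $\zeta$ changes neither this identity nor, as will be clear, any moment computation. Indeed, every moment of $\zeta W$ with elements of $F_0$ is $\zeta^{m}$ times the corresponding moment of $W$, where $m$ is the total exponent. Hence the whole content lies in two claims: $W$ is scalar Haar, and $W$ is free from $F_0$ with respect to the trace of $\widetilde N$. Once these hold, the isomorphism $\mathrm C^*(F_0,W)\cong F_0*\mathrm C(\mathbb T)$ follows. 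Both traces are faithful and the moments agree, so the argument used in Lemma~\ref{lem:fd-tag-embedding} applies: equality of all mixed moments together with faithfulness of the two reduced traces gives the isometric identification.

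For the moment computation I would argue in $\widetilde N=N*_C(C\otimes\mathrm C(\mathbb T))$ using $C$-valued freeness. By construction, $v$ is free from $N$ over $C$, commutes with $C$, and satisfies $\E_C(v^k)=0$ for $k\neq0$. It suffices to show that every alternating word
\[
 f_0W^{k_1}f_1W^{k_2}\cdots W^{k_n}f_n
\]
has trace zero. Here $k_i\neq0$, the $f_i$ are scalar-centered elements of $F_0$, and the end letters $f_0,f_n$ are allowed to be $1$. This single statement also gives $\tau(W^k)=0$, which is the Haar property.

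Expanding each power $W^{\pm k}$ as $w_0v\,w_0v\cdots$ or $v^*w_0^*\,v^*w_0^*\cdots$ writes the word as an alternating product of letters $v^{\pm1}$ and letters from $N$. I would first reorganize it through the frame of $C$. Fix matrix units $e_{ij}$ of $C$ and their images $w_0e_{ij}w_0^*$, which are the labelled matrix units of $B$. Inserting $\sum e_{ii}$ next to each $v^{\pm1}$ is possible because $v$ commutes with $C$. After this insertion, each maximal $N$-segment of the word becomes one of three kinds:
\begin{enumerate}
\item an element of $C$, coming from $w_0^*w_0$ or a frame insertion;
\item a product of the form $w_0^*(\text{element of }F_0)w_0$, or its analogue $w_0\,c\,f\,c'\,w_0^*$, which by the frame hypothesis lies in the algebra generated by $F_0$ and $B$;
\item mixed segments such as $f w_0$, whose $C$-valued expectation has to be computed.
\end{enumerate}

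I would then apply the $C$-valued moment formula for a $C$-central Haar unitary free from $N$ over $C$. Only noncrossing pairings of the $v$-letters against $v^*$-letters survive, and each such pairing produces nested $C$-valued expectations $\E_C$ of the intervening $N$-segments. Each such nested expectation is a moment in $N$ of an alternating product of the scalar-centered $f_i$ with elements of $B=w_0Cw_0^*$. The pairing structure forces the $B$-elements in these products to be traceless parts as well: a paired $v\cdots v^*$ that closes around an $f$ conjugates it by $w_0$ only when the two adjacent exponents have opposite signs, and the alternation hypothesis controls this. Scalar freeness of $F_0$ and $B$ then makes every such moment vanish, so each pairing contributes zero. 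The unpaired case contributes nothing at all, because $\E_C(v^k)=0$ for $k\neq0$.

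The main obstacle is this bookkeeping step. One must verify that, after the $C$-valued Haar expansion, every surviving term really reduces to a scalar moment of an alternating product of centered elements of $F_0$ and centered elements of $B$. In particular, the factors $w_0$ have no reason to be $C$-centered, and the cancellations $v f v^*$ across the junction between two powers with opposite signs need care. I would first test the scheme on words with $n\le2$. Then I would try to organize the general case as an induction on the number of $v$-letters, contracting innermost pairs exactly as in the Wick recursion of Lemma~\ref{lem:fd-tag-absorption}.
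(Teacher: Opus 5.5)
Your preliminary reductions are sound: the frame identity, the effect of a scalar phase, and the passage from ``$W$ is Haar and free from $F_0$'' to $\mathrm C^*(F_0,W)\cong F_0*\mathrm C(\mathbb T)$ via faithful traces. But the whole content of the lemma is the vanishing of the alternating moments $\tau(f_0W^{k_1}f_1\cdots W^{k_n}f_n)$. You leave exactly this as ``the main obstacle'', and the mechanism you sketch would not establish it, for two reasons.

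\emph{The pairing formula is wrong for Haar unitaries.} Moments of a ($C$-central) Haar unitary against a freely independent algebra are not sums over noncrossing pairings, because a Haar unitary has nonzero higher free cumulants. Already for a scalar Haar $u$ free from the $x_i$, the moment $\tau(x_0ux_1u^*x_2ux_3u^*)$ contains the term $-\tau(x_0)\tau(x_1)\tau(x_2)\tau(x_3)$. Contracting innermost $v\cdots v^*$ pairs as in the Wick recursion discards the centred remainder at each pinch, and that remainder can later merge with neighbouring segments.

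\emph{Splitting $W$ creates uncontrolled quantities.} Writing $W$ as separate letters $w_0$ and $v$ produces terms the hypotheses do not determine, since $w_0$ is prescribed only through $\Ad(w_0)|_C$. The segments $w_0^*fw_0$ and $w_0cfc'w_0^*$ in your item (ii) need not lie in $\mathrm C^*(F_0,B)$; take $C=\C$ and $w_0$ a Haar unitary free from $F_0$. Likewise $\E_C(w_0)$ and $\E_C(fw_0)$ in item (iii) are not fixed by the data. So no termwise reduction to moments of centred $F_0$- and $B$-letters can succeed along these lines.

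The missing observation is that the segments $w_0$, $fw_0$ and $w_0^*f$ occur only at same-sign junctions $v\,y\,v$ or $v^*\,y\,v^*$. There nothing needs computing: the $C$-part of $y$ merely merges $v^m$ into $v^{m\pm1}$, and by bimodularity the neighbouring letters stay $C$-centred. Only opposite-sign junctions matter, so $W$ should be kept intact as an HNN stable letter for $\theta=\Ad(w_0)|_C:C\to B$. The rules are then:
\begin{itemize}
\item unpinched words have zero expectation onto $N$;
\item $\E_N(WxW^*)=w_0\E_C(x)w_0^*$;
\item $\E_N(W^*xW)=w_0^*\E_B(x)w_0$, by uniqueness of trace-preserving expectations.
\end{itemize}
On $H_0=\mathrm C^*(F_0,B)=F_0*B$ one adds two identities: $\E_B(f)=\tau(f)1$ for $f\in F_0$, which is scalar freeness, and $\E_C|_{H_0}=\E_C\E_{F_0}|_{H_0}$, which holds because $C\subset F_0$.

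The paper avoids further bookkeeping by comparing with a model. In $F_0*\mathrm C(\mathbb T)$ with free Haar generator $z$, the assignment $\theta(c)\mapsto zcz^*$ identifies $H_0$ with $\mathrm C^*(F_0,zCz^*)$. The pairs $(H_0,W)$ and $(\mathrm C^*(F_0,zCz^*),z)$ obey identical pinch rules, so induction on stable letters gives equality of all scalar moments. Since $B=WCW^*$ and $C\subset F_0$, the generated algebras are $\mathrm C^*(F_0,W)$ and $F_0*\mathrm C(\mathbb T)$, and the Haar property and freeness are inherited from the model.

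Your direct route can also be completed with the same ingredients. $W^*fW$ junctions are already reduced because $\E_B(f)=0$. A $WfW^*$ pinch splits into a reduced part plus the centred element $\theta(\E_Cf)\in B$. The two identities then show that every segment produced by such collapses again satisfies the reduced conditions, so each branch either has zero expectation onto $N$ or collapses to an alternating centred word in $F_0*B$, which has trace zero.
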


\begin{proof}
Centrality of $v$ proves the frame identity.  Put
$\theta=\Ad(w_0)|_C:C\to B$ and
$H_0=\mathrm C^*(F_0,B)=F_0*B$.  Substituting $W=w_0v$
in a word with coefficients in $N$ gives the reduced HNN
normal-form rules for $\theta$: an unpinched word has
expectation zero onto $N$, and the two pinches have
expectations
\[
 \E_N(WxW^*)=w_0\E_C(x)w_0^*,\qquad
 \E_N(W^*xW)=w_0^*\E_B(x)w_0.
\]
These formulas follow directly from the amalgamated
reduced-word rule for the nonzero powers of $v$.  The
second uses uniqueness of the trace-preserving expectations.
They remain valid on $H_0$, where
\[
 \E_B(f)=\tau(f)1\quad(f\in F_0),\qquad
 \E_C|_{H_0}=\E_C|_{F_0}E_{F_0}|_{H_0}.
\]

For comparison, in $F_0*\mathrm C(\mathbb T)$ let $z$ be
the free Haar generator.  The algebras $F_0$ and $zCz^*$
are scalar-free, so the assignment $\theta(c)\mapsto zcz^*$
identifies $H_0$ with $\mathrm C^*(F_0,zCz^*)$.
Expanding its coefficients into alternating centered
$F_0$- and $zCz^*$-words gives precisely the same
normal-form rules: a $z$--$z^*$ pinch keeps the $\E_C$
component, the reverse pinch keeps the expectation onto
$zCz^*$, and every unpinched term has zero trace.  This
can also be checked inductively on the number of stable
letters: remove a pinch by its displayed expectation,
while the remaining centered term is an amalgamated
reduced word; if there is no pinch the whole word is
reduced.  It follows that every scalar word moment in
$H_0,W$ equals the corresponding moment in
$\mathrm C^*(F_0,zCz^*),z$.  The relations eliminate
$B=WCW^*$, so the generated algebras are exactly
$\mathrm C^*(F_0,W)$ and $F_0*\mathrm C(\mathbb T)$.
Faithfulness of the traces proves the isometric
identification.  No step uses simplicity of $C$.
\end{proof}

We will also use retained-frame liberation for direct sums.
If $A,B\subset M$ are scalar-free unital subalgebras,
$A$ is finite dimensional, $F_0\subset M$ contains $A$,
and $u$ is an $A$-central Haar unitary free from $M$
over $A$, then $uF_0u^*$ is scalar-free from $B$ and
contains the original $A$.  Indeed $uMu^*$ and $M$
are free over $A$ by the amalgamated reduced-word rule.
Their restricted product is
\[
 (uF_0u^*)*_A(A*B)\cong(uF_0u^*)*B,
\]
since $\E_A(b)=\tau(b)1$ on $B$.  This proves the
claim with its reduced norm.

Exact finite-dimensional frame conjugacy also requires
all summands to be retained.  If two unital embeddings of
$F=\bigoplus_s\mathrm M_{d_s}$ into a unital
stable-rank-one algebra have the same $\mathrm K_0$
class on each labelled $q_s$, cancellation gives
partial isometries between the corresponding $q_s$.
Writing these as $a_s$, with initial projection
$\varphi(q_s)$ and final projection $\psi(q_s)$, the unitary
\[
 \sum_{s,i}\psi(e^{(s)}_{i1})a_s\varphi(e^{(s)}_{1i})
\]
conjugates the entire labelled $\varphi$-frame onto the
$\psi$-frame.  Equality of a single ambient unit class
would not suffice.

\begin{lemma}[Relative Nielsen moves as actual ambient automorphisms]
\label{lem:fd-relative-nielsen}
Let $T=H*D_\infty$ be an admissible target, and let
$F_1,\ldots,F_m\subset H$ be unital finite-dimensional
subalgebras.  Form the finitely enlarged algebra
\[
 T^+=T*_{F_1}(F_1\otimes I_{H_1})\cdots
                  *_{F_m}(F_m\otimes I_{H_m}).
\]
Fix $a\in\{1,\ldots,m\}$, write
$F_a=\bigoplus_s\mathrm M_{d_s}$, and put
$Z=\exp(2\pi iH_a)$.  Suppose
$u\in\mathrm U(F_a'\cap T)$ and, for every labelled
minimal projection $q_s\in F_a$,
\begin{equation}\label{eq:fd-corner-marking}
 q_suq_s\in\CU(q_sTq_s).
\end{equation}
There are actual trace-preserving automorphisms of $T^+$
which, on arbitrary prescribed finite packets, approximate
the identity on $T$ and on every other tag logarithm,
and approximate the prescription
\[
 Z\longmapsto Zu.
\]
They induce the identity on the full compatible invariant.
All packets may contain moving coefficient elements and
are specified before the approximation is chosen.  No
approximation assertion is made on $H_a$.

In \eqref{eq:fd-corner-marking} one may equivalently impose
zero $\mathrm K_1$ class and zero Hausdorffized determinant
in each normalized corner.  Ambient determinant zero
alone is not the hypothesis.  No assumption
$\mathrm K_1(T)=0$ is needed: the multiplier classes in
\eqref{eq:fd-corner-marking} are required to vanish
individually.
\end{lemma}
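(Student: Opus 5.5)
The idea is to reduce to Theorem~\ref{rf:circle-lift}, one-circle relative interval lifting, applied with the $a$th tag as the distinguished interval and everything else absorbed into the coefficient algebra. Write $T^+$ as a relative interval extension over $F_a$ alone. By Lemma~\ref{lem:fd-reassociation} we may place the $a$th tag last. Its coefficient algebra $T'$ is generated by $T$ and the other tags. Decompose the $F_a$-central tag into block-supported tags $G_s$ over $F_s^+$, and treat one summand at a time, with the remaining block tags in the coefficients. Since $u$ commutes with $F_a$, it has the form $u=\sum_s p_s u p_s$. The corner $p_sup_s$ is the $d_s$-fold amplification of $q_suq_s$ under the matrix units. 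Hence the prescription $Z\mapsto Zu$ factors as a product of commuting moves $Z_s\mapsto Z_s(p_sup_s+1-p_s)$, one per summand. Composing the resulting automorphisms, with packets enlarged by the images of earlier moves, gives the full statement. So it suffices to treat a single supported block $p=p_s$, with corner unitary $u_s=p_sup_s$ commuting with $\mathrm M_{d_s}$.

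For a single block, compress by the minimal projection $q=q_s$. Use Lemma~\ref{lem:fd-tag-absorption}-style frame reconstruction \eqref{eq:fd-reconstruction} to identify $pT^+p$ with $\mathrm M_{d_s}(qT^+q)$. The corner $qT^+q$ should be identified, via the Wick/Gaussian computation in Lemma~\ref{lem:fd-tag-absorption}, with $F^\flat\fr\mathrm C([0,1])$. Here $F^\flat$ is the corner of $T'$ and the free interval is $q_stq_s$ rescaled, so that $qZq$ is Haar and free from $F^\flat$. In that corner define $\gamma$ on $R=\mathrm C^*(F^\flat,qZq)$ by fixing $F^\flat$ and sending $qZq\mapsto qZq\cdot quq$. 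This is a trace-preserving automorphism of $R$: $qZq\,quq$ is again Haar and free from $F^\flat$, because $quq\in F^\flat$ is unitary and multiplication of a free Haar unitary by a unitary from the coefficient algebra preserves the $*$-distribution and freeness. This is the standard free-product fact, checked by reduced words. Its inverse multiplies by $qu^*q$.

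Next verify the marked hypothesis of Theorem~\ref{rf:circle-lift}. It requires the embeddings of $\mathrm C(\mathbb T)$ with images $qZq$, $qZq\,quq$, and $qZq\,qu^*q$ into the corner to have the same totally compatible triple. All three have Haar distribution, so the traces agree. Their $\mathrm K_1$ classes and Hausdorffized determinants differ by those of $quq$, which vanish by \eqref{eq:fd-corner-marking}. Total $\mathrm K$-theory for $\mathrm C(\mathbb T)$ is determined by the $\mathrm K_1$ class of the generator, and the coefficient maps $\zeta_m$ are then determined via the Thomsen sequence. This is exactly where corner-wise vanishing, rather than ambient vanishing, is needed.

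The admissibility, infinite $L^2$, and simplicity hypotheses come from the free interval tail inside $T$, as in Lemma~\ref{lem:interval-regularity} and Remark~\ref{rf:rem-moving-packets}. Theorem~\ref{rf:circle-lift} then gives actual corner automorphisms approximating $\gamma$ on prescribed packets of $R$. Amplify them via the frame to $pT^+p$, extend by the identity on $(1-p)T^+(1-p)$, and reconstruct with \eqref{eq:fd-reconstruction}. This fixes $F_a$ and approximates the identity on $T$ and on the other tags. Moving coefficients are handled by Remark~\ref{rf:rem-moving-packets}.

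For the full invariant, the restriction to the coefficient corner is approximately unitarily equivalent to the identity. Combined with the KK-equivalence of Lemma~\ref{lem:fd-extension-kk}, this shows the automorphism acts trivially on total $\mathrm K$-theory. Triviality on the unitary quotient then follows from the Thomsen sequence and monotraciality. The main obstacle is the corner identification of $qT^+q$ as a genuine reduced free product with a free interval coordinate whose exponential is $qZq$, precisely enough that Theorem~\ref{rf:circle-lift} applies with the required simplicity and marking. This is also where I would be least sure the block-supported Wick bookkeeping goes through cleanly.
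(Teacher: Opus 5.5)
Your reduction down to the corner matches the paper. That covers the block splitting by Lemma~\ref{lem:fd-reassociation}, the identification $qT^+q\cong(qMq)*I_{h_q}$, the corner Nielsen automorphism $z\mapsto z\,quq$, and the check of the marked hypothesis of Theorem~\ref{rf:circle-lift} from \eqref{eq:fd-corner-marking}. In that identification $M$ is the full remaining coefficient algebra; it must contain the other block tags of $F_a$, not only $T'$.

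The gap is the passage from the corner lifts back to $T^+$. ``Amplify to $pT^+p\cong\mathrm M_{d_s}(qT^+q)$, extend by the identity on $(1-p)T^+(1-p)$, and reconstruct'' does not define an automorphism. Since $p=p_s$ is not central in $T^+$, you must also say how the off-diagonal space $(1-p)T^+p$ moves, and in general this cannot be done.

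Suppose $F_a$ has at least two summands. Then $1-p\neq0$ is full in the simple algebra $T^+\cong T$. An automorphism fixing $p$ and equal to the identity on $(1-p)T^+(1-p)$ is, on the imprimitivity bimodule $(1-p)T^+p$, $(1-p)T^+(1-p)$-linear and preserves the inner product $xy^*$. So it is right multiplication by a unitary $V\in pT^+p$. Then $x^*y\mapsto V^*x^*yV$, and these products are dense, so it equals $\Ad(V^*)$ on $pT^+p$.

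The lifts from Theorem~\ref{rf:circle-lift} only approximately fix $qMq$, and nothing makes them inner. Formula \eqref{eq:fd-reconstruction} does not rescue this. It needs an automorphism of the corner at $\sum_{s'}q_{s'}$ acting compatibly on the cross terms $q_sT^+q_{s'}$, which a single-block lift does not provide. Your step is valid only when $F_a$ is a single full matrix block, so that $p=1$.

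The missing idea is to give up exact identity on the complement. Use fullness of $q$ together with a small unitary correction, as the paper does:
\begin{enumerate}
\item Choose $x_1,\ldots,x_N\in Tq\subset Mq$ with $\sum_jx_jx_j^*=1$. Then $P=(x_i^*x_j)\in\mathrm M_N(qMq)$ is a projection, and $\Psi(r)=(x_i^*rx_j)$ is an isomorphism of $T^+$ onto $P\,\mathrm M_N(qT^+q)P$.
\item Put into the corner packet the entries of $P$, the entries $x_i^*rx_j$ for protected $r$, and the coefficients $x_i^*v_k$, $v_k^*x_j$.
\item Then $\theta_N(P)\approx P$, and the polar unitary $w$ of $P\theta_N(P)+(1-P)(1-\theta_N(P))$ is close to $1$.
\item The map $\beta=\Psi^{-1}\Ad(w)\theta_N\Psi$ is an actual automorphism with $\beta(r)\approx r$. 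Via $x_i^*Z_0x_j=\sum_k(x_i^*v_k)z(v_k^*x_j)$ it also satisfies $\beta(Z_0)\approx Z_0U_0$.
\end{enumerate}

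The marking argument must then be carried through $\Psi$, $\theta_N$ and $\Ad(w)$. You need that each individual lift, not just the limit, has the approximate-unitary class of the identity on the coefficient corner. You also need that the coefficient inclusion is an isomorphism of full invariants, by Lemma~\ref{lem:fd-extension-kk} plus the determinant sequence. Trace preservation and triviality on total $\mathrm K$-theory alone do not force triviality on $\Uquot{\,\cdot\,}$ when $\mathrm K_1\neq0$.

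Admissibility of the corner should come from Lemma~\ref{lem:fd-tag-absorption} ($M\cong T^+\cong T$ with $H$ fixed), fullness of $q$, and passage of admissibility to full corners. Remark~\ref{rf:rem-moving-packets} explicitly leaves admissibility to be verified, so a free interval tail alone does not supply it.
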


\begin{proof}
Split the selected tag into its block-supported tags by
Lemma~\ref{lem:fd-reassociation}.  We perform a move on
one block at a time, putting that block outermost and
including all other block tags among the coefficients.
Fix a block and abbreviate its support by $p_0$, its
minimal projection by $q$, its matrix units by $e_{ij}$,
and $v_i=e_{i1}$.  Let $M$ denote the full remaining
coefficient algebra, including every other tag.  The
whole algebra, in this ordering, is
\[
 R=M*_{\mathrm M_d\oplus\mathbb C(1-p_0)}
       ((\mathrm M_d\otimes I)\oplus\mathbb C(1-p_0)).
\]
Write $h$ for the supported interval and $h_q=qhq$,
with functional calculus in the unital corner $qRq$.
Then
\begin{equation}\label{eq:fd-supported-corner}
 qRq\cong(qMq)*I_{h_q}.
\end{equation}
To prove generation, use
$h=\sum_iv_i h_qv_i^*$ to expand every occurrence of
the supported tag in a corner word.  The coefficients
between consecutive $h_q$'s then lie in $qMq$.
For the free law, a centered element of $qMq$ has
expectation zero onto
$\mathrm M_d\oplus\mathbb C(1-p_0)$: its expectation
is its normalized corner trace times $q$.
A centered function of $h_q$ has the same property.
The amalgamated reduced-word rule therefore gives
scalar freeness in the normalized corner trace.
Faithfulness proves \eqref{eq:fd-supported-corner}.
This argument uses the outermost block, rather than
compressing an algebra in which other block tags
are still treated as added variables.

Set $z=\exp(2\pi ih_q)$ and $u_q=quq$.
The corner circle free product has the trace-preserving
Nielsen automorphism
\[
 \gamma|_{qMq}=\id,\qquad \gamma(z)=zu_q,
\]
whose inverse sends $z$ to $zu_q^*$.
The multiplier $u_q$ belongs to the coefficient corner.
Its class in the Hausdorffized algebraic unitary group
is zero by \eqref{eq:fd-corner-marking}, also after
inclusion in $qMq$.  Thus $z,zu_q,zu_q^*$ have
identical full marked circle invariants: their
$\mathrm K_1$ classes and their algebraic unitary
classes agree, and all three have Haar spectral
measure.  The unit determines their $\mathrm K_0$ maps;
since both ordinary $\mathrm K$-groups of $\mathrm C(\mathbb T)$
are free cyclic, the coefficient maps are determined by the
ordinary maps as well.  This checks the full marking required by
Theorem~\ref{rf:circle-lift}.

All corner hypotheses of that theorem hold here.
By Lemma~\ref{lem:fd-tag-absorption}, $M$ and $R$
are isomorphic to $T$, with the original $H$ fixed.
The nonzero projection $q\in T$ is full in the
original simple algebra $T$, hence in both $M$ and
$R$.  Admissibility passes to full corners.
The coefficient corner has infinite-dimensional
GNS space since it contains $qTq$.
Theorem~\ref{rf:circle-lift} and its moving-packet
formulation in Remark~\ref{rf:rem-moving-packets}
therefore supply actual trace-preserving corner
automorphisms $\theta$ approximating $\gamma$ on
any chosen finite coefficient-and-circle packet.

We now extend this finite-packet assertion to the whole
ambient algebra without asserting an ordinary matrix
amplification by $q$.  Fullness in $T$ gives finitely
many $x_j\in Tq\subset Mq$ such that
\[
 \sum_{j=1}^N x_jx_j^*=1.
\]
For instance choose a finite positive sum of terms
$y_jqy_j^*$ which is invertible and normalize the
$y_jq$ by its inverse square root.  Set
\[
 P=(x_i^*x_j)_{i,j}\in\mathrm M_N(qMq),\qquad
 \Psi(r)=(x_i^*rx_j)_{i,j}.
\]
Then $P$ is a projection and
\[
 \Psi:R\longrightarrow P\mathrm M_N(qRq)P
\]
is a unital isomorphism; its inverse is
$(b_{ij})\mapsto\sum_{i,j}x_i b_{ij}x_j^*$.
Include every entry of $P$ in the protected corner
coefficient packet.  The projection
$Q=\theta_N(P)$ is then as close to $P$ as desired.
When $\|P-Q\|<1$, the polar unitary of
\[
 PQ+(1-P)(1-Q)
\]
is a unitary $w$ satisfying $wQw^*=P$, with
$\|w-1\|\to0$ as $\|P-Q\|\to0$.  Thus
\begin{equation}\label{eq:fd-ambient-morita}
 \beta=\Psi^{-1}\Ad(w)\theta_N\Psi
\end{equation}
is an actual automorphism of $R$.

For each protected coefficient $r\in M$, include all
entries $x_i^*rx_j$ in the corner packet.  Equation
\eqref{eq:fd-ambient-morita} then makes $\beta(r)$
as close to $r$ as desired.  To control the selected
supported circle
\[
 Z_0=\sum_{k=1}^dv_kzv_k^*,\qquad
 U_0=p_0up_0=\sum_{k=1}^dv_ku_qv_k^*,
\]
also include the finitely many coefficients
$x_i^*v_k$ and $v_k^*x_j$.  Since
\[
 x_i^*Z_0x_j
       =\sum_k(x_i^*v_k)z(v_k^*x_j),
\]
the corner estimate $\theta(z)\approx zu_q$
then gives $\beta(Z_0)\approx Z_0U_0$.
Every other block tag belongs to $M$ and is protected,
including its logarithm.  These estimates involve
only finitely many elements and finite sums; choosing
the corner tolerance sufficiently small gives any
specified ambient tolerance.

The map \eqref{eq:fd-ambient-morita} preserves trace:
with $\tau_q=\tau(q)^{-1}\tau|_{qRq}$,
\[
 \tau(r)=\tau(q)(\Tr_N\otimes\tau_q)(\Psi(r)),
\]
and both $\theta_N$ and $\Ad(w)$ preserve the
matrix trace.  It also preserves the full marking.
The corner coefficient inclusion in
\eqref{eq:fd-supported-corner} is a $\mathrm{KK}$-
equivalence, and hence a total-$\mathrm K$-theory
isomorphism, by Lemma~\ref{lem:fd-extension-kk}
with scalar amalgamation.  Its trace range
on $\mathrm K_0$ is unchanged.  The determinant
exact sequence therefore shows that its map on
Hausdorffized algebraic unitary groups is also an
isomorphism.  The corner lifts can be chosen with the identity
marking on this coefficient inclusion.  More precisely,
the proof of Theorem~\ref{rf:circle-lift} first
constructs a fixed embedding whose coefficient
restriction has the prescribed full approximate-unitary
class; it then uses inner conjugates of a fixed
isomorphism approximately unitarily equivalent to that
embedding.  Consequently each chosen automorphism,
and not only its point-norm limit on finite packets,
has this full coefficient marking.  It therefore has
identity full marking on the corner.  Matrix
stabilization, the full-corner identification and
the inner correction in
\eqref{eq:fd-ambient-morita} preserve that conclusion.

Finally perform these moves for the finitely many
summands.  At each step protect the other tags and
all preceding moving images needed in the finite
request.  Their individual errors can be made
arbitrarily small, so the finite composition fixes
the requested coefficient packets and sends
$Z=\sum_s p_sZ$ as close as desired to $Zu$.
Each factor of the composition is an actual
trace-preserving automorphism with identity full
marking.  Stable rank one and the determinant
exact sequence identify
\eqref{eq:fd-corner-marking} with the equivalent
$\mathrm K_1$ and normalized-corner determinant
conditions stated in the lemma.
\end{proof}

\section{Split-unit fusion}\label{app:split-unit}

The support-shifting argument below gives a direct proof of fusion
when only one unit class may be nonsplit. It is also used in the
main construction to separate the root's $\mathrm K_1$-group and to
incorporate the split-unit leaves.

\begin{theorem}[Fusion with split-unit leaves]\label{thm:split-fusion}
Let $\mathcal A=((A_i,\tau_i))$ be a nonempty finite or countable
family of separable unital nuclear UCT algebras with faithful selected
traces, and suppose all but at most one factor have split unit class.
If $F$ is a nuclear UCT core with faithful selected trace and the
same selected pointed graded $\mathrm K$-groups and pairing as
$P_{\mathcal A}$, then, for every reservoir $D$,
\[
 P_{\mathcal A}\fr D\cong F\fr D.
\]
Every prescribed compatible full marking may be realized, and a
finite reservoir prefix may be fixed pointwise. Such products are
classified by their Elliott invariants, with realization of every
invariant isomorphism. For two such families, if each contains two
distinct factors admitting Haar unitaries, the classification also
holds without adjoining $D$.
\end{theorem}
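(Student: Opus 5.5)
We reduce everything to the marked nuclear-core theorem, Theorem~\ref{thm:marked-core}, by absorbing the split-unit leaves one at a time into a growing nuclear core. Let $A_0$ be the possibly nonsplit factor, or $A_0=\C$ if there is none. Enumerate the remaining factors $A_1,A_2,\ldots$; each has $\mathrm K_0(A_i)=\Z[1_{A_i}]\oplus G_i$. By Proposition~\ref{prop:product-elliott}, the selected data of $P_{\mathcal A}$ are
\[
 \mathrm K_0=\mathrm K_0(A_0)\oplus\bigoplus_{i\ge1}G_i,\qquad
 \mathrm K_1=\bigoplus_i\mathrm K_1(A_i),
\]
with the unit of $A_0$ and the summed pairing. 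At stage $n$ we form a nuclear UCT core $E_n$, namely a range model from Lemma~\ref{lem:marked-range}, carrying the selected data of $A_0\fr\cdots\fr A_n$. The key claim is the binary support-shifting step:
\[
 E_{n-1}\fr A_n\fr D\cong E_n\fr D,
\]
realized with the prescribed marking and fixing a prescribed finite reservoir head.

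\emph{The binary step.} Since $A:=A_n$ has split unit, we move it off the scalar unit. Write $\mathrm K_0(A)=\Z[1]\oplus G$. Choose a nuclear UCT algebra $A'$, simple and monotracial by Lemma~\ref{lem:marked-range}, with $\mathrm K_0(A')=\Z[1]\oplus G$, the same $\mathrm K_1$, and pairing $\tau_{A*}$. Theorem~\ref{thm:marked-core} then gives $A\fr D\cong A'\fr D$. The splitting lets us form the nuclear algebra
\[
 E_n:=E_{n-1}\otimes\text{-type gluing},
\]
more concretely a range model whose $\mathrm K$-groups are $\mathrm K_*(E_{n-1})\oplus(G,\mathrm K_1(A))$, with pairing the sum. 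We then need an isomorphism $E_{n-1}\fr A'\fr D\to E_n\fr D$. For this we construct unital trace-preserving embeddings $E_{n-1}\to E_n$ and $A'\to E_n\fr D_{>r}$ whose $\mathrm K$-images are the complementary summands; these come from Theorem~\ref{thm:embedding}, since the target is admissible by Lemma~\ref{lem:interval-regularity}. Freeness of the two images is achieved by conjugating the second by a Haar unitary from a fresh reservoir bank, in the spirit of Lemma~\ref{lem:fd-free-implementer} with $C=\C$.

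This yields an embedding $\eta:E_{n-1}\fr A'\fr D\to E_n\fr D$ whose full marking is an isomorphism, by Proposition~\ref{prop:product-elliott} and the Thomsen sequence. Surjectivity up to isomorphism then follows from Lemma~\ref{lem:marked-caap}. The required automorphisms are $\id_{H}\fr\Ad(u)$ on tails, with $u$ supplied by the two uniqueness applications exactly as in Proposition~\ref{prop:marked-replacement}. The splitting is what makes the support shift possible: it guarantees that the only relation between the unit classes is the scalar one, so no $\mathrm K_1$ is created and the marking extends.

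\emph{Countable families and classification.} For infinitely many leaves, we choose each stage isomorphism to fix a larger finite head and to approximate the previous one on increasing finite sets. This uses the head-fixing clause of Theorem~\ref{thm:marked-core} and Remark~\ref{rem:marked-alignment}. Two-sided summable estimates then give a limit isomorphism $P_{\mathcal A}\fr D\cong E_\infty\fr D$, where $E_\infty$ is a core with the aggregate data. A further application of Theorem~\ref{thm:marked-core} replaces $E_\infty$ by the given $F$ with any prescribed compatible marking. Classification and realization of invariant isomorphisms are Theorem~\ref{thm:marked-core} transported through these isomorphisms. In the Haar case, Shlyakhtenko's absorption theorem \cite[Theorem~5.2]{Shlyakhtenko} gives $P\cong P\fr D$, using the semicircular presentation of $D$ from Remark~\ref{rem:interval-convention}, so $D$ can be dropped.

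\emph{Main obstacle.} The hardest step is the binary isomorphism $\eta$, specifically verifying \eqref{eq:caap}. The correcting unitaries must simultaneously nearly fix the image of $E_{n-1}$ and pull the target packet into $\eta(\cdot)$. The difficulty is that the approximate conjugator comparing the free Haar-twisted copy of $A'$ with its image must lie in the complementary tail. My first attempt would be to place the two nuclear images in disjoint reservoir banks from the start, so that Remark~\ref{rem:marked-alignment} applies on each bank separately.
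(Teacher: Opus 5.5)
Your binary step has a genuine gap: \eqref{eq:caap} for your $\eta$ cannot be verified ``exactly as in Proposition~\ref{prop:marked-replacement}.'' That argument uses nuclear-source uniqueness twice, and the first use is on the \emph{source} core. The estimate $\rho\theta\approx j$ on $E$ is what makes $\id_{D_{\le r}}\fr\Ad(u)$ nearly fix $\theta(E)$. In your setting the source core is $E_{n-1}\fr A'$, which is not nuclear. Theorem~\ref{thm:embedding} gives no uniqueness for it, and Example~\ref{ex:reservoir-nonuniqueness} shows that uniqueness genuinely fails for free-product sources with equal full invariants.

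Applying uniqueness to $E_{n-1}$ and to $A'$ separately gives two unrelated conjugators. A single correcting automorphism, however, must nearly fix $\eta(E_{n-1})$ and the Haar-twisted $\eta(A')$ at the same time. Putting the images in disjoint banks does not decouple them, because both are built over the same core $E_n$. Remark~\ref{rem:marked-alignment} applies only when the compared maps already lie in a common complementary tail; it explicitly excludes a core that is merely scalar free from a reservoir.

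A telling symptom is that your argument never uses the splitting. Your stated reason for needing it is wrong: Proposition~\ref{prop:product-elliott} shows that no $\mathrm K_1$ is created for any faithfully traced factors. If your binary step were valid, it would also fuse two nonsplit factors, such as the CAR example in Remark~\ref{rem:nonsplit-fusion}. That case is exactly what all of Section~\ref{sec:general-fusion} is needed for.

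The missing idea is to apply uniqueness and \eqref{eq:caap} only to nuclear \emph{direct sums}, and to obtain the free product by functoriality of amalgamated reduced free products. Use half weights. The splitting $\mathrm K_0(E_2)=\Z[1_{E_2}]\oplus G_2$ gives the isomorphism $(ku_2+g,\ell)\mapsto(k,\ell u_2+g)$ from $\mathrm K_0(E_2\oplus\C)$ to $\mathrm K_0(\C\oplus E_2)$. It fixes both support classes and the pairing. Such a map cannot exist for $E_2=\mathrm M_n$, and this is where the hypothesis enters.

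The steps are then as follows.
\begin{enumerate}
\item Theorem~\ref{thm:marked-core} gives $(E_2\oplus\C)\fr D\cong(\C\oplus E_2)\fr D$. Correct it by cancellation to fix $r$, and hence the expectation onto $R=\C r\oplus\C q$.
\item Amalgamate with $\id_{E_1\oplus\C}$ over $R$ and use Lemma~\ref{lem:assoc}. This yields $((E_1\fr E_2)\oplus\C)\fr D\cong(E_1\oplus E_2)\fr D$, which is Proposition~\ref{prop:shift}.
\item Replace the nuclear core $E_1\oplus E_2$ by $F\oplus\C$ via Theorem~\ref{thm:marked-core}.
\item Strip the scalar summand with Theorem~\ref{thm:bridge}: amalgamate with $\mathrm M_2$ over $R$, apply Lemmas~\ref{lem:complete} and~\ref{lem:gaussian}, and compress at $e_{11}$.
\end{enumerate}

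Your countable step has a smaller gap of the same kind. For each new stage to approximate $\iota_n\fr\id_D$, the restriction of the stage isomorphism to the old core must have exactly the full invariant of $\iota_n$. This means its $\KL$ class and unitary map, not just the group maps. Remark~\ref{rem:marked-alignment} presupposes this equality rather than providing it. The paper arranges it with a $\KK$ block built from the retraction supplied by the splitting, together with a $\mathrm K_1/\Tor$ determinant correction.
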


We first prove the matrix-completion and support-shifting lemmas;
the countable fusion argument then proves the theorem.

\subsection{Matrix completion}\label{sec:matrix}

For a faithfully tracial algebra $(P,\tau_P)$ define
\begin{equation}\label{eq:note-2-1}
 \mathcal F(P)=
 (P\oplus\C,\tfrac12\tau_P\oplus\tfrac12\id)\fr D,
 \qquad r=(1_P,0),\quad q=1-r,\quad
 R=\C r\oplus\C q.
\end{equation}
The trace on $R$ is the uniform two-point trace.
All expectations onto $R$ are trace preserving.

\begin{lemma}[Mixed associativity]\label{lem:assoc}
Let $A_1,A_2$ contain a common finite-dimensional tracial algebra
$R$, with trace-preserving expectations onto $R$, and let $Q$ be
faithfully tracial. Then
\begin{equation}\label{eq:note-2-2}
 A_1*_R(A_2\fr Q)
 \cong (A_1*_R A_2)\fr Q
\end{equation}
by an isomorphism respecting the canonical copies of $A_1,A_2,Q$.
The expectation from $A_2\fr Q$ to $R$ is the composition of its
canonical expectation onto $A_2$ with the expectation onto $R$.
\end{lemma}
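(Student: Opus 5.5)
The plan is to compare both sides with a single reduced amalgamated free product over $R$ of the three algebras $A_1$, $A_2$ and $R\otimes Q$. Here $R\otimes Q$ carries the expectation $\id_R\otimes\tau_Q$. Both sides of \eqref{eq:note-2-2} will turn out to have the same joint $R$-valued distribution for the three canonical copies. Faithfulness of the traces will then upgrade equality of moments to an isometric identification.

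First we check that the expectations on each side are well defined and trace preserving. On the left, the expectation $A_2\fr Q\to R$ is $\E_R\circ\E_{A_2}$, as stated. Since $\E_{A_2}$ preserves the free-product trace and $\E_R$ on $A_2$ preserves $\tau_{A_2}$, the composite is the unique trace-preserving expectation onto $R$. On the right, the free product $A_1*_RA_2$ carries its amalgamated trace $\tau_R\circ\E_R$. We then take its scalar free product with $Q$. Both constructions yield faithful traces, because all the expectations involved are GNS-faithful.

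The key step is a freeness statement: in $(A_1*_RA_2)\fr Q$, the algebras $A_1$ and $\mathrm C^*(A_2,Q)$ are free with amalgamation over $R$, with respect to $\E_R\circ\E_{A_1*_RA_2}$. We verify this on reduced words. Take an alternating product of elements that are $R$-centered and lie alternately in $A_1$ and in $\mathrm C^*(A_2,Q)\cong A_2\fr Q$. Expand each element of $A_2\fr Q$ into its $A_2$-part $\E_{A_2}(y)$ plus scalar-centered alternating $A_2$/$Q$ words.

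\begin{itemize}
\item The term $\E_{A_2}(y)$ is $R$-centered in $A_2$.
\item Each remaining scalar-centered word contains at least one centered $Q$ letter.
\end{itemize}

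After regrouping, the whole product becomes a sum of alternating words. Their letters are either centered $Q$ letters, or elements of $A_1*_RA_2$ built from $R$-centered alternating $A_1$/$A_2$ pieces. Any such element of $A_1*_RA_2$ that sits between $Q$ letters should be scalar-centered; if it is not, its scalar part is split off and absorbed. Scalar freeness of $Q$ from $A_1*_RA_2$ then kills every term containing a $Q$ letter. The term with no $Q$ letter is a reduced $A_1$/$A_2$ word, and amalgamated freeness kills it.

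This bookkeeping is the main obstacle. The scalar parts of $A_1*_RA_2$ elements sitting between $Q$ letters need not vanish, so the cleanest route is induction on the number of $Q$ letters. Removing the outermost pinch by $\tau_Q$ reduces to a shorter word, in the same way as in the proof of Lemma~\ref{lem:fd-free-implementer}.

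With $R$-freeness established, the $R$-valued joint moments of $(A_1,A_2,Q)$ agree on both sides. Each side is determined by two pieces of data. The first is the marginal expectations onto $R$. The second is $\E_R$ on $A_2\fr Q$, which is the same composite on both sides. Since the traces are $\tau_R\circ\E_R$ on both sides, the scalar moments of all words agree. Both sides are generated by the canonical copies and both traces are faithful, so the GNS construction gives an isometric $^*$-isomorphism respecting $A_1$, $A_2$ and $Q$.

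The fact that $A_2$ and $Q$ generate a copy of $A_2\fr Q$ inside the right side, with the reduced norm, is immediate from associativity of scalar free products restricted to $R$-centered words. More simply, it follows because the restricted trace is the free-product trace and is faithful.
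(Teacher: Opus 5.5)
Your overall strategy matches the paper's: prove that $A_1$ and $B=\mathrm C^*(A_2,Q)$ are free over $R$ inside $(A_1*_RA_2)\fr Q$, then identify the two sides by equality of faithful joint laws. The opening ``single model'' $A_1*_RA_2*_R(R\otimes Q)$ with expectation $\id_R\otimes\tau_Q$ is wrong, however. In that algebra $Q$ commutes with $R$, whereas on both sides of \eqref{eq:note-2-2} $Q$ is scalar-free from $R$. For instance, with $A_1=A_2=R=\C^2$ and $Q=\mathrm C(\mathbb T)$ your model is commutative, while $\C^2\fr\mathrm C(\mathbb T)$ is not. A correct three-factor model would use $R\fr Q$ with its canonical expectation onto $R$. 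Nothing later in your argument uses this model, so you can simply drop it.

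The genuine gap is at the step you call the main obstacle. You say the $A_1*_RA_2$ blocks between consecutive centered $Q$ letters need not be scalar-centered, and you propose an induction on the number of $Q$ letters without stating an inductive hypothesis. In fact these blocks are always centered, and seeing this is the whole content of the paper's proof.
\begin{itemize}
\item Inside one $B$-coefficient, the block between two $Q$ letters is a scalar-centered $A_2$ letter, by reducedness.
\item A block spanning two coefficients has the form $a\,x_kz_kx_{k+1}\cdots z_{l-1}x_l\,a'$. Here $a,a'\in A_2$ are end letters of the adjacent $A_2$--$Q$ words (possibly $1$), the $x$'s are $R$-centered in $A_1$, and the $z$'s are the $R$-centered pieces $\E_{A_2}(y)$. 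Alternation guarantees at least one $x$.
\item Write $a=\E_R(a)+\mathring a$ and $a'=\E_R(a')+\mathring a'$. This expresses such a block as a sum of $R$-multiples of $R$-reduced alternating words. So its $R$-expectation vanishes, and so does its trace $\tau_R\circ\E_R$.
\end{itemize}
Hence every term is a scalar-reduced word in $(A_1*_RA_2)\fr Q$ containing a centered $Q$ letter. Its expectation onto $A_1*_RA_2$, and therefore onto $R$, is zero, and no induction is needed.

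Without this fact your pinch-removal induction cannot close. A nonzero scalar part between $q_i$ and $q_{i+1}$ would produce terms in which neighbouring blocks merge, and you would again have to show their $R$-expectation vanishes by exactly this structural argument.

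You also assert, without checking it on the right-hand side, that $\E_R\circ\E_{A_1*_RA_2}$ restricts on $B$ to $\E_R\circ\E_{A_2}$. The same word computation gives this: a scalar-reduced $A_2$--$Q$ word containing a $Q$ letter has zero expectation onto $A_1*_RA_2$.
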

\begin{proof}
Work inside $C=(A_1*_R A_2)\fr Q$, and put
$B=\mathrm{C}^*(A_2,Q)$. Its trace and expectation onto $R$ have the stated
form. We verify that $A_1$ and $B$ are $R$-free.

Expand an $R$-centered algebraic coefficient of $B$ as a sum of
an $R$-centered coefficient from $A_2$ and scalar-reduced
$A_2$--$Q$ words containing centered $Q$ letters.
In an alternating word from $\ker \E_R|_{A_1}$ and
$\ker \E_R|_B$, either no $Q$ letter occurs, in which case use
$R$-freeness of $A_1,A_2$, or $Q$ letters occur. Each block
between successive centered $Q$ letters is scalar centered:
inside one $B$ coefficient this follows from scalar reduction;
across coefficients the block contains an $R$-centered alternating
$A_1$--$A_2$ word. Testing the resulting word against $R$ shows
that its $R$-expectation is zero, by scalar freeness of $Q$.
The indicated algebras generate $C$. Equality of their faithful
tracial joint laws identifies the reduced GNS completions.
\end{proof}

\begin{lemma}[Completing a support projection]\label{lem:complete}
For every faithfully tracial $P$,
\begin{equation}\label{eq:note-2-3}
 (P\oplus\C)*_R \mathrm{M}_2(\C)\cong \mathrm{M}_2(P),
\end{equation}
where $r=e_{11}$, $q=e_{22}$, and the copy of $P\oplus\C$ is
$\operatorname{diag}(P,\C1_P)$.
\end{lemma}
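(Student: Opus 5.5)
We would prove the identity by exhibiting the amalgamated reduced free product as $\mathrm M_2(P)$ itself, using the amalgamated free-product characterization. Inside $\mathrm M_2(P)$, equipped with the trace $\tau_P\otimes\tr_2$, we consider the two unital subalgebras $A_1=\operatorname{diag}(P,\C1_P)$ and $A_2=\mathrm M_2(\C)$, which share the diagonal $R=\C e_{11}\oplus\C e_{22}$. The restricted traces are the right ones: on $A_1$ the trace is $\tfrac12\tau_P\oplus\tfrac12\id$, and on $A_2$ it is $\tr_2$, which restricts to the uniform two-point trace on $R$. The trace-preserving expectation $\E_R$ on $\mathrm M_2(P)$ is $x\mapsto\operatorname{diag}(\tau_P(x_{11}),\tau_P(x_{22}))$, and it restricts to the trace-preserving expectations on $A_1$ and on $A_2$. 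The subalgebras $A_1,A_2$ generate $\mathrm M_2(P)$, since $e_{i1}\operatorname{diag}(p,0)e_{1j}$ realizes $p\otimes e_{ij}$.

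The key step is to show that $A_1$ and $A_2$ are free with amalgamation over $R$ with respect to $\E_R$. Their kernels are easy to describe. We have
\[
\ker\E_R|_{A_1}=\{\operatorname{diag}(p,0):\tau_P(p)=0\},\qquad
\ker\E_R|_{A_2}=\operatorname{span}\{e_{12},e_{21}\}.
\]
Take an alternating word in these centered pieces. Two consecutive letters from $A_2$ never occur, so each off-diagonal letter is separated by centered corner elements $\operatorname{diag}(p,0)$. A nonzero product must pass through the $(1,1)$ corner at every $A_1$ letter. Hence any nonvanishing alternating product has the form $e_{21}p_1e_{12}\cdots$, with the centered $p$'s sitting in the $(1,1)$ entry, interleaved with $e_{12},e_{21}$ where needed.

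We then compute the $R$-expectation of such a product directly. If the word has an unequal number of $e_{12}$ and $e_{21}$ letters, or more generally lands in an off-diagonal entry, then $\E_R$ kills it. Otherwise the word reduces to a single corner entry that is a product of consecutive $p_i$ with an $e_{21}e_{12}$ gap between them. Since $e_{12}\operatorname{diag}(0,1)$-type insertions force $e_{22}$ between consecutive $A_1$ letters, every interior $A_1$ letter is in fact compressed by $e_{22}$ and hence vanishes. So a surviving word contains at most one centered $p$, and its corner trace is $\tau_P(p)=0$.

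This case check is the main obstacle, though it is elementary. The careful point is that an $A_1$ letter that is $\operatorname{diag}(p,0)$ with $p$ centered cannot be followed through the $(2,2)$ corner. This is why the amalgamated word product collapses, and it mirrors the standard proof that $\mathrm M_2$ is generated freely over its diagonal.

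Given $R$-freeness, the universal property of the reduced amalgamated free product gives a trace-preserving surjection from the reduced product $(P\oplus\C)*_R\mathrm M_2(\C)$ onto $\mathrm M_2(P)$, extending the two inclusions. Because the joint $R$-valued moments agree, the GNS representations for the two faithful traces are unitarily equivalent. The trace on $\mathrm M_2(P)$ is faithful, as $\tau_P$ is faithful, and the reduced product trace is faithful by the faithful expectations. Therefore the surjection is isometric, which is the argument already used in Lemma~\ref{lem:assoc}. It sends $r$ to $e_{11}$ and $q$ to $e_{22}$, as stated.
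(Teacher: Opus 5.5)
Your argument is correct and is essentially the paper's proof: generation of $\mathrm M_2(P)$ by the two copies, the description of the $R$-centered pieces, vanishing of alternating words containing two $P\oplus\C$ letters, zero $R$-expectation when there is one such letter, and faithfulness of $\tr_2\otimes\tau_P$ to identify the reduced completion. One wording fix: an interior corner letter does not vanish by itself (for example, $e_{21}\operatorname{diag}(p,0)e_{12}=\operatorname{diag}(0,p)$), so the reason a word with two such letters vanishes should be stated as in the paper, namely $\operatorname{diag}(a,0)\,m\,\operatorname{diag}(b,0)=0$ for off-diagonal $m$, while a single letter gives either an off-diagonal matrix or a diagonal entry proportional to $p$, whose expectation is a multiple of $\tau_P(p)=0$.
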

\begin{proof}
The two algebras on the right generate $\mathrm{M}_2(P)$: the diagonal
$p e_{11}$ and the constant matrix units generate all $p e_{ij}$.
The $R$-centered elements of $P\oplus\C$ have the form
$\operatorname{diag}(a,0)$, $\tau_P(a)=0$, while those of
$\mathrm{M}_2(\C)$ are off diagonal. An alternating centered word with
at least two $P\oplus\C$ letters vanishes because
\[
 \operatorname{diag}(a,0)\,m\,\operatorname{diag}(b,0)=0
 \quad\text{for off-diagonal }m.
\]
With one such letter, its $R$-expectation is zero: multiplication
by at most two off-diagonal letters gives either an off-diagonal
matrix or a diagonal entry proportional to $a$.
This proves $R$-freeness. The faithful normalized matrix trace
then gives exactly the reduced completion.
\end{proof}

\begin{lemma}[Gaussian matrix amplification]\label{lem:gaussian}
For every faithfully tracial $P$ there is an isomorphism
\begin{equation}\label{eq:note-2-4}
 (\mathrm{M}_2(P),\tr_2\otimes\tau_P)\fr D
 \cong \mathrm{M}_2(P\fr D)
\end{equation}
fixing the canonical $\mathrm{M}_2(P)$.
\end{lemma}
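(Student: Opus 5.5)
We will realize $\mathrm M_2(P\fr D)$ as a reduced free product of $\mathrm M_2(P)$ with a Gaussian copy of $D$, using the free standard semicircular presentation of $D$ (Remark~\ref{rem:interval-convention}). Let $s_j$, $j\geq1$, be the free standard semicircular generators of $D\subset P\fr D$. Inside $\mathrm M_2(P\fr D)$ we form, for each $j$, the self-adjoint $2\times2$ \emph{GUE-type} matrices built from four distinct reservoir generators:
\[
 X_j=\frac1{\sqrt2}\begin{pmatrix}s_{4j}&\frac{s_{4j+1}+is_{4j+2}}{\sqrt2}\\[2pt]
 \frac{s_{4j+1}-is_{4j+2}}{\sqrt2}&s_{4j+3}\end{pmatrix}.
\]
We keep the canonical $\mathrm M_2(P)$ fixed. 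The goal is that the $X_j$ form a standard free semicircular family that is scalar-free from $\mathrm M_2(P)$ with respect to $\tr_2\otimes\tau$, and that together with $\mathrm M_2(P)$ they generate $\mathrm M_2(P\fr D)$.

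For the moment computation we work $\mathrm M_2(P)$-valued, as in Lemma~\ref{lem:fd-tag-embedding}. With $\E=\id_{\mathrm M_2}\otimes\E_P$ the expectation onto $\mathrm M_2(P)$, the family $(X_j)$ is $\mathrm M_2(P)$-valued semicircular, because it is a linear combination of free semicirculars that are free from $P$ over $\C$ (entrywise Wick formula). Its covariance is
\[
 \E(X_jbX_l)=\delta_{jl}\,(\tr_2\otimes\tau_P)(b)\,1,
\]
and this is the standard matrix-unit contraction: the scalar pairing of entries gives $\tau(b_{kk'})$ summed with the factor $1/2$. A scalar covariance with vanishing higher cumulants is exactly the criterion for freeness over $\C$ from $\mathrm M_2(P)$ with standard semicircular law (cf.\ \cite{NSS}). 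The generated algebra is therefore $(\mathrm M_2(P),\tr_2\otimes\tau_P)\fr D'$, where $D'=\mathrm C^*(X_j)$ is a free semicircular family of countably infinite cardinality. Transforming back to uniform intervals identifies $D'\cong D$. Faithfulness of both traces makes this an isometric identification of the reduced completions, exactly as in Lemma~\ref{lem:fd-tag-embedding}.

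It remains to prove generation, and I expect this to be the main obstacle. Multiplying $X_j$ by the matrix units $e_{kl}\in\mathrm M_2(\C)\subset\mathrm M_2(P)$ recovers every entry times a matrix unit, for example $e_{11}X_je_{11}=2^{-1/2}s_{4j}e_{11}$ and $e_{12}X_je_{21}=2^{-1/2}s_{4j+3}e_{11}$. The real and imaginary parts of the off-diagonal entry are recovered in the same way. Hence every $s_ne_{kl}$ lies in the generated algebra, together with $\mathrm M_2(P)$, so we get all of $\mathrm M_2(P\fr D)$. The only care needed is that the index bookkeeping $n\mapsto(j,\text{entry})$ is a bijection onto all generators, so that no generator of $D$ is omitted. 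This can be arranged by indexing $s_n$ with $n\geq4$ and putting $s_1,s_2,s_3$ into an extra block, or by simply reindexing $D$ from $0$. The isomorphism fixes $\mathrm M_2(P)$ by construction, and it preserves the selected traces since $\tr_2\otimes\tau$ restricts to the free product trace.
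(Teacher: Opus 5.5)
Your proposal is correct and follows essentially the same route as the paper. You build $2\times2$ Gaussian matrices from four free semicirculars per index and check that their $\mathrm M_2(P)$-valued cumulants vanish except for the covariance $b\mapsto(\tr_2\otimes\tau_P)(b)1$. You then use the cumulant criterion of \cite{NSS} and faithfulness of the traces to identify the generated algebra with $\mathrm M_2(P)\fr D$, and you get surjectivity by extracting every entry with the constant matrix units. The bookkeeping for $s_1,s_2,s_3$ is handled as in the paper, by reindexing the countable entry family; an ``extra block'' would need four generators, so reindexing from $0$ is the clean choice.
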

\begin{proof}
In $P\fr D_{\mathrm{ent}}$ choose standard semicirculars
$a_n,b_n,x_n,y_n$, $n\geq1$, all freely independent and free
from $P$. Set
\begin{equation}\label{eq:note-2-5}
 c_n=(x_n+i y_n)/\sqrt2,\qquad
 S_n=\frac1{\sqrt2}
 \begin{pmatrix}a_n&c_n\\c_n^*&b_n\end{pmatrix}.
\end{equation}
Here $D_{\mathrm{ent}}=\mathrm{C}^*(a_n,b_n,x_n,y_n:n\geq1)\cong D$.

We check the full distribution, including coefficients from
the possibly nonnuclear algebra $P$.
Put $B=\mathrm{M}_2(P)$, $\tau_B=\tr_2\otimes\tau_P$, and
$\E_B=\id_{\mathrm{M}_2}\otimes \E_P$. Scalar semicircularity and freeness
from $P$ give $P$-valued cumulants zero except at order two,
where
\begin{equation}\label{eq:note-2-6}
 \kappa_2^P(z_\alpha,pz_\beta)
 =\delta_{\alpha\beta}\tau_P(p)1_P .
\end{equation}
This is the cumulant factorization criterion in
\cite[Theorem 3.5]{NSS}. Entrywise matrix amplification preserves
the cumulant formula. Equivalently, write $S_n$ as the linear
combination of its four real Gaussian entries with coefficients
\[
 C_1=e_{11}/\sqrt2,\quad C_2=e_{22}/\sqrt2,\quad
 C_3=(e_{12}+e_{21})/2,\quad
 C_4=i(e_{12}-e_{21})/2.
\]
These coefficients give exactly \eqref{eq:note-2-5}.
All higher $B$-valued cumulants are zero, and
\begin{equation}\label{eq:note-2-7}
 \begin{split}
 \kappa_2^B(S_n,bS_m)
 &=\delta_{nm}\sum_{\alpha=1}^4
 C_\alpha(\id_{\mathrm{M}_2}\otimes\tau_P)(b)C_\alpha\\
 &=\delta_{nm}\tau_B(b)1_B .
 \end{split}
\end{equation}
Thus the $B$-valued Wick recursion is precisely that of a
standard scalar semicircular family free from $B$.
In particular the family $(S_n)$ has that joint law with $B$.
The scalar matrix version is the classical Gaussian matrix
construction in \cite[Section 3]{Dykema}.

Consequently, the canonical $B$ and the $S_n$ generate a copy
of $B\fr D$. It is the entire $\mathrm{M}_2(P\fr D_{\mathrm{ent}})$:
constant matrix units recover every matrix entry of $S_n$,
and their real and imaginary parts recover $a_n,b_n,x_n,y_n$.
This proves surjectivity at the $\mathrm{C}^*$-algebraic level.

Faithfulness proves equality of the $\mathrm{C}^*$-norms, not only equality
of tracial moments. For any $z$ in a faithfully tracial
$\mathrm{C}^*$-algebra,
\[
 \|z\|=\lim_{k\to\infty}
 \tau((z^*z)^k)^{1/(2k)}.
\]
Finally reindex the countable entry family to identify
$D_{\mathrm{ent}}$ with $D$.
\end{proof}

\begin{theorem}[Matrix completion removes the scalar summand]
\label{thm:bridge}
Let $P,Q$ be arbitrary faithfully tracial unital algebras.
If
\begin{equation}\label{eq:note-2-8}
 \Phi:\mathcal F(P)\longrightarrow\mathcal F(Q)
\end{equation}
is a trace-preserving isomorphism fixing $r$, then
\begin{equation}\label{eq:note-2-9}
 P\fr D\cong Q\fr D.
\end{equation}
No nuclearity or $\mathrm{K}$-theory assumption on $P,Q$ is needed here.
\end{theorem}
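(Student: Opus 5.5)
The plan is to amalgamate $\Phi$ with the identity on the matrix completion over $R$, use Lemma~\ref{lem:complete} and Lemma~\ref{lem:gaussian} to identify both sides with $\mathrm M_2(P\fr D)$ and $\mathrm M_2(Q\fr D)$, and then compress by $e_{11}=r$.

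\emph{Step 1: rewrite each amalgamated product.} Mixed associativity (Lemma~\ref{lem:assoc}), with $A_1=\mathrm M_2(\C)$, $A_2=P\oplus\C$ and free factor $D$, gives
\[
 \mathcal F(P)*_R\mathrm M_2(\C)
 \cong\bigl((P\oplus\C)*_R\mathrm M_2(\C)\bigr)\fr D .
\]
Here $\mathrm M_2(\C)\supset R$ carries the normalized trace, which restricts to the uniform two-point trace on $R$, matching \eqref{eq:note-2-1}. By Lemma~\ref{lem:complete} the inner product is $\mathrm M_2(P)$ with trace $\tr_2\otimes\tau_P$. Lemma~\ref{lem:gaussian} then identifies the whole algebra with $\mathrm M_2(P\fr D)$. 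Under this identification $r\mapsto e_{11}$, and the canonical copy of $P$ goes to $P\,e_{11}\subset e_{11}\mathrm M_2(P\fr D)e_{11}$. The same holds for $Q$.

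\emph{Step 2: extend $\Phi$.} Since $\Phi$ fixes $r$, it also fixes $q=1-r$, so it restricts to the identity on $R$. Because $\Phi$ preserves traces, it intertwines the unique trace-preserving expectations onto $R$: for $x\in\mathcal F(P)$ and $s\in R$,
\[
 \tau(\Phi(x)s)=\tau(\Phi(xs))=\tau(xs).
\]
So $\Phi*_R\id_{\mathrm M_2}$ is defined on the reduced amalgamated free products. It preserves the amalgamated free-product expectations, hence the faithful traces, and is therefore an isometric $^*$-isomorphism. Its inverse is $\Phi^{-1}*_R\id$. This gives
\[
 \Psi:\mathrm M_2(P\fr D)\longrightarrow\mathrm M_2(Q\fr D),
 \qquad \Psi(e_{ij})=e_{ij}.
\]
This requires that the identifications in Step 1 send the $\mathrm M_2(\C)$ factor to the constant matrix units. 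Lemma~\ref{lem:complete} does this explicitly, and Lemma~\ref{lem:gaussian} fixes $\mathrm M_2(P)$ and hence the constant matrix units.

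\emph{Step 3: compress.} Since $\Psi(e_{11})=e_{11}$, $\Psi$ restricts to a trace-preserving isomorphism of the corners $e_{11}\mathrm M_2(P\fr D)e_{11}\cong P\fr D$ onto $e_{11}\mathrm M_2(Q\fr D)e_{11}\cong Q\fr D$, with normalized corner traces. This proves \eqref{eq:note-2-9}.

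The main obstacle is the bookkeeping in Step 1. The composite identification has to send $r$ exactly to $e_{11}$ and the $\mathrm M_2(\C)$ tag exactly to the constant matrix units, so that $\Psi$ fixes $e_{11}$ on the nose. I would check this by tracing the canonical copies through Lemma~\ref{lem:assoc}, which respects $A_1$, $A_2$ and $Q$, and through Lemma~\ref{lem:gaussian}, which fixes $\mathrm M_2(P)$. The trace normalizations also need care: the trace $\tfrac12\tau_P\oplus\tfrac12$ on $P\oplus\C$ has to match $\tr_2\otimes\tau_P$ on $\operatorname{diag}(P,\C)$, and it does. None of this uses nuclearity or $\mathrm K$-theory, as the statement claims.
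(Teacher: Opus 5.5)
Your proposal is correct and follows essentially the same route as the paper. The paper also uses that $\Phi$ fixes $R$ and preserves the trace, hence the expectation onto $R$, to form $\Phi*_R\id_{\mathrm M_2}$. It then identifies both sides with $\mathrm M_2(P\fr D)$ and $\mathrm M_2(Q\fr D)$ via Lemmas~\ref{lem:assoc}--\ref{lem:gaussian}, checking that the matrix units are preserved and $r\mapsto e_{11}$, and compresses by $e_{11}$.
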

\begin{proof}
Trace preservation and $\Phi(r)=r$ imply that $\Phi$ preserves
the expectation onto $R$. Explicitly, that expectation is
\[
 \E_R(a)=2\tau(rar)r+2\tau(qaq)q.
\]
Reduced amalgamated functoriality therefore gives an isomorphism
\[
 \widehat\Phi=\Phi*_R\id_{\mathrm{M}_2}:
 \mathcal F(P)*_R \mathrm{M}_2\longrightarrow
 \mathcal F(Q)*_R \mathrm{M}_2
\]
fixing every added matrix unit. Lemmas~\ref{lem:assoc}--%
\ref{lem:gaussian} give identifications
\begin{equation}\label{eq:note-2-10}
 \begin{aligned}
 \mathcal F(P)*_R \mathrm{M}_2
 &\cong ((P\oplus\C)*_R \mathrm{M}_2)\fr D\\
 &\cong \mathrm{M}_2(P)\fr D
 \cong \mathrm{M}_2(P\fr D),
 \end{aligned}
\end{equation}
and the corresponding identifications for $Q$.
They all preserve the matrix units and take $r$ to $e_{11}$.
Compressing the resulting isomorphism by $e_{11}$ proves \eqref{eq:note-2-9}.
\end{proof}

\begin{remark}[Invariant and reservoir control]\label{rem:control}
The completed isomorphism fixes all matrix units, so it equals
$\mathrm{M}_2(\Theta)$ for its corner isomorphism $\Theta$.
The prescribed total $\mathrm{K}$-theory and matrix-stabilized unitary data therefore
descend through the canonical corner maps.
For $u\in\mathrm U_0(P\fr D)$, using a path from $1$ to $u$
and its diagonal amplification,
\[
 \Delta_{\tr_2\otimes\tau}(\operatorname{diag}(u,1))
 =\tfrac12\Delta_\tau(u);
\]
normalizing the corner trace restores the original determinant.
This formula concerns the determinant kernel. The ordinary
$\mathrm K_1$ component of a general unitary is retained by the
same corner identification.

The class $[q]-[r]$ is killed by the actual matrix-unit relation
$r\sim q$. Other specified reduced classes survive. This is
a functor applied to the exact isomorphism \eqref{eq:note-2-8}.

Finite reservoir control survives as well. If $\Phi$ fixes
finitely many decorated semicircular generators and $r$,
then $\widehat\Phi$ fixes these generators and all matrix units.
It consequently fixes every Gaussian entry extracted from
them. By choosing the entry enumeration, any specified finite
collection of ordinary reservoir interval factors can be
protected pointwise.
\end{remark}

\subsection{Finite support shifting and fusion}\label{sec:fusion}

\begin{proposition}[Support shifting at equal weights]
\label{prop:shift}
Let $E_1,E_2$ be nuclear UCT cores with faithful selected traces.
Suppose that $E_2$ has a chosen splitting
$\mathrm{K}_0(E_2)=\Z[1_{E_2}]\oplus G_2$.
There is a trace-preserving isomorphism fixing $r$,
\begin{equation}\label{eq:note-3-1}
 \mathcal F(E_1\fr E_2)
 \cong (E_1\oplus E_2,
        \tfrac12\tau_1\oplus\tfrac12\tau_2)\fr D.
\end{equation}
No splitting of $[1_{E_1}]$ is required.
\end{proposition}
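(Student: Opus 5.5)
The plan is to build an explicit trace-preserving embedding of the right-hand side of \eqref{eq:note-3-1} into $\mathcal F(E_1\fr E_2)$ that fixes $r$ and induces an invertible full marking. I would then upgrade it to an isomorphism by Lemma~\ref{lem:marked-caap}, with correcting automorphisms of the form $\id_{D_{\le k}}\fr\Ad(u)$ as in Proposition~\ref{prop:marked-replacement}. Both sides are admissible targets, by Lemma~\ref{lem:interval-regularity} applied to the exact algebras $(E_1\fr E_2)\oplus\C$ and $E_1\oplus E_2$. The first step is therefore to compare their selected data, using Proposition~\ref{prop:product-elliott} and the Fima--Germain sequence. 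On the left,
\[
 \mathrm K_0=\mathrm K_0(E_1\fr E_2)\oplus\Z[q],\qquad [r]=([1],0).
\]
On the right, $\mathrm K_0=\mathrm K_0(E_1)\oplus\mathrm K_0(E_2)$ with $[r]=([1_{E_1}],0)$.

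The splitting $\mathrm K_0(E_2)=\Z[1_{E_2}]\oplus G_2$ is exactly what is needed to match these groups. Send $[1_{E_2}]\mapsto[q]$, send $G_2$ identically onto its image in $\mathrm K_0(E_1\fr E_2)$, and send $\mathrm K_0(E_1)$ identically. This defines a graded isomorphism that fixes $[r]$ and $[q]$ and preserves the half-weighted pairings. No splitting of $[1_{E_1}]$ is used, because $E_1$ stays in the $r$-corner throughout. The $\mathrm K_1$-groups agree as $\mathrm K_1(E_1)\oplus\mathrm K_1(E_2)$. By the compatible-extension theorem \cite[Theorem~3.9]{CGSTW}, this isomorphism extends to a full totally compatible isomorphism $\xi$.

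Next I construct the embedding. Keep $E_1$ native in the $r$-corner. The full corner $q\,\mathcal F(E_1\fr E_2)\,q$ is again admissible, so Theorem~\ref{thm:embedding} gives a unital trace-preserving embedding $\kappa:E_2\to q\,\mathcal F(E_1\fr E_2)\,q$ realizing the $E_2$-part of $\xi$. Its image need not be $R$-free from $E_1$. To fix this, I reserve one reservoir block and use it to produce an $R$-central Haar unitary. Conjugating $\kappa(E_2)$ by this unitary makes it free over $R$ from $E_1\oplus\C q$, by the retained-frame liberation remark in Section~\ref{sec:fd-relative}, and conjugation does not change its marking. Together with an untouched infinite reservoir tail, this yields a trace-preserving embedding
\[
 \eta:(E_1\oplus E_2)\fr D\longrightarrow\mathcal F(E_1\fr E_2)
\]
with $\eta(r)=r$ and marking $\xi^{-1}$. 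I use Lemma~\ref{lem:assoc} to reassociate the $R$-amalgamation with the scalar-free tail.

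It remains to show $\eta$ is onto, and this is the step I expect to be the main obstacle. Following Proposition~\ref{prop:marked-replacement}, the natural candidates for \eqref{eq:caap} are automorphisms $\id_{D_{\le k}}\fr\Ad(u)$ with $u$ in a complementary tail. The difficulty is that the original native copy of $E_2$, sitting under $r$ inside $E_1\fr E_2$, must be approximated by elements of the image. That copy is not nuclear-free from $E_1$ in the way the new core is. I would first try two moves. Use nuclear-source uniqueness, Theorem~\ref{thm:embedding}, in the admissible target $r\,\mathcal F\,r$ to compare the native $E_2$ with a copy of $E_2$ assembled from $\kappa(E_2)$ by a partial isometry $r\sim q$. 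Such a partial isometry exists in $\eta(\text{source})$ because $[r]=[q]$ there, and the unit-splitting gives that class equality. Then extend the resulting correcting conjugation by the identity on a protected head, as in Remark~\ref{rem:marked-alignment}.

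If these corrections can be made to act identically on the full invariant, Lemma~\ref{lem:marked-caap} gives the required isomorphism, and it fixes $r$ because every automorphism used fixes $R$. Identity action on the invariant should follow by the cancellation argument through the invertible tail triple, exactly as in Proposition~\ref{prop:marked-replacement}.
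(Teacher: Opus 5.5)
Your construction of $\eta$ is plausible, but surjectivity, which is the whole content of the proposition, is not established, and neither of the two moves you propose for it works.

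First, there is no partial isometry $w$ with $w^*w=q$ and $ww^*=r$ in either algebra. Your own $\mathrm K$-theory computation shows
\[
 [r]=([1_{E_1}],0)\neq(0,[1_{E_2}])=[q]
 \quad\text{in }\mathrm K_0(E_1)\oplus\mathrm K_0(E_2),
\]
and $[r]=([1],0)\neq(0,1)=[q]$ in $\mathrm K_0(E_1\fr E_2)\oplus\Z[q]$. The splitting lets you \emph{match} $[q]$ with $[1_{E_2}]$; it does not identify $[q]$ with $[r]$. That difference is killed only later, after adjoining $\mathrm M_2$ over $R$ in Theorem~\ref{thm:bridge}.

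Second, even granting such a transfer, nuclear-source uniqueness for $E_2$ alone gives a unitary with no control on the $E_1$-coefficients of the packet in \eqref{eq:caap}. Controlling both at once would require uniqueness for a source containing $E_1$ and a copy of $E_2$ jointly, and that source is not nuclear. The argument of Proposition~\ref{prop:marked-replacement} works only because the ambient core of its target is nuclear, whereas your target core $(E_1\fr E_2)\oplus\C$ is not. Remark~\ref{rem:marked-alignment} also does not help: it requires the compared maps to lie already in one complementary tail. Your $\kappa$ is built in all of $q\,\mathcal F(E_1\fr E_2)\,q$ and then liberated, so it does not.

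The missing idea is to keep $E_1$ out of every analytic step. By Lemma~\ref{lem:assoc},
\[
 \mathcal F(E_1\fr E_2)\cong(E_1\oplus\C)*_R\bigl((E_2\oplus\C)\fr D\bigr),
 \qquad
 (E_1\oplus E_2)\fr D\cong(E_1\oplus\C)*_R\bigl((\C\oplus E_2)\fr D\bigr).
\]
These hold because $(E_1\oplus\C)*_R(E_2\oplus\C)=(E_1\fr E_2)\oplus\C$, while in $(E_1\oplus\C)*_R(\C\oplus E_2)$ the $R$-centered letters sit in opposite corners and multiply to zero, giving $E_1\oplus E_2$.

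It therefore suffices to produce a trace-preserving isomorphism $\Psi:(E_2\oplus\C)\fr D\to(\C\oplus E_2)\fr D$ with $\Psi(r)=r$. Such a $\Psi$ preserves $\E_R$, and then $\id_{E_1\oplus\C}*_R\Psi$ is the required isomorphism.

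The two cores $E_2\oplus\C$ and $\C\oplus E_2$, with half weights, are nuclear UCT. With $u_2=[1_{E_2}]$ and $g\in G_2$, the splitting gives the graded isomorphism $(ku_2+g,\ell)\mapsto(k,\ell u_2+g)$ on $\mathrm K_0$, with the $\mathrm K_1(E_2)$ summand moved across. This map preserves the unit, the classes of $r$ and $q$, and the pairing. Theorem~\ref{thm:marked-core} realizes it by an isomorphism, and stable-rank-one cancellation corrects that isomorphism by a unitary so that $r$ is fixed exactly.

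This is the only place the splitting enters, which is why none is needed for $E_1$. In your framework, the repair amounts to confining $\kappa$ to the $R$-free complement $(E_2\oplus\C)\fr D$ of $E_1\oplus\C$, whose core is nuclear. There no liberation is needed, and the marked-core theorem applies.
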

\begin{proof}
Put $u_2=[1_{E_2}]$. Compare the nuclear cores
$E_2\oplus\C$ and $\C\oplus E_2$, with half weights on both
components. On $\mathrm{K}_0$ use the isomorphism
\[
 (ku_2+g,\ell)\longmapsto(k,\ell u_2+g),
 \qquad g\in G_2,
\]
and on $\mathrm{K}_1$ move the $E_2$ summand to the second component.
This fixes the two component-unit classes $r,q$ and preserves
the selected pairing: both pairings on $G_2$ equal
$\frac12\tau_{2*}$. Theorem~\ref{thm:marked-core} gives an
isomorphism after adjoining $D$.
Stable-rank-one cancellation adjusts it by a unitary to fix
$r$ exactly. It then preserves the expectation onto $R$.

Amalgamate this isomorphism with $\id_{E_1\oplus\C}$ over $R$.
Mixed associativity gives
\[
 \begin{aligned}
 (E_1\oplus\C)*_R((E_2\oplus\C)\fr D)
 &\cong((E_1\fr E_2)\oplus\C)\fr D,\\
 (E_1\oplus\C)*_R((\C\oplus E_2)\fr D)
 &\cong(E_1\oplus E_2)\fr D.
 \end{aligned}
\]
The first line places both nonscalar cores in the $r$-corner;
the second places them in opposite corners.
This proves \eqref{eq:note-3-1}, and the construction fixes the canonical
$E_1\oplus\C$ at this stage. All replacements may be carried out
in a complementary reservoir tail before extending by the identity
on a protected prefix.
\end{proof}

\begin{theorem}[Connected commutative fusion]\label{thm:fusion}
Let $E_i=(\mathrm{C}(X_i),\mu_i)$, $i=1,2$, where $X_i$ are connected
compact metrizable spaces and the measures are faithful. No assumption
on either $\mathrm{K}$-group is made.
Choose points $x_i\in X_i$, and let $X_1\vee X_2$ denote the wedge
obtained by identifying them.
For any faithful selected probability on $X_1\vee X_2$,
\begin{equation}\label{eq:note-3-2}
 (E_1\fr E_2)\fr D
 \cong \mathrm{C}(X_1\vee X_2)\fr D.
\end{equation}
The specified reduced $\mathrm{K}$-classes can be preserved.
The result holds for any finite number of such cores.
\end{theorem}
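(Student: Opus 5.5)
Because $X_1$ and $X_2$ are connected, each unit class splits and is primitive in the relevant sense. Evaluation at $x_i$ gives $\mathrm K_0(\mathrm C(X_i))=\Z[1]\oplus\widetilde{\mathrm K}_0(X_i)$, and the selected pairing vanishes on the reduced part: a connected space has constant rank, so every reduced class has trace zero. I would therefore deduce the statement from Proposition~\ref{prop:shift} together with Theorem~\ref{thm:marked-core}, rather than by any direct geometric identification.

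\emph{Step 1: the invariant of the wedge.} The wedge $X_1\vee X_2$ is connected, and Mayer--Vietoris for the pushout over the common point gives
\[
 \widetilde{\mathrm K}^*(X_1\vee X_2)\cong\widetilde{\mathrm K}^*(X_1)\oplus\widetilde{\mathrm K}^*(X_2).
\]
The pairing of every faithful selected probability on $X_1\vee X_2$ vanishes on reduced classes, because rank is constant on a connected space. Proposition~\ref{prop:product-elliott}, on the other side, gives $\mathrm K_0(E_1\fr E_2)=\Z\oplus\widetilde{\mathrm K}^0(X_1)\oplus\widetilde{\mathrm K}^0(X_2)$ and $\mathrm K_1=\mathrm K^1(X_1)\oplus\mathrm K^1(X_2)$, with pairing equal to the $\Z$-coordinate. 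The obvious identification is therefore a unit- and pairing-preserving graded isomorphism. It also respects the natural maps induced by the two wedge inclusions, which is what the phrase ``the specified reduced classes can be preserved'' refers to.

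\emph{Step 2: reduction to equal selected data.} The isomorphism~\eqref{eq:note-3-2} would now follow from a fusion statement: if $F$ is a nuclear UCT core with the same pointed graded $\mathrm K$-groups and pairing as $E_1\fr E_2$, then $(E_1\fr E_2)\fr D\cong F\fr D$. This is exactly Theorem~\ref{thm:split-fusion}, with $F=\mathrm C(X_1\vee X_2)$ and all factors split. Since that theorem's proof comes later, I would instead argue directly from the split-unit machinery already proved.

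Proposition~\ref{prop:shift}, applied with $E_2$ split, gives $\mathcal F(E_1\fr E_2)\cong(E_1\oplus E_2)\fr D$, fixing $r$. The algebra $E_1\oplus E_2$ is nuclear UCT. Its pointed data with half weights, namely $\mathrm K_0=\Z r\oplus\Z q\oplus\widetilde{\mathrm K}^0(X_1)\oplus\widetilde{\mathrm K}^0(X_2)$ with $\tau(r)=\tau(q)=\tfrac12$, coincide with those of $\mathrm C(X_1\vee X_2)\oplus\C$ under the Step~1 isomorphism on the $r$-summand. Theorem~\ref{thm:marked-core} then gives a trace-preserving isomorphism
\[
 (E_1\oplus E_2)\fr D\cong(\mathrm C(X_1\vee X_2)\oplus\C)\fr D=\mathcal F(\mathrm C(X_1\vee X_2))
\]
realizing this invariant isomorphism, which fixes the class of $r$. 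Stable-rank-one cancellation corrects it by a unitary so that it fixes $r$ exactly. Composing, we obtain $\mathcal F(E_1\fr E_2)\cong\mathcal F(\mathrm C(X_1\vee X_2))$ fixing $r$. Theorem~\ref{thm:bridge} then yields~\eqref{eq:note-3-2}, and Remark~\ref{rem:control} transports the prescribed $\mathrm K$-classes through the corner.

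\emph{Step 3: finitely many cores.} For more than two cores I would induct on the number of factors. Wedges of connected spaces remain connected, so each new step again involves two connected split factors. The reduced free product is associative, so $(E_1\fr\cdots\fr E_n)\fr D\cong\mathrm C(X_1\vee\cdots\vee X_{n-1})\fr E_n\fr D$. Applying the two-factor case to the pair $\mathrm C(X_1\vee\cdots\vee X_{n-1})$ and $E_n$ completes the induction.

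The main obstacle I expect is the marking check in Step~2. The class $q$ appears in both $\mathrm K_0$ groups, but the identification must be chosen so that it fixes $r$ exactly and preserves the selected half-weight pairing. A further compatibility issue is that Theorem~\ref{thm:marked-core} needs a full compatible extension of the graded isomorphism. Theorem~\ref{thm:marked-core} itself says such an extension exists, but I would still verify explicitly that the extension can be chosen to fix $r$.
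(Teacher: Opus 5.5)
Your proposal is correct and follows the paper's proof essentially step for step. You shift supports with Proposition~\ref{prop:shift}, compare the cores $E_1\oplus E_2$ and $\mathrm C(X_1\vee X_2)\oplus\C$ by Theorem~\ref{thm:marked-core} followed by a cancellation unitary fixing $r$, apply Theorem~\ref{thm:bridge}, and iterate for finitely many factors, exactly as the paper does (the paper computes the wedge's reduced $\mathrm K$-theory from the split evaluation extension $0\to\mathrm C_0(X_1\setminus\{x_1\})\oplus\mathrm C_0(X_2\setminus\{x_2\})\to\mathrm C(X_1\vee X_2)\to\C\to0$ rather than Mayer--Vietoris, but your C$^*$-pullback version works equally well for arbitrary compact metrizable spaces).
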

\begin{proof}
Let $W=\mathrm{C}(X_1\vee X_2)$. Use Proposition~\ref{prop:shift}.
The pullback description
\[
 W=\{(f_1,f_2)\in E_1\oplus E_2:f_1(x_1)=f_2(x_2)\}
\]
gives the split evaluation extension
\[
 0\longrightarrow
 \mathrm C_0(X_1\setminus\{x_1\})\oplus
 \mathrm C_0(X_2\setminus\{x_2\})
 \longrightarrow W\longrightarrow\C\longrightarrow0.
\]
The splitting is the constant functions. Consequently the reduced
$\mathrm K$-groups are the direct sums of the two wedge summands;
no CW or cofibration hypothesis on the spaces is needed.
The nuclear cores $E_1\oplus E_2$ and $W\oplus\C$ have
isomorphic selected data: map the first component unit to
$1_W\oplus0$, the second to $0\oplus1$, and map their reduced
$\mathrm{K}$-groups to the wedge summands. All reduced $\mathrm{K}_0$-classes
have trace zero by connectedness, so the trace pairing agrees
for any faithful selected wedge law.
Apply Theorem~\ref{thm:marked-core}, then adjust by a unitary to fix
$r$. We obtain $\mathcal F(E_1\fr E_2)\cong\mathcal F(W)$
over $R$. Theorem~\ref{thm:bridge} proves \eqref{eq:note-3-2}.
Repeat for finitely many factors.
\end{proof}

\begin{proposition}[Fusion into a nuclear range model]
\label{prop:model}
Let $E_1,E_2$ satisfy Proposition~\ref{prop:shift}, with the chosen
splitting $\mathrm{K}_0(E_2)=\Z[1_{E_2}]\oplus G_2$.
Put $P=E_1\fr E_2$, and give
$G_0=\mathrm{K}_0(P)$ the order
\begin{equation}\label{eq:note-3-4}
 (G_0)_+=\{g:\rho(g)>0\}\cup\{0\},
 \qquad \rho=\tau_{P*},\qquad u=[1_P].
\end{equation}
There is a simple, infinite-dimensional, monotracial nuclear UCT
$\mathcal Z$-stable model $F$ with pointed $\mathrm{K}$-groups and pairing
$(G_0,u,\mathrm{K}_1(P),\rho)$, and
\begin{equation}\label{eq:note-3-5}
 P\fr D\cong F\fr D .
\end{equation}
Any prescribed compatible isomorphism of the selected pointed data
may be used. The unit of $F$ need not split.
\end{proposition}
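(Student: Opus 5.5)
The plan is to imitate the proof of Theorem~\ref{thm:fusion}, with the wedge replaced by a nuclear range model. I would work through the matrix-completion bridge of Theorem~\ref{thm:bridge}.

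\textbf{Step 1: choose the model and compare selected data.}
Let $F$ be a model for $(G_0,u,\mathrm K_1(P),\rho)$ given by Lemma~\ref{lem:marked-range}. Its hypothesis needs a nuclear UCT core realizing these data. For that I would use the range theorem \cite[Theorem~2.8]{EGLN} exactly as in the proof of that lemma. The order \eqref{eq:note-3-4} is simple and weakly unperforated with unique state $\rho$, and Proposition~\ref{prop:product-elliott} gives $\rho(u)=1$.

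Next I compare the nuclear cores $E_1\oplus E_2$ and $F\oplus\C$, each with half weights on its two summands. Proposition~\ref{prop:product-elliott} gives
\[
 \mathrm K_0(E_1\oplus E_2)=\mathrm K_0(E_1)\oplus\Z u_2\oplus G_2,
 \qquad
 \mathrm K_0(F\oplus\C)\cong\bigl(\mathrm K_0(E_1)\oplus G_2\bigr)\oplus\Z.
\]
On $\mathrm K_0$ I send $(g_1,ku_2+g)$ to $(g_1+g,k)$, where $g_1+g$ is read in $\mathrm K_0(P)=\mathrm K_0(F)$. This is an isomorphism because $\mathrm K_0(P)\cong\mathrm K_0(E_1)\oplus G_2$ via the splitting of $[1_{E_2}]$. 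On $\mathrm K_1$ I take the identification $\mathrm K_1(E_1)\oplus\mathrm K_1(E_2)=\mathrm K_1(P)$.

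This map sends $[r]=([1_{E_1}],0)$ to $(u,0)$ and $[q]=(0,u_2)$ to $(0,1)$. It therefore preserves both component-unit classes. It also preserves the half-weighted pairings, since the pairing on $\mathrm K_0(P)$ is additive by \eqref{eq:product-trace-pairing}. Any prescribed compatible isomorphism of the selected pointed data of $P$ and $F$ can be inserted in the first coordinate.

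\textbf{Step 2: build an isomorphism fixing $r$.}
Theorem~\ref{thm:marked-core} (isomorphism case) gives a trace-preserving isomorphism
\[
 (E_1\oplus E_2)\fr D\cong(F\oplus\C)\fr D
\]
realizing this marking. It maps the projection $r=(1_{E_1},0)$ to a projection with the same $\mathrm K_0$ class as $(1_F,0)$. The target is admissible by Lemma~\ref{lem:interval-regularity}, so stable-rank-one cancellation gives a unitary conjugating the image of $r$ to $(1_F,0)$, and then the complementary projections also correspond. The corrected map is trace preserving and fixes $r$. Hence it preserves the expectation onto $R$, as in the proof of Theorem~\ref{thm:bridge}.

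\textbf{Step 3: conclude.}
Combining with Proposition~\ref{prop:shift} gives
\[
 \mathcal F(P)\cong(E_1\oplus E_2)\fr D\cong(F\oplus\C)\fr D=\mathcal F(F),
\]
all over $R$. Theorem~\ref{thm:bridge} then yields $P\fr D\cong F\fr D$. No splitting of $u$ is used, because the nonsplit part of the unit always stays in the $r$-corner.

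\textbf{Expected obstacle.}
The delicate point is the $\mathrm K_0$ bookkeeping in Step 1. I need to check that the map is a group isomorphism of the relevant quotient groups and that it sends the $r$ and $q$ unit classes correctly, so that the unitary correction in Step 2 applies. The marking assertion should then follow by tracing the exact markings through Remark~\ref{rem:control}.
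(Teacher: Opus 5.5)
Your proposal is correct and follows essentially the same route as the paper: obtain $F$ from \cite[Theorem~2.8]{EGLN} directly (Lemma~\ref{lem:marked-range} cannot be applied to the nonnuclear $P$ itself, and in effect you use EGLN directly), and compare the half-weighted cores $E_1\oplus E_2$ and $F\oplus\C$ through the map $(x,ku_2+g)\mapsto((x,g),k)$ using Theorem~\ref{thm:marked-core}. Then correct by a unitary to fix $r$, combine with Proposition~\ref{prop:shift} and Theorem~\ref{thm:bridge}, and let the marking descend via Remark~\ref{rem:control}. The only cosmetic slip is attributing the $\mathrm K_0$-decomposition of the direct sum $E_1\oplus E_2$ to Proposition~\ref{prop:product-elliott}, which concerns free products; the decomposition is just additivity of $\mathrm K$-theory together with the chosen splitting.
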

\begin{proof}
Write $u_i=[1_{E_i}]$. The free-product $\mathrm{K}$-sequence
\cite{FG} gives the identifications
\[
 G_0=\mathrm{K}_0(E_1)\oplus G_2,
 \qquad u=(u_1,0),\qquad
 \mathrm{K}_1(P)=\mathrm{K}_1(E_1)\oplus\mathrm{K}_1(E_2).
\]
Indeed, the unit relation in
$\mathrm{K}_0(E_1)\oplus(\Z u_2\oplus G_2)$ eliminates the
second unit coordinate: $(x,ku_2+g)$ represents $(x+ku_1,g)$.
On this group the selected pairing is
$\rho(x,g)=\tau_{1*}(x)+\tau_{2*}(g)$.

The group \eqref{eq:note-3-4} is simple, weakly unperforated and
directed, and $u$ is an order unit. Here weak unperforation means
that $ng>0$ for $n\geq1$ implies $g>0$; torsion is permitted.
Every normalized state equals $\rho$: rational upper and lower
bounds on $g$ using $mu-ng$ force its state value to be $\rho(g)$.
The range theorem \cite[Theorem~2.8]{EGLN} therefore supplies $F$.
It can be chosen infinite dimensional. In the cyclic case with
pointed data $(\Z,m,0)$, one can use $\mathrm{M}_m(\mathcal Z)$.

By Proposition~\ref{prop:shift}, $\mathcal F(P)$ is isomorphic
over $R$ to $(E_1\oplus E_2)\fr D$.
Compare that nuclear core with $F\oplus\C$ by the explicit map
\[
 \mathrm{K}_0(E_1)\oplus(\Z u_2\oplus G_2)
 \longrightarrow
 (\mathrm{K}_0(E_1)\oplus G_2)\oplus\Z,
 \qquad
 (x,ku_2+g)\longmapsto((x,g),k),
\]
and use the corresponding identity on the two
$\mathrm{K}_1$ summands. This sends the two support classes
$(u_1,0)$ and $(0,u_2)$ to $(u_F,0)$ and $(0,1)$,
respectively. At half weights it preserves the selected trace,
since both values are
$\frac12(\tau_{1*}(x)+k+\tau_{2*}(g))$.
Theorem~\ref{thm:marked-core} applies directly to these decorated
cores and gives an isomorphism to $\mathcal F(F)$; it requires
no splitting of their unit classes. The splitting needed in this
argument was used in Proposition~\ref{prop:shift}.
Adjust it to fix $r$, then apply Theorem~\ref{thm:bridge}.
The exact marking descends through matrix completion by
Remark~\ref{rem:control}. Composing the first coordinate of
the displayed map with a prescribed pointed isomorphism, and
using its $\mathrm{K}_1$ map, gives the stated freedom.

The core $\mathrm{K}_0$-map need not preserve the positive cone of the
core algebra $F\oplus\C$. The marked replacement theorem applies
to the simple $D$-stabilized targets, where positivity is
controlled by the faithful selected trace. No embedding into
the original core $F\oplus\C$ is asserted.
\end{proof}

\begin{remark}[Two nonsplit units]\label{rem:nonsplit-fusion}
The split-unit hypothesis on the shifted factor has an algebraic
role. For $E_2=\mathrm{M}_n(\C)$, $n>1$, its support class in
$\mathrm{K}_0(E_2\oplus\C)$ is $n$-divisible, whereas the intended
scalar support in $\mathrm{K}_0(\C\oplus E_2)$ is primitive.
Hence the support-preserving isomorphism used in
Proposition~\ref{prop:shift} cannot exist in this case.
The unit relation can also create torsion for infinite-dimensional
simple nuclear factors. If $\mathcal U$ is the CAR algebra, then
\[
 \begin{split}
 \mathrm{K}_0(\mathcal U\fr\mathcal U)
 &\cong
 \frac{\Z[1/2]\oplus\Z[1/2]}{\Z(1,-1)}\\
 &\cong\Z[1/2]\oplus\bigl(\Z[1/2]/\Z\bigr),
 \qquad
 \mathrm{K}_1(\mathcal U\fr\mathcal U)=0.
 \end{split}
\]
The second isomorphism is $[(x,y)]\mapsto(x+y,y+\Z)$;
the unit is $(1,0)$ and the selected trace is the first coordinate.
Products with two nonsplit unit classes therefore require an additional
fusion argument.  Section~\ref{sec:general-fusion}
provides it for arbitrary nuclear UCT factors, including this CAR
example.  The
support-shifting proof remains sufficient when all but at most one
factor have split unit class.
\end{remark}

\subsection{Countable fusion and invariant control}
\label{sec:classification}

We now prove the split-unit fusion theorem, retaining the full
markings through finite fusion and passage to a countable family.

\begin{proof}[Proof of Theorem~\ref{thm:split-fusion}]
Put $P=P_{\mathcal A}$ and $A=P\fr D$. To compare two such
products, also write $Q=P_{\mathcal B}$ and $B=Q\fr D$, and fix
the prescribed isomorphism $\Theta:\Ell(A)\to\Ell(B)$. Proposition~\ref{prop:product-elliott} identifies its group
maps with
$\alpha=(\chi_0,\chi_1):\mathrm{K}_*(P)\to\mathrm{K}_*(Q)$,
preserving the unit and selected trace pairing.
In each family place the possibly nonsplit factor first; if all units
split, choose any first factor. Write $E_j=A_j$ for the first family.
Choose splittings
$\mathrm{K}_0(E_j)=\Z[1_{E_j}]\oplus G_{E_j}$ for $j\geq2$.
No splitting of $[1_{E_1}]$ is assumed.

For a finite family with at least two factors, iterating
Proposition~\ref{prop:model} gives $P\fr D\cong F\fr D$, where
$F$ is a simple monotracial nuclear UCT $\mathcal Z$-stable range model
for the pointed groups and selected pairing of $P$, with the strict
trace cone. At each step the newly fused factor has split unit;
the old nuclear core need not have split unit. For one factor use
Theorem~\ref{thm:marked-core} with its monotracial range model directly.
Nuclear classification \cite[Theorems 2.7--2.8]{EGLN} identifies
models with isomorphic data, realizing a prescribed isomorphism.

Here is the countable construction, including the invariant alignment
for unrestricted groups. Choose simple monotracial nuclear UCT
$\mathcal Z$-stable models $F_n$, $n\geq1$, for the data of the first
$n$ factors. The free-product $\mathrm{K}$-sequence gives the actual
decompositions
\[
 \mathrm{K}_0(F_n)=\mathrm{K}_0(E_1)
       \oplus\bigoplus_{2\leq j\leq n}G_{E_j},
 \qquad
 \mathrm{K}_1(F_n)=\bigoplus_{j\leq n}\mathrm{K}_1(E_j),
\]
with unit $([1_{E_1}],0)$ and the aggregate selected pairing.
Give these groups the strict trace cone and choose trace-preserving
embeddings $\iota_n:F_n\to F_{n+1}$ inducing the natural inclusions,
using \cite{CGSTW}. Choose compatible full-invariant extensions when
realizing these embeddings. The limit
$F_\infty=\varinjlim(F_n,\iota_n)$ is simple, monotracial, nuclear,
UCT and $\mathcal Z$-stable. Its groups and pairing are the aggregate
data of $P$, and its order is the strict trace cone: a class of
strictly positive trace already has that property at a finite stage.

Put, for $n\geq1$,
\[
 A_n=F_n\fr\left(\fr_{j>n}E_j\right)\fr D,
 \qquad B_n=F_n\fr D,
 \qquad c_n=\iota_n\fr\id_D.
\]
Theorem~\ref{thm:marked-core}, followed by extension over the later
free factors, gives an initial trace-preserving isomorphism
$\eta_1:A\to A_1$ with the prescribed first-factor marking.
In particular $\eta_1(E_1)$ and $\eta_1(D)$ lie in $B_1$, and the
later factors are fixed.
Finite fusion on $F_n,E_{n+1}$, extended identically over the later
factors, gives isomorphisms $\theta_n:A_n\to A_{n+1}$ with
$\theta_n(B_n)\subseteq B_{n+1}$. We can arrange that
$b_n=\theta_n|_{B_n}$ approximates $c_n$ on any prescribed finite
packet. First approximate its elements by free-product polynomials
with error smaller than the prescribed tolerance. Only finitely many
reservoir intervals occur in these polynomials. Protect a finite
interval prefix containing them and run fusion in its complementary
tail. Contractivity controls the error on the original packet.
Then compare only the nuclear old core $F_n$ in that tail and apply
Theorem~\ref{thm:embedding}. The inner correction is made in the
tail and extended by the identity on the protected prefix.

To justify that comparison, it is necessary to align the full
$\mathrm{KL}$-class, equivalently total $\mathrm{K}$-theory, and the
unitary map on $F_n$. One may choose the nuclear replacement between
decorated cores with the following $\mathrm{KK}$ block. Let
$u:\C\to F_{n+1}$ be the unit map. The chosen splitting of the new
factor gives a retraction
$\epsilon_*:\mathrm{K}_0(E_{n+1})\to\Z$ with
$\epsilon_*([1_{E_{n+1}}])=1$. UCT surjectivity lifts it, together
with the zero $\mathrm{K}_1$-map, to
$\epsilon\in\mathrm{KK}(E_{n+1},\C)$.
Choose $\kappa\in\mathrm{KK}(E_{n+1},F_{n+1})$ inducing the
unit-preserving group map that includes the new reduced summand and
the new $\mathrm{K}_1$ summand. Use
\begin{equation}\label{eq:note-4-2}
 \begin{array}{c|cc}
  &F_{n+1}&\C\\ \hline
 F_n&[\iota_n]&0\\
 E_{n+1}&\kappa-u\epsilon&\epsilon
 \end{array}.
\end{equation}
UCT surjectivity supplies $\kappa$ for arbitrary countable groups.
Since the composition of $\epsilon$ with the unit of $E_{n+1}$
is $1\in\mathrm{KK}(\C,\C)$, this block fixes the two component
units $r,q$. It induces an ordinary $\mathrm{K}$-isomorphism and
retains the entire old class $[\iota_n]$.
At half weights its pairing on the new factor is
$\tfrac12(\rho\kappa-\epsilon+\epsilon)
=\tfrac12\tau_{E_{n+1},*}$.
The block is realized in the simple $D$-stabilized target; its separate
component $\kappa-u\epsilon$ need not be positive in $F_{n+1}$.
Theorem~\ref{thm:marked-core} retains the chosen total-$\mathrm{K}$ map
exactly. It need not retain a chosen absolute $\mathrm{KK}$ representative:
pure extensions are irrelevant to the $\mathrm{KL}$ comparison used here.

Choose the compatible unitary map with this total-$\mathrm{K}$ extension
using \cite[Theorem 3.9 and Remark 3.10]{CGSTW}. Any discrepancy
on the old core factors through
$\mathrm{K}_1(F_n)/\Tor\mathrm{K}_1(F_n)$ into the determinant
kernel, by \cite[Lemma 3.11]{CGSTW}.
For this application to supported maps, adjoin the complementary
scalar summand: compare the two unital maps from $F_n\oplus\C$
which send the added scalar unit to $q$. They have equal total
$\mathrm{K}$-theory and the same half-weight trace. Their $\mathrm{K}_1$
is just $\mathrm{K}_1(F_n)$, so the lemma gives exactly the stated
discrepancy. It extends by zero along the actual direct sum
\[
 \frac{\mathrm{K}_1(F_n\oplus E_{n+1})}{\Tor}
 =\frac{\mathrm{K}_1(F_n)}{\Tor}\oplus
   \frac{\mathrm{K}_1(E_{n+1})}{\Tor}.
\]
Adding this correction changes neither total $\mathrm{K}$-theory nor
the trace and preserves every coefficient-to-unitary square.
No freeness of either quotient and no real-valued lift are used.
The full-$\mathrm{KL}$ local theorem realizes the corrected data.
Corner equivalence and Remark~\ref{rem:control} retain them through
matrix completion. Thus the old-core maps have equal full invariant,
as required for the preceding nuclear-source uniqueness step.

Fix countable dense subsets of the $B_n$. Recursively choose finite
packets $K_n\subset B_n$ tracking larger and larger finite portions
of these subsets from every earlier stage under both connecting
systems, and containing
$b_{n-1}(K_{n-1})\cup c_{n-1}(K_{n-1})$ when $n>1$.
Choose the next fusion so that
$\|b_n(x)-c_n(x)\|<\varepsilon_n$ for $x\in K_n$, where
$\sum_n\varepsilon_n<\infty$. After $b_n$ is chosen, its finitely
many required images can be included in $K_{n+1}$.

Write $b_{n,m}$ and $c_{n,m}$ for the connecting compositions,
and $c_{m,\infty}$ for the canonical map to the $c$-limit.
For $x\in K_n$ and $m\geq n$,
\[
 \|c_{m+1,\infty}b_{n,m+1}(x)
       -c_{m,\infty}b_{n,m}(x)\|<\varepsilon_m.
\]
Thus these images are Cauchy, with errors bounded by the summable
tails. Reversing $b$ and $c$ gives the reverse limit map. The same
tail estimates on the tracked dense packets show that the two maps
are inverse. Consequently
\begin{equation}\label{eq:note-4-3}
 \varinjlim(B_n,b_n)\cong
 \varinjlim(B_n,c_n)=F_\infty\fr D.
\end{equation}
Set $\eta_n=\theta_{n-1}\cdots\theta_1\eta_1:A\to A_n$
for $n\geq2$, retaining the preceding definition for $n=1$.
The pulled-back algebras $\eta_n^{-1}(B_n)$ increase, because
$\theta_n(B_n)\subseteq B_{n+1}$.
Each $\theta_n$ carries the whole
$F_n\fr E_{n+1}\fr D$ onto $B_{n+1}$ and fixes the later
factors; its tail correction preserves these properties.
Inductively $\eta_n(E_i)\subset B_n$ for $i\leq n$, the later
factors are fixed, and $\eta_n(D)\subset B_n$.
The pulled-back union therefore contains every original factor
and $D$, so it is dense in $A$.
This proves $P\fr D\cong F_\infty\fr D$.
The stage markings identify the induced pointed groups and pairing
with the prescribed aggregate data. For two products with isomorphic
data, nuclear classification of their $F_\infty$ models proves the
required classification, including realization of the prescribed
isomorphism $\Theta$ of Elliott invariants.
For an arbitrary prescribed compatible full marking
$\xi:A\to F\fr D$, let $\Lambda$ be the triple of the constructed
isomorphism $\Phi:A\to F_\infty\fr D$.
Apply Theorem~\ref{thm:marked-core} to $\xi\Lambda^{-1}$ between
the two nuclear-core stabilizations, and compose its realizing
isomorphism with $\Phi$. This gives the requested marking exactly.

To protect an original finite reservoir-block head $H$, first
write $D=H\fr D_{>r}$. The complementary tail is again a reservoir.
Run the whole construction with $D_{>r}$, using an interval
identification only inside that tail, and extend by $\id_H$.
The native tail inclusions induce invertible full triples by
Lemma~\ref{lem:reservoir-regularity}; transport the prescribed
marking through these triples before the construction and cancel
them afterwards, as in Theorem~\ref{thm:marked-core}.
This fixes $H$ pointwise without identifying an original block
prefix with an interval prefix.

Finally, suppose two distinct factors contain Haar unitaries.
Group one as the first factor and all the others as the second.
Both grouped algebras contain a Haar unitary. Semicircular absorption
\cite[Theorem 5.2]{Shlyakhtenko} gives $P\cong P\fr D$.
The same reasoning applies to $Q$. Since the $D$-stabilized algebras
are simple, monotracial and strictly comparing, their positive cone
is $\{0\}\cup\{g:\tau_*(g)>0\}$, so their Elliott invariant is
determined by the pointed groups and selected trace pairing.
To realize an isomorphism $\Theta_0:\Ell(P)\to\Ell(Q)$ after
removing $D$, choose any trace-preserving absorption isomorphisms
$a:P\to P\fr D$ and $b:Q\to Q\fr D$. Realize
$\Ell(b)\Theta_0\Ell(a)^{-1}$ between the $D$-stabilized products and
then compose with $b^{-1}$ and $a$. No canonical marking of the
absorption isomorphisms is required. The two-factor Haar condition
is used only for this removal step; the $D$-stabilized classification
also includes scalar cores and one-factor families.
\end{proof}

\section{Fusion of nuclear factors}\label{sec:general-fusion}

This section proves fusion for every nonempty finite or countable
family of separable unital nuclear UCT algebras with faithful selected
traces.  We first fuse zero-$\mathrm K_1$ cores over a finite diagonal.  Tail
control then gives a two-sided countable exhaustion.  The independent
split-unit theorem, Theorem~\ref{thm:split-fusion}, finally separates
and reincorporates arbitrary $\mathrm K_1$-groups.  In all constructions an
originally prescribed finite reservoir head may be kept external.
We first use the interval presentation $D=D_I$;
Corollary~\ref{cor:reservoir-model} supplies the general reservoir
at the end.

\subsection{The relative interval pushout over a finite diagonal}

Let $G\subset N$ be a unital inclusion, where $G$ is finite
dimensional and commutative.  Fix faithful selected traces and the
trace-preserving expectation, and put
\[
 R_G=N*_G(G\otimes \mathrm C(\mathbb T_v)),\qquad
 Q_G=N*_G(G\otimes \mathrm C([0,1]_h)),
\]
with product Haar and uniform expectations.

\begin{proposition}[Relative one-turn pushout]\label{gf:relative-pushout}
The canonical map
\[
 R_G*^{\max}_{G\otimes \mathrm C(\mathbb T)}
                       (G\otimes \mathrm C([0,1]))\longrightarrow Q_G
\]
is an isomorphism.  In particular, a faithful representation of
\(R_G\) together with a positive contraction \(K\) commuting with
its represented \(G\) and satisfying \(e^{2\pi iK}=v\) extends to
a contractive homomorphism of \(Q_G\).  If its selected trace on
\(R_G\) is the native one, the extension preserves the native trace
and is faithful.
\end{proposition}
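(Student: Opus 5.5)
The plan is to reduce to the scalar case, Lemma~\ref{rf:lem-pushout}, one minimal projection of $G$ at a time. Write $G=\bigoplus_{k=1}^m\C g_k$ with minimal projections $g_k$. Since $v$ and $h$ are $G$-central, $v=\sum_k g_kv$ and $h=\sum_k g_kh$. The first step is the analogue of Lemma~\ref{lem:fd-reassociation} for this commutative base. By that lemma, $Q_G$ is the successive extension by the block-supported intervals $h_k=g_kh$, each taken over $\C g_k\oplus\C(1-g_k)$, and $R_G$ decomposes in the same way into supported circles $v_k=g_kv+(1-g_k)$. When a single block $k$ is placed outermost and every other block is put into the coefficient algebra $M$, the compressions behave as in \eqref{eq:fd-supported-corner}. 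Namely,
\[
 g_kQ_Gg_k\cong(g_kMg_k)\fr\mathrm C([0,1]),\qquad
 g_kR_Gg_k\cong(g_kMg_k)\fr\mathrm C(\mathbb T),
\]
with normalized corner traces, and the native inclusion is the scalar one $w\mapsto e^{2\pi ih}$.

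Next I would run the proof of Lemma~\ref{rf:lem-pushout} in the amalgamated setting instead of passing to corners. The argument proceeds as follows.
\begin{enumerate}
\item Represent the full pushout on $\mathcal H$, with coordinates $V$ and $K$, and put $P_k=g_k1_{\{1\}}(V)$.
\item By Lemma~\ref{rf:lem-endpoint} applied in the corner $g_kR_Gg_k$, the state $\ev_1\circ\E$ is the unique state there taking value one at $g_kv$. This gives a compression formula $P_kaP_k=\omega_k(g_kag_k)P_k$ for $a\in R_G$.
\item Split $\mathcal H$ into the reducing subspaces $\overline{R_G\,1_{\{s\}}(g_kK)\mathcal H}$ for $s\in\{0,1\}$ and $k=1,\dots,m$, together with their complement. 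Off the eigenspaces, $K$ is the Borel argument of $V$.
\end{enumerate}
To see that the subspaces for different $k$ are orthogonal, I would use that $g_kv$ and $g_lv$ live under orthogonal projections commuting with $G$. The mixing argument of Lemma~\ref{af:endpoint-mixing} would serve as a fallback if something subtler is needed.

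For the norm bound, I would use the branches $g^s_\delta$ componentwise, setting $\sum_k g_kg^{s_k}_\delta(v)$ on each reducing subspace. The upper bound then needs the relative analogue of strong convergence \eqref{rf:eq-branch-norm} for these $G$-central functional-calculus replacements of $h$. To get it, I would pass to the corners $g_kQ_Gg_k$, since $G$ is a full finite frame in the commutative case and the corners determine the norm. In the corners, Pisier's strong continuity \cite[Theorem~4.2]{RF-Pisier} applies to the scalar free products with coefficient algebra $g_kMg_k$. The block-by-block reassociation handles the remaining blocks inductively.

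Strong lower semicontinuity then gives $\|p\|\le\|p\|_{Q_G}$ on each reducing subspace, and the canonical quotient gives the reverse inequality, so the canonical map is an isomorphism.

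The "in particular" follows immediately. A faithful representation of $R_G$ together with $K$ is a representation of the full pushout, and it factors through $Q_G$ by the isomorphism. Faithfulness and trace preservation when the trace is native then follow as in Corollary~\ref{rf:cor-strong-logs}. The joint moments of $(R_G,K)$ are determined by $e^{2\pi iK}=v$ up to the null endpoint set, because the Haar law gives no mass at $1$. Faithfulness of the trace on $Q_G$ and the formula $\lim\tau((p^*p)^k)^{1/2k}$ then give isometry.

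The main obstacle I expect is the strong-convergence step over the amalgamation. Pisier's theorem is stated for scalar reduced products, so the reduction to corners must be done carefully. It has to be arranged so that the approximating branches stay $G$-central and the corner identification \eqref{eq:fd-supported-corner} is compatible with the approximants for all blocks at once.
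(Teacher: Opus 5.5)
Your overall scheme is the paper's: represent the full pushout, compress by the endpoint spectral projection, and split into reducing subspaces on which a branch of the argument of $v$ converges strongly to $K$. Lower semicontinuity and the trace formula then finish, and your trace/faithfulness paragraph is fine.

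\textbf{First gap: orthogonality across blocks.} The orthogonality of the subspaces $\overline{R_G\,g_k1_{\{s\}}(K)\mathcal H}$ for different $k$ is not justified. The projections $g_k\perp g_l$ commute with $G$, but these subspaces are $R_G$-invariant and $N\subset R_G$ does not commute with $G$. So $x\,g_k1_{\{1\}}(V)\mathcal H$, $x\in N$, can have components in $g_l\mathcal H$.

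What you need is $P_laP_k=0$ for $a\in R_G$ and $l\neq k$. You need it twice: for the orthogonality itself, and to know that block $l$ contributes no endpoint vectors to the $k$th subspace, so that the branch you choose there converges to $K$. The corner-wise scalar formula $P_kaP_k=\omega_k(g_kag_k)P_k$ obtained from Lemma~\ref{rf:lem-endpoint} says nothing about off-diagonal compressions. Lemma~\ref{af:endpoint-mixing} concerns mutually free circles, not the blocks $g_kv$ of a single $G$-central circle.

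The missing ingredient is the $G$-valued endpoint compression $PaP=\epsilon(a)P$, with $P=1_{\{1\}}(V)$ and $\epsilon=\operatorname{ev}_1\circ\E_{G\otimes\mathrm C(\mathbb T)}:R_G\to G$. It is proved directly by averaging widely spaced conjugates $v^jxv^{-j}$, which are free over $G$, using the creation/annihilation norm bound on the reduced Hilbert $G$-module. Since $\epsilon(a)\in G$ commutes with $1_{\{s\}}(K)$, this makes the block decomposition unnecessary, together with the circle analogue of Lemma~\ref{lem:fd-reassociation}, which that lemma does not supply. The two subspaces $\overline{R_G1_{\{s\}}(K)\mathcal H}$, $s=0,1$, are orthogonal and reduce both $R_G$ and $K$. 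On each of them $g^s_\delta(v)\to K$ strongly.

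\textbf{Second gap: the norm-convergence step.} You flag this step as the main obstacle and leave it open. In your corner approach the coefficient algebra $g_kMg_k$ contains the other blocks, which are being replaced at the same time. So a fixed-coefficient strong-continuity theorem such as \cite[Theorem~4.2]{RF-Pisier} does not apply directly.

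No strong continuity is needed at all. The Haar laws of $g^0_\delta$ and $g^1_\delta$ are \emph{exactly} uniform on $[0,1-\delta]$ and $[\delta,1]$. Hence $g^0_\delta(v)$ and $g^1_\delta(v)$ have the same $G$-valued laws as the $G$-central elements $(1-\delta)h$ and $\delta+(1-\delta)h$. Reduced amalgamated functoriality then gives $\|P(N,g^0_\delta(v))\|_{R_G}=\|P(N,(1-\delta)h)\|_{Q_G}$, and the analogous identity for $g^1_\delta$. Both tend to $\|P(N,h)\|_{Q_G}$ by norm continuity in $Q_G$.

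With these two substitutions your argument is complete.
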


\begin{proof}
Use the continuous branch functions $g_\delta^0,g_\delta^1$
from the proof of Lemma~\ref{rf:lem-pushout}.  Their Haar distributions
are exactly uniform on \([0,1-\delta]\) and \([\delta,1]\),
respectively.  The reduced amalgamated free law therefore gives
\[
 \begin{split}
 \|P(N,g_\delta^0(v))\|&=\|P(N,(1-\delta)h)\|_{Q_G},\\
 \|P(N,g_\delta^1(v))\|&=\|P(N,\delta+(1-\delta)h)\|_{Q_G}.
 \end{split}
\]
Both norms tend to \(\|P(N,h)\|_{Q_G}\).

Here is the endpoint compression needed for arbitrary representations.
Let
\[
 \epsilon=\operatorname{ev}_1 \E_{G\otimes \mathrm C(\mathbb T)}:R_G\to G.
\]
If \(P=1_{\{1\}}(v)\), then
\begin{equation}\label{gf:endpoint}
                         PaP=\epsilon(a)P\qquad(a\in R_G).
\end{equation}
To check it, the conjugates \(v^kNv^{-k}\) are free over \(G\).
For a finite-support word centered under expectation onto \(G\),
widely spaced conjugates have disjoint support and are free over
\(G\).  The creation, annihilation and diagonal decomposition on
the reduced Hilbert \(G\)-module bounds the norm of their average by
\((m^{-1}+2m^{-1/2})\|a\|\).
Compression by \(P\) is invariant under these conjugations.
For a finite Laurent sum \(\sum_kx_kv^k\), this proves
\(P(\sum_kx_kv^k)P=(\sum_k\E_G(x_k))P\), which is
\eqref{gf:endpoint}; density completes the argument.

In a representation of the full pushout put
\(P_s=1_{\{s\}}(K)\), \(s=0,1\).
The subspaces \(\overline{R_GP_s\mathcal H}\) are orthogonal and
reduce \(R_G\), by \eqref{gf:endpoint} and the fact that
the \(P_s\)'s commute with \(G\).  They also reduce \(K\):
away from the \(v=1\) spectral subspace, \(K\) is the unique Borel
argument of \(v\), and on \(P_s\) it is \(s\).
Thus \(g_\delta^s(v)\to K\) strongly on the corresponding subspace;
either branch works on their orthogonal complement.
The preceding branch norm limits give the required upper bound for
every polynomial in \(N,K\).  The canonical reduced quotient gives
the reverse bound, proving the isomorphism.

For the trace assertion, a positive one-turn logarithm differs in
\(2\)-norm from a fixed branch only on an arc whose Haar measure
tends to zero.  Its joint moments with \(N\) therefore equal those
of the native relative interval.  Faithfulness of the latter trace
gives injectivity.
\end{proof}

\subsection{The finite-base endpoint ideal and comparison}

Continue with $R_G,Q_G$ above and finite-dimensional commutative $G$.  Assume that
\(N=N_0\fr D_0\), where \(G\subset N_0\) is expected and \(D_0\) is an
external infinite scalar interval reservoir.  This assumption provides
stability; it is separate from the uniform-logarithm criterion.

Let \(\mathcal H_\epsilon\) be the Hilbert \(G\)-module obtained by
completing \(R_G\) for
\(\langle a,b\rangle_G=\epsilon(a^*b)\), with
\(\epsilon\) as in \eqref{gf:endpoint}.
Its left representation has the useful decomposition
\begin{equation}\label{gf:endpoint-module}
 \mathcal H_\epsilon=G\oplus
       (\ell^2(\mathbb Z)\otimes\mathcal V),\qquad
 \pi_\epsilon(v)=1_G\oplus(S\otimes1),
\end{equation}
where \(S\) is bilateral shift and \(\mathcal V\) is a countably
generated Hilbert \(G\)-module.  To see this, identify the endpoint
GNS module with \(L^2(*_{k\in\mathbb Z,G}N_k,\E_G)\), where
\(N_k=v^kNv^{-k}\).  The vacuum is \(G\); on nonempty reduced
words \(v\) shifts all labels simultaneously.  Group each orbit by
requiring its first label to be zero.  Every such orbit is a copy
of \(\mathbb Z\), proving \eqref{gf:endpoint-module}.

The external reservoir supplies a standard \(\ell^2\otimes G\)
orthogonal summand in this module.  For example the usual free-product
decomposition for \(N_0\fr D_0\) contains
\((L^2(D_0)\ominus\mathbb C)\otimes G\) as a summand.  All remaining
summands are countably generated.  Hilbert-module stabilization
therefore gives \(\mathcal H_\epsilon\cong\ell^2\otimes G\).

\begin{proposition}[The relative two-logarithm ideal]
\label{gf:relative-ideal}
Represent \(R_G\) faithfully, and let \(H_0,H_1\) be positive
contractions commuting with the represented \(G\), with
\(e^{2\pi iH_0}=e^{2\pi iH_1}=v\).  In the generated algebra put
\[
 J_G=\operatorname{Ideal}_{\mathrm C^*(G,H_0,H_1)}(H_1-H_0),\qquad
 I=\operatorname{Ideal}_{\mathrm C^*(R_G,H_0,H_1)}(H_1-H_0).
\]
Then \(J_G\) is a quotient of \(G\otimes J_{\rm pair}\), where
\[
 J_{\rm pair}=\{f\in \mathrm C([0,1],\mathrm M_2):
                       f(0)=0,\ f(1)\text{ diagonal}\},
\]
and
\begin{equation}\label{gf:induced-ideal}
 I\cong\mathcal K_{J_G}(\mathcal H_\epsilon\otimes_G J_G)
                  \cong\mathcal K\otimes J_G.
\end{equation}
In particular \(I\) is nuclear, stable, type I and of finite nuclear
dimension, and has the corona factorization property.
\end{proposition}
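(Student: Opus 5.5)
The plan is to follow the proof of Proposition~\ref{rf:prop-two-log}, replacing scalars by the finite-dimensional commutative algebra $G$ and the GNS Hilbert space by the Hilbert $G$-module $\mathcal H_\epsilon$. The endpoint compression \eqref{gf:endpoint} from the proof of Proposition~\ref{gf:relative-pushout} replaces the scalar endpoint state.

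\emph{Step 1: the local ideal $J_G$.} In the faithful representation, put $P=1_{\{1\}}(v)$. Off $P\mathcal H$ the two logarithms coincide with the common Borel argument of $v$, and on $P\mathcal H$ they are projections. Hence $(v-1)J_G=J_G(v-1)=0$. Since $G$ commutes with $H_0,H_1$ and $v$, the algebra $\mathrm C^*(G,H_0,H_1)$ has $G$ in its center. Write $G=\bigoplus_k\C g_k$ with minimal projections $g_k$. Each corner $g_kJ_G$ is an ideal of the fiber at $v=1$ generated by the two projections $g_kH_0P$ and $g_kH_1P$. The universal two-projection algebra argument then shows that $g_kJ_G$ is a quotient of $J_{\rm pair}$, so $J_G$ is a quotient of $G\otimes J_{\rm pair}$.

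\emph{Step 2: the induced ideal.} For $x,y\in J_G$ and $a\in R_G$, \eqref{gf:endpoint} together with $J_G=PJ_GP$ gives $xay=x\epsilon(a)y$. Note that $\epsilon(a)\in G$ multiplies $J_G$. Put $\mathcal V=\overline{J_G\mathcal H}$ and $\mathcal M=\overline{R_G\mathcal V}$. The map
\[
 \mathcal H_\epsilon\otimes_G\mathcal V\to\mathcal M,\qquad a\otimes\xi\mapsto a\xi,
\]
is unitary, since its inner products are $\langle\xi,\epsilon(a^*b)\eta\rangle$. Under it, $R_G$ acts by $\pi_\epsilon\otimes1$ and $x\in J_G$ acts by the rank-one $G$-module operator at the vacuum summand $G$ in \eqref{gf:endpoint-module}, tensored with $x$. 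Consequently the closed span of the products $axb^*$ is $\mathcal K_{J_G}(\mathcal H_\epsilon\otimes_GJ_G)$.

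To show this span is an ideal of the whole generated algebra, I will use \eqref{gf:endpoint-module}: the $1$-eigenspace of $\pi_\epsilon(v)$ is exactly the vacuum summand $G$, because the bilateral shift has no eigenvectors. It follows that $H_s=\arg_0(\pi_\epsilon(v))\otimes1+|G\rangle\langle G|\otimes m_s$, where $m_s$ is a multiplier of $J_G$. Both terms normalize the compacts. So the span equals $I$.

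Finally, $\mathcal H_\epsilon\cong\ell^2\otimes G$ by the stabilization remark, since the external reservoir $D_0$ supplies the standard summand. This gives $I\cong\mathcal K\otimes J_G$, using $G\otimes_G J_G=J_G$.

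\emph{Step 3: regularity and the main obstacle.} Since $J_G$ is a quotient of the one-dimensional subhomogeneous algebra $G\otimes J_{\rm pair}$, it is nuclear, type I, and of nuclear dimension at most one. Tensoring with $\mathcal K$ preserves these properties and gives stability. The corona factorization property then follows from finite nuclear dimension by \cite{RF-RobertCFP}.

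The main obstacle is Step 2. There $G$ is not scalar, and the logarithms are only assumed to commute with $G$, not with all of $R_G$. I must check that the $G$-valued compression makes the unitary $\mathcal H_\epsilon\otimes_G\mathcal V\to\mathcal M$ well defined and balanced over $G$. I must also check that the decomposition \eqref{gf:endpoint-module} correctly identifies the $1$-eigenspace as the $G$-vacuum, with no extra fixed vectors in $\ell^2(\Z)\otimes\mathcal V$. I would verify the latter first, directly from the orbit grouping of reduced words.
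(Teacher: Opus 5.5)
Your proposal is correct and follows essentially the same route as the paper: the two-projection quotient for $J_G$, the compression identity $xay=x\epsilon(a)y$ from \eqref{gf:endpoint}, the induced-module unitary onto $\overline{R_GJ_G\mathcal H}$, invariance under $H_0,H_1$ via the vacuum/shift splitting \eqref{gf:endpoint-module}, standardness of $\mathcal H_\epsilon$, and the cited corona factorization result. One detail the paper spells out that you leave implicit: on the shift summand the common logarithm is the Borel argument of $S$ tensored with the identity, hence an adjointable operator on $\mathcal H_\epsilon$, and this is what lets it normalize the module compacts; also, your Hilbert space $\overline{J_G\mathcal H}$ should not be called $\mathcal V$, since that name is already used for the module in \eqref{gf:endpoint-module}.
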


\begin{proof}
Off \(P=1_{\{1\}}(v)\), both logarithms are the same Borel argument.
On \(P\) they are two projections commuting with \(G\).
The universal two-projection difference ideal is \(J_{\rm pair}\);
nuclearity of \(G\) gives the asserted tensor-product quotient.
By \eqref{gf:endpoint},
\[
              xay=x\epsilon(a)y\quad(x,y\in J_G,\ a\in R_G).
\]
Thus the rank-one-operator formula
\[
 \theta_{a\otimes x,b\otimes y}\longmapsto axy^*b^*
\]
identifies the compact operators in
\eqref{gf:induced-ideal} with the closed span of these
products.  Faithfulness follows by inducing a faithful representation
of \(J_G\); the endpoint compression identity gives the required
inner products.

Here is why the span is invariant under the logarithms, without
assuming in advance that it is an ideal.  In the faithful generated
representation put
\[
 \mathcal L=\overline{R_GJ_G\mathcal H},\qquad
 P=1_{\{1\}}(v).
\]
The induced-module identification shows that \(\mathcal L\) reduces
\(R_G\).  The shift in \eqref{gf:endpoint-module} has no eigenvector
at \(1\), so \(P\mathcal L=\overline{J_G\mathcal H}\) is precisely
its vacuum part.  This subspace is preserved by each \(H_s\) because
\(J_G\) is an ideal in \(\mathrm C^*(G,H_0,H_1)\).  On
\((1-P)\mathcal L\), both logarithms are the common Borel argument
of \(v\) and hence preserve \(\mathcal L\).  Self-adjointness makes
\(\mathcal L\) reducing for both logarithms.  In this induced
representation, they act on the shift summand
of \eqref{gf:endpoint-module} as the same bounded Borel argument of
\(S\), tensored with the identity.  This is an adjointable operator:
it is a bounded Hilbert-space operator on \(\ell^2(\mathbb Z)\).
On the vacuum summand they act as the two endpoint projections in
\(\mathcal M(J_G)\).  Consequently they normalize the compact-operator
algebra just identified; on \(\mathcal L^\perp\) those compact
operators vanish.  The span is therefore an ideal of the whole
generated algebra, and is exactly \(I\), rather
than merely a subalgebra containing the original difference.

The second isomorphism follows from standardness of
\(\mathcal H_\epsilon\).  The quotient $J_G$ is subhomogeneous of
topological dimension at most one; finite nuclear dimension passes
to its stabilization.  The finite-nuclear-dimension corona-factorisation result
\cite{RF-RobertCFP} applies.
\end{proof}

\begin{theorem}[Comparison over a fixed relative circle]
\label{gf:relative-comparison}
Assume \(Q_G\) is simple and the preceding external-reservoir
hypothesis holds.  Two faithful unital homomorphisms
\(\rho_0,\rho_1:Q_G\to L_\infty\) that agree on \(R_G\) are
unitarily equivalent.  The same holds in a norm ultrapower.

Consequently, if the native \(R_G\) embeds unitally and trace
preservingly in an actual faithfully tracial algebra \(L\) and positive contractions \(H_n\in L\) satisfy
\[
 [H_n,b]\longrightarrow0\quad(b\in G),\qquad
                    e^{2\pi iH_n}\longrightarrow v
\]
in norm, there is an actual faithful trace-preserving embedding
\(f:Q_G\to L\) whose restriction to \(R_G\) is approximately
unitarily equivalent to its native inclusion.
\end{theorem}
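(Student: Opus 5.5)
The plan is to repeat, over the finite commutative base $G$, the argument of Corollary~\ref{rf:two-log-comparison} and Proposition~\ref{rf:prop-descent}, with Proposition~\ref{gf:relative-ideal} in place of Proposition~\ref{rf:prop-two-log} and Proposition~\ref{gf:relative-pushout} in place of Lemma~\ref{rf:lem-pushout}. For the first assertion, identify $R_G$ with its common faithful image $\rho_0(R_G)=\rho_1(R_G)$ and form the separable algebra generated by this image together with $H_0=\rho_0(h)$ and $H_1=\rho_1(h)$. Both are positive contractions commuting with the represented $G$ and satisfy $e^{2\pi iH_s}=v$. Proposition~\ref{gf:relative-ideal} then shows that the difference ideal $I$ is nuclear and stable and has the corona factorization property; stability uses the external reservoir $D_0$. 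If $I=0$ the maps coincide. Otherwise $\rho_0,\rho_1$ induce unital multiplier maps $Q_G\to\mathcal M(I)$ whose differences lie in $I$, so they form a Cuntz pair. Both maps are full, since $Q_G$ is simple and they share the multiplier unit.

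The key step is to show that the Cuntz-pair class vanishes in $\KK(Q_G,I)$. The two maps agree exactly on $N$, which contains $G$. I would show that the native inclusion $N\to Q_G$ is a $\KK$-equivalence, exactly as in Lemma~\ref{lem:fd-extension-kk}. In the full amalgamated product, evaluation of the interval at $0$ retracts onto $N$, and $h\mapsto th$ fixes $N$ and $G$ and gives the homotopy. The full-to-reduced quotient is a $\KK$-equivalence by \cite[Proposition~2.15 and Theorem~3.1]{FG}, since the expectations are faithful. Restriction to $N$ therefore annihilates the class injectively, so the class is zero. Lemma~\ref{rf:lem-proper} then gives unitaries $v_m\in1+I$ asymptotically conjugating the two maps. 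These are norm estimates in the generated algebra, because the errors lie in $I$, where the multiplier action is isometric. Lifting the unitaries to coordinate unitaries and diagonalizing over a countable dense subset of $Q_G$ gives a single unitary in $L_\infty$. In a norm ultrapower the same argument applies, using countable saturation.

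For the consequence, fix $n$ and use a sequence $H_n$ with $[H_n,b]\to0$ and $e^{2\pi iH_n}\to v$. First define $\Theta:Q_G\to L_\infty$ by sending $h\mapsto(H_n)_n$ and fixing the constant copy of $R_G$. In $L_\infty$ the class $(H_n)$ commutes exactly with $G$ and exponentiates exactly to $v$. Proposition~\ref{gf:relative-pushout} therefore gives a contractive homomorphism from $Q_G$, and its trace clause gives faithfulness and the native trace on ultralimits. The native trace is obtained by the branch $2$-norm argument of Corollary~\ref{rf:cor-strong-logs}, which I would cite or repeat to get joint moment convergence. For every diverging $r:\N\to\N$, the reparameterization $r^*\Theta$ agrees with $\Theta$ on $R_G$. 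The first part then makes these two maps unitarily equivalent in $L_\infty$. Intertwining by reparameterization \cite[Proposition~9.2]{CGSTW} yields a homomorphism $f:Q_G\to L$ and a unitary $U$ with $\Theta=\Ad(U)\iota_Lf$. This $f$ is faithful and unital. Lifting $U$ gives approximate unitary equivalence of $f|_{R_G}$ with the native inclusion. Trace preservation follows from joint moment convergence.

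I expect the main obstacle to be the $\KK$-vanishing step over the nontrivial base $G$. One must check that the homotopy $h\mapsto th$ respects the amalgamation: since $h$ is $G$-central, so is $th$. One must also check that the Fima--Germain comparison applies to amalgamation over $G$ rather than scalars; it does, since $G$ is finite dimensional with faithful trace-preserving expectations. The other point needing care is fullness of the multiplier maps, but simplicity of $Q_G$ settles it as in Corollary~\ref{af:multi-comparison}.
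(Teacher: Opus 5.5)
Your proposal is correct and follows essentially the same route as the paper. Like the paper, you use the difference ideal of Proposition~\ref{gf:relative-ideal}, with stability coming from the external reservoir. You kill the Cuntz-pair class through the $\KK$-equivalence $N\to Q_G$, obtained by contracting the interval and applying the Fima--Germain full-to-reduced comparison, and then apply Lemma~\ref{rf:lem-proper}. You descend exactly as the paper does, via Proposition~\ref{gf:relative-pushout}, the branch trace estimate and reparameterization intertwining \cite[Proposition~9.2]{CGSTW}. Your observation that reparameterizations fix the diagonal copy of $R_G$ is the point the paper singles out as essential.
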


\begin{proof}
The difference ideal is the stable nuclear ideal in
Proposition~\ref{gf:relative-ideal}.  If it is zero, the two
maps coincide; suppose it is nonzero.  The two multiplier maps form
a unital full Cuntz pair, by simplicity of \(Q_G\), and agree on
\(N\).  The inclusion \(N\to Q_G\) is a \(\KK\)-equivalence:
contract the interval in the full amalgamated product and use the
full-to-reduced equivalence \cite[Theorem 3.1]{FG} for the specified
GNS-faithful expectations, using \cite[Proposition~2.15]{FG}
to identify the vertex-reduced and ordinary reduced products.
Restriction thus annihilates their actual
\(\KK\)-pair injectively.  Lemma~\ref{rf:lem-proper} gives approximate conjugacy by unitaries in \(1+I\).
The norm estimates hold in the generated algebra because their
errors lie in \(I\).  The sequence-algebra epsilon test gives one
implementing unitary.

For descent, Proposition~\ref{gf:relative-pushout} first gives
\(\Theta:Q_G\to L_\infty\) agreeing with the diagonal native map
on \(R_G\); the trace follows from the same branch estimate.
Every diverging reparameterization still agrees with \(\Theta\)
on this \emph{diagonal} circle algebra.  Apply the comparison just
proved, and then reparameterization descent
\cite[Proposition 9.2]{CGSTW}, as in
Proposition~\ref{rf:prop-descent}, to
obtain an actual \(f\) unitarily equivalent to \(\Theta\) after
diagonal inclusion.  This proves the asserted finite-packet limits
and trace preservation.  Diagonality of the common circle is an
essential hypothesis at this step.
\end{proof}

\subsection{Two exact matrix identities}

\begin{proposition}\label{gf:matrix-identities}
Let $A,B$ contain the same finite-dimensional tracial algebra $F$,
with faithful selected traces and their trace-preserving
expectations.  For $M=A*_F B$, there is a trace-preserving reduced
isomorphism
\begin{equation}
 \mathrm M_2(M)\cong (A\oplus B)*_{F\oplus F}\mathrm M_2(F),
 \label{gf:eq-MT}
\end{equation}
using the half-weighted trace on $A\oplus B$ and
$\tr_2\otimes\tau_F$ on $\mathrm M_2(F)$.
For every faithfully tracial $M$ and $k\geq1$ there is also an
isomorphism
\begin{equation}
 \mathrm M_k(M)\fr D\cong \mathrm M_k(M\fr D)
 \label{gf:eq-GA}
\end{equation}
fixing the labelled $\mathrm M_k(M)$ and preserving the normalized trace.
\end{proposition}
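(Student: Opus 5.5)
For \eqref{gf:eq-MT} I would follow the proof of Lemma~\ref{lem:complete}, now relative to $F$. Work inside $\mathrm M_2(M)$ with the trace $\tr_2\otimes\tau_M$. Embed $A\oplus B$ as $\operatorname{diag}(A,B)$, embed $\mathrm M_2(F)$ as the constant $F$-valued matrices, and note that their intersection is the diagonal $F\oplus F$. The restricted trace on $\operatorname{diag}(A,B)$ is the half-weighted one.

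First I check generation. The diagonal $a e_{11}$ and $b e_{22}$, multiplied by the matrix units $e_{12},e_{21}\in\mathrm M_2(F)$, give $a e_{ij}$ and $b e_{ij}$ for all $i,j$. Hence every entry $a_1b_1a_2\cdots$ of $M$ appears in every position.

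Next I check freeness over $F\oplus F$. The trace-preserving expectation onto $F\oplus F$ is $\E_F\oplus\E_F$ on the diagonal. The centered elements of $A\oplus B$ are $\operatorname{diag}(a,b)$ with $\E_F(a)=\E_F(b)=0$. The centered elements of $\mathrm M_2(F)$ are the off-diagonal ones, $xe_{12}+ye_{21}$ with $x,y\in F$. In an alternating word, each off-diagonal letter swaps the corner, so every entry of the product is an alternating $A$--$B$ word whose letters are centered elements of $A$ and $B$ separated by elements of $F$. Such a word is $F$-reduced after absorbing the $F$-letters into the neighbouring centered letters, and absorbing $F$ keeps them centered. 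So its $\E_F$ vanishes by $F$-freeness of $A$ and $B$ in $M$.

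There is one case to treat separately: a word made only of $\mathrm M_2(F)$ letters, or a diagonal word with no $A\oplus B$ letter. This is just a single letter, which is centered by assumption. The remaining diagonal contributions that start and end in the same corner are exactly the reduced words above.

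Equality of the joint laws with respect to $F\oplus F$, together with faithfulness of both traces, identifies the reduced amalgamated product with $\mathrm M_2(M)$. The norm comparison comes from the formula $\|z\|=\lim_k\tau((z^*z)^k)^{1/(2k)}$, as in Lemma~\ref{lem:gaussian}. I expect the bookkeeping of where $F$-letters sit, and the resulting loss of centering, to be the only real obstacle. I would handle it by writing each centered $F\oplus F$-bimodule element in the form above, and using that $\ker\E_F$ is an $F$-bimodule.

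For \eqref{gf:eq-GA} I would generalize Lemma~\ref{lem:gaussian} from $2$ to $k$. In $M\fr D_{\mathrm{ent}}$, choose free standard semicirculars $a^{(n)}_i$ for $1\le i\le k$ and $x^{(n)}_{ij},y^{(n)}_{ij}$ for $i<j$, and let $S_n\in\mathrm M_k(M\fr D)$ be the normalized GUE-type matrix with diagonal entries $a^{(n)}_i$, off-diagonal entries $(x^{(n)}_{ij}+iy^{(n)}_{ij})/\sqrt2$, and overall factor $k^{-1/2}$. Scalar semicircularity and freeness from $M$ give $M$-valued covariance $\tau_M(\cdot)1$, with all other cumulants vanishing. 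Writing $S_n=\sum_\alpha C_\alpha g_\alpha$ with an orthonormal Hermitian basis $C_\alpha$ of $\mathrm M_k$, suitably scaled, gives the $B$-valued covariance
\[
 \sum_\alpha C_\alpha(\id\otimes\tau_M)(b)C_\alpha=\tau_B(b)1,
\]
where $B=\mathrm M_k(M)$. This follows from the identity $\sum_\alpha C_\alpha XC_\alpha=k^{-1}\Tr(X)\cdot 1$ for that basis, after normalization. Hence $(S_n)$ is a standard semicircular family free from $B$.

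Generation follows as before: matrix units recover every entry of $S_n$, and real and imaginary parts recover the generators. Faithfulness gives the isometric identification. Finally, reindexing $D_{\mathrm{ent}}\cong D$ gives the isomorphism, which fixes $\mathrm M_k(M)$ and preserves the normalized trace.
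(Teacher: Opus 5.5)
Your proof is correct. For \eqref{gf:eq-MT} it is essentially the paper's argument, namely the relative version of Lemma~\ref{lem:complete} with the path bookkeeping through the two corners.

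For \eqref{gf:eq-GA} you work in the opposite direction from the paper. You generalize Lemma~\ref{lem:gaussian}: inside $\mathrm M_k(M\fr D)$ you build $k\times k$ GUE-type matrices $S_n$ and show that they form a standard semicircular family free from $\mathrm M_k(M)$. Generation is then immediate, because matrix units recover every entry. The covariance reduces to the identity $\sum_\alpha C_\alpha XC_\alpha=\tr_k(X)1$ for a normalized Hermitian orthonormal basis.

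The paper instead starts in $\mathrm M_k(M)\fr D$ and compresses to the corner $P=e_{11}$. There it extracts the $M$-valued semicirculars $X^{(i)}_{ab}=\sqrt k\,e_{1a}S_ie_{b1}$, with covariance $\delta_{ij}\delta_{bc}\delta_{ad}\tau_M(m)P$. It must then prove separately, via the decomposition $\Psi$, that $M$ and these variables generate the whole corner. Finally it reassembles the isomorphism and its inverse by explicit entrywise formulas.

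Your route is shorter on generation. However, it relies on one fact you use without stating it. If a matrix with $M$-semicircular entries is written as $\sum_\alpha C_\alpha\otimes g_\alpha$, then it is $\mathrm M_k(M)$-valued semicircular with the amplified covariance. In other words, all higher $\mathrm M_k(M)$-valued cumulants vanish. You use this implicitly when you pass from the covariance computation to ``a standard semicircular family free from $B$''. It should be stated; it is the same input the paper takes from \cite{NSS} in Lemma~\ref{lem:gaussian}.

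The paper's corner route avoids that matrix-valued cumulant step. It needs only the $M$-valued Wick recursion in a single corner, contracting innermost pairs, and in exchange it does more work to establish generation.
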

\begin{proof}
For \eqref{gf:eq-MT} embed the first factor diagonally in
$\mathrm M_2(M)$ and the second through the common $F$.
A centered letter from $\mathrm M_2(F)$ is off diagonal.  A centered
diagonal coefficient is $\operatorname{diag}(a,b)$ with
$\E_F(a)=\E_F(b)=0$.  Alternating centered products therefore
follow paths alternating between $A$ and $B$; the defining
$F$-reduced-word rule makes their trace zero.  Thus the displayed
copies have exactly the required reduced free law.  Their matrix
units and diagonal coefficients generate $\mathrm M_2(M)$.  Faithfulness
of the traces proves the reduced norm identity and surjectivity.

For \eqref{gf:eq-GA}, put $R=\mathrm M_k(M)$,
$\mathcal A=R\fr D$ and $P=e_{11}$, using the standard matrix units
$e_{ab}\in R$.  Write $\tau_R=\tr_k\otimes\tau_M$ and let
$\tau_{\mathcal A}$ be the free-product trace.
The canonical trace-preserving expectation $\E_R:\mathcal A\to R$
restricts to an expectation
$\E_M:P\mathcal AP\to PRP\cong M$.
The normalized corner trace is
$\tau_P=k\tau_{\mathcal A}|_{P\mathcal AP}$, with
$\tau_M\circ\E_M=\tau_P$.

Choose the standard semicircular reservoir coordinates $(S_i)_{i\geq1}$.
Their freeness from $R$ gives the $R$-valued Wick rule: conditional
moments are sums over noncrossing pairings, with each pair evaluated by
\[
 \E_R(S_i rS_j)=\delta_{ij}\tau_R(r)1_R\qquad(r\in R).
\]
In particular their first moments and all $R$-valued cumulants
of order greater than two vanish.
Set
\[
 X_{ab}^{(i)}=\sqrt{k}\,e_{1a}S_i e_{b1}\in P\mathcal AP
 \qquad(1\leq a,b\leq k).
\]
Then $(X_{ab}^{(i)})^*=X_{ba}^{(i)}$ and $\E_M(X_{ab}^{(i)})=0$.
For $m\in PRP\cong M$, bimodularity and the Wick rule give
\[
 \begin{aligned}
 \E_M\bigl(X_{ab}^{(i)}mX_{cd}^{(j)}\bigr)
 &=k e_{1a}\E_R\bigl(S_i(e_{b1}me_{1c})S_j\bigr)e_{d1}\\
 &=k\delta_{ij}\tau_R(e_{b1}me_{1c})e_{1a}e_{d1}\\
 &=\delta_{ij}\delta_{bc}\delta_{ad}\tau_M(m)P.
 \end{aligned}
\]
The last equality uses
$\tau_R(e_{b1}me_{1c})=\delta_{bc}\tau_M(m)/k$ and
$e_{1a}e_{d1}=\delta_{ad}P$; this explains the factor $\sqrt{k}$.
For a longer product with coefficients in $M$, substitute the same
matrix-unit expressions into the $R$-valued Wick rule.
Contract the innermost pairs first, using the displayed calculation.
Each contraction gives a coefficient in $PRP$, so the same covariance
applies recursively to every outer pair.  Only pairings occur.
Thus the $X$'s satisfy the $M$-valued Wick
recursion with this covariance; their higher $M$-valued cumulants
vanish as well.

For each $i$, take the self-adjoint variables
\[
 X_{aa}^{(i)}\ (1\leq a\leq k),\qquad
 \frac{X_{ab}^{(i)}+X_{ba}^{(i)}}{\sqrt2},\qquad
 \frac{X_{ab}^{(i)}-X_{ba}^{(i)}}{\mathrm i\sqrt2}\quad(a<b).
\]
There are $k+2\binom{k}{2}=k^2$ of them.
Denoting this whole countable family by $(Y_\lambda)$, the covariance
calculation gives
\[
 \E_M(Y_\lambda mY_\mu)=\delta_{\lambda\mu}\tau_M(m)P.
\]
The change of variables preserves the vanishing of higher cumulants.
Consequently their mixed moments with $M$ are exactly those of
standard scalar semicirculars free from one another and from $M$.
This also follows directly from the Wick recursion, since the
displayed covariance is precisely the covariance for that free family.

We next check that these variables generate the entire corner.
Let $C=\mathrm C^*(M,X_{ab}^{(i)}:i,a,b)\subset P\mathcal AP$;
the $X$'s and the $Y$'s generate the same algebra.
The standard matrix decomposition
\[
 \Psi:\mathcal A\longrightarrow\mathrm M_k(P\mathcal AP),
 \qquad \Psi(x)=[e_{1a}xe_{b1}]_{a,b},
 \qquad
 \Psi^{-1}([c_{ab}])=\sum_{a,b}e_{a1}c_{ab}e_{1b}
\]
is a $^*$-isomorphism, as follows by multiplying matrix units and
using $\sum_a e_{a1}e_{1a}=1$.
Its values on $R$ have entries in $M$, and
$\Psi(S_i)=k^{-1/2}[X_{ab}^{(i)}]_{a,b}$.
Since $R$ and the $S_i$ generate $\mathcal A$, it follows that
$\Psi(\mathcal A)\subset\mathrm M_k(C)$.
Taking the $(1,1)$-corner gives $P\mathcal AP=C$.

Reindexing the $k^2$ semicircular variables for each $i$ by
$\{1,\ldots,k^2\}\times\mathbb N\cong\mathbb N$ identifies this corner with
$M\fr D$, fixing $M$ and preserving $\tau_P$.
For completeness, equality of the mixed moments gives equality of
the reduced polynomial norms: for either faithful trace $\tau$,
\[
 \|x\|=\lim_{n\to\infty}\tau((x^*x)^n)^{1/(2n)}.
\]
Thus the polynomial correspondence extends isometrically to a
trace-preserving isomorphism $\theta:P\mathcal AP\to M\fr D$.
The desired isomorphism and its inverse are explicitly
\[
 x\longmapsto[\theta(e_{1a}xe_{b1})]_{a,b},
 \qquad
 [y_{ab}]\longmapsto
       \sum_{a,b}e_{a1}\theta^{-1}(y_{ab})e_{1b}.
\]
These maps fix the labelled $\mathrm M_k(M)$ entry by entry.
Finally, traciality and $\tau_P=k\tau_{\mathcal A}|_{P\mathcal AP}$
give
\[
 \tau_{\mathcal A}\!\left(\sum_{a,b}e_{a1}c_{ab}e_{1b}\right)
       =\frac1k\sum_a\tau_P(c_{aa}),
\]
which proves preservation of the normalized matrix trace and
completes \eqref{gf:eq-GA}.
This statement uses a full standard matrix frame; it asserts no
corresponding identity for an arbitrary full projection.
\end{proof}

\subsection{Relative free transport and actual returns}

We use the following finite-base setup.  Let $B$ be
finite dimensional and $G\subset B$ its labelled full diagonal
algebra.  Let $N$ and $E$ be unital nuclear UCT algebras with
faithful selected traces and trace-preserving embeddings
\[
 a:N\to E,\qquad b:B\to E
\]
agreeing on the specified $G\subset N,B$.
Identify all these copies of $G$.  Put
\begin{equation}\label{gf:eq-ST}
 S=(N*_G B)\fr D,\qquad
 T=(E*_G(G\otimes I_h))\fr D,\qquad q=e^{2\pi ih}.
\end{equation}
The $G$-Haar conjugation rule gives a faithful reduced embedding
\begin{equation}
 \eta|_N=a,\qquad \eta|_B=\Ad(q)b,\qquad \eta|_D=\id.
 \label{gf:eq-FE}
\end{equation}
Indeed $E$ and $q\,b(B)q^*$ are free over $G$, and the ordinary
reservoir is external.

Assume in this setup that $[\eta]$ and $\eta^\ddagger$ are
isomorphisms, that $\eta_T$ is an affine homeomorphism, that
$\mathrm K_1(N)=\mathrm K_1(E)=\mathrm K_1(S)=0$, and that the
working free-reservoir targets are admissible.  They will be verified below for the binary fusion application.
Finite relative tags preserve the relevant $\KK$ groups and
admissibility, by Lemma~\ref{lem:fd-tag-absorption}.
In particular $\mathrm K_1(T)=0$ by the marking of $\eta$;
the finite-tag $\KK$-equivalences and full-corner Morita
equivalence retain this vanishing in the working algebras and
normalized corners used below.

\begin{proposition}\label{gf:actual-returns}
There are actual trace-preserving embeddings $\beta_n:T\to S$
with inverse core marking and $\beta_n\eta\to\id_S$ pointwise.
They may be chosen to fix $G$ exactly.
\end{proposition}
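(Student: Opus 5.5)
The plan is to build each $\beta_n$ generator by generator on
$T=(E*_G(G\otimes I_h))\fr D$, using only actual maps into $S$, and then check that it is a reduced homomorphism.
Throughout, fix an increasing sequence of finite packets in $S$ and tolerances $1/n$.
By approximating each packet with free-product polynomials, protect a finite reservoir head $D_{\le r_n}$ that contains every interval coordinate occurring in the packet.
All further constructions take place in the complementary tail.
On $D_{\le r_n}$, $\beta_n$ is the identity, and each tail correction is extended by $\id$ on this head, as in Theorem~\ref{thm:marked-core}.
This makes $\beta_n\eta$ exact on the protected part of $D$.

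\emph{Step 1: the nuclear core.}
The target $S$, and the admissible working tail inside it, satisfy the hypotheses of Theorem~\ref{thm:embedding}.
Transport the inverse marking $[\eta]^{-1}$, $(\eta^\ddagger)^{-1}$, $\eta_T^{-1}$, restricted along the canonical inclusion $E\to T$, and realize it by a trace-preserving embedding $e:E\to S$.
Since the marking is inverse, $e\circ a:N\to S$ has the same full triple as the native inclusion $N\subset S$.
Nuclear-source uniqueness therefore gives a unitary $z_n$ with $\Ad(z_n)e a\approx\id$ on the $N$-part of the packet; replace $e$ by $\Ad(z_n)e$.
Next use the common-frame argument of Remark~\ref{rem:common-frame}, with the labelled minimal projections of $G$ among the uniqueness tests.
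This upgrades the approximate agreement on $G$ to exact agreement, so $e|_G=\id_G$.
The same marking gives equality of $\mathrm K_0$-classes of the labelled minimal projections of $e b(B)$ and $B$.
Stable-rank-one cancellation, by the paragraph on exact frame conjugacy preceding Lemma~\ref{lem:fd-relative-nielsen}, then gives a unitary $w_0\in S$ that conjugates the labelled frame of $e b(B)$ onto that of $B$ and commutes with $G$.

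\emph{Step 2: the relative circle and its logarithm.}
We want $\beta_n(q)$ to be a $G$-Haar unitary $W$ that implements $w_0$ on $e b(B)$ and is free from $e(E)$ over $G$.
Apply Lemma~\ref{lem:fd-free-implementer} and the retained-frame liberation remark, with $C=e b(B)$ and $F_0=e(E)$.
This gives $W=w_0v$ in $S*_C(C\otimes\mathrm C(\mathbb T))$, where the $C$-central Haar generator $v$ is taken from a relative tag.
Lemma~\ref{lem:fd-tag-absorption} then identifies this tag extension with $S$ onto, fixing $e(E)$, $G$ and the protected head.
It remains to produce a positive one-turn logarithm of $W$ commuting with $G$.
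Take a uniform $G$-central coordinate $k$, again from a tag.
Because $\mathrm K_1(S)=0$, the circle embeddings with images $W$ and $e^{2\pi ik}$ have the same $\mathrm K_1$-class in each corner $q_sSq_s$, $q_s$ running over the labelled minimal projections of $G$.
Their Haar laws give the same trace data.
What must still be arranged is that the normalized-corner determinants of $q_sWq_s$ and of $q_se^{2\pi ik}q_s$ agree.
To arrange this, first multiply $W$ by scalar phases in each $G$-corner.
If that does not suffice, apply the Nielsen moves of Lemma~\ref{lem:fd-relative-nielsen} to the tag logarithm.
With the corner markings equal, nuclear-source uniqueness for $\mathrm C(\mathbb T)$ in each corner gives $H_n=u_nku_n^*$ with $[H_n,G]\to0$ and $e^{2\pi iH_n}\to W$.
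Theorem~\ref{gf:relative-comparison}, whose descent step uses Proposition~\ref{gf:relative-pushout}, then yields an actual trace-preserving embedding
$f:E*_G(G\otimes I_h)\to S$.
Its restriction to the relative circle algebra is approximately unitarily equivalent to $(e,W)$.
Absorb the implementing unitary and re-fix $G$ exactly by Remark~\ref{rem:common-frame}.

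\emph{Step 3: assembly and estimates.}
Set $\beta_n=f\fr\id_{D_{\le r_n}}$, extended over the remaining external tail by freeness.
This is legitimate because $f$ was built in a complementary tail, so its image is free from the protected head.
The marking of $\beta_n$ is the inverse marking.
On $E$ it is prescribed by Step~1.
On the interval coordinate there is no further invariant to check, because the native relative-tag inclusion is a $\KK$-equivalence (Lemma~\ref{lem:fd-extension-kk}).
On the reservoir, $\beta_n$ is the identity.
For convergence of $\beta_n\eta$ to $\id_S$: on $N$ we have $\beta_n\eta=e a\approx\id$.
On $B$ we have $\beta_n\eta(x)=W e b(x)W^*\approx w_0 e b(x)w_0^*=x$.
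On $D$ the map $\beta_n\eta$ is exactly the identity.
These are finitely many conditions on each packet, so choosing the tolerances summable gives pointwise convergence.

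\emph{Main obstacle.}
I expect the hardest step to be Step~2.
One must produce a single $W$ that is simultaneously (i) exactly free over $G$ from the corrected core image, so that $f$ is a reduced and faithful homomorphism of the amalgamated product, and (ii) equipped with a $G$-commuting one-turn logarithm whose full corner marking matches that of a uniform coordinate.
The point needing care is the matching of normalized-corner determinants at each labelled vertex.
My first attempt is scalar phase adjustment of $W$ corner by corner, using $\mathrm K_1=0$ in the working corners.
If that is not enough, I would fall back on the relative Nielsen moves of Lemma~\ref{lem:fd-relative-nielsen}.
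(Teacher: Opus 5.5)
Your overall architecture matches the paper's: an inverse-marked core embedding, uniqueness on $N$, exact $G$, the frame unitary $w_0$, the free implementer $W=w_0v$, corner phase normalization, relative comparison and descent, and head protection. However, the step you flag as the main obstacle is not carried out, and the tool you cite cannot carry it out.

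\textbf{The freeness of the core image.} Lemma~\ref{lem:fd-free-implementer}, and its relative version giving $\mathrm C^*(e(E),W)\cong e(E)*_G(G\otimes\mathrm C(\mathbb T))$, needs $e(E)$ to be free over $G$ from the \emph{retained} copy of $B$ in $S$. The pinch rule $\E(W^*xW)=w_0^*\E_B(x)w_0$ matches the model only because $\E_B|_{e(E)}=\E_G$, i.e.\ $\mathrm C^*(e(E),B)=e(E)*_GB$. An embedding produced by Theorem~\ref{thm:embedding} and then inner-corrected has no such property; for instance $eb(B)$ and $B$ are two unrelated copies of $B$ over the same diagonal.

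The retained-frame liberation remark does not help. It yields freeness only for a conjugate $u\,e(E)\,u^*$ by a fresh $G$- or $C$-central Haar unitary. But then $\beta_n\eta|_N=\Ad(u)ea$ is far from $\id_N$. Re-aligning by another uniqueness unitary destroys the freeness again, since freeness is an exact relation that approximate conjugacies do not preserve.

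The missing idea is to liberate the \emph{source} rather than the core:
\begin{enumerate}
\item Realize $\theta_n$ inside a working algebra $S_{\mathrm{work}}=(N*_GB)\fr D_0$.
\item Compose with the actual embedding $\sigma:S_{\mathrm{work}}\to S$ that fixes $N$, sends $B$ to $vBv^*$ for a $G$-central relative Haar unitary $v$ built in a fresh tag, and moves $D_0$ to a fresh bank $D_1$.
\end{enumerate}
Then $N$, $B$, $\mathrm C^*(G,D_1)$ and $vBv^*$ are free over $G$, so $E'=\sigma\theta_n(E)$ is free over $G$ from the retained $B$. The estimate $\sigma\theta_na\approx\id_N$ survives exactly, because $\sigma|_N=\id$. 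The inverse marking is unchanged, since the $\mathrm K_0$-map and trace are unchanged and all relevant $\mathrm K_1$-groups vanish.

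\textbf{Two further steps fail as written.} First, the corner phases. Multiplying $W$ by phases $d\in Z(G)=G$ always suffices, so no Nielsen fallback is needed. But $d$ is in general not central in $B$, so it rotates the labelled frame: $(dW)\,eb(b^{(\ell)}_{ij})\,(dW)^*=d\,b^{(\ell)}_{ij}\,d^*$. Your Step~3 estimate therefore becomes $\beta_n\eta|_B\approx\Ad(d)|_B$, not $\id_B$. The paper fixes this by postcomposing the return with the inverse of the source gauge automorphism that is $\id$ on $N$ and $D$ and $\Ad(d)$ on $B$. It fixes $G$, so it is well defined. A path of diagonal phases makes it homotopic to the identity, and with trace preservation and $\mathrm K_1(S)=0$ it has identity full marking.

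Second, the external reservoir. Theorem~\ref{gf:relative-comparison} assumes the coefficient carries an external infinite interval reservoir; this is what makes the difference ideal stable. The paper reserves a further bank $D_3$ and applies the comparison to the coefficient $E'\fr D_3$ with the fixed circle $dW$. It also uses $D_3$ as the image of the unprotected reservoir tail of $T$. Your $f$ is defined only on $E*_G(G\otimes I_h)$, so both the hypothesis of that theorem and your phrase ``extended over the remaining external tail by freeness'' need this reserved bank.

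Incidentally, no cancellation is needed to get $w_0$. The diagonal projections of the two frames already coincide in $G$, so $w_0=\sum_{\ell,i}b^{(\ell)}_{i1}c^{(\ell)}_{1i}$ works directly.
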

\begin{proof}
Fix finite source coefficient packets and a finite ordinary
reservoir head $D_{\rm head}$.  Keep this head external throughout
the following construction, using independent infinite banks in
its complement. Choose an infinite working bank $D_0$ there and put
$S_{\mathrm{work}}=(N*_G B)\fr D_0$.
Theorem~\ref{thm:embedding} supplies an
inverse-marked $\theta:E\to S_{\mathrm{work}}$.
Theorem~\ref{thm:embedding}, applied to the whole nuclear $N$, gives inner-corrected
$\theta_n$ with $\theta_na\to\id_N$.
Correct their finite $G$-frames by unitaries tending to $1$,
so that $\theta_n|_G=\id_G$ exactly.

Reserve a $G$-central relative Haar unitary $v$, realized by the
finite-dimensional tag construction in a fresh scalar interval.
Conjugate the old $B$ by $v$, fix $N$, and move the old ordinary
working bank into a fresh external bank $D_1$.  Reduced
$G$-freeness gives an actual embedding $\sigma$ with these
prescriptions. To check the retained-frame assertion directly, put
$M=(N*_G B)\fr D_1$ and $A=\mathrm C^*(G,D_1)$.
Reduced reassociation makes $N,B,A$ free over $G$.
In the fresh $G$-central Haar extension, $M$ and $vMv^*$
are free over $G$: alternating centered words expand to reduced
words containing nonzero powers of $v$.
Thus $N,B,A,vBv^*$ form a free family over $G$.
Consequently $\mathrm C^*(N,vBv^*,D_1)$ has the prescribed
source law and is free over $G$ from the retained $B$.
Faithful traces give the asserted actual embedding.
Its image core
$E'=\sigma\theta_n(E)$ remains root-good and is free over $G$
from the retained old $B$.  On the nuclear $E$, its inverse
marking is unchanged: the $\mathrm K_0$ action and trace are unchanged,
and both source and ambient $\mathrm K_1$ vanish, eliminating the
degree-zero UCT Ext terms.

Let $C=\sigma\theta_nb(B)\subset E'$ and write the corresponding
labelled matrix units as $c_{ij}^{(\ell)},b_{ij}^{(\ell)}$.
Their diagonal projections already agree exactly in $G$.
Consequently the explicit element
\[
 w_0=\sum_{\ell,i}b_{i1}^{(\ell)}c_{1i}^{(\ell)}
       \in \mathrm C^*(B,C)
\]
is unitary, commutes with $G$, and satisfies
$w_0c_{ij}^{(\ell)}w_0^*=b_{ij}^{(\ell)}$.
These identities follow directly by multiplying matrix units.
No projection cancellation or additional $\mathrm K_0$-existence statement
is needed for this frame-matching step.

Here the free-implementer lemma has a relative version.
Over the whole working algebra containing $E'$, the old $B$
and $w_0$, take a fresh $C$-amalgamated extension with
$C$-central Haar $z$, free from that whole algebra over $C$, and
put $W=w_0z$.  Then
\begin{equation}
 W\in G',\qquad WCW^*=B,\qquad
 \mathrm C^*(E',W)\cong E' *_G(G\otimes \mathrm C(\mathbb T)).
 \label{gf:eq-RT}
\end{equation}
To verify the last reduced norm, set
$H_0=\mathrm C^*(E',B)=E' *_G B$.  Write $\E$ for the
canonical expectation onto $H_0$ in the reduced subextension
\[
 H_0*_C(C\otimes\mathrm C(\mathbb T_z)).
\]
This subextension embeds trace preservingly in the whole working
extension: the expectations onto the finite-dimensional $C$ restrict
correctly. It contains $w_0,z,W$. No expectation from the whole
working coefficient algebra onto $H_0$ is assumed.
The HNN pinch rules are
\[
 \E(WxW^*)=w_0\E_C(x)w_0^*,\qquad
 \E(W^*xW)=w_0^*\E_B(x)w_0 .
\]
They agree with the rules for the $G$-central free Haar $U$ in
$E' *_G(G\otimes \mathrm C(\mathbb T))$, identifying
$B$ with $UCU^*$.  This uses
$\E_B|_{E'}=\E_G$ and the expectation tower onto $C$.
Induction on stable letters, after expanding coefficients in
$E' *_G B$, gives equality of all $G$-valued moments.
Faithful traces give \eqref{gf:eq-RT}.  The $C$-central tag is
actual, by the finite-dimensional tag embedding, so this does
not construct only a sequence-algebra circle.

Independently normalize the normalized determinant of $W$
in every minimal $G$-corner to $[1/2]$, by replacing $W$ by
$dW$, $d\in Z(G)$.  All corner $\mathrm K_1$ classes vanish.
This preserves the $G$-Haar free law, while rotating the old
labelled $B$-frame by $\Ad(d)$.  That rotation can be undone:
the source map which fixes $N,D$ and acts by $\Ad(d)$ on $B$
is an actual trace-preserving automorphism, since it fixes
$G$ pointwise.  A path of diagonal phases shows it is homotopic
to the identity, so its total $\mathrm K$-map is the identity.
Its trace preservation
and $\mathrm K_1(S)=0$ retain the full unitary marking as well.

Reserve an additional external infinite bank $D_3$.  The
coefficient $E'\fr D_3$ and the fixed actual $dW$-circle satisfy
the relative actual-model theorem
(Theorem~\ref{gf:relative-comparison}, applied to
$G=\mathrm C(\{\text{finitely many points}\})$).
The native marked $G$-central interval can be made in a fresh
bank by
\[
 Y=\sum_j\tau(e_j)^{-1/2}e_j s e_j
\]
and the semicircle distribution function.  Nuclear-source
uniqueness for $G\otimes \mathrm C(\mathbb T)$ supplies approximate
logarithms for the fixed circle, and the fixed-relative-circle
comparison descends them to actual embeddings of the full
interval domain.  In particular their whole coefficient
restrictions are approximately unitarily equivalent to the
native one.

Choose the root and frame errors tending to zero, and undo
the diagonal source gauge by postcomposing the return with its
inverse. The gauge fixes $N$ and the protected reservoir head
pointwise, so this retains the
native root estimates and restores the old whole $B$-frame.
Use $D_3$ for the unprotected target tail and extend the actual
return by the identity on $D_{\rm head}$.  Thus each return
approximates the native map on the chosen $N$-packet, the whole
finite $B$-frame, and this head.  Letting these packets and heads
increase gives $\beta_n\eta\to\id_S$ by word approximation.  Near-identity finite-frame corrections make
$G$ exact, and later make the whole returned finite $B$-frame
exact when adjoining its tags.  The inverse core marking is
retained by coefficient approximate unitary equivalence and
the homotopically trivial source gauges.
\end{proof}

\subsection{Simultaneous corner phases in target capture}

\begin{theorem}[Finite-base finite-dimensional attachment]
\label{gf:finite-base-fusion}
In the finite-base setup just specified, with $S,T$ as
in~\eqref{gf:eq-ST}, there is a marked trace-preserving isomorphism
\[
                       S\cong T\cong E\fr D.
\]
\end{theorem}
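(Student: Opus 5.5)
The plan is to follow step~(iv)--(v) of the outline in Section~\ref{sec:fusion-outline}, using Lemma~\ref{lem:marked-caap} with $B=\eta(S)\subset C=T$ and the subgroup $\mathcal G$ of trace-preserving automorphisms of $T$ that act identically on the full invariant. Given finite $F\subset S$, $L\subset T$ and $\varepsilon>0$, I will produce an actual return $\beta:T\to S$ and an automorphism $\alpha\in\mathcal G$ with
\[
 \|\beta\eta(s)-s\|<\varepsilon\quad(s\in F),\qquad
 \|\alpha\eta\beta(t)-t\|<\varepsilon\quad(t\in L\cup\eta(F)).
\]
The displayed triangle inequalities of the outline then give \eqref{eq:caap} for $\alpha$ and the element $y=\beta(t)$. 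Lemma~\ref{lem:marked-caap} yields $S\cong T$ with the marking of $\eta$. The second identification $T\cong E\fr D$ is Lemma~\ref{lem:fd-tag-absorption}, applied to the single tag $G\otimes I_h$ over $G\subset E$. That lemma fixes $E$ pointwise, and its marking is the native one by Lemma~\ref{lem:fd-extension-kk} and the remark following it.

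For the return, I will take $\beta=\beta_n$ from Proposition~\ref{gf:actual-returns} with $n$ large. This gives the first estimate on $F$, after approximating $F$ by polynomials in $N$, $B$ and a finite reservoir head, and it makes $\beta$ fix $G$ exactly. The composite $\eta\beta:T\to T$ then has the identity full marking, since $\beta$ carries the inverse core marking. Hence $\eta\beta$ restricted to the nuclear core $E$ is approximately unitarily equivalent in $T$ to the inclusion, by Theorem~\ref{thm:embedding}. I will perform this comparison inside a complementary tail so that a finite head containing the reservoir letters of $L$ is protected, as in Remark~\ref{rem:marked-alignment}. Conjugating by the resulting tail unitary $u$ gives a first factor $\Ad(u)$ of $\alpha$. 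It corrects $E$ and the ordinary reservoir letters on the packet, and it fixes the $G$-frame up to a near-identity correction (Remark~\ref{rem:common-frame}).

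The remaining letter is $q=e^{2\pi ih}$, or rather the interval coordinate $h$ itself. After the first correction, $\Ad(u)\eta\beta(q)$ is a $G$-central unitary that differs from $q$ by a multiplier $q^*\Ad(u)\eta\beta(q)$ in $G'\cap T$. I plan to absorb this multiplier with a relative Nielsen move, Lemma~\ref{lem:fd-relative-nielsen}, which supplies an actual automorphism with identity marking sending $Z\mapsto Zu'$ while nearly fixing the protected packet. For this I must verify \eqref{eq:fd-corner-marking}: the multiplier must vanish in $\mathrm K_1$ and in the Hausdorffized determinant of every normalized minimal $G$-corner. The $\mathrm K_1$ part follows from $\mathrm K_1(T)=0$ and its retention in full corners. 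For the determinants, the corner phase normalization to $[1/2]$ in Proposition~\ref{gf:actual-returns}, applied identically to the native circle, should make the corner determinants match.

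The interval coordinate itself is then recovered by comparing the two logarithms of the same relative circle. Theorem~\ref{gf:relative-comparison} handles two embeddings of $Q_G$ that agree on $R_G$, and Theorem~\ref{rf:circle-lift} in its relative form gives the interval lift. Each yields inner corrections with errors confined to the stable nuclear difference ideal, so every factor of $\alpha$ lies in $\mathcal G$.

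I expect the main obstacle to be simultaneity. The return $\beta$ must be fixed before the corner phases are measured, yet the determinant of $\eta\beta(q)$ in each $G$-corner must agree with that of $q$ exactly and not only approximately. My first attempt is to build the gauge $d\in Z(G)$ from Proposition~\ref{gf:actual-returns} into $\beta$ itself. I would choose $d$ after measuring the corner determinants of the returned circle, and undo it on the source by the homotopically trivial automorphism $\Ad(d)$ on $B$. Independent reservoir banks keep the estimate on $F$ intact while this adjustment is made.
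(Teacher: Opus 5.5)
There is a genuine gap where you absorb the discrepancy $q^*\Ad(u)\eta\beta(q)$ by a relative Nielsen move. Lemma~\ref{lem:fd-relative-nielsen} moves a tag $Z=e^{2\pi iH_a}$ by a multiplier lying in the coefficient algebra to which that tag is freely attached. Its automorphisms approximate the identity on that coefficient algebra.

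If the moved circle is $q$ itself, the coefficient algebra is $E\fr D$. You would therefore need $\Ad(u)\eta\beta(q)\in q\,(E\fr D)$, and there is no reason for this. The element $\eta\beta(q)$ lies in $\eta(S)=\mathrm C^*(a(N),q\,b(B)q^*,D)$ and comes from an uncontrolled return. It is therefore a general $G$-central unitary of $T$ involving $q$ and $h$. The prescription $q\mapsto qu'$ with $u'$ depending on $q$ is not a Nielsen move, and there is no reason it extends to an automorphism.

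Nothing else in the paper carries $q$ onto a different free $G$-central Haar circle while fixing $E\fr D$. Theorem~\ref{gf:relative-comparison} only compares two maps that already agree on the relative circle algebra and differ in the one-turn logarithm. So after your first correction $\Ad(u)$, the hard part remains, and the second CAAP estimate is not obtained.

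The missing idea is the auxiliary-tag twist.
\begin{itemize}
\item The paper adjoins, on both sides, a $G$-central root tag $H_Z$ and a $B$-central leaf tag $H_Y$. On the target, $H_Y$ is central over $B_0=q\,b(B)q^*$.
\item It uses the relative Nielsen automorphism $\Gamma$, which is $\Ad(Z)$ on the coefficient algebra and sends $q\mapsto YqZ^*$. It satisfies $\Gamma\eta^+=\eta^+\gamma_S$, where $\gamma_S$ is a source gauge homotopic to the identity.
\item After the twist, the core-alignment unitary $u_n\in G'\cap T$ and the averaged circle discrepancy $v_n\in B_0'\cap T$ are multipliers in the coefficient algebras of the new tags $Z$ and $Y$. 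The moves $Z\mapsto Zu_n''$ and $Y\mapsto Yv_n'$ are then legitimate instances of Lemma~\ref{lem:fd-relative-nielsen}.
\item The two resulting maps agree asymptotically on the relative circle algebra. Only at this point does the comparison assertion of Theorem~\ref{gf:relative-comparison} apply.
\end{itemize}
Consequently the return in the CAAP estimate is the twisted $b_n=\gamma_S^{-1}\beta_n^+\Gamma_n$, not $\beta_n$. It is transported back by label-natural tag absorption, and the target automorphism is $a_n\Gamma_m$ with a later-chosen lift.

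The ``simultaneity'' problem you anticipate is not the real obstacle in your one-multiplier set-up. Both $q$ and $\eta\beta(q)$ are exponentials of uniform $G$-central intervals. With $u\in G'$, every normalized $G$-corner determinant of your multiplier is therefore automatically $[1/2]-[1/2]=0$. Determinant bookkeeping is forced only once the discrepancy is split into two separately absorbed multipliers. Then only $\Delta_{e_j}(u_n)+\Delta_{e_j}(v_n)\to0$ is automatic. The paper repairs this with the block-central phase $c_n\in Z(B_0)\subset G$, absorbed by the automorphism $\delta_{c_n}$, together with diagonal phases $d_n\to1$.
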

\begin{proof}
Add a $G$-central uniform root tag $H_Z$ and a whole
$B$-central uniform leaf tag $H_Y$ to the source; on the target
the latter tag is central over $B_0=q\,b(B)q^*$.
Write $S^+,T^+,\eta^+$ for the resulting diagrams and
$Z=e^{2\pi iH_Z}$, $Y=e^{2\pi iH_Y}$.
The actual returns extend, after the near-identity whole
$B$-frame correction, to
\begin{equation}
 \beta_n^+:T^+\to S^+,\qquad
 \beta_n^+\eta^+\to\id_{S^+},\qquad \beta_n^+(T)\subset S.
 \label{gf:eq-ER}
\end{equation}
All finite-frame corrections occur in the original $S$.
The tag extensions send each target $H_Z,H_Y$ to its source
counterpart exactly. Hence, with $\phi_n=\eta^+\beta_n^+$,
\[
 \phi_n(H_Z)=H_Z,\qquad \phi_n(H_Y)=H_Y,\qquad
 \phi_n|_{B_0}=\id_{B_0}.
\]
Put $q_{1,n}=\phi_n(q)$.
The core and circle images in this notation lie in the
original $T$, and $\phi_n|_G=\id_G$.

Theorem~\ref{thm:embedding}, on the nuclear core $E$, gives
core-alignment unitaries.
Correct on the finite diagonal to choose them in $G'$:
\[
 u_n\phi_n(e)u_n^*\longrightarrow e\quad(e\in E).
\]
Averaging $q_{1,n}u_n^*q^*$ over the unitary group of $B_0$
and taking polar parts gives
\begin{equation}
 v_n\in B_0'\cap T,\qquad
             v_n-q_{1,n}u_n^*q^*\longrightarrow0.
 \label{gf:eq-AL}
\end{equation}
The source-good whole leaf and the core alignment justify
this averaging. Indeed, for $A_n=q_{1,n}u_n^*q^*$,
the exact identity $\phi_n|_{B_0}=\id$ and core alignment imply
$\|[A_n,b]\|\to0$ on the finite matrix frame of $B_0$.
Finite dimensionality makes this uniform for $b\in\mathrm U(B_0)$;
the average differs from $A_n$ by at most that commutator bound.
It is therefore invertible for large $n$, and its polar unitary
lies in $B_0'\cap T$ and has the asserted norm estimate.

Here independent root corner phases cannot be discarded.
Write $G=\bigoplus_j\C e_j$ and let $b_\ell$ be the central
supports of $B_0$.  Since $G$ is its full diagonal, each
$b_\ell$ is a sum of some $e_j$'s.  Equivalent minimal
projections in a block have the same normalized determinant
for $v_n$, since $v_n$ commutes with the whole block.
Let $r_{\ell,n}$ represent this common determinant and put
\[
 c_n=\sum_\ell e^{2\pi i r_{\ell,n}}b_\ell
       \in Z(B_0)\subset G.
\]
Because $q$ commutes with $G$, these block supports are
also the central supports of $C_B=b(B)$; hence
$c_n\in Z(C_B)$.
Every normalized $B_0$-corner determinant of
$v_n'=v_nc_n^*$ is exactly zero.

Both $q$ and $q_{1,n}$ are exponentials of actual uniform
intervals in each normalized $G$-corner.  Their corner
determinants are $[1/2]$.  Equation \eqref{gf:eq-AL} therefore
gives
\[
 \Delta_{e_j}(u_n)+\Delta_{e_j}(v_n)\longrightarrow0.
\]
Thus every normalized $G$-corner determinant of $c_nu_n$
tends to zero.  Choose diagonal phases $d_n\in Z(G)$,
$d_n\to1$, so that
\[
 u_n''=d_nc_nu_n
\]
has exactly zero determinant in every $G$-corner.  This uses
only that a closed trace-range subgroup is either discrete
or all of $\mathbb R$.  Then
\begin{equation}
 u_n''\phi_n(e)(u_n'')^*-c_nec_n^*\longrightarrow0,\qquad
 v_n'q-q_{1,n}(u_n'')^*\longrightarrow0 .
 \label{gf:eq-CP}
\end{equation}
For completeness, with $\epsilon_n=\|v_n-A_n\|$ and
$\delta_n=\|d_n-1\|$, the two errors are bounded by
\[
 \begin{aligned}
 \|u_n''\phi_n(e)(u_n'')^*-c_nec_n^*\|
 &\leq\|u_n\phi_n(e)u_n^*-e\|+2\delta_n\|e\|,\\
 \|v_n'q-q_{1,n}(u_n'')^*\|&\leq\epsilon_n+\delta_n.
 \end{aligned}
\]
Stable rank one and zero corner $\mathrm K_1$ turn these zero
determinants into the full corner $\CU$ conditions.

Set $H_X=q^*H_Yq$ and $X=e^{2\pi iH_X}$.
Reassociation writes the tag coefficient as
\[
 \mathcal N=(E*_G(G\otimes I_{H_Z}))
                *_{C_B}(C_B\otimes I_{H_X}).
\]
On the faithful relative circle algebra
\[
 R^+=(\mathcal N*_G(G\otimes \mathrm C(\mathbb T_q)))\fr D
\]
define
\begin{equation}
 \Gamma|_{\mathcal N}=\Ad(Z),\qquad
 \Gamma(q)=qXZ^*=YqZ^*,\qquad \Gamma|_D=\id.
 \label{gf:eq-GC}
\end{equation}
This is a relative Nielsen automorphism.
In each normalized $G$-corner its multiplier has determinant
$\Delta(X)-\Delta(Z)=[1/2]-[1/2]=0$.
Lemma~\ref{lem:fd-relative-nielsen} therefore gives
actual automorphisms $\Gamma_n$ of $T^+$ approaching this
prescription on arbitrary finite packets in $R^+$, with full
identity marking.  One first extends $\Ad(Z)$ on the
coefficient and then performs the right-multiplier move.

There is a source automorphism $\gamma_S$ acting by $\Ad(Z)$
on $N$, by $\Ad(Y)$ on the whole $B$, and fixing $D$ and
the tag logarithms.  Reassociate the two relative tag
extensions, with the ordinary $D$ external, as
\[
 N*_G(B\otimes I_{H_Y})*_G(G\otimes I_{H_Z}).
\]
The map is partial conjugation of $N$ by the $G$-central $Z$;
its inverse is the corresponding conjugation by $Z^*$.
In this whole-finite-dimensional-leaf case $Y$ commutes with
$B$ itself. The path $Z_t=e^{2\pi itH_Z}$ gives trace-preserving
automorphisms from the identity to $\gamma_S$. Homotopy retains
total $\mathrm K$-theory; trace preservation and
$\mathrm K_1(S^+)=0$ retain the full unitary map.
The exact circle identity is
\begin{equation}
                    \Gamma\eta^+=\eta^+\gamma_S.
 \label{gf:eq-SC}
\end{equation}
In particular $\eta^+(S^+)\subset R^+$.

For the phase correction let $\delta_{c_n}$ be the actual
automorphism acting by $\Ad(c_n)$ on $\mathcal N$ and fixing
$q,D$. It fixes both tag logarithms, since $c_n\in G\cap Z(C_B)$.
The path
$c_n(t)=\sum_\ell e^{2\pi it r_{\ell,n}}b_\ell$
shows that its total $\mathrm K$-map is the identity; trace
preservation and $\mathrm K_1(T^+)=0$ give identity full marking.
Set $\Gamma_{c_n}=\delta_{c_n}\Gamma$ and take simply
\[
             \Gamma_{c_n,n}=\delta_{c_n}\Gamma_n.
\]
Postcomposition is isometric, so the same finite-packet accuracy
approximates $\Gamma_{c_n}$. Its circle formula in
\eqref{gf:eq-GC} is unchanged, because $\delta_{c_n}$ fixes
$X,Z,q$.

Use Lemma~\ref{lem:fd-relative-nielsen} once more:
let $\Theta_{Z,n}$ approximate $Z\mapsto Zu_n''$, and let
$\Theta_{Y,n}$ approximate $Y\mapsto Yv_n'$.
The exact corner conditions were checked above.
Choose all coefficient packets after the moving core and
circle images are known.  The actual maps
\[
 F_n=\Theta_{Z,n}\phi_n\Gamma_n|_T,\qquad
 G_n=\Theta_{Y,n}\Gamma_{c_n,n}|_T
\]
then have the following common images in the norm sequence
algebra:
\[
 \begin{aligned}
 F(e)&=Zc\,e\,c^*Z^*=G(e) &&(e\in E),\\
 F(q)&=Yq_1u^*c^*Z^*=Yv'qZ^*=G(q),\\
 F(d)&=d=G(d) &&(d\in D).
 \end{aligned}
\]
These equalities mean that the coordinate norm differences
tend to zero; $c_n$ is allowed to move.
No equality on the added $Y$ logarithm is needed, since the
comparison domain is the original $T$.

The common relative circle is the faithful copy of
\[
 (E\fr D)*_G(G\otimes \mathrm C(\mathbb T)).
\]
Only the two one-turn logarithms of its one interval differ.
Theorem~\ref{gf:relative-comparison} applies: its coefficient
has an external infinite reservoir, its source is simple by
finite-tag absorption, and the native coefficient inclusion
is a $\KK$-equivalence.  Thus the actual Cuntz-pair class
vanishes.  Here we use the theorem's first, comparison assertion,
which requires a common faithful relative circle.  Its separate
descent assertion is not being used, so the common image need
not be diagonal and the moving $c_n$ is harmless.
Lift the comparison unitary and diagonalize to get
actual $V_n\in T^+$ with
\[
 \Ad(V_n)\Theta_{Z,n}\eta^+\beta_n^+\Gamma_n(x)
       -\Theta_{Y,n}\Gamma_{c_n,n}(x)\longrightarrow0
       \qquad(x\in T).
\]
Put $H_n=\Theta_{Y,n}\Gamma_{c_n,n}$ and define the actual
trace-preserving automorphism
\[
 a_n=H_n^{-1}\Ad(V_n)\Theta_{Z,n}.
\]
Every factor has identity full marking.  The preceding comparison gives
\[
       a_n\eta^+\beta_n^+\Gamma_n(x)\longrightarrow x
       \qquad(x\in T).
\]

Now put $b_n=\gamma_S^{-1}\beta_n^+\Gamma_n$.
Equations \eqref{gf:eq-ER} and \eqref{gf:eq-SC} give
$b_n\eta^+\to\id$ on $S^+$.
For a fixed $n$ and finite target packet $L\subset T$,
choose a second actual lift $\Gamma_m$ accurately on the
already specified moving packet $\eta^+b_n(L)\subset R^+$.
Then
\[
 \Gamma_m\eta^+b_n(x)\approx
 \Gamma\eta^+b_n(x)
 =\eta^+\gamma_Sb_n(x)
 =\eta^+\beta_n^+\Gamma_n(x).
\]
Consequently $\alpha_n=a_n\Gamma_m$ supplies target capture
for that same source-good return.  Every inverse is an
actual coordinate inverse; none is passed through a limit.

The exactly natural finite-tag absorption of
Lemma~\ref{lem:fd-tag-absorption}
removes the added $Z,Y$ tags using common ordinary banks.
More explicitly, choose label-natural isomorphisms
$J_S:S^+\to S$, $J_T:T^+\to T$ fixing the prescribed source
and target polynomial packets and satisfying
\[
                      J_T\eta^+=\eta J_S.
\]
Transport an enlarged return $b^+$ and enlarged correction
$\alpha^+$ by
\[
 b=J_Sb^+J_T^{-1},\qquad
 \alpha=J_T\alpha^+J_T^{-1}.
\]
The protected packets may be chosen before these common-bank
absorptions.  Thus the original $S,T$ have, for every pair of finite
packets, a common actual return $b$ and actual target automorphism
$\alpha$ with
\[
 b\eta\approx\id_S,\qquad \alpha\eta b\approx\id_T.
\]
All maps have the required full markings: the returns have
the inverse core marking; the Nielsen lifts have identity
marking by their corner conditions; coefficient conjugacies
and inner comparisons preserve it.  The source gauges are
homotopic to the identity.  Native finite-tag inclusions
are $\KK$-equivalences, and zero $\mathrm K_1$ and the selected trace
retain the determinant compatibility.

To verify~\eqref{eq:caap} for $\eta(S)\subset T$, include
$\eta(F)$ in the target packet whenever $F\subset S$ is the
source packet.  Then
\[
 \|\alpha\eta(s)-\eta(s)\|
 \leq\|s-b\eta(s)\|
       +\|\alpha\eta b(\eta(s))-\eta(s)\|
                         \qquad(s\in F).
\]
The same $\alpha$ places each target test element near
$\alpha\eta(S)$.  Lemma~\ref{lem:marked-caap} therefore gives
$S\cong T$ with the full marking of $\eta$.  Finally finite-dimensional tag
absorption on $E\fr D$ gives $T\cong E\fr D$, fixing $E$.
\end{proof}

\subsection{\texorpdfstring{Zero-$\mathrm K_1$}{Zero-K1} binary reduction and exact-unit compression}

\begin{proposition}\label{gf:zero-K1-binary-fusion}
Let $A_1,A_2$ be separable unital nuclear UCT algebras with faithful
selected traces and $\mathrm K_1(A_i)=0$.  Their free product, after adjoining a reservoir, is
trace-preservingly isomorphic to a nuclear-core reservoir model
with the aggregate selected pointed data.
\end{proposition}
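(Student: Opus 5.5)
The plan is to follow the outline of Section~\ref{sec:fusion-outline}, feeding the finite-base setup of Theorem~\ref{gf:finite-base-fusion} with $G=\C^2$ and $B=\mathrm M_2$. Put $N=A_1\oplus A_2$, with the half-weighted trace $\tfrac12\tau_1\oplus\tfrac12\tau_2$, and let $G=\C^2\subset N$ be the labelled diagonal of component units. By \eqref{gf:eq-MT}, taken with $F=\C$,
\[
 \mathrm M_2(A_1\fr A_2)\cong N*_{\C^2}\mathrm M_2 .
\]
Adjoining $D$ and using \eqref{gf:eq-GA}, the source $S=(N*_G\mathrm M_2)\fr D$ is identified with $\mathrm M_2((A_1\fr A_2)\fr D)$. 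This identification fixes the labelled frame, so $e_{11}$ corresponds to the distinguished corner.

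For the target, I would choose by Lemma~\ref{lem:marked-range} (or by the range theorem as in Proposition~\ref{prop:model}) a simple, monotracial, $\mathcal Z$-stable nuclear UCT model $E_0$. Its data are $\mathrm K_0(E_0)=\mathrm K_0(A_1\fr A_2)$ with the strict trace cone, unit $[1]$, $\mathrm K_1(E_0)=0$, and the aggregate pairing of Proposition~\ref{prop:product-elliott}. Put $E=\mathrm M_2(E_0)$. The nuclear existence theorem \cite[Theorem~B]{CGSTW} gives unital trace-preserving embeddings $a_i:A_i\to E_0$ inducing the canonical maps $\mathrm K_0(A_i)\to\mathrm K_0(A_1\fr A_2)$. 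Their diagonal sum is $a:N\to E$, and $b:\mathrm M_2\to E$ is the standard frame; $a$ and $b$ agree on $G$.

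The hypotheses of the finite-base setup then have to be checked.
\begin{enumerate}
\item Nuclearity and the UCT hold for $N$ and $E$, and $\mathrm K_1(N)=\mathrm K_1(E)=0$.
\item $\mathrm K_1(S)=0$. This follows from the amalgamated $\mathrm K$-sequence of \cite{FG}, or more directly from the matrix identification, since $\mathrm K_1(A_1\fr A_2)=0$ by Proposition~\ref{prop:product-elliott}.
\item The working targets are admissible by Lemma~\ref{lem:interval-regularity}, via Morita/corner passage and finite tag absorption (Lemma~\ref{lem:fd-tag-absorption}).
\item $[\eta]$ is an isomorphism. Under the two $\mathrm M_2$ identifications, and with $T\cong E\fr D$ after tag absorption, $\eta$ induces on $\mathrm K_0$ the map $\mathrm K_0(A_1)\oplus\mathrm K_0(A_2)$ modulo the unit relation $\to\mathrm K_0(E_0)$. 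This is the prescribed isomorphism, and it also preserves the class of $e_{11}$. The point is that $\Ad(q)$ is homotopically trivial on $\mathrm K$-theory, and the relation $[r]=[q]$ induced by $\mathrm M_2$ is exactly the free-product unit relation. The coefficient groups follow by the exact sequences, since both sides are KK-equivalent to their cores through native inclusions (Lemma~\ref{lem:fd-extension-kk}).
\item The trace maps are affine homeomorphisms because all algebras are monotracial with equal pairing.
\item $\eta^\ddagger$ is an isomorphism by the Thomsen sequence, given $\mathrm K_1=0$ and equal trace ranges.
\end{enumerate}

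With these in place, Theorem~\ref{gf:finite-base-fusion} gives a marked trace-preserving isomorphism
\[
 \mathrm M_2\bigl((A_1\fr A_2)\fr D\bigr)\cong S\cong T\cong E\fr D=\mathrm M_2(E_0)\fr D\cong\mathrm M_2(E_0\fr D),
\]
where the last step is again \eqref{gf:eq-GA}. The marking sends $[e_{11}]$ to $[e_{11}]$: indeed $\eta(e_{11})=e_{11}$ because $q$ commutes with $\C^2$, and every later map is marked. By stable rank one, cancellation gives $e_{11}\sim\Phi(e_{11})$ and $1-e_{11}\sim1-\Phi(e_{11})$, so a unitary carries $\Phi(e_{11})$ onto $e_{11}$. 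Compressing to that corner gives a trace-preserving isomorphism
\[
 (A_1\fr A_2)\fr D\cong E_0\fr D ,
\]
and the normalized corner trace is preserved. Finally, Corollary~\ref{cor:reservoir-model} transfers the result from the interval presentation to a general reservoir.

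I expect the main obstacle to be the exact marking of $\eta$ on total $\mathrm K$-theory, and in particular the survival of the corner class $[e_{11}]$ including possible $2$-torsion (cf.\ Remark~\ref{rem:common-frame}). The hypothesis of Theorem~\ref{gf:finite-base-fusion} requires $[\eta]$ to be invertible, not merely an isomorphism on $\mathrm K_0$. I would handle this by computing both sides through KK-equivalences with native inclusions: on $S$, the inclusion $N*_G\mathrm M_2\to S$; on $T$, the inclusion $E\to T$. With those in hand, the check reduces to the explicit $\mathrm K_0$ calculation above, together with the coefficient exact sequences.
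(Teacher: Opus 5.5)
Your proposal is correct and follows the paper's proof essentially step for step. Both use the finite-base setup with $N=A_1\oplus A_2$, $B=\mathrm M_2$, $G=\C^2$ and $E=\mathrm M_2(E_0)$, check the invertible marking of $\eta$ through the amalgamated $\mathrm K$-sequence, the coefficient sequences and the Thomsen sequence, and finish by tracking $e_{11}$ exactly (rather than halving $2u$), then applying stable-rank-one cancellation and compression. The only differences are cosmetic: you take the $a_i$ from \cite[Theorem~B]{CGSTW} rather than Theorem~\ref{thm:embedding}, and you read off $\mathrm K_1(S)=0$ from the matrix identification and Proposition~\ref{prop:product-elliott} rather than from trace-injectivity of the map into $\mathrm K_0(A_1)\oplus\mathrm K_0(A_2)\oplus\Z h$, which is the same computation.
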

\begin{proof}
Write $u_i=[1_{A_i}]$ and
\[
 G_0=\frac{\mathrm K_0(A_1)\oplus \mathrm K_0(A_2)}
                 {\langle(u_1,-u_2)\rangle},\qquad
 u=[(u_1,0)]=[(0,u_2)].
\]
The aggregate pairing sends $[(x_1,x_2)]$ to
$\tau_{1*}(x_1)+\tau_{2*}(x_2)$ and sends $u$ to $1$.
Choose the simple monotracial nuclear UCT range
core $E_0$ with $\mathrm K_0(E_0)=G_0$, unit $u$, this pairing and
$\mathrm K_1(E_0)=0$, using its strict trace cone and \cite[Theorem 2.8]{EGLN}.
Theorem~\ref{thm:embedding} supplies actual unital trace-preserving embeddings
$a_i:A_i\to E_0$ with the canonical markings.  Positivity uses
the range core's strict trace cone; no identification of the
original cores' ordered $\mathrm K_0$-groups is required.

Apply the finite-base setup with
\[
 N=A_1\oplus A_2,\qquad B=\mathrm M_2,\qquad G=\C^2,\qquad
 E=\mathrm M_2(E_0),\qquad a=\operatorname{diag}(a_1,a_2),
\]
and the standard embedding $b:\mathrm M_2\to \mathrm M_2(E_0)$.
The trace on $N$ is half-weighted and $G$ is the common full
diagonal.  The map in the amalgamated $\mathrm K$-theory sequence has columns
\[
 (u_1,0,-h),\qquad (0,u_2,-h)
       \quad\text{in }\mathrm K_0(A_1)\oplus \mathrm K_0(A_2)\oplus\mathbb Zh,
\]
where $h$ is the minimal matrix class.  It is injective:
the selected trace on each $A_i$ forces the corresponding integer
coefficient to vanish.  Its cokernel is $G_0$, with $h=u$.
Thus $\mathrm K_1(S)=0$, the amplified unit is $2u$, and the liberated
embedding $\eta$ is a $\mathrm K$-isomorphism with the required selected
trace.  In the amplified diagram the pairing is half the aggregate
pairing, so it sends $2u$ to $1$ and $u$ to $1/2$.
The full-to-reduced $\KK$ equivalence and natural coefficient
sequences give its total $\mathrm K$-marking; zero $\mathrm K_1$ and the selected
trace give the unitary/determinant part of that marking.  Its full
selected marking is therefore an isomorphism.

The working source targets are admissible by the infinite external
reservoir and the standard matrix identity.  The target and its
finite tag extensions are admissible by finite-dimensional tag
absorption.  Theorem~\ref{gf:finite-base-fusion} applies.
Writing $P=A_1\fr A_2$, the exact identities in
Proposition~\ref{gf:matrix-identities} give
\[
 \mathrm M_2(P\fr D)\cong \mathrm M_2(E_0\fr D).
\]
Under the constructed marking the distinguished projection
$e_{11}$ has class \emph{exactly} $u$ on both sides.
Indeed $\eta(e_{11})=e_{11}$, since $e_{11}\in G$ and $q$
commutes with $G$. Its class
is not inferred just by halving the amplified unit $2u$; that
inference would lose possible two-torsion.  Stable-rank-one
cancellation in the target, for the projection and its complement,
gives a unitary correcting its image to the standard $e_{11}$.
Compressing this actual isomorphism gives
\[
                         (A_1\fr A_2)\fr D\cong E_0\fr D
\]
with the selected unamplified unit and pairing.
\end{proof}

\subsection{Binary fusion with control on the old stabilized core}

\begin{proposition}[Tail-controlled binary fusion]
\label{gf:tail-controlled-binary-fusion}
Let $R,A$ be nuclear UCT cores with faithful selected traces and
$\mathrm K_1(R)=\mathrm K_1(A)=0$.  Let $E$ be a simple monotracial nuclear UCT
range core for their aggregate selected pointed data, and choose
an actual trace-preserving unital embedding $\lambda:R\to E$
inducing the canonical old-core $\mathrm K_0$-map.
For an interval reservoir $D$, finite $\mathcal Y\subset R\fr D$
and $\varepsilon>0$, there is a trace-preserving isomorphism
\[
 \psi:(R\fr A)\fr D\longrightarrow E\fr D
\]
inducing the canonical aggregate marking and satisfying
\begin{equation}
 \|\psi(y)-(\lambda\fr \id_D)(y)\|<\varepsilon
                         \qquad(y\in\mathcal Y).
 \label{gf:eq-TB}
\end{equation}
It may fix any specified finite external reservoir head pointwise.
\end{proposition}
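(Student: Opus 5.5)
First I will produce some fusion isomorphism with the correct marking, and afterwards correct it on the old core so that it approximates $\lambda\fr\id_D$ on $\mathcal Y$. For the first step, Proposition~\ref{gf:zero-K1-binary-fusion}, combined with marked nuclear-core replacement (Theorem~\ref{thm:marked-core}), supplies a trace-preserving isomorphism $\psi_0:(R\fr A)\fr D\to E\fr D$ that induces the canonical aggregate marking. I would run this construction inside a complementary tail. Concretely: approximate every element of $\mathcal Y$, within $\varepsilon/3$, by a free-product polynomial in $R$ and the blocks of a finite interval prefix $D_{\le r}$. This prefix should also contain the prescribed external head. Then build $\psi_0$ on $(R\fr A)\fr D_{>r}\to E\fr D_{>r}$ and extend it by $\id_{D_{\le r}}$. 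With this arrangement $\psi_0$ fixes $D_{\le r}$ exactly, and by contractivity it only remains to control $\psi_0$ on the finitely many $R$-coefficients of these polynomials.

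Next I would compare two maps on the nuclear source $R$, both landing in the tail target $E\fr D_{>r}$: the restriction $\psi_0|_R$ and the map $j_E\lambda$, where $j_E:E\to E\fr D_{>r}$ is the canonical inclusion. Both are unital, trace preserving, and induce the canonical old-core $\mathrm K_0$-map. Since $\mathrm K_1(R)=0$ and $\mathrm K_1$ of the targets vanishes, the coefficient exact sequences make their total $\mathrm K$-maps agree. The trace components agree, and by the Thomsen sequence the unitary components reduce to the determinant kernel, so they are determined by the common trace. I will need to check that there is no Ext ambiguity: $\KL$ is governed by the total $\mathrm K$-morphisms through \eqref{eq:umct}, and these are determined by the $\mathrm K_0$ map once $\mathrm K_1(R)=0$.

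With the full invariants matched, Theorem~\ref{thm:embedding} applies with the admissible target $E\fr D_{>r}$ (admissible by Lemma~\ref{lem:interval-regularity}). It gives a unitary $u$ in that tail with $\Ad(u)\psi_0$ within $\delta$ of $j_E\lambda$ on the chosen $R$-coefficients. Now set $\psi=(\id_{D_{\le r}}\fr\Ad(u))\circ\psi_0$. As in Remark~\ref{rem:marked-alignment}, this fixes the head and preserves the full marking, because $\Ad(u)$ acts trivially and the tail inclusion has invertible triple. Choosing $\delta$ small relative to the polynomial degrees then yields \eqref{gf:eq-TB}.

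The main obstacle is the geometric containment requirement flagged in the remarks. The restriction $\psi_0|_R$ has to land in the same complementary tail $E\fr D_{>r}$ as $j_E\lambda$, so that the conjugating unitary lives there and extends by the identity on the head. The construction in Proposition~\ref{gf:zero-K1-binary-fusion} passes through matrix amplification, finite-tag absorption and compression by $e_{11}$, and I would have to verify that each of these steps can be carried out entirely inside $D_{>r}$ with the head kept external. Lemma~\ref{lem:fd-tag-absorption} and Proposition~\ref{gf:matrix-identities} are stated with prescribed prefixes fixed, which is what makes this possible. I would also check that the final unitary correction of $e_{11}$ can be chosen in the tail; this follows from cancellation in the admissible tail algebra.
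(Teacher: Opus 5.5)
Your proposal is correct and follows essentially the paper's argument. You run Proposition~\ref{gf:zero-K1-binary-fusion} together with marked core replacement in the complementary tail, observe that vanishing $\mathrm K_1$ on both sides forces $\psi_0|_R$ and $\lambda$ to have the same full invariant, correct by nuclear-source uniqueness in the tail, and extend by the identity on the head. The containment you flag as the main obstacle is automatic: $D_{>r}$ is itself an interval reservoir, so the proposition applies to it as a black box, $\psi_0(R)\subset E\fr D_{>r}$ holds by construction, and none of its internal steps needs to be re-examined.
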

\begin{proof}
Approximate the elements of $\mathcal Y$ by free-product
polynomials using finitely many $R$-coefficients and finitely many
reservoir intervals.  Include those intervals and the prescribed
protected head in a finite head $H$, and write $D=H\fr D_{\rm tail}$
with an infinite complementary tail.  Run
Proposition~\ref{gf:zero-K1-binary-fusion} only in that tail, obtaining
\[
 \chi:(R\fr A)\fr D_{\rm tail}\overset{\cong}{\longrightarrow}
                          E\fr D_{\rm tail}
\]
with the canonical aggregate marking, using marked core replacement
(Theorem~\ref{thm:marked-core}) to select the displayed core $E$.

The maps $\chi|_R$ and the native $\lambda:R\to E\subset
E\fr D_{\rm tail}$ have the same full selected marking.  Here their
ordinary $\mathrm K_0$-maps and selected traces agree, while both source
and target have zero $\mathrm K_1$. Writing $U=E\fr D_{\rm tail}$,
the degree-zero UCT extension terms are
\[
 \Ext^1_{\Z}(\mathrm K_1(R),\mathrm K_0(U))
 \oplus\Ext^1_{\Z}(\mathrm K_0(R),\mathrm K_1(U))=0.
\]
Thus their $\KK$ classes agree, the coefficient maps are forced by
the Bockstein sequences, and the compatible unitary map is determined by trace
and $\mathrm K_0$.  There is no additional old-core rotation choice.
Nuclear-source uniqueness on $R$ gives a unitary
$w\in E\fr D_{\rm tail}$ such that $\Ad(w)\chi$ is as close to
$\lambda$ as required on those finitely many $R$-coefficients.

Freely extend $\Ad(w)\chi$ by $\id_H$ and reassociate the
scalar free products.  This is the required actual isomorphism
$\psi$.  It fixes $H$ exactly, and polynomial continuity gives
\eqref{gf:eq-TB}, including the initial polynomial approximation
errors.  The inner correction is made before extending over $H$;
it does not conjugate the protected reservoir coordinates.
\end{proof}

This is why finite-head protection suffices for countable coherence.
Although uniqueness is applied only to the nuclear algebra $R$,
it controls an arbitrary finite packet of $R\fr D$: its finitely many
reservoir coordinates are kept external to that uniqueness step.

\subsection{A two-sided finite-prefix construction}

\begin{proposition}[Countable zero-$\mathrm K_1$ fusion]
\label{gf:countable-zero-K1-fusion}
Let $((A_i,\tau_i))_{i\geq1}$ be nuclear UCT cores with faithful
selected traces and $\mathrm K_1(A_i)=0$.  For an interval reservoir $D$,
there is a nuclear UCT core
$E_\infty$ with their aggregate selected pointed data and an actual
trace-preserving isomorphism
\[
                  \left(\mathop{\fr}_{i\geq1}A_i\right)\fr D
                          \cong E_\infty\fr D.
\]
\end{proposition}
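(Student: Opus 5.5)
We will build $E_\infty$ as an inductive limit of range cores and run a two-sided approximate intertwining. The scheme follows the countable part of the proof of Theorem~\ref{thm:split-fusion}. Proposition~\ref{gf:tail-controlled-binary-fusion} replaces Proposition~\ref{prop:model} at each step, so no splitting of unit classes is needed.

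\emph{Range cores.} For each $n$, let $G_0^{(n)}$ be the aggregate pointed group of $A_1,\ldots,A_n$: the direct sum of the $\mathrm K_0(A_i)$ modulo the unit relations, as in Proposition~\ref{prop:product-elliott}, with the summed pairing, the strict trace cone, and $\mathrm K_1=0$. By \cite[Theorem~2.8]{EGLN} we choose a simple monotracial nuclear UCT $\mathcal Z$-stable model $E_n$, taking $E_1$ to be a range core for $A_1$. The canonical maps $G_0^{(n)}\to G_0^{(n+1)}$ preserve the unit and the pairing and are positive for the strict cones. Theorem~\ref{thm:embedding}, or \cite[Theorem~B]{CGSTW}, therefore gives unital trace-preserving embeddings $\lambda_n:E_n\to E_{n+1}$ inducing them.

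We set $E_\infty=\varinjlim(E_n,\lambda_n)$. It is nuclear, UCT and monotracial, and by continuity of $\mathrm K$-theory it carries the aggregate selected data. Its $\mathrm K_1$ vanishes.

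\emph{Stage isomorphisms.} Put $A_n'=E_n\fr(\fr_{j>n}A_j)\fr D$ and $B_n=E_n\fr D$. Theorem~\ref{thm:marked-core}, extended over the later factors, gives an initial isomorphism $\eta_1:(\fr_iA_i)\fr D\to A_1'$.

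For each $n$ we apply Proposition~\ref{gf:tail-controlled-binary-fusion} with $R=E_n$, $A=A_{n+1}$, $E=E_{n+1}$ and $\lambda=\lambda_n$. Both cores have zero $\mathrm K_1$, so the hypothesis holds. This gives an isomorphism $\psi_n:B_n$-level$\,(E_n\fr A_{n+1})\fr D\to E_{n+1}\fr D$. It satisfies $\|\psi_n(y)-(\lambda_n\fr\id_D)(y)\|<\varepsilon_n$ on a prescribed finite packet $K_n\subset E_n\fr D$, with $\sum\varepsilon_n<\infty$. Extending $\psi_n$ by the identity over the later factors gives $\theta_n:A_n'\to A_{n+1}'$, and $b_n=\theta_n|_{B_n}$ satisfies $b_n\approx c_n=\lambda_n\fr\id_D$ on $K_n$.

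We choose the packets recursively. Each $K_n$ contains growing finite portions of fixed dense subsets of every earlier $B_m$, pushed forward under both the $b$- and $c$-systems. Each $K_{n+1}$ contains $b_n(K_n)\cup c_n(K_n)$.

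\emph{Passing to the limit.} The summable estimates give Cauchy sequences in both directions, as in the proof of Theorem~\ref{thm:split-fusion}. This yields mutually inverse isomorphisms $\varinjlim(B_n,b_n)\cong\varinjlim(B_n,c_n)=E_\infty\fr D$.

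The pulled-back algebras $\eta_n^{-1}(B_n)$, where $\eta_n=\theta_{n-1}\cdots\theta_1\eta_1$, increase. Their union contains every $A_i$ and $D$, so it is dense, and we obtain the required isomorphism.

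\emph{Main obstacle.} The delicate point is applying Proposition~\ref{gf:tail-controlled-binary-fusion} at stage $n$ so that the canonical aggregate marking is compatible with $\lambda_n$ on the old core. Here the zero-$\mathrm K_1$ hypothesis does the work: the UCT Ext terms vanish, so the full invariant of $\chi|_{E_n}$ is determined by $\mathrm K_0$ and the trace. The remaining bookkeeping is to protect a finite reservoir head containing the intervals that occur in $K_n$, so that the packet estimate really holds on $E_n\fr D$ and not only on $E_n$. The protected head can be taken to grow with $n$, which keeps the reservoir coordinates of all earlier packets external to each uniqueness step.
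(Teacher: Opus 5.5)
Your proposal is correct and follows essentially the paper's argument. The paper builds stage isomorphisms $\phi_n:P_n\fr D\to E_n\fr D$ by $\phi_{n+1}=\psi_n(\phi_n\fr\id_{A_{n+1}})$, with $\psi_n$ from Proposition~\ref{gf:tail-controlled-binary-fusion}, and then intertwines the canonical inclusions of the $P_n\fr D$ with the maps $\lambda_n\fr\id_D$ using forward and inverse estimates. Your restriction $\eta_n|_{P_n\fr D}$ is exactly this $\phi_n$, so intertwining $(B_n,b_n)$ with $(B_n,c_n)$ and identifying the source through the increasing algebras $\eta_n^{-1}(B_n)$ is the same estimate, written in the bookkeeping of the countable split-unit proof.
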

\begin{proof}
First use an interval reservoir.  Put $P_n=A_1\fr \cdots\fr A_n$.
The free-product $\mathrm K$-sequence gives
\[
 G_n=\mathrm K_0(P_n)=
 \frac{\bigoplus_{i\leq n}\mathrm K_0(A_i)}
      {\langle [1_{A_1}]-[1_{A_i}]:2\leq i\leq n\rangle},
 \qquad \mathrm K_1(P_n)=0.
\]
The canonical maps $G_n\to G_{n+1}$ are injective.  Indeed, in
$(G_n\oplus \mathrm K_0(A_{n+1}))/\langle(u_n,-u_{n+1})\rangle$,
a class $(x,0)$ can vanish only if
$(x,0)=k(u_n,-u_{n+1})$; applying the selected trace on the new
factor gives $k=0$.

Choose simple monotracial nuclear UCT $\mathcal Z$-stable range
cores $E_n$ for $(G_n,u_n,\rho_n)$, with their strict trace cones
and zero $\mathrm K_1$.  Theorem~\ref{thm:embedding} supplies trace-preserving unital embeddings
\[
 \lambda_n:E_n\longrightarrow E_{n+1}
\]
inducing the canonical maps of the selected pointed data.
Their limit $E_\infty=\varinjlim(E_n,\lambda_n)$ is simple,
monotracial, nuclear, UCT and $\mathcal Z$-stable.  Continuity of
ordinary $\mathrm K$-theory gives the aggregate pointed groups and pairing;
a class of positive trace is positive at a finite stage, so its
order is the strict trace cone.

Set
\[
 S_n=P_n\fr D,\qquad T_n=E_n\fr D,\qquad
 i_n:S_n\hookrightarrow S_{n+1},\qquad
 j_n=\lambda_n\fr \id_D:T_n\hookrightarrow T_{n+1}.
\]
These are actual trace-preserving embeddings.  Start with a
canonically marked isomorphism $\phi_1:S_1\to T_1$ given by
nuclear-core replacement.  Suppose $\phi_n$ has been constructed.
Freely extending it by the identity on $A_{n+1}$ gives an actual
isomorphism
\[
 \phi_n\fr \id_{A_{n+1}}:S_{n+1}\longrightarrow
                              (E_n\fr A_{n+1})\fr D.
\]
For any prescribed finite $\mathcal G_n\subset T_n$,
Proposition~\ref{gf:tail-controlled-binary-fusion} gives an
isomorphism
\[
 \psi_n:(E_n\fr A_{n+1})\fr D\longrightarrow T_{n+1}
\]
as close to $j_n$ as required on $\mathcal G_n$.  Define
\[
              \phi_{n+1}=\psi_n(\phi_n\fr \id_{A_{n+1}}).
\]
Thus every $\phi_n$ is an actual isomorphism onto its entire stage.

Here are explicit two-sided exhaustion conditions.  Fix positive
$\varepsilon_n$ with $\sum_n\varepsilon_n<\infty$.
After $\phi_n$ is known, choose finite
$\mathcal F_n\subset S_n$ and put
$\mathcal G_n=\phi_n(\mathcal F_n)$.  Require
\begin{equation}
 i_{n-1}(\mathcal F_{n-1})\subset\mathcal F_n,
 \qquad
 j_{n-1}(\mathcal G_{n-1})\subset\mathcal G_n.
 \label{gf:eq-EX}
\end{equation}
Also include successively more points from a fixed countable dense
subset of every earlier $S_k$, propagated by the $i$'s, and of
every earlier $T_k$, propagated by the $j$'s.  The second condition
and all target dense points can be included simply by adding their
$\phi_n$-preimages to $\mathcal F_n$.  Every stage has only
finitely many requirements.

Choose $\psi_n$ accurate within $\varepsilon_n$ on
$\mathcal G_n$.  Then
\begin{equation}
 \|\phi_{n+1}i_n(x)-j_n\phi_n(x)\|<\varepsilon_n
                           \quad(x\in\mathcal F_n).
 \label{gf:eq-AI-plus}
\end{equation}
Applying the actual isometric inverse $\phi_{n+1}^{-1}$ gives
the equally important reverse estimate
\begin{equation}
 \|\phi_{n+1}^{-1}j_n(y)-i_n\phi_n^{-1}(y)\|
       <\varepsilon_n\quad(y\in\mathcal G_n).
 \label{gf:eq-AI-minus}
\end{equation}
The propagation conditions \eqref{gf:eq-EX} and the summable
estimates make the forward and inverse maps Cauchy on the two
dense exhaustions.  The coordinate maps are contractive
homomorphisms, so convergence extends from these dense sets to
pointwise convergence and preserves the algebraic operations.
They extend to trace-preserving homomorphisms
between the two limits.  On a test point propagated to stage $n$,
either composition differs from the identity by at most
$2\sum_{k\geq n}\varepsilon_k$.  Letting $n\to\infty$ proves
that these maps are mutual inverses.  This proves surjectivity,
not just the existence of a limiting embedding.

Finally,
\[
 \varinjlim(S_n,i_n)=\left(\mathop{\fr}_{i\geq1}A_i\right)\fr D,
 \qquad
 \varinjlim(T_n,j_n)=E_\infty\fr D.
\]
These norm identifications follow directly from faithful reduced
embeddings and density of finite free-product words.  On the
target side, the copies of $E_n$ have dense union in $E_\infty$;
on the source side every word uses only finitely many factors.
The constructed isomorphism has the canonical pointed groups and
pairing, since all stage maps have those markings.
\end{proof}

\subsection{Arbitrary \texorpdfstring{$\mathrm K_1$}{K1} and the general fusion theorem}

\begin{theorem}[Fusion of finite and countable nuclear families]
\label{thm:general-fusion}\label{gf:general-fusion}
Let $((A_i,\tau_i))_{i\in J}$ be a nonempty finite or countable
family of separable unital nuclear UCT algebras with faithful
selected traces.  Let $F$ be a nuclear UCT core with faithful
selected trace having the same selected pointed graded $\mathrm K$-groups
and pairing as $P=\mathop{\fr}_{i\in J}A_i$.  For every reservoir
$D$ there is a trace-preserving isomorphism
\[
                              P\fr D\cong F\fr D.
\]
Every prescribed compatible full marking can be realized, and a
specified finite external reservoir head can be fixed pointwise.
\end{theorem}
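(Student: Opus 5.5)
The plan is to reduce the general family to the two cases already handled: countable zero-$\mathrm K_1$ fusion (Proposition~\ref{gf:countable-zero-K1-fusion}) and split-unit fusion (Theorem~\ref{thm:split-fusion}). We first work with the interval reservoir $D=D_I$ and transfer to a general reservoir at the end, using Corollary~\ref{cor:reservoir-model} and the extension $\id_P\fr(D\cong D_I)$ of Remark~\ref{rem:interval-convention}. Marking and head protection are handled last. A protected finite head $H$ is kept external by running the whole construction in a complementary tail, extending by $\id_H$, and transporting markings through the invertible tail-inclusion triples of Lemma~\ref{lem:reservoir-regularity}. A prescribed full marking $\xi$ is realized by composing with the marked isomorphism of Theorem~\ref{thm:marked-core} applied to $\xi\Lambda^{-1}$, where $\Lambda$ is the triple of the constructed isomorphism.

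The key step is to separate $\mathrm K_1$ from each factor. For each $i$, I would use the range theorem \cite[Theorem~2.8]{EGLN} to choose a simple monotracial nuclear UCT core $A_i^0$ with pointed data $(\mathrm K_0(A_i),[1_{A_i}],0,\tau_{i*})$. I would also choose a nuclear UCT core $K_i$ with split unit class, $\mathrm K_0(K_i)=\Z[1]$, $\mathrm K_1(K_i)=\mathrm K_1(A_i)$, and trace pairing sending the unit to $1$. A concrete choice is $K_i=\mathrm C(X_i)$ with $X_i$ connected, for example a suspension-type model with $\mathrm K_0$-reduced part zero; alternatively one can use a range model together with a scalar summand. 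Split-unit fusion, applied to the two-factor family $(A_i^0,K_i)$, where only $K_i$ needs a split unit, gives
\[
 (A_i^0\fr K_i)\fr D\cong A_i\fr D.
\]
Both sides have data $(\mathrm K_0(A_i),[1],\mathrm K_1(A_i),\tau_{i*})$: the unit relation in the free-product $\mathrm K$-sequence absorbs $\Z[1_{K_i}]$. Both sides are also nuclear-core stabilizations, so Theorem~\ref{thm:marked-core} supplies the marked isomorphism.

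Next, these identifications are assembled over the whole family. Write $P\fr D$ with a free copy of $D$ attached to each factor, using $D\cong D^{\fr\infty}$ by reindexing, and replace each $A_i\fr D$ by $(A_i^0\fr K_i)\fr D$ in its own bank. Reassociation then gives
\[
 P\fr D\cong\Bigl(\fr_i A_i^0\Bigr)\fr\Bigl(\fr_i K_i\Bigr)\fr D.
\]
For countable $J$, the stagewise replacements must be chosen coherently. I would do this through tail control: each replacement acts only in a fresh bank and fixes everything else, so an infinite free product of them defines an isomorphism directly, without an approximate intertwining. Proposition~\ref{gf:countable-zero-K1-fusion} then fuses the zero-$\mathrm K_1$ family $(A_i^0)$ into $E_\infty\fr D$. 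Finally, Theorem~\ref{thm:split-fusion}, applied to the family consisting of $E_\infty$ (possibly nonsplit) followed by the split-unit $K_i$, gives $F'\fr D$ with $F'$ carrying the aggregate selected data. Another application of Theorem~\ref{thm:marked-core} then identifies $F'\fr D$ with $F\fr D$, and since the data agree it also realizes the prescribed marking.

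I expect the main obstacle to be the first step: constructing $K_i$ with arbitrary countable $\mathrm K_1$, split unit, and $\mathrm K_0=\Z[1]$ as an honest nuclear UCT algebra with a faithful trace, and checking that the split-unit fusion hypothesis applies to the pair $(A_i^0,K_i)$. If an unsplit model is forced, I would instead take $K_i=\mathcal O$-free, for instance a unital AH-type simple monotracial model, whose unit class generates a direct summand $\Z$ by the range theorem with $G_0=\Z$.

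The second delicate point is bookkeeping of markings across the countable product of per-factor replacements. I would handle it by showing that every stage map induces the canonical identification of the graded groups from Proposition~\ref{prop:product-elliott}, and then correcting only at the very end via Theorem~\ref{thm:marked-core}.
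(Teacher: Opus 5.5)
Your proposal is correct and follows the paper's proof essentially step by step. You split each $A_i\fr D_i$ via Theorem~\ref{thm:split-fusion} into $(A_i^0\fr K_i)\fr D_i$ with $\mathrm K_1(A_i^0)=0$ and $\mathrm K_0(K_i)=\Z[1]$, take the free product of these isomorphisms over disjoint banks, and fuse the $A_i^0$ by zero-$\mathrm K_1$ fusion (iterating Proposition~\ref{gf:zero-K1-binary-fusion} when $J$ is finite, since the countable proposition is stated for infinite families). You then apply Theorem~\ref{thm:split-fusion} again to the resulting core together with the $K_i$, and fix the marking and head via Theorem~\ref{thm:marked-core} and tail transport, exactly as the paper does.

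The obstacle you anticipate is not one. $\mathrm K_0(K_i)=\Z[1_{K_i}]$ already makes the unit class split, and such a core is supplied either by the range theorem \cite[Theorem~2.8]{EGLN}, as in the paper, or by Lemma~\ref{lem:connected-pointed-range} with $H_0=0$.
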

\begin{proof}
For a finite zero-$\mathrm K_1$ family, iterate
Proposition~\ref{gf:zero-K1-binary-fusion} in independent banks.
For a countable zero-$\mathrm K_1$ family use
Proposition~\ref{gf:countable-zero-K1-fusion}.  We now remove the
$\mathrm K_1$ restriction, writing $J=\mathbb N$ in the countable case;
the same construction applies to a finite index set.  Partition a working interval reservoir into
mutually free infinite banks $D_i$ and one spare infinite bank.
For each $i$, choose nuclear range cores $A_i^0,C_i$ with
$A_i^0$ retaining the selected $\mathrm K_0$, unit and pairing of $A_i$,
$\mathrm K_1(A_i^0)=0$, and $\mathrm K_0(C_i)=\mathbb Z[1_{C_i}]$,
$\mathrm K_1(C_i)=\mathrm K_1(A_i)$.  Theorem~\ref{thm:split-fusion}, applied
to $A_i^0,C_i$ with nuclear target $A_i$, supplies an actual
marked isomorphism $(A_i^0\fr C_i)\fr D_i\cong A_i\fr D_i$.  Invert
it to obtain
\[
 A_i\fr D_i\cong(A_i^0\fr C_i)\fr D_i',\qquad
 \mathrm K_1(A_i^0)=0,\quad
 \mathrm K_0(C_i)=\mathbb Z[1_{C_i}],\quad \mathrm K_1(C_i)=\mathrm K_1(A_i),
\]
where $A_i^0$ retains the selected $\mathrm K_0$, unit and pairing of
$A_i$.  Their countable reduced free product is an actual
isomorphism: the trace-preserving isomorphisms give isometries
on each finite free subproduct, and these finite subproducts have
dense union.  Regrouping the reservoir banks therefore gives
\[
 P\fr D\cong
 \left(\mathop{\fr}_{i\in J}A_i^0\right)\fr 
 \left(\mathop{\fr}_{i\in J}C_i\right)\fr D.
\]
Fuse the $A_i^0$ in a spare bank, using
Proposition~\ref{gf:countable-zero-K1-fusion} in the countable case
and finite iteration of Proposition~\ref{gf:zero-K1-binary-fusion}
otherwise.  Extend the resulting actual isomorphism by the
identity over every $C_i$.  Theorem~\ref{thm:split-fusion}, including its countable assertion,
now applies to the
resulting one nuclear core and the family $(C_i)$: only that
one core may have nonsplit unit.  This gives a nuclear-core
reservoir model with the aggregate graded groups and pairing,
and marked nuclear-core replacement gives $F\fr D$.

For an arbitrary prescribed compatible global full marking,
first take the actual fusion just constructed.  Compose with
the residual full-marking automorphism of $F\fr D$, supplied by
Theorem~\ref{thm:marked-core}.  We do not
assume that a desired global marking splits factorwise, or that
an arbitrary absolute $\KK$ representative is retained through
the inductive limit.

Keep any originally protected finite reservoir head external
before partitioning banks.  All splittings, controlled binary
steps, reservoir changes and final marking corrections are made
in its complementary tail and freely extended by its identity.
It is therefore fixed by the limiting isomorphism.  Corollary~\ref{cor:reservoir-model} gives the stated reservoir class.
For an original block head in a general reservoir, use the native
tail-triple transport argument at the end of the proof of
Theorem~\ref{thm:split-fusion}: isolate that head first, identify
only its complementary reservoir tail with the interval model,
transport the desired marking through the invertible tail
inclusions, and extend by the identity on the head.
\end{proof}

\section{Models and classification}\label{sec:fusion-models}

The general fusion theorem imposes no AF or split-unit restriction
on the original factors. We retain the following model constructions
to describe selected invariants explicitly.

We say that $(A,\tau_A)$ \emph{admits a model built from AF factors}
if its pointed graded $\mathrm K$-groups and selected trace pairing
agree with those of a nonempty finite or countable
reduced free product of faithfully tracial separable unital AF algebras.
The original order and other traces are not prescribed. An AF-product
model may be nonnuclear. The replacement statements below start from
nuclear UCT factors.

\begin{corollary}[AF factors and an arbitrary nuclear root]
\label{af:block-fusion}\label{af:rooted-fusion}
Let $R$ be a nuclear UCT core with faithful selected trace, and let
$((B_i,\tau_i))$ be a finite or countable family of faithfully
tracial unital AF algebras. If $F$ is a nuclear UCT core carrying
the aggregate selected pointed data, then
\[
 \left(R\fr\left(\fr_i B_i\right)\right)\fr D\cong F\fr D.
\]
Every prescribed compatible full marking is realized, and a finite
external reservoir head may be fixed pointwise.
\end{corollary}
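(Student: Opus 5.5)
The plan is to reduce directly to the general fusion theorem, Theorem~\ref{gf:general-fusion}, by observing that the family $\{R\}\cup\{B_i\}$ is itself a nonempty finite or countable family of separable unital nuclear UCT algebras with faithful selected traces. Every separable unital AF algebra is nuclear and satisfies the UCT, and each $\tau_i$ is faithful by hypothesis. Reduced free products are associative for the specified faithful traces. The reassociation
\[
 R\fr\Bigl(\fr_i B_i\Bigr)\cong\fr_{j\in\{0\}\sqcup I}A_j,\qquad A_0=R,\ A_i=B_i,
\]
therefore holds as a trace-preserving identification fixing every factor. In the countable case this uses the density of finite subproducts together with the isometric identification on each finite stage, as in the proof of Theorem~\ref{gf:general-fusion}.

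Next I would check that the aggregate selected pointed data of the nested product coincide with those of the flat product. Proposition~\ref{prop:product-elliott}, applied twice, identifies both $\mathrm K_0$-groups with the direct sum of the factor $\mathrm K_0$-groups modulo the unit relations. It identifies both $\mathrm K_1$-groups with $\mathrm K_1(R)\oplus\bigoplus_i\mathrm K_1(B_i)=\mathrm K_1(R)$, since $\mathrm K_1$ of an AF algebra vanishes. The pairing is in both cases the sum \eqref{eq:product-trace-pairing}. These identifications are induced by the canonical inclusions and are therefore compatible with the reassociation.

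Theorem~\ref{gf:general-fusion} then gives $P\fr D\cong F\fr D$ for every reservoir $D$ and every nuclear UCT core $F$ carrying these data. It also realizes every prescribed compatible full marking and fixes a specified finite external reservoir head pointwise. Composing with the reassociation isomorphism, extended by $\id_D$, transports the marking, because that isomorphism induces the identity on the canonically identified invariants. This completes the argument.

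The only point needing care is the bookkeeping of the marking. The prescribed marking is phrased for the nested product, while the fusion theorem is applied to the flat one. I would handle this as in the final paragraph of the proof of Theorem~\ref{gf:general-fusion}: first take any fusion isomorphism, then correct it with the residual full-marking automorphism of $F\fr D$ from Theorem~\ref{thm:marked-core}, working in the complementary tail of the protected head.

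No split-unit hypothesis on $R$ or on the $B_i$ is needed, since the general fusion theorem imposes none. I expect no step to be a genuine obstacle; this corollary is essentially a specialization.
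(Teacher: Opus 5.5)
Your proposal is correct and is essentially the paper's own proof, which simply applies Theorem~\ref{gf:general-fusion} to the combined nuclear UCT family $\{R\}\cup\{B_i\}$. The paper cites Theorem~\ref{thm:marked-core} separately for the empty AF family; your reduction covers that case as well, since the combined family is always nonempty. Your extra marking correction via Theorem~\ref{thm:marked-core} is harmless but redundant: the fusion theorem already realizes every prescribed marking, and reassociation is the canonical identification of the same generated algebra.
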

\begin{proof}
Apply Theorem~\ref{gf:general-fusion}. The empty AF family is
Theorem~\ref{thm:marked-core}.
\end{proof}

\subsection{Models built from AF factors and split-unit fusion}

\begin{proposition}[Replacement by AF products]
\label{af:model-replacement}
Let $\mathcal A=((A_i,\tau_i))_{i\in S}$ be a nonempty finite or
countable family of nuclear UCT cores with faithful selected traces.
Suppose each $A_i$ admits a model built from AF factors, and choose
such a model
\[
 R_i=\fr_{j\in J_i}(F_{ij},\sigma_{ij})
\]
with a specified unit- and selected-pairing-preserving isomorphism
of graded $\mathrm K$-groups from $A_i$ to $R_i$.
For every reservoir $D$ and every faithfully tracial separable unital
algebra $Q$ there is a trace-preserving isomorphism
\begin{equation}\label{af:eq-model-replacement}
 (P_{\mathcal A}\fr Q)\fr D
 \cong\left(\left(\fr_{i\in S}R_i\right)\fr Q\right)\fr D.
\end{equation}
It fixes $Q$ pointwise and induces the specified maps on the
canonical factor $\mathrm K$-classes. A prescribed finite reservoir
prefix may also be fixed pointwise. The flattened family
$(F_{ij})_{i\in S,\,j\in J_i}$ is finite or countable.
\end{proposition}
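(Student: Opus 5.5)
The plan is to reduce both sides of \eqref{af:eq-model-replacement} to a common nuclear-core stabilization and then correct the markings. The only available tool is Theorem~\ref{gf:general-fusion}, and it applies to families of \emph{nuclear} factors. The factor $Q$ is arbitrary, so it cannot be fused. Instead I will treat $Q$ as an external free factor, exactly as reservoir heads are treated. The AF algebras $F_{ij}$ are nuclear UCT, so the flattened family $(F_{ij})$ is a legitimate input to fusion. It is finite or countable because it is a countable union of countable index sets.

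\emph{Step 1.} Apply Theorem~\ref{gf:general-fusion} twice, first to $\mathcal A$ and then to the flattened AF family. For the first family, Proposition~\ref{prop:product-elliott} computes the pointed graded $\mathrm K$-groups and selected pairing of $P_{\mathcal A}$ from the factor data. For the second, the same proposition shows that the aggregate data of $\fr_{i}R_i$ are obtained by the same formula with $\mathrm K_*(A_i)$ replaced by $\mathrm K_*(R_i)$. The specified isomorphisms $\mathrm K_*(A_i)\to\mathrm K_*(R_i)$ preserve units and pairings, so their direct sum descends through the unit relations to an aggregate isomorphism $\chi$. Choose a single simple monotracial nuclear UCT range core $F$ for these data (Lemma~\ref{lem:marked-range} or \cite[Theorem~2.8]{EGLN}). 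Fusion then gives isomorphisms $\Phi_1:P_{\mathcal A}\fr D\to F\fr D$ and $\Phi_2:(\fr_iR_i)\fr D\to F\fr D$ with prescribed full markings. I will choose these markings so that $\Phi_2^{-1}\Phi_1$ induces $\chi$ on the canonical factor classes. A compatible full extension of $\chi$ exists by \cite[Theorem~3.9]{CGSTW}, and Theorem~\ref{thm:marked-core} realizes the residual marking.

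\emph{Step 2.} Insert $Q$. Write $D=D'\fr D''$ with two infinite banks by reindexing; each bank is again a reservoir, by Corollary~\ref{cor:reservoir-model}. Run Step 1 with reservoir $D'$, and protect any prescribed finite prefix as an external head. Free-product functoriality then extends $\Phi_2^{-1}\Phi_1$ by $\id_Q\fr\id_{D''}$. Reassociation gives
\[
 (P_{\mathcal A}\fr Q)\fr D\cong(P_{\mathcal A}\fr D')\fr(Q\fr D'')
 \longrightarrow((\fr_iR_i)\fr D')\fr(Q\fr D'')\cong((\fr_iR_i)\fr Q)\fr D,
\]
which fixes $Q$ pointwise. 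Canonical factor classes map to canonical factor classes via $\chi$. This holds because the factor inclusions into $P_{\mathcal A}\fr D'$ land in the part transformed by $\Phi_2^{-1}\Phi_1$, and the $\mathrm K$-classes of factors are pushed forward by the free extension. A finite prefix of $D$ is fixed if it is placed in an external head before partitioning, as in the proof of Theorem~\ref{gf:general-fusion}.

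\emph{Main obstacle.} The delicate point is the marking claim. The statement asserts only that the specified maps are induced on the canonical factor $\mathrm K$-classes. I do not need the full marking of the whole product including $Q$, which would require $\mathrm K$-theory of $Q$ and possibly non-UCT issues. I must still check that the marking chosen in Step 1 carries each factor class $[x]\in\mathrm K_*(A_i)$ to the class of its image in $\mathrm K_*(R_i)$, rather than merely inducing some aggregate isomorphism. This is handled by choosing the compatible full marking of $\Phi_2^{-1}\Phi_1$ to extend $\chi$ exactly, and then applying naturality of $\mathrm K$-theory under the free extension by $\id_Q$. No $\mathrm K$-computation for $Q$ is used beyond functoriality.
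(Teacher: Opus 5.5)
Your proof is correct, but it takes a different route from the paper's.

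\textbf{The paper's route.} The paper never fuses the family $\mathcal A$. It partitions the interval reservoir as $D_*\fr(\fr_{i\in S}D_i)$ and keeps the protected head in $D_*$. For each $i$ separately it builds $\theta_i\colon A_i\fr D_i\to R_i\fr D_i$ from two ingredients:
\begin{itemize}
\item Corollary~\ref{af:block-fusion}, which fuses the AF factors of $R_i$ into a range core $E_i$;
\item single-core replacement, Theorem~\ref{thm:marked-core}, applied to $A_i$.
\end{itemize}
The markings are chosen so that $\theta_i$ induces the specified map $\mathrm K_*(A_i)\to\mathrm K_*(R_i)$. The required isomorphism is then the reduced free product of all the $\theta_i$ with $\id_Q$ and $\id_{D_*}$.

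\textbf{Your route.} You fuse both whole families, $\mathcal A$ and the flattened $(F_{ij})$, onto one common core. You then prescribe markings so that the composite induces the aggregate map $\chi$, and extend by the identity on $Q$.

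\textbf{What each buys.} Your version is shorter to state. It does, however, invoke the full general fusion theorem for all of $\mathcal A$, including countable fusion and the split-unit $\mathrm K_1$ reduction. It also controls the factors only through $\chi$. That suffices here, because the statement concerns only $\mathrm K$-classes, and naturality of the free extension carries each canonical factor class to the canonical class of its specified image. The paper's factorwise construction gives more:
\begin{itemize}
\item it maps each $A_i\fr D_i$ onto $R_i\fr D_i$;
\item each factor marking is literally the prescribed one;
\item for countable $S$ it is a single free product of actual isomorphisms, with no limiting argument at the top level.
\end{itemize}

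\textbf{Small corrections.}
\begin{itemize}
\item The split $D=D'\fr D''$ is unnecessary. Since $(P_{\mathcal A}\fr Q)\fr D\cong(P_{\mathcal A}\fr D)\fr Q$, you can extend by $\id_Q$ directly.
\item Corollary~\ref{cor:reservoir-model} does not itself say that sub-banks of a general reservoir are reservoirs. Either partition the blocks so that both atom sums diverge, or keep the protected prefix external and identify only the complementary tail with $D_I$.
\item Proposition~\ref{prop:product-elliott} assumes nuclear factors, and the $R_i$ need not be nuclear. Compute the data of $\fr_iR_i$ from the flattened family $(F_{ij})$ instead; associativity then gives your formula.
\end{itemize}
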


\begin{proof}
First use the interval reservoir and partition its coordinates into
disjoint infinite banks so that
\[
 D_I=D_*\fr\left(\fr_{i\in S}D_i\right).
\]
Keep the prescribed finite head among the coordinates of $D_*$.
For each $i$, Corollary~\ref{af:block-fusion} supplies a nuclear UCT
range core $E_i$ and a marked trace-preserving isomorphism
$R_i\fr D_i\cong E_i\fr D_i$. The selected data of $A_i$ and
$E_i$ agree. Theorem~\ref{thm:marked-core} therefore gives a marked
trace-preserving isomorphism $A_i\fr D_i\cong E_i\fr D_i$.
Choose the markings so that their composition gives
\[
 \theta_i:A_i\fr D_i\longrightarrow R_i\fr D_i
\]
with the specified graded map on $A_i$.

Take the reduced free product of all the actual isomorphisms
$\theta_i$, together with the identities on $Q$ and $D_*$.
Associativity identifies its source and target as
\[
 \begin{aligned}
 \left(\fr_{i\in S}(A_i\fr D_i)\right)\fr Q\fr D_*
     &\cong(P_{\mathcal A}\fr Q)\fr D_I,\\
 \left(\fr_{i\in S}(R_i\fr D_i)\right)\fr Q\fr D_*
     &\cong\left(\left(\fr_{i\in S}R_i\right)\fr Q\right)\fr D_I.
 \end{aligned}
\]
Trace preservation identifies the reduced completions, and the free
product of the inverse maps is the inverse. In the countable case
this is a single countable free product of actual isomorphisms;
no infinite composition or limiting choice of corrections is used.
The free-product $\mathrm K$-sequence gives the stated factor
markings. The algebra $Q$ and the head in $D_*$ are fixed exactly.

For a general reservoir, keep its protected finite prefix external
and use Corollary~\ref{cor:reservoir-model} to identify the
complementary tail with $D_I$. Apply the preceding construction
there and extend by the identity on the prefix. This proves all
the assertions.
\end{proof}

\begin{proposition}[Models with real rank zero and torsion-free $\mathrm K_0$]
\label{af:rr0-model}
Let $(A,\tau)$ be a separable unital nuclear UCT algebra with faithful
selected trace. Suppose its selected pointed data agree with those of
a separable unital nuclear UCT algebra $(B,\tau_B)$ with faithful trace,
real rank zero and torsion-free $\mathrm K_0(B)$.
There are a faithfully tracial unital AF algebra $F$ and a nuclear
UCT core $C$ with faithful selected trace such that
\[
 \begin{aligned}
 (\mathrm K_0(F),[1_F],\tau_{F*})
     &\cong(\mathrm K_0(A),[1_A],\tau_*),\qquad \mathrm K_1(F)=0,\\
 \mathrm K_0(C)&=\Z[1_C],\qquad \mathrm K_1(C)=\mathrm K_1(A).
 \end{aligned}
\]
For every reservoir $D$ there is a trace-preserving isomorphism
\begin{equation}\label{af:eq-rr0-replacement}
 A\fr D\cong(F\fr C)\fr D
\end{equation}
with the corresponding selected-data marking. Every compatible full
marking may be realized, and a finite reservoir prefix may be fixed
pointwise.
\end{proposition}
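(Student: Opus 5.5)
The plan is to build the AF algebra $F$ from $\mathrm K_0(B)$, build $C$ as a range model carrying only $\mathrm K_1$, and then reduce the free-product identity to general fusion (Theorem~\ref{gf:general-fusion}) followed by marked nuclear-core replacement (Theorem~\ref{thm:marked-core}).

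\emph{Constructing $F$.} Only the selected pointed data of $A$ enter. These are the same as for $B$, so we work with
\[
 (G,u,\rho)=(\mathrm K_0(B),[1_B],\tau_{B*}).
\]
Equip $G$ with the strict trace cone $G^+=\{0\}\cup\{g:\rho(g)>0\}$. Note that this is not the original order on $\mathrm K_0(B)$; real rank zero of $B$ is not used to identify that order.

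We want a Riesz, unperforated, countable ordered group with order unit $u$. Then Effros--Handelman--Shen gives a unital AF algebra $F$ with this pointed ordered $\mathrm K_0$, with $\mathrm K_1(F)=0$, and whose unique state is $\rho$. Two properties need to be checked:
\begin{enumerate}
\item Unperforation: if $ng>0$ then $\rho(g)>0$, so $g>0$. This uses torsion-freeness of $G$ when $ng=0$: then $g=0$.
\item Riesz interpolation for the strict cone: for any nontrivial interpolation problem, find an element whose trace lies strictly between the prescribed values. This needs $\rho(G)$ to be dense in $\R$, or else $G$ to be a simple dimension group in the discrete-range case.
\end{enumerate}
The real rank zero hypothesis on $B$ is what should supply the needed density. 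Real rank zero makes $\rho(\mathrm K_0(B))$ dense in the range of the trace on positive elements. That range is $[0,1]$ unless $B$ has a finite-dimensional quotient representation. In the discrete case one could instead argue directly that $G\cong\Z^k$ with a discrete simple order, which is itself a dimension group.

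Once $G$ is a simple dimension group, $F$ is simple AF. Its unique trace is faithful, and that trace induces $\rho$.

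\emph{Constructing $C$ and the free product $F\fr C$.} Take $C$ to be a simple monotracial nuclear UCT $\mathcal Z$-stable range model with $\mathrm K_0(C)=\Z[1_C]$ and $\mathrm K_1(C)=\mathrm K_1(A)$; this comes from Lemma~\ref{lem:marked-range} applied to the data $(\Z,1,\mathrm K_1(A),\mathrm{id})$ via \cite[Theorem~2.8]{EGLN}. Proposition~\ref{prop:product-elliott} computes the selected data of $F\fr C$. The unit relation identifies $[1_C]$ with $[1_F]$, so we get
\[
 \mathrm K_0(F\fr C)=G,\qquad \mathrm K_1(F\fr C)=\mathrm K_1(A),
\]
with the pairing $\rho$. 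This matches the selected data of $A$.

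\emph{The isomorphism.} Apply Theorem~\ref{gf:general-fusion} to the two-element family $(F,C)$, with a nuclear core having the aggregate data. Alternatively, Theorem~\ref{thm:split-fusion} applies directly, since $C$ has split unit class. Either way this gives $(F\fr C)\fr D\cong E\fr D$ for a range core $E$. Then Theorem~\ref{thm:marked-core} gives $E\fr D\cong A\fr D$ realizing the prescribed graded isomorphism.

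The remaining assertions follow from earlier results:
\begin{enumerate}
\item Any compatible full marking is realized by composing with a residual marked automorphism from Theorem~\ref{thm:marked-core}.
\item A finite reservoir prefix is fixed by running everything in the complementary tail, as in the proof of Theorem~\ref{thm:split-fusion}.
\end{enumerate}

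The main obstacle is the first step: verifying Riesz interpolation for the strict trace cone on $G$, and in particular extracting the needed density of $\rho(G)$ (or handling the discrete case) from real rank zero and torsion-freeness. If real rank zero does not give density directly, I would fall back to using $B$ itself: a real-rank-zero algebra with torsion-free $\mathrm K_0$ and the strict cone should already make $(G,G^+)$ a dimension group via Zhang's Riesz decomposition for projections. This needs care, because the order on $\mathrm K_0(B)$ may differ from the strict cone.
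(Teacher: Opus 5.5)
Your downstream steps (the range model $C$ with $\mathrm K_0(C)=\Z[1_C]$, the unit-relation computation of the data of $F\fr C$, split-unit fusion via Theorem~\ref{thm:split-fusion} because $C$ has split unit, then Theorem~\ref{thm:marked-core}, and the marking and prefix bookkeeping) match the paper. The gap is in the construction of $F$, at exactly the step you flag, and the resolution you sketch does not work.

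\emph{The density of $\rho(G)$ is not established.} Your proposed dichotomy is the wrong one. Real rank zero alone does not give density: $B=\C^2$ with trace $(\tfrac12,\tfrac12)$ has range $\tfrac12\Z$. The presence of a finite-dimensional quotient is also irrelevant: $\C\oplus\mathcal U$ with any faithful trace has dense range. The missing idea is the following. If $B$ is infinite dimensional, then for every $N$ it contains $N$ mutually orthogonal nonzero positive elements. Real rank zero gives a nonzero projection in each of their hereditary subalgebras. Faithfulness makes the traces of these projections positive, and they sum to at most $1$. So $\rho(G)$ contains arbitrarily small positive elements and is dense.

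Interpolation for the strict cone then needs one more remark that you omit. If $\max_i\rho(a_i)=\min_j\rho(b_j)$, nothing lies strictly between. In that case a maximizing $a_i$ and a minimizing $b_j$ satisfy $a_i\le b_j$ and $\rho(b_j-a_i)=0$, hence $a_i=b_j$, and this common element interpolates.

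\emph{Your discrete case is wrong.} By the density argument, discrete $\rho(G)$ forces $B$ to be finite dimensional, so $G=\Z^k$. If $k\ge2$, then $\ker\rho\neq0$, and the strict cone is not a dimension group. For example, take $B=\C^2$ with trace $(\tfrac12,\tfrac12)$, and put $g=(1,0)$ and $h=(1,-1)\in\ker\rho$. Then $0,h\le g,g+h$. Any interpolant $c$ has $\rho(c)\in\{0,\tfrac12\}$, which forces either $c=0=h$ or $c=g=g+h$, both impossible. So there is no ``discrete simple order'' to feed into Effros--Handelman--Shen.

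The fix is to drop the strict cone whenever $B$ is finite dimensional. The statement does not require $F$ to be simple, so take $F=B$ with its own order and faithful trace. The selected data then force $\mathrm K_1(A)=\mathrm K_1(B)=0$, so take $C=\C$ and apply Theorem~\ref{thm:marked-core} directly. Your fallback via Zhang's Riesz decomposition concerns the native order on $\mathrm K_0(B)$ and gives nothing for the strict cone.
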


\begin{proof}
Identify the selected data of $B$ with those of $A$.
For finite-dimensional $B$, the selected data give $\mathrm K_1(A)=0$.
Take $F=B$ and $C=\C$;
Theorem~\ref{thm:marked-core} gives the assertion.
Assume now that $B$ is infinite dimensional. Real rank zero gives,
for every $N$, mutually orthogonal nonzero
projections $p_1,\ldots,p_N$ in $B$: take orthogonal nonzero positive
elements and a nonzero projection in each of their hereditary
subalgebras. Faithfulness gives $\tau_B(p_j)>0$, while
$\sum_j\tau_B(p_j)\leq1$. Thus the common selected trace range has
arbitrarily small positive elements and is dense in $\R$.

Put $G=\mathrm K_0(A)$, $u=[1_A]$ and $\rho=\tau_*$, and give $G$
the cone
\[
 G^+=\{0\}\cup\{g:\rho(g)>0\}.
\]
This is a directed unperforated ordered group with order unit $u$;
unperforation uses torsion-freeness also when $ng=0$.
For interpolation, suppose $a_i\leq b_j$ for $i,j=1,2$.
If $\max_i\rho(a_i)<\min_j\rho(b_j)$, density supplies an element
with trace strictly between the two bounds. If the bounds agree,
a maximizing $a_i$ and a minimizing $b_j$ have equal trace and are
comparable, hence are equal; this common element interpolates.
The countable AF range theorem \cite{EHS} therefore gives a unital
AF algebra $F$ with this ordered pointed group and a faithful trace
inducing $\rho$. Indeed the displayed order is simple and every
normalized state equals $\rho$, by rational comparisons with $u$,
so $F$ may be chosen simple and monotracial. The original order and
other traces of $A$ are not part of the selected data.

The nuclear range theorem \cite[Theorem~2.8]{EGLN} supplies $C$ with
the displayed groups and pairing $\tau_{C*}(k[1_C])=k$.
The scalar unit relation gives
\[
 \mathrm K_0(F\fr C)
   =\frac{G\oplus\Z[1_C]}{\langle u-[1_C]\rangle}\cong G,
 \qquad \mathrm K_1(F\fr C)=\mathrm K_1(A),
\]
with unit $u$ and selected pairing $\rho$.
Since $C$ has split unit, Theorem~\ref{thm:split-fusion} gives
$(F\fr C)\fr D\cong E\fr D$ for a nuclear range core $E$ with
these data. Theorem~\ref{thm:marked-core} compares $A\fr D$ with
$E\fr D$, proving \eqref{af:eq-rr0-replacement} with the specified
markings and finite-prefix control.
\end{proof}

\subsection{Completion of the classification theorem}
\label{subsec:classification-completion}

\begin{proof}[Morphism realization and classification in
Theorem~\ref{thm:nuclear-classification}]
Apply Theorem~\ref{gf:general-fusion} to the two families to obtain
marked trace-preserving isomorphisms
\[
 P_{\mathcal A}\fr D\cong F_{\mathcal A}\fr D,\qquad
 P_{\mathcal B}\fr D\cong F_{\mathcal B}\fr D,
\]
where the nuclear cores have the aggregate selected data.
Transport a prescribed unital Elliott-invariant morphism through
these isomorphisms. The homomorphism assertion of
Theorem~\ref{thm:marked-core} realizes the transported morphism by
a unital embedding between the two nuclear-core stabilizations.
Compose with the displayed isomorphisms to realize the original
morphism. For an invariant isomorphism, use the isomorphism
assertion of Theorem~\ref{thm:marked-core}. The converse
classification implication is functoriality.

If two distinct factors contain Haar unitaries, group them on opposite
sides and apply semicircular absorption \cite[Theorem~5.2]{Shlyakhtenko}
to obtain $P\cong P\fr D$. Transport the prescribed invariant
morphism through the chosen absorption isomorphisms, realize it
between the stabilized products, and compose back. An invariant
isomorphism may again be realized by an isomorphism. The range is proved in
Theorem~\ref{thm:nuclear-range}.
\end{proof}

\begin{proof}[Morphism realization and classification in Corollary~\ref{thm:classification}]
The commutative factors are nuclear UCT, so the classification
and morphism-realization assertions follow
from Theorem~\ref{thm:nuclear-classification}.
Lemma~\ref{lem:continuous-Haar} supplies the Haar unitaries needed
to remove $D$. The range and standard models are established in
Section~\ref{sec:range}.
\end{proof}

\begin{remark}[Commutative factors and concrete models]
Theorem~\ref{gf:general-fusion} includes arbitrary finite or
countable mixtures of faithfully tracial nuclear UCT factors.
For commutative factors, evaluation at a point splits the unit;
their sharper invariant range and standard models are described in
Section~\ref{sec:models}.

The AF and real-rank-zero constructions above give concrete models
for selected pointed data. An AF product may have torsion even
though each individual AF factor has torsion-free $\mathrm K_0$.
These model descriptions are useful in examples, while general
fusion permits arbitrary unit classes and both $\mathrm K$-groups.
The matrix enlargement in its proof retains the exact minimal
projection class before compression, including its torsion part.
\end{remark}

\section{Range and standard models}\label{sec:range}\label{sec:models}

We first give the full range for nuclear cores and the products of
Theorem~\ref{thm:nuclear-classification}, then determine the sharper
commutative range and its standard models.

\subsection{The general nuclear range}

\begin{theorem}[Range for nuclear cores]\label{thm:nuclear-range}
The Elliott invariants of the $D$-stabilized products in
Theorem~\ref{thm:nuclear-classification} are precisely
\[
 \bigl(G_0,G_0^+,u,G_1,\{\tau\},\rho\bigr),
 \qquad G_0^+=\{0\}\cup\{g:\rho(g)>0\},
\]
where $G_0,G_1$ are countable abelian groups, $u\in G_0$, and
$\rho:G_0\to\R$ is a homomorphism with $\rho(u)=1$.
Every such invariant is already attained, without adjoining $D$,
by a reduced free product of two simple infinite-dimensional
monotracial nuclear UCT $\mathcal Z$-stable algebras, one of which
is $\mathcal Z$. The unit of the other algebra need not split.
\end{theorem}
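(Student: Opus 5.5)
The proof has two directions: the invariants of the $D$-stabilized products lie in the displayed class, and every displayed invariant is realized by the two-factor product described in the statement.

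\emph{Necessity.} Necessity is immediate from Proposition~\ref{prop:product-elliott}. For $A=P_{\mathcal A}\fr D$ it gives $\mathrm T(A)=\{\tau_A\}$ and the strict trace cone. The groups $\mathrm K_0(A)$ and $\mathrm K_1(A)$ are computed there as countable direct sums and quotients of countable groups, since the factors are separable. The pairing $\rho=\tau_{A*}$ is a homomorphism with $\rho([1_A])=1$.

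\emph{Sufficiency, choice of core.} Given the data $(G_0,u,G_1,\rho)$, equip $G_0$ with the strict cone. As in the proof of Lemma~\ref{lem:marked-range}, this is a simple weakly unperforated ordered group with order unit $u$ whose only state is $\rho$. Torsion is permitted, and $u$ need not split. The range theorem \cite[Theorem~2.8]{EGLN} then supplies a simple, infinite-dimensional, monotracial, nuclear, UCT, $\mathcal Z$-stable algebra $F$ with $(\mathrm K_0(F),[1_F],\mathrm K_1(F),\tau_{F*})=(G_0,u,G_1,\rho)$. In the degenerate cyclic case $(\Z,m,0)$ we take $F=\mathrm M_m(\mathcal Z)$ instead.

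\emph{Sufficiency, the product $F\fr\mathcal Z$.} Proposition~\ref{prop:product-elliott}, with $\mathrm K_0(\mathcal Z)=\Z[1]$ and $\mathrm K_1(\mathcal Z)=0$, shows that $F\fr\mathcal Z$ and $(F\fr\mathcal Z)\fr D$ have exactly the prescribed pointed groups and pairing. Indeed the unit relation simply identifies $[1_{\mathcal Z}]$ with $u$. That same proposition then gives the strict cone and monotraciality of the $D$-stabilized algebra, so its Elliott invariant is the one prescribed.

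\emph{Removing $D$.} This is the only nontrivial step. Both $F$ and $\mathcal Z$ are simple and infinite dimensional, so by \cite[Corollary~B]{ThielHaar} each contains a Haar unitary for its unique trace. Shlyakhtenko's absorption theorem \cite[Theorem~5.2]{Shlyakhtenko} then gives a trace-preserving isomorphism $F\fr\mathcal Z\cong(F\fr\mathcal Z)\fr D$. Since an isomorphism identifies Elliott invariants, $F\fr\mathcal Z$ itself realizes the invariant, with no $D$ adjoined. The realizing algebra also lies in the class of Theorem~\ref{thm:nuclear-classification}, because its two factors are nuclear UCT with faithful traces. If the reader prefers to avoid the Haar citation, one can note instead that $\mathcal Z$ and $F$ contain unital copies of dimension-drop or interval-type algebras carrying diffuse spectral measure. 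Lemma~\ref{lem:continuous-Haar} applied in such a subalgebra then supplies the Haar unitaries.
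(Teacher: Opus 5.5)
Your proposal is correct and follows the paper's proof essentially step for step. It uses separability and Proposition~\ref{prop:product-elliott} for necessity, the range model $F$ from \cite[Theorem~2.8]{EGLN} (with $\mathrm M_m(\mathcal Z)$ in the cyclic case), the free-product $\mathrm K$-calculation for $F\fr\mathcal Z$, and removal of $D$ via \cite[Corollary~B]{ThielHaar} and \cite[Theorem~5.2]{Shlyakhtenko}.

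Your optional substitute for the Haar citation is not needed, and as stated it is incomplete. Lemma~\ref{lem:continuous-Haar} requires a commutative subalgebra on which the restricted trace is nonatomic, and producing such a subalgebra is essentially what Thiel's result supplies.
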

\begin{proof}
Necessity follows from separability and
Proposition~\ref{prop:product-elliott}.
Conversely, the displayed strict trace cone makes $G_0$ a simple
weakly unperforated directed ordered group, with order unit $u$.
Its unique normalized state is $\rho$: rational upper and lower
comparisons with $u$ determine the value of every element.
The range theorem
\cite[Theorem 2.8]{EGLN} provides a simple infinite-dimensional
monotracial nuclear UCT $\mathcal Z$-stable algebra $F$ with these
pointed groups and pairing. The exceptional data
$(G_0,u,G_1)=(\Z,m,0)$, $m\geq1$, can be realized explicitly by
$\mathrm{M}_m(\mathcal Z)$.

The free-product $\mathrm{K}$-sequence identifies the pointed groups
and pairing of $F\fr\mathcal Z$ with those of $F$.
Both factors contain Haar unitaries by \cite[Corollary~B]{ThielHaar},
since they are simple, unital and infinite dimensional.
Semicircular absorption therefore gives
$F\fr\mathcal Z\cong(F\fr\mathcal Z)\fr D$.
The latter algebra has the displayed Elliott invariant by
Proposition~\ref{prop:product-elliott}. The representatives
$F\fr\mathcal Z$ belong to the class of
Theorem~\ref{thm:nuclear-classification} and realize every listed
invariant. This proves its range assertion as well.
\end{proof}

\subsection{The commutative range}

For commutative factors, rank imposes a restriction on the selected
pairing beyond splitting the unit.

\begin{lemma}[Rank decomposition]\label{lem:rank-decomposition}
Let $X$ be a nonempty compact metrizable space.  There are a finite
or countable set $J_X$ and a countable abelian group $H_X$ such that
\[
 \mathrm{K}_0(\mathrm{C}(X))
 \cong \Z[1]\oplus\Z^{(J_X)}\oplus H_X,
 \qquad H_X=\ker(\operatorname{rk}),
\]
where $\operatorname{rk}:\mathrm{K}_0(\mathrm{C}(X))
\to\mathrm{C}(X,\Z)$ is the pointwise rank map.
For every Borel probability measure $\mu$ on $X$, its pairing
vanishes on $H_X$ and is integration on the first two summands,
identified with $\mathrm{C}(X,\Z)$.
\end{lemma}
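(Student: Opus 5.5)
The plan is to assemble this from standard facts about $\mathrm K_0$ of commutative algebras: the rank map, a splitting of $\mathrm C(X,\Z)$, and the vanishing of traces on rank-zero classes. First I will show that the pointwise rank map
\[
 \operatorname{rk}:\mathrm K_0(\mathrm C(X))\longrightarrow\mathrm C(X,\Z)
\]
is a well-defined surjective homomorphism. A projection $p\in\mathrm M_n(\mathrm C(X))$ has a locally constant rank function $x\mapsto\Tr p(x)$, and the rank is additive and invariant under equivalence. For surjectivity, a function $f\in\mathrm C(X,\Z)$ takes finitely many values on clopen sets $U_k$. Such an $f$ is the rank of the class $\sum_k f_k[\chi_{U_k}]$, where $\chi_{U_k}$ is a projection in $\mathrm C(X)$.

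Next I will split the rank map and identify its image. The group $\mathrm C(X,\Z)$ is countable, since $X$ is compact metrizable and so has only countably many clopen sets. By the theorem of Nöbeling and Specker (or because $\mathrm C(X,\Z)\cong\mathrm C(\Omega,\Z)$ for the totally disconnected quotient $\Omega$ of components), $\mathrm C(X,\Z)$ is free abelian. Hence the surjection $\operatorname{rk}$ splits, and the section can be chosen to send $f\mapsto\sum_kf_k[\chi_{U_k}]$ on a basis of characteristic functions. This gives $\mathrm K_0=\mathrm C(X,\Z)\oplus H_X$ with $H_X=\ker\operatorname{rk}$.

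To isolate $\Z[1]$, I will use that the constant function $1$ is primitive in $\mathrm C(X,\Z)$: evaluation at a point is a retraction to $\Z$ sending $1$ to $1$. A countable free abelian group with a primitive element, whose quotient $\mathrm C(X,\Z)/\Z1\cong\ker\ev_{x_0}$ is again a subgroup of a free group and hence free, has a basis containing that element. This gives $\mathrm C(X,\Z)\cong\Z[1]\oplus\Z^{(J_X)}$ with $J_X$ countable. Countability of $H_X$ follows from countability of $\mathrm K_0(\mathrm C(X))$, which holds because $\mathrm C(X)$ is separable.

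For the pairing, a measure $\mu$ gives $\tau_\mu\otimes\Tr(p)=\int_X\Tr p(x)\,d\mu(x)=\int\operatorname{rk}(p)\,d\mu$, since the trace of a matrix projection at $x$ is its rank. So the pairing factors through $\operatorname{rk}$, vanishes on $H_X$, and is integration on the summand identified with $\mathrm C(X,\Z)$. The only delicate point is the freeness of $\mathrm C(X,\Z)$ and of its quotient by $\Z1$. I would cite Nöbeling's theorem, or argue directly: the clopen algebra is countable, so $\mathrm C(X,\Z)$ is a countable increasing union of finitely generated free groups $\mathrm C(\mathcal P_n,\Z)$ over refining finite clopen partitions, and each is a direct summand of the next. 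Choosing bases compatibly starting from $1$ then gives a basis containing $1$.
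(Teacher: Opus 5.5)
Your proposal is correct and follows essentially the paper's proof: you split the rank map by the characteristic-function section, obtain a basis of $\mathrm C(X,\Z)$ containing $1_X$ from nested clopen partitions (countably many clopen sets, each refinement a split inclusion with free cokernel), and compute the pairing as $\int\operatorname{rank}(p(x))\,d\mu$. The only differences are cosmetic. You justify the splitting through freeness of $\mathrm C(X,\Z)$ and peel off $\Z 1$ by evaluation at a point, whereas the paper notes that $f\mapsto\sum_k k[1_{f^{-1}(k)}]$ is already a homomorphism and builds the basis through $1_X$ directly from the partitions.
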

\begin{proof}
The rank map has the canonical group section
\[
 f\longmapsto\sum_{k\in f(X)} k[1_{f^{-1}(k)}].
\]
Here $f(X)$ is finite, and the characteristic functions are
projections in $\mathrm{C}(X)$.  Thus
\[
 \mathrm{K}_0(\mathrm{C}(X))
 \cong\mathrm{C}(X,\Z)\oplus\ker(\operatorname{rk}).
\]
There are only countably many clopen subsets of $X$: every such
subset is compact and therefore is a finite union of elements of
a fixed countable base.  Enumerating them gives nested finite
clopen partitions $\mathcal P_n$, starting with $\mathcal P_0=\{X\}$,
whose locally constant integer-valued functions exhaust
$\mathrm{C}(X,\Z)$.  A refinement replaces the characteristic
function of each old atom by the sum of those of its nonempty
subatoms.  This is a split inclusion between finitely generated
free groups, with free cokernel: in each refinement block the
diagonal vector $(1,\ldots,1)$ extends to an integral basis.
Choosing a complement at every step therefore gives
\[
 \mathrm{C}(X,\Z)=\Z 1_X\oplus\Z^{(J_X)}.
\]
Finally, for a projection $p$ in a matrix algebra over
$\mathrm{C}(X)$,
\[
 (\tau_\mu\otimes\operatorname{Tr})(p)
 =\int_X\operatorname{rank}(p(x))\,d\mu(x).
\]
This proves the pairing assertion.  All groups are countable
because $\mathrm{C}(X)$ is separable.
\end{proof}

\begin{lemma}[Connected pointed range]\label{lem:connected-pointed-range}
Let $H_0,H_1$ be countable abelian groups.  There is a connected
compact metrizable space $Y$ of dimension at most four, a point
$y_0\in Y$, and isomorphisms
\[
 \mathrm{K}_0(\mathrm{C}(Y))\cong\Z[1]\oplus H_0,
 \qquad \mathrm{K}_1(\mathrm{C}(Y))\cong H_1,
\]
under which evaluation at $y_0$ is $(k,h)\mapsto k$.
Consequently every probability measure on $Y$ pairs by this same
rank map.  The space can be chosen nontrivial and hence admits a
full-support nonatomic Borel probability measure.
\end{lemma}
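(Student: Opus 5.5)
The plan is to build $Y$ as a pointed wedge $Y=Y_0\vee Y_1$, where $Y_0$ carries reduced $\mathrm K^0$ equal to $H_0$ and $Y_1$ carries $\mathrm K^1$ equal to $H_1$. Each $Y_k$ will be an inverse limit of finite wedges of Moore spaces. The point $y_0$ is the wedge point.

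\emph{Building blocks.} For a finitely generated abelian group $A=\Z^a\oplus\bigoplus_l\Z/m_l$ I use the following finite pointed CW complexes:
\[
 M_0(A)=\textstyle\bigvee^a S^2\vee\bigvee_l(S^2\cup_{m_l}e^3),
 \qquad
 M_1(A)=\textstyle\bigvee^a S^1\vee\bigvee_l(S^1\cup_{m_l}e^2).
\]
The cofibre sequences of the attaching maps of degree $m_l$, together with Bott periodicity, give
\[
 \widetilde{\mathrm K}^0(M_0(A))\cong A,\quad \mathrm K^1(M_0(A))=0,\qquad
 \widetilde{\mathrm K}^0(M_1(A))=0,\quad \mathrm K^1(M_1(A))\cong A.
\]
These are the spaces $T_{\mathrm{II},m}$ and $T_{\mathrm{III},m}$ of the standard-model list, and their dimension is at most three.

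\emph{Passage to countable groups.} I write $H_0=\varinjlim(A_n,f_n)$ with finitely generated $A_n$, and similarly $H_1=\varinjlim(B_n,g_n)$. Each homomorphism $f_n:A_n\to A_{n+1}$ is to be realized by a pointed map $M_0(A_{n+1})\to M_0(A_n)$ inducing $f_n$ on $\widetilde{\mathrm K}^0$, and each $g_n$ similarly by a map $M_1(B_{n+1})\to M_1(B_n)$. This realization is the step I expect to be the main obstacle. It needs a Moore-space universal coefficient argument: for Moore spaces in a fixed degree, homotopy classes of pointed maps surject onto $\Hom$ of the homology groups. One must then check that the induced map on $\mathrm K$-theory is the given $f_n$. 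For these low-dimensional spaces the Atiyah--Hirzebruch spectral sequence degenerates and identifies $\widetilde{\mathrm K}$ with the single cohomology group, naturally; I would carry out this identification cell by cell. If this naturality at the torsion cells gives trouble, I would instead realize the maps by a single suspension into dimension four, where $\mathrm K$ and cohomology again agree naturally. This is why the statement allows dimension four.

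\emph{The limit space and the point.} Put $Y_0=\varprojlim M_0(A_n)$ and $Y_1=\varprojlim M_1(B_n)$. They are pointed, compact and metrizable, and connected as inverse limits of connected continua. Their covering dimension is at most four, since inverse limits do not raise it. Since $\mathrm C(\varprojlim)=\varinjlim\mathrm C$, continuity of $\mathrm K$-theory gives $\widetilde{\mathrm K}^0(Y_0)=H_0$, $\mathrm K^1(Y_0)=0$, $\widetilde{\mathrm K}^0(Y_1)=0$ and $\mathrm K^1(Y_1)=H_1$. The wedge $Y$ then has the displayed groups, by the split evaluation extension used in Theorem~\ref{thm:fusion}. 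Evaluation at $y_0$ splits off $\Z[1]$ with kernel the reduced group, which gives the asserted form $(k,h)\mapsto k$.

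\emph{Consequences.} The pairing statement follows from Lemma~\ref{lem:rank-decomposition}. Since $Y$ is connected, rank is constant, so every probability measure pairs through the rank map $(k,h)\mapsto k$. To ensure nontriviality I wedge on an arc $[0,1]$ if necessary; this changes neither the $\mathrm K$-groups nor the dimension bound. A nondegenerate connected compact metric space has no isolated points. Every ball is then uncountable and carries a nonatomic probability measure, and a weighted countable sum of such measures on a countable base of balls gives a nonatomic measure of full support.
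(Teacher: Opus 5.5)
There is a genuine gap in your building blocks: their $\mathrm K$-theory is wrong, and this breaks the construction. The cellular cochains of $S^2\cup_m e^3$ are $\Z\xrightarrow{\,m\,}\Z$ in degrees $2,3$. In the cofibre sequence of its $2$-skeleton, the boundary map $\widetilde{\mathrm K}^0(S^2)\cong\Z\to\mathrm K^1(S^3)\cong\Z$ is multiplication by $m$. Hence $\widetilde{\mathrm K}^0(S^2\cup_m e^3)=0$ and $\mathrm K^1(S^2\cup_m e^3)\cong\Z/m$. Symmetrically, $S^1\cup_m e^2$ has $\widetilde{\mathrm K}^0\cong\Z/m$ and $\mathrm K^1=0$. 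For $m=2$ this is $\mathbb{RP}^2$, whose reduced $\mathrm K^0$ is the $\Z/2$ used in the paper's projective-plane examples. The table for $T_{\mathrm{II},m}$ and $T_{\mathrm{III},m}$ in Section~\ref{sec:models} records exactly this, with the opposite assignment to yours. So $M_0(A)$ has $\widetilde{\mathrm K}^0$ equal to the free part of $A$ and $\mathrm K^1$ equal to its torsion, and $M_1(A)$ the reverse. Already for $H_0=\Z/2$ and $H_1=0$, your $Y$ is essentially $S^2\cup_2e^3$, with $\mathrm K_0=\Z$ and $\mathrm K_1=\Z/2$.

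The root cause is that $M_0(A)$ and $M_1(A)$ are homology Moore spaces. Their integral cohomology is $\Hom(A,\Z)$ in one degree and $\Ext(A,\Z)$ in the next, so $A$ is dualized and split across both parities. For the same reason your realization step has the wrong variance. The universal-coefficient surjection turns homomorphisms $A_{n+1}\to A_n$ into maps $M_0(A_{n+1})\to M_0(A_n)$, and their effect on $\mathrm K$-theory is the dual map, not $f_n$. Your suspension fallback shifts both parities together, so it does not help. Relabeling does not help either: connecting maps generally have nonzero free-to-torsion components (e.g.\ $\Z(1,1)\subset\Z\oplus\Z/2$). So free and torsion parts must sit in the same parity and be joined by actual maps.

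What you need are blocks whose $\mathrm K$-theory is concentrated in one parity, together with an explicit contravariant realization of every homomorphism. The paper takes $W(L)=\bigvee S^3\vee\bigvee(S^2\cup_nD^3)$, which has $\widetilde{\mathrm K}^0=0$ and $\mathrm K^1\cong L$. Degree maps and pinch maps handle the free part. Collapsing the $2$-skeleton, $S^2\cup_mD^3\to S^3$, realizes the reduction $\Z\to\Z/m$. Maps of mapping cones realize $\Z/n\to\Z/m$. The even part is $\Sigma W(H_{0,n})$, and this suspension is where the dimension bound four comes from. Note that free odd summands must be $S^3$ rather than $S^1$: every map $S^2\cup_me^3\to S^1$ is null-homotopic, so a $\Z\to\Z/m$ component could not be realized.

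The rest of your argument is fine and parallels the paper: the inverse limits, connectedness, the dimension bound, continuity of $\mathrm K$-theory, the evaluation splitting, the added arc and the measure.
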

\begin{proof}
We record the pointed construction to specify the unit and rank
splitting.  The existence of finite-dimensional connected spaces
with arbitrary prescribed countable $\mathrm{K}$-groups is also
used at the beginning of \cite[Section~6]{DPS}.

For a finitely generated abelian group
$L=\Z^r\oplus\bigoplus_a\Z/n_a\Z$, let
\[
 W(L)=\bigvee_{j=1}^r S^3\ \vee\
       \bigvee_a M(\Z/n_a\Z,2),
 \qquad M(\Z/n\Z,2)=S^2\cup_{\deg n}D^3.
\]
Take $W(0)$ to be a point.  We use complex topological $\mathrm{K}$-theory in this proof.
These finite complexes satisfy
$\widetilde{\mathrm{K}}_0(W(L))=0$ and
$\mathrm{K}_1(W(L))\cong L$.
Every homomorphism $\alpha:L\to L'$ is induced contravariantly
by a pointed continuous map $W(L')\to W(L)$.
Here is an explicit verification.  On a spherical summand of the
domain, degree maps and the pinch map realize all the required
free coordinates.  On a summand $M(\Z/m\Z,2)$, collapsing its
two-skeleton gives the reduction map $\Z\to\Z/m\Z$ from a
target $S^3$-summand.  A homomorphism
$\Z/n\Z\to\Z/m\Z$ is multiplication by an integer $b$ with
$nb=ma$ for some integer $a$.  Degree $a$ on the lower two-sphere and degree $b$ on the
attaching two-sphere give a homotopy-commutative square of the
degree-$m$ and degree-$n$ maps. The resulting map of their
mapping cones induces the required homomorphism.  Finally, every domain summand is a suspension,
so its suspension pinch map combines the maps to the separate
target summands; on reduced $\mathrm{K}$-theory the resulting
homomorphisms add.

Choose increasing finitely generated subgroups $H_{i,n}$ with
union $H_i$, for $i=0,1$.  Realize their inclusions by pointed
maps $W(H_{i,n+1})\to W(H_{i,n})$ as above, and form the inverse
system
\[
 Y_n=\Sigma W(H_{0,n})\vee W(H_{1,n})\vee[0,1],
\]
with the suspended maps on the first summand and the identity
on the last.  Here $\Sigma$ is reduced suspension and $0$ is
the basepoint of the interval.  Each $Y_n$ is connected, is
at most four-dimensional, and has pointed groups
$\mathrm{K}_0(Y_n)=\Z[1]\oplus H_{0,n}$ and
$\mathrm{K}_1(Y_n)=H_{1,n}$.  All bonding maps preserve the
basepoint and fix the interval.  Their inverse limit $Y$ is
therefore nontrivial, connected, compact, metrizable and at most
four-dimensional; surjectivity of the bonding maps is not
needed. Indeed, in $\prod_{n\geq1}Y_n$ the set satisfying only
the first $m$ bonding relations is homeomorphic to
$Y_{m+1}\times\prod_{k>m+1}Y_k$ and is compact and connected.
Their nested intersection is $Y$, so it is connected; the fixed
interval makes it nontrivial. Finite covers refine to pullbacks
from a finite coordinate image, giving the dimension bound.
Continuity of $\mathrm{K}$-theory gives the displayed
pointed groups.  Since $Y$ is connected, the rank of a
projection is constant, proving the pairing assertion.
Finally, a nontrivial connected compact metric space has no
isolated points and has a full-support nonatomic probability
measure.
\end{proof}

\begin{theorem}[Range of the Elliott invariant]\label{thm:invariant-range}
The Elliott invariants of the algebras $P_{\mathcal X}\fr D$ in
Corollary~\ref{thm:classification} are exactly the following invariants,
up to isomorphism:
\begin{align}
 \mathrm{K}_0&=\Z u\oplus\Z^{(J)}\oplus H_0,
 &\mathrm{K}_1&=H_1,\qquad [1]=u,\qquad \mathrm{T}=\{\tau\},
 \label{eq:invariant-range-groups}\\
 \rho\bigl(mu+(n_j),h\bigr)&=m+\sum_{j\in J}a_jn_j,
 &\mathrm{K}_0^+&=\{0\}\cup\{g:\rho(g)>0\}.
 \label{eq:invariant-range-pairing}
\end{align}
Here $J$ is finite or countable, $H_0,H_1$ are arbitrary countable
abelian groups, and the $a_j$ are arbitrary real numbers.
Equivalently, $\mathrm{K}_0$ has a decomposition $F\oplus H_0$ with
$F$ free abelian, the unit belonging to a basis of $F$, and
$\rho(H_0)=0$; apart from $\rho(u)=1$, the homomorphism on $F$
is unrestricted.

Every invariant in this list is realized by a finite or countable
reduced free product containing two Lebesgue-interval factors, with
all commutative spectra of dimension at most four. Thus the range
is the same for the $D$-stabilized class and the Haar-unitary class
in Corollary~\ref{thm:classification}.
\end{theorem}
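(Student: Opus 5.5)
We prove necessity and sufficiency separately.

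\emph{Necessity.} Let $\mathcal X=((X_i,\mu_i))_{i\in S}$. By Proposition~\ref{prop:product-elliott}, $\mathrm K_0(P_{\mathcal X}\fr D)$ is the direct sum of the $\mathrm K_0(\mathrm C(X_i))$ modulo identification of the unit classes, $\mathrm K_1$ is the direct sum of the $\mathrm K_1(\mathrm C(X_i))$, the trace is unique, and the cone is the strict trace cone. Lemma~\ref{lem:rank-decomposition} writes each $\mathrm K_0(\mathrm C(X_i))$ as $\Z[1]\oplus\Z^{(J_{X_i})}\oplus H_{X_i}$, with $\mu_i$ pairing trivially on $H_{X_i}$. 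Because every summand contains a common unit summand $\Z[1]$, the unit relations identify these summands, leaving
\[
\Z u\oplus\Z^{(\sqcup_iJ_{X_i})}\oplus\textstyle\bigoplus_iH_{X_i}.
\]
Put $a_j=\mu_i$-integral of the $j$th basis function. This is exactly \eqref{eq:invariant-range-groups}--\eqref{eq:invariant-range-pairing}, with countable $J$ and countable groups.

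\emph{Sufficiency.} Given $J$, $(a_j)$, $H_0$ and $H_1$, Lemma~\ref{lem:connected-pointed-range} supplies a connected $Y$ of dimension at most four with a full-support nonatomic measure. Its pointed groups are $\Z[1]\oplus H_0$ and $H_1$, and its pairing is rank, so it kills $H_0$. For each $j\in J$, realize the basis element and its trace $a_j$ by a factor $\mathrm C([0,1]\sqcup[0,1])$ with masses $t_j,1-t_j$, $t_j\in(0,1)$. The class of the first component's indicator is a new free generator with trace $t_j$, and the reduced group of this factor is $\Z$.

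Take the free product of $\mathrm C(Y)$, these two-interval factors, and two Lebesgue intervals (contractible, contributing nothing). Proposition~\ref{prop:product-elliott} then gives $\mathrm K_0=\Z u\oplus\Z^{(J)}\oplus H_0$ and $\mathrm K_1=H_1$, with $\rho(e_j)=t_j$. Since $a_j$ is an arbitrary real number, we must correct the basis: replace $e_j$ by $e_j+k_ju$ with $k_j=a_j-t_j\in\Z$. This needs $a_j-t_j\in\Z$, so choose $t_j\in(0,1)$ with $t_j\equiv a_j\pmod 1$. If $a_j\in\Z$, use $t_j=1/2$ and the element $2e_j-u$ instead. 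The replaced elements still form a basis together with $u$, and they have pairing $a_j$. Countability of $J$ gives a countable family.

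The two interval factors carry nonatomic measures and hence Haar unitaries by Lemma~\ref{lem:continuous-Haar}. Shlyakhtenko absorption \cite[Theorem~5.2]{Shlyakhtenko} therefore gives $P\cong P\fr D$. So the same algebras realize the invariant in both the $D$-stabilized class and the Haar-unitary class, all spectra have dimension at most four, and the two ranges agree.

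\emph{Equivalent formulation and main obstacle.} The step most likely to need care is the integer case, where the replacement element must stay part of a basis. $\{u,\,2e_j-u\}$ generates only an index-two subgroup of $\Z u\oplus\Z e_j$, so it is not a basis. Instead, pick $t_j$ irrational, or $1/2$, and take the basis $\{u,\,e_j+(a_j-t_j)u\}$ whenever $a_j-t_j\in\Z$. When $a_j\in\Z$ this fails for $t_j\in(0,1)$. In that case use the connected part instead: add a factor $\mathrm C(S^2)$, whose reduced $\mathrm K_0$ class $\beta$ is free with trace $0$, and take the basis element $\beta+a_ju$, which has pairing $a_j$.

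Finally, the equivalence with the abstract description ($F$ free with basis containing $u$, $\rho(H_0)=0$) is immediate: expand $\rho$ on the basis of $F$ to obtain the coefficients $a_j$.
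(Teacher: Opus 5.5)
Your final argument is correct and is essentially the paper's proof. Necessity goes through Lemma~\ref{lem:rank-decomposition} and Proposition~\ref{prop:product-elliott}. Realization uses $\mathrm C(Y)$ from Lemma~\ref{lem:connected-pointed-range}, a disconnected-interval factor whose first-component mass is the fractional part of $a_j$ (with the basis element shifted by an integer multiple of $u$), an $S^2$ Bott class shifted by $a_ju$ when $a_j\in\Z$, two Lebesgue intervals, and absorption \cite[Theorem~5.2]{Shlyakhtenko}.

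Delete the intermediate suggestion $t_j=1/2$, $2e_j-u$ from the main body and keep only your corrected $S^2$ construction. That suggestion fails twice over: it is not a basis change, and its trace is $0$ rather than $a_j$.
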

\begin{proof}
For necessity, apply Lemma~\ref{lem:rank-decomposition} to each
commutative factor. Identifying their unit classes in
Proposition~\ref{prop:product-elliott} gives
\[
 \mathrm{K}_0(P_{\mathcal X})
 =\Z u\oplus\bigoplus_i\Z^{(J_{X_i})}
             \oplus\bigoplus_i H_{X_i}.
\]
The middle summand is countable free abelian and the pairing
vanishes on the last. The degree-one group is countable, and
adjoining $D$ preserves the groups, unit and pairing.
Regularity and Lemma~\ref{lem:positive-cone} give the positive
cone and the one-point trace simplex.

Conversely, fix the displayed data. By
Lemma~\ref{lem:connected-pointed-range}, choose a connected compact
metrizable $Y$ with pointed groups
$(\Z[1]\oplus H_0,H_1)$ and rank pairing, and give it a full-support
nonatomic probability measure. Its unit is marked by $u$, its
rank-zero summand by $H_0$, and its degree-one group by $H_1$.
For the standard generator $e_j$ of $\Z^{(J)}$, put
\[
 q_j=\lfloor a_j\rfloor,\qquad s_j=a_j-q_j\in[0,1).
\]
If $s_j>0$, take the space $[0,1]\sqcup[0,1]$ with component
masses $s_j,1-s_j$ and Lebesgue measure within each component.
Mark the first-component projection by $e_j-q_ju$.
If $s_j=0$, take $S^2$ with any full-support nonatomic measure
and mark its rank-zero Bott class by $e_j-q_ju$.
These choices realize exactly the required trace values.
The classes $u$ and $e_j-q_ju$, together with $H_0$, still give
the prescribed direct-sum decomposition.

Form $P$ from the factor $\mathrm{C}(Y)$, these finite or countably
many additional factors, and two Lebesgue-interval factors.
Proposition~\ref{prop:product-elliott} computes the data of $P\fr D$
as \eqref{eq:invariant-range-groups}--\eqref{eq:invariant-range-pairing}.
The two interval factors contain Haar unitaries, so semicircular
absorption gives $P\cong P\fr D$. This proves realization in both
classes. All spectra used have dimension at most four.
\end{proof}

\begin{example}[A split unit is not sufficient]\label{ex:range-obstruction}
Consider $\Z u\oplus\Z[1/2]$ with unit $u$ and normalized pairing
\[
 \rho(mu,q)=m+q.
\]
This pointed group and pairing cannot occur in
Corollary~\ref{thm:classification}. Indeed, every homomorphism from
$\Z[1/2]$ to a free abelian group is zero, since its image lies in
$\bigcap_{n\ge1}2^nF=\{0\}$. Consequently the pairing in
Theorem~\ref{thm:invariant-range} must vanish on this subgroup,
whereas the displayed pairing does not.
Theorem~\ref{thm:nuclear-range} realizes these same pointed data in
the larger class with nuclear factors.
\end{example}

\subsection{Standard free-product models}
The classification theorem also has a standard-model consequence. The spaces
needed at each finite stage can be chosen from a fixed list of types,
although the connecting homomorphisms need not come from maps between
the spaces.  Put
\[
 T_{\mathrm{II},n}=S^1\cup_{z\mapsto z^n}D^2,
 \qquad
 T_{\mathrm{III},n}=S^2\cup_{\deg n}D^3
 \quad(n\geq2).
\]
These are the usual two- and three-dimensional Moore spaces,
introduced as standard model spaces for the classification of AH
algebras in \cite[Section~4.2]{EGModels}.  In particular,
$T_{\mathrm{III},n}$ is not a three-dimensional real projective space.
Our list consists of
\begin{equation}\label{eq:standard-spaces}
 [0,1],\quad [0,1]\sqcup[0,1],\quad S^1,\quad S^2,
 \quad T_{\mathrm{II},n},\quad T_{\mathrm{III},n}\quad(n\geq2).
\end{equation}
For the connected spaces, use the complement to the unit given by
the kernel of evaluation at a point.  For the disconnected interval
space, use the complement generated by the characteristic function
$e$ of its first component.  The last column records the pairing
on these chosen complements, using the displayed generator lifts:
\[
\begin{array}{c|c|c|c}
 X&\mathrm{K}_0/\Z[1]
       &\mathrm{K}_1
       &\rho\text{ on the complement}\\ \hline
 {}[0,1]&0&0&0\\
 {}[0,1]\sqcup[0,1]&\Z[e]&0&[e]\mapsto t\\
 S^1&0&\Z&0\\
 S^2&\Z&0&0\\
 T_{\mathrm{II},n}&\Z/n\Z&0&0\\
 T_{\mathrm{III},n}&0&\Z/n\Z&0
\end{array}
\]
Here $t\in(0,1)$ is the mass of the first interval.  Every space
has a full-support nonatomic probability measure; on a disconnected
interval space use normalized Lebesgue measure on each component
with the indicated masses.  For connected spaces the pairing is
the rank map and is independent of this choice.

\begin{lemma}[Finite standard realization]\label{lem:finite-standard-model}
Let $H_0,H_1$ be finitely generated abelian groups, let
$H_0=\Z u\oplus H'_0$, and let $\rho:H_0\to\R$ satisfy
$\rho(u)=1$.  There is a finite reduced free product $C$ of measured
commutative algebras over the spaces in \eqref{eq:standard-spaces},
including two Lebesgue-interval factors, and isomorphisms
\[
 \eta_i:\mathrm{K}_i(C)\longrightarrow H_i\quad(i=0,1)
\]
such that $\eta_0([1_C])=u$ and $\tau_{C*}=\rho\eta_0$.
Its positive cone corresponds to
$\{0\}\cup\{h\in H_0:\rho(h)>0\}$.
\end{lemma}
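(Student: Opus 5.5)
The plan is to assemble $C$ factor by factor from the table, decomposing $H_0'$ and $H_1$ into cyclic summands, and then read off the invariant from Proposition~\ref{prop:product-elliott}.

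\emph{Decomposing the data.} Write $H_1\cong\Z^{b}\oplus\bigoplus_{c}\Z/m_c\Z$. Write $H_0'\cong\Z^{a}\oplus\bigoplus_{d}\Z/n_d\Z$, choosing generators $f_1,\dots,f_a$ for the free part and $t_d$ for the torsion part. The torsion generators satisfy $\rho(t_d)=0$ automatically. For each free generator $f_j$ we may replace it by $f_j-q_ju$ with $q_j=\lfloor\rho(f_j)\rfloor$; this does not change the splitting $H_0=\Z u\oplus H_0'$ up to isomorphism, and afterwards $s_j=\rho(f_j)\in[0,1)$.

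\emph{Assigning factors.} Each piece of the decomposition gets one factor, chosen so that its reduced $\mathrm K$-group and complement pairing match the piece:
\begin{enumerate}
\item each $f_j$ with $s_j>0$ gets $[0,1]\sqcup[0,1]$ with masses $s_j,1-s_j$, marking $[e]\mapsto f_j$;
\item each $f_j$ with $s_j=0$ gets $S^2$, marking the Bott class to $f_j$;
\item each $t_d$ gets $T_{\mathrm{II},n_d}$;
\item each free summand of $H_1$ gets $S^1$;
\item each $\Z/m_c$ summand of $H_1$ gets $T_{\mathrm{III},m_c}$;
\item finally, two Lebesgue-interval factors are added, which contribute nothing.
\end{enumerate}
All of these carry full-support measures (nonatomic on the connected spaces), as recorded after~\eqref{eq:standard-spaces}.

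\emph{Computing $\mathrm K$-theory.} Each factor has a split unit (evaluation at a point for the connected spaces, and the first component for the disconnected one). Proposition~\ref{prop:product-elliott} then gives $\mathrm K_0(C)=\Z[1]\oplus\bigoplus(\text{complements})$ and $\mathrm K_1(C)=\bigoplus\mathrm K_1(X_i)$. The trace pairing is the sum of the complement pairings from the table together with $1$ on the unit. Defining $\eta_0$ and $\eta_1$ by the markings above, we get $\eta_0([1_C])=u$ and $\tau_{C*}=\rho\eta_0$; the latter is checked on generators, using the value $s_j$ on $[e]$ and $0$ on the connected complements.

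\emph{Positive cone.} Proposition~\ref{prop:product-elliott} gives the positive cone for $C\fr D$. For $C$ itself, the two interval factors carry Haar unitaries, so semicircular absorption gives $C\cong C\fr D$ and the cone is the strict trace cone.

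The main point needing care is the normalization $f_j\mapsto f_j-q_ju$, in particular confirming that it preserves a complement to $\Z u$. This is immediate, because the change of basis is unimodular. Everything else is bookkeeping in the table.
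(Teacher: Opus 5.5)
Your proposal is correct and is essentially the paper's proof. It uses the same normalization of the free generators by $\lfloor\rho(\cdot)\rfloor u$, the same assignment of disconnected-interval, $S^2$, $T_{\mathrm{II},n}$, circle and $T_{\mathrm{III},m}$ blocks plus two Lebesgue-interval factors, the $\mathrm K$-computation via Proposition~\ref{prop:product-elliott}, and Haar absorption $C\cong C\fr D$ for the strict trace cone (the only implicit detail is choosing the cyclic decompositions with orders $n_d,m_c\geq2$, as the list requires).
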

\begin{proof}
Write $H'_0=\bigoplus_{s=1}^r\Z h_s\oplus
\bigoplus_a\Z/n_a\Z$ and
$H_1=\Z^q\oplus\bigoplus_b\Z/m_b\Z$.
Set $k_s=\lfloor\rho(h_s)\rfloor$ and
$t_s=\rho(h_s)-k_s$.  If $t_s>0$, use a disconnected interval
factor of first-component mass $t_s$ and mark its class $[e]$ by
$h_s-k_su$.  If $t_s=0$, use $S^2$ and mark its Bott class by
$h_s-k_su$.  Use $T_{\mathrm{II},n_a}$ for the cyclic summands of
$H'_0$, and $q$ circle factors and the spaces
$T_{\mathrm{III},m_b}$ for $H_1$.  Torsion has zero pairing with
$\R$.  Finally add two Lebesgue-interval factors.  They add no
reduced $\mathrm{K}$-theory and ensure the Haar-unitary hypothesis.
The free-product calculation in
Proposition~\ref{prop:product-elliott} gives exactly the stated
markings and pairing.  Haar absorption identifies $C$ with
$C\fr D$, so regularity and the strict trace cone follow from
Lemmas~\ref{lem:interval-regularity} and~\ref{lem:positive-cone}.
\end{proof}

\begin{theorem}[Standard models for the attained invariants]
\label{thm:standard-models}
Let $\mathcal X$ satisfy Corollary~\ref{thm:classification}, and put
$A=P_{\mathcal X}\fr D$.  There is an inductive system
\[
 C_1\xrightarrow{\varphi_1}C_2
 \xrightarrow{\varphi_2}\cdots
\]
with unital trace-preserving injective connecting homomorphisms,
in which every $C_n$ is a finite reduced free product of measured
commutative algebras over \eqref{eq:standard-spaces}, and
\[
 A\cong\varinjlim(C_n,\varphi_n).
\]
Each $C_n$ may be required to contain two Lebesgue-interval factors.
The isomorphism can realize any prescribed identification of its
Elliott invariant with $\Ell(A)$.
If two distinct factors of $P_{\mathcal X}$ admit Haar unitaries,
the same conclusion holds for $P_{\mathcal X}$ itself.
\end{theorem}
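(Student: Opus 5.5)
The plan is to write $\Ell(A)$ as an increasing union of finitely generated pointed subdata, realize each stage by Lemma~\ref{lem:finite-standard-model}, and connect the stages by marked embeddings from Theorem~\ref{thm:marked-core} (via fusion), finishing with an Elliott intertwining at the level of $D$-stabilized algebras. Concretely, by Theorem~\ref{thm:invariant-range} write $\mathrm K_0(A)=\Z u\oplus\Z^{(J)}\oplus H_0$ and $\mathrm K_1(A)=H_1$. Choose increasing finitely generated subgroups $G_{0,n}=\Z u\oplus\Z^{(J_n)}\oplus H_{0,n}$ and $G_{1,n}\subseteq H_1$, with $J_n$ finite and increasing and unions exhausting everything. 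Each $G_{0,n}$ has the form $\Z u\oplus H_0'$ required by Lemma~\ref{lem:finite-standard-model}, with the restricted pairing $\rho$. That lemma gives finite standard free products $C_n$, each containing two interval factors, with markings $\eta^{(n)}$ onto $(G_{0,n},u,G_{1,n},\rho)$.

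For the connecting maps, the inclusions $G_{i,n}\to G_{i,n+1}$, transported through the markings, are unit- and pairing-preserving group homomorphisms $\mathrm K_*(C_n)\to\mathrm K_*(C_{n+1})$. Each $C_n$ contains two Haar unitaries, so $C_n\cong C_n\fr D$ by semicircular absorption. By Theorem~\ref{gf:general-fusion}, each $C_n\fr D$ is isomorphic to $F_n\fr D$ for a nuclear core $F_n$. The homomorphism assertion of Theorem~\ref{thm:marked-core} then realizes the inclusion data by a unital trace-preserving embedding $F_n\fr D\to F_{n+1}\fr D$. Conjugating back through the absorption and fusion isomorphisms gives $\varphi_n:C_n\to C_{n+1}$, which is injective because it is trace-preserving with faithful trace. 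Taking $C=\varinjlim(C_n,\varphi_n)$, continuity of $\mathrm K$-theory identifies the pointed groups and pairing of $C$ with those of $A$, via the markings, and $C$ has a unique trace.

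The main obstacle is proving $C\cong A$. The limit $C$ is not visibly of the form $P\fr D$, and uniqueness fails for embeddings between these algebras (Example~\ref{ex:reservoir-nonuniqueness}), so $C$ cannot be matched against $A$ by intertwining alone. I would avoid uniqueness as follows. Realize the whole system inside the nuclear world: construct nuclear cores $E_n$ with the stage data and embeddings $\lambda_n:E_n\to E_{n+1}$ by Theorem~\ref{thm:embedding}, with $E_\infty=\varinjlim E_n$. Then choose the isomorphisms $\psi_n:C_n\fr D\to E_n\fr D$ with $\psi_{n+1}\varphi_n\approx(\lambda_n\fr\id_D)\psi_n$ on increasing finite packets with summable errors. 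This follows the two-sided construction of Proposition~\ref{gf:countable-zero-K1-fusion}, with Remark~\ref{rem:marked-alignment} and nuclear-source uniqueness applied to the core $E_n$ in a complementary tail to supply the corrections. This requires choosing $\varphi_n$ and $\psi_{n+1}$ together, so I would define $\varphi_n$ as $\psi_{n+1}^{-1}(\lambda_n\fr\id)\psi_n$ up to the inner tail correction, rather than fixing $\varphi_n$ first. The resulting two-sided approximate intertwining gives $C\fr D\cong\varinjlim(C_n\fr D)\cong E_\infty\fr D$. Absorbing $D$ stagewise requires absorption isomorphisms compatible with the connecting maps; I expect this to work by keeping the reservoir coordinates of each $C_n$ in a protected head and the added $D$ in tails.

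Once $C\cong E_\infty\fr D$, Theorem~\ref{thm:marked-core} compares $E_\infty\fr D$ with $A\cong F\fr D$, where $F$ is the core from Theorem~\ref{gf:general-fusion}. It gives an isomorphism realizing any prescribed identification of Elliott invariants, and composing yields the claimed marking. In the Haar-unitary case, $P_{\mathcal X}\cong A$ by semicircular absorption, so the same system works for $P_{\mathcal X}$.
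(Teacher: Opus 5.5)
Your decisive step, defining the connecting maps by transport, $\varphi_n=\psi_{n+1}^{-1}(\lambda_n\fr\mathrm{id}_D)\psi_n$, instead of fixing them first, is exactly the paper's argument. Your diagnosis of why fixing $\varphi_n$ first fails is also right: there is no uniqueness for maps out of the nonnuclear $C_n$ (Example~\ref{ex:reservoir-nonuniqueness}).

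As assembled, though, the proof still has an open step. You take $\psi_n$ on $C_n\fr D$, so the transported map is an embedding $C_n\fr D\to C_{n+1}\fr D$. Such a map need not carry $C_n$ into $C_{n+1}$, and need not have the form $\varphi_n\fr\mathrm{id}_D$. Hence the chain $C\fr D\cong\varinjlim(C_n\fr D)$ is not justified. The stagewise absorption "compatible with the connecting maps" that you then need is left as an expectation. Arranging it via protected heads would again require approximate control of maps out of $C_n$, which is the same uniqueness problem. The surrounding machinery has the same defect. The summable approximate intertwining, the inner tail corrections and Remark~\ref{rem:marked-alignment} are vacuous once $\varphi_n$ is defined by transport, since the squares then commute exactly. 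If instead $\varphi_n$ is fixed first, they are unavailable.

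The fix is to absorb $D$ at each stage independently. Let $\psi_n:C_n\to E_n\fr D$ be the composite of any trace-preserving absorption isomorphism $C_n\cong C_n\fr D$ (using the two interval factors) with the marked fusion/replacement isomorphism $C_n\fr D\cong E_n\fr D$. No compatibility between different stages is required.

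Then $\varphi_n:=\psi_{n+1}^{-1}(\lambda_n\fr\mathrm{id}_D)\psi_n$ is a unital trace-preserving embedding $C_n\to C_{n+1}$. It induces the subgroup inclusions because $\psi_n$, $\psi_{n+1}$ and $\lambda_n$ are marked. Moreover $(\psi_n)$ is an exact isomorphism of inductive systems. Therefore
$\varinjlim(C_n,\varphi_n)\cong\varinjlim(E_n\fr D,\lambda_n\fr\mathrm{id}_D)\cong E_\infty\fr D$,
where the last step holds because the $E_n\fr D$ sit isometrically, with dense union, in $E_\infty\fr D$. This is precisely the paper's proof.

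Your final comparison of $E_\infty\fr D$ with $A$ through Theorem~\ref{thm:marked-core} is fine. The paper instead identifies $E_\infty$ with the fusion core of $A$ by nuclear classification. Either route realizes any prescribed identification of the invariants.
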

\begin{proof}
Write $\mathrm{K}_0(A)=\Z[1_A]\oplus H$, using evaluation on
the original commutative factors and the free-product calculation.
Choose increasing finitely generated subgroups $H_n\subset H$
and $L_n\subset\mathrm{K}_1(A)$ whose unions are the respective
groups.  Apply Lemma~\ref{lem:finite-standard-model} to
$\Z[1_A]\oplus H_n$, $L_n$, and the restricted pairing.
This gives marked finite standard products $C_n$.

Lemma~\ref{lem:marked-range} and Theorems~\ref{gf:general-fusion}
and~\ref{thm:marked-core} give marked isomorphisms
\[
 \psi_n:C_n\longrightarrow N_n\fr D,
\]
where $N_n$ is simple, monotracial, nuclear, UCT and
$\mathcal Z$-stable with the indicated finite-stage data.
Here the two interval factors permit removal of the additional
$D$ from the source.  The realization assertion of
Corollary~\ref{thm:classification} lets us choose this absorption
isomorphism with the identity marking on the identified pointed
groups and pairing, before applying finite fusion.
Nuclear embedding existence, with compatible full-invariant
extensions \cite[Theorem~B and Theorem~3.9]{CGSTW}, supplies
unital embeddings $\nu_n:N_n\to N_{n+1}$ inducing the subgroup
inclusions.  Their unique traces agree.  Define
\[
 \varphi_n=\psi_{n+1}^{-1}\circ
          (\nu_n\fr\id_D)\circ\psi_n.
\]
These maps are injective and trace preserving.  Continuity of
reduced free products for injective trace-preserving inductive
systems gives
\[
 \varinjlim(C_n,\varphi_n)\cong N\fr D,
 \qquad N=\varinjlim(N_n,\nu_n).
\]
Indeed, identify the $N_n$ with nested subalgebras of $N$.
Their reduced products with the same copy of $D$ embed isometrically
in $N\fr D$, and finite reduced words from their union are dense.
The algebra $N$ retains simplicity, nuclearity, the UCT,
$\mathcal Z$-stability and unique trace.  Continuity gives exactly
the groups, unit and pairing of $A$; its positive cone is strict
because every positive-trace class already has positive trace at
a finite stage. Theorem~\ref{gf:general-fusion}, with a simple
monotracial nuclear UCT $\mathcal Z$-stable range core supplied by
Lemma~\ref{lem:marked-range}, also gives
$A\cong F_\infty\fr D$ with these data.
Nuclear classification \cite[Theorems~2.7--2.8]{EGLN} identifies
$N$ with $F_\infty$, with any prescribed invariant marking.
This proves the assertion without applying the classification
theorem to a previously unclassified inductive limit.  Haar
absorption gives the last statement.
\end{proof}

\begin{corollary}\label{cor:finite-standard-model}
If both $\mathrm{K}$-groups of $A=P_{\mathcal X}\fr D$ are
finitely generated, then $A$ is isomorphic to one finite standard
free product $C$ from Lemma~\ref{lem:finite-standard-model}.
Under the Haar-unitary hypothesis, this also holds for
$P_{\mathcal X}$.
\end{corollary}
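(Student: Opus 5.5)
The plan is to combine Theorem~\ref{thm:standard-models} with the finite realization in Lemma~\ref{lem:finite-standard-model} and the classification of Corollary~\ref{thm:classification}, avoiding any inductive limit.

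\emph{Step 1: a finite standard product with the right invariant.} Write
\[
 \mathrm K_0(A)=\Z[1_A]\oplus H,
\]
using evaluation on the commutative factors as in the proof of Theorem~\ref{thm:standard-models}. Since $\mathrm K_0(A)$ is finitely generated, so is $H$, and $\mathrm K_1(A)$ is finitely generated by hypothesis. Apply Lemma~\ref{lem:finite-standard-model} with $H_0=\mathrm K_0(A)$, $u=[1_A]$, $H_1=\mathrm K_1(A)$ and $\rho=\tau_{A*}$. This produces a finite standard product $C$ containing two Lebesgue-interval factors, together with marked isomorphisms $\eta_0,\eta_1$ that preserve the unit and the pairing. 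Its positive cone is the strict trace cone, and $C$ is monotracial, because Haar absorption identifies $C$ with $C\fr D$.

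\emph{Step 2: the Elliott invariants agree.} By Proposition~\ref{prop:product-elliott}, $A$ also has the strict cone and a unique trace. As recorded after Corollary~\ref{thm:classification}, a unital Elliott morphism between such algebras is exactly a pair of group maps preserving the unit and the pairing. Hence $(\eta_0^{-1},\eta_1^{-1})$ is an isomorphism $\Ell(A)\to\Ell(C)$.

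\emph{Step 3: apply the classification.} Since $C\cong C\fr D$, the algebra $C$ is itself a $D$-stabilized commutative product in the class of Corollary~\ref{thm:classification}. Its family consists of the finitely many standard factors; the interval factors carry Lebesgue measure and the others carry the full-support measures fixed after \eqref{eq:standard-spaces}. The isomorphism assertion of that corollary then realizes the invariant isomorphism by a $^*$-isomorphism $A\cong C\fr D\cong C$.

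\emph{Step 4: the Haar case.} If two distinct factors of $P_{\mathcal X}$ admit Haar unitaries, \cite[Theorem~5.2]{Shlyakhtenko} gives $P_{\mathcal X}\cong P_{\mathcal X}\fr D=A$. The same finite model $C$ therefore works for $P_{\mathcal X}$.

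The main point to verify is that $C$ genuinely lies in the classified class, namely that every factor carries a faithful full-support measure and that $C\cong C\fr D$ trace-preservingly. Both are supplied by the construction in Lemma~\ref{lem:finite-standard-model}, since the two interval factors provide the Haar unitaries needed for absorption.
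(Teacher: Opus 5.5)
Your proposal is correct and follows the paper's own argument. You apply Lemma~\ref{lem:finite-standard-model} to the full split invariant of $A$, then use the isomorphism assertion of Corollary~\ref{thm:classification} together with $C\cong C\fr D$, and Shlyakhtenko absorption transfers the conclusion to $P_{\mathcal X}$. You have additionally spelled out details that the paper's two-line proof leaves implicit: the unit splitting via evaluation, and why the strict cone and unique trace make $(\eta_0^{-1},\eta_1^{-1})$ an Elliott isomorphism.
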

\begin{proof}
Apply the lemma to the full invariant and then apply the classification
assertion of Corollary~\ref{thm:classification}, using $C\cong C\fr D$.
\end{proof}

The inductive-limit formulation is necessary for unrestricted
groups: a single countable free product of the listed individual
blocks has reduced $\mathrm{K}$-groups that are direct sums of
cyclic groups.  It cannot, for example, have $\Z[1/2]$ as its
$\mathrm{K}_1$-group.  Also, connected factors alone have integral
pairing, which explains the disconnected interval blocks, while
$S^2$ supplies free trace-zero $\mathrm{K}_0$-classes.
The range of the actual commutative free products is exactly that of
Theorem~\ref{thm:invariant-range}; arbitrary inductive systems of standard
products need not have invariants in this range. The connecting maps
constructed above need not be pullbacks of continuous maps between
the standard spaces.
\section{Exponential length and exponential rank}\label{sec:exponential}

We compute the exponential rank and exponential length using only
the trace range in the classification data. We first obtain the required upper estimates
directly from Theorem~\ref{thm:embedding}; no additional classification
theorem for nonnuclear algebras is needed.

\subsection{Definitions and interval lifts}

For a unital algebra $B$ and $u\in \mathrm{U}_0(B)$, put
\[
 \cel_B(u)=\inf\left\{\sum_{j=1}^r\|h_j\|:
 u=\prod_{j=1}^r e^{ih_j},\ h_j=h_j^*\in B\right\},\qquad
 \cel(B)=\sup_{u\in \mathrm{U}_0(B)}\cel_B(u).
\]
This agrees with the infimum of lengths of rectifiable unitary paths
from $1$ to $u$ \cite{Ringrose}. Here
\[
 \CU(B)=\overline{[\mathrm{U}_0(B),\mathrm{U}_0(B)]},\qquad
 \cucu(B)=\sup_{u\in\CU(B)}\cel_B(u).
\]
The paths defining $\cucu$ may run through all of $\mathrm{U}_0(B)$.
The assertion $\cer(B)\leq1+\varepsilon$ means that single
exponentials are norm dense in $\mathrm{U}_0(B)$; equality means additionally
that some connected unitary has no self-adjoint logarithm.
The symbol $\varepsilon$ in this rank notation is formal.

We use two elementary facts. If $\|u-e^{ih}\|<\delta<2$, the
residual unitary has a logarithm of norm at most
$2\arcsin(\delta/2)$. Thus uniform approximation with
$\|h\|\leq L$ implies $\cel_B(u)\leq L$.
Also, if $h=h^*$ vanishes on every tracial state, then
$e^{ih}\in\CU(B)$, by the trace-commutator result of \cite{CP}
and the Trotter product formula.

\begin{proposition}[Bounded approximate logarithms]\label{prop:bounded-logarithms}
Let $T$ be an admissible target, with unique trace $\tau$, and put
$H=\tau_*(\mathrm{K}_0(T))$. Every $u\in \mathrm{U}_0(T)$ is a norm limit of
exponentials $e^{ih}$ with $h=h^*$ and $\|h\|\leq2\pi$.
Consequently
\[
 \cer(T)\leq1+\varepsilon,\qquad
 \cel(T)\leq2\pi.
\]
\end{proposition}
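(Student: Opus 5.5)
The intro already sketches the route: lift the spectral distribution of $u$ to an interval and match its determinant, then apply Theorem~\ref{thm:embedding}. Fix $u\in\mathrm U_0(T)$. Its spectral distribution under $\tau$ is a probability measure $\mu$ on $\mathbb T$, and $\mathrm C^*(u)$ is a quotient of $\mathrm C(\mathbb T)$. We work with the unital map $\mathrm C(\mathbb T)\to T$, $z\mapsto u$.

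Faithfulness of the selected trace is the first technical point, since $\mu$ need not have full support. We first perturb $u$ slightly, say by composing with a small unitary from a separately chosen Haar element. The cleanest choice is to replace $u$ by a nearby unitary $u'$ whose spectral measure has full support and which still lies in $\mathrm U_0(T)$ with the same class in $\Uquot{T}$. If this is awkward, we instead work with the restriction to $\mathrm C(\operatorname{sp}(u))$. With this normalization we obtain a unital embedding $\phi:\mathrm C(\operatorname{sp}u)\to T$ inducing a faithful trace $\mu$.

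Next choose a Borel argument $\theta:\mathbb T\to[0,2\pi)$, or better a lift of $\mu$ to $[-\pi,\pi]$. We then use a continuous map $g:[0,1]\to\mathbb T$ and a faithful measure $\nu$ on $[0,1]$ such that $g_*\nu=\mu$. Monotone rearrangement of the distribution function gives such a $g$ covering the circle once, with $g(t)=e^{i\ell(t)}$ and $\ell$ increasing of total variation at most $2\pi$. We may also arrange $\ell$ to take values in an interval of length $2\pi$ shifted by a constant $c$.

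The plan is to produce an embedding $\psi:\mathrm C([0,1])\to T$ with trace $\nu$ such that $\psi\circ g^*$ has the same totally compatible triple as $\phi$. Since $\mathrm C([0,1])$ has trivial reduced K-theory, existence by Theorem~\ref{thm:embedding} only requires compatibility with the trace. Uniqueness in Theorem~\ref{thm:embedding} then gives $\phi\approx_{\mathrm u}\psi\circ g^*$, so $u$ is a norm limit of unitary conjugates of $e^{i\psi(\ell)}$ with $\|\ell\|\le 2\pi$, which is the approximation claimed.

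The main obstacle is matching the triples. On $\mathrm C(\mathbb T)$, $\mathrm K_1$ is generated by $[z]$, which maps to $[u]=0$ since $u\in\mathrm U_0$. The $\mathrm K_1$ map of $\psi g^*$ also vanishes because it factors through the interval. So total K-theory and the traces agree, and everything reduces to the algebraic $\mathrm K_1$ component: we need $\overline{\psi(e^{i\ell})}=\bar u$ in $\Uquot T$. By the Thomsen sequence \eqref{eq:thomsen}, $\bar u=\mathrm{Th}_T(t)$ for some $t\in\R/\overline H$, while $\overline{e^{i\psi(\ell)}}=\mathrm{Th}_T\bigl(\tfrac1{2\pi}\int\ell\,d\nu\bigr)$. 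We therefore adjust the additive constant $c$ in $\ell$ so that $\tfrac1{2\pi}\int\ell\,d\nu\equiv t$ modulo $H$. This requires choosing $\ell$ among all lifts with values in some interval of length $2\pi$. Shifting the cut point of the circle changes $\int\ell\,d\nu$ continuously through a range of length $2\pi$ (plus integer jumps). Hence every residue mod $\Z\subseteq H$ is attained, and the bound $\|\ell-\text{center}\|\le\pi$ after recentring yields $\|h\|\le 2\pi$ even in the worst case.

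The coefficient maps $\zeta_m$ add no condition, because $\mathrm K_0(\mathrm C(\mathbb T);\Z/m)$ comes from the unit. This gives $\mathrm{cel}(T)\le2\pi$ by the residual-logarithm fact, and density of exponentials gives $\cer(T)\le1+\varepsilon$.
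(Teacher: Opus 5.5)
Your strategy is the paper's: lift the spectral distribution to an interval, match the Thomsen determinant, then use existence and uniqueness in Theorem~\ref{thm:embedding} on the circle. However, the determinant-matching step fails as written.

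Let $\ell_c$ be the lift of $\mu$ into $[c,c+2\pi)$. Then for $c<c'\le c+2\pi$
\[
 \frac1{2\pi}\int\ell_{c'}\,d\mu-\frac1{2\pi}\int\ell_c\,d\mu
 =\mu\bigl(\{e^{is}:c\le s<c'\}\bigr).
\]
So the mean is monotone in the cut point. It is discontinuous exactly at the atoms of $\mu$, and each jump equals the mass of the atom. There are no ``integer jumps'', and residues inside these gaps are never reached.

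Atoms are compatible with full spectrum. Take $\mu=\tfrac12\delta_1+\tfrac12 m$, with $m$ normalized Haar measure. The attainable means are then exactly $(-\tfrac14,\tfrac14]+\Z$. Now work in $T=D$, where $H=\Z$. Apply Theorem~\ref{thm:embedding} to $(\mathrm C(\mathbb T),\mu)$ with the unital triple that is zero on $\mathrm K_1$ and has $\kappa_\gamma(\bar z)=\mathrm{Th}_T(\tfrac12)$. This gives a unitary $u$ with this distribution and determinant $\tfrac12$, and stable rank one puts it in $\mathrm U_0(T)$. Determinants are invariant under approximate unitary equivalence, so no single-valued lift of the kind you describe yields an approximant of this $u$.

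The missing idea is to let the mass at a single point of the circle be divided between two lifts. The paper does this uniformly. Let $\nu_0$ be the lift to $[0,2\pi]$ and $t\in[0,1)$, and put $\nu_t=(1-t)\nu_0+t\,(s\mapsto s-2\pi)_*\nu_0$ on $[-2\pi,2\pi]$. This measure is faithful, its exponential pushforward is $\mu$, and its normalized mean is $a-t$. That mean sweeps a full period continuously whatever the atoms, while still giving $\|h\|\le2\pi$.

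In your framework, the equivalent repair is to place the cut at an atom and split that atom's mass between the endpoints $c$ and $c+2\pi$. The quantile function stays continuous and increasing, and the split interpolates the jump.

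Separately, handle $\operatorname{sp}(u)\neq\mathbb T$ by a continuous branch of the argument, as the paper does. A continuous $g$ from $[0,1]$ onto $\operatorname{sp}(u)$ with faithful $\nu$ cannot exist when the spectrum is disconnected. When $\operatorname{sp}(u)=\mathbb T$, faithfulness of $\tau$ already gives $\mu$ full support, so no perturbation is needed.
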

\begin{proof}
If the spectrum of $u$ omits a point of the circle, a branch of the
argument gives a continuous logarithm of norm at most $2\pi$.
We may therefore suppose that
$\operatorname{sp}(u)=\mathbb T$.
Let $\mu$ be its spectral distribution for $\tau$; it has full support.
Let $\nu_0$ be the lift of $\mu$ by a Borel argument in $[0,2\pi)$,
regarded as a measure on $[0,2\pi]$, and put
\[
 a=\frac1{2\pi}\int s\,d\nu_0(s).
\]
Every open interval at either endpoint contains the lift of an open
arc, so $\nu_0$ has full support on $[0,2\pi]$.
The Thomsen sequence identifies the connected unitary class of $u$
with a determinant
$\Delta_T(u)\in\R/\overline H$. Choose a representative $d\in\R$.
Choose $t\in[0,1)$ such that $a-t-d\in\Z$, and let
\[
 \nu_t=(1-t)\nu_0+t(s\mapsto s-2\pi)_*\nu_0.
\]
For $t=0$ use $Y=[0,2\pi]$; otherwise use $Y=[-2\pi,2\pi]$.
Then $\nu_t$ is faithful on $\mathrm{C}(Y)$, its exponential pushforward is
$\mu$, and its normalized mean is $a-t$.

Send the unit of $\mathrm{K}_0(\mathrm{C}(Y))=\Z$ to $[1_T]$, and use $\nu_t$ for
the trace component. These data extend to a compatible full-invariant
morphism \cite[Theorem 3.9]{CGSTW}.
Theorem~\ref{thm:embedding} gives a unital embedding
$\phi:\mathrm{C}(Y)\to T$ with $\tau\phi=\nu_t$.
For $h=\phi(s\mapsto s)$ and $v=e^{ih}$ we have
\[
 \|h\|\leq2\pi,\qquad
 \tau(f(v))=\tau(f(u))\quad(f\in \mathrm{C}(\mathbb T)),\qquad
 \Delta_T(v)=a-t=d\pmod{\overline H}.
\]
Here $\Z\subseteq H$, since the trace is normalized.

The two circle embeddings defined by $u$ and $v$ have the same full
invariant. Indeed, their $\mathrm{K}_0$-maps are unital and their $\mathrm{K}_1$-maps
are zero. The free $\mathrm{K}$-groups of $\mathrm{C}(\mathbb T)$ imply equality on
all coefficient groups. Their trace components agree by the displayed
spectral-distribution identity. Finally, the Thomsen kernel and the
class of the coordinate unitary $z$ generate
$\Uquot{\mathrm{C}(\mathbb T)}$, because its $\mathrm{K}_1$-quotient is
$\Z[z]$. The trace action determines the map on that kernel, and
the matched determinants give equality on $[z]$.
Thus Theorem~\ref{thm:embedding} makes $u$ and $v$ approximately
unitarily conjugate. Conjugation preserves the norm of $h$.

The small-residual estimate above gives the length bounds.
\end{proof}

\subsection{Sharpness for a discrete range}

\begin{lemma}\label{lem:short-loop-exp}
A rectifiable unitary loop based at $1$ of length less than $2\pi$
is null-homotopic. Its trace determinant integral is zero.
\end{lemma}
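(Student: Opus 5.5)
The plan is to reduce both assertions to the local geometry of the unitary group near $1$. Let $\gamma:[0,1]\to\mathrm U(B)$ be a rectifiable loop with $\gamma(0)=\gamma(1)=1$ and length $\ell<2\pi$. Every point of the loop is then within arc length $\ell/2<\pi$ of the base point, measured along the loop in one direction or the other. I will use the standard estimate $\|\gamma(s)-1\|\le 2\sin(\min(L_s,\pi)/2)$, where $L_s$ is the arc length from $1$ to $\gamma(s)$. This gives $\|\gamma(s)-1\|\le 2\sin(\ell/4)<2$ uniformly in $s$. Thus $-1\notin\operatorname{sp}(\gamma(s))$ for all $s$, and in fact the spectrum stays in a fixed closed arc $\{e^{i\theta}:|\theta|\le\theta_0\}$ with $\theta_0<\pi$. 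The distance form of this bound, rather than a spectral-radius argument, is the key input. It is the usual fact that a path of length $L<\pi$ from $1$ stays in the norm ball of radius $2\sin(L/2)$, proved by subdividing the path and using the triangle inequality together with subadditivity of the chordal metric on arcs.

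Once the spectrum avoids $-1$ uniformly, put $h(s)=-i\log\gamma(s)$ using the principal branch on the fixed arc. Holomorphic functional calculus makes $s\mapsto h(s)$ a continuous path of self-adjoint elements with $h(0)=h(1)=0$ and $\|h(s)\|\le\theta_0$. The contraction $H(r,s)=e^{irh(s)}$, $r\in[0,1]$, is then a homotopy of loops based at $1$ from the constant loop to $\gamma$, so $\gamma$ is null-homotopic.

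For the determinant, recall that the trace determinant integral of a piecewise smooth loop is $\frac1{2\pi i}\int_0^1\tau(\dot\gamma(s)\gamma(s)^*)\,ds$. For a general rectifiable loop it is defined via approximation by piecewise smooth, for instance piecewise exponential, loops with the same endpoints and length. For such loops, $\gamma(s)=e^{ih(s)}$ with $h$ piecewise smooth, and the trace-derivative identity $\tau(\frac{d}{ds}e^{ih}\,e^{-ih})=i\,\tau(\dot h)$ holds; it follows from cyclicity of the trace and Duhamel's formula. The integral therefore equals $\frac1{2\pi}(\tau(h(1))-\tau(h(0)))=0$. Alternatively, homotopy invariance of the de la Harpe--Skandalis determinant together with the null-homotopy gives zero in $\mathbb R/\tau_*\mathrm K_0$. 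The direct computation gives exactly zero, which is the statement I want.

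The main obstacle is the rectifiable-but-not-smooth case. The approximation step must preserve the bound: the approximating loops have length at most $\ell+\delta<2\pi$ and are uniformly close, and the determinant integral is continuous under this approximation. I would handle this by subdividing $\gamma$ at $0=s_0<\dots<s_N=1$ with $\|\gamma(s_k)-\gamma(s_{k-1})\|<2$, and replacing each piece by the short exponential path $\gamma(s_{k-1})e^{it\log(\gamma(s_{k-1})^*\gamma(s_k))}$. Its length is at most that of the piece, since $2\arcsin(x/2)\le$ arc length. The resulting piecewise-exponential loop has length at most $\ell$, is uniformly close to $\gamma$, and stays in the same arc-bounded region. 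The computation above applies to it.
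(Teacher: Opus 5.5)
Your proposal is correct and follows the paper's proof: a subarc of length less than $\pi$ from $1$ forces the spectrum of each $\gamma(s)$ to omit $-1$, and the principal logarithm contracts the loop. Your Duhamel computation makes explicit the vanishing of the determinant integral, which the paper leaves to endpoint-fixing homotopy invariance; that invariance already gives exactly $0$, not merely $0$ modulo $\tau_*\mathrm K_0$.

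One justification needs tightening. The bound $\|\gamma(s)-1\|\le 2\sin(L_s/2)$ does not follow from the norm triangle inequality, which only gives $\|\gamma(s)-1\|\le L_s$. It follows from the triangle inequality for the angle metric $2\arcsin(\|u-v\|/2)$ on the unitary group, i.e.\ subadditivity of the spectral angle under products, which one proves by comparing angles of approximate eigenvectors. This is the same standard fact the paper uses implicitly.
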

\begin{proof}
Every point on the loop is connected to $1$ by one of its two
subarcs of length less than $\pi$. Its spectrum therefore omits $-1$.
The continuous principal logarithm contracts the whole loop to $1$.
\end{proof}

\begin{proposition}\label{prop:discrete-length-lower}
Let $B$ be unital with a unique faithful trace $\tau$, and suppose it
contains a Haar unitary. If $H=\tau_*(\mathrm{K}_0(B))$ is discrete, then
$\cucu(B)\geq2\pi$.
\end{proposition}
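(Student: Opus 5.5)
The plan is to exhibit one explicit unitary $u\in\CU(B)$, built by functional calculus from the given Haar unitary $w$, whose exponential length is at least $2\pi$. The lower bound will come from combining three things: the de la Harpe--Skandalis determinant of paths, the discreteness of $H$, and a spectral lifting argument in the tracial von Neumann algebra $M=\pi_\tau(B)''$. I expect this lifting step to be the hard one.

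\emph{Setup.} Since $H$ is a discrete subgroup of $\R$ containing $1$, we have $H=N^{-1}\Z$ for some integer $N\geq1$. Choose a continuous real function $g$ on $\mathbb T$ with these properties:
\begin{itemize}
\item $g$ is odd with respect to complex conjugation, so $\int g\,dm=0$;
\item $g$ takes all values in $[-\pi+\delta,\pi-\delta]$;
\item $g$ increases linearly with slope close to $1$ along most of the circle and returns quickly near $-1$.
\end{itemize}
Put $u=e^{ig(w)}\in\mathrm U_0(B)$. Its trace vanishes on $g(w)$, since $\tau(g(w))=\int g\,dm=0$ by the Haar property. By the trace-commutator fact recalled before Proposition~\ref{prop:bounded-logarithms} (via \cite{CP} and Trotter), $u\in\CU(B)$.

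\emph{Determinant constraint.} Suppose toward a contradiction that $\cel_B(u)<2\pi$. Then there is a piecewise smooth path $\gamma$ from $1$ to $u$ of length $L<2\pi$. Its determinant integral
\[
 D(\gamma)=\frac1{2\pi i}\int_0^1\tau(\gamma'(t)\gamma(t)^*)\,dt
\]
satisfies $|D(\gamma)|\leq L/2\pi<1$. Comparing $\gamma$ with the straight path $e^{itg(w)}$, whose determinant is $\tau(g(w))/2\pi=0$, gives a loop. The determinant of a loop lies in $H$, so $D(\gamma)\in H=N^{-1}\Z$. Lemma~\ref{lem:short-loop-exp} alone does not kill this loop, because the concatenated loop may have length up to $L+\|g\|$. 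So discreteness only gives the weak conclusion that $D(\gamma)$ is one of finitely many values $k/N$ with $|k/N|<1$.

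\emph{Spectral lifting (main obstacle).} To rule these values out, I would use the spectral distribution of $\gamma(t)$. In $M$, lift the path continuously to self-adjoint "eigenvalue functions". The concrete device is to subdivide $\gamma$ into pieces of length $<\pi$ and take principal logarithms of successive quotients. The aim is to show that the terminal unitary $u$ then acquires a logarithm $k\in M_{\mathrm{sa}}$ with the following two properties:
\begin{itemize}
\item $\|k\|\leq L$;
\item $\tau(k)=2\pi D(\gamma)$ exactly.
\end{itemize}
Granting this, the distribution of $k$ is a lift of the law of $u$, which is the pushforward of Haar measure by $e^{ig}$, to the interval $[-L,L]$.

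This lift is very constrained. Because $g$ sweeps almost a full turn with nearly uniform density, mass near argument $\pm\pi$ must be placed at $\pm\pi$ or at $\mp\pi$ plus a multiple of $2\pi$. When $L<2\pi$, the lift is essentially forced to be $g$ plus a shift by $\pm2\pi$ on a set of small mass. Hence $\tau(k)$ lies within $O(\delta)$ of $0$ but cannot equal $0$ unless $k=g(w)$ in distribution.

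I would then conclude in one of two ways. Either tune $g$ so that the forced mean avoids $N^{-1}\Z$ altogether, contradicting $D(\gamma)\in H$. Or show that $k=g(w)$ in distribution is incompatible with length $<2\pi$ after letting $\delta\to0$, so that the infimum is at least $2\pi-O(\delta)$ for every $\delta$. The most delicate point is justifying that the per-step principal logarithms compose into a single logarithm with both the norm bound and the exact trace identity. The non-commutativity of the successive quotients is the reason I would pass through Trotter-type approximations modulo $\CU$, rather than expect an exact composition.
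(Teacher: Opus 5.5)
\textbf{The witness cannot work.} Your $g$ is real valued with range in $[-\pi+\delta,\pi-\delta]$, so $g(w)$ is itself a self-adjoint logarithm of $u=e^{ig(w)}$. Hence $\cel_B(u)\le\|g(w)\|\le\pi-\delta$; equivalently, $\operatorname{sp}(u)$ omits $-1$. The assumption $\cel_B(u)<2\pi$ in your contradiction argument is therefore always true, and no lifting argument can refute it. The straight path $s\mapsto e^{isg(w)}$ has length $\pi-\delta$ and determinant integral $\tau(g(w))/2\pi=0\in H$. So $k=g(w)$ is exactly the logarithm, with $\|k\|\le L$ and $\tau(k)=2\pi D(\gamma)$, that your spectral step would produce, and nothing is contradicted. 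The \emph{forced mean} heuristic and the claim that $k=g(w)$ is incompatible with length below $2\pi$ are both false. A unitary that is $2$-norm close to the Haar unitary $w$ carries no determinant obstruction. Separately, your composite-logarithm claim (one logarithm with exact trace $2\pi D(\gamma)$ and norm at most $L$, built from noncommuting pieces) is unjustified. Trotter approximation modulo $\CU$ would destroy both the exact trace identity and the norm bound.

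\textbf{What is missing is the right source of the obstruction:} elements of $\CU(B)$ that are $2$-norm close to a scalar $\lambda=e^{2\pi it}$ with $t\notin H$ and $t<1/2$.
\begin{enumerate}
\item With $b$ Haar, let $f_n$ approximate the indicator of an arc of Haar measure $t$, with $0\le f_n\le1$ and $\int f_n=t$. Put $u_n=\exp(2\pi i(t1-f_n(b)))$. The logarithm has trace zero, so $u_n\in\CU(B)$, and $u_n\to\lambda$ in $2$-norm.
\item Suppose paths $\gamma_n$ of length at most $\ell<2\pi(1-t)$ reach $u_n$. Comparing with the trace-zero exponential path gives $D(\gamma_n)\in H$. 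Discreteness lets you fix $D(\gamma_n)=c$ on a subsequence.
\item Subdivide each $\gamma_n$ into a fixed number of principal-logarithm pieces and pass to the tracial ultrapower of $\pi_\tau(B)''$. This gives a single piecewise exponential path from $1$ to exactly $\lambda1$, of length at most $\ell+\eta$ and determinant $c$, where $\ell+\eta+2\pi t<2\pi$.
\item Closing this path with the scalar path of length $2\pi t$ gives a loop of length $<2\pi$ with determinant $c-t\neq0$. This contradicts Lemma~\ref{lem:short-loop-exp}.
\end{enumerate}
Hence $\cel_B(u_n)\to2\pi(1-t)$, and letting $t\downarrow0$ outside $H$ gives $2\pi$. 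This route only needs the piecewise path and the short-loop lemma, so no single logarithm with exact trace and norm control is ever required.
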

\begin{proof}
For a piecewise smooth unitary path $\gamma$, use the normalized
determinant integral
\[
 D_\tau(\gamma)=\frac1{2\pi i}
 \int_0^1\tau(\gamma'(s)\gamma(s)^*)\,ds.
\]
It is invariant under endpoint-preserving homotopies, has absolute
value at most $\operatorname{Length}(\gamma)/(2\pi)$, and has periods
in $H$ on loops \cite{HS}.

Let $b$ be a Haar unitary and choose $0<t<1/2$ with $t\notin H$.
Choose an arc of Haar measure $t$ and its Fej\'er approximants $f_n$,
so $0\leq f_n\leq1$, $\int f_n=t$, and $f_n$ tends in $L^2$ to
the indicator of the arc. Put
\[
 h_n=2\pi(t1-f_n(b)),\qquad u_n=e^{ih_n},\qquad
 \lambda=e^{2\pi it}.
\]
Then $\tau(h_n)=0$, so $u_n\in\CU(B)$, and
\[
 \cel_B(u_n)\leq2\pi(1-t),\qquad
 \|u_n-\lambda1\|_{2,\tau}\longrightarrow0.
\]
We show that the first upper bound is asymptotically sharp.
Otherwise a subsequence admits paths $\gamma_n$ from $1$ to $u_n$
with lengths at most some $\ell<2\pi(1-t)$.
Comparing with the trace-zero exponential path gives
$D_\tau(\gamma_n)\in H$. These numbers are bounded. Discreteness
of $H$ permits a subsequence on which they are a fixed $c\in H$.

Choose $\eta>0$ with $\ell+\eta+2\pi t<2\pi$.
Parametrize by arclength and subdivide into a fixed number of short
segments. Replacing each by its principal-logarithm path preserves
the endpoint homotopy class and determinant integral. The replacements
have total length at most $\ell+\eta$, with a uniformly bounded
number of logarithms. Write the successive logarithms as $k_{j,n}$,
padding by zeros to obtain a fixed finite number of terms.
Their images $k_j=(k_{j,n})_\omega$ in the tracial ultrapower of
$\pi_\tau(B)''$ satisfy
\[
 \sum_j\|k_j\|\leq\lim_\omega\sum_j\|k_{j,n}\|\leq\ell+\eta,
 \qquad
 \frac1{2\pi}\sum_j\tau_\omega(k_j)=\lim_\omega c=c.
\]
They therefore define a genuine piecewise exponential path from
$1$ to $\lambda1$, with determinant integral $c$ and length at most
$\ell+\eta$.
Appending the reverse scalar path of length $2\pi t$ gives a loop
of length less than $2\pi$ with integral $c-t\ne0$.
This contradicts Lemma~\ref{lem:short-loop-exp}.
Hence $\cel_B(u_n)\to2\pi(1-t)$ along the original sequence.
Letting $t\downarrow0$ outside $H$ proves the proposition.
\end{proof}

\subsection{A unitary with no ambient logarithm}

\begin{lemma}\label{lem:interval-no-log}
Let $P$ be faithfully tracial and let $A=P\fr D$.
There is a Haar unitary $w\in\CU(A)$ such that
\[
 A\cap\{w\}'=\mathrm{C}^*(w).
\]
In particular $w$ has no self-adjoint logarithm in $A$.
\end{lemma}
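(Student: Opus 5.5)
The plan is to take $w$ as a product of two exponentials of interval coordinates, one from each of the first two reservoir factors of $D=D_I$. Write $h_1,h_2$ for the uniform coordinates of the first two interval factors, and put $w_j=e^{2\pi i h_j}$. Each $w_j$ is Haar, and $w_1,w_2$ are $*$-free. I would set $w=w_1w_2$, or if that is more convenient $w=w_1w_2w_1^*w_2^*$, which lies in $\CU(A)$ trivially since each $w_j=e^{2\pi ih_j}\in\mathrm U_0(A)$. The simpler route to membership in $\CU(A)$ is to note that $w_1w_2$ is a product of exponentials whose determinant is $\tau(h_1)+\tau(h_2)=1\in\Z$, which is trivial modulo $\overline{\tau_*\mathrm K_0}$. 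Since $\mathrm K_1$ of the path class vanishes, Thomsen's sequence puts $w$ in $\CU$. A product of two free Haar unitaries is Haar, which gives the first assertion. Alternatively, I would take just the product of two free Haar unitaries and verify its Haar law by freeness: all moments $\tau((w_1w_2)^k)$ are alternating centered words.

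The main step is the commutant computation $A\cap\{w\}'=\mathrm C^*(w)$. Write $A=(\mathrm C^*(w_1)\fr\mathrm C^*(w_2))\fr C$, where $C=\mathrm C^*(P,\ \text{other intervals},\ h_1\text{-}, h_2\text{-extra data})$. More carefully, $A$ is generated by $P$, $h_1$, $h_2$ and the tail, and $h_j$ is a Borel function of $w_j$ only at the von Neumann level. I would therefore pass to $M=\pi_\tau(A)''$ and prove the stronger statement $\{w\}'\cap M=\{w\}''$, namely that $L^\infty$ of $w$ is a maximal abelian and in fact singular masa. The standard fact is that in a free product $L(\Z)*L(\Z)*N$ the generator $ab$ of two free Haar unitaries generates a masa. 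This can be proved by the usual word-length/$L^2$ argument: expand $x\in M$ in reduced words in the free factors $\mathrm L^\infty(h_1)$, $\mathrm L^\infty(h_2)$ and the rest, and compute that $\|w^nxw^{-n}-x\|_2\to$ stays large unless $x\in\{w\}''$ (Popa's asymptotic orthogonality, or Dykema--R\u{a}dulescu type computations). Concretely, I would show that $\Ad(w^n)$ is mixing on $L^2(M)\ominus L^2(\{w\}'')$, i.e. $\langle w^n\xi w^{-n},\eta\rangle\to0$, by checking it on reduced words, where conjugation by a long power of $w_1w_2$ produces reduced words of growing length except for the $\{w\}''$ part.

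Given the von Neumann statement, I would intersect with $A$: an element of $A$ commuting with $w$ lies in $\{w\}''\cap A$. I then need $\{w\}''\cap A=\mathrm C^*(w)$. Here $E_{\{w\}''}$ restricted to $A$ is a conditional expectation, and I expect to show that elements of $A$ in the masa are norm limits of Fejér means of their Fourier series in $w$. Those means are obtained as averages $N^{-1}\sum$ of finite-dimensional combinations, and it must be checked that they stay in $\mathrm C^*(w)$. This is the delicate step and the one I expect to be the main obstacle, since a norm-continuity argument is needed rather than an $L^2$ one. I would try to realize $E_{\{w\}''}$ on $A$ as a norm limit of averages $N^{-1}\sum_{n}\Ad(w^{n})$ composed with a projection. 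Failing that, I would use that $x\in A\cap\{w\}'$ gives $x=f(w)$ with $f\in L^\infty(\mathbb T)$, and that norm-approximating $x$ by polynomials in the generators forces $f$ to be continuous via the endpoint/spectral estimates of Lemma~\ref{rf:lem-endpoint}. Finally, if $w=e^{ik}$ with $k=k^*\in A$, then $k$ commutes with $w$, so $k\in\mathrm C^*(w)=\mathrm C(\mathbb T)$ and $k$ would be a continuous logarithm of the identity function on the full-support spectrum $\mathbb T$, which is impossible.
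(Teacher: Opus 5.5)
Your choice $w=w_1w_2$ matches the paper, and so do the Haar property, the membership $w\in\CU(A)$ and the von Neumann statement $M\cap\{w\}'=\mathrm W^*(w)$. For $\CU(A)$, your determinant route needs stable rank one of $A$ to pass from the stabilized unitary quotient to $\mathrm U(A)/\CU(A)$. More simply, write $w=e^{2\pi i(h_1-1/2)}e^{2\pi i(h_2-1/2)}$, a product of exponentials with traceless exponents. For the von Neumann commutant, the cleanest bookkeeping is the free-group substitution $(w_1,w_2,\dots)\mapsto(w,w_2,\dots)$. It gives $M=\mathrm W^*(w)*R$, and $\Ad(w)$ then shifts the outer $w$-exponents of reduced words in $L^2(M)\ominus L^2(\mathrm W^*(w))$, so it has no fixed vectors.

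The genuine gap is the step you flag yourself, $A\cap\mathrm W^*(w)=\mathrm C^*(w)$, and neither of your routes closes it. Fej\'er means of $f$ converge uniformly to $f(w)$ exactly when $f$ is continuous, which is the claim to be proved. The averages $N^{-1}\sum_n\Ad(w^n)$ fix every element of $A\cap\{w\}'$, so even their norm convergence only returns the set you are trying to identify. The appeal to endpoint estimates is not specified enough to count as an argument.

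The missing idea is to compute the trace-preserving expectation $\E_w:M\to\mathrm W^*(w)$, $\E_w(x)=\sum_n\tau(xw^{-n})w^n$, on a norm-dense $^*$-subalgebra of $A$, and then use that $\E_w$ is norm contractive. Take the algebra spanned by $1$ and centered reduced words in $\mathrm C^*(h_1)$, $\mathrm C^*(h_2)$ and $\mathrm C^*(P,h_3,h_4,\dots)$. Peel letters in $\tau(xw^{-n})$: each $g_jw_2^*$ or $f_jw_1^*$ splits as its trace plus a centered element, and every centered branch is a reduced word of trace zero. For centered $f_j\in\mathrm C^*(h_1)$ and $g_j\in\mathrm C^*(h_2)$ this gives
\[
 \E_w(f_1g_1\cdots f_rg_r)=\Bigl(\prod_{j=1}^r\tau(w_1^*f_j)\tau(w_2^*g_j)\Bigr)w^r .
\]
The reverse pattern $g_1f_1\cdots g_rf_r$ gives a scalar multiple of $w^{-r}$. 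Every other centered reduced word has expectation $0$, in particular every word containing a centered letter from $P$ or a later interval.

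So $\E_w$ maps this dense algebra into Laurent polynomials in $w$. Each word has only finitely many nonzero Fourier coefficients, even though $h_j$ is not a continuous function of $w_j$. By norm continuity, $\E_w(A)\subseteq\mathrm C^*(w)$. For $x\in A\cap\{w\}'$ you then get $x\in\mathrm W^*(w)$, so $x=\E_w(x)\in\mathrm C^*(w)$. Your winding-number argument then finishes the proof.
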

\begin{proof}
Let $x_1,x_2$ be the first two interval coordinates, put
$a_j=e^{2\pi ix_j}$, and set $w=a_1a_2$.
The two-interval subalgebra has a unique trace by the Avitzour
condition \cite{Avitzour}.
Since $x_j-1/2$ is traceless there,
\[
 w=e^{2\pi i(x_1-1/2)}e^{2\pi i(x_2-1/2)}\in\CU(A).
\]
Every nonzero power of $w$ is a centered reduced word, so $w$ is Haar.

In the tracial von Neumann closure $M$ of $A$, each $a_j$ generates
the corresponding interval von Neumann algebra by measurable
functional calculus. These are free Haar unitaries, free from the
von Neumann closure of $P$. The free-group substitution
$(a_1,a_2,a_3,\ldots)\mapsto(w,a_2,a_3,\ldots)$ gives
$M=\mathrm{W}^*(w)*R$ for a finite tracial von Neumann algebra $R$.
The reduced-word decomposition identifies
$L^2(M)\ominus L^2(\mathrm{W}^*(w))$ with
\[
 \ell^2(\Z)\otimes\mathcal H\otimes\ell^2(\Z)
\]
where $\mathcal H$ is the Hilbert space of reduced words beginning
and ending with centered $R$-letters. On this space conjugation by $w$
shifts the two integer indices by $(m,n)\mapsto(m+1,n-1)$.
Every orbit is infinite, so there are no nonzero fixed vectors.
Consequently $M\cap\{w\}'=\mathrm{W}^*(w)$.

Let $\E_w:M\to \mathrm{W}^*(w)$ be the trace-preserving expectation.
For centered functions $f_j$ in the first interval and $g_j$ in
the second, reduced-word orthogonality gives
\[
 \E_w(f_1g_1\cdots f_rg_r)
 =\left(\prod_{j=1}^r\tau(a_1^*f_j)\tau(a_2^*g_j)\right)w^r.
\]
The reverse pattern has expectation a scalar multiple of $w^{-r}$.
Every other centered reduced-word pattern has expectation zero,
including a word with a letter from $P$ or a later interval.
Thus $\E_w$ maps the algebraic free product into Laurent polynomials
in $w$. Norm density and contractivity give $\E_w(A)\subseteq \mathrm{C}^*(w)$,
and hence $A\cap\{w\}'=\mathrm{C}^*(w)$.

If $w=e^{ih}$ with $h=h^*\in A$, then $h$ commutes with $w$ and
therefore equals $f(w)$ for a continuous real function $f$ on the
circle. But $e^{if(z)}=z$ is impossible by winding number.
\end{proof}

\subsection{The trace-pairing dichotomy}

\begin{theorem}\label{thm:exponential-tail}
Let $(P,\tau_P)$ be a separable unital exact faithfully tracial algebra,
put $A=P\fr D$, and let $H=\tau_{P*}(\mathrm{K}_0(P))$.
Then
\[
 \cel(A)=\cucu(A)=
 \begin{cases}
 2\pi,&H\text{ is discrete},\\
 \pi,&H\text{ is dense in }\R,
 \end{cases}
 \qquad \cer(A)=1+\varepsilon.
\]
All three values are unchanged on passing to $\mathrm{M}_d(A)$, $d\geq1$.
\end{theorem}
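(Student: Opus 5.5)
We assemble the four ingredients of this section, applying each to $A=P\fr D$ and to its matrix amplifications. First record the regularity and the trace range. By Lemma~\ref{lem:interval-regularity}, $A$ (with $P$ exact) is an admissible target. The canonical inclusion $P\to A$ induces an isomorphism on $\mathrm K_0$ preserving the selected pairing; this uses the free-product $\mathrm K$-sequence of \cite{FG} as in Proposition~\ref{prop:product-elliott}, and that argument needs only the faithful traces, not nuclearity. Hence $\tau_{A*}(\mathrm K_0(A))=H$. Since $H$ is a subgroup of $\R$ containing $1$, it is either discrete or dense. For $\mathrm M_d(A)$ the normalized trace range is $d^{-1}H$, which is discrete exactly when $H$ is. Moreover $\mathrm M_d(A)\cong\mathrm M_d(A\fr D)\cong\mathrm M_d(A)\fr D$ by Proposition~\ref{gf:matrix-identities}, so $\mathrm M_d(A)$ is again of the form $P'\fr D$ with $P'=\mathrm M_d(A)$ exact. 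It therefore suffices to treat $d=1$ for a general exact $P$.

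Next the upper bounds. Proposition~\ref{prop:bounded-logarithms} gives $\cer(A)\le1+\varepsilon$ and $\cel(A)\le 2\pi$, and trivially $\cucu\le\cel$. For dense $H$ we must improve this to $\pi$. Density of $H$ together with strict comparison and stable rank one should give real rank zero for $A$: following the route in the introduction, we would approximate every selfadjoint element by one with finite spectrum. To do this we embed, via Theorem~\ref{thm:embedding}, a finite-spectrum model realizing a slightly perturbed spectral distribution, with atoms placed at weights in the dense range $H$, and then invoke uniqueness.

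With that in hand, the plan for the $\pi$ bound is to refine the proof of Proposition~\ref{prop:bounded-logarithms}. Instead of lifting the spectral measure $\mu$ of $u$ to $[0,2\pi)$, we lift it to a measure on $[-\pi,\pi]$ in which the mass near $-1$ is split between the two endpoints in a proportion $t$. The mean then varies continuously over an interval of length at least the mass near $-1$, and density of $H$ lets us hit the determinant $\Delta_A(u)$ modulo $\overline H=\R$ with no constraint at all. Since $\overline H=\R$, the determinant condition is in fact vacuous, so any faithful lift supported on $[-\pi,\pi]$ works. The embedding and uniqueness steps of Theorem~\ref{thm:embedding} then give $u$ as a limit of conjugates of $e^{ih}$ with $\|h\|\le\pi$, whence $\cel(A)\le\pi$. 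We expect this step to be the main obstacle, because the lifted measure must be faithful on a disconnected or endpoint-split domain $Y$ and must still extend to a compatible full-invariant morphism. If the endpoint splitting causes trouble, we would take $Y=[-\pi,\pi]\sqcup[-\pi,\pi]$ and use the weights from $H$.

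Finally the lower bounds. Lemma~\ref{lem:interval-no-log} supplies $w\in\CU(A)$ with no selfadjoint logarithm, so $\cer(A)=1+\varepsilon$. For discrete $H$, $A$ has a unique faithful trace and contains a Haar unitary, namely the exponential of an interval coordinate, so Proposition~\ref{prop:discrete-length-lower} gives $\cucu(A)\ge2\pi$. For dense $H$, the same $w$ must satisfy $\cel_A(w)\ge\pi$. Suppose instead that some path from $1$ to $w$ has length less than $\pi$. Then every point of the path lies within spherical distance less than $\pi$ of $1$, so we may use the principal logarithm: the endpoint $w$ would have spectrum omitting $-1$ and hence a continuous logarithm in $\mathrm C^*(w)$, contradicting Lemma~\ref{lem:interval-no-log}. (More precisely, $\cel_A(w)<\pi$ forces $\|w-1\|<2$, which puts $-1$ outside the spectrum.) Thus $\pi\le\cucu(A)\le\cel(A)\le\pi$. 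Combining the two cases, and applying the reduction of the first paragraph to $\mathrm M_d(A)$, proves the theorem.
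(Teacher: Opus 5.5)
Your reduction of $\mathrm M_d(A)$ to the case $d=1$, via Proposition~\ref{gf:matrix-identities} and $A\fr D\cong A$, is valid and neater than the paper's separate treatment of matrix amplifications. Your lower bounds are also fine: $w$ is Haar, so $-1\in\operatorname{sp}(w)$, and the discrete case goes as in the paper. The gap is in the dense-$H$ upper bound $\cel(A)\le\pi$, which you establish only for unitaries with full spectrum.

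\textbf{What works.} For $u\in\mathrm U_0(A)$ with $\operatorname{sp}(u)=\mathbb T$, your lift is faithful on $\mathrm C([-\pi,\pi])$. Since $\ker\Pi^{\mathrm{cu}}_A\cong\R/\overline H=0$, the circle embeddings given by $u$ and by $e^{ih}$ have the same full invariant. Uniqueness then applies with no real-rank-zero input, which is a nice alternative to the paper's route.

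\textbf{Where it fails.} Take a unitary whose spectrum is a proper subset of $\mathbb T$ with $-1$ as a two-sided accumulation point, for example an arc through $-1$.
\begin{enumerate}
\item The fallback from Proposition~\ref{prop:bounded-logarithms} is a branch logarithm, whose norm can approach $2\pi$.
\item Your lift does not work as stated. Its support $Y\subset[-\pi,\pi]$ is disconnected, and $\mathrm C(Y)$ has an extra clopen projection $1_V$ not pulled back from $\operatorname{sp}(u)$.
\item Any embedding with trace $\nu$ must send $1_V$ to an actual projection of $A$. So $\nu(V)$ must lie in $H$ itself, not merely in $\overline H=\R$.
\end{enumerate}
Hence ``any faithful lift supported on $[-\pi,\pi]$ works'' is false in precisely the case you flagged as the main obstacle. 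The determinant condition being vacuous does not help here. Proposing $Y=[-\pi,\pi]\sqcup[-\pi,\pi]$ ``with weights from $H$'' also does not resolve it, because the weights are dictated by the spectral measure of $u$.

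\textbf{Your real-rank-zero step does not rescue this.} As sketched it cannot work: uniqueness in Theorem~\ref{thm:embedding} needs identical invariants, including the exact trace. So a finite-spectrum model with a perturbed distribution cannot be compared with $\mathrm C^*(a)$. In any case, you never actually invoke real rank zero afterwards.

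\textbf{How to close the gap.} Either of two repairs works.
\begin{enumerate}
\item Follow the paper. Strict comparison, stable rank one, exactness, and density of $\rho(\mathrm K_0(A))$ in $\Aff\mathrm T(A)=\R$ give real rank zero by \cite[Proposition~7.1]{RordamRR}. Then take any approximate logarithm $h$, from Proposition~\ref{prop:bounded-logarithms} or \cite{LinRR}. Approximate $h$ by a finite-spectrum self-adjoint element and reduce its eigenvalues modulo $2\pi$ into $[-\pi,\pi]$.
\item Stay within your framework. Move the cut point $e^{i\theta}$ slightly away from $-1$, splitting an atom at the cut if necessary, so that the mass of the split-off clopen piece lies in the dense group $H$. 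Send the extra generator to a $\mathrm K_0$-class of that trace. Then uniqueness on $\mathrm C(\operatorname{sp}(u))$, whose $\mathrm K_1$ vanishes, gives approximate logarithms of norm at most $\pi+\delta$ for every $\delta>0$. This yields $\cel(A)\le\pi$.
\end{enumerate}
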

\begin{proof}
Lemma~\ref{lem:interval-regularity} makes $A$ an admissible target.
The free-product $\mathrm{K}$-sequence identifies $\mathrm{K}_*(P)\to \mathrm{K}_*(A)$
as an isomorphism preserving unit and trace \cite{FG}.
Thus $\tau_*(\mathrm{K}_0(A))=H$. A subgroup of $\R$ containing $1$
is either discrete or dense.

For discrete $H$, Proposition~\ref{prop:bounded-logarithms} and
Proposition~\ref{prop:discrete-length-lower} give both lengths
equal to $2\pi$. For dense $H$, strict comparison makes
$\mathrm W(A)$ almost unperforated. Admissibility supplies exactness,
simplicity and stable rank one, and the unique trace identifies
$\Aff(\mathrm T(A))$ with $\R$.
Thus \cite[Proposition~7.1]{RordamRR} gives real rank zero directly.
The real-rank-zero approximation theorem of \cite{LinRR} approximates
each unitary in $\mathrm{U}_0(A)$ by single exponentials $e^{ih}$.
Real rank zero then approximates $h$ by a finite-spectrum
self-adjoint element. Reducing its finitely many eigenvalues modulo
$2\pi$ into $[-\pi,\pi]$ leaves its exponential unchanged and gives
an approximate logarithm of norm at most $\pi$.
The resulting short correction paths give $\cel(A)\leq\pi$.
The unitary $v=e^{2\pi i(x_1-1/2)}$ belongs to $\CU(A)$ and has
$-1$ in its spectrum. Any path from $1$ to $v$ has length at least
$\pi$, so $\cucu(A)\geq\pi$.
The approximate logarithms prove $\cer(A)\leq1+\varepsilon$;
Lemma~\ref{lem:interval-no-log} excludes rank at most $1$.

Matrix algebras over $A$ remain admissible targets, and their normalized
trace range is $d^{-1}H$. Real rank zero passes to matrix algebras,
so the same upper bounds apply.
For the discrete lower bound use the Haar unitary
$\operatorname{diag}(b,\ldots,b)$, where $b$ is an interval Haar unitary.
For the dense lower bound use $v\oplus1_{d-1}\in\CU(\mathrm{M}_d(A))$.
Finally, $w\oplus1_{d-1}$ has no logarithm. A self-adjoint logarithm
would commute with this unitary. Since $w$ is Haar, $w-1$ has zero
kernel in the faithful tracial von Neumann representation; hence the
off-diagonal entries between the first coordinate and its complement
must vanish. The first diagonal entry would be a logarithm of $w$,
contradicting Lemma~\ref{lem:interval-no-log}.
\end{proof}

\begin{remark}[Abstract hypotheses and free-product structure]
The length dichotomy in Theorem~\ref{thm:exponential-tail} holds for
every admissible target $T$.\label{rem:abstract-exponential}
More precisely, if
$H=\tau_*(\mathrm{K}_0(T))$, then
\[
 \cel(T)=\cucu(T)=
 \begin{cases}
 2\pi,&H\text{ is discrete},\\
 \pi,&H\text{ is dense in }\R,
 \end{cases}
 \qquad \cer(T)\leq1+\varepsilon.
\]
Proposition~\ref{prop:bounded-logarithms} already gives the abstract
upper bounds. For discrete $H$, every admissible target contains a
Haar unitary by \cite[Corollary~B]{ThielHaar}, so
Proposition~\ref{prop:discrete-length-lower} gives sharpness.
For dense $H$, strict comparison makes $\mathrm{W}(T)$ almost
unperforated, and \cite[Proposition~7.1]{RordamRR} gives real rank zero;
that criterion requires neither nuclearity nor
$\mathcal Z$-stability. The finite-spectrum approximation argument
above, using \cite{LinRR}, gives the upper bound $\pi$.
A trace-preserving embedding of the uniform interval into $T$, supplied
by Theorem~\ref{thm:embedding}, gives a trace-zero exponential with
$-1$ in its spectrum, proving the reverse bound as above.
These length formulas also hold for all matrix amplifications, whose
normalized trace ranges are $d^{-1}H$.

The exact rank equality $\cer(P\fr D)=1+\varepsilon$ in our proof
additionally uses the concrete free-product structure:
Lemma~\ref{lem:interval-no-log} produces a connected unitary with no
self-adjoint logarithm in the ambient algebra, by computing its
commutant using reduced words. The abstract argument above establishes
the upper bound on exponential rank. In the free-product applications,
the concrete structure also verifies admissibility and identifies $H$
from the trace pairings of the factors.
\end{remark}

\begin{proof}[Proof of Theorem~\ref{thm:exponential-intro}]
Apply Theorem~\ref{thm:exponential-tail} to $P=P_{\mathcal A}$.
Proposition~\ref{prop:product-elliott} identifies its trace range
with $H=\tau_{A*}(\mathrm{K}_0(A))$.
Under the Haar-unitary hypothesis, $P_{\mathcal A}\cong P_{\mathcal A}\fr D$,
so all the conclusions, including those for matrix amplifications,
transfer to the original free product.
In the commutative subclass, if every $X_i$ is connected, each projection has constant integral
rank, so $H=\Z$.
\end{proof}

\section{Examples and consequences}\label{sec:examples}

We begin with the circle--annulus comparison of Hirshberg and Phillips,
then illustrate the roles of reduced classes, torsion, infinite
generation and the trace pairing.

\begin{example}[Circles and annuli]
Let $Y$ be a compact planar annulus with normalized area measure $\mu$,
and give $S^1$ its normalized Haar measure $\lambda_{S^1}$.
For a faithful tracial state $\sigma$ on $\C\oplus\C$, put
\[
 B_\sigma=(\C\oplus\C,\sigma)^{*_{\mathrm r}\infty}.
\]
Then, for $R=D$ or $R=B_\sigma$, there is a trace-preserving
isomorphism\label{ex:hp-annulus}
\[
 (\mathrm C(S^1),\tau_{\lambda_{S^1}})\fr R
 \cong (\mathrm C(Y),\tau_\mu)\fr R.
\]
For $R=D$ and, respectively, $R=B_\sigma$ with
$\sigma(1,0)$ irrational, these are the comparisons asked for by
Hirshberg and Phillips in \cite[Question~4.9(2)(b) and~(1)(b)]{HP}.
The conclusion for $B_\sigma$ also holds when $\sigma(1,0)$ is rational.
Moreover, for $2\leq n\leq\infty$,
\[
 (\mathrm C(Y),\tau_\mu)^{*_{\mathrm r}n}
 \cong \mathrm C^*_{\red}(\mathbb F_n).
\]
\end{example}
\begin{proof}
The annulus deformation retracts onto a circle. Thus both root
factors have $\mathrm K_0=\Z[1]$ and $\mathrm K_1=\Z$, and their
degree-zero trace pairings are rank. Appending the same tail $R$
gives identical aggregate pointed groups and selected pairings.
Corollary~\ref{thm:classification} therefore identifies the two
products after adjoining $D$.

Both tails freely absorb $D$. This is immediate for $R=D$ by
reindexing. For $R=B_\sigma$, put $p=\sigma(1,0)\in(0,1)$ and
$I=(\mathrm C([0,1]),\tau_\lambda)$. The atom parameter of each
two-point factor is $\max(p,1-p)<1$, so
\cite[Theorem~3.1]{HP} gives $I\fr B_\sigma\cong B_\sigma$.
Since $B_\sigma^{*_{\mathrm r}\infty}\cong B_\sigma$ by reindexing,
grouping countably many copies gives
\[
 D\fr B_\sigma
 \cong (I\fr B_\sigma)^{*_{\mathrm r}\infty}
 \cong B_\sigma.
\]
These isomorphisms preserve the unique free-product traces, so they
remove the adjoined reservoir and prove the first assertion.
For the free powers, apply the two-nonatomic-factor clause of
Corollary~\ref{thm:classification} to the annulus factors and the
circle factors. Their aggregate invariants agree, and the circle
free power is $\mathrm C^*_{\red}(\mathbb F_n)$.
\end{proof}

\begin{example}[Two spheres]
For faithful nonatomic sphere and wedge measures,
\begin{equation}\label{eq:note-3-3}
 (\mathrm{C}(S^2),\mu_1)\fr(\mathrm{C}(S^2),\mu_2)\fr D
 \cong (\mathrm{C}(S^2\vee S^2),\nu)\fr D .
\end{equation}
The two sphere Bott classes map to the two wedge Bott classes.
\end{example}

\begin{example}[A torsion example]
For faithful nonatomic measures on the real projective plane
and its wedge,
\[
 (\mathrm{C}(\mathbb{RP}^2),\mu_1)\fr
 (\mathrm{C}(\mathbb{RP}^2),\mu_2)\fr D
 \cong (\mathrm{C}(\mathbb{RP}^2\vee\mathbb{RP}^2),\nu)\fr D.
\]
Both sides have pointed groups
$\mathrm{K}_0=\Z[1]\oplus(\Z/2)^2$, $\mathrm{K}_1=0$; the trace pairing
is $1$ on the unit and zero on the two torsion summands.
\end{example}

\begin{example}[Non-AF factors with matrix models]
\label{af:matrix-models}
Let $X$ be a nonempty compact metrizable contractible space, let
$n\geq1$, and give $A=\mathrm M_n(\mathrm C(X))$ any faithful
tracial state. Under Morita identification, its selected data are
\[
 \mathrm K_0(A)=\Z g,\qquad [1_A]=ng,\qquad
 \tau_{A*}(g)=1/n,\qquad \mathrm K_1(A)=0.
\]
These are the selected data of $\mathrm M_n$ with normalized trace.
The same data are carried by $\mathrm M_n(\mathcal Z)$.
Proposition~\ref{af:model-replacement} permits simultaneous replacement
of any finite or countable family of these factors by the corresponding
matrix factors after adjoining $D$. Theorem~\ref{gf:general-fusion}
allows arbitrary additional nuclear UCT factors with faithful selected
traces. For example, it applies to
arbitrarily many factors $\mathrm M_{n_i}(\mathrm C([0,1]))$,
although these interval algebras are not AF and their unit classes
do not split when $n_i>1$.
\end{example}

\begin{example}[Two matrix factors and the projective plane]
\label{ex:matrix-projective-plane}
Give each $\mathrm M_2$ its normalized trace, and give
$E=\mathrm M_2(\mathrm C(\mathbb {RP}^2))$ the trace
\[
 \tau_E(f)=\int_{\mathbb {RP}^2}\operatorname{tr}_2(f(x))\,d\mu(x),
\]
where $\mu$ is any full-support Borel probability measure.  Then
\[
 (\mathrm M_2\fr\mathrm M_2)\fr D
       \cong \mathrm M_2(\mathrm C(\mathbb {RP}^2))\fr D.
\]
Indeed, if $p_i$ is a minimal projection in the $i$th matrix factor,
the free-product calculation gives
\[
 \begin{aligned}
 \mathrm K_0(\mathrm M_2\fr\mathrm M_2)
   &=\frac{\Z[p_1]\oplus\Z[p_2]}
           {\langle2[p_1]-2[p_2]\rangle}
     =\Z a\oplus(\Z/2)t,\\
 a&=[p_1],\qquad t=[p_2]-[p_1],\qquad [1]=2a,\\
 \rho(a)&=\tfrac12,\qquad \rho(t)=0,\qquad \mathrm K_1=0.
 \end{aligned}
\]
Under the standard Morita identification for $E$, let $a$ be the
trivial line class and let $t=[L]-[\unit]$, where $L$ is the nontrivial
complex line bundle over $\mathbb {RP}^2$.  These give the same pointed
groups and pairing.  The trace is independent of $\mu$ on
$\mathrm K_0$, since the rank of each projection is constant.
Corollary~\ref{af:block-fusion} and marked nuclear-core replacement give the displayed isomorphism with
these prescribed classes.

The two matrix markings also have a geometric realization in $E$.
The trivial decomposition $\unit\oplus\unit$ gives a unital matrix
embedding with minimal class $a$.  Since $L\oplus L$ is trivial,
a trivialization gives another unital matrix embedding with minimal
class $a+t$.  Their difference retains the nonzero torsion class.
The reservoir remains in the displayed fusion statement; the two
finite-dimensional factors do not satisfy the Haar-unitary hypothesis.

The algebra $E$ admits a model built from AF factors, namely the
two matrix algebras above. Proposition~\ref{af:model-replacement}
therefore permits simultaneous replacement of any finite or countable
family of these $E$ factors by their paired matrix models after
adjoining $D$, while fixing the remaining factors.
Theorem~\ref{gf:general-fusion} applies without any model restriction
on the other nuclear UCT factors. This use of $E$ as an input factor
does not require $E$ itself to be AF.
\end{example}

\begin{example}[A unit-twisted Moore corner and a matrix factor]
\label{af:twisted-moore-example}
Let $L$ be the nontrivial complex line bundle over $\mathbb {RP}^2$,
and put
\[
 A=\Gamma\bigl(\operatorname{End}(\unit\oplus L)\bigr),
 \qquad P_4=S^1\cup_4D^2.
\]
Equip $A$ with the normalized fibre trace integrated against any
full-support probability measure. Under Morita identification,
\[
 \mathrm K_0(A)=\Z a\oplus(\Z/2)t,\qquad
 [1_A]=2a+t,\qquad \tau_*(a)=\tfrac12,\quad
 \tau_*(t)=0,\qquad \mathrm K_1(A)=0.
\]
Corollary~\ref{af:rooted-fusion} applies with $A$ as the arbitrary
root and $\mathrm M_2$ as its AF leaf. If $b$ is the minimal matrix
class and $d=b-a$, the unit relation gives
\[
 t=2d,\qquad 4d=0,\qquad
 \mathrm K_0(A\fr\mathrm M_2)=\Z b\oplus(\Z/4)d,
 \quad [1]=2b,\quad \mathrm K_1=0.
\]
The selected pairing sends $b$ to $1/2$ and $d$ to zero.
These are exactly the selected pointed data of
$\mathrm M_2(\mathrm C(P_4))$, for any full-support probability
measure on $P_4$. Marked nuclear-core replacement therefore gives
\[
 (A\fr\mathrm M_2)\fr D
 \cong \mathrm M_2(\mathrm C(P_4))\fr D.
\]
The markings may send the original classes $a,t,b$ to $b-d,2d,b$,
respectively. The original order-two class becomes twice the new
order-four class; replacing $A$ by the trivial bundle corner would
lose its unit marking. The proof uses no freeness assertion inferred
from approximate conjugacy of the whole root.
\end{example}

\begin{example}[Two unit-twisted Moore corners]
\label{ex:two-twisted-moore-corners}
Let $L$ be the nontrivial complex line bundle over
$\mathbb{RP}^2$ and put
\[
 A=\Gamma\bigl(\operatorname{End}(\unit\oplus L)\bigr),
 \qquad P_4=S^1\cup_{z\mapsto z^4}D^2,
 \qquad Y=\mathbb{RP}^2\vee P_4.
\]
Give each copy of $A$ its normalized fibre trace integrated against
a full-support probability measure. Write its two pointed groups as
\[
 \mathrm K_0(A_1)=\Z a\oplus(\Z/2)t,\quad [1_{A_1}]=2a+t,
 \qquad
 \mathrm K_0(A_2)=\Z b\oplus(\Z/2)s,\quad [1_{A_2}]=2b+s.
\]
Both degree-one groups vanish. Setting $d=b-a$ in the free-product
unit relation gives $s=t-2d$ and $4d=0$.
Thus $s=t+2d$ as well, and
\[
 \mathrm K_0(A_1\fr A_2)
 =\Z a\oplus(\Z/4)d\oplus(\Z/2)t,\qquad
 [1]=2a+t,\quad \rho(a)=\tfrac12,
 \quad \rho(d)=\rho(t)=0.
\]
Let $L_t$ be the pullback of $L$ through the collapse
$Y\to\mathbb{RP}^2$, and set
$F=\Gamma(\operatorname{End}(\unit\oplus L_t))$.
Rank and first Chern class on the two-dimensional complex $Y$
give the same pointed groups and pairing for $F$, with
$\mathrm K_1(F)=0$. Theorem~\ref{gf:general-fusion} gives
\[
                         (A_1\fr A_2)\fr D\cong F\fr D
\]
with factor markings $a\mapsto a$, $t\mapsto t$,
$b\mapsto a+d$ and $s\mapsto t+2d$.
The exact unit $2a+t$ and the order-four class are both retained.
The two factors have nonsplit units and torsion in degree zero.
\end{example}

\begin{example}[Solenoids]
Let $\Sigma_p=\varprojlim(S^1,z\mapsto z^p)$ be the $p$-adic
solenoid, with its Haar probability measure. Then, for any faithful
selected probability $\nu$ on the wedge,
\[
 (\mathrm{C}(\Sigma_2),\mu_2)\fr(\mathrm{C}(\Sigma_3),\mu_3)\fr D
 \cong (\mathrm{C}(\Sigma_2\vee\Sigma_3),\nu)\fr D.
\]
Here $\mathrm{K}_0=\Z[1]$ and
$\mathrm{K}_1=\Z[1/2]\oplus\Z[1/3]$. Neither summand in degree one
is finitely generated or free abelian. The left-hand side may also
be written without $D$.
\end{example}
\begin{proof}
Continuity for
$\mathrm{C}(\Sigma_p)=\varinjlim(\mathrm{C}(S^1),f(z)\mapsto f(z^p))$ gives
$\mathrm{K}_0=\Z[1]$ and $\mathrm{K}_1=\Z[1/p]$. Apply
Theorem~\ref{thm:fusion}. The Haar measures are nonatomic,
so \cite[Theorem 5.2]{Shlyakhtenko} absorbs $D$ on the left.
\end{proof}

\begin{example}[The $\mathrm{K}$-theory of a point]\label{cor:point}
Let $P=\fr_{j\in S}(E_j,\tau_{\mu_j})$, with $E_j=\mathrm{C}(X_j)$,
be as in Corollary~\ref{thm:classification}. Suppose each factor
satisfies $\mathrm{K}_0(E_j)=\Z[1]$ and $\mathrm{K}_1(E_j)=0$.
Then $P\fr D\cong D$. If two factors contain Haar unitaries,
then $P\cong D$.
\end{example}
\begin{proof}
The aggregate data are $(\Z,1,0,\id)$, which are the data of $D$.
Apply Corollary~\ref{thm:classification} and the reindexing
$D\fr D\cong D$.
\end{proof}

\begin{example}[Odd spheres]\label{cor:odd-spheres}
Let $d\geq1$ and let $\mu$ be a nonatomic full-support probability
measure on $S^{2d+1}$. For every $n\in\{2,3,\ldots,\infty\}$,
\[
 (\mathrm{C}(S^{2d+1}),\tau_\mu)^{\fr n}
 \cong \mathrm{C}^*_{\red}(\mathbb F_n).
\]
\end{example}
\begin{proof}
The pointed graded groups of $\mathrm{C}(S^{2d+1})$ and $\mathrm{C}(S^1)$ are
$(\Z[1],\Z)$, and their selected trace pairings in degree zero
are rank. Their corresponding free powers therefore have the same
aggregate invariant. Use Corollary~\ref{thm:classification} with Haar
measure on $S^1$; the free power on the circle side is
$\mathrm{C}^*_{\red}(\mathbb F_n)$.
\end{proof}

\begin{example}[The pairing for disconnected spaces]\label{ex:trace-pairing}
Let $X=[0,1]\sqcup[0,1]$, let $0<t<1$, and give $X$ the measure
$\mu_t=t\lambda\sqcup(1-t)\lambda$. Put
$P_t=(\mathrm{C}(X),\tau_{\mu_t})\fr(\mathrm{C}(X),\tau_{\mu_t})$.
All these measures are faithful and nonatomic. The pointed groups
are $\mathrm{K}_0(P_t)=\Z[1]\oplus\Z^2$ and $\mathrm{K}_1(P_t)=0$.
With the two reduced generators represented by the first-component
projections in the two free factors, the trace pairing is
\[
 \tau_{t*}(m[1]+ng_1+kg_2)=m+t(n+k).
\]
Its image is $\Z+t\Z$. For $t=1/2$ this is the discrete group
$\tfrac12\Z$; for irrational $t$ it is dense in $\R$.
Consequently these two types of free products are nonisomorphic,
although their pointed graded groups agree. Their traces are unique,
so any isomorphism would preserve the displayed trace range.
Theorem~\ref{thm:exponential-intro} also gives
\[
 \cel(P_t)=\cucu(P_t)=
 \begin{cases}
 2\pi,&t\in\mathbb Q,\\
 \pi,&t\notin\mathbb Q,
 \end{cases}
 \qquad \cer(P_t)=1+\varepsilon.
\]
Thus the trace pairing detects an exact change in exponential length
even though the pointed graded groups are independent of $t$.
\end{example}

\begin{example}[Finite matrix Moore-space models]
\label{af:moore-model}
For a finite family $A_i=\mathrm M_{n_i}$, put
\[
 L=\operatorname{lcm}(n_1,\ldots,n_r),\qquad
 G=\frac{\bigoplus_{i=1}^r\Z a_i}
             {\langle n_i a_i-n_j a_j\rangle}.
\]
There is a connected finite complex $Y$ of dimension at most two, a finite
wedge of Moore complexes, such that
\[
 H^1(Y;\Z)=0,\qquad H^2(Y;\Z)=\Tor G,
 \qquad
 \Bigl(\mathop{\fr}_{i=1}^r\mathrm M_{n_i}\Bigr)\fr D
       \cong \mathrm M_L(\mathrm C(Y))\fr D
\]
with the indicated selected marking.  If $\Tor G=0$, take $Y$ to
be a point.  In particular, for two factors one may take
$Y=Y_d=S^1\cup_d e^2$, where $d=\gcd(m,n)$ and
$L=\operatorname{lcm}(m,n)$; for $d=1$ take a point and for $d=2$
take $\mathbb {RP}^2$.
\end{example}

\begin{proof}
Put $r_i=L/n_i$.  Since $\gcd(r_1,\ldots,r_r)=1$, choose integers
$k_i$ with $\sum_i k_i r_i=1$ and set $g=\sum_i k_i a_i$.
Writing $u=n_i a_i$ for the common unit gives
\[
 Lg=u,\qquad \rho(g)=1/L,\qquad
 t_i=a_i-r_i g,\qquad n_i t_i=0.
\]
The $t_i$ generate the torsion subgroup and
$G=\Z g\oplus\Tor G$.  Choose a Moore-space wedge with the
displayed cohomology.  The two-dimensional $\mathrm K$-calculation
gives
\[
 \mathrm K_0(\mathrm M_L(\mathrm C(Y)))
       =\Z g\oplus\Tor G,
 \quad [1]=Lg,
 \quad \rho(g)=1/L,
 \quad \rho(\Tor G)=0,
 \quad \mathrm K_1=0.
\]
Every faithful selected measure on $Y$ has this pairing.
Corollary~\ref{af:block-fusion} applies.

The geometric frames can also be retained.  Let $L_{t_i}$ be the
line bundle with first Chern class $t_i$, and set
$V_i=\unit^{r_i-1}\oplus L_{t_i}$.
The bundle $V_i^{\oplus n_i}$ has rank $L$ and zero first Chern
class, hence is trivial over the two-complex.  Its decomposition
gives a unital matrix frame $\mathrm M_{n_i}\subset
\mathrm M_L(\mathrm C(Y))$ with minimal class $r_i g+t_i$.
This identifies the asserted marking.  For two factors the unit
relation has torsion $\Z/d$, giving the stated $Y_d$.
\end{proof}

\subsection{Free products without adjoining a reservoir}
\label{subsec:unstabilized-examples}

The following cases extend the observation in
Theorem~\ref{thm:nuclear-classification} by grouping the original
factors before applying semicircular absorption.

\begin{theorem}[Classification without adjoining a reservoir]
\label{cor:unstabilized-classification}
Let $\mathcal A$ and $\mathcal B$ be families as in
Theorem~\ref{thm:nuclear-classification}. Suppose that each family has
two disjoint nonempty subfamilies whose reduced free products contain
Haar unitaries for their selected free-product traces.
Then $P_{\mathcal A}$ and $P_{\mathcal B}$ are simple and monotracial,
and
\[
 P_{\mathcal A}\cong P_{\mathcal B}
 \quad\Longleftrightarrow\quad
 \Ell(P_{\mathcal A})\cong\Ell(P_{\mathcal B}).
\]
Every unital morphism of these invariants is induced by a unital
embedding. If the invariant morphism is an isomorphism, the
realizing map may be chosen to be a $^*$-isomorphism.
The invariant range is exactly that of
Theorem~\ref{thm:core-classification-intro}; every invariant in this
range is realized by $F\fr\mathcal Z$ with $F$ simple,
infinite-dimensional, unital, monotracial, nuclear, UCT and
$\mathcal Z$-stable.

For each family, the grouping hypothesis holds under any one of
the following sufficient conditions:
\begin{enumerate}
\item Two distinct factors have diffuse selected traces in the sense of
\cite[Theorem~A]{ThielHaar}; in particular,
two factors are simple and infinite dimensional.
\item Four distinct factors contain unitaries $z_i$ with $\tau_i(z_i)=0$;
in particular, four factors are full matrix algebras
$\mathrm M_{n_i}$ with $n_i\geq2$.
\item The family is countably infinite and
\[
 \sum_i\bigl(1-\operatorname{At}(A_i,\tau_i)\bigr)=\infty,
\]
with the analogous condition for $\mathcal B$.
\end{enumerate}
The two families may satisfy different sufficient conditions.
\end{theorem}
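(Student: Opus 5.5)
The approach is to reduce everything to Theorem~\ref{thm:nuclear-classification} by proving $P_{\mathcal A}\cong P_{\mathcal A}\fr D$ through a trace-preserving isomorphism, and likewise for $\mathcal B$.

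Let the two disjoint subfamilies be $\mathcal A_1,\mathcal A_2$, and let $\mathcal A_3$ be the remaining factors. By associativity of reduced free products with faithful traces,
\[
 P_{\mathcal A}\cong P_{\mathcal A_1}\fr\bigl(P_{\mathcal A_2}\fr P_{\mathcal A_3}\bigr).
\]
The first grouped factor contains a Haar unitary by hypothesis. The second grouped factor contains the Haar unitary of $P_{\mathcal A_2}$, and this unitary stays Haar for the free-product trace, since that trace restricts to the selected one. Shlyakhtenko's absorption theorem \cite[Theorem~5.2]{Shlyakhtenko} needs no nuclearity. Using the continuous change of variables from semicirculars to intervals, it gives a trace-preserving isomorphism $P_{\mathcal A}\cong P_{\mathcal A}\fr D$.

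Simplicity and monotraciality then follow from Lemma~\ref{lem:interval-regularity}, applied with $P=P_{\mathcal A}$. This $P$ is exact, by \cite{DykemaExact} and nuclearity of the factors. The Elliott invariant is computed by Proposition~\ref{prop:product-elliott}. To realize a morphism $\Ell(P_{\mathcal A})\to\Ell(P_{\mathcal B})$, transport it through the absorption isomorphisms, realize it between the $D$-stabilized products by Theorem~\ref{thm:nuclear-classification}, and compose back, exactly as in the proof of Theorem~\ref{thm:split-fusion}. For the range: necessity comes from Theorem~\ref{thm:nuclear-range}. Sufficiency also comes from that theorem, because $F\fr\mathcal Z$ satisfies condition~(i): both factors are simple and infinite dimensional.

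It remains to check the three sufficient conditions.

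\textbf{Condition (i).} Each of two distinct factors contains a Haar unitary by \cite[Theorem~A and Corollary~B]{ThielHaar}. Take those two singleton subfamilies.

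\textbf{Condition (ii).} Group the four factors into pairs $\{1,2\}$ and $\{3,4\}$. In $A_1\fr A_2$ the unitary $z_1z_2$ is Haar, because every nonzero power of it is an alternating word of centered letters and so has trace zero.

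\textbf{Condition (iii).} This is the step I expect to be the main obstacle. My first approach is to split the index set into two infinite subfamilies, each with divergent atom sum. Then \cite[Lemma~2.6]{HP}, or the regularity in Lemma~\ref{lem:reservoir-regularity} with $P=\C$, makes each subproduct simple, infinite dimensional and monotracial, so it contains a Haar unitary by \cite[Corollary~B]{ThielHaar}. The difficulty is making sure the cited Haar-unitary result applies to these possibly nonnuclear but exact products. If it does not, the fallback is to use \cite[Theorem~A]{ThielHaar} directly, via diffuseness of the unique trace. Diffuseness should follow from the divergent atom sum, since the trace measure of any projection can be made small.
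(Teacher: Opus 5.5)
Your proposal is correct. The reduction is the paper's own: you group $P_{\mathcal A}$ as $P_{\mathcal A_1}\fr(P_{\mathcal A_2}\fr P_{\mathcal A_3})$, absorb $D$ by \cite[Theorem~5.2]{Shlyakhtenko}, and transport invariant morphisms through Theorem~\ref{thm:nuclear-classification}. The range argument via $F\fr\mathcal Z$ and your treatment of conditions (i) and (ii) also match the paper. Your explicit exactness remark, needed to apply Lemma~\ref{lem:interval-regularity} to $P_{\mathcal A}$, is a welcome addition.

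Condition (iii) is where your route genuinely differs.

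The paper stays finite. It picks $a_i=a_i^*\in A_i$ with nearly optimal largest atoms $\beta_i$, and chooses two disjoint \emph{finite} index sets $S_1,S_2$ with $\sum_{i\in S_j}(1-\beta_i)>1$. The atom formula for free additive convolution then shows that $b_j=\sum_{i\in S_j}a_i$ has atomless distribution. So $\exp(2\pi i H_j(b_j))$, with $H_j$ the distribution function of $b_j$, is an explicit Haar unitary, and no regularity of the subproducts is needed.

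You instead split the index set into two infinite pieces, each with divergent atom sum. This is possible, for instance greedily: switch pieces each time the current piece's partial sum passes the next integer. You then get simplicity and infinite dimensionality of each subproduct from \cite[Lemma~2.6]{HP}, and apply \cite[Corollary~B]{ThielHaar}. This is valid. However, it relies on heavier black boxes, since the proof of Hirshberg--Phillips's lemma itself contains the same grouping argument. It also needs infinite subfamilies where finite ones suffice.

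Your stated obstacle is not one. Thiel's Corollary~B applies to every tracial state on any simple unital infinite-dimensional $\mathrm C^*$-algebra, with no nuclearity or exactness assumption. The paper itself applies it to possibly nonnuclear admissible targets in Remark~\ref{rem:abstract-exponential}. So the fallback is unnecessary, and as phrased (``the trace of any projection can be made small'') it is not an argument for diffuseness in any case.

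Also, Lemma~\ref{lem:reservoir-regularity} with $P=\C$ does not literally apply to a subfamily. Its factors need not satisfy the reservoir conditions $\mathrm K_0=\Z[1]$ and $\mathrm K_1=0$. Only that lemma's regularity input, \cite[Lemma~2.6]{HP}, which uses just the atom condition, is relevant here.
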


\begin{proof}
First consider either family and write its product as $P$.
Group the first distinguished subfamily into $B_1$ and the second,
together with all remaining factors, into $B_2$. Associativity gives
$P=B_1\fr B_2$, with faithful selected traces, and each $B_j$
contains a Haar unitary. By Corollary~\ref{cor:reservoir-model},
the reservoir $D$ has a countable semicircular presentation.
Shlyakhtenko's absorption theorem
\cite[Theorem~5.2]{Shlyakhtenko} therefore gives a trace-preserving
isomorphism $P\cong P\fr D$. Nuclearity and the UCT are not required
for this absorption step; the grouped algebras may be nonnuclear.

Apply this to both families, obtaining isomorphisms
$\alpha_{\mathcal A}:P_{\mathcal A}\to P_{\mathcal A}\fr D$ and
$\alpha_{\mathcal B}:P_{\mathcal B}\to P_{\mathcal B}\fr D$.
Simplicity and uniqueness of trace follow from
Lemma~\ref{lem:interval-regularity}.
For a prescribed unital Elliott-invariant morphism $\chi$, apply
Theorem~\ref{thm:nuclear-classification} to
\[
 \Ell(\alpha_{\mathcal B})\circ\chi\circ
 \Ell(\alpha_{\mathcal A})^{-1}.
\]
Composing its realizing embedding with
$\alpha_{\mathcal A}$ and $\alpha_{\mathcal B}^{-1}$ realizes
$\chi$ between the original products. If $\chi$ is invertible,
choose the realizing isomorphism in
Theorem~\ref{thm:nuclear-classification}. The converse
classification implication is functoriality.
Theorem~\ref{thm:nuclear-range} gives the claimed range through
$F\fr\mathcal Z$, whose two factors are simple and infinite dimensional.

It remains to verify the concrete sufficient conditions.
A diffuse tracial state admits a Haar unitary
\cite[Theorem~A]{ThielHaar}; this applies to every tracial state on a
simple unital infinite-dimensional algebra
\cite[Corollary~B]{ThielHaar}. This proves~\textup{(i)}.
For~\textup{(ii)}, the unitaries $z_1z_2$ and $z_3z_4$ are Haar in
the two disjoint paired products: every nonzero power is a centered
alternating word. This is also
\cite[Remark~5.5]{Shlyakhtenko}. Each $\mathrm M_n$, $n\geq2$,
has a trace-zero unitary, for example the diagonal matrix of all
$n$th roots of unity.

For~\textup{(iii)}, enumerate the factors by $\N$ and choose
$a_i=a_i^*\in A_i$ so that, writing $\mu_i$ for its selected spectral
distribution and $\beta_i=\sup_t\mu_i(\{t\})$, one has
$\beta_i<\operatorname{At}(A_i,\tau_i)+2^{-i}$.
Then $\sum_i(1-\beta_i)=\infty$. Choose two disjoint finite sets
$S_1,S_2$ with $\sum_{i\in S_j}(1-\beta_i)>1$ for $j=1,2$.
In the corresponding free block, $b_j=\sum_{i\in S_j}a_i$ has
distribution $\boxplus_{i\in S_j}\mu_i$. The atom formula for free
additive convolution makes this distribution atomless: an atom would
require atoms $t_i$ with
\[
 \sum_{i\in S_j}\mu_i(\{t_i\})>|S_j|-1,
 \qquad\text{whereas}\qquad
 \sum_{i\in S_j}\mu_i(\{t_i\})
 \leq\sum_{i\in S_j}\beta_i<|S_j|-1.
\]
This is the grouping argument in the proof of
\cite[Lemma~2.6]{HP}. The continuous distribution function $H_j$
of $b_j$ gives the Haar unitary $\exp(2\pi iH_j(b_j))$.
Thus the two disjoint blocks satisfy the required hypothesis.
Finally, for a nonscalar faithfully tracial algebra, any nonscalar
selfadjoint has a non-point-mass spectral distribution, so
$\operatorname{At}(A,\tau)<1$. Repeated copies therefore satisfy
the divergence condition.
\end{proof}

The additional factors in the first two cases are arbitrary subject
to the standing nuclear UCT and faithful-trace hypotheses. The last
case includes countably many copies of any fixed nonscalar separable
unital nuclear UCT algebra with a faithful selected trace.

\end{document}